\title{Interpolation between Banach spaces and continuity of Radon-like integral transforms}
\author{Pavel Zorin-Kranich}
\theoremstyle{plain}
\newtheorem{theorem}[equation]{Theorem}
\newtheorem{prop}[equation]{Proposition}
\newtheorem{proposition}[equation]{Proposition}
\newtheorem{lemma}[equation]{Lemma}
\newtheorem{corollary}[equation]{Corollary}
\theoremstyle{definition}
\newtheorem{definition}[equation]{Definition}
\renewcommand{\C}{\mathbb{C}}
\renewcommand*{\C}{\mathbb{C}}
\newcommand*{\Z}{\mathbb{Z}}
\newcommand*{\Q}{\mathbb{Q}}
\newcommand*{\N}{\mathbb{N}}
\newcommand*{\R}{\mathbb{R}}
\newcommand*{\boundary}{\partial}
\newcommand*{\id}{\mathrm{id}}
\newcommand*{\BMO}{\mathrm{BMO}}
\newcommand*{\VMO}{\mathrm{VMO}}
\newcommand*{\Schwartz}{\mathcal{S}}
\newcommand*{\dif}{\text{d}}
\newcommand*{\tdif}[3][]{\frac{\dif^{ #1} #2}{\dif { #3}^{ #1}}}
\newcommand*{\pdif}[3][]{\frac{\partial^{ #1} #2}{\partial { #3}^{ #1}}}
\newcommand*{\inv}{^{-1}}
\newcommand*{\cconv}{\overline{\mathrm{conv}}\,}
\newcommand*{\Union}{\bigcup\limits}
\newcommand*{\Intersection}{\bigcap\limits}
\newcommand*{\union}{\cup}
\newcommand*{\intersection}{\cap}
\newcommand*{\Sum}{\sum\limits}
\newcommand*{\Prod}{\prod\limits}
\def\<{\left\langle}
\def\>{\right\rangle}
\newcommand*{\hol}[1]{\mathcal{H}(#1)}
\newcommand*{\const}{\mathrm{const}}
\newcommand*{\DMO}[1]{\expandafter\DeclareMathOperator\csname #1\endcsname {#1}}
\newcommand{\Frechet}{\text{Fréchet}}
\newcommand{\Fejer}{\text{Fejér}}
\newcommand{\Calderon}{\text{Calderón}}
\newcommand{\Fourier}{\mathcal{F}}
\newcommand{\Radon}{\mathcal{R}}
\newcommand{\Leb}[1][]{\lambda_{#1}}
\newcommand{\GM}{\mathcal{M}} 
\newcommand{\mo}{\text{mo}} 
\newcommand{\Det}{\text{Det}}
\def\clap#1{\hbox to 0pt{\hss#1\hss}}
\def\mathrlap{\mathpalette\mathrlapinternal}
\def\mathrlapinternal#1#2{%
\rlap{$\mathsurround=0pt#1{#2}$}}
\begin{document}

\frontmatter
\pagestyle{empty}
\calccentering{\unitlength}
\begin{adjustwidth*}{\unitlength}{-\unitlength}
\begin{center}
{\huge Interpolation between Banach spaces and\\
continuity of Radon-like integral transforms}\\
\vspace{1cm}
a diploma thesis by\\
\vspace{0.5cm}
{\Large Pavel Zorin-Kranich}\\

\vspace{2cm}
\begin{abstract}
We present the abstract framework and some applications of interpolation theory.
The main new result concerns interpolation between $H^1$ and $L^p$ estimates for analytic families of operators acting on Schwartz functions.
\end{abstract}

\vfill
Eberhard Karls Universität Tübingen\\
Mathematisches Institut\\
\vspace{1cm}
Advisors:\\
Prof.~F.~Ricci (Scuola Normale Superiore di Pisa, Italy)\\
Prof.~R.~Nagel (Universität Tübingen, Germany)\\
\vspace{1cm}
Presented in January 2011
\end{center}
\end{adjustwidth*}

\clearpage
\section*{Copyright information}
Copyright \copyright{}  2010--2013  Pavel Zorin-Kranich.
Permission is granted to copy, distribute and/or modify this document
under the terms of the GNU Free Documentation License, Version 1.3
or any later version published by the Free Software Foundation;
with no Invariant Sections, no Front-Cover Texts, and no Back-Cover Texts.
A copy of the license is embedded into the PDF file.

\section*{Why GNU FDL?}
I chose the GNU FDL licence because it requires ``transparent'' copies of any documents derived from the present one to be made available.
In case of a \LaTeX\ document such as this one this means that the full unobfuscated \LaTeX\ source code must be made available to the public.

\cleardoublepage
\pagestyle{plain}
\tableofcontents
\chapter{Introduction}
The interpolation theory deals with the question what is a good \emph{method} to define an interpolation space ``between'' two given Banach spaces both contained in a larger topological vector space (e.g.\ two $L^{p}$ spaces inside the space of measurable functions).
The method should have the \emph{interpolation property}: given compatible (i.e.\ agreeing on the intersection) continuous operators on both spaces, one would like them to induce a continuous operator on the interpolation space.
The hope is that these operators are easier to analyze when considered on the boundary spaces.

The applications we have in mind are to $L^{p}$ continuity of integral operators of the form
\[
Tf(y)=\int_{M} K(y,x) f(x),
\]
where $M$ is a manifold and the kernel $K(y, \cdot)$ is a distribution supported on a submanifold of strictly positive codimension, e.g.\ on a line in $\R^{n}$, $n\geq 2$.

The first chapter summarizes the standard results on linear real interpolation by Peetre's $K$-method (1963) and related results for interpolation of estimates for multilinear forms.
The first notable application of the abstract theory is the $L^{p}$ continuity of the Hardy-Littlewood maximal operator, $p>1$.

In chapter~\ref{chap:complex} we discuss \Calderon{}'s complex interpolation method (1964) along with prerequisites from complex analysis, including the characterization of analyticity of vector valued functions on the complex plane.
We then relax the hypothesis of the Stein interpolation theorem (a generalization of the Riesz-Thorin theorem) as to include operators with small initial domain.
This straightforward step is essential in what follows.
We immediately verify that this version is applicable to the complex interpolation space $[L^{p_{0}},L^{p_{1}}]_{\theta}$ over $\R^{n}$.

The third chapter is a brief account of properties of Riesz transforms.
They can be thought of as differential operators of non-integer order, in the sense that they constitute an analytic family of operators and happen to be ordinary differential operators for some integer arguments.

The fourth chapter begins with the basic properties of the classical Radon transform as an operator on the Schwartz space of test functions.
We then find the range of exponents in which the Radon transform is $L^{p}$-continuous.
The proof requires complex interpolation and we address the technical issues which were left implicit in the original literature.
We also connect the Radon transform to a convolution operator on the Heisenberg group.

In chapter~\ref{chap:rearrangement} we clarify in which sense it is possible to transform a bounded measurable subset $T$ of $\R^{n}$ into a ball by means of rearrangement.
The standard reference for this trick seems to be Federer's book, which only provides convergence to some ball in the Hausdorff distance.
Our quantitative argument shows that this ball must have the same measure as $T$.
The Brunn-Minkowski inequality, the main ingredient in the proof of a rearrangement inequality due to Brascamp, Lieb and Luttinger (1974), is an immediate corollary.

The next chapter deals with the Hardy space $H^{1}$, which is a useful substitute for $L^{1}$ in interpolation theory.
We are mostly interested in the atomic structure of $H^{1}$, i.e.\ the fact that every function in $H^{1}$ is a linear combination of functions with particularly nice properties.
We provide the most refined version of this decomposition.
The required modifications to the original proof seem to be known to the experts but have not been written down anywhere.
We mention the recent result of Meda, Sjögren, and Vallarino (2008) clarifying how the atomic decomposition is related to the continuity of operators on $H^{1}$.
The classical proof that $H^{1}$ is the dual of $\VMO$, the space of functions with vanishing mean oscillation, is presented in a simplified form.
Our central result is the Proposition~\ref{prop:h1-lp-interpolation-by-schwartz-functions}, which allows to interpolate between $H^{1}(\R^{n})$ and $L^{p}(\R^{n})$ by means of Schwartz functions.

Chapter~\ref{chap:k-plane} contains applications of rearrangement and interpolation methods to the $k$-plane transform.
We simplify some arguments and carry out an extension to the complex case.

The last chapter deals with continuity of convolution operators with kernels supported on submanifolds of Lie groups.
Here the central lemma regards transport of measure by a smooth map.
We recast it in the language of interpolation theory.

\section*{Acknowledgment}
The work on this thesis started at the Scuola Normale Superiore di Pisa where I have spent the academic year 2009--2010 thanks to an exchange program of the University of Tübingen.
I am grateful to Prof.~Fulvio Ricci for the motivation and the guidance he has provided me with as well as for his patience.
The support of Prof.~Rainer Nagel was invaluable not only in relation to this text.

\section*{Online version}
In the present version I have corrected some typographical errors present in the original and cleaned up the \LaTeX\ code.
I have also taken the liberty to remove some unnecessary fluff and add an abstract in an effort to make the text more useful.

\mainmatter
\pagestyle{headings}
\chapter{Real interpolation}
\label{chap:real}
We review two equivalent real-variable methods for constructing interpolation spaces between an appropriate couple of Banach spaces, mostly following the exposition in \cite{MR928802}.

The Marcinkiewicz interpolation theorem then allows one to transport estimates on operators on the endpoint spaces to interpolation spaces.
It is most useful in conjunction with the knowledge of explicit expressions for the norms of the spaces in question.
These norms will be computed for interpolation spaces between various $L^p$'s.

\section{The \texorpdfstring{$K$}{K}-method}
\index{real interpolation!K-method@$K$-method}
When applied to $L^p$ spaces, the $K$-method ultimately boils down to decomposition of a function in two parts by absolute value.
The abstract approach here is due to Peetre~\cite{MR0178381}.
It will come in handy in the proofs of the reiteration theorems for interpolation.

Let $X_0$ and $X_1$ be Banach spaces contained in a topological vector space.
The $K$-functional is defined by
\[
K(f,t,X_0,X_1) = \inf \{ ||f_0||_{X_0} + t ||f_1||_{X_1}, f=f_0+f_1 \}, \quad \text{for } f \in X_0 + X_1.
\]
For every $0 < \theta < 1$ and $1 \leq q \leq \infty$, the $(\theta,q;K,X_0,X_1)$-norm on $X_0 + X_1$ is defined by
\[
||f||_{\theta,q;K,X_0,X_1} =
\begin{cases}
\left( \int_0^\infty (t^{-\theta} K(f,t,X_0,X_1))^q \frac{\dif t}{t} \right)^{1/q}, & q < \infty,\\
\sup_{t>0} t^{-\theta} K(f,t,X_0,X_1), & q = \infty.
\end{cases}
\]
We will call it just $K$-norm if the supplementary information is clear from the context.

The usefulness of this definition stems from the following interpolation theorem for operators.
\begin{theorem}[Marcinkiewicz]
\index{Marcinkiewicz interpolation theorem}
\label{th:marcinkiewicz}
Let $T : X_0 + X_1 \to Y_0 + Y_1$ be a linear operator such that
\[
||T f||_{Y_j} \leq M_j ||f||_{X_j}, j=0,1.
\]
Then, for every $0 < \theta < 1$ and $1 \leq q \leq \infty$,
\[
||Tf||_{\theta, q; K, Y_0, Y_1} \leq M_0^{1-\theta} M_1^{\theta} ||Tf||_{\theta, q; K, X_0, X_1}.
\]
\end{theorem}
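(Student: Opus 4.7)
The plan is to reduce the theorem to a pointwise estimate for the $K$-functional and then apply a change of variables in the defining integral (or supremum).

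First I would establish the key estimate
\[
K(Tf,t,Y_0,Y_1) \leq M_0\, K(f, tM_1/M_0, X_0, X_1) \quad \text{for all } t>0.
\]
Given any decomposition $f=f_0+f_1$ with $f_j \in X_j$, linearity yields $Tf=Tf_0+Tf_1$ with $Tf_j \in Y_j$, so by definition of $K$,
\[
K(Tf,t,Y_0,Y_1) \leq \|Tf_0\|_{Y_0}+t\|Tf_1\|_{Y_1} \leq M_0\|f_0\|_{X_0}+tM_1\|f_1\|_{X_1},
\]
which factors as $M_0(\|f_0\|_{X_0} + (tM_1/M_0)\|f_1\|_{X_1})$. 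Taking the infimum over admissible decompositions of $f$ gives the claim.

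Next I would substitute this into the definition of $\|Tf\|_{\theta,q;K,Y_0,Y_1}$ and change variables $s=tM_1/M_0$, under which $\dif t/t = \dif s/s$. For $q<\infty$,
\[
\|Tf\|_{\theta,q;K,Y_0,Y_1}^q \leq M_0^q \int_0^\infty \bigl(t^{-\theta} K(f,tM_1/M_0,X_0,X_1)\bigr)^q \frac{\dif t}{t} = M_0^q (M_0/M_1)^{-\theta q} \|f\|_{\theta,q;K,X_0,X_1}^q,
\]
and taking the $q$-th root produces the factor $M_0^{1-\theta}M_1^{\theta}$. For $q=\infty$ the same substitution applied inside the supremum gives the same constant.

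There is no real obstacle here — the proof is essentially a bookkeeping exercise — so the only thing to be careful about is that the scaling factor lands correctly, which is why I would introduce the substitution $s=tM_1/M_0$ explicitly rather than try to absorb the constants directly.
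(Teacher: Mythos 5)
Your proof is correct and follows the same approach as the paper: establish the pointwise $K$-functional comparison $K(Tf,t,Y_0,Y_1) \leq M_0\, K(f,tM_1/M_0,X_0,X_1)$ by linearity and the endpoint bounds, then substitute $s=tM_1/M_0$ in the defining integral (or supremum) to extract the factor $M_0^{1-\theta}M_1^{\theta}$. Your statement of the key estimate also corrects a typo in the paper's displayed inequality, where the final expression accidentally reads $K(f,tM_1/M_0,Y_0,Y_1)$ instead of $K(f,tM_1/M_0,X_0,X_1)$.
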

\begin{proof}
By linearity we have that
\begin{multline*}
K(Tf, t, Y_0, Y_1)
\leq
\inf_{f=g+h}
||Tg||_{Y_0} + t ||Th||_{Y_1}\\
\leq
\inf_{f=g+h}
M_0 ||g||_{X_0} + M_1 t ||h||_{X_1}
=
M_0 K(f, t M_1/M_0, Y_0, Y_1).
\end{multline*}
Inserting this into the definition of the $(\theta, q; K, Y_0, Y_1)$-norm yields
\begin{align*}
||Tf||_{\theta, q; K, Y_0, Y_1}
&\leq
\left( \int_0^\infty (t^{-\theta} M_0 K(f,t M_1/M_0,X_0,X_1))^q \frac{\dif t}{t} \right)^{1/q}\\
&=
M_0 (M_1/M_0)^{-\theta} \left( \int_0^\infty (t^{-\theta} K(f,t,X_0,X_1))^q \frac{\dif t}{t} \right)^{1/q}\\
&=
M_0^{1-\theta} M_1^{\theta} ||f||_{\theta, q; K, X_0, X_1}.
\qedhere
\end{align*}
\end{proof}
If $Y_0$ and $Y_1$ are ordered (say, Banach function spaces), then the assumptions of the theorem may be weakened as to include subadditive operators $T$.
This stronger version will be useful in the proof of the Hardy-Littlewood maximal inequality.

\section{The \texorpdfstring{$J$}{J}-method and the equivalence theorem}
The $J$-method is modeled on dyadic decomposition by absolute value.
\index{real interpolation!J-method@$J$-method}
Let $X_0$ and $X_1$ be Banach spaces contained in a topological vector space and define the $J$-functional by
\[
J(f,t,X_0,X_1) = \max \{ ||f||_{X_0}, t ||f_1||_{X_1}\}, \quad \text{for } f \in X_0 \intersection X_1.
\]
For every $0 < \theta < 1$ and $1 \leq q \leq \infty$, the $(\theta,q;J,X_0,X_1)$-norm (or just $J$-norm) on $X_0 + X_1$ is defined by
\begin{align*}
||f||_{\theta,q;J,X_0,X_1} &= \inf_u
\begin{cases}
\left( \int_0^\infty (t^{-\theta} J(u(t),t,X_0,X_1))^q \frac{\dif t}{t} \right)^{1/q}, & q < \infty,\\
\sup_{t>0} t^{-\theta} J(u(t),t,X_0,X_1), & q = \infty,
\end{cases}
\end{align*}
where the infimum is taken over measurable functions $u : (0,\infty) \to X_0 \intersection X_1$ such that $\int_0^\infty u(t) \dif t/t = f$ with convergence in $X_0 + X_1$.

We now show that the $K$- and the $J$-norm are equivalent.
This fact furnishes powerful estimates needed to prove the reiteration theorems.
The estimates below for $f \in X_{0} \cap X_{1}$ follow immediately from the definitions.
\begin{align}
K(f,t,X_0,X_1) &\leq ||f||_{X_0} \leq J(f,s,X_0,X_1) &\text{for all }t,s, \label{ineq:K-J}\\
K(f,t,X_0,X_1) &\leq t ||f||_{X_1} \leq t/s J(f,s,X_0,X_1) &\text{for all }t,s, \label{ineq:K-tsJ}\\
J(f,t,X_0,X_1) &\leq J(f,s,X_0,X_1) &\text{for } t\leq s, \label{ineq:J-J}\\
J(f,t,X_0,X_1) &\leq t/s J(f,s,X_0,X_1) &\text{for } t\geq s. \label{ineq:J-tsJ}
\end{align}
The other ingredients in the proof are Hardy's inequalities and dyadic versions of the $K$- and the $J$-norm.

\begin{lemma}[{Hardy's inequalities}]
\index{Hardy's inequalities}
Let $\lambda > 0$, $1 \leq q < \infty$ and $f$ be a measurable function on $[0,\infty)$.
Then
\begin{align}
  \left( \int_0^\infty \left( t^\lambda \int_t^\infty f(s) \frac{\dif s}{s} \right)^q \frac{\dif t}{t} \right)^{1/q}
  &\leq
  \frac1\lambda \left(\int_0^\infty (t^\lambda f(t))^q \frac{\dif t}{t} \right)^{1/q},
  \label{ineq:hardy-positive}\\
  \left( \int_0^\infty \left( t^{-\lambda} \int_0^t f(s) \dif s \right)^q \frac{\dif t}{t} \right)^{1/q}
  &\leq
  \frac1\lambda \left(\int_0^\infty (t^{1-\lambda} f(t))^q \frac{\dif t}{t} \right)^{1/q}.
  \label{ineq:hardy-negative}
\end{align}
\end{lemma}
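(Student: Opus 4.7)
The plan is to prove both inequalities by rescaling so that the $t$-dependence can be pulled outside a Minkowski integral inequality. The underlying observation is that both operators on the left are convolutions (with the multiplicative group structure on $(0,\infty)$), so the dilation $s = tu$ is the natural change of variables.

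For \eqref{ineq:hardy-positive}, I first substitute $s = tu$ in the inner integral (so that $\dif s/s = \dif u/u$), obtaining
\[
t^{\lambda} \int_{t}^{\infty} f(s)\, \frac{\dif s}{s}
 = t^{\lambda} \int_{1}^{\infty} f(tu)\, \frac{\dif u}{u}.
\]
Since $q \geq 1$, Minkowski's integral inequality then lets me interchange the $L^{q}(\dif t/t)$-norm with the integral $\int_{1}^{\infty} \cdot \,\dif u/u$:
\[
\left( \int_{0}^{\infty} \left( t^{\lambda} \int_{1}^{\infty} f(tu)\, \frac{\dif u}{u} \right)^{q} \frac{\dif t}{t} \right)^{1/q}
 \leq \int_{1}^{\infty} \left( \int_{0}^{\infty} \bigl( t^{\lambda} f(tu) \bigr)^{q} \frac{\dif t}{t} \right)^{1/q} \frac{\dif u}{u}.
\]
The inner $t$-integral is invariant under $t \mapsto t/u$, which produces a scalar factor $u^{-\lambda}$; the remaining integral $\int_{1}^{\infty} u^{-\lambda}\,\dif u/u = 1/\lambda$ provides the claimed constant.

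For \eqref{ineq:hardy-negative} I proceed symmetrically, substituting $s = tu$ for $u \in (0,1)$ in $\int_{0}^{t} f(s)\,\dif s$ (so that $\dif s = t\,\dif u$):
\[
t^{-\lambda} \int_{0}^{t} f(s)\,\dif s
 = t^{1-\lambda} \int_{0}^{1} f(tu)\,\dif u.
\]
Applying Minkowski's inequality and then rescaling $t \mapsto t/u$ inside the $L^{q}(\dif t/t)$-norm yields a factor $u^{-(1-\lambda)}$, and $\int_{0}^{1} u^{\lambda-1}\,\dif u = 1/\lambda$ gives the bound.

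The one place that needs a little care is the use of Minkowski's integral inequality, which requires $q \geq 1$ and measurability/nonnegativity — both are satisfied once we replace $f$ by $|f|$ (which only strengthens the inequalities). Apart from that the proof is entirely computational, so I do not expect any serious obstacle; the only nontrivial ingredient is recognising that the Hardy operators are convolutions on the multiplicative group $(0,\infty)$, which makes the scaling substitution and Minkowski inequality the natural tools.
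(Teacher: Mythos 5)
Your proof is correct, but it takes a genuinely different route from the paper. The paper proves both inequalities by writing $f(s)s^{-1} = s^{-(\lambda+1)/q'}\bigl(s^{(\lambda+1)/q'} f(s)s^{-1}\bigr)$, applying H\"older's inequality to the inner integral against the weight $s^{-(\lambda+1)/q'}$, and then using Fubini's theorem to swap the $t$- and $s$-integrations; the factor $1/\lambda$ emerges as $\lambda^{-1/q'}\cdot\lambda^{-1/q}$ after the two steps. You instead recognize the Hardy operator as a convolution on the multiplicative group $(0,\infty)$: the substitution $s=tu$ reduces it to integration of the dilates $f(t\,\cdot)$ against a fixed $L^1(\dif u/u)$ kernel supported on $[1,\infty)$ (or $(0,1]$), Minkowski's integral inequality passes the $L^q(\dif t/t)$-norm inside, and dilation invariance of $\dif t/t$ produces the factor $u^{-\lambda}$ whose $\dif u/u$-integral is exactly $1/\lambda$. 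The two arguments give the identical constant, but yours is more structural: it makes visible why the statement holds for all $1\le q<\infty$ uniformly (Minkowski needs only $q\ge 1$, and the paper's H\"older step must be bypassed separately for $q=1$, as the paper itself notes), and it generalizes immediately to any convolution kernel in $L^1(\dif u/u)$. The paper's H\"older--Fubini argument is more elementary in the sense that it does not invoke the integral Minkowski inequality, but it is longer to write out. Both are sound; yours is arguably the cleaner presentation.
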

\begin{proof}
To show (\ref{ineq:hardy-positive}) in the case $q>1$, write $f(s) s\inv = s^{-(\lambda + 1)/q'} (s^{(\lambda + 1)/q'} f(s)s\inv)$.
Applying the Hölder inequality to the inner integral we obtain
\begin{align*}
\int_t^\infty f(s) \frac{\dif s}{s}
&\leq
\left( \int_t^\infty s^{-\lambda-1} \dif s \right)^{1/q'}
\left( \int_t^\infty [ s^{(\lambda+1)/q'} f(s) s^{-1} ]^q \dif s \right)^{1/q}\\
&=
\lambda^{-1/q'} t^{-\lambda/q'}
\left( \int_t^\infty s^{(\lambda+1)(q-1)-q} f(s)^q \dif s \right)^{1/q}.
\end{align*}
The left-hand side of (\ref{ineq:hardy-positive}) may therefore be estimated by
\begin{align*}
\dots &\leq
\left( \int_0^\infty t^{q \lambda} \lambda^{-q/q'} t^{-q \lambda/q'} \int_t^\infty s^{(\lambda+1)(q-1)-q} f(s)^q \dif s \frac{\dif t}{t} \right)^{1/q}\\
&=
\lambda^{-1/q'} \left( \int_{t=0}^\infty t^{\lambda-1} \int_{s=t}^\infty s^{(\lambda+1)(q-1)-q} f(s)^q \dif s \dif t \right)^{1/q}\\
&=
\lambda^{-1/q'} \left( \int_{s=0}^\infty \int_{t=0}^s t^{\lambda-1} \dif t s^{(\lambda+1)(q-1)-q} f(s)^q \dif s \right)^{1/q}\\
&=
\lambda^{-1} \left( \int_{s=0}^\infty s^\lambda s^{(\lambda+1)(q-1)-q} f(s)^q \dif s \right)^{1/q}\\
&=
\frac1\lambda \left( \int_{s=0}^\infty [s^\lambda f(s)]^q \frac{\dif s}{s} \right)^{1/q}.
\end{align*}
The case $q=1$ is similar but the Hölder inequality is not needed.
The proof of (\ref{ineq:hardy-negative}) is analogous if we decompose $f$ as $f(s) = s^{(\lambda-1)/q'} (s^{(1-\lambda)/q'} f(s))$.
\end{proof}

We are now ready to estimate the $K$-norm with the $J$-norm.
Let $u$ be as above.
Then
\[
K(f,t,X_0,X_1)
\leq
\int_0^\infty K(u(s),t,X_0,X_1) \dif s/s
\leq
\int_0^\infty \min\{1, t/s\} J(u(s),s,X_0,X_1) \dif s/s
\]
by (\ref{ineq:K-J}) and (\ref{ineq:K-tsJ}).
In the case $q < \infty$ this implies that
\begin{align*}
||f||_{\theta,q;K,X_0,X_1}
&\leq
\left( \int_0^\infty \left(t^{-\theta} \int_0^t J(u(s),s,X_0,X_1) \dif s/s \right)^q \frac{\dif t}{t} \right)^{1/q}\\
& \quad {} +
\left( \int_0^\infty \left(t^{-\theta} \int_t^\infty t/s J(u(s),s,X_0,X_1) \dif s/s \right)^q \frac{\dif t}{t} \right)^{1/q}\\
&\leq
\left( \frac{1}{\theta} + \frac{1}{1 - \theta} \right) \left( \int_0^\infty (t^{-\theta} J(u(t),t,X_0,X_1))^q \frac{\dif t}{t} \right)^{1/q}
\end{align*}
by the Hardy inequalities (\ref{ineq:hardy-negative}) and (\ref{ineq:hardy-positive}) for the former and the latter term, respectively.
On the other hand, in the case $q=\infty$ we have that
\begin{align*}
||f||_{\theta,q;K,X_0,X_1}
&=
\sup_{t>0} t^{-\theta} K(f,t,X_0,X_1)\\
&\leq
\sup_{t>0} t^{-\theta} \left[
\int_0^t J(u(s),s,X_0,X_1) \dif s/s
+
\int_t^\infty \frac{t}{s} J(u(s),s,X_0,X_1) \dif s/s
\right]\\
&\leq
\sup_r r^{-\theta} J(u(r),r,X_0,X_1) \sup_{t>0} t^{-\theta} \left[
\int_0^t s^\theta \dif s/s
+
\int_t^\infty \frac{t}{s} s^\theta \dif s/s
\right]\\
&\leq
\left( \frac{1}{\theta} + \frac{1}{1 - \theta} \right)
\sup_r r^{-\theta} J(u(r),r,X_0,X_1).
\end{align*}
Taking the infimum over $u$ yields the claimed estimate in both cases.

In order to obtain the converse we consider the dyadic versions of the $K$- and the $J$-norm.
Let $\lambda^{\theta, q}$ denote the space of sequences $(a_\nu)_{\nu=-\infty}^\infty$ such that
\begin{equation}
\label{eq:lambda-norm}
||(a_\nu)_\nu||_{\theta, q} =
\begin{cases}
\left( \sum_{\nu=-\infty}^\infty (2^{-\theta \nu} a_\nu)^q \right)^{1/q},& q < \infty,\\
\sup_\nu 2^{-\theta \nu} a_\nu,& q = \infty
\end{cases}
\end{equation}
is finite and define the dyadic $K$- and $J$-norms by
\begin{align*}
||f||_{\theta,q;K,X_0,X_1}^d
&=
|| (K(f,2^\nu,X_0,X_1))_\nu ||_{\theta, q}
\quad\text{and}\\
||f||_{\theta,q;J,X_0,X_1}^d
&=
\inf_{f_\nu} || (J(f_\nu,2^\nu,X_0,X_1))_\nu ||_{\theta, q},
\end{align*}
the infimum this time being taken over decompositions $f = \sum_\nu f_\nu$ with $f_\nu \in X_0 \intersection X_1$ and convergence in $X_0 + X_1$.
The equivalence
\[
||f||_{\theta,q;K,X_0,X_1} \leq C ||f||_{\theta,q;K,X_0,X_1}^d \leq C' ||f||_{\theta,q;K,X_0,X_1}
\]
is clear from the fact that $K(f,t)$ is monotonous in its second argument.
Furthermore, restricting the infimum on the left-hand side to piecewise constant functions, we obtain
\[
||f||_{\theta,q;J,X_0,X_1} \leq ||f||_{\theta,q;J,X_0,X_1}^d.
\]
In order to estimate the dyadic $J$-norm and thus complete the proof of equivalence of the norms we need to establish the existence of a particular decomposition $f = \sum_\nu f_\nu$.

\begin{lemma}[Fundamental lemma of interpolation theory]
\label{lem:fundamentalJK}
Let $f \in X_0 + X_1$ be such that $K(f,t) \to 0$ as $t\to 0$ and $K(f,t)/t \to 0$ as $t\to\infty$.
Then for every $\epsilon>0$ there exists a decomposition $f = \sum_{\nu = -\infty}^\infty f_\nu$ with convergence in $X_0 + X_1$ such that $J(f_\nu, 2^\nu) \leq 3(1+\epsilon) K(f, 2^\nu)$ for every $\nu \in \Z$.
\end{lemma}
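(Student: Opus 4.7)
The plan is to produce the decomposition $f = \sum_\nu f_\nu$ by telescoping near-optimal splittings of $f$ at scales $2^\nu$. Concretely, for each $\nu \in \Z$ I would fix, using the definition of the $K$-functional, a decomposition $f = g_\nu + h_\nu$ with $g_\nu \in X_0$, $h_\nu \in X_1$ satisfying
\[
\|g_\nu\|_{X_0} + 2^\nu \|h_\nu\|_{X_1} \leq (1+\epsilon) K(f, 2^\nu),
\]
and then set $f_\nu = g_\nu - g_{\nu-1} = h_{\nu-1} - h_\nu$, which lies in $X_0 \cap X_1$.

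The estimate for the $X_0$-component is immediate: the triangle inequality together with the monotonicity of $K(f,\cdot)$ gives
\[
\|f_\nu\|_{X_0} \leq \|g_\nu\|_{X_0} + \|g_{\nu-1}\|_{X_0} \leq (1+\epsilon)\bigl(K(f,2^\nu) + K(f,2^{\nu-1})\bigr) \leq 2(1+\epsilon) K(f, 2^\nu).
\]
For the $X_1$-component I would use the complementary identity $f_\nu = h_{\nu-1}-h_\nu$ together with the fact that $t \mapsto K(f,t)/t$ is non-increasing (which follows from the definition of $K$): this forces $K(f,2^{\nu-1})/2^{\nu-1} \leq 2 K(f,2^\nu)/2^\nu$, hence
\[
2^\nu \|f_\nu\|_{X_1} \leq 2^\nu(1+\epsilon)\bigl(K(f,2^{\nu-1})/2^{\nu-1} + K(f,2^\nu)/2^\nu\bigr) \leq 3(1+\epsilon) K(f,2^\nu).
\]
Taking the maximum of the two bounds yields $J(f_\nu, 2^\nu) \leq 3(1+\epsilon) K(f, 2^\nu)$, which is the pointwise estimate claimed.

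It remains to verify that the telescoping series actually sums to $f$ in $X_0+X_1$. The partial sum is
\[
\sum_{\nu=-M}^{N} f_\nu = g_N - g_{-M-1},
\]
so I need $g_N \to f$ and $g_{-M-1} \to 0$ in $X_0+X_1$. The first follows from the hypothesis $K(f,t)/t \to 0$ as $t\to\infty$: indeed $\|h_N\|_{X_1} \leq (1+\epsilon) K(f,2^N)/2^N \to 0$, so $g_N = f - h_N \to f$ in the $X_1$-norm and thus in $X_0+X_1$. The second uses the other decay hypothesis $K(f,t)\to 0$ as $t\to 0$: $\|g_\nu\|_{X_0} \leq (1+\epsilon) K(f,2^\nu) \to 0$ as $\nu\to-\infty$, giving convergence in the $X_0$-norm. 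The main conceptual step is recognizing that the two decay assumptions on $K$ are precisely what makes the natural telescoping construction converge at both ends; the pointwise $J$-estimate itself is a routine application of triangle inequalities plus the monotonicity properties of $K(f,t)$ and $K(f,t)/t$.
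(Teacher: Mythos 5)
Your construction and argument coincide with the paper's: telescope near-optimal splittings at each dyadic scale, then use the two decay hypotheses to make the partial sums converge from both ends. There is one mis-justified step. The inequality $K(f,2^{\nu-1})/2^{\nu-1} \leq 2\,K(f,2^\nu)/2^\nu$ does \emph{not} follow from $t \mapsto K(f,t)/t$ being non-increasing; that property runs the wrong way and only gives $K(f,2^{\nu-1})/2^{\nu-1} \geq K(f,2^\nu)/2^\nu$. What you actually need is the monotonicity of $K$ itself: $K(f,2^{\nu-1}) \leq K(f,2^\nu)$, so $K(f,2^{\nu-1})/2^{\nu-1} \leq K(f,2^\nu)/2^{\nu-1} = 2\,K(f,2^\nu)/2^\nu$ — the very fact you already invoked for the $X_0$ estimate. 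With that correction the proof is the paper's proof.
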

\begin{proof}
By definition of the $K$-functional there exist $f_{j,\nu}$, $j=0,1$ such that
\[
||f_{0,\nu}||_{X_0} + 2^\nu ||f_{1,\nu}||_{X_1} \leq (1+\epsilon) K(f, 2^\nu).
\]
By the assumptions $||f_{0,\nu}||_{X_0} \to 0$ as $\nu \to -\infty$ and $||f_{1,\nu}||_{X_1} \to 0$ as $\nu \to \infty$.
Let
\[
f_\nu := f_{0,\nu} - f_{0,\nu-1} = f_{1,\nu-1} - f_{1,\nu} \in X_0 \intersection X_1.
\]
Then
\[
f - \sum_{\nu=-N}^N f_\nu = f_{1,N} + f_{0,-N-1} \to 0 \text{ in } X_0 + X_1 \text{ as } N \to\infty
\]
and
\begin{align*}
J(f_\nu, 2^\nu)
&\leq
\max \{ ||f_{0,\nu}||_{X_0} + ||f_{0,\nu-1}||_{X_0}, 2^\nu ||f_{1,\nu}||_{X_1} + 2 \cdot 2^{\nu-1} ||f_{1,\nu-1}||_{X_1} \}\\
&\leq
(1+\epsilon) K(f,2^\nu) + 2(1+\epsilon) K(f,2^{\nu-1})
\leq
3 (1+\epsilon) K(f,2^\nu).
\qedhere
\end{align*}
\end{proof}
This result allows us to conclude the proof of the equivalence theorem.
\begin{theorem}
For every $0 < \theta < 1$ and $1 \leq q \leq \infty$, the norms $|| \cdot ||_{\theta, q; K, X_0, X_1}^d$, $|| \cdot ||_{\theta, q; J, X_0, X_1}^d$, $|| \cdot ||_{\theta, q; K, X_0, X_1}$ and $|| \cdot ||_{\theta, q; J, X_0, X_1}$ on $X_0 + X_1$ are equivalent.
\end{theorem}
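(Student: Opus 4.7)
The plan is to close the circle of inequalities. Four of the six pairwise comparisons are already in place by the preceding discussion: $\|f\|_{\theta,q;K} \asymp \|f\|_{\theta,q;K}^d$ from the monotonicity of $K(f,t)$ in $t$, $\|f\|_{\theta,q;J} \leq \|f\|_{\theta,q;J}^d$ from restricting the admissible $u$ to step functions, and $\|f\|_{\theta,q;K} \lesssim (1/\theta + 1/(1-\theta)) \|f\|_{\theta,q;J}$ from the Hardy-inequality computation just carried out. Only the reverse estimate
\[
\|f\|_{\theta,q;J,X_0,X_1}^d \lesssim \|f\|_{\theta,q;K,X_0,X_1}^d
\]
is missing; combined with the others, it yields the equivalence of all four norms.

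For this step I would apply Lemma~\ref{lem:fundamentalJK} directly. Suppose $\|f\|_{\theta,q;K,X_0,X_1}^d < \infty$ (otherwise there is nothing to prove) and fix $\epsilon > 0$. Once the hypotheses of the lemma are verified, it produces a decomposition $f = \sum_{\nu \in \Z} f_\nu$ with $f_\nu \in X_0 \intersection X_1$, convergence in $X_0 + X_1$, and
\[
J(f_\nu, 2^\nu, X_0, X_1) \leq 3(1+\epsilon) K(f, 2^\nu, X_0, X_1) \quad \text{for every } \nu \in \Z.
\]
Taking $\lambda^{\theta,q}$-norms of both sides termwise and then passing to the infimum in the definition of $\|\cdot\|_{\theta,q;J,X_0,X_1}^d$ yields $\|f\|_{\theta,q;J,X_0,X_1}^d \leq 3(1+\epsilon) \|f\|_{\theta,q;K,X_0,X_1}^d$, and sending $\epsilon \to 0$ finishes this step.

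The main obstacle is verifying the asymptotic conditions $K(f,t) \to 0$ as $t \to 0$ and $K(f,t)/t \to 0$ as $t \to \infty$ required by the fundamental lemma. Since $K(f,\cdot)$ is non-decreasing and $K(f,\cdot)/t$ is non-increasing (the $K$-functional is concave in $t$ with $K(f,0) \geq 0$), it suffices to verify both limits along the dyadic sequence $t = 2^\nu$. For $q < \infty$, finiteness of $\sum_\nu (2^{-\theta \nu} K(f, 2^\nu))^q$ forces $2^{-\theta \nu} K(f, 2^\nu) \to 0$ as $\nu \to \pm \infty$; since $0 < \theta < 1$ this gives $K(f, 2^\nu) = o(2^{\theta \nu}) \to 0$ at $\nu \to -\infty$ and $K(f, 2^\nu)/2^\nu = o(2^{(\theta-1)\nu}) \to 0$ at $\nu \to +\infty$. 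The case $q = \infty$ is handled identically from the bound $K(f, 2^\nu) \leq \|f\|_{\theta,\infty;K}^d \, 2^{\theta \nu}$. With the hypotheses secured the previous paragraph applies verbatim, completing the chain of equivalences.
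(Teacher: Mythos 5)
Your proof is correct and follows essentially the same route as the paper: close the chain by bounding the dyadic $J$-norm with the dyadic $K$-norm via the Fundamental Lemma~\ref{lem:fundamentalJK}. You are in fact more careful than the paper on one point: the paper dismisses the verification of the decay hypotheses $K(f,t)\to 0$ and $K(f,t)/t\to 0$ with a one-line appeal to "monotonicity of $K$ in $t$," whereas you explicitly extract the decay from finiteness of the dyadic $K$-norm (using $0<\theta<1$) and then use concavity of $K(f,\cdot)$ to pass from the dyadic sequence to all $t$ — a genuinely complete justification. Everything checks out.
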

The space $[X_0,X_1]_{\theta, q} \subseteq X_0 + X_1$ defined by any of these norms is called a \emph{real interpolation space} between $X_0$ and $X_1$.
Note that
\[
X_{0}\cap X_{1} \hookrightarrow [X_0,X_1]_{\theta, q} \hookrightarrow X_0 + X_1,
\]
where the continuity of the former inclusion becomes evident if one considers the dyadic $J$-norm, and the continuity of the latter inclusion can be seen using the $K$-norm.
\begin{proof}
We have already shown that
\[
||f||_{\theta, q; K, X_0, X_1}^d \leq  C ||f||_{\theta, q; K, X_0, X_1} \leq C' ||f||_{\theta, q; J, X_0, X_1} \leq C'' ||f||_{\theta, q; J, X_0, X_1}^d.
\]
and need only estimate the dyadic $J$- by the dyadic $K$-norm.

Clearly, if $||f||_{\theta, q; K, X_0, X_1}^d < \infty$, then the monotonicity of $K$ in $t$ implies the decay conditions on $K(f,t,X_0,X_1)$ required in Lemma~\ref{lem:fundamentalJK}.
With the decomposition $f = \sum_\nu f_\nu$ provided by the same lemma, we have that
\begin{multline*}
||f||_{\theta, q; J, X_0, X_1}^d
\leq
|| (J(f_\nu,2^\nu,X_0,X_1))_\nu ||_{\theta, q}\\
\leq
C || (K(f,2^\nu,X_0,X_1))_\nu ||_{\theta, q}
=
C ||f||_{\theta, q; K, X_0, X_1}^d.
\qedhere
\end{multline*}
\end{proof}

Henceforth we will omit the superscript ${}^d$ from the notation and use the dyadic and continuous versions of the $K$- and the $J$-norm interchangeably.

\section{The reiteration theorem, case \texorpdfstring{$\theta_{0}<\theta_{1}$}{\unichar{"03B8}\unichar{"2080}<\unichar{"03B8}\unichar{"2081}}}
The basic tool for explicit computation of norms on the interpolation spaces is the reiteration theorem which characterizes the interpolation spaces between interpolation spaces.
Here we switch to the more concise methods in \cite{MR0482275}.
\begin{theorem}[Reiteration theorem]
\index{real interpolation!reiteration theorem!1@$\theta_0 < \theta_1$}
\label{th:real-reiteration}
Let $0 \leq \theta_0 < \theta_1 \leq 1$, $0 < \eta < 1$, $\theta = (1-\eta) \theta_0 + \eta \theta_1$, $1 \leq q \leq \infty$ and $Y_0$, $Y_1$ be Banach spaces such that
\begin{align*}
[ X_0, X_1 ]_{\theta_0 , 1} \subseteq Y_0 \subseteq [ X_0, X_1 ]_{\theta_0 , \infty}& \text{ if } \theta_0 > 0, \quad Y_0 = X_0 \text{ if } \theta_0 = 0,\\\
[ X_0, X_1 ]_{\theta_1 , 1} \subseteq Y_1 \subseteq [ X_0, X_1 ]_{\theta_1 , \infty}& \text{ if } \theta_1 < 1, \quad Y_1 = X_1 \text{ if } \theta_0 = 1,
\end{align*}
each with continuous inclusion.
Then, up to norm equivalence,
\[
[Y_0, Y_1]_{\eta, q} = [X_0, X_1]_{\theta, q}.
\]
\end{theorem}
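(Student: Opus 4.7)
The plan is to prove the two inclusions separately, exploiting the duality between the $K$- and $J$-methods. For $[Y_0,Y_1]_{\eta,q}\hookrightarrow [X_0,X_1]_{\theta,q}$ I would use the upper embeddings $Y_j\subseteq [X_0,X_1]_{\theta_j,\infty}$ together with the $K$-functional, while for the reverse inclusion I would use the lower embeddings $[X_0,X_1]_{\theta_j,1}\subseteq Y_j$ together with Lemma~\ref{lem:fundamentalJK} and the $J$-functional. Both directions hinge on the same change of variable $s=t^{\theta_1-\theta_0}$ between the scales of the two couples, with exponents matching precisely via $\theta-\theta_0=\eta(\theta_1-\theta_0)$.

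For the first inclusion, I would unravel the upper embedding as the pointwise bound $K(y_j,s,X_0,X_1)\leq C\,s^{\theta_j}\|y_j\|_{Y_j}$ for $y_j\in Y_j$ (this is immediate when $\theta_j\in(0,1)$, and follows at the endpoints from the trivial estimates $K(y,s)\leq\|y\|_{X_0}$ or $K(y,s)\leq s\|y\|_{X_1}$). Applied to an arbitrary decomposition $f=y_0+y_1$ and optimized over $y_0,y_1$, this yields the pointwise relation
\[
K(f,t,X_0,X_1)\leq C\,t^{\theta_0}K(f,t^{\theta_1-\theta_0},Y_0,Y_1).
\]
Substituting $s=t^{\theta_1-\theta_0}$ in the integral defining $\|f\|_{\theta,q;K,X_0,X_1}$ converts it, up to a factor depending only on $\theta_1-\theta_0$, into $\|f\|_{\eta,q;K,Y_0,Y_1}$.

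For the converse, I would first observe that the lower embedding $[X_0,X_1]_{\theta_j,1}\subseteq Y_j$ implies $\|y\|_{Y_j}\leq C\,t^{-\theta_j}J(y,t,X_0,X_1)$ for every $y\in X_0\cap X_1$ and $t>0$, by testing the $(\theta_j,1;J)$-norm against the admissible representation $u(s)=(\log 2)^{-1}y\,\mathbf{1}_{[t,2t]}(s)$ and using the doubling property~\eqref{ineq:J-tsJ} of $J$; the endpoint cases $\theta_j\in\{0,1\}$ are immediate from the definition of $J$. Given $f$ with finite $(\theta,q;K,X_0,X_1)$-norm, the decay conditions in Lemma~\ref{lem:fundamentalJK} hold automatically, so I obtain a decomposition $f=\sum_\nu f_\nu$ with $J(f_\nu,2^\nu,X_0,X_1)\leq C K(f,2^\nu,X_0,X_1)$. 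Repackaging this sum as the continuous representative $u(s)=(\log 2)^{-1}f_\nu$ on $[2^\nu,2^{\nu+1})$ and rescaling via $v(t)=(\theta_1-\theta_0)^{-1}u(t^{1/(\theta_1-\theta_0)})$ produces $\int v(t)\,\dif t/t=f$, and the two bounds on $\|f_\nu\|_{Y_j}$ combine to $J(v(t),t,Y_0,Y_1)\leq C\,s^{-\theta_0}K(f,s,X_0,X_1)$ with $s=t^{1/(\theta_1-\theta_0)}$, the two terms of the $J$-maximum balancing precisely because of the rescaling. A final change of variable in the $(\eta,q;J,Y_0,Y_1)$-integral gives $\|f\|_{\eta,q;J,Y_0,Y_1}\leq C\|f\|_{\theta,q;K,X_0,X_1}$, and the equivalence of the $J$- and $K$-norms concludes.

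The principal obstacle I expect is the careful bookkeeping of the scale change $s\leftrightarrow t=s^{\theta_1-\theta_0}$ in both directions, ensuring that Jacobians and exponents balance exactly and that the endpoint values $\theta_0=0$ and $\theta_1=1$ are accommodated uniformly by replacing the (now vacuous) upper or lower embedding hypothesis with the corresponding trivial $K$- or $J$-estimate.
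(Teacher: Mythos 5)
Your first inclusion $[Y_0,Y_1]_{\eta,q}\hookrightarrow[X_0,X_1]_{\theta,q}$ follows the paper's argument line by line: the upper embeddings give $K(f_j,t,X_0,X_1)\leq Ct^{\theta_j}\|f_j\|_{Y_j}$, whence $K(f,t,X_0,X_1)\leq Ct^{\theta_0}K(f,t^{\theta_1-\theta_0},Y_0,Y_1)$ after optimizing over decompositions, and the change of variable $s=t^{\theta_1-\theta_0}$ converts norms using $\frac{\theta-\theta_0}{\theta_1-\theta_0}=\eta$. Both treatments handle the endpoint cases $\theta_0=0$, $\theta_1=1$ by falling back on the trivial $K$- or $J$-estimates.

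The converse inclusion is where you diverge from the paper. You invoke Lemma~\ref{lem:fundamentalJK} to produce a dyadic $J$-representation $f=\sum_\nu f_\nu$ with $J(f_\nu,2^\nu,X_0,X_1)\leq CK(f,2^\nu,X_0,X_1)$, then rescale it into a continuous representative $v(t)$ for which the two terms of $J(v(t),t,Y_0,Y_1)$ balance exactly, giving $J(v(t),t,Y_0,Y_1)\leq Cs^{-\theta_0}K(f,s,X_0,X_1)$ with $s=t^{1/(\theta_1-\theta_0)}$; the remaining step is a bare change of variable, landing directly on $\|f\|_{\theta,q;K,X_0,X_1}$. The paper instead fixes an \emph{arbitrary} continuous $J$-representation $u(s)$ of $f$, applies $K(f,t,Y_0,Y_1)\leq\int\min\{1,t/s\}J(u(s),s,Y_0,Y_1)\,\dif s/s$ combined with the pointwise estimate $J(u,s^{\theta_1-\theta_0},Y_0,Y_1)\leq Cs^{-\theta_0}J(u,s,X_0,X_1)$, and then disposes of the resulting double integral via Minkowski's and Hardy's inequalities, landing on $\|f\|_{\theta,q;J,X_0,X_1}$. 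Both routes are correct. Yours trades the two Hardy inequalities for one explicit invocation of the fundamental lemma (and requires noting, as you do, that finiteness of the $(\theta,q;K)$-norm implies its decay hypotheses); the paper's keeps the argument parallel to the proof of the $K$-$J$ equivalence theorem. Since both Hardy's inequalities and the fundamental lemma already underlie the $K$-$J$ equivalence that each direction ultimately appeals to, neither route is fundamentally cheaper, but yours makes the exact cancellation in the $J$-maximum transparent, which the paper leaves buried in the $\min\{(t/s)^{\theta_0},(t/s)^{\theta_1}\}$ kernel.
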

We give the proof only for $q < \infty$, the case $q=\infty$ is similar but easier.
\begin{proof}
Assume first that $0 < \theta_0 < \theta_1 < 1$.

Let $f \in [Y_0, Y_1]_{\eta, q}$ and $f = f_0 + f_1$ with $f_0 \in Y_0$, $f_1 \in Y_1$.
Then
\[
K(f,t,X_0,X_1)
\leq
K(f_0,t,X_0,X_1) + K(f_1,t,X_0,X_1)
\leq
C t^{\theta_0} ||f_0||_{Y_0} + C t^{\theta_1} ||f_1||_{Y_1}
\]
because $Y_j \subseteq [ X_0, X_1 ]_{\theta_j , \infty}$.
Taking the infimum over decompositions $f=f_0+f_1$ yields
\[
K(f,t,X_0,X_1) \leq C t^{\theta_0} K(f,t^{\theta_1 - \theta_0},Y_0,Y_1).
\]
Inserting this estimate into the definition of the $K$-norm and using the relation $\frac{\theta-\theta_0}{\theta_1 - \theta_0} = \eta$ we obtain
\begin{align*}
||f||_{\theta, q; K, X_0, X_1}
&=
\left( \int_0^\infty (t^{-\theta} K(f,t,X_0,X_1))^q \frac{\dif t}{t} \right)^{1/q}\\
&\leq
C \left( \int_0^\infty (t^{-\theta+\theta_0} K(f,t^{\theta_1 - \theta_0},Y_0,Y_1))^q \frac{\dif t}{t} \right)^{1/q}\\
&=
C \left( \int_0^\infty (s^{-\frac{\theta-\theta_0}{\theta_1 - \theta_0}} K(f,s,Y_0,Y_1))^q \frac{\dif s}{s} \right)^{1/q}
&(s = t^{\theta_{1}-\theta_{0}})\\
&=
C ||f||_{\eta, q; K, Y_0, Y_1}.
\end{align*}
For the converse observe first that for every $u \in X_0 \intersection X_1$ we have that
\begin{multline*}
J(u,s^{\theta_1 - \theta_0}, Y_0, Y_1)
=
s^{-\theta_0} \max_j s^{\theta_j} ||u||_{Y_j}
\leq
s^{-\theta_0} \max_j s^{\theta_j} ||u||_{\theta_j, 1; J, X_0, X_1}\\
\leq
s^{-\theta_0} \max_j s^{\theta_j} C J(u,s,X_0,X_1) s^{-\theta_j}
=
C s^{-\theta_0} J(u,s,X_0,X_1)
\end{multline*}
because $[ X_0, X_1 ]_{\theta_j , 1} \subseteq Y_j$ and by the dyadic characterization of the $J$-norm.
Using the variable change from the first part of the proof, decomposing $f=\int_0^\infty u(s) \dif s/s$ and inserting the last result we obtain
\begin{align*}
||f||&_{\eta, q; K, Y_0, Y_1}\\
&=
C \left( \int_0^\infty (t^{-\theta+\theta_0} K(f,t^{\theta_1 - \theta_0},Y_0,Y_1))^q \frac{\dif t}{t} \right)^{1/q}\\
&\leq
C \left( \int_0^\infty \left(t^{-\theta+\theta_0} \int_0^\infty K(u(s),t^{\theta_1 - \theta_0},Y_0,Y_1) \frac{\dif s}{s} \right)^q \frac{\dif t}{t} \right)^{1/q}\\
&\leq
C \left( \int_0^\infty \left(t^{-\theta+\theta_0} \int_0^\infty \min\{ 1, \frac{t^{\theta_1 - \theta_0}}{s^{\theta_1 - \theta_0}}\} J(u(s),s^{\theta_1 - \theta_0},Y_0,Y_1) \frac{\dif s}{s} \right)^q \frac{\dif t}{t} \right)^{1/q}\\
&\leq
C \left( \int_0^\infty \left(t^{-\theta} \int_0^\infty \min\{ (t/s)^{\theta_0}, (t/s)^{\theta_1}\}\} J(u(s),s,Y_0,Y_1) \frac{\dif s}{s} \right)^q \frac{\dif t}{t} \right)^{1/q}.
\end{align*}
We now split the inner integral as $\int_{0}^{\infty} = \int_{0}^{t} + \int_{t}^{\infty}$, use the Minkowski inequality and the Hardy inequalities (\ref{ineq:hardy-positive}) and (\ref{ineq:hardy-negative}).
This yields
\[
||f||_{\eta, q; K, Y_0, Y_1}
\leq
C \left( \int_0^\infty \left(t^{-\theta} J(u(t),t,Y_0,Y_1) \right)^q \frac{\dif t}{t} \right)^{1/q}.
\]
Taking the infimum over $u(s)$ we obtain
\[
||f||_{\eta, q; K, Y_0, Y_1}
\leq
C ||f||_{\theta, q; J, X_0, X_1}.
\]
The cases $\theta_0 = 0$ and $\theta_1 = 1$ can be treated similarly, only the estimates for $K(f,t,X_0,X_1)$ and $J(u,s^{\theta_1 - \theta_0}, Y_0, Y_1)$ become easier.
\end{proof}

\section{Lorentz spaces}
The reiteration theorem still leaves some work to be done.
Namely we have to compute the interpolation spaces between the (hopefully easier to handle) endpoints spaces.
Here we do so for the spaces $L^1$ and $L^\infty$.
The customary notation is as follows.
\begin{definition}
\index{Lorentz space}
Let $1 < p < \infty$ and $1\leq q\leq \infty$.
The corresponding \emph{Lorentz space} is
\[
L^{p,q} = [ L^1, L^\infty ]_{\theta,q},
\]
where
\[
\frac1p = 1-\theta  \quad\left( = \frac{1-\theta}{1} + \frac{1-\theta}{\infty}\right).
\]
The norm on $L^{p,q}$ is denoted by $||\cdot||_{p,q} = || \cdot ||_{\theta, q; L^1, L^\infty}$.
\end{definition}
To obtain an explicit expression for the norm on $L^{p,q}$ we now calculate
\[
K(f,t,L^1,L^\infty).
\]
This quantity is most conveniently expressed in terms of the non-increasing rearrangement of $f$.
\begin{definition}
Let $f$ be a measurable function on a measure space $(\Omega, \mu)$.
Its \emph{distribution function} is defined by
\[
\lambda_f(s) = \mu \{ |f|>s \}, \quad s>0
\]
and its \emph{non-increasing rearrangement} by
\index{rearrangement!non-increasing}
\[
f^*(t) = \inf \{ s : \lambda_f(s) \leq t \}, \quad t>0.
\]
\end{definition}
We establish some basic properties of $\lambda_f$ and $f^*$ first.

Both operations are monotonous in the sense that $|f| \leq |g|$ a.e.\ implies
$\lambda_f(s) \leq \lambda_g(s)$ for all $s$ since $\{ |f|>s \} \subseteq \{ |g|>s \}$
and $f^*(t) \leq g^*(t)$ for all $t$ since $\{ s : \lambda_f(s) \leq t \} \supseteq \{ s : \lambda_g(s) \leq t \}$.

Moreover, for every $f$, both $\lambda_f$ and $f^*$ are monotonously decreasing.
Together with continuity of $\mu$ from below (monotonous convergence theorem) this implies that
\begin{align*}
\lambda_f(f^*(t))
&=
\mu\{ |f| > \inf \{ s, \lambda_f(s) \leq t \} \}\\
&=
\mu \Union_{s : \lambda_f(s) \leq t} \{ |f| > s \}\\
&=
\lim_{s \searrow \inf \{ s, \lambda_f(s) \leq t \}} \mu \{ |f| > s \}\\
&\leq
\lim_{s \searrow \inf \{ s, \lambda_f(s) \leq t \}} t\\
&=
t
\end{align*}
and, Lebesgue measure on $(0,\infty)$ being denoted by $\Leb[1]$,
\begin{align*}
\lambda_{f*}(s_0)
&=
\Leb[1] \{ |f^*| > s_0 \}\\
&=
\sup \{ t : f^*(t) > s_0 \}\\
&=
\sup \{ t : \inf\{ s : \lambda_f(s) \leq t \} > s_0 \}\\
&=
\sup \{ t : \exists\epsilon>0 : \lambda_f(s_0 + \epsilon) > t \}\\
&=
\sup_{\epsilon>0} \lambda_f(s_0 + \epsilon)\\
&=
\lim_{\epsilon \searrow 0} \mu\{ |f| > s_0 + \epsilon \}\\
&=
\mu\{ |f| > s_0 \}\\
&=
\lambda_f(s_0).
\end{align*}
Hence, by definition of the Lebesgue integral,
\begin{multline}
\label{eq:rearrangement-Lp-norm}
||f||_p^{p}
=
\int |\{ |f|^p > t\}| \dif t
=
p \int s^{p-1} |\{ |f| > s\}| \dif s\\
=
p \int s^{p-1} \lambda_f (s) \dif s
=
||f^*||_p^{p}
\end{multline}
for every $1 \leq p < \infty$.

We now return to the problem of calculation of $K(f,t,L^1,L^\infty)$.
\begin{proposition}
For every $f \in L^1 + L^\infty$,
\[
K(f,t,L^1,L^\infty) = \int_0^t f^*.
\]
\end{proposition}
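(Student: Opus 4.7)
The plan is to prove equality by establishing the two opposite inequalities, each using an explicit use of the non-increasing rearrangement.

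For the upper bound $K(f,t,L^1,L^\infty) \leq \int_0^t f^*$, I would exhibit a concrete decomposition adapted to the level $s_0 := f^*(t)$. Setting
\[
f_1 = \min(|f|, s_0)\, \operatorname{sgn} f, \qquad f_0 = f - f_1,
\]
we immediately have $\|f_1\|_\infty \leq s_0$, while $|f_0| = (|f|-s_0)_+$, so by the layer-cake formula $\|f_0\|_1 = \int_{s_0}^\infty \lambda_f(s)\, \dif s$. On the other hand, applying the same layer-cake principle to $f^*$ on $(0,\infty)$ (Lebesgue measure) and using the identity $\lambda_{f^*} = \lambda_f$ shown above,
\[
\int_0^t f^* = \int_0^\infty \min(t, \lambda_f(s))\, \dif s.
\]
The two terms combine cleanly once one observes that for $s < s_0$ the definition of $f^*$ forces $\lambda_f(s) > t$, while for $s \geq s_0$ the already established inequality $\lambda_f(f^*(t)) \leq t$ and monotonicity give $\lambda_f(s) \leq t$. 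Splitting the integral at $s_0$ then yields exactly $t s_0 + \int_{s_0}^\infty \lambda_f(s)\, \dif s \geq t\|f_1\|_\infty + \|f_0\|_1$.

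For the lower bound $\int_0^t f^* \leq K(f,t,L^1,L^\infty)$, I would fix an arbitrary decomposition $f = g+h$ with $g \in L^1$, $h \in L^\infty$ and prove the pointwise rearrangement estimate
\[
f^*(u) \leq g^*(u) + \|h\|_\infty \quad \text{for all } u > 0.
\]
This follows from sub-additivity of the distribution function, $\lambda_f(s+s') \leq \lambda_g(s) + \lambda_h(s')$ (itself a consequence of $\{|f| > s+s'\} \subseteq \{|g|>s\} \cup \{|h|>s'\}$), applied with $s = g^*(u)$ and $s' = \|h\|_\infty$: this gives $\lambda_f(g^*(u)+\|h\|_\infty) \leq u + 0 = u$, whence the claim. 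Integrating over $[0,t]$ and using \eqref{eq:rearrangement-Lp-norm} in the form $\int_0^\infty g^* = \|g\|_1$ yields $\int_0^t f^* \leq \|g\|_1 + t\|h\|_\infty$, and the infimum over decompositions completes the argument.

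The main obstacle is the upper-bound computation: one must track that the specific choice $s_0 = f^*(t)$ turns an \emph{a priori} inequality into equality, which hinges on the two sharp bounds for $\lambda_f$ on either side of $s_0$. The edge cases $\|f\|_\infty \leq s_0$ (where $f_0 = 0$) and $f^*(t) = 0$ (occurring when $\lambda_f(0) \leq t$) fit into the same formula without trouble.
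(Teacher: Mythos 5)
Your proof is correct, and while it arrives at the same optimal decomposition $f = f_0 + f_1$ with cutoff $s_0 = f^*(t)$ that the paper implicitly uses, the route there is genuinely different in both directions. For the upper bound, the paper first proves that for each fixed $a = \|h\|_\infty$ the optimal $h$ is the truncation of $f$ at level $a$ (reducing $K$ to $\inf_{a\geq 0}\|Q_a\circ f\|_1 + ta$), then transfers to $f^*$, then locates the minimizing $a$ by a geometric argument about areas under the graph of $f^*$; you instead verify the bound directly for the single candidate decomposition via layer-cake on both sides, which replaces the paper's Figure~\ref{fig:K-L1-Linf} argument by the two sharp inequalities $\lambda_f(s) > t$ for $s < s_0$ and $\lambda_f(s) \leq t$ for $s \geq s_0$. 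For the lower bound, where the paper continues to work with $\inf_a \|Q_a\circ f^*\|_1 + ta$, you instead invoke the rearrangement subadditivity $f^*(u) \leq g^*(u) + \|h\|_\infty$ (derived from $\lambda_f(s+s') \leq \lambda_g(s) + \lambda_h(s')$), integrate, and take the infimum over decompositions. This is a standard and robust estimate and, unlike the paper's route, keeps the lower bound independent of the fact that optimal $h$'s are truncations. Both arguments are valid and of comparable length; your version sidesteps the ``optimal $h$ is a truncation'' lemma and the graphical optimization, at the cost of introducing the subadditivity fact as an independent ingredient. One small clean-up worth recording: the identity $\int_0^t f^* = \int_0^\infty \min(t,\lambda_f(s))\,\dif s$ relies on $\lambda_{f^*} = \lambda_f$ and on $f^*$ being non-increasing so that $\{u < t : f^*(u)>s\}$ has measure $\min(t,\lambda_{f^*}(s))$; both facts are established in the paper just before this proposition, so you may cite them.
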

\begin{proof}
Observe that $K(f,t,L^1,L^\infty)$ only depends on $|f|$.
Thus we may without loss of generality assume that $f \geq 0$.
Let
\[
Q_a (x) :=
\begin{cases}
x-a, & \text{if } x>a,\\
0, & \text{if } x\leq a.
\end{cases}
\]
Then, for fixed $a$,
\[
|| Q_a \circ f ||_1 = \min_{||h||_\infty = a} || f - h ||_1,
\]
because $|Q_a \circ f| \leq |f-h|$ pointwise a.e.\ for every $h$ with $||h||_\infty = a$.
Therefore
\[
K(f,t,L^1,L^\infty) = \inf_{a \geq 0} || Q_a \circ f ||_1 + ta.
\]
Since $\lambda_{Q_a \circ f}(s) = \lambda_f(s+a)$ we have by definition $(Q_a \circ f)^*(t) = Q_a \circ f^*(t)$.
By (\ref{eq:rearrangement-Lp-norm}) we also have that
\[
|| Q_a \circ f ||_1 = || (Q_a \circ f)^* ||_1 = || Q_a \circ f^* ||_1,
\]
so that
\[
K(f,t,L^1,L^\infty) = \inf_{a \geq 0} || Q_a \circ f^* ||_1 + ta.
\]
The infimum is attained for $a = f^*(t)$.
Indeed, since $f^*$ is non-increasing
\[
|| Q_a \circ f^* ||_1 + ta
=
\int_0^{\sup\{s : f^*(s) > a\}} (f^* - a) + ta
=
\int_0^t f^*
+
\int_t^{\sup\{s : f^*(s) > a\}} (f^* - a).
\]
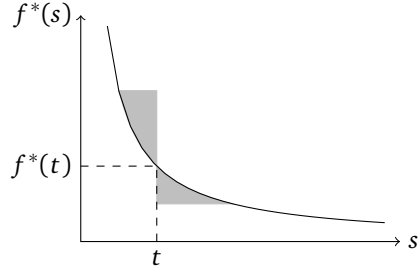
\begin{figure}
\centering
\begin{tikzpicture}
\draw[<->] (0,3) node[left] {$f^*(s)$} -- (0,0) -- (4.2,0) node[right] {$s$};

\draw[fill,color=lightgray] plot[domain=0.5:2] (\x,{1 / \x}) -- (1,.5) -- (1,2) -- cycle;

\draw plot[domain=0.35:4] (\x,{1 / \x});
\draw[style=dashed] (1,0) node[below] {$t$} -- (1,1) -- (0,1) node[left] {$f^*(t)$};
\end{tikzpicture}
\caption{The optimal value for $a$ is $f^*(t)$, for other values $||Q_a \circ f^*||_1 + a$ increases by the area of one the shaded regions}
\label{fig:K-L1-Linf}
\end{figure}
If $a < f^*(t)$ then $\sup\{s : f^*(s) > a\} \geq t$ and the latter integral is non-negative.
Otherwise $\sup\{s : f^*(s) > a\} \leq t$ and the latter integral is non-negative again since the integrand is negative, see Figure~\ref{fig:K-L1-Linf}.
Summarizing, we have that
\[
K(f,t,L^1,L^\infty)
=
K(|f|,t,L^1,L^\infty)
=
|| Q_{f^*(t)} \circ f^* ||_1 + t f^*(t)
=
\int_0^t f^*.
\qedhere
\]
\end{proof}
This proposition provides us with a direct expression for the norm on $L^{p,q}$.
If we are willed to sacrifice the triangle inequality and use a quasinorm, we obtain an even simpler characterization of $L^{p,q}$.
\begin{proposition}
For every $1 < p < \infty$ and $1\leq q \leq \infty$ the quasinorm
\[
|| f ||_{p,q}^* :=
\begin{cases}
\left( \frac{q}{p} \int_0^\infty  ( t^{1/p} f^*(t) )^{q} \frac{\dif t}{t} \right)^{1/q}, & q<\infty,\\
\sup_{t>0} t^{1/p} f^*(t), & q=\infty.
\end{cases}
\]
is equivalent to $|| \cdot ||_{p,q}$.
\end{proposition}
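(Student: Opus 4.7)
The plan is to use the expression $K(f,t,L^{1},L^{\infty})=\int_{0}^{t}f^{*}$ from the previous proposition to translate the $K$-norm into an integral against $f^{*}$, and then obtain the equivalence by one application of Hardy's inequality together with the trivial pointwise bound coming from monotonicity. Since $1/p=1-\theta$, the weight $t^{-\theta}$ appearing in $\|f\|_{p,q}$ becomes $t^{1-1/p}$, so that $t^{-\theta}K(f,t)=t^{-\theta}\int_{0}^{t}f^{*}$ and $t^{1/p}f^{*}(t)=t^{1-\theta}f^{*}(t)$ will be compared directly.

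First I would establish the estimate $\|f\|_{p,q}\lesssim \|f\|_{p,q}^{*}$. For $q<\infty$ this is precisely the content of the Hardy inequality (\ref{ineq:hardy-negative}) with $\lambda=\theta=1-1/p$ applied to the non-negative function $f^{*}$:
\[
\Bigl(\int_{0}^{\infty}\!\bigl(t^{-\theta}\!\int_{0}^{t}f^{*}(s)\,\dif s\bigr)^{q}\frac{\dif t}{t}\Bigr)^{1/q}
\leq \frac{1}{\theta}\Bigl(\int_{0}^{\infty}(t^{1-\theta}f^{*}(t))^{q}\frac{\dif t}{t}\Bigr)^{1/q}.
\]
For $q=\infty$ one uses monotonicity of $f^{*}$ and the elementary integral
\[
\int_{0}^{t}f^{*}(s)\,\dif s\leq \sup_{s>0}(s^{1/p}f^{*}(s))\int_{0}^{t}s^{-1/p}\,\dif s=\frac{p}{p-1}\,t^{\theta}\|f\|_{p,\infty}^{*}.
\]

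For the reverse inequality $\|f\|_{p,q}^{*}\lesssim \|f\|_{p,q}$ I would use that $f^{*}$ is non-increasing, so that for every $t>0$,
\[
K(f,t,L^{1},L^{\infty})=\int_{0}^{t}f^{*}(s)\,\dif s\geq t\,f^{*}(t).
\]
Multiplying by $t^{-\theta}$ gives the pointwise inequality $t^{-\theta}K(f,t)\geq t^{1/p}f^{*}(t)$, which after taking $L^{q}((0,\infty),\dif t/t)$ norms (or a supremum when $q=\infty$) yields
\[
\|f\|_{p,q}^{*}\leq (q/p)^{1/q}\|f\|_{p,q}
\]
with the convention $(q/p)^{1/q}=1$ for $q=\infty$.

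The argument is short and the main technical point is simply lining up the exponents: recognising that the Hardy inequality with parameter $\lambda=\theta=1-1/p$ is tailored exactly to convert the Hardy-Littlewood average $\int_{0}^{t}f^{*}$ back into the pointwise value $t^{1/p}f^{*}(t)$, and conversely that monotonicity of $f^{*}$ gives the opposite comparison for free. No serious obstacle is expected; the only care required is treating $q=\infty$ separately since the Hardy inequalities are only stated for $q<\infty$.
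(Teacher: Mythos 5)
Your proposal is correct and matches the paper's argument essentially exactly: the direction $\|f\|^*_{p,q}\lesssim\|f\|_{p,q}$ comes from the monotonicity bound $tf^*(t)\leq\int_0^t f^*$, and the reverse direction from Hardy's inequality (\ref{ineq:hardy-negative}) with $\lambda=\theta$ for $q<\infty$ (resp.\ the elementary $\int_0^t s^{-1/p}\dif s$ computation for $q=\infty$). You present the two directions in the opposite order from the paper, but the content and constants are the same.
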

\begin{proof}
Consider first the case $q<\infty$.
Since $f^*$ is non-increasing, we have that
\begin{equation}
\label{eq:f-int-f}
t f^*(t) \leq \int_0^t f^*(s) \dif s,
\end{equation}
so that
\begin{multline*}
|| f ||_{p,q}^*
\leq
\left( \frac{q}{p} \int_0^\infty  ( t^{1/p-1} \int_0^t f^*(s) \dif s )^{q} \frac{\dif t}{t} \right)^{1/q}\\
=
\frac{q^{1/q}}{p^{1/q}} \left( \int_0^\infty  ( t^{-\theta} K(f,t,L^1,L^\infty) )^{q} \frac{\dif t}{t} \right)^{1/q}
=
\frac{q^{1/q}}{p^{1/q}} ||f||_{p,q}.
\end{multline*}
For the converse we use the Hardy inequality (\ref{ineq:hardy-negative}) and obtain
\begin{align*}
||f||_{p,q}
&=
\left( \int_0^\infty  \left( t^{-\theta} \int_0^t f^*(s) \dif s \right)^{q} \frac{\dif t}{t} \right)^{1/q}\\
&\leq
\frac{1}{\theta} \left( \int_0^\infty  ( t^{1-\theta} f^*(t) )^{q} \frac{\dif t}{t} \right)^{1/q}\\
&=
\frac{1}{\theta} \frac{q^{-1/q}}{p^{-1/q}} ||f||_{p,q}^*.
\end{align*}
Now consider the case $q=\infty$.
By (\ref{eq:f-int-f}) we have that
\[
||f||_{p,\infty}^*
=
\sup_t t^{1/p} f^*(t)
\leq
\sup_t t^{1/p-1} \int_0^t f^*(s) \dif s
=
||f||_{p,\infty}.
\]
For the converse observe that
\[
t^{1/p-1} \int_0^t f^*(s) \dif s
\leq
t^{1/p-1} \left( \int_0^t s^{-1/p} \dif s \right) \left( \sup_s s^{1/p} f^*(s) \right)
=
p' ||f||_{p,\infty}^*.
\]
Taking the supremum over $t$ yields the claim.
\end{proof}
\index{Lorentz space}
The latter quasinorm $|| \cdot ||_{p,q}^*$ also defines non-trivial spaces for $p=1$.
They are called \emph{Lorentz spaces} as well and denoted by $L^{1,q}$.
Furthermore, conventionally $L^{\infty,\infty} = L^\infty$.

With these definitions and by (\ref{eq:rearrangement-Lp-norm}) we have that $L^{p,p} = L^p$ with equal norms for every $1\leq p \leq \infty$.

Next we are going to establish the inclusion relations between the Lorentz spaces which are valid independently of the underlying measure space.

\begin{lemma}
\label{lemma:non-incr-sup-estimate}
Let $f : (0,\infty) \to (0,\infty)$ be a non-increasing function, $1 \leq p \leq q \leq \infty$, $a>0$ and $g(r) = f(r) r^{a}$.
Then $||g||_{q} \leq C_{p,q,a} ||g||_p$, where the norms are taken with respect to the dilation-invariant measure $\dif r/r$.
\end{lemma}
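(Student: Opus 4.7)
The plan is to first establish a pointwise bound of $g$ in terms of $\|g\|_p$ (i.e., the case $q=\infty$), and then interpolate. The key observation is that the non-increasing property of $f$ gives a one-sided comparison for $g$: if $s \leq r$, then $f(s) \geq f(r)$, so
\[
g(s) = f(s) s^a \geq f(r) s^a = g(r) (s/r)^a.
\]
In particular, $g(s)$ cannot drop too quickly as $s$ decreases from $r$, which forces the $L^p(dr/r)$-mass near any point $r$ where $g(r)$ is large to be sizeable.

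Quantitatively, I would integrate the $p$-th power of the above inequality over $s \in (0,r)$ against $ds/s$. Since the measure $ds/s$ is dilation invariant, the integral $\int_0^r (s/r)^{ap} \, ds/s$ is a finite constant depending only on $a$ and $p$ (namely $1/(ap)$). Hence
\[
\|g\|_p^p \;\geq\; \int_0^r g(s)^p \frac{\dif s}{s} \;\geq\; g(r)^p \int_0^r (s/r)^{ap} \frac{\dif s}{s} \;=\; \frac{g(r)^p}{ap},
\]
which gives the pointwise estimate $g(r) \leq (ap)^{1/p} \|g\|_p$ uniformly in $r$. This handles the endpoint $q = \infty$.

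For $p < q < \infty$, I would then split $g(r)^q = g(r)^{q-p} \cdot g(r)^p$ and apply the just-obtained $L^\infty$-bound on the first factor:
\[
\|g\|_q^q \;=\; \int_0^\infty g(r)^{q-p} g(r)^p \frac{\dif r}{r} \;\leq\; \|g\|_\infty^{q-p} \|g\|_p^p \;\leq\; (ap)^{(q-p)/p} \|g\|_p^q.
\]
Taking $q$-th roots yields $\|g\|_q \leq (ap)^{(q-p)/(pq)} \|g\|_p$, which is the claim with an explicit constant. The case $p=q$ is trivial, and the case $q=\infty$ was already established. I do not anticipate a real obstacle; the only substantive idea is exploiting the monotonicity of $f$ to convert an upper bound at $r$ into a lower bound at nearby smaller points, which is what makes the $L^p \to L^\infty$ step work despite $g$ itself not being monotone.
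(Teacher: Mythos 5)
Your proof is correct and follows essentially the same route as the paper: the paper also reduces to the $q=\infty$ case (via log-convexity of $1/q\mapsto\|g\|_q$, which your splitting $g^q=g^{q-p}\cdot g^p$ reproves directly) and obtains the pointwise bound $g(r_0)\leq(ap)^{1/p}\|g\|_p$ by integrating the inequality $g(r)\geq g(r_0)(r/r_0)^a$ for $r<r_0$ against $\dif r/r$.
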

\begin{proof}
By $\log$-convexity of the function $1/q \mapsto ||g||_q$, it is sufficient to estimate $||g||_\infty$ in terms of $||g||_p$ since then, if $1/q = (1-\theta)/p$, it follows that
\[
||g||_{q}
\leq
||g||_p^{1-\theta} ||g||_\infty^\theta
\leq C ||g||_p.
\]
For every $r_0 \in (0,\infty)$ we have that $f(r_0) = g(r_0) r_0^{-a}$, and the monotonicity of $f$ implies
\begin{align*}
f(r) &\geq g(r_0) r_0^{-a} & \text{for all } r<r_0,\\
g(r) &\geq g(r_0) (r/r_0)^{a} & \text{for all } r<r_0.
\end{align*}
Inserting this into the definition of the $L^p$ norm yields
\begin{multline*}
||g||_p
\geq
\left( \int_0^{r_0} g(r)^p \frac{\dif r}{r} \right)^{\frac1p}
\geq
g(r_0) \left( \int_0^{r_0} \left(\frac{r}{r_0}\right)^{ap} \frac{\dif r}{r} \right)^{\frac1p}\\
=
g(r_0) \left( \int_0^1 s^{ap} \frac{\dif s}{s} \right)^{\frac1p}
= (ap)^{-1/p} g(r_0).
\qedhere
\end{multline*}
\end{proof}
Since $f^*$ is non-increasing and by the $||\cdot||_{p,q}^*$-characterization of $L^{p,q}$ we obtain
\begin{corollary}
\label{cor:lorentz-scale}
Let $1 \leq p<\infty$ and $1 \leq q_0 \leq q_1 \leq \infty$.
Then
\[
L^{p, q_0} \subset L^{p, q_1}
\]
with continuous inclusion.
\end{corollary}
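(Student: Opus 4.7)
The plan is to reduce the inclusion directly to Lemma~\ref{lemma:non-incr-sup-estimate} via the $\|\cdot\|_{p,q}^*$ characterization of the Lorentz spaces given in the previous proposition. Since that quasinorm is equivalent to $\|\cdot\|_{p,q}$ (for $1<p<\infty$) and is by definition the one used when $p=1$, it suffices to establish a bound of the form $\|f\|_{p,q_1}^* \lesssim \|f\|_{p,q_0}^*$.

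Concretely, I would set $g(r) = r^{1/p} f^*(r)$. Because $f^*$ is non-increasing and $a = 1/p > 0$, the hypotheses of Lemma~\ref{lemma:non-incr-sup-estimate} are met with exponents $p \leftrightarrow q_0$ and $q \leftrightarrow q_1$, so the lemma supplies a constant $C = C_{q_0, q_1, 1/p}$ such that
\[
\left( \int_0^\infty g(r)^{q_1} \frac{\dif r}{r} \right)^{1/q_1}
\leq
C \left( \int_0^\infty g(r)^{q_0} \frac{\dif r}{r} \right)^{1/q_0},
\]
with the obvious modification when $q_1 = \infty$. Multiplying by the normalizing factors $(q_i/p)^{1/q_i}$ appearing in $\|\cdot\|_{p,q_i}^*$ translates this verbatim into $\|f\|_{p, q_1}^* \leq C' \|f\|_{p, q_0}^*$.

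For $p=1$ this is already the desired continuous inclusion by definition of $L^{1,q}$. For $1 < p < \infty$ it combines with the equivalence $\|\cdot\|_{p,q} \sim \|\cdot\|_{p,q}^*$ proved in the previous proposition to yield $\|f\|_{p,q_1} \lesssim \|f\|_{p,q_0}$, hence the continuous embedding $L^{p,q_0} \hookrightarrow L^{p,q_1}$. There is no real obstacle here; the only point requiring a little care is the bookkeeping of the constants $q_i/p$ and separately treating the endpoint case $q_1 = \infty$, which is where the supremum version of both the lemma and the quasinorm is used.
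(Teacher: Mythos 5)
Your proof is correct and matches the paper's approach exactly: the paper presents the corollary as an immediate consequence of Lemma~\ref{lemma:non-incr-sup-estimate} applied to $g(r)=r^{1/p}f^*(r)$, combined with the $\|\cdot\|_{p,q}^*$-characterization of the Lorentz norms. Your bookkeeping of the normalization constants and the separate treatment of $q_1=\infty$ and $p=1$ is the same routine verification the paper leaves implicit.
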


\begin{theorem}
\index{real interpolation!between Lorentz spaces}
\label{th:lorentz-interpolation}
Let $1 \leq p_0 < p_1 < \infty$ and $1 \leq q_0, q_1 \leq \infty$ or $1 \leq p_0 < p_1 = \infty$ and $1 \leq q_0 \leq \infty$, $q_1 = \infty$.
If
\[
\frac{1}{p} = \frac{1-\theta}{p_0} + \frac{\theta}{p_1},
\]
where $0 < \theta < 1$, then
\[
[ L^{p_0, q_0}, L^{p_1, q_1} ]_{\theta, q} = L^{p,q}.
\]
\end{theorem}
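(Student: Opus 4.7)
The natural strategy is to realize each Lorentz space $L^{p_i,q_i}$ as an intermediate space between $L^1$ and $L^\infty$ and then invoke the reiteration theorem (Theorem~\ref{th:real-reiteration}) with $X_0=L^1$, $X_1=L^\infty$, $Y_i=L^{p_i,q_i}$. This will reduce the computation to a single application of a result we already have, and the index bookkeeping will fall out of a short calculation.

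More precisely, for $1<p_i<\infty$ the definition of the Lorentz space gives $L^{p_i,q_i}=[L^1,L^\infty]_{\theta_i,q_i}$ with $1/p_i=1-\theta_i$, and Corollary~\ref{cor:lorentz-scale} yields the continuous inclusions
\[
[L^1,L^\infty]_{\theta_i,1}=L^{p_i,1}\subseteq L^{p_i,q_i}\subseteq L^{p_i,\infty}=[L^1,L^\infty]_{\theta_i,\infty},
\]
which are precisely the hypotheses the reiteration theorem demands. Applying it with interpolation parameter $\eta=\theta$ produces
\[
[L^{p_0,q_0},L^{p_1,q_1}]_{\theta,q} = [L^1,L^\infty]_{\theta^*,q}, \quad \theta^* = (1-\theta)\theta_0+\theta\theta_1,
\]
and then the identity $1-\theta^*=(1-\theta)(1-\theta_0)+\theta(1-\theta_1)=(1-\theta)/p_0+\theta/p_1=1/p$ identifies the right-hand side with $L^{p,q}$, again by the defining equation of the Lorentz norm.

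It remains to dispose of the boundary values of $p_0$ and $p_1$ allowed in the statement. If $p_1=\infty$ (forcing $q_1=\infty$), then $L^{\infty,\infty}=L^\infty=X_1$ and $\theta_1=1$, which is exactly the endpoint case of the reiteration theorem. Similarly, $p_0=1$ with $q_0=1$ gives $L^{1,1}=L^1=X_0$, $\theta_0=0$, also covered by the endpoint case. I expect the main obstacle to be the remaining case $p_0=1,\ q_0>1$: here $L^{1,q_0}$ is only defined through the quasinorm $\|\cdot\|_{p,q}^*$ and is \emph{not} an interpolation space between $L^1$ and $L^\infty$, so the present form of Theorem~\ref{th:real-reiteration} does not directly apply. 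To handle it, my plan is to either (i) compute $K(f,t,L^{1,q_0},L^{p_1,q_1})$ directly in terms of the non-increasing rearrangement $f^*$, mimicking the proof of $K(f,t,L^1,L^\infty)=\int_0^t f^*$, or (ii) defer to the companion reiteration theorem for the diagonal case $\theta_0=\theta_1$ and view $L^{1,q_0}$ as an intermediate space between $L^{1,1}=L^1$ and $L^{1,\infty}$, then combine with the generic case already established. Either route should lower the boundary case to an application of the Hardy inequalities analogous to the ones used in the equivalence theorem.
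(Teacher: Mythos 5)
Your core strategy---invoke Theorem~\ref{th:real-reiteration} with base couple $(L^1,L^\infty)$ and verify the sandwiching hypotheses via Corollary~\ref{cor:lorentz-scale}---is exactly what the paper does, and you correctly isolate the one genuine obstruction: when $p_0=1$ and $q_0>1$, the space $L^{1,q_0}$ is defined only through the quasinorm and is \emph{not} of the form $[L^1,L^\infty]_{\theta_0,q_0}$ for any $\theta_0$ (those spaces are $L^{1/(1-\theta_0),q_0}$ with $1/(1-\theta_0)>1$, or trivial at $\theta_0=0$), so the reiteration theorem does not apply as stated with $Y_0=L^{1,q_0}$.

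Neither of your proposed repairs quite closes the gap, though. Route (ii) fails at the outset: the diagonal reiteration theorem needs $L^{1,q_0}$ to be exhibited as $[X_0,X_1]_{\theta,q_0}$ for some couple and some $\theta\in(0,1)$, and neither $(L^1,L^\infty)$ nor $(L^1,L^{1,\infty})$ furnishes such a representation---the spaces $L^{1,r}$ sit off the $[L^1,L^\infty]$ interpolation scale precisely because they live at $\theta=0$, which the theorem excludes. Route (i) is in the right spirit but overshoots: a full computation of $K(f,t,L^{1,q_0},L^{p_1,q_1})$ is more than is needed. The paper's fix replaces it with a single one-sided estimate relative to a simpler couple. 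Namely, one proves $[L^{1,\infty},L^\infty]_{\theta,q}=[L^1,L^\infty]_{\theta,q}$. The inclusion $\supseteq$ is immediate from $L^1\subseteq L^{1,\infty}$. For $\subseteq$, if $f=g+h$ with $g\in L^{1,\infty}$, $h\in L^\infty$, then
\[
f^*(s)\leq g^*(s)+||h||_\infty\leq \frac{||g||_{1,\infty}+s\,||h||_\infty}{s},
\]
so taking the infimum over decompositions gives $f^*(s)\leq K(f,s,L^{1,\infty},L^\infty)/s$. Inserting this into $K(f,t,L^1,L^\infty)=\int_0^t f^*$ and applying Hardy's inequality~(\ref{ineq:hardy-negative}) yields $||f||_{\theta,q;K,L^1,L^\infty}\leq\theta\inv||f||_{\theta,q;K,L^{1,\infty},L^\infty}$, which is the nontrivial half. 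By the monotonicity of the $K$-functional under $L^1\subseteq L^{1,q_0}\subseteq L^{1,\infty}$, the identity then holds with $L^{1,q_0}$ in place of $L^{1,\infty}$ for every $1\leq q_0\leq\infty$. Finally run Theorem~\ref{th:real-reiteration} in the base couple $(L^{1,q_0},L^\infty)$, taking $\theta_0=0$ when $p_0=1$; the identity pins down $[L^{1,q_0},L^\infty]_{\theta^*,q}=L^{1/(1-\theta^*),q}$ and the index arithmetic is exactly as in your main case.
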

\begin{proof}
Assume for the moment that
\begin{equation}
\label{eq:interpolation-weak-L1-Linfty}
[L^{1,\infty},L^\infty]_{\theta,q} = [L^{1},L^\infty]_{\theta,q} \text{ for every } \theta \text{ and } q.
\end{equation}
Then also $[L^{1,r},L^\infty]_{\theta,q} = [L^{1},L^\infty]_{\theta,q}$ for every $1\leq r\leq\infty$ and we can conclude by Corollary~\ref{cor:lorentz-scale} and the Reiteration Theorem~\ref{th:real-reiteration}.

We now prove (\ref{eq:interpolation-weak-L1-Linfty}).
The inclusion ``$\supseteq$'' is clear because $L^{1,\infty} \supseteq L^1$ by Corollary~\ref{cor:lorentz-scale}.
To obtain the converse recall that $K(f,t,L^1,L^\infty) = \int_{s=0}^t f^*(s) \dif s$.
Furthermore, whenever $f = g+h$ with $g \in L^{1,\infty}$, $h \in L^\infty$,
\[
f^*(s) \leq g^*(s) + ||h||_\infty \leq (||g||_{1,\infty} + s ||h||_\infty)/s.
\]
Taking the infimum over all decompositions $f=g+h$ we find that
\[
f^*(s) \leq K(f,s,L^{1,\infty}, L^\infty)/s.
\]
Therefore, by definition of the $(\theta,q;K,L^1,L^\infty)$-norm and the Hardy inequality (\ref{ineq:hardy-negative}),
\begin{align*}
||f||_{\theta,q;K,L^1,L^\infty}
&\leq
\left( \int_0^\infty \left(t^{-\theta} \int_0^t s\inv K(f,s,L^{1,\infty}, L^\infty) \dif s \right)^q \frac{\dif t}{t} \right)^{1/q}\\
&\leq
\frac1\theta
\left( \int_0^\infty \left(t^{1-\theta} t\inv K(f,t,L^{1,\infty}, L^\infty) \right)^q \frac{\dif t}{t} \right)^{1/q}\\
&=
\frac1\theta
||f||_{\theta,q;K,L^{1,\infty},L^\infty}.
\qedhere
\end{align*}
\end{proof}

\section{The reiteration theorem, case \texorpdfstring{$\theta_{0}=\theta_{1}$}{\unichar{"03B8}\unichar{"2080}=\unichar{"03B8}\unichar{"2081}}}
To describe the interpolation spaces between Lorentz spaces with $p_0 = p_1$ we need a second reiteration theorem.
The main ingredient in the proof is going to be the already established reiteration theorem~\ref{th:real-reiteration}.

Recall the definition (\ref{eq:lambda-norm}) of the sequence space  $\lambda^{\theta, q}$.
It is clear that $(a_\nu)_\nu \in \lambda^{\theta, q}$ if and only $(2^{-\theta \nu} a_\nu)_\nu \in \ell^q$.
Since for all $1 \leq q_0 < q_1 \leq \infty$ and $0 < \eta < 1$ we have that $[\ell^{q_0}, \ell^{q_1}]_{\eta, q} = \ell^{q,q} = \ell^q$ with $1/q = (1-\eta)/q_0 + \eta/q_1$ by Theorem~\ref{th:lorentz-interpolation}, we also have that
\begin{equation}
\label{eq:fine-real-interpolation-of-sequences}
[\lambda^{\theta, q_0}, \lambda^{\theta, q_1}]_{\eta, q} = \lambda^{\theta, q}
\end{equation}
under the same assumptions.
Together with the dyadic characterizations of the real interpolation spaces this yields the following reiteration theorem.

\begin{theorem}
\index{real interpolation!reiteration theorem!2@$\theta_0 = \theta_1$}
Let $0 < \theta < 1$, $1 \leq q_0, q_1 \leq \infty$, $0 < \eta < 1$ and $1/q = (1-\eta)/q_0 + \eta/q_1$.
Then
\[
[[X_0, X_1]_{\theta, q_0}, [X_0, X_1]_{\theta, q_1}]_{\eta,q} = [X_0, X_1]_{\theta, q}.
\]
\end{theorem}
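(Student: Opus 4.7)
The plan is to reduce the statement to the sequence-space identity (\ref{eq:fine-real-interpolation-of-sequences}), $[\lambda^{\theta,q_0},\lambda^{\theta,q_1}]_{\eta,q}=\lambda^{\theta,q}$, via the dyadic characterizations of the $K$- and $J$-norms. Write $Y_j:=[X_0,X_1]_{\theta,q_j}$, so that $\|g\|_{Y_j}$ is equivalent to $\|(K(g,2^\nu,X_0,X_1))_\nu\|_{\lambda^{\theta,q_j}}$ for every $g\in Y_j$. I would treat the two inclusions separately, using the $K$-description in one direction and the $J$-description in the other.

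For $[Y_0,Y_1]_{\eta,q}\subseteq[X_0,X_1]_{\theta,q}$, fix $f\in Y_0+Y_1$ and any decomposition $f=g+h$ with $g\in Y_0$, $h\in Y_1$. Subadditivity of the $K$-functional in its first argument yields the pointwise bound $K(f,2^\nu,X_0,X_1)\leq K(g,2^\nu)+K(h,2^\nu)$, so the sequence $\alpha_\nu:=K(f,2^\nu,X_0,X_1)$ splits as $\alpha_\nu=\beta_\nu+\gamma_\nu$ with $\beta_\nu:=\min(\alpha_\nu,K(g,2^\nu))\in[0,K(g,2^\nu)]$ and $\gamma_\nu:=\alpha_\nu-\beta_\nu\in[0,K(h,2^\nu)]$. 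This gives $K((\alpha_\nu),s,\lambda^{\theta,q_0},\lambda^{\theta,q_1})\leq\|g\|_{Y_0}+s\|h\|_{Y_1}$; taking the infimum over decompositions of $f$, then the $(\eta,q;K)$-norm, and invoking (\ref{eq:fine-real-interpolation-of-sequences}) produces $\|f\|_{\theta,q;K,X_0,X_1}\leq C\|f\|_{\eta,q;K,Y_0,Y_1}$.

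For the reverse inclusion, given $f\in[X_0,X_1]_{\theta,q}$, the Fundamental Lemma~\ref{lem:fundamentalJK} supplies a decomposition $f=\sum_\nu f_\nu$ with $f_\nu\in X_0\cap X_1$ and $\alpha_\nu:=J(f_\nu,2^\nu,X_0,X_1)$ belonging to $\lambda^{\theta,q}$ with norm comparable to $\|f\|_{\theta,q;K,X_0,X_1}$. By (\ref{eq:fine-real-interpolation-of-sequences}), for each $s>0$ one can split $\alpha=\beta+\gamma$ almost realizing $K(\alpha,s,\lambda^{\theta,q_0},\lambda^{\theta,q_1})$; truncating entrywise to $[0,\alpha_\nu]$, which can only decrease both $\lambda^{\theta,q_j}$-norms, one may further assume $0\leq\beta_\nu,\gamma_\nu\leq\alpha_\nu$. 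I would then lift this splitting to $X_0\cap X_1$ by $g_\nu:=(\beta_\nu/\alpha_\nu)f_\nu$, $h_\nu:=(\gamma_\nu/\alpha_\nu)f_\nu$, with the convention $0/0:=0$ (noting that $\alpha_\nu=0$ forces $f_\nu=0$). Positive homogeneity of $J$ in its first argument yields $J(g_\nu,2^\nu)=\beta_\nu$ and $J(h_\nu,2^\nu)=\gamma_\nu$, so by the dyadic $J$-characterization of $Y_j$ the series $g:=\sum_\nu g_\nu$ and $h:=\sum_\nu h_\nu$ converge in $X_0+X_1$ to elements of $Y_0$ and $Y_1$ with $\|g\|_{Y_0}\leq\|\beta\|_{\lambda^{\theta,q_0}}$ and $\|h\|_{Y_1}\leq\|\gamma\|_{\lambda^{\theta,q_1}}$. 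Hence $K(f,s,Y_0,Y_1)\leq C\,K(\alpha,s,\lambda^{\theta,q_0},\lambda^{\theta,q_1})$, and another application of (\ref{eq:fine-real-interpolation-of-sequences}) delivers $\|f\|_{\eta,q;K,Y_0,Y_1}\leq C\|f\|_{\theta,q;K,X_0,X_1}$.

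The main obstacle is the lifting performed in the second step: one must verify both that the entrywise truncation of the optimal sequence-space decomposition is compatible with the sign and cannot increase the relevant $\lambda^{\theta,q_j}$-norms, and that the lifted series $\sum_\nu g_\nu$ and $\sum_\nu h_\nu$ genuinely realize the $J$-decompositions required to place $g$ and $h$ into $Y_0$ and $Y_1$. The edge cases $q_j=\infty$ are handled by replacing the integrals in the $K$- and $J$-norms by suprema and repeating the same manipulations verbatim.
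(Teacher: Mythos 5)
Your proof is correct and follows essentially the same strategy as the paper: both reduce the statement to the sequence-space identity $[\lambda^{\theta,q_0},\lambda^{\theta,q_1}]_{\eta,q}=\lambda^{\theta,q}$ via the dyadic $K$- and $J$-characterizations of the inner and outer interpolation spaces. The bookkeeping differs slightly—the paper uses the $J$-description of $[Y_0,Y_1]_{\eta,q}$ directly in the first inclusion and, in the second, takes an infimum over all $J$-decompositions of $f$ together with all nonnegative splittings of $J(u_\nu,2^\nu)$, which renders your entrywise min-splitting and the truncation-then-lift step unnecessary—but these are variations in implementation, not in the underlying idea.
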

\begin{proof}
Let $f \in [[X_0, X_1]_{\theta, q_0}, [X_0, X_1]_{\theta, q_1}]_{\eta,q}$.
Then
\begin{align*}
||f||_{\theta, q; K, X_0, X_1}
&=
|| (K(f,2^\nu, X_0, X_1))_\nu ||_{\theta, q}\\
&\leq
C \inf_{f = \sum_\mu u_\mu} || ( \sum_\mu K(u_\mu,2^\nu, X_0, X_1))_\nu ||_{\eta, q; J, \lambda^{\theta, q_0}, \lambda^{\theta, q_1}}\\
\intertext{because of (\ref{eq:fine-real-interpolation-of-sequences}) and by subadditivity of $K$ in the $f$ argument}
&\leq
C \inf_{f = \sum_\mu u_\mu} || (J( (K(u_\mu,2^\nu, X_0, X_1))_\nu, 2^\mu,  \lambda^{\theta, q_0}, \lambda^{\theta, q_1}) )_\mu ||_{\eta, q}\\
\intertext{by definition of $(\eta, q; J, \lambda^{\theta, q_0}, \lambda^{\theta, q_1})$-norm}
&=
C \inf_{f = \sum_\mu u_\mu} || (J( u_\mu, 2^\mu, [X_0, X_1]_{\theta, q_0}, [X_0, X_1]_{\theta, q_1}) )_\mu ||_{\eta, q}\\
&=
C || f ||_{\eta, q; J, [X_0, X_1]_{\theta, q_0}, [X_0, X_1]_{\theta, q_1}}.
\end{align*}

For the converse suppose that $f \in [X_0, X_1]_{\theta, q}$.
Then we have that
\begin{multline*}
K(f,t, [X_0,X_1]_{\theta,q_0}, [X_0,X_1]_{\theta,q_1})\\
\leq
\inf_{f= \sum_\nu u_\nu; J(u_\nu, 2^\nu, X_0, X_1) = a_{0\nu} + a_{1\nu}}
|| \sum_\nu \frac{a_{0\nu}}{a_{0\nu} + a_{1\nu}} u_\nu ||_{\theta, q_0; J, X_0, X_1}\\
+
t || \sum_\nu \frac{a_{1\nu}}{a_{0\nu} + a_{1\nu}} u_\nu ||_{\theta, q_1; J, X_0, X_1}
\end{multline*}
by definition of $K$ and because $\sum_\nu \frac{a_{0\nu}}{a_{0\nu} + a_{1\nu}} u_\nu + \sum_\nu \frac{a_{1\nu}}{a_{0\nu} + a_{1\nu}} u_\nu = f$,
\[
\dots\leq
\inf || (J(\frac{a_{0\nu}}{a_{0\nu} + a_{1\nu}} u_\nu, 2^\nu, X_0, X_1))_\nu ||_{\theta, q_0}
+ t || (J(\frac{a_{1\nu}}{a_{0\nu} + a_{1\nu}} u_\nu, 2^\nu, X_0, X_1))_\nu ||_{\theta, q_1}
\]
by definition of the $(\theta, q_j; J, X_0, X_1)$-norm,
\begin{align*}
\dots &=
\inf || (a_{0\nu} )_\nu ||_{\theta, q_0}
+ t || (a_{1\nu})_\nu ||_{\theta, q_1}\\
&=
\inf_{f= \sum_\nu u_\nu}
K( (J(u_\nu, 2^\nu, X_0, X_1))_\nu, t, \lambda^{\theta, q_0}, \lambda^{\theta, q_1} ).
\end{align*}
Inserting this result into the definition of the $(\eta, q; K, [X_0, X_1]_{\theta, q_0}, [X_0, X_1]_{\theta, q_1})$-norm we obtain
\begin{align*}
|| f ||&_{\eta, q; K, [X_0, X_1]_{\theta, q_0}, [X_0, X_1]_{\theta, q_1}}\\
&=
|| (K( f, 2^\mu, [X_0, X_1]_{\theta, q_0}, [X_0, X_1]_{\theta, q_1}) )_\mu ||_{\eta, q}\\
&\leq
C || ( \inf_{f = \sum_\nu u_\nu} K( (J(u_\nu,2^\nu, X_0, X_1))_\nu, 2^\mu,  \lambda^{\theta, q_0}, \lambda^{\theta, q_1}) )_\mu ||_{\eta, q}\\
&\leq
C \inf_{f = \sum_\nu u_\nu} || (J(u_\nu,2^\nu, X_0, X_1))_\nu ||_{\eta, q; K, \lambda^{\theta, q_0}, \lambda^{\theta, q_1}}\\
&=
C \inf_{f = \sum_\nu u_\nu} || (J(u_\nu,2^\nu, X_0, X_1))_\nu ||_{\theta, q}\\
&=
C || f ||_{\theta, q; J, X_0, X_1}.
\qedhere
\end{align*}
\end{proof}
\begin{corollary}
\label{cor:lorentz-interpolation-fine}
Let $1 < p < \infty$ and $1 \leq q_0 < q < q_1 \leq \infty$.
Then
\[
[ L^{p, q_0}, L^{p, q_1} ]_{\eta, q} = L^{p,q},
\]
where $\eta$ is given by
\[
\frac1q = \frac{1-\eta}{q_0} + \frac{\eta}{q_1}.
\]
\end{corollary}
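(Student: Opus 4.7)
The plan is to recognize that this corollary is essentially an immediate application of the second reiteration theorem just proved, combined with the definition of Lorentz spaces.

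First, I would unfold the definition of $L^{p,q_j}$. Since $1 < p < \infty$, by the definition of the Lorentz space (and using Theorem~\ref{th:lorentz-interpolation} to make sense of $L^{p,q_j}$ in its equivalent form), we have
\[
L^{p,q_0} = [L^1, L^\infty]_{\theta, q_0}, \qquad L^{p,q_1} = [L^1, L^\infty]_{\theta, q_1},
\]
where $\theta \in (0,1)$ is determined by $1/p = 1 - \theta$. Crucially, both Lorentz spaces are real interpolation spaces between the \emph{same} couple $(L^1, L^\infty)$ at the \emph{same} parameter $\theta$, only the secondary parameter $q_j$ differs.

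Next, I would apply the preceding reiteration theorem (the case $\theta_0 = \theta_1$) with $X_0 = L^1$, $X_1 = L^\infty$, the common parameter $\theta$ as above, and the given $\eta$, $q_0$, $q_1$, $q$. This immediately yields
\[
[L^{p,q_0}, L^{p,q_1}]_{\eta,q} = [[L^1, L^\infty]_{\theta, q_0}, [L^1, L^\infty]_{\theta, q_1}]_{\eta,q} = [L^1, L^\infty]_{\theta, q} = L^{p,q},
\]
with equivalence of norms, which is the claim.

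There is essentially no obstacle here, since all the heavy lifting was done in the second reiteration theorem; this corollary is just that theorem read off in the specific case of interpolation between $L^1$ and $L^\infty$. The only minor point worth checking is that the convention $L^{p,q_j}$ for $q_j = \infty$ agrees with the real interpolation space $[L^1, L^\infty]_{\theta, \infty}$, which is built into the definition given earlier in the section.
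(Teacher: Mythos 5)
Your proposal is correct and matches the paper's intended derivation: the corollary is indeed an immediate consequence of the second reiteration theorem applied to $L^{p,q_j} = [L^1,L^\infty]_{\theta,q_j}$ with $1/p = 1-\theta$. The only caveat worth noting is that for $q_1 = \infty$ the space $L^{p,\infty}$ is, by the paper's definition, exactly $[L^1,L^\infty]_{\theta,\infty}$, so the identification you flag at the end holds by fiat.
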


\section{The Hardy-Littlewood maximal function}
Let $f$ be a measurable function on $\R^n$.
The (non-centered) \emph{Hardy-Littlewood maximal function} is defined by
\index{Hardy-Littlewood maximal!function}
\[
Mf(x) = \sup_{Q:x\in Q} \fint_Q |f|,
\]
the supremum being taken over all open cubes containing $x$ with edges parallel to the axes.
By the monotone convergence theorem we may restrict the above supremum to cubes with rational coordinates, so that $Mf$ is always measurable.
Clearly, $M$ is subadditive and
\[
||Mf||_\infty \leq ||f||_\infty.
\]
In this section we will obtain a Lorentz space estimate for $Mf$, $f\in L^1$ and conclude that $M$ is continuous on $L^p$, $p>1$.

We need an alternative characterization of $L^{p, \infty}$ in terms of the distribution function first.
\begin{proposition}
\label{prop:Lpinfty-lambda}
For a measurable function $f$
\[
\sup_t t^{1/p} f^*(t) = \sup_s s \lambda_f(s)^{1/p}.
\]
In particular, $f \mapsto \sup_s s \lambda_f(s)^{1/p}$ is an equivalent norm on $L^{p,\infty}$.
\end{proposition}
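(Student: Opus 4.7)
The plan is to exploit the fact that $f^{*}$ and $\lambda_{f}$ are generalized inverses of each other, so the statement is essentially a change of variables between a "horizontal" and a "vertical" way of measuring the graph of $|f|$. The bidirectional inequality
\[
\lambda_{f}(s) > t \iff s < f^{*}(t)
\]
(valid because both $f^{*}$ and $\lambda_{f}$ are non-increasing and by the infimum definition of $f^{*}$) is what drives both directions.

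For the inequality $\sup_{s} s\,\lambda_{f}(s)^{1/p} \geq \sup_{t} t^{1/p} f^{*}(t)$, I would fix $t>0$ and pick any $s < f^{*}(t)$. Since $s$ cannot lie in the set $\{s' : \lambda_{f}(s') \leq t\}$ whose infimum is $f^{*}(t)$, we must have $\lambda_{f}(s) > t$, so $s\,\lambda_{f}(s)^{1/p} > s\, t^{1/p}$. Taking $s \nearrow f^{*}(t)$ gives $\sup_{s} s\,\lambda_{f}(s)^{1/p} \geq f^{*}(t)\, t^{1/p}$, and then taking the supremum over $t$ yields the claim.

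For the reverse inequality, I would fix $s>0$ and pick any $t < \lambda_{f}(s)$. Then for every $s' \leq s$, monotonicity of $\lambda_{f}$ gives $\lambda_{f}(s') \geq \lambda_{f}(s) > t$, so no such $s'$ lies in $\{s'' : \lambda_{f}(s'') \leq t\}$, whence $f^{*}(t) \geq s$. Therefore $t^{1/p} f^{*}(t) \geq t^{1/p} s$, and letting $t \nearrow \lambda_{f}(s)$ followed by taking suprema yields $\sup_{t} t^{1/p} f^{*}(t) \geq s\,\lambda_{f}(s)^{1/p}$.

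I do not anticipate a real obstacle; the only point that requires care is the handling of strict versus non-strict inequalities when passing to the limit in $s \nearrow f^{*}(t)$ or $t \nearrow \lambda_{f}(s)$, but this is resolved cleanly by the fact that supremum is preserved under left-limits of monotone functions. The ``in particular'' assertion that $f \mapsto \sup_{s} s\,\lambda_{f}(s)^{1/p}$ is an equivalent norm on $L^{p,\infty}$ is then immediate from the identity just proved combined with the definition of $\|\cdot\|_{p,\infty}^{*}$ recalled in the preceding proposition.
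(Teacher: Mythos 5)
Your proof is correct and is essentially the same approach as the paper's: both exploit the quasi-inverse relationship between $f^{*}$ and $\lambda_{f}$ and the monotonicity/definitions to pass between the two suprema. Your packaging via the implications $s < f^{*}(t) \Rightarrow \lambda_{f}(s) > t$ and $t < \lambda_{f}(s) \Rightarrow f^{*}(t) \geq s$ is a cleaner and more symmetric way to express what the paper does by introducing the constant $C$ and verifying the two pointwise power-law bounds; there is no substantive difference.

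One small remark: the biconditional $\lambda_{f}(s) > t \iff s < f^{*}(t)$ as stated needs the right-continuity of $\lambda_{f}$ (equivalently the fact, established earlier in the paper, that $\lambda_{f}(f^{*}(t)) \leq t$) to get the strict inequality in the forward direction; with only monotonicity one obtains $s \leq f^{*}(t)$. Your actual argument, however, only uses each one-sided implication in the non-strict form where it is needed, so the proof goes through unchanged.
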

\begin{proof}
Assume first that $\sup_t t^{1/p} f^*(t) = C < \infty$.
Then, for every $0<s<\infty$,
\begin{multline*}
\lambda_f(s) = \lambda_{f^*}(s) = |\{ f^* > s \}| = \sup\{ t : f^*(t) > s \}\\
\leq
\sup \{ t : C t^{-1/p} > s \} = \sup \{ t : t < C^p s^{-p} \} = C^p s^{-p},
\end{multline*}
whence $\sup_s s \lambda_f(s)^{1/p} \leq C$.
For the converse assume the latter inequality.
Then
\[
f^*(t) = \sup\{ s : \lambda_f(s) \geq t \}
\leq
\sup\{ s : C^p s^{-p} \geq t \} = C t^{-1/p},
\]
which proves the claim.
\end{proof}

The remaining part of the argument is combinatorial in nature.
\begin{lemma}[{Vitali covering lemma}]
\index{Vitali covering lemma}
\label{lemma:vitali}
Let $\{ Q_j \}_{j \in J}$ be a collection of cubes in $\R^n$ such that $\union_{j\in J} Q_j$ is bounded.
Then there exists a countable subset $J' \subset J$ such that $\{ Q_j \}_{j \in J'}$ are pairwise disjoint and $\union_{j \in J'} 5 Q_j \supseteq \union_{j \in J} Q_j$ ($5 Q$ is the cube with the same center as $Q$ and whose edge length is five times bigger).
\end{lemma}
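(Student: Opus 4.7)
The plan is to proceed by a greedy selection stratified according to the sizes of the cubes. First I would observe that since $\bigcup_{j} Q_{j}$ is bounded, every $Q_{j}$ lies inside this bounded set, hence the edge lengths are uniformly bounded by some $L := \sup_{j} \operatorname{edge}(Q_{j}) < \infty$. I then partition the index set into generations by size: for each integer $k \geq 0$, let
\[
J_{k} = \{ j \in J : L / 2^{k+1} < \operatorname{edge}(Q_{j}) \leq L / 2^{k} \}.
\]

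Next I would construct $J' = \bigcup_{k \geq 0} J'_{k}$ inductively. Assuming $J'_{0}, \dots, J'_{k-1}$ have already been chosen, I let $J'_{k}$ be a maximal subcollection of $J_{k}$ whose cubes are pairwise disjoint and disjoint from all cubes indexed by $J'_{0} \cup \cdots \cup J'_{k-1}$. Existence of such a maximal family follows from Zorn's lemma applied to the poset of disjoint subfamilies (or, more concretely, from the fact — proved below — that only countably many cubes can appear at each level). Countability of each $J'_{k}$ comes from a volume argument: the selected cubes of generation $k$ are mutually disjoint, each has measure at least $(L/2^{k+1})^{n}$, and all fit within the bounded set $\bigcup_{j} Q_{j}$, so there can only be finitely many.

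The heart of the argument is then the covering property. Given $j \in J$, pick the unique $k$ with $j \in J_{k}$. By maximality of $J'_{k}$ there must exist $j' \in J'_{0} \cup \cdots \cup J'_{k}$ with $Q_{j} \cap Q_{j'} \neq \emptyset$; otherwise we could enlarge $J'_{k}$ by adding $j$. Writing $\ell = \operatorname{edge}(Q_{j})$ and $\ell' = \operatorname{edge}(Q_{j'})$, the generation assignment gives $\ell \leq L/2^{k}$ and $\ell' > L/2^{k+1}$, so $\ell \leq 2 \ell'$. A short computation in the $\ell^{\infty}$ norm now yields $Q_{j} \subseteq 5 Q_{j'}$: if $x \in Q_{j} \cap Q_{j'}$ and $c_{j'}$ is the center of $Q_{j'}$, then for any $y \in Q_{j}$,
\[
\|y - c_{j'}\|_{\infty} \leq \|y - x\|_{\infty} + \|x - c_{j'}\|_{\infty} \leq \ell + \ell'/2 \leq 2\ell' + \ell'/2 = 5\ell'/2,
\]
which is exactly the condition for $y \in 5 Q_{j'}$.

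I don't foresee a genuine obstacle; the only delicate point is that the chosen cubes have edge lengths bounded below only within each generation, so one really needs the stratification to make the volume argument (for countability) and the size comparison (for the factor $5$) work simultaneously. A single-step greedy selection without stratification would fail because the cubes could shrink to arbitrarily small sizes before the process terminates.
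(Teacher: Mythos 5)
Your proof is correct. The heart of the argument is the same as the paper's—greedily prefer large cubes, use the boundedness hypothesis to limit how many can be chosen, and then show that any unselected cube meets a selected cube of comparable size—but the technical device for dealing with the fact that the supremum of cube sizes need not be attained is different. The paper picks, at each stage, a cube that is \emph{almost} maximal among those disjoint from the previously chosen ones (up to a factor $1+\epsilon$), observes that the selected measures tend to zero, and concludes that every cube intersects a selected one whose measure is at least $|Q_j|/(1+\epsilon)$, which gives the factor $5$ once $\epsilon$ is small. You instead stratify the cubes into dyadic generations by edge length and select a maximal disjoint family generation by generation; within the resulting comparison the edge lengths differ by a factor of at most $2$, again giving $5$. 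The dyadic version makes the size comparison exact rather than $\epsilon$-approximate and avoids choosing $\epsilon$ altogether; the near-maximal version avoids the bookkeeping of generations. Both are standard and essentially interchangeable. One small remark: your concluding sentence overstates the objection to an unstratified greedy scheme—the paper's proof is exactly such a scheme and works fine, because the shrinking of the selected cubes to measure zero is a feature rather than a failure mode; the genuine obstacle, which both proofs address, is only that a \emph{literal} maximum need not exist.
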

\begin{proof}
Choose $j_1 \in J$ such that $Q_{j_1}$ has almost (up to a factor of $1+\epsilon$ with choice of $\epsilon$ depending on the dimension of the ambient space) maximal measure and then, inductively, choose $j_{k+1}$ such that $Q_{j_{k+1}}$ has almost maximal measure among the cubes disjoint from each of $Q_{j_1}, \dots, Q_{j_k}$ indefinitely or until no such cube exists.

Then $J'=\{j_1, \dots \}$ has the required properties: $\{ Q_j \}_{j \in J'}$ are pairwise disjoint by construction.
Since all cubes are contained in a bounded set, this implies $|Q_{j_k}| \to 0$.
Hence every $Q_{j \in J\setminus J'}$ intersects a cube $Q_{j' \in J'}$ of measure at least $|Q_j|/(1+\epsilon)$, because otherwise $j \in J'$.
Therefore $Q_j \subseteq 5 Q_{j'}$ if $\epsilon>0$ is small enough.
\end{proof}

\begin{lemma}
\label{lem:hardy-littlewood-maximal-inequality-L1}
Let $f \in L^1(\R^n)$.
Then for every $s>0$, $\lambda_{Mf}(s) = \Leb[1]\{ |Mf| > s \} \leq C ||f||_1 / s$.
\end{lemma}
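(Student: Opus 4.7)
The plan is to prove the weak-type $(1,1)$ inequality by a standard covering argument built on the Vitali Lemma~\ref{lemma:vitali}. Fix $s>0$ and set $E_s=\{Mf>s\}$. For each $x\in E_s$, by definition of $Mf$ there is an open cube $Q_x$ containing $x$ with
\[
\fint_{Q_x}|f|>s, \quad \text{equivalently} \quad |Q_x|<\frac{1}{s}\int_{Q_x}|f|\leq\frac{\|f\|_1}{s}.
\]
In particular every such cube has edge length bounded by $(\|f\|_1/s)^{1/n}$.

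Since the Vitali lemma requires the union of the cubes to be bounded, I would first truncate: fix $R>0$ and consider the subfamily $\{Q_x\}_{x\in E_s\cap B(0,R)}$. By the uniform bound on edge lengths, the union of these cubes lies inside the bounded set $B(0,R+\sqrt{n}(\|f\|_1/s)^{1/n})$, so Lemma~\ref{lemma:vitali} applies and yields a countable pairwise disjoint subcollection $\{Q_{x_k}\}$ with $\bigcup_k 5Q_{x_k}\supseteq\bigcup_{x\in E_s\cap B(0,R)}Q_x\supseteq E_s\cap B(0,R)$. Then
\[
|E_s\cap B(0,R)|
\leq\sum_k|5Q_{x_k}|
=5^n\sum_k|Q_{x_k}|
\leq\frac{5^n}{s}\sum_k\int_{Q_{x_k}}|f|
\leq\frac{5^n\|f\|_1}{s},
\]
the last inequality using disjointness of the $Q_{x_k}$. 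Letting $R\to\infty$ and applying monotone continuity of Lebesgue measure gives $\lambda_{Mf}(s)\leq 5^n\|f\|_1/s$, which is the claim with $C=5^n$.

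The routine part is the chain of inequalities above; the only real subtlety is the boundedness hypothesis of the Vitali lemma, which is handled by the truncation to $B(0,R)$ together with the automatic size bound $|Q_x|\leq\|f\|_1/s$ forced by the defining inequality. No part of the argument relies on any finer structure of the maximal function beyond the pointwise covering property that produced the cubes $Q_x$, so the same proof would work for the centered maximal operator or for averages over balls, changing only the geometric constant.
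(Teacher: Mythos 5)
Your proof is correct, and the essential engine --- Vitali covering plus the chain of inequalities --- is the same as the paper's. The genuine difference lies in how you satisfy the boundedness hypothesis of Lemma~\ref{lemma:vitali}. You truncate \emph{in space}, considering only points $x\in E_s\cap B(0,R)$, and then observe that the defining inequality $\fint_{Q_x}|f|>s$ automatically caps the cube volume by $\|f\|_1/s$, so all selected cubes live in a fixed larger ball; you recover the full set by letting $R\to\infty$. The paper truncates \emph{in cube size}, working with $A_m=\bigcup\{Q:\fint_Q|f|>s,\,|Q|>2^{-m}\}$, and then needs a short contradiction argument (an infinite disjoint family of cubes of size bounded below, each with $\fint_Q|f|>s$, would violate $f\in L^1$) to show each $A_m$ is bounded before Vitali can be applied. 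Your version is a little more streamlined: the upper bound on cube size comes for free, so no separate boundedness argument is required, and the continuity-from-below step is the same in both proofs. The tradeoff is purely cosmetic; both give the constant $5^n$ and both generalize without change to the centered maximal function or ball averages, as you note.
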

\begin{proof}
By definition of $Mf$ we have
\[
\{ |Mf| > s \}
=
\Union_{Q : \fint_Q |f| > s} Q
=
\Union_m \Union_{Q : \fint_Q |f| > s, |Q| > 2^{-m}} Q
=:
\Union_m A_m.
\]
Since $A_m$ grows with $m$, it is sufficient to obtain a uniform estimate on $|A_m|$.

The set $A_m$ is bounded because otherwise there exists an infinite disjoint collection of cubes $Q$ such that $|Q|$ and $\fint_Q |f|$ are bounded from below, which contradicts $f \in L^1$.
Hence Lemma~\ref{lemma:vitali} applies and there exists a disjoint collection $\{ Q_j : \fint_{Q_j} |f| > s, |Q_j| > 2^{-m} \}$ such that
\[
|A_m|
\leq
5^n \left| \union_j Q_j \right|
\leq
5^n \frac1s \int_{\union_j Q_j} |f|
\leq 
5^n ||f||_1 / s.
\qedhere
\]
\end{proof}

By Lemma~\ref{prop:Lpinfty-lambda}, this implies $||Mf||_{1,\infty} \leq C ||f||_1$.
An application of the Marcin\-kie\-wicz interpolation theorem~\ref{th:marcinkiewicz} yields

\begin{theorem}
\index{Hardy-Littlewood maximal!inequality}
\label{th:hardy-littlewood-maximal-inequality}
For $1<p\leq\infty$ the maximal operator $M$ is bounded on $L^p$ and
\[
||Mf||_p \leq Cp' ||f||_p.
\]
\end{theorem}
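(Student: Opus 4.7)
The plan is to apply the Marcinkiewicz interpolation theorem (Theorem~\ref{th:marcinkiewicz}) between two endpoint estimates: the trivial bound $\|Mf\|_\infty \leq \|f\|_\infty$ and the weak-type $(1,1)$ bound $\|Mf\|_{1,\infty} \leq C\|f\|_1$, which is exactly Lemma~\ref{lem:hardy-littlewood-maximal-inequality-L1} combined with Proposition~\ref{prop:Lpinfty-lambda}. Since $M$ is only subadditive and not linear, I will invoke the version of Marcinkiewicz's theorem applicable to subadditive operators mentioned in the remark after Theorem~\ref{th:marcinkiewicz}; the target spaces $L^{1,\infty}$ and $L^\infty$ are Banach function spaces in the required sense.

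First I would note that Theorem~\ref{th:lorentz-interpolation} (specifically the interpolation identity $[L^{1,\infty}, L^\infty]_{\theta, p} = [L^1, L^\infty]_{\theta, p} = L^{p,p}$ with $1/p = 1-\theta$) together with $L^{p,p} = L^p$ (which was observed after the definition of Lorentz spaces) means that the $K$-norm $\|\cdot\|_{\theta, p; K, L^{1,\infty}, L^\infty}$ is, up to a constant, the ordinary $L^p$-norm. Applying Marcinkiewicz with $X_0 = L^1$, $X_1 = L^\infty$, $Y_0 = L^{1,\infty}$, $Y_1 = L^\infty$, $M_0 = C$ and $M_1 = 1$, and with $q = p$, gives
\[
\|Mf\|_{\theta, p; K, L^{1,\infty}, L^\infty} \leq C^{1-\theta}\|f\|_{\theta, p; K, L^1, L^\infty},
\]
which, after converting both sides to $L^p$-norms via the equivalences from Theorem~\ref{th:lorentz-interpolation} and the identification $L^{p,p}=L^p$, yields $\|Mf\|_p \lesssim \|f\|_p$.

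The main issue is then tracking the dependence of the constant on $p$ to obtain the sharp factor $p'$. With $\theta = 1 - 1/p = 1/p'$, the Marcinkiewicz estimate itself contributes only $C^{1-\theta} \leq C$, so the $p'$ blow-up has to come from the equivalence constants. Inspecting the proof that $\|\cdot\|_{p,q}$ and $\|\cdot\|_{p,q}^*$ are equivalent, the relevant direction estimates $\|f\|_{p,q}$ from above by $(1/\theta) \cdot (q/p)^{-1/q} \|f\|_{p,q}^*$, via the Hardy inequality (\ref{ineq:hardy-negative}); taking $q=p$ and $\theta = 1/p'$ this gives exactly a factor $p' = 1/\theta$. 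In the opposite direction only bounded factors appear. The $L^\infty$ case is immediate from the trivial bound and requires no interpolation.

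So my plan is: state the two endpoint bounds, apply the subadditive Marcinkiewicz theorem with $\theta = 1/p'$ and $q = p$, rewrite the resulting $K$-norms as $L^p$-norms using Theorem~\ref{th:lorentz-interpolation}, and carefully read off a factor of $1/\theta = p'$ from the Hardy-inequality step that appears when comparing the $K$-norm definition with the $\|\cdot\|_{p,p}^*$ quasinorm, which coincides with $\|\cdot\|_p$. The only real obstacle is bookkeeping constants through the several norm equivalences; there is no new analytical difficulty beyond the weak-type bound already proved.
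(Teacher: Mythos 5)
Your overall route is exactly the paper's: apply the subadditive Marcinkiewicz theorem to the weak $(1,1)$ bound of Lemma~\ref{lem:hardy-littlewood-maximal-inequality-L1} (converted to a Lorentz norm via Proposition~\ref{prop:Lpinfty-lambda}) together with the trivial $L^\infty$ bound, and then unwind the $K$-norms. However, the constant tracking as you describe it is incomplete. You account only for the factor $1/\theta = p'$ coming from the Hardy-inequality comparison between $||\cdot||_{p,p}$ and $||\cdot||_{p,p}^* = ||\cdot||_p$. There is a \emph{second} factor $1/\theta = p'$ hidden inside the equivalence $[L^{1,\infty}, L^\infty]_{\theta,p} = [L^1, L^\infty]_{\theta,p}$: the proof of (\ref{eq:interpolation-weak-L1-Linfty}) in Theorem~\ref{th:lorentz-interpolation} applies the Hardy inequality a second time and explicitly picks up a $\frac{1}{\theta}$. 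If you invoke Theorem~\ref{th:lorentz-interpolation} as a black box to pass from $||Mf||_{\theta,p;K,L^{1,\infty},L^\infty}$ (what Marcinkiewicz controls) to $||Mf||_p$, and then separately convert $||f||_{\theta,p;K,L^1,L^\infty} = ||f||_{p,p}$ to $||f||_p$, you end up with $(p')^2$, which is strictly worse than the stated $Cp'$ as $p \to 1$.

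The repair is that the left-hand conversion does not need Hardy at all: the pointwise estimate $g^*(s) \leq K(g,s,L^{1,\infty},L^\infty)/s$, which is the step in the proof of (\ref{eq:interpolation-weak-L1-Linfty}) that precedes the Hardy inequality, gives directly, with $1-\theta = 1/p$,
\[
||Mf||_p = ||Mf||_{p,p}^* = \left( \int_0^\infty \left( s^{1-\theta} (Mf)^*(s) \right)^p \frac{\dif s}{s} \right)^{1/p}
\leq
\left( \int_0^\infty \left( s^{-\theta} K(Mf,s,L^{1,\infty},L^\infty) \right)^p \frac{\dif s}{s} \right)^{1/p}
= ||Mf||_{\theta,p;K,L^{1,\infty},L^\infty}
\]
with constant $1$. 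After Marcinkiewicz this leaves only the single Hardy application $||f||_{\theta,p;K,L^1,L^\infty} = ||f||_{p,p} \leq p' \, ||f||_p$, producing the claimed $Cp'$. So your heuristic that one Hardy step costs one $p'$ is correct, but you must organize the proof so that Hardy is genuinely used only once; a naive concatenation of the paper's norm-equivalence propositions uses it twice.
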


Proposition~\ref{prop:Lpinfty-lambda} is also useful for the calculation of the $L^{p,\infty}$-norm of highly symmetrical functions, as the example below shows.
\begin{proposition}
\label{prop:sphere-Lp-Rn-Lpinfty}
Let $1 < p < \infty$ and $g \in L^p(S^{n-1})$.
Then
\[
f(x) := |x|^{-n/p} g(x/|x|) \in L^{p,\infty}(\R^n)
\]
and $||f||_{p, \infty} = C ||g||_p$.
\end{proposition}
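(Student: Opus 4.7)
The plan is to reduce the claim to Proposition~\ref{prop:Lpinfty-lambda}, which identifies the $L^{p,\infty}$-quasinorm with $\sup_s s\, \lambda_f(s)^{1/p}$. So I would compute the distribution function of $f$ directly, using polar coordinates, and check that $s\,\lambda_f(s)^{1/p}$ is in fact constant in $s$, proportional to $\|g\|_{L^p(S^{n-1})}$.

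First, note that $|f(x)|>s$ if and only if $|x|^{-n/p}|g(x/|x|)|>s$, which is equivalent to $|x|<(|g(x/|x|)|/s)^{p/n}$. Writing $x=r\omega$ with $r=|x|>0$ and $\omega\in S^{n-1}$, so that $\dif x=r^{n-1}\dif r\,\dif\sigma(\omega)$, I would compute
\[
\lambda_f(s)
=\int_{S^{n-1}}\int_{0}^{(|g(\omega)|/s)^{p/n}} r^{n-1}\,\dif r\,\dif\sigma(\omega)
=\frac{1}{n\,s^{p}}\int_{S^{n-1}}|g(\omega)|^{p}\,\dif\sigma(\omega)
=\frac{\|g\|_{L^{p}(S^{n-1})}^{p}}{n\,s^{p}}.
\]
Therefore
\[
s\,\lambda_f(s)^{1/p}=n^{-1/p}\,\|g\|_{L^{p}(S^{n-1})}\qquad\text{for every }s>0,
\]
which is independent of $s$, and in particular finite.

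By Proposition~\ref{prop:Lpinfty-lambda}, this quantity is an equivalent norm on $L^{p,\infty}(\R^{n})$, so $f\in L^{p,\infty}(\R^{n})$ and $\|f\|_{p,\infty}=C\|g\|_{p}$ with a constant $C$ depending only on $n$ and $p$.

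There is no real obstacle here: the integrand is a pure power of $r$, and the homogeneity $f(\lambda x)=\lambda^{-n/p}f(x)$ already forces $\lambda_{f}$ to scale as $s^{-p}$, so the only computation to perform is the angular integral. The measurability of $\{|f|>s\}$ is immediate since $f$ is continuous off the origin.
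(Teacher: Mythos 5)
Your proof is correct and follows essentially the same route as the paper: both reduce to Proposition~\ref{prop:Lpinfty-lambda} and compute the distribution function via polar coordinates. The paper first extracts the scaling $\lambda_f(s)=s^{-p}\lambda_f(1)$ and then evaluates $\lambda_f(1)$ over the sphere, whereas you carry out the radial integral for general $s$ directly, which (as you observe) encodes the same homogeneity.
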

\begin{proof}
The scaling behavior of $f$ implies
\[
\{x : |f(x)| > s \} = \{x=s^{-p/n} y : |f(y)| > 1\} = s^{-p/n} \{ y : |f(y)| > 1 \}.
\]
Hence
\begin{align*}
||f||_{p, \infty}
&=
\sup_{s>0} s \Leb \{ x : |f(x)| > s \}^{1/p}\\
&=
\sup_{s>0} \Leb \{ y : |f(y)| > 1 \}^{1/p}\\
&=
C \left( \int_{S^{n-1}} \left( |g(\omega)|^{p/n} \right)^n \dif\omega \right)^{1/p}\\
&=
C ||g||_p.
\qedhere
\end{align*}
\end{proof}

A frequently used special case occurs when $g=1$.
Then $f(x) = |x|^{-n/p}$ and $f \in L^{p,\infty}(\R^n)$.
This observation is most useful in conjunction with the weak-type Young inequality (Proposition~\ref{prop:weak-type-young}).

This readily implies the boundedness of the following operator.
\begin{proposition}
\label{prop:Rn-Lp1-sphere-Lp}
Let $f \in \Schwartz(\R^n)$, $1 < p < \infty$ and
\[
U_p f(\omega) = \int_\R |t|^{n/p-1} f(t \omega) \dif t, \quad \omega\in S^{n-1}.
\]
Then $||U_p f||_p \leq C ||f||_{p,1}$.
\end{proposition}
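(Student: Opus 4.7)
The plan is to establish the bound by duality against $L^{p'}(S^{n-1})$. Write
\[
\|U_p f\|_{L^p(S^{n-1})} = \sup\Bigl\{ \textstyle\int_{S^{n-1}} U_p f(\omega)\, g(\omega)\, d\omega : \|g\|_{L^{p'}(S^{n-1})} \leq 1 \Bigr\}
\]
and convert the pairing into an integral on $\R^n$. Split the defining integral at $t=0$; in the negative half substitute $t\mapsto -t$, and then exploit the antipodal invariance of surface measure on $S^{n-1}$ to replace $\omega$ by $-\omega$, obtaining
\[
\int_{S^{n-1}} U_p f(\omega)\, g(\omega)\, d\omega = \int_{S^{n-1}}\int_0^\infty t^{n/p-1} f(t\omega)\bigl(g(\omega)+g(-\omega)\bigr)\, dt\, d\omega.
\]
Passing to polar coordinates on $\R^n$, with $dx = t^{n-1}\, dt\, d\omega$, and using $n/p-1-(n-1) = -n/p'$, this becomes $\int_{\R^n} f(x)\, h(x)\, dx$ where
\[
h(x) = |x|^{-n/p'}\bigl(\tilde g(x/|x|)\bigr), \qquad \tilde g(\omega) := g(\omega)+g(-\omega).
\]

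Since $\|\tilde g\|_{L^{p'}(S^{n-1})} \leq 2\|g\|_{L^{p'}(S^{n-1})}$, Proposition~\ref{prop:sphere-Lp-Rn-Lpinfty} applied at the dual exponent $p'$ shows that $h\in L^{p',\infty}(\R^n)$ with $\|h\|_{p',\infty} \leq C\|g\|_{L^{p'}(S^{n-1})}$. It then remains to prove the Hölder-type inequality
\[
\left|\int_{\R^n} f h\right| \leq C\, \|f\|_{p,1}\, \|h\|_{p',\infty}.
\]
For this, invoke the Hardy--Littlewood rearrangement inequality $\int_{\R^n}|fh| \leq \int_0^\infty f^*(t)\, h^*(t)\, dt$ and the bound $h^*(t) \leq t^{-1/p'}\|h\|_{p',\infty}^*$ furnished by Proposition~\ref{prop:Lpinfty-lambda}. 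Writing $t^{-1/p'} = t^{1/p}/t$ gives
\[
\int_0^\infty f^*(t)\, h^*(t)\, dt \leq \|h\|_{p',\infty}^*\int_0^\infty t^{1/p} f^*(t)\, \frac{dt}{t} \leq C\, \|h\|_{p',\infty}^*\, \|f\|_{p,1}^*.
\]

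Combining these estimates yields $|\int_{\R^n} fh| \leq C\|f\|_{p,1}\|g\|_{L^{p'}(S^{n-1})}$; taking the supremum over $\|g\|_{p'}\leq 1$ gives the desired inequality. The main obstacle is that the Hölder-type duality pairing $(L^{p,1},L^{p',\infty})$ is not explicitly recorded earlier in the paper, so one has to cite (or briefly establish) the rearrangement inequality $\int |fh|\leq \int f^*h^*$; the remainder of the argument is a reduction via the explicit formulas for the Lorentz quasinorms already proved in this chapter.
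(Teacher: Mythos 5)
Your argument matches the paper's proof essentially line by line: duality against $g \in L^{p'}(S^{n-1})$, passage to polar coordinates, identification of $|x|^{-n/p'}\tilde g(x/|x|)$ as an element of $L^{p',\infty}(\R^n)$ via Proposition~\ref{prop:sphere-Lp-Rn-Lpinfty}, and closing with the Lorentz H\"older bound $\bigl|\int fh\bigr| \leq C\,\|f\|_{p,1}\|h\|_{p',\infty}$. The only difference is that you make two details explicit where the paper's three-line computation is terse --- the symmetrization $\tilde g = g + g(-\cdot)$ when folding the $t<0$ half-line onto $t>0$, and the justification of the Lorentz--H\"older pairing via the Hardy--Littlewood rearrangement inequality $\int|fh|\leq\int f^*h^*$ together with $h^*(t)\leq t^{-1/p'}\|h\|^*_{p',\infty}$ (the paper tacitly leans on the duality $(L^{p,1})'\supseteq L^{p',\infty}$ suggested by Proposition~\ref{prop:interpolation-dual}, which is never spelled out for the pair $(L^1,L^\infty)$) --- and both additions are correct.
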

\begin{proof}
For every $g \in L^{p'}(S^{n-1})$,
\begin{align*}
\int_{S^{n-1}} g(\omega) U_p f(\omega) \dif\omega
&=
C \int_{\R^n} g(x/|x|) |x|^{n/p-1} f(x) |x|^{-n+1} \dif x\\
&\leq
C ||f||_{p,1} || |x|^{-n/p'} g(x/|x|) ||_{p', \infty}\\
&=
C ||f||_{p,1} || g ||_{p'}.
\qedhere
\end{align*}
\end{proof}

\section{Interpolation between dual spaces}
\label{sec:real-int-dual}
Suppose that $X_0 \intersection X_1$ is dense both in $X_0$ and in $X_1$.
Then it makes sense to define the intersection of the dual spaces $X_0'$ and $X_1'$ by
\[
\phi_0 \in X_0' = \phi_1 \in X_1' \text{ if and only if } \phi_0|_{X_0 \intersection X_1} = \phi_1|_{X_0 \intersection X_1}.
\]
Denseness of $X_0 \intersection X_1$ is necessary and sufficient to ensure that each $\phi_0$ equals at most one $\phi_1$ and vice versa.
For the rest of the section let $X_0$, $X_1$, $0 < \theta < 1$ and $1\leq q \leq \infty$ be fixed.
We will be concerned with the interpolation space $[X_0', X_1']_{\theta,q'} \subset X_0' + X_1'$.

\begin{proposition}
\label{prop:sum-of-duals}
There exists a natural isometric isomorphism
\[
\Phi : X_0' + X_1' \to (X_0 \intersection X_1)'
\]
given by $\Phi(\phi_0 + \phi_1)(x):=\phi_0(x) + \phi_1(x)$ for every $x \in X_0 \intersection X_1$.
\end{proposition}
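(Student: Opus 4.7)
My plan is to verify that $\Phi$ is well-defined and bounded, injective, and surjective while preserving norms, with the last point carrying the real content via Hahn-Banach. Throughout I would work with the natural norm $||x||_{X_0 \intersection X_1} = \max(||x||_{X_0}, ||x||_{X_1})$.

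For boundedness, given any representative $\phi = \phi_0 + \phi_1$ and any $x \in X_0 \intersection X_1$, the estimate
\[
|\phi_0(x) + \phi_1(x)| \leq ||\phi_0||_{X_0'} ||x||_{X_0} + ||\phi_1||_{X_1'} ||x||_{X_1} \leq (||\phi_0||_{X_0'} + ||\phi_1||_{X_1'}) ||x||_{X_0 \intersection X_1}
\]
gives, after taking the infimum over decompositions, $||\Phi(\phi)||_{(X_0 \intersection X_1)'} \leq ||\phi||_{X_0' + X_1'}$. This also shows well-definedness, since two decompositions of $\phi$ produce the same restriction to $X_0 \intersection X_1$ by the identification in the paragraph preceding the proposition. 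Injectivity follows immediately: if $\phi_0(x) + \phi_1(x) = 0$ on $X_0 \intersection X_1$ then $\phi_0|_{X_0 \intersection X_1} = -\phi_1|_{X_0 \intersection X_1}$, which by denseness forces $\phi_0 + \phi_1 = 0$ in $X_0' + X_1'$.

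The substantive step is to show that for each $\psi \in (X_0 \intersection X_1)'$ there exists a decomposition $\psi = \phi_0 + \phi_1$ with $||\phi_0||_{X_0'} + ||\phi_1||_{X_1'} \leq ||\psi||$, which will simultaneously supply surjectivity and the reverse norm inequality. My approach is Hahn-Banach applied to a diagonal embedding: equip $X_0 \oplus X_1$ with the norm $||(x_0, x_1)|| = \max(||x_0||_{X_0}, ||x_1||_{X_1})$, so that the diagonal map $\Delta(x) = (x, x)$ from $X_0 \intersection X_1$ into $X_0 \oplus X_1$ is an isometric embedding. I would extend $\psi \circ \Delta^{-1}$ from $\Delta(X_0 \intersection X_1)$ to a functional $\tilde\psi$ on all of $X_0 \oplus X_1$ with the same norm. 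Since the dual of this max-normed direct sum identifies with $X_0' \oplus X_1'$ carrying the sum norm (a standard computation via testing against $(x_0, 0)$ and $(0, x_1)$ and optimising the signs), this yields $\tilde\psi = (\phi_0, \phi_1)$ with $||\phi_0||_{X_0'} + ||\phi_1||_{X_1'} = ||\tilde\psi|| = ||\psi||$, and restricting to the diagonal gives $\Phi(\phi_0 + \phi_1)(x) = \tilde\psi(x, x) = \psi(x)$. The only nontrivial ingredient is this dual identification of the max-normed direct sum; everything else unpacks directly from the definitions and the denseness hypothesis.
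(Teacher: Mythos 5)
Your argument is correct and follows essentially the same route as the paper's: boundedness from the triangle inequality, injectivity from the identification of $X_0' \cap X_1'$ via restriction, and surjectivity together with the reverse norm inequality via Hahn-Banach applied to the diagonal embedding of $X_0 \cap X_1$ into the max-normed $X_0 \oplus X_1$. The only cosmetic difference is that you invoke the duality $(X_0 \oplus X_1, \max)' \cong (X_0' \oplus X_1', \mathrm{sum})$ as a named fact, whereas the paper unpacks that computation inline in its ``contractivity of the inverse'' step.
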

\begin{proof}
We will first show that $\Phi$ is a bijection and then that it is also isometric.

\emph{Injectivity.}
Assume that $\phi_0(x) + \phi_1(x) = \tilde\phi_0(x) + \tilde\phi_1(x)$ for every $x \in X_0 \intersection X_1$.
By definition this is equivalent to $(\phi_0 - \tilde\phi_0) = - (\phi_1 - \tilde\phi_1) \in X_0' \intersection X_1'$.
Hence
\[
\phi_0 + \phi_1
=
(\phi_0 - (\phi_0 - \tilde\phi_0)) + (\phi_1 - (\phi_1 - \tilde\phi_1))
=
\tilde\phi_0 + \tilde\phi_1.
\]

\emph{Surjectivity.}
Let $\phi \in (X_0 \intersection X_1)'$.
Then $\psi(x,x):=\phi(x)$ is a continuous linear form on the space $\{ (x,x) : x \in X_0 \intersection X_1 \} \subseteq X_0 \oplus X_1$ endowed with the norm $||(x_0,x_1)||_{X_0 \oplus X_1} = \max\{||x_0||, ||x_1||\}$ and $||\psi||=||\phi||$.
Therefore $\psi$ extends to a linear form $\tilde\psi$ on $X_0 \oplus X_1$ with $||\tilde\psi||=||\phi||$.
Let
\[
\phi_0(x_0) := \tilde\psi(x_0,0) \text{ and } \phi_1(x_1) := \tilde\psi(0,x_1).
\]
Then $\phi(x) = \psi(x,x) = \phi_0(x) + \phi_1(x)$.

\emph{Contractivity.}
We have that
\begin{align*}
||\Phi(\phi_0 + \phi_1)||
&=
\sup_{||(x,x)||_{X_0 \oplus X_1} \leq 1} |\phi_0(x) + \phi_1(x)|\\
&=
\inf_{\phi_0 + \phi_1 = \tilde\phi_0 + \tilde\phi_1} \sup_{||(x,x)||_{X_0 \oplus X_1} \leq 1} |\tilde\phi_0(x) + \tilde\phi_1(x)|\\
&\leq
\inf_{\phi_0 + \phi_1 = \tilde\phi_0 + \tilde\phi_1} \sup_{||(x_0,x_1)||_{X_0 \oplus X_1} \leq 1} |\tilde\phi_0(x_0)| + |\tilde\phi_1(x_1)|\\
&=
\inf_{\phi_0 + \phi_1 = \tilde\phi_0 + \tilde\phi_1} ||\tilde\phi_0||_{X_0'} + ||\tilde\phi_1||_{X_1'}\\
&=
|| \phi_0 + \phi_1 ||_{X_0' + X_1'}.
\end{align*}

\emph{Contractivity of the inverse.}
Let $\phi \in (X_0 \intersection X_1)'$ and $\phi_0$, $\phi_1$ be the linear forms constructed in the proof of surjectivity.
Then
\begin{multline*}
|| \phi_0 + \phi_1 ||_{X_0' + X_1'}
\leq
||\phi_0||_{X_0'} + ||\phi_1||_{X_1'}
=
\sup_{||x_0||_{X_0} \leq 1, ||x_1||_{X_1} \leq 1} |\phi_0(x_0) + \phi_1(x_1)|\\
=
\sup_{||(x_0, x_1)||_{X_0 \oplus X_1} \leq 1} |\tilde\psi(x_0,x_1)|
= ||\tilde\psi|| = ||\phi||.
\qedhere
\end{multline*}
\end{proof}
Henceforth we shall identify $\phi$ and $\Phi\phi$.
The preceding proposition implies the following relationship between the $K$- and the $J$-functional.
\begin{multline}
\label{eq:K-sup-Jinv}
K(\phi, t, X_0', X_1')
= ||\phi||_{X_0' + t X_1'}
= ||\phi||_{X_0' + (t\inv X_1)'}
= ||\phi||_{(X_0 \intersection (t\inv X_1))'}\\
= \sup_{f \in X_0 \intersection (t\inv X_1)} \phi(f) / ||f||_{X_0 \intersection (t\inv X_1)}
= \sup_{f \in X_0 \intersection X_1} \phi(f) / J(f,t\inv,X_0, X_1).
\end{multline}

\begin{proposition}
\label{prop:intersection-dense-in-interpolation-space}
If $q < \infty$, then $X_0 \intersection X_1$ is dense in $[X_0, X_1]_{\theta, q}$.
\end{proposition}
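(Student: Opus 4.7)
The plan is to exploit the dyadic $J$-method characterization. Given $f \in [X_0, X_1]_{\theta,q}$, by the equivalence theorem and the fundamental lemma of interpolation theory (Lemma~\ref{lem:fundamentalJK}) there exists a decomposition $f = \sum_{\nu \in \Z} f_\nu$ with $f_\nu \in X_0 \cap X_1$, convergence in $X_0 + X_1$, such that the sequence $(J(f_\nu, 2^\nu, X_0, X_1))_\nu$ lies in $\lambda^{\theta, q}$. Indeed we may even take $J(f_\nu, 2^\nu) \leq 3(1+\epsilon) K(f, 2^\nu)$, but any $J$-decomposition with finite dyadic $J$-norm will do.

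First I would let $g_N := \sum_{|\nu| \leq N} f_\nu$. Since this is a \emph{finite} sum of elements of $X_0 \cap X_1$, we have $g_N \in X_0 \cap X_1$. The residual $f - g_N = \sum_{|\nu|>N} f_\nu$ is itself a $J$-decomposition of $f - g_N$ (convergent in $X_0 + X_1$ as the difference of two such convergent series), so by the definition of the dyadic $J$-norm
\[
\| f - g_N \|_{\theta, q; J, X_0, X_1}
\leq
\Bigl\| \bigl( \chi_{|\nu|>N} J(f_\nu, 2^\nu, X_0, X_1) \bigr)_\nu \Bigr\|_{\theta, q}.
\]

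Next I would use the assumption $q < \infty$: the right-hand side is the $\lambda^{\theta,q}$-norm of the tail of a sequence in $\lambda^{\theta,q}$, and by the dominated convergence theorem for the counting measure with weight $2^{-\theta \nu q}$ this tends to $0$ as $N \to \infty$. Hence $g_N \to f$ in $[X_0, X_1]_{\theta,q}$, which is precisely density of $X_0 \cap X_1$.

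The only point requiring care is checking that the truncated series indeed witnesses the $J$-norm of $f - g_N$; this reduces to verifying that the tails $\sum_{|\nu|>N} f_\nu$ converge in $X_0 + X_1$ to $f - g_N$, which is immediate because $\sum_\nu f_\nu$ converges to $f$ in $X_0 + X_1$ and the partial sum $g_N$ is fixed. The argument breaks for $q=\infty$ precisely because tails in $\ell^\infty$ need not vanish, which matches the well-known fact (e.g.\ $L^\infty$ inside $[L^1, L^\infty]_{\theta,\infty}$) that density can genuinely fail in that case.
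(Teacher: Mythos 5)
Your proof is correct and follows essentially the same route as the paper: take a $J$-decomposition $f=\sum_\nu f_\nu$ with $(J(f_\nu,2^\nu))_\nu \in \lambda^{\theta,q}$, approximate by the partial sums (which lie in $X_0\cap X_1$), and observe that the tail of the $\lambda^{\theta,q}$-norm vanishes since $q<\infty$. Your additional remark on why the argument genuinely fails for $q=\infty$ is a sound observation, though not needed for the statement.
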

\begin{proof}
Let $f \in [X_0, X_1]_{\theta, q}$.
There exists a decomposition $f = \sum_\nu f_\nu$ such that
\[
\left( \sum_{\nu=-\infty}^\infty (2^{-\theta \nu} J(f_\nu,2^\nu,X_0,X_1))^{q} \right)^{1/q} < \infty.
\]
Let $h_N = f - \sum_{|\nu|<N} f_\nu = \sum_{|\nu| \geq N} f_\nu$.
Then
\[
||h_N||_{\theta, q; J, X_0, X_1} \leq \left( \sum_{|\nu| \geq N} (2^{-\theta \nu} J(f_\nu,2^\nu,X_0,X_1))^{q} \right)^{1/q} \to 0.
\]
The assertion follows because $\sum_{|\nu|<N} f_\nu \in X_0 \intersection X_1$.
\end{proof}

\begin{proposition}
\index{real interpolation!between dual spaces}
\label{prop:interpolation-dual}
If $q < \infty$, then
\[
[X_0', X_1']_{\theta, q'} = [X_0, X_1]_{\theta, q}'.
\]
\end{proposition}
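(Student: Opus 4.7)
The plan is to combine the dyadic characterization of the $K$- and $J$-norms with the duality formula \eqref{eq:K-sup-Jinv} to set up a duality pairing between $[X_0', X_1']_{\theta, q'}$ and $[X_0, X_1]_{\theta, q}$. By Proposition~\ref{prop:sum-of-duals} every element of $X_0' + X_1'$ is determined by its restriction to $X_0 \intersection X_1$, and by Proposition~\ref{prop:intersection-dense-in-interpolation-space} (this is where $q < \infty$ is used) the intersection is dense in $[X_0, X_1]_{\theta, q}$, so it suffices to compare the action of $\phi|_{X_0 \intersection X_1}$ on dyadic $J$-decompositions with the dyadic $(\theta, q'; K, X_0', X_1')$-norm of $\phi$.

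For the inclusion $[X_0', X_1']_{\theta, q'} \subseteq [X_0, X_1]_{\theta, q}'$, fix $\phi$ in the former space and $f \in X_0 \intersection X_1$ with a decomposition $f = \sum_\nu f_\nu$, $f_\nu \in X_0 \intersection X_1$. Applying \eqref{eq:K-sup-Jinv} with $t = 2^{-\nu}$ term by term gives
\[
|\phi(f)| \leq \sum_\nu K(\phi, 2^{-\nu}, X_0', X_1') \cdot J(f_\nu, 2^\nu, X_0, X_1).
\]
Hölder's inequality in $\ell^{q'}/\ell^q$, after inserting the compensating weights $2^{\theta \nu}$ and $2^{-\theta \nu}$ and reindexing $\mu = -\nu$, bounds this by $||\phi||_{\theta, q'; K, X_0', X_1'} \cdot ||(J(f_\nu,2^\nu,X_0,X_1))_\nu||_{\theta, q}$; taking the infimum over decompositions of $f$ produces the dyadic $J$-norm $||f||_{\theta, q; J, X_0, X_1}$ on the right.

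For the converse, fix $\phi \in [X_0, X_1]_{\theta, q}'$ (which, via Proposition~\ref{prop:sum-of-duals} applied to $\phi|_{X_0 \intersection X_1}$, represents an element of $X_0' + X_1'$) and a finitely supported non-negative sequence $(b_\mu)_\mu$. Using \eqref{eq:K-sup-Jinv}, choose $f_\mu \in X_0 \intersection X_1$ satisfying $\phi(f_\mu) \geq (1-\epsilon) K(\phi, 2^{-\mu}, X_0', X_1') \cdot J(f_\mu, 2^\mu, X_0, X_1)$ (scaling $f_\mu$ by a phase to make $\phi(f_\mu)$ real and positive), and form the finite sum $g := \sum_\mu b_\mu f_\mu / J(f_\mu, 2^\mu, X_0, X_1)$. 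The resulting dyadic $J$-decomposition $g = \sum_\mu g_\mu$ has $J(g_\mu, 2^\mu, X_0, X_1) = b_\mu$, so $||g||_{\theta, q; J, X_0, X_1} \leq ||(b_\mu)_\mu||_{\theta, q}$; pairing with $\phi$ yields
\[
\sum_\mu b_\mu K(\phi, 2^{-\mu}, X_0', X_1') \leq (1-\epsilon)\inv ||\phi||_{[X_0,X_1]_{\theta,q}'} \cdot ||(b_\mu)_\mu||_{\theta, q}.
\]
Via the isometry $\lambda^{\theta, q} \cong \ell^q$ given by $(a_\mu) \mapsto (2^{-\theta\mu} a_\mu)$ and the standard $\ell^q/\ell^{q'}$ duality, this estimate bounds $||(2^{\theta\mu} K(\phi, 2^{-\mu}, X_0', X_1'))_\mu||_{\ell^{q'}}$; the substitution $\nu = -\mu$ identifies this quantity with $||\phi||_{\theta, q'; K, X_0', X_1'}$, and letting $\epsilon \to 0$ concludes the proof.

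The main obstacle is the careful bookkeeping of weights forced by the reciprocal relationship between $t$ in $K$ and $t\inv$ in $J$ within \eqref{eq:K-sup-Jinv}: this reversal is what causes the exponent $\theta$ to appear rather than $-\theta$ in the $\ell^{q'}$ quantity on the dual side, and it is resolved cleanly only after the reindexing $\mu = -\nu$. A secondary technical point is that one must work with the dyadic versions of both norms and restrict to finitely supported test sequences in the converse direction, so that Hölder applies discretely and one avoids any circular dependence on an a priori finite $K$-norm.
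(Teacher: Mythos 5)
Your converse direction ($[X_0, X_1]_{\theta, q}' \subseteq [X_0', X_1']_{\theta, q'}$) is correct and is essentially the paper's argument, with the mild simplification that you restrict to finitely supported test sequences rather than arbitrary positive sequences in $\lambda^{1-\theta,q}$; this is fine, since the $\ell^{q'}$-norm is the supremum of pairings against the unit ball of finitely supported sequences in $\ell^q$.

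The forward direction, however, has a genuine gap. You fix $f \in X_0 \intersection X_1$, take a dyadic $J$-decomposition $f = \sum_\nu f_\nu$, and assert that ``applying \eqref{eq:K-sup-Jinv} term by term gives''
\[
|\phi(f)| \leq \sum_\nu K(\phi, 2^{-\nu}, X_0', X_1') \, J(f_\nu, 2^\nu, X_0, X_1).
\]
This step implicitly uses $\phi(f) = \sum_\nu \phi(f_\nu)$, hence $|\phi(f)| \leq \sum_\nu |\phi(f_\nu)|$. But the decomposition $\sum_\nu f_\nu$ converges to $f$ only in $X_0 + X_1$, whereas $\phi \in X_0' + X_1' = (X_0 \intersection X_1)'$ (Proposition~\ref{prop:sum-of-duals}) is continuous only with respect to the $X_0 \intersection X_1$-norm. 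Nothing guarantees that the partial sums $s_N = \sum_{|\nu|<N} f_\nu$ converge to $f$ in $X_0 \intersection X_1$ --- indeed, for a $J$-decomposition with finite $\theta,q$-norm one typically has $\|f_\nu\|_{X_0} = o(2^{\theta\nu})$ and $\|f_\nu\|_{X_1} = o(2^{-(1-\theta)\nu})$, neither of which is summable on its bad side, so $\sum_\nu f_\nu$ need not converge in either $X_0$ or $X_1$ separately. Consequently $\phi(s_N) \to \phi(f)$ cannot be asserted, and the sum $\sum_\nu \phi(f_\nu)$ (which does converge absolutely by your Hölder estimate) may have nothing to do with $\phi(f)$. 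Trying to fix this via an a priori continuity of $\phi$ with respect to the $J$-norm would be circular, as that is essentially what is being proved.

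The paper avoids this by decomposing the \emph{functional} rather than the argument: write $\phi = \sum_\nu \phi_\nu$ with $\phi_\nu \in X_0' \intersection X_1'$ and convergence in $X_0' + X_1'$. Since $X_0' + X_1' = (X_0 \intersection X_1)'$ and $f \in X_0 \intersection X_1$ is fixed, the partial sums $\sum_{|\nu|<N}\<f,\phi_\nu\>$ converge to $\<f,\phi\>$ automatically. Then one estimates $|\<f, \phi_\nu\>| \leq K(f, 2^\nu, X_0, X_1)\, J(\phi_\nu, 2^{-\nu}, X_0', X_1')$ by splitting $f = f_0 + f_1$, and Hölder produces $|\<f,\phi\>| \leq \|f\|_{\theta,q;K,X_0,X_1} \|\phi\|_{\theta,q';J,X_0',X_1'}$. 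The point to absorb is that the identification $X_0' + X_1' = (X_0 \intersection X_1)'$ naturally pairs a ``sum-side'' object $\phi$ against an ``intersection-side'' object $f$; the $J$-decomposition (which is a decomposition by intersection elements, converging in the ambient sum space) must therefore live on the $\phi$-side, not the $f$-side.
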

\begin{proof}
Let $\phi \in [X_0', X_1']_{\theta, q'}$ and consider an arbitrary decomposition $\phi = \sum_\nu \phi_\nu$  with $\phi_\nu \in X_0' \intersection X_1'$ and convergence in $X_0' + X_1'$.

For every $f \in X_0 \intersection X_1 \subset [X_0, X_1]_{\theta, q}$ set
\[
\<f, \phi\> := \sum_\nu \< f, \phi_\nu \>.
\]
This does not depend on the decomposition because $X_0' + X_1' = (X_0 \intersection X_1)'$ by Proposition~\ref{prop:sum-of-duals}.
The bilinearity of this pairing is clear, so that we only need to show continuity.

Consider a decomposition $f = f_0 + f_1 \in X_0 + X_1$ and observe that
\[
|\< f, \phi_\nu \>| \leq |\< f_0, \phi_\nu \>| + |\< f_1, \phi_\nu \>| \leq ||f_0||_{X_0} ||\phi_\nu||_{X_0'} + t ||f_1||_{X_1} t\inv ||\phi_\nu||_{X_1'}
\]
for every $t>0$.
Taking the infimum over decompositions $f=f_0 + f_1$ and setting $t=2^\nu$ we obtain that
\[
|\< f, \phi_\nu \>| \leq K(f,2^\nu,X_0,X_1) J(\phi_\nu,2^{-\nu}, X_0',X_1'),
\]
whence
\begin{align*}
|\<f, \phi\>|
&\leq
\sum_\nu |\< f, \phi_\nu \>|\\
&\leq
\sum_\nu 2^{-\theta\nu} K(f,2^\nu,X_0,X_1) 2^{\theta\nu} J(\phi_\nu,2^{-\nu}, X_0',X_1')\\
&\leq
|| f ||_{\theta, q; K, X_0, X_1} \left( \sum_{\nu=-\infty}^\infty (2^{-\theta \nu} J(\phi_\nu,2^\nu,X_0',X_1'))^{q'} \right)^{1/q'}
\end{align*}
by the Hölder inequality.
Taking the infimum over decompositions $\phi = \sum_\nu \phi_\nu$ we obtain
\[
|\<f, \phi\>| \leq || f ||_{\theta, q; K, X_0, X_1} || \phi ||_{\theta, q'; J, X_0', X_1'}.
\]
By Proposition~\ref{prop:intersection-dense-in-interpolation-space} the intersection $X_0 \intersection X_1$ is dense in $[X_0, X_1]_{\theta, q}$.
Therefore $\phi$ admits a unique extension to a linear form on $[X_0, X_1]_{\theta, q}$.

Conversely, let $\phi \in [X_0, X_1]_{\theta, q}'$.
Since the inclusion $X_0 \intersection X_1 \hookrightarrow [X_0, X_1]_{\theta, q}$ is continuous $\phi$ restricts to a continuous linear form on $X_0 \intersection X_1$.
Proposition~\ref{prop:sum-of-duals} allows us to regard $\phi$ as an element of $X_0' + X_1'$.
By (\ref{eq:K-sup-Jinv}) for every $\nu$ and $\epsilon_{-\nu}>0$ there exists an $f_\nu \in X_0 \intersection X_1$ such that
\[
K(\phi,2^{-\nu},X_0',X_1') - \epsilon_{-\nu} \leq |\phi(f_\nu)| / J(f_\nu, 2^\nu, X_0, X_1).
\]
Let $(\alpha_\nu)_\nu \in \lambda^{1-\theta, q}$ be an arbitrary positive sequence.
Then
\begin{align*}
\sum_\nu 2^{-\nu} & \alpha_\nu (K(\phi,2^{\nu},X_0',X_1') - \epsilon_{\nu})\\
&=
\sum_\nu 2^{\nu} \alpha_{-\nu} (K(\phi,2^{-\nu},X_0',X_1') - \epsilon_{-\nu})\\
&\leq
\sum_\nu 2^{\nu} \alpha_{-\nu} |\phi(f_\nu)| / J(f_\nu, 2^\nu, X_0, X_1)\\
&=
\phi \left( \sum_\nu 2^{\nu} \alpha_{-\nu} f_\nu / J(f_\nu, 2^\nu, X_0, X_1) \right)\\
&\leq
||\phi||_{[X_0, X_1]_{\theta, q}'} \left|\left| \sum_\nu 2^{\nu} \alpha_{-\nu} f_\nu / J(f_\nu, 2^\nu, X_0, X_1) \right|\right|_{[X_0, X_1]_{\theta, q}}\\
&\leq
||\phi||_{[X_0, X_1]_{\theta, q}'} \left|\left| ( 2^{\nu} \alpha_{-\nu} J(f_\nu, 2^\nu, X_0, X_1) / J(f_\nu, 2^\nu, X_0, X_1))_\nu \right|\right|_{\lambda^{\theta, q}}\\
&=
||\phi||_{[X_0, X_1]_{\theta, q}'} \left|\left| ( 2^{\nu} \alpha_{-\nu} )_\nu \right|\right|_{\lambda^{\theta, q}}\\
&=
||\phi||_{[X_0, X_1]_{\theta, q}'} \left|\left| ( \alpha_{\nu} )_\nu \right|\right|_{\lambda^{1-\theta, q}}.
\end{align*}
Since $(\lambda^{1-\theta, q})' = \lambda^{\theta, q'}$ with the dual pairing $\< (a_\nu)_\nu, (b_\nu)_\nu \> = \sum_\nu 2^\nu a_\nu b_\nu$, this implies that
\[
|| (K(\phi,2^{\nu},X_0',X_1') - \epsilon_{\nu})_\nu ||_{\lambda^{\theta, q'}} \leq ||\phi||_{[X_0, X_1]_{\theta, q}'}.
\]
Letting $\epsilon_\nu \to 0$ we see that
\[
||\phi||_{\theta, q'; X_0', X_1'} \leq ||\phi||_{[X_0, X_1]_{\theta, q}'}.
\qedhere
\]
\end{proof}

This immediately implies the following version of the Marcinkiewicz interpolation theorem~\ref{th:marcinkiewicz} for bilinear forms.
\begin{corollary}
\label{cor:marcinkiewicz-bilinear}
Let $A : (X_0 + X_1) \times (Y_0 + Y_1) \to \C$ be a bilinear form such that
\[
|A(x,y)| \leq C || x ||_{X_0} || y ||_{Y_0} \text{ and } |A(x,y)| \leq C || x ||_{X_1} || y ||_{Y_1}.
\]
If $0 < \theta < 1$ and $1 \leq q < \infty$, then
\[
|A(x,y)| \leq C || x ||_{\theta, q'; X_0, X_1} || y ||_{\theta, q; Y_0, Y_1}.
\]
\end{corollary}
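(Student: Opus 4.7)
The plan is to freeze the second argument of $A$, obtaining a linear operator to which the Marcinkiewicz theorem applies, and then to convert the resulting estimate on interpolated dual spaces into the desired bilinear inequality via Proposition~\ref{prop:interpolation-dual}.

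For each $y \in Y_0 \intersection Y_1$, set $(Ty)(x) := A(x, y)$; the two hypotheses give $||Ty||_{X_j'} \leq C ||y||_{Y_j}$ for $j = 0, 1$, so via Proposition~\ref{prop:sum-of-duals} the assignment $T$ extends to a linear map $Y_0 + Y_1 \to X_0' + X_1'$ that is bounded between the $j$-th endpoints. Applying Theorem~\ref{th:marcinkiewicz} with source couple $(Y_0, Y_1)$ and target couple $(X_0', X_1')$ yields
\[
||Ty||_{\theta, q; K, X_0', X_1'} \leq C ||y||_{\theta, q; K, Y_0, Y_1}.
\]
When $q > 1$, the dual exponent $q'$ is finite, and Proposition~\ref{prop:interpolation-dual}, applied to $(X_0, X_1)$ with parameters $(\theta, q')$, identifies $[X_0', X_1']_{\theta, q}$ with $[X_0, X_1]_{\theta, q'}'$. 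Pairing $Ty$ with an arbitrary $x \in [X_0, X_1]_{\theta, q'}$ gives
\[
|A(x, y)| = |(Ty)(x)| \leq ||Ty||_{[X_0, X_1]_{\theta, q'}'} \, ||x||_{\theta, q'; X_0, X_1} \leq C ||y||_{\theta, q; Y_0, Y_1} \, ||x||_{\theta, q'; X_0, X_1}.
\]

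The main obstacle is the endpoint $q = 1$, for which $q' = \infty$ and Proposition~\ref{prop:interpolation-dual} is unavailable. Here I would argue directly via the $K$--$J$ pairing. For $u \in Y_0 \intersection Y_1$ and any decomposition $x = x_0 + x_1 \in X_0 + X_1$, the two hypotheses together with $||u||_{Y_0} \leq J(u,t,Y_0,Y_1)$ and $t ||u||_{Y_1} \leq J(u,t,Y_0,Y_1)$ yield
\[
|A(x, u)| \leq C ||x_0||_{X_0} ||u||_{Y_0} + C ||x_1||_{X_1} ||u||_{Y_1} \leq C J(u,t,Y_0,Y_1) \bigl( ||x_0||_{X_0} + t^{-1} ||x_1||_{X_1} \bigr);
\]
infimizing over $x_0, x_1$ gives $|A(x, u)| \leq C K(x, t^{-1}, X_0, X_1) \, J(u, t, Y_0, Y_1)$. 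Choosing $t = 2^\nu$ and summing over a dyadic $J$-decomposition $y = \sum_\nu u_\nu$ provided by Lemma~\ref{lem:fundamentalJK}, Hölder's inequality with exponents $1$ and $\infty$ (combined with the reindexing $\nu \mapsto -\nu$ to match the dyadic $K$-norm) produces the claimed estimate. Absolute convergence of $\sum_\nu A(x, u_\nu)$ is clear first for $x \in X_0 \intersection X_1$, after which density of $Y_0 \intersection Y_1$ in $[Y_0, Y_1]_{\theta, 1}$ (Proposition~\ref{prop:intersection-dense-in-interpolation-space}, legitimate since $1 < \infty$) closes the argument.
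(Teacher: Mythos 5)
Your proof is correct, but it chooses the less convenient argument to freeze and then has to pay for it with the separate treatment of $q = 1$. Freezing the second argument gives $T : Y_0 + Y_1 \to X_0' + X_1'$ and, after Marcinkiewicz, $||Ty||_{\theta, q; K, X_0', X_1'} \leq C\,||y||_{\theta,q; K, Y_0, Y_1}$; to read this off as a bound in $[X_0, X_1]_{\theta, q'}'$ you must invoke Proposition~\ref{prop:interpolation-dual} with inner exponent $q'$, which requires $q' < \infty$, i.e.\ $q > 1$. Your $K$--$J$ patch for $q = 1$ is sound: the pointwise bound $|A(x,u)| \leq C\,K(x, t\inv, X_0, X_1)\, J(u, t, Y_0, Y_1)$, the dyadic choice $t = 2^\nu$, and Hölder in $\ell^\infty \times \ell^1$ combined with the reindexing $\nu \mapsto -\nu$ do yield $C\,||x||_{\theta, \infty}\,||y||_{\theta, 1}$. (The closing density step is unnecessary: the hypotheses already make $A(x, \cdot)$ continuous on $Y_0 + Y_1$ for every $x \in X_0 + X_1$, so $A(x,y) = \sum_\nu A(x, u_\nu)$ for any $J$-decomposition of $y$, and your estimate shows the series is absolutely convergent; nor do you need Lemma~\ref{lem:fundamentalJK} specifically, since you take an infimum over $J$-decompositions anyway.) The argument the paper has in mind freezes the \emph{first} argument instead: set $S : X_0 + X_1 \to Y_0' + Y_1'$, $(Sx)(y) := A(x,y)$, so that Marcinkiewicz gives $||Sx||_{\theta, q'; Y_0', Y_1'} \leq C\,||x||_{\theta, q'; X_0, X_1}$, and the duality needed is now $[Y_0', Y_1']_{\theta, q'} = [Y_0, Y_1]_{\theta, q}'$, for which Proposition~\ref{prop:interpolation-dual} asks exactly $q < \infty$ --- the hypothesis of the corollary, with $q' = \infty$ permitted. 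The asymmetry in that proposition (the exponent of the undualized couple must be finite; its conjugate on the dual side is unrestricted) tells you which variable to freeze so that the endpoint comes for free.
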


With the interpolation theorem for bilinear forms at hand we are able to obtain an interpolation result for multilinear forms as well.
The following version will be useful later on.
\begin{proposition}
\label{prop:interpolation-multilinear}
Let $A(f_0, \dots, f_n)$ be a multilinear form that satisfies
\[
|A(f_0, \dots, f_n)| \leq C ||f_j||_{1,1} \Prod_{j' \neq j} ||f_{j'}||_{n/k,1}
\]
for every $0 \leq j \leq n$.
Then
\[
|A(f_0, \dots, f_n)| \leq C \Prod_{j} ||f_{j}||_{(n+1)/(k+1),n+1}.
\]
\end{proposition}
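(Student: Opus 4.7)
The plan is to prove the estimate by iterated application of the bilinear Marcinkiewicz interpolation theorem (Corollary~\ref{cor:marcinkiewicz-bilinear}), together with the characterization of real interpolation spaces between Lorentz spaces (Theorem~\ref{th:lorentz-interpolation} and Corollary~\ref{cor:lorentz-interpolation-fine}).

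At each stage I hold all but two of the arguments of $A$ fixed, treat $A$ as a bilinear form in the remaining two slots, and apply Corollary~\ref{cor:marcinkiewicz-bilinear} to two of the available estimates, in which those two slots occupy complementary Lorentz spaces. The first round starts from the hypotheses, which I rewrite as $(H_i), (H_j)$ for each pair $i\neq j$: applied to the bilinear form in slots $(f_i, f_j)$ with the remaining $n-1$ arguments fixed, these give the endpoint estimates required by Corollary~\ref{cor:marcinkiewicz-bilinear} with $X_0 = Y_1 = L^{1,1}$ and $X_1 = Y_0 = L^{n/k,1}$. Choosing $\theta = 1/2$, $q = 2$ and identifying $[L^{1,1}, L^{n/k, 1}]_{1/2, 2} = L^{2n/(n+k), 2}$ via Theorem~\ref{th:lorentz-interpolation} yields an estimate in which slots $i$ and $j$ both lie in this intermediate space and the remaining slots stay at $L^{n/k, 1}$.

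One then iterates $n-1$ further rounds, each time interpolating two estimates from the preceding round that differ only in the pair of slots to be moved. The interpolation parameters are chosen recursively so that after round $m$ the $m+1$ affected slots sit in $L^{p_m, q_m}$ with
\[
\frac{1}{p_m} = \frac{1}{m+1} + \frac{m}{m+1}\cdot\frac{k}{n}, \qquad q_m = m+1;
\]
in particular at round $n$ one lands on $(p_n, q_n) = ((n+1)/(k+1), n+1)$, which is the target exponent pair. Theorem~\ref{th:lorentz-interpolation} is used at every step to identify the resulting interpolation spaces with genuine Lorentz spaces, and the permutation symmetry of the hypothesis---$(H_{\sigma(j)})$ holds for every $\sigma$---guarantees that all the intermediate estimates needed to feed each round are actually on hand.

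The main obstacle is reconciling the two secondary Lorentz indices produced by Corollary~\ref{cor:marcinkiewicz-bilinear} at each round: the corollary is asymmetric, one argument carrying the index $q$ while the other carries $q'$, so a single bilinear application leaves the two interpolated slots in Lorentz spaces with different second indices. This is resolved by running each round twice---once with the pair $(q, q')$ and once with $(q', q)$---and then combining the two resulting estimates via the fine reiteration theorem (Corollary~\ref{cor:lorentz-interpolation-fine}) for Lorentz spaces of matching primary index. The bookkeeping for this pairing, rather than any individual interpolation step, is the delicate part of the argument; once it is carried out, the round-$n$ estimate is precisely the claimed bound.
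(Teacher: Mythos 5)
Your first round is sound: fixing all slots other than $f_i$, $f_j$ and applying Corollary~\ref{cor:marcinkiewicz-bilinear} with $\theta=1/2$, $q=2$ (so that $q'=q=2$) does place both interpolated slots in $L^{2n/(n+k),2}$ via Theorem~\ref{th:lorentz-interpolation}. The gap is in the passage from round $1$ to round $2$.

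A single application of Corollary~\ref{cor:marcinkiewicz-bilinear} moves \emph{only} the two slots it touches. After round 1 you have, say, slots $0,1$ in $L^{p_1,2}$ with $1/p_1=(n+k)/(2n)$ and slots $2,\dots,n$ in $L^{n/k,1}$. If you now fix all slots except $1,2$ and interpolate the estimates $(E_{01})$ and $(E_{02})$, only slots $1$ and $2$ move; slot $0$ stays at $L^{p_1,2}$. It is therefore impossible for all ``$m+1$ affected slots'' to sit in $L^{p_m,q_m}$ after round $m$ unless $p_{m-1}=p_m$, which does not hold. Moreover, even for the two slots that do move, equalizing them forces $\theta=1/2$, after which their common primary index is
\[
\frac12\left(\frac{n+k}{2n}+\frac{k}{n}\right)=\frac{n+3k}{4n},
\]
not the $(n+2k)/(3n)$ your recursion demands; the two agree only when $k=n$. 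The secondary-index bookkeeping you describe (running each round twice with $(q,q')$ and $(q',q)$ and combining with Corollary~\ref{cor:lorentz-interpolation-fine}) is downstream of these problems and cannot repair them. The sanity check $\sum_j 1/p_j=k+1$ is necessary but not sufficient: it is satisfied by your claimed exponents, yet no sequence of bilinear steps of the type you describe can actually realize them.

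The paper's proof sidesteps this by taking $q=1$ in Corollary~\ref{cor:marcinkiewicz-bilinear}. With $q=1$ one of the two interpolated slots carries the secondary index $q'=\infty$, and you are then free to choose $\theta$ so that \emph{that} slot lands exactly on the target primary exponent $(n+1)/(k+1)$; the other interpolated slot receives a harmless intermediate primary exponent with secondary index $1$ and is simply moved again at the next step. After $n$ such steps every slot has primary exponent $(n+1)/(k+1)$, one with secondary index $1$ and the rest with $\infty$, and a second pass of interpolation in the minor exponent (Corollary~\ref{cor:lorentz-interpolation-fine}) then brings all secondary indices to $n+1$. The crucial difference is that the asymmetry of Corollary~\ref{cor:marcinkiewicz-bilinear} is exploited --- one slot per step is parked at the target and stays there --- rather than suppressed by forcing $\theta=1/2$.
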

\begin{proof}
Let $f_2, \dots, f_n$ be fixed.
Then by the assumption
\[
A(\cdot, \cdot, f_2, \dots, f_n) : L^{1,1} \times L^{n/k,1} \to \C, L^{n/k,1} \times L^{1,1} \to \C.
\]
Since the space of simple functions is dense both in $L^{n/k,1}$ and in $L^{1,1}$, we can apply the Marcinkiewicz interpolation theorem for bilinear forms (Corollary~\ref{cor:marcinkiewicz-bilinear}) with $q=1$.
Together with Theorem~\ref{th:lorentz-interpolation} it implies
\[
A(\cdot, \cdot, f_2, \dots, f_n) : L^{(n+1)/(k+1),\infty} \times L^{p,1} \to \C,
\]
where
\[
\frac1p = k+1 - 1 \cdot \frac{k+1}{n+1} - (n-1) \cdot \frac{k}{n}.
\]
Therefore and by symmetry we have
\[
A(f_0, \dots, f_n) \leq C ||f_{0}||_{(n+1)/(k+1),\infty} ||f_j||_{p,1} \Prod_{j' \geq 1, j' \neq j} ||f_{j'}||_{n/k,1}
\]
for every $j \geq 1$.
Now let $f_0, f_3, \dots, f_n$ be fixed and interpolate in $f_1$ and $f_2$.
Continuing in the same way we obtain
\[
A(f_0, \dots, f_n) \leq C ||f_{0}||_{(n+1)/(k+1),\infty} \dots ||f_{m-1}||_{(n+1)/(k+1),\infty} ||f_j||_{p,1} \Prod_{j' \geq m, j' \neq j} ||f_{j}||_{n/k,1},
\]
for every $m$ and every $j\geq m$ with
\[
\frac1p = k+1 - m \cdot \frac{k+1}{n+1} - (n-m) \cdot \frac{k}{n}.
\]
Taking $m=n$ we obtain
\[
A(f_0, \dots, f_n) \leq C ||f_j||_{(n+1)/(k+1),1} \Prod_{j' \neq j} ||f_{j'}||_{(n+1)/(k+1),\infty}
\]
for every $j$.
Now we repeat the procedure using Corollary~\ref{cor:lorentz-interpolation-fine} to interpolate in the minor exponent.
\end{proof}

Another consequence of the interpolation theorem for dual spaces is the weak-type Young inequality.
\begin{proposition}
\label{prop:weak-type-young}
Let $1 < p,q < \infty$ be such that
\[
0 < \frac{1}{r} = \frac1q + \frac1p - 1 < 1.
\]
and $f \in L^{p,\infty}(\R^n)$ and $g \in L^q(\R^n)$.
Then
\[
||f * g||_{r} \leq C ||f * g||_{r,q} \leq C ||f||_{p,\infty} ||g||_q.
\]
\end{proposition}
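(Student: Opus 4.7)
The plan is to fix $f \in L^{p,\infty}(\R^n)$ and view convolution with $f$ as a linear operator $T_f : g \mapsto f * g$. The main estimate $||f * g||_{r, q} \leq C ||f||_{p, \infty} ||g||_q$ will follow from the Marcinkiewicz Theorem~\ref{th:marcinkiewicz} applied to $T_f$ between two endpoint mapping properties, after which the first inequality drops out of the Lorentz scale inclusion.

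The first endpoint is $T_f : L^1 \to L^{p, \infty}$ with norm $\leq C ||f||_{p, \infty}$. For this, fix $g \in L^1$ and introduce the auxiliary operator $S_g : h \mapsto h * g$. Classical Young gives $||S_g h||_1 \leq ||g||_1 ||h||_1$ and $||S_g h||_\infty \leq ||g||_1 ||h||_\infty$. Applying Theorem~\ref{th:marcinkiewicz} with $\theta = 1 - 1/p$ and second parameter $\infty$, together with the identification $[L^1, L^\infty]_{\theta, \infty} = L^{p, \infty}$ from Theorem~\ref{th:lorentz-interpolation}, yields $||f * g||_{p, \infty} = ||S_g f||_{p, \infty} \leq C ||g||_1 ||f||_{p, \infty}$. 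The second endpoint is $T_f : L^{p', 1} \to L^\infty$ with norm $\leq C ||f||_{p, \infty}$. Here I would first derive a Hölder inequality $\int |uv| \leq C ||u||_{p, \infty} ||v||_{p', 1}$ from Corollary~\ref{cor:marcinkiewicz-bilinear} applied to the bilinear form $(u, v) \mapsto \int uv$ (trivially bounded on $L^1 \times L^\infty$ and on $L^\infty \times L^1$), taking $\theta = 1 - 1/p$ and bilinear parameter $1$ and identifying the interpolation spaces by Theorem~\ref{th:lorentz-interpolation}. Applying this estimate to $y \mapsto f(x - y)$ and $g(y)$ bounds $|f * g(x)|$ uniformly in $x$, since the non-increasing rearrangement (and hence the $L^{p, \infty}$-norm) is invariant under translation and reflection.

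With both endpoints in hand I apply Theorem~\ref{th:marcinkiewicz} once more to $T_f$ with $X_0 = L^1$, $X_1 = L^{p', 1}$, $Y_0 = L^{p, \infty}$, $Y_1 = L^\infty$. Choose $\theta$ by $1/q = 1 - \theta/p$; then Theorem~\ref{th:lorentz-interpolation} identifies $[X_0, X_1]_{\theta, q} = L^{q, q} = L^q$ and $[Y_0, Y_1]_{\theta, q} = L^{r, q}$ with $1/r = (1-\theta)/p$, and the direct algebraic check $1/p + 1/q - 1 = 1/p + 1 - \theta/p - 1 = (1-\theta)/p$ confirms $1/r = 1/p + 1/q - 1$. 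This produces $||f * g||_{r, q} \leq C ||f||_{p, \infty} ||g||_q$, the second claimed inequality. Finally, the hypothesis $1/r < 1$ combined with $p > 1$ forces $q < r$, so Corollary~\ref{cor:lorentz-scale} gives $L^{r, q} \hookrightarrow L^{r, r} = L^r$, whence $||f*g||_r \leq C ||f*g||_{r, q}$. The main obstacle is keeping track of the Lorentz fine indices across the two rounds of interpolation, in particular choosing the second parameter equal to $q$ so that the input space collapses to $L^{q, q} = L^q$ while the output retains the correct fine index $q$.
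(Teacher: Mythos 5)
Your proof is correct and follows essentially the same two-step scheme as the paper: first establish the endpoint bounds $T_f : L^1 \to L^{p,\infty}$ and $T_f : L^{p',1} \to L^\infty$ (the paper gets the second by invoking Proposition~\ref{prop:interpolation-dual} directly, you derive the equivalent Lorentz--Hölder inequality from Corollary~\ref{cor:marcinkiewicz-bilinear}, which the text itself presents as an immediate consequence of that proposition), then interpolate in $g$ via Marcinkiewicz. The one cosmetic difference is that you fix the fine index $s=q$ from the outset, while the paper records the full family $||f*g||_{r,s}\le C||f||_{p,\infty}||g||_{q,s}$ before specializing, but this changes nothing substantive.
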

\begin{proof}
The first inequality follows from Corollary~\ref{cor:lorentz-scale} because $q \leq r$.

For the second inequality observe that
\[
||f * g||_{\infty} \leq C ||f||_{\infty} ||g||_1
\text{ and }
||f * g||_{1} \leq C ||f||_{1} ||g||_1,
\]
by duality and Fubini's theorem, respectively.
Since $f*g$ is linear in $f$, the Marcinkiewicz interpolation theorem~\ref{th:marcinkiewicz} implies that
\[
||f * g||_{p,\infty} \leq C ||f||_{p,\infty} ||g||_1.
\]
Moreover, by duality (Proposition~\ref{prop:interpolation-dual}) we have that
\[
||f * g||_{\infty} \leq C ||f||_{p,\infty} ||g||_{p',1}.
\]
Since $f*g$ is also linear in $g$, we can once again apply the Marcinkiewicz interpolation theorem~\ref{th:marcinkiewicz} and obtain
\[
||f * g||_{r,s} \leq C ||f||_{p,\infty} ||g||_{q,s}
\]
for every $1 \leq s \leq \infty$.
Setting $s=q$ yields the claim.
\end{proof}

Since $|\cdot|^{-n/p} \in L^{p,\infty}(\R^n)$ by Proposition~\ref{prop:sphere-Lp-Rn-Lpinfty}, we immediately obtain the Hardy-Littlewood-Sobolev theorem on fractional integration.
\begin{corollary}
\label{cor:hardy-littlewood-sobolev}
Let $1 < q,r < \infty$ and $0 < \alpha < n$ satisfy $\frac{1}{r} + 1 = \frac1q + \frac{\alpha}{n}$.
Then
\[
||f||_{r} \leq C ||g||_q,
\]
whenever $g \in L^q(\R^n)$ and
\[
f(x) = \int_{\R^n} \frac{g(y)}{|x-y|^\alpha} \dif y.
\]
\end{corollary}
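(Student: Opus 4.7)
The plan is to recognize $f$ as a convolution $f = K * g$ with $K(x) = |x|^{-\alpha}$, and then apply the weak-type Young inequality (Proposition~\ref{prop:weak-type-young}) together with the membership $K \in L^{n/\alpha,\infty}(\R^{n})$.

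First I would set $p = n/\alpha$. Since $0 < \alpha < n$, we have $1 < p < \infty$. Proposition~\ref{prop:sphere-Lp-Rn-Lpinfty}, applied with $g \equiv 1$ on $S^{n-1}$, yields
\[
\bigl\| |\cdot|^{-\alpha} \bigr\|_{p,\infty} = \bigl\| |\cdot|^{-n/p} \bigr\|_{p,\infty} = C < \infty,
\]
so $K \in L^{p,\infty}(\R^n)$. Next I would verify that the exponents satisfy the hypotheses of Proposition~\ref{prop:weak-type-young}: the relation
\[
\frac{1}{q} + \frac{1}{p} - 1 = \frac{1}{q} + \frac{\alpha}{n} - 1 = \frac{1}{r}
\]
is exactly the stated Sobolev condition, and since $1 < r < \infty$ we have $0 < 1/r < 1$ as required.

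Applying Proposition~\ref{prop:weak-type-young} to $f = K * g$ gives
\[
\|f\|_{r} \leq C \|K\|_{p,\infty} \|g\|_{q} \leq C' \|g\|_{q},
\]
which is the desired bound.

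There is essentially no main obstacle: all the hard analytic work has already been done in establishing the weak-type Young inequality via real interpolation and duality, and in the scaling computation for $||\,|x|^{-n/p}\,||_{p,\infty}$. The only thing to check is that the arithmetic of the Sobolev exponents matches the combination in Proposition~\ref{prop:weak-type-young}, which it does by construction.
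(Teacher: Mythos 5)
Your proof is correct and coincides exactly with the paper's own argument: recognize $f = |\cdot|^{-\alpha} * g$, use Proposition~\ref{prop:sphere-Lp-Rn-Lpinfty} to place $|\cdot|^{-n/p}$ in $L^{p,\infty}$ with $p = n/\alpha$, and then apply the weak-type Young inequality (Proposition~\ref{prop:weak-type-young}) after checking the exponent arithmetic.
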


\section{Supplement: Maximal inequality in large dimension}
The constants obtained in Lemma~\ref{lem:hardy-littlewood-maximal-inequality-L1} grow exponentially in $n$.
Here we briefly discuss an estimate with better asymptotic behavior for the centered maximal function associated to the standard ball in $\R^n$ due to Stein and Strömberg \cite{MR727348}.

Their method makes use of the heat semigroup
\[
T_t f = f * h_t, \quad h_t(y) = (4\pi t)^{-n/2} e^{-|y|^2/(4t)}.
\]
The operators $T_t$ are positive complete contractions on $L^1$ (i.e.\ $T_t f\geq 0$ whenever $f\geq 0$, $||T_t f||_1 \leq ||f||_1$, and $||T_t f||_\infty \leq ||f||_\infty$).
Operators satisfying the latter two estimates are also sometimes called \emph{Dunford-Schwartz operators}.
Moreover the semigroup $(T_t)_{t>0}$ is strongly continuous, i.e.~the map $t\mapsto T_t f$ is $L^1$-norm continuous for every $f \in L^1$.

These properties of $(T_t)_{t>0}$ ensure that the Hopf mean ergodic theorem applies.
The classical reference is the book of Dunford and Schwartz \cite[VIII.7]{MR1009162}.
The proof of the Hopf lemma presented here (due to Garcia) may be found e.g.~in Krengel's book \cite{MR797411}.
\begin{lemma}[Hopf]
\index{Hopf's lemma}
\label{lem:Hopf}
Let $(\Omega, \mu)$ be a measure space and $T$ be a positive complete contraction on $L^1(\Omega,\mu) + L^\infty(\Omega, \mu)$.
Let $S_k := \sum_{j=0}^{k-1} T^j$ denote the sums of iterates of $T$ and $M_n f := \sup_{1 \leq k \leq n} S_k f$ be the associated maxima.
Then
\[
\int_{\{M_n f \geq 0\}} f \dif\mu \geq 0
\]
for every $n$ and every $f \in L^1 - L^\infty_+$, where $L^\infty_+$ denotes the positive cone of $L^\infty$.
\end{lemma}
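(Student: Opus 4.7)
The plan is to follow the classical Garcia argument, whose heart is an inequality comparing $M_n f$ with a $T$-iterate of the positive part $M_n^+ f := \max(0, M_n f) = \sup_{0\leq k\leq n} S_k f$ (with the convention $S_0 = 0$).

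\medskip

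First I would record the identity $S_k f = f + T S_{k-1} f$ for $k\geq 1$. Since $S_{k-1} f \leq M_n^+ f$ for $1\leq k\leq n$ and $T$ is positive, this yields
\[
S_k f \leq f + T M_n^+ f \qquad (1\leq k\leq n),
\]
hence $M_n f \leq f + T M_n^+ f$. On the set $E := \{M_n f \geq 0\}$ the left-hand side equals $M_n^+ f$, so
\[
f \geq M_n^+ f - T M_n^+ f \quad \text{on } E.
\]

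\medskip

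Next I would integrate this pointwise inequality over $E$. On the complement of $E$ one has $M_n^+ f = 0$, so $\int_E M_n^+ f = \int M_n^+ f$. Since $T$ is positive we also have $T M_n^+ f \geq 0$ everywhere, giving $\int_E T M_n^+ f \leq \int T M_n^+ f$. Provided $M_n^+ f \in L^1$, the $L^1$-contractivity of $T$ then yields $\int T M_n^+ f \leq \int M_n^+ f$, and chaining these three observations produces
\[
\int_E f \,\dif\mu \;\geq\; \int M_n^+ f \,\dif\mu - \int T M_n^+ f \,\dif\mu \;\geq\; 0,
\]
which is the desired conclusion.

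\medskip

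The only real technical point is the integrability of $M_n^+ f$, since the hypothesis places $f$ only in $L^1 - L^\infty_+$ rather than in $L^1$. Writing $f = g - h$ with $g \in L^1$ and $h \in L^\infty_+$, positivity of $T$ gives $T^j f \leq T^j g \leq T^j |g|$ for every $j$, hence $M_n^+ f \leq \sum_{j=0}^{n-1} T^j |g|$, which lies in $L^1$ with norm at most $n\|g\|_1$ by iterating the $L^1$-contraction. This legitimizes the application of $L^1$-contractivity in the previous step. The rest of the argument is then just assembling these four ingredients -- the identity $S_k = \id + T S_{k-1}$, positivity, $L^1$-contractivity, and the vanishing of $M_n^+ f$ off $E$ -- and the main obstacle is purely notational, namely keeping track of where each of the three defining properties of a positive $L^1$-contraction is used.
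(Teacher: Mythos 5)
Your proof is correct and follows the same Garcia-style argument as the paper: establish the pointwise inequality $M_n f \leq f + T(M_n f)^+$ via $S_k = \mathrm{id} + T S_{k-1}$ and positivity, integrate over $\{M_n f \geq 0\}$, and finish with positivity and $L^1$-contractivity. Your explicit justification that $(M_n f)^+ \in L^1$ (dominating it by $\sum_{j<n} T^j|g|$ where $f = g-h$) fills in a step the paper dispatches with a single clause, but the structure is identical.
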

\begin{proof}
By definition of $M_n$ and positivity of $T$ we have
\[
S_k f = f + TS_{k-1}f \leq f + T(M_n f)^+
\]
for every $2 \leq k \leq n$ (Here $f^+$ denotes the positive part of $f$).
The corresponding estimate for $S_1 f = f$ is trivial.
Taken together these inequalities imply
\[
M_n f \leq f + T (M_n f)^+.
\]
By positivity of $T$ we have $(M_n f)^+ \in L^1$ and therefore
\begin{multline*}
\int_{\{M_n f \geq 0\}} f \dif\mu
\geq
\int_{\{M_n f \geq 0\}} M_n f - T(M_n f)^+ \dif\mu
=\\
\int (M_n f)^+ \dif\mu - \int_{\{M_n f \geq 0\}} T(M_n f)^+ \dif\mu
\geq
\int (M_n f)^+ \dif\mu - \int T(M_n f)^+ \dif\mu\\
\geq
0.
\qedhere
\end{multline*}
\end{proof}
Since $f(x) \leq 0$ whenever $M_nf(x) = 0$, Hopf's Lemma~\ref{lem:Hopf} immediately implies that
\[
\int_{\{M_n f > 0\}} f \dif\mu \geq 0
\]
(note that the domain of integration has changed).
\begin{theorem}
\index{maximal ergodic theorem!discrete case}
\label{thm:max-ergodic-discrete}
Let $T$ be as above, $A_k := \frac1k S_k$ be the weighted averages of iterates of $T$ and $A_n^*[T] f = A_n^* f := \sup_{1\leq k\leq n} A_k f$.
Then, for every $f \in L^1$ and $\lambda > 0$,
\[
\lambda \mu\{ A_n^* f \geq \lambda \} \leq \int_{\{A_n^* f > \lambda\}} f \dif\mu.
\]
In particular, the maximal operator $A^*[T] f = A^* f := \sup_{k \in \N} A_k f$ satisfies
\[
\lambda \mu\{ A^* f \geq \lambda \} \leq ||f||_1.
\]
\end{theorem}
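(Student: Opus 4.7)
The strategy is to apply Hopf's Lemma~\ref{lem:Hopf} to the shifted function $g := f - \lambda\mathbf{1}$, where $\mathbf{1}$ denotes the constant function equal to $1$. Since $f\in L^{1}$ and $\lambda\mathbf{1}\in L^{\infty}_{+}$, one has $g\in L^{1}-L^{\infty}_{+}$, so the lemma (and its strengthening $\int_{\{M_{n}g>0\}}g\dif\mu\ge 0$ noted immediately after its statement) applies to $g$.

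The key identification is that of the super-level sets of $M_{n}g$ with those of $A_{n}^{*}f$. Since $T$ is a positive $L^{\infty}$-contraction one has $T\mathbf{1}\le\mathbf{1}$ pointwise, and in the setting of interest (where $T$ is the heat semigroup) $T\mathbf{1}=\mathbf{1}$, so $S_{k}\mathbf{1}=k$ and $S_{k}g=S_{k}f-k\lambda$. Consequently
\[
\{M_{n}g\ge 0\}=\{A_{n}^{*}f\ge\lambda\}\quad\text{and}\quad\{M_{n}g>0\}=\{A_{n}^{*}f>\lambda\}.
\]

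Applying Hopf's Lemma to $g$ on the set $\{M_{n}g\ge 0\}$ yields $0\le\int_{\{A_{n}^{*}f\ge\lambda\}}(f-\lambda)\dif\mu$, which rearranges to give the claimed inequality; the restriction of the integration domain on the right-hand side to the strict super-level set $\{A_{n}^{*}f>\lambda\}$ follows from the companion bound $\int_{\{M_{n}g>0\}}g\dif\mu\ge 0$ combined with the observation that $f\le\lambda$ on the boundary $\{A_{n}^{*}f=\lambda\}$ (since there $f-\lambda=S_{1}g\le M_{n}g=0$). The ``in particular'' claim follows by applying the finite-$n$ inequality to $|f|$ in place of $f$---positivity of $T$ gives $A^{*}f\le A^{*}|f|$, so $\{A^{*}f\ge\lambda\}\subseteq\{A^{*}|f|\ge\lambda\}$---and bounding $\int_{\{A_{n}^{*}|f|>\lambda\}}|f|\dif\mu\le\|f\|_{1}$, then letting $n\to\infty$ by monotone convergence of measure on the increasing sets $\{A_{n}^{*}|f|\ge\lambda\}$.

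The main obstacle is the identification of the super-level sets, which rests on the exact cancellation $S_{k}\mathbf{1}=k$ afforded by $T\mathbf{1}=\mathbf{1}$. For a general positive Dunford-Schwartz $T$ satisfying only $T\mathbf{1}\le\mathbf{1}$ one only gets the inclusion $\{A_{n}^{*}f\ge\lambda\}\subseteq\{M_{n}g\ge 0\}$ for free, and the reverse inclusion can genuinely fail; the standard remedy is to reduce to the mass-preserving case via an Akcoglu-type dilation of $(\Omega,\mu)$ that absorbs the defect $\mathbf{1}-T\mathbf{1}$ into an auxiliary sink, after which the argument above proceeds verbatim.
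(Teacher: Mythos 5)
Your central worry---that for a general positive Dunford--Schwartz contraction one only gets the inclusion $\{A_n^*f\ge\lambda\}\subseteq\{M_ng\ge 0\}$, and that the reverse inclusion can fail---is perfectly correct, but your remedy (an Akcoglu-type dilation to the mass-preserving case) is not what the paper does, and in fact no such reduction is needed. The paper's proof observes that the one-sided inclusion is already enough, by a small refinement you have missed. From $T^j\mathbf{1}\le\mathbf{1}$ one gets $T^jg\ge T^jf-\lambda\mathbf{1}$ and hence $A_n^*g\ge A_n^*f-\lambda\mathbf{1}$, yielding the chain
\[
\{g>0\}=\{f>\lambda\}\subseteq\{A_n^*f>\lambda\}\subseteq\{M_ng>0\}.
\]
The crucial point is that $g\le 0$ on the complement of $\{g>0\}$, and $\{A_n^*f>\lambda\}$ sits \emph{between} $\{g>0\}$ and $\{M_ng>0\}$; therefore removing the slice $\{M_ng>0\}\setminus\{A_n^*f>\lambda\}$ (on which $g\le 0$) can only increase the integral, so
\[
\int_{\{A_n^*f>\lambda\}}g\,\dif\mu\;\ge\;\int_{\{M_ng>0\}}g\,\dif\mu\;\ge\;0
\]
by the remark following Hopf's lemma. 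This gives the finite-$n$ bound directly for an arbitrary positive complete contraction, without assuming $T\mathbf{1}=\mathbf{1}$ and without any change of underlying measure space.

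So the concrete gap in your write-up is twofold. First, your main argument proves the statement only under the extra hypothesis $T\mathbf{1}=\mathbf{1}$, while the theorem is stated for a general positive complete contraction, and you leave the general case to a sketched dilation that you do not carry out. Second, and more to the point, the dilation is a detour: once you have the one-sided inclusion you have been worrying about, the sign information $g\le 0$ off $\{g>0\}$ closes the argument immediately. Your treatment of the ``in particular'' claim (passing to $|f|$, bounding the integral by $\|f\|_1$, and letting $n\to\infty$) is fine and matches the paper's use of monotone convergence, though one can also estimate $\int_{\{A_n^*f>\lambda\}}f\le\|f\|_1$ directly without replacing $f$ by $|f|$.
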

\begin{proof}
\newcommand{\eins}{\mbox{$1 \hspace{-2.25pt} \mathrm{l}$}}
The latter assertion follows from the former by the monotone convergence theorem.
Furthermore we only need to verify the former with $\{A_n^* f > \lambda\}$ in place of $\{A_n^* f \geq \lambda\}$.
For this end we consider the function $g = f - \lambda \eins$.
By $L^\infty$-contractivity of $T$ we have
\[
T^j g = T^j f - \lambda T^j \eins \geq T^j f - \lambda \eins.
\]
Taking the appropriate sums and suprema we obtain
\[
A_n^* g \geq A_n^* f - \lambda \eins.
\]
This implies
\[
\{g>0\} = \{f>\lambda\} \subseteq \{A_n^* f > \lambda\} \subseteq \{A_n^* g > 0\} = \{M_n g > 0\}.
\]
In particular,
\[
\int_{\{ A_n^* f > \lambda \}} (f - \lambda \eins) \dif\mu
=
\int_{\{ A_n^* f > \lambda \}} g \dif\mu
\geq
\int_{\{M_n g > 0\}} g \dif\mu
\geq
0
\]
by the remark following Hopf's Lemma~\ref{lem:Hopf}.
\end{proof}

\begin{theorem}
\index{maximal ergodic theorem!continuous case}
Let $(T_t)_{t>0}$ be a strongly continuous semigroup of positive complete contractions, $B_s := \frac1s \int_0^s T_t f \dif t$ be the weighted averages of the semigroup and $B^* f := \sup_{s>0} B_s f$.
Then, for every $\lambda > 0$,
\[
\lambda \mu\{ B^* f > \lambda \} \leq ||f||_1.
\]
\end{theorem}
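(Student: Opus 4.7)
The plan is to deduce this from the discrete case (Theorem~\ref{thm:max-ergodic-discrete}) by a discretization argument that trades a parameter $h\to 0$ against a step count $N\to\infty$. Since each $T_{t}$ is positive we have $|B_{s}f|\leq B_{s}|f|$, so it suffices to treat $f\geq 0$. For measurability of $B^{*}f$ one selects a jointly measurable representative of $(t,x)\mapsto(T_{t}f)(x)$, which exists by strong $L^{1}$-continuity of the semigroup; by Fubini, for a.e.~$x$ the map $s\mapsto B_{s}f(x)$ is absolutely continuous, and therefore $B^{*}f$ coincides a.e.\ with the countable supremum $\sup_{s\in\Q_{>0}}B_{s}f$.

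Fix $h>0$ and set $U:=T_{h}$ and $g:=\int_{0}^{h}T_{t}f\,\dif t$. Complete contractivity gives $||g||_{1}\leq h||f||_{1}$. For $s>0$ with $k=\lfloor s/h\rfloor$, positivity of $T_{t}$ and the semigroup property yield
\[
\int_{0}^{s}T_{t}f\,\dif t
\leq
\int_{0}^{(k+1)h}T_{t}f\,\dif t
=
\sum_{j=0}^{k}T_{jh}g
=
(k+1)\,A_{k+1}[U]g.
\]
If additionally $s\geq Nh$, then $k\geq N$ and $\tfrac{k+1}{s}\leq\tfrac{k+1}{kh}\leq\tfrac{1+1/N}{h}$, so
\[
\sup_{s\geq Nh}B_{s}f\leq\frac{1+1/N}{h}\,A^{*}[U]g.
\]

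Applying Theorem~\ref{thm:max-ergodic-discrete} to $U$ at level $\lambda h/(1+1/N)$ gives
\[
\mu\{\sup_{s\geq Nh}B_{s}f>\lambda\}
\leq
\mu\{A^{*}[U]g>\tfrac{\lambda h}{1+1/N}\}
\leq
\frac{(1+1/N)||g||_{1}}{\lambda h}
\leq
\frac{(1+1/N)||f||_{1}}{\lambda}.
\]
Now choose $h=\epsilon/N$ so that the bound covers all $s\geq\epsilon$, let $N\to\infty$ to kill the $1+1/N$ factor, and then let $\epsilon\to 0$, invoking continuity of $\mu$ from below on the increasing family $\{\sup_{s\geq\epsilon}B_{s}f>\lambda\}$. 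This yields the claimed inequality.

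The main obstacle is the sharp constant $1$: the crude bound $\tfrac{k+1}{kh}\leq 2/h$, valid as soon as $k\geq 1$, would give only a factor of $2$. One must therefore push the discretization deep into the tail $s\geq Nh$ and take $N\to\infty$ jointly with $h\to 0$. A secondary technicality is the a.e.~continuity of $s\mapsto B_{s}f(x)$ needed to reduce $B^{*}f$ to a countable supremum.
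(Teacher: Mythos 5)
Your proof is correct and takes a genuinely different route from the text. The text approximates $B_s f$ by Riemann sums $\frac1m\sum_{j<m}T(j/k!)f$; these converge in $L^1$, a diagonal subsequence is extracted to upgrade to a.e.\ pointwise convergence, each Riemann sum is dominated by the discrete maximal function $A^*[T(1/k!)]f$ of $f$ itself, and the weak-type bound is passed to the $\liminf$ via an increasing-union argument. You instead establish a genuine pointwise domination with no approximation: positivity and the semigroup law give $\int_0^s T_t f\,\dif t\leq(k+1)A_{k+1}[T_h]g$ exactly, where $g=\int_0^h T_t f\,\dif t$, so the discrete theorem applies directly to the smoothed function $g$ and the time-$h$ operator, with no subsequence machinery at all. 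The price is the coupled $h\to 0$, $N\to\infty$ limit needed to recover the sharp constant (as you note, the crude $(k+1)/(kh)\leq 2/h$ would cost a factor of $2$). Yours is essentially the classical Dunford--Schwartz/Krengel transfer argument, while the text's is a direct Riemann-sum limit; each buys something --- yours avoids a.e.\ subsequence convergence, the text's avoids the bookkeeping with $\lfloor s/h\rfloor$ and the double limit. Your measurability preliminary (joint measurability of $(t,x)\mapsto T_tf(x)$ from strong continuity, Fubini, local absolute continuity of $s\mapsto B_sf(x)$) is a slightly more explicit version of the paper's appeal to strong continuity for the reduction to a countable supremum.
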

Note that the supremum in the definition of $B^*$ has to be taken with respect to the Banach lattice structure on $L^1$, because the parameter varies over an uncountable set.
\begin{proof}
By strong continuity we can reduce to a countable supremum in the definition of $B^*$, i.e.
\[
B^* f = \sup_{s>0, s \in \Q} B_s f,
\]
the supremum now being equivalent to the pointwise supremum.
Again by strong continuity,
\[
B_s f = \lim_{k\to \infty} \frac1{\lceil s k! \rceil} \sum_{j=0}^{\lceil s k! \rceil - 1} T\left(\frac{j}{k!}\right) f
\]
for every rational $s$.
Passing to a subsequence by means of a diagonal argument we may assume pointwise convergence almost everywhere.
Since the expression following the limit symbol is bounded by $A^*[T(1/k!)] f$, we obtain
\[
B^* f \leq \liminf_{i \to \infty} A^*[T(1/k_i!)] f 
\]
pointwise almost everywhere.
Therefore
\[
\{ B^* f > \lambda \} \subseteq \Union_N \Intersection_{i=N}^\infty \{ A^*[T(1/k_i!)] f > \lambda \}.
\]
Since the union in question is increasing and by Theorem~\ref{thm:max-ergodic-discrete} the claim follows.
\end{proof}
In order to obtain the Hardy-Littlewood maximal inequality we estimate $M f$ by $B^{*} f$.
\begin{lemma}
\label{lem:ball-gaussian}
There exists a constant $C$ such that
\[
\Leb[n](B(0,1))\inv \leq C n \frac{1}{1/n} \int_0^{1/n} h_t(1) \dif t
\]
for every $n \geq 3$, where $B(0,1) \subset \R^n$ is the standard unit ball.
\end{lemma}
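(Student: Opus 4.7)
The plan is to reduce both sides to Gamma-function expressions and compare them. By the standard formula $\Leb[n](B(0,1)) = \pi^{n/2}/\Gamma(n/2+1)$, the left-hand side equals $\Gamma(n/2+1)/\pi^{n/2}$. For the right-hand side, I would apply the substitution $u = 1/(4t)$ (so $\dif t = -\dif u/(4u^2)$) to rewrite
\[
\int_0^{1/n} (4\pi t)^{-n/2} e^{-1/(4t)} \dif t = \frac{1}{4\pi^{n/2}} \int_{n/4}^{\infty} u^{n/2 - 2} e^{-u} \dif u,
\]
so the right-hand side of the claim (apart from the constant $C$) becomes $\dfrac{n^2}{4\pi^{n/2}} \displaystyle\int_{n/4}^{\infty} u^{n/2 - 2} e^{-u} \dif u$.

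Cancelling the common factor $\pi^{-n/2}$ and using the identity $\Gamma(n/2+1) = (n/2)(n/2-1)\Gamma(n/2-1) \leq (n^2/4)\Gamma(n/2-1)$ (valid for $n \geq 3$), the problem reduces to producing an absolute constant $c > 0$ such that
\[
\int_{n/4}^{\infty} u^{n/2-2} e^{-u} \dif u \geq c\, \Gamma(n/2-1)
\]
for every $n \geq 3$. In other words, one must show that a uniformly positive fraction of the mass of the Gamma distribution with shape $n/2-1$ lies in the tail above $n/4$.

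To prove this tail bound, I would use that for $n \geq 4$ the density $u^{n/2-2}e^{-u}/\Gamma(n/2-1)$ has its mode at $n/2-2$, and that the median of a Gamma density with shape $\alpha \geq 1$ satisfies $\alpha - 1 \leq \mathrm{median} \leq \alpha$ (the standard sandwich between mode and mean for the right-skewed Gamma). Hence at least half the mass lies above $n/2 - 2$. For $n \geq 8$ we have $n/4 \leq n/2 - 2$, so the tail estimate follows with $c = 1/2$. The remaining finitely many cases $n \in \{3,4,5,6,7\}$ are disposed of by direct numerical evaluation of the corresponding integrals.

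The main delicate point is the median inequality cited above; if one prefers a self-contained argument, one can instead pinpoint the maximum of the integrand $t \mapsto (4\pi t)^{-n/2}e^{-1/(4t)}$ at $t^{*} = 1/(2n) \in (0,1/n)$ with value $(n/(2\pi e))^{n/2}$, estimate the second derivative of $\log h_{t}(1)$ at $t^{*}$ (which is of order $-n^{3}$), and integrate over a neighborhood of $t^{*}$ of width $\sim n^{-3/2}$, where the integrand is comparable to its peak. Combining this with Stirling's asymptotic $\Gamma(n/2+1)/\pi^{n/2} \sim \sqrt{n}(n/(2\pi e))^{n/2}$ yields the same inequality with matching constants.
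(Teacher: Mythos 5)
Your proposal is correct and begins with the same substitution $u=1/(4t)$ that the paper uses, arriving at the same reduction: one must show
\[
\int_{n/4}^{\infty} u^{n/2-2} e^{-u}\, \dif u \geq c\, \Gamma(n/2-1)
\]
for some absolute $c>0$. Where you diverge is in the proof of this tail bound. The paper estimates the complementary integral $\int_0^{n/4} u^{n/2-2}e^{-u}\,\dif u$ by splitting at $n/(4a)$ for a constant $a$ chosen in $(e/2,\,1/(2-2\log 2))$, bounding one piece by a pure power and the other by an exponential, and invoking Stirling to show both are $o(\Gamma(n/2-1))$; this is elementary but the choice of $a$ is delicate and the argument is asymptotic, so the finitely many small $n$ strictly speaking need separate (trivial) verification, which the paper leaves implicit. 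You instead interpret the tail as a Gamma-distribution probability and invoke the median inequality $\alpha-1 \leq \text{median} \leq \alpha$ for Gamma$(\alpha)$ with $\alpha\geq 1$, which gives the bound with $c=1/2$ for $n\geq 8$ and leaves $n\in\{3,\dots,7\}$ to numerics. This is shorter and more conceptual, but the median inequality is a nontrivial fact about Gamma (it follows from, e.g., the Chen--Rubin bound $\text{median}>\alpha-1/3$, or from log-concavity plus skewness considerations) and really does need a reference or a short proof — you are right to flag it, and your Laplace-method alternative is a reasonable self-contained fallback, though as written it is only a sketch. Both routes are sound; the paper's is more self-contained, yours is more transparent about where the constant comes from and more careful about the small-$n$ cases.
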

The statement of the lemma includes an abuse of the notation because $h_t$ is defined on $\R^n$.
It is justified by radial symmetry.

Before giving the proof we infer the maximal inequality.
Observe that it suffices to consider non-negative functions.
By the change of the variable $t' = r^2 t$ we obtain
\[
\Leb[n](B(0,r))\inv
\leq
C n \frac{1}{r^n/n} \int_0^{1/n} h_t(1) \dif t
=
C n \frac{1}{r^2/n} \int_0^{r^2/n} h_{t'}(r) \dif t'.
\]
Since $h_t$ is non-increasing this implies that
\[
M f(x)
=
\sup_r (f * \Leb[n](B)\inv \chi_B )(x)
\leq
Cn B^* f,
\]
and we obtain $\Leb[n]\{ Mf > \lambda \} \leq ( Cn/\lambda ) ||f||_1$.

\begin{proof}[Proof of Lemma~\ref{lem:ball-gaussian}]
Change of variable $s=1/(4t)$ yields
\begin{multline*}
\int_0^{1/n} (4\pi t)^{-n/2} e^{-1/(4t)} \dif t
=
\frac14 \pi^{-n/2} \int_{n/4}^\infty s^{n/2-2} e^{-s} \dif s\\
=
\frac14 \pi^{-n/2} \left( \Gamma(n/2-1) - \int_0^{n/4} s^{n/2-2} e^{-s} \dif s \right).
\end{multline*}
In order to deal with the latter integral observe that there exists an $a$ such that $1 < e/2 < a < \frac{1}{2-2 \log 2}$.
For such an $a$ we have
\[
\int_0^{n/(4a)} s^{n/2-2} e^{-s} \dif s
\leq
\int_0^{n/(4a)} s^{n/2-2} \dif s
=
\frac{1}{n/2-1} \left( \frac{n}{4a} \right)^{n/2-1}
=
O\left( \frac{n^{n/2-2}}{(4a)^{n/2}} \right)
\]
and
\begin{multline*}
\int_{n/(4a)}^{n/4} s^{n/2-2} e^{-s} \dif s
\leq
e^{-n/4a} \int_0^{n/4} s^{n/2-2} \dif s\\
=
\frac{e^{-n/4a}}{n/2-1} \left( \frac{n}{4} \right)^{n/2-1}
=
O\left( \frac{e^{-n/4a} n^{n/2-2}}{4^{n/2}} \right).
\end{multline*}
By the Stirling formula both these quantities are $o(\Gamma(n/2-1))$, so that
\begin{multline*}
\int_0^{1/n} (4\pi t)^{-n/2} e^{-1/(4t)} \dif t
\geq
C \pi^{-n/2} \Gamma(n/2-1)\\
=
C \pi^{-n/2} \Gamma(n/2)/(n/2-1)
\geq
C n^{-2} \Leb[n](B(0,1))\inv.
\qedhere
\end{multline*}
\end{proof}


\chapter{Complex interpolation}
\label{chap:complex}
In this chapter we review a complex interpolation method due to Calderón \cite{MR0167830} and calculate the corresponding interpolation spaces between various $L^p$ spaces.
The main advantage of the complex method is the possibility to interpolate estimates for an analytic family of operators, as opposed to a single operator in the real case.

The basic tool is the three lines lemma which is a maximum principle for holomorphic functions on an unbounded strip in $\C$.
The strongest variant of the three lines lemma is proved using the principle of harmonic majoration for subharmonic functions.

\section{Harmonic majoration}
We will need to majorize the logarithm of the modulus of a holomorphic function $f$ by a harmonic function on a bounded domain $\Omega$ given the majoration on $\boundary\Omega$.
We split our considerations in two parts.
First we show that $\log|f|$ is subharmonic and then we prove the principle of harmonic majoration for subharmonic functions.
\begin{definition}
\index{subharmonic function}
A function $f$ is said to be subharmonic in $\Omega \subset \C$, if for every closed ball $\bar B (z,r) \subset \Omega$,
\[
f(z) \leq \frac{1}{2\pi} \int_{-\pi}^\pi f(z + r e^{i\theta}) \dif\theta.
\]
\end{definition}

We begin with the observation that if $f$ has no zeroes, then $\log|f|$ is in fact harmonic (and we are done by the maximum principle for harmonic functions).
\begin{proposition}
\label{prop:simply-conn-log}
Let $\Omega \subset \C$ be simply connected.
Then for every holomorphic function $f$ which does not vanish identically on $\Omega$ there exists a holomorphic function $g$ such that $f=e^g$.
\end{proposition}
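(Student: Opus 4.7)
The statement as written (``does not vanish identically'') is standardly interpreted, and is only meaningful if we require $f$ to be nowhere zero on $\Omega$ (otherwise $e^g$, which never vanishes, cannot equal $f$); I shall treat the statement in that sense, which also matches the motivating remark just above it (``if $f$ has no zeroes, then $\log|f|$ is in fact harmonic'').

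The plan is to construct $g$ as a primitive of the logarithmic derivative $f'/f$, adjusted by a constant to fit the initial value. Since $f$ is nowhere zero and holomorphic on $\Omega$, the function $f'/f$ is holomorphic on $\Omega$. The key fact I would invoke is that every holomorphic function on a simply connected domain admits a holomorphic primitive: given any closed piecewise-$C^1$ loop $\gamma$ in $\Omega$, simple connectedness allows us to fill in a homotopy to a constant loop, and Cauchy's theorem for holomorphic functions then yields $\oint_\gamma f'/f = 0$. Fixing a base point $z_0 \in \Omega$, one therefore defines $g_0(z) := \int_{z_0}^z f'(\zeta)/f(\zeta)\,\dif\zeta$ independently of the path, and standard estimates show $g_0$ is holomorphic with $g_0' = f'/f$.

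Next I would compute the derivative of the auxiliary function $h := f e^{-g_0}$:
\[
h' = f' e^{-g_0} - f g_0' e^{-g_0} = f' e^{-g_0} - f \cdot \frac{f'}{f} \cdot e^{-g_0} = 0.
\]
Since $\Omega$ is connected, $h$ is a constant $c$, and because neither $f$ nor $e^{-g_0}$ vanishes, $c \neq 0$. Pick any $\alpha \in \C$ with $e^\alpha = c$ (such an $\alpha$ exists since the complex exponential is surjective onto $\C \setminus \{0\}$), and set $g := g_0 + \alpha$. Then $g$ is holomorphic on $\Omega$ and $e^g = e^{g_0} e^\alpha = c \cdot e^{g_0} = f$, as desired.

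The only real obstacle is justifying path-independence of the integral defining $g_0$, i.e.\ establishing the primitive. This is precisely where simple connectedness enters; everything else is a routine differentiation. If the author prefers to avoid referencing a homotopy version of Cauchy's theorem, one can alternatively note that on a simply connected open subset of $\C$, any closed differential form is exact, and apply this to the holomorphic $1$-form $(f'/f)\,\dif z$.
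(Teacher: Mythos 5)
Your proof is correct, and your reading of the hypothesis is the right one: ``does not vanish identically'' must mean ``nowhere zero,'' since $e^g$ has no zeros; the paper's phrasing here is a slip. Your route, however, differs from the paper's. You construct $g$ as a primitive of the logarithmic derivative $f'/f$, invoking simple connectedness through Cauchy's theorem (vanishing of $\oint_\gamma f'/f$, hence path-independence), then adjust by a constant. The paper instead appeals to covering-space theory: since $\Omega$ is simply connected and $f:\Omega\to\C\setminus\{0\}$ is continuous, the lifting criterion provides a continuous lift $g$ of $f$ through the universal covering map $\exp:\C\to\C\setminus\{0\}$, and $g$ is automatically holomorphic because $\exp$ is a local biholomorphism. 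The paper's argument is shorter and more conceptual but presupposes the homotopy lifting criterion; yours is entirely internal to elementary complex analysis, using nothing beyond the homotopy form of Cauchy's theorem (or the exactness of closed $1$-forms on simply connected domains, as you note), at the cost of the small explicit calculation showing $f e^{-g_0}$ is constant. Either is a perfectly acceptable proof, and yours is arguably the more standard textbook treatment.
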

\begin{proof}
Since $\Omega$ is simply connected and $f$ does not vanish, it can be lifted to a holomorphic function taking values in the universal covering $\C^*$ of $\C \setminus \{0\}$.
The logarithm is a holomorphic function on $\C^*$, so that the composition of the logarithm with the lift is the holomorphic function one is looking for.
\end{proof}

The following auxiliary lemma is a nice application of the Cauchy integral theorem to a definite integral.
\begin{lemma}[{\cite[15.17]{MR924157}}]
The following identity holds
\label{lemma:int-ln-1-e-itheta}
\[
\int_{\theta = 0}^{2 \pi} \log |1 - e^{i \theta}| \dif \theta = 0.
\]
\end{lemma}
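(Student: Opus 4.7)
The plan is to exploit the fact that on the open unit disk, $\log|1-z|$ is the real part of a holomorphic function, and then invoke the mean value property on shrinking circles before passing to the boundary. By Proposition~\ref{prop:simply-conn-log} applied to the simply connected domain $\{|z|<1\}$ and the non-vanishing holomorphic function $z \mapsto 1-z$, one obtains a holomorphic $g$ with $e^{g(z)} = 1-z$ and $g(0)=0$, whose real part is precisely $\log|1-z|$.

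For each $r \in (0,1)$ the function $g$ is holomorphic in a neighborhood of the closed disk $\{|z|\leq r\}$, so Cauchy's integral formula applied at the origin (i.e.\ the mean value property) yields
\[
0 = g(0) = \frac{1}{2\pi}\int_{0}^{2\pi} g(re^{i\theta})\,\dif\theta.
\]
Taking real parts gives the intermediate identity
\[
\int_{0}^{2\pi} \log|1-re^{i\theta}|\,\dif\theta = 0 \quad \text{for every } r \in (0,1).
\]

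The remaining step is to let $r \to 1^{-}$, and this is the only delicate point, since the integrand for $r=1$ has a logarithmic singularity at $\theta=0$. I would use the elementary identity $|1-re^{i\theta}|^2 = (1-r)^2 + 4r\sin^2(\theta/2)$ to secure the two-sided bound
\[
2\sqrt{r}\,|\sin(\theta/2)| \leq |1-re^{i\theta}| \leq 2,
\]
uniformly in $r \in [1/2,1]$. Consequently $|\log|1-re^{i\theta}||$ is dominated by $C + |\log|\sin(\theta/2)||$, a function that is integrable on $[0,2\pi]$. The dominated convergence theorem then transfers the vanishing from the circles of radius $r<1$ to the unit circle itself, finishing the proof.

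The main obstacle is this final limiting argument; the identification of $\log|1-z|$ as the real part of a holomorphic function and the mean value computation are both immediate once Proposition~\ref{prop:simply-conn-log} is in hand.
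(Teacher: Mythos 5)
Your proof is correct, and it takes a genuinely different route from the paper's.

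The paper applies Proposition~\ref{prop:simply-conn-log} to the half-plane $\Omega = \{\Re z < 1\}$ rather than the disk, then works directly on the boundary: it writes the integral over $\{e^{i\theta} : \delta \leq \theta \leq 2\pi-\delta\}$ as a contour integral of $g(z)/(iz)$, uses Cauchy's theorem (exploiting that $g(z)/z$ extends holomorphically once $g(0)=0$ is imposed) to deform the contour onto a small arc $\gamma'_\delta$ near $z=1$, and observes that this contribution is $O(\delta\log\delta) \to 0$. You instead stay in the interior of the disk, invoke the mean value property of $g$ on circles of radius $r<1$, take real parts, and then pass $r\to 1^-$ via dominated convergence with the explicit majorant $C + |\log|\sin(\theta/2)||$. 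Your limiting estimate is correct: from $|1-re^{i\theta}|^2=(1-r)^2+4r\sin^2(\theta/2)$ one gets the stated two-sided bound uniformly for $r\in[1/2,1]$, and $\log|\sin(\theta/2)|$ is integrable near the endpoints of $[0,2\pi]$. The paper's contour-shifting argument requires only a crude size estimate near the singularity and a single integration; your route trades that for the familiar mean value property but then needs the DCT step, which you correctly flag as the only delicate point and handle with an explicit dominating function. Both are sound; the paper's is marginally shorter, while yours is arguably more transparent since the mean value property and dominated convergence are more elementary tools than contour deformation with asymptotic estimates.
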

\begin{proof}
By Proposition~\ref{prop:simply-conn-log} applied to $\Omega = \{ \Re z < 1 \}$, there exists a $g \in \hol\Omega$ such that
\[
e^{g(z)} = 1 - z.
\]
Clearly, one can assume that $g(0)=0$, so that $g(z) / z$ extends to a holomorphic function on $\Omega$.
Now consider the paths in the picture and use the Cauchy theorem to infer
\marginpar{\resizebox{\marginparwidth}{!}{
\begin{tikzpicture}[scale=1.5]
\fill (0,0) circle (1pt) node[below] {$0$};
\fill (1,0) circle (1pt) node[below] {$1$};

\begin{scope} 
\clip (1,0) circle (0.5) -- (1.1,1.1) -- (1.1,-1.1) -- (-1.1,-1.1) -- (-1.1,1.1) -- (1.1,1.1) -- cycle;
\draw (0,0) circle (1);
\end{scope}
\path (-1,0) node[anchor=east] {$\gamma_\delta$};

\begin{scope} 
\clip (0,0) circle (1);
\draw (1,0) circle (0.5);
\end{scope}
\path (0.5,0) node[anchor=east] {$\gamma'_\delta$};

\draw[->] (1,0) -- node[anchor=south west] {$\delta$} +(120:0.5);
\end{tikzpicture}
}}
\begin{align*}
\int_{\theta = 0}^{2 \pi} \log |1 - e^{i \theta}| \dif \theta
&=
\lim_{\delta \to 0} \int_{\theta = \delta}^{2 \pi-\delta} \Re g(e^{i \theta}) \dif \theta\\
&=
\lim_{\delta \to 0} \Re \left( \int_{\gamma_\delta} \frac{g(z)}{i z} \dif z \right)\\
&=
\lim_{\delta \to 0} \Re \left( \int_{\gamma'_\delta} \frac{g(z)}{i z} \dif z \right)\\
&=
0,
\end{align*}
since the expression in the parentheses is of order $\delta \log(\delta)$.
\end{proof}

Next we prove a version of the principle that the value of a harmonic function at a point is equal to its mean value on a sphere centered at this point (as we have already observed, $\log|f|$ is harmonic if $f$ does not vanish anywhere).
\begin{proposition}[{Jensen's formula, \cite[15.18]{MR924157}}]
\index{Jensen's formula}
\label{prop:jensen-formula}
Let $\Omega = B(0,R)$, $f \in \hol\Omega$, $f(0) \neq 0$, $0 < r < R$, and $\alpha_1, \dots, \alpha_N$ be the zeroes of $f$ in $\bar B(0,r)$ with multiplicity.
Then
\begin{equation}
\label{eq:jensens-formula}
|f(0)| \Prod_{n=1}^N \frac{r}{|\alpha_n|} = \exp \left( \frac{1}{2\pi} \int_{-\pi}^\pi \log |f(r e^{i\theta})| \dif \theta \right).
\end{equation}
\end{proposition}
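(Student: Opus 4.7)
The strategy is to reduce to the case in which $f$ has no zeroes on the circle $|z|=r$, where the statement follows from the mean value property for harmonic functions applied to a suitably modified $f$.

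\textbf{Step 1 (main case: no zeroes on $|z|=r$).}  Assume first that all $\alpha_1,\dots,\alpha_N$ lie strictly inside $B(0,r)$.  Define the modified function
\[
g(z) = f(z) \prod_{n=1}^N \frac{r^2 - \bar\alpha_n z}{r(z-\alpha_n)}.
\]
The zero of $f$ at each $\alpha_n$ is cancelled by the pole of the corresponding factor, and the numerator $r^2-\bar\alpha_n z$ vanishes at $r^2/\bar\alpha_n$, which lies \emph{outside} $\bar B(0,r)$ since $|\alpha_n|<r$.  Hence $g$ extends to a holomorphic, non-vanishing function on a neighborhood $\Omega'$ of $\bar B(0,r)$, which we may take to be a disc $B(0,R')$ with $R'<R$ close enough to $R$.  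A direct substitution $z = r e^{i\theta}$ together with $|\overline{re^{i\theta}-\alpha_n}| = |re^{i\theta}-\alpha_n|$ shows that each factor has modulus $1$ on $|z|=r$, so $|g(z)| = |f(z)|$ there.

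By Proposition~\ref{prop:simply-conn-log} applied on the simply connected domain $\Omega'$, we can write $g = e^{G}$ for some $G \in \hol{\Omega'}$, and therefore $\log|g| = \Re G$ is harmonic on $\Omega'$.  The mean value property for harmonic functions gives
\[
\log|g(0)| = \frac{1}{2\pi} \int_{-\pi}^{\pi} \log|g(re^{i\theta})|\,\dif\theta = \frac{1}{2\pi}\int_{-\pi}^{\pi} \log|f(re^{i\theta})|\,\dif\theta.
\]
Finally, $g(0) = f(0)\prod_{n=1}^N \frac{r^2}{r(-\alpha_n)}$, so $|g(0)| = |f(0)| \prod_{n=1}^N \frac{r}{|\alpha_n|}$, and exponentiating yields \eqref{eq:jensens-formula}.

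\textbf{Step 2 (reduction of the boundary case).}  Suppose now that $\alpha_{M+1},\dots,\alpha_N$ lie on $|z|=r$, while $\alpha_1,\dots,\alpha_M$ are strictly inside.  Consider
\[
f_1(z) := \frac{f(z)}{\prod_{n=M+1}^N (z-\alpha_n)},
\]
which has its zeroes in $\bar B(0,r)$ precisely at $\alpha_1,\dots,\alpha_M$, all strictly interior.  Applying Step~1 to $f_1$ and using $\log|f_1(re^{i\theta})| = \log|f(re^{i\theta})| - \sum_{n=M+1}^N \log|re^{i\theta}-\alpha_n|$, Lemma~\ref{lemma:int-ln-1-e-itheta} tells us that each boundary zero $\alpha_n = re^{i\theta_n}$ contributes
\[
\frac{1}{2\pi}\int_{-\pi}^{\pi} \log|re^{i\theta}-\alpha_n|\,\dif\theta = \log r + \frac{1}{2\pi}\int_{-\pi}^{\pi}\log|1-e^{i(\theta_n-\theta)}|\,\dif\theta = \log r.
\]
Combining this with $|f_1(0)| = |f(0)|/r^{N-M}$ and observing that $\frac{r}{|\alpha_n|}=1$ for the boundary zeroes, the factors of $r^{N-M}$ cancel and Step~1 for $f_1$ rearranges into \eqref{eq:jensens-formula} for $f$.

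\textbf{Expected obstacle.}  The non-trivial ingredient is ensuring $\log|g|$ is genuinely harmonic across the entire closed disc; this is delicate because $\log|\cdot|$ is not harmonic at zeroes, so the zero-removal by the Blaschke-like factor has to be done carefully.  Proposition~\ref{prop:simply-conn-log} is what makes this globally legitimate.  The treatment of boundary zeroes via the integral $\int \log|1-e^{i\theta}|\,\dif\theta = 0$ is the other key point, and is precisely the content of the preceding lemma.
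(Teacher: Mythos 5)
Your argument is essentially the same as the paper's, just organized in two stages: the paper defines a single auxiliary function
\[
g(z) = f(z)\prod_{n=1}^{m}\frac{r^{2}-\bar\alpha_{n}z}{r(\alpha_{n}-z)}\prod_{n=m+1}^{N}\frac{\alpha_{n}}{\alpha_{n}-z}
\]
that simultaneously cancels interior and boundary zeroes, whereas you treat interior zeroes via Blaschke factors in Step~1 and peel off boundary zeroes separately in Step~2.  In both cases the two ingredients are the same: Proposition~\ref{prop:simply-conn-log} makes $\log|g|$ harmonic so the mean-value property applies, and Lemma~\ref{lemma:int-ln-1-e-itheta} takes care of the boundary zeroes.

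One small slip in Step~1: you say $g$ is non-vanishing on a disc $B(0,R')$ with $R'<R$ ``close enough to $R$.''  That is not available: $f$ may have further zeroes in the annulus $r<|z|<R$, and the numerators $r^{2}-\bar\alpha_{n}z$ you introduced vanish at $z=r^{2}/\bar\alpha_{n}$, points whose modulus $r^{2}/|\alpha_{n}|$ can be arbitrarily close to $r$ from above.  You should instead take $R'$ slightly larger than $r$ (i.e.\ a disc $B(0,r+\epsilon)$ for small $\epsilon>0$); this already contains $\bar B(0,r)$, which is all the argument needs.  With that fix the proof is correct.
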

\begin{proof}
Assume that the $\alpha_n$'s are ordered in such a way that $|\alpha_1|, \dots, |\alpha_m| < r$, $|\alpha_{m+1}|=\dots=|\alpha_N|=r$ and let
\[
g(z) := f(z)
\Prod_{n=1}^m \frac{r^2 - \bar\alpha_n z}{r(\alpha_n - z)}
\Prod_{n=m+1}^N \frac{\alpha_n}{\alpha_n - z}.
\]
Then $g$ extends to a non-vanishing holomorphic function on $B(0,r + \epsilon)$ for some $\epsilon$.
For $z = r e^{i\theta}$ the terms in the definition of $g$ satisfy
\[
\left| \frac{r^2 - \bar\alpha_n z}{r(\alpha_n - r z)} \right| = 1,
\quad
\log \left| \frac{\alpha_n}{\alpha_n - z} \right| = - \log | 1 - e^{i(\theta - \theta_n)} |,
\]
where $\theta_n$ denotes the argument of $\alpha_n$.
By Proposition~\ref{prop:simply-conn-log}, the function $\log |g|$ is the real part of a holomorphic function and thus harmonic, so that, by definition of $g$ and Lemma~\ref{lemma:int-ln-1-e-itheta},
\begin{align*}
\log \left| f(0) \Prod_{n=1}^N \frac{r}{|\alpha_n|} \right|
&= \log | g(0) |\\
&=
\frac{1}{2 \pi} \int_{\theta = 0}^{2\pi} \log | g(r e^{i \theta}) | \dif\theta\\
&=
 \frac{1}{2 \pi} \int_{\theta = 0}^{2\pi} \left[
\log | f(r e^{i \theta}) |
- \Sum_{n=m+1}^N \log \left| 1 - e^{i (\theta-\theta_n)} \right|
\right] \dif\theta\\
&=  \frac{1}{2 \pi} \int_{\theta = 0}^{2\pi} \log | f(r e^{i \theta}) | \dif\theta.
\qedhere
\end{align*}
\end{proof}

With Jensen's formula at hand, we readily obtain subharmonicity of $\log |f|$.

\begin{corollary}
\label{cor:hol-log-subharm}
If $f$ is holomorphic in $\Omega$, then $\log |f|$ is subharmonic in $\Omega$.
\end{corollary}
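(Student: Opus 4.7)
The plan is to apply Jensen's formula directly. Let $\bar B(z_0,r) \subset \Omega$ be an arbitrary closed ball; I need to show
\[
\log |f(z_0)| \leq \frac{1}{2\pi}\int_{-\pi}^{\pi} \log|f(z_0 + r e^{i\theta})|\,\dif\theta.
\]
First I would dispose of the degenerate cases. If $f$ vanishes identically on the connected component of $\Omega$ containing $z_0$, both sides are $-\infty$ and the inequality is vacuous. Otherwise the zeros of $f$ are isolated, so $\log|f|$ is bounded above on the compact circle $\{|z-z_0|=r\}$ and equal to $-\infty$ at most finitely many points; in particular, the integral on the right is a well-defined element of $[-\infty, +\infty)$.

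The main case is $f(z_0) \neq 0$. Here I would apply Proposition~\ref{prop:jensen-formula} to the translated function $g(w) = f(z_0 + w)$, which is holomorphic on $B(0,R)$ for some $R>r$ and satisfies $g(0) \neq 0$. Letting $\alpha_1,\dots,\alpha_N$ enumerate the zeros of $g$ in $\bar B(0,r)$ with multiplicity, Jensen's formula gives
\[
|f(z_0)| \Prod_{n=1}^{N} \frac{r}{|\alpha_n|}
= \exp\left( \frac{1}{2\pi}\int_{-\pi}^{\pi} \log|f(z_0 + r e^{i\theta})|\,\dif\theta \right).
\]
Since $|\alpha_n| \leq r$ for every $n$, each factor $r/|\alpha_n|$ is at least $1$, so dropping the product only decreases the left-hand side. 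Taking logarithms yields the desired mean-value inequality.

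It remains to deal with the subcase $f(z_0) = 0$ when $f$ is not identically zero. Then $\log|f(z_0)| = -\infty$, so the subharmonicity inequality holds trivially regardless of the (finite or $-\infty$) value of the integral.

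There is no real obstacle here — the content is entirely in Jensen's formula, already proved above. The only thing to be careful about is the bookkeeping of extended-real values and the separate treatment of zeros of $f$ inside the disk (where Jensen contributes the Blaschke factors that we throw away) and zeros on the boundary or at the center (which give $-\infty$ values but do not spoil integrability, since they are finite in number on the circle).
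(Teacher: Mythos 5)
Your proof is correct and follows essentially the same route as the paper: reduce to a ball centered at the origin, dispose of the case $f(z_0)=0$ trivially, and otherwise apply Jensen's formula (Proposition~\ref{prop:jensen-formula}) and discard the Blaschke-type product $\prod r/|\alpha_n| \geq 1$. The only difference is that you spell out the edge cases (identically vanishing $f$, integrability on the circle when there are boundary zeros) that the paper leaves implicit; this is harmless extra bookkeeping, not a different argument.
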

\begin{proof}
By a translation of the complex plane, the problem reduces to verifying
\[
\log |f(0)| \leq \frac{1}{2\pi} \int_{-\pi}^\pi \log |f(r e^{i\theta})| \dif \theta
\]
for $f \in \hol{B(0,R)}$ and every $0<r<R$.
If $f(0)=0$, there is nothing to prove.
Otherwise, by Jensen's formula~(\ref{eq:jensens-formula}), the assertion is equivalent to
\[
\log |f(0)| \leq
\log \left( |f(0)| \Prod_{n=1}^N \frac{r}{|\alpha_n|} \right),
\]
where $\alpha_n$'s are the zeroes of $f$ inside the ball of radius $r$ with multiplicity, so that $\log (r/|\alpha_n|) > 0$.
\end{proof}

At last, we show the principle of harmonic majoration.
\begin{proposition}
\index{harmonic majoration}
\label{prop:harmonic-majoration}
Let $\Omega \subset \C$ be a domain with compact closure, $f$ an upper semicontinuous function on $\bar\Omega$ which is subharmonic in $\Omega$ and $u$ a continuous function on $\bar \Omega$ which is harmonic in $\Omega$ such that $f \leq u$ on $\boundary \Omega$.
Then $f \leq u$ in $\Omega$.
\end{proposition}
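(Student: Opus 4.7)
The plan is to reduce the statement to a maximum principle for subharmonic functions. Set $g := f - u$. Since $u$ is continuous on $\bar\Omega$ and harmonic on $\Omega$, for every closed ball $\bar B(z,r)\subset\Omega$ we have equality in the mean value property for $u$; combined with the subharmonicity of $f$ this shows that $g$ is subharmonic on $\Omega$. Moreover $g$ is upper semicontinuous on $\bar\Omega$ as a sum of a USC and a continuous function, and $g\leq 0$ on $\partial\Omega$ by hypothesis. The claim reduces to showing $g\leq 0$ on $\Omega$.

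Since $\bar\Omega$ is compact and $g$ is upper semicontinuous, $g$ attains its maximum $M$ at some point $z_0\in\bar\Omega$. If $z_0\in\partial\Omega$ then $M\leq 0$ and we are done. Otherwise $z_0\in\Omega$, and I would argue that $g\equiv M$ on the connected component $\Omega_0$ of $\Omega$ containing $z_0$ by showing that the level set $E:=\{z\in\Omega_0 : g(z)=M\}$ is both open and closed in $\Omega_0$. Closedness of $E=\{g\geq M\}\cap\Omega_0$ is immediate from upper semicontinuity. For openness, let $w\in E$ and pick $r>0$ with $\bar B(w,r)\subset\Omega_0$; subharmonicity gives
\[
M = g(w) \leq \frac{1}{2\pi}\int_{-\pi}^\pi g(w+re^{i\theta})\dif\theta \leq M,
\]
so the integrand, which is $\leq M$ everywhere, must equal $M$ for almost every $\theta$.

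The key small step is then to upgrade this to $g\equiv M$ on the whole circle: if $g(w')<M$ at some $w'$ with $|w'-w|=r$, upper semicontinuity would supply an arc around $w'$ on which $g<M-\varepsilon$, contradicting the integral equality. Varying $r$ gives a full neighborhood of $w$ on which $g\equiv M$, so $E$ is open. By connectedness $E=\Omega_0$. Finally, picking $\zeta\in\partial\Omega_0\cap\partial\Omega$ (nonempty since $\Omega_0$ is bounded) and a sequence $z_n\to\zeta$ in $\Omega_0$, upper semicontinuity yields $g(\zeta)\geq\limsup g(z_n)=M$, while $g(\zeta)\leq 0$; hence $M\leq 0$, as required.

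I expect the only delicate point to be the passage from ``mean value equals maximum'' to ``pointwise equal to the maximum on the circle''; everything else is the standard open-and-closed argument in the connected component. The rest of the reductions (subharmonicity of $g$, compactness giving attainment of the supremum) are essentially bookkeeping.
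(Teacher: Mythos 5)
Your proof is correct and uses the same core idea as the paper: reduce to $g=f-u$, then use the sub-mean-value inequality together with upper semicontinuity to show that the set where the maximum is attained cannot have a boundary inside the domain. The paper phrases this more compactly (it picks a point $z\in\partial E$, where $E$ is the maximum set, and a circle around $z$ that is not entirely contained in $E$, deriving an immediate contradiction with subharmonicity), whereas you run the equivalent open-and-closed argument and then appeal to the boundary; both hinge on the same "mean value equals maximum $\Rightarrow$ equal on the circle" step, so the two proofs are substantively the same.
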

\begin{proof}
The function $f-u$ is upper semicontinuous on $\bar \Omega$ and subharmonic in $\Omega$, so that without loss of generality we may assume $u = 0$.

Now assume that, on the contrary to the assertion, $m = \sup_\Omega f > 0$.
Since $f$ is upper semicontinuous on $\bar \Omega$, it assumes its supremum on a compact subset $E \subset \bar \Omega$.
By the hypothesis, $E \cap \boundary \Omega = \emptyset$.
Let $z \in \boundary E$.
Since $z$ also lies in the interior of $\Omega$, there exists an $r>0$ such that the circle of radius $r$ around $z$ is contained in $\bar\Omega$ but not in $E$.
But then
\[
\frac{1}{2\pi} \int_{0}^{2 \pi} f(z + r e^{i \theta}) \dif\theta < m = f(z),
\]
as the decomposition into the parts inside and outside of $E$ shows.
This contradicts the assumption that $f$ is subharmonic.
\end{proof}

\section{The three lines lemma}
The three lines lemma is a version of the maximum principle for the strip $S := \{0 < \Re z < 1\} \subset \C$.
The unboundedness of $S$ makes an additional qualitative hypothesis necessary for the maximum principle to hold.
\begin{definition}
\index{admissible growth}
A function $g$ on $\R$ is said to have \emph{admissible growth} if $g(r) = O(e^{a |r|})$ with $a < \pi$.
A function $f$ on $\bar S$ is said to have \emph{admissible growth} if $M(r) := \log \sup_{\Im z = r} |f(z)|$ has admissible growth.
\end{definition}
To establish a maximum principle for functions of admissible growth on $\bar S$ we will use an explicit solution formula for the Dirichlet boundary problem
\[
\Delta u = 0 \text{ in } S,
\quad
u(j + i y) = a_j(y) \text{ for } j=0,1, \, y \in \R
\]
We reduce this problem to the Dirichlet problem on the unit disc.
By symmetry, it is sufficient to consider the case $a_1 = 0$ and to find a formula for $u(x)$ with $x$ real.

A conformal mapping from the unit disc $B(0,1)$ onto $S$ is given by
\[
h(z) = \frac{1}{i \pi} \log\left( i \frac{1+z}{1-z} \right),
\]
cf.~\cite[Lemma 3.1]{MR928802}.
Since $h$ is conformal, $u \circ h\inv$ is harmonic in $B(0,1)$ and solves a Dirichlet boundary problem with initial data supported on the lower unit half-circle, which is mapped by $h$ onto the line $i \R$.
The desired relation is
\[
u(x) = \frac{1}{2\pi} \int_{-\pi}^0 a_0(-i h(e^{i \phi})) P_{h\inv(x)}(\phi) \dif \phi
=
\int_{-\infty}^\infty a_0(y) \omega(x,-y) \dif y
\]
with some kernel $\omega$.
Here, the first integral is the solution formula for the unit disk, where $P_z(\phi) = \Re \frac{e^{i \phi} + z}{e^{i \phi} - z}$ is the Poisson kernel, while the second integral represents the solution formula we are looking for.
A necessary and sufficient condition for equality to hold is
\[
-2 \pi \omega(x,-y) \tdif{}{\phi} (-i h(e^{i \phi})) = \Re \frac{e^{i \phi} + h\inv(x)}{e^{i \phi} - h\inv(x)},
\]
where $\phi$ is given by $y = -i h(e^{i \phi})$.
A calculation shows that
\[
\omega(x,y)
= \frac12 \frac{\sin(\pi x)}{\cosh(\pi y) - \cos(\pi x)}
= \Re \frac{i}{1 - e^{-iz}}.
\]
It is the kernel of the harmonic measure on $\boundary S$ in the sense that
if $a_0, a_1 : \R \to \C$ are continuous functions of admissible growth, then
\[
u (x+iy)
=
\int_{-\infty}^{+\infty} \omega(x, y-t) a_0(t) \dif t
+
\int_{-\infty}^{+\infty} \omega(1-x, y-t) a_1(t) \dif t
\]
is harmonic in $S$, since $z \mapsto \omega(x,y)$ is harmonic and the growth condition on the $a_j$'s allows to differentiate under the integral sign, since $\omega(x,y) \lesssim \tan\frac{x \pi}{2} e^{-\pi |y|}$ for $|y|>1$.
Also, $u$ extends to a continuous function on $\bar S$ with $u(j+iy)=a_j(y)$ for $j=0,1$, because $\int_{y \in \R} \omega(x,y) \dif y = 1-x$ (see below) and the measure $\omega(x,y) \dif y$ is concentrated at $y=0$ for $x \to 0$.

\begin{lemma}[{three lines lemma, \cite[p. 210]{MR0057936}}]
\index{three lines lemma}
\label{lemma:three-lines}
Let $f \in \hol{ S } \cap C(\bar S)$, $a_0, a_1 \in C(\R)$ be functions of admissible growth and assume
\[
\log |f(j+iy)| \leq a_j(y), \quad (j=0,1,\, -\infty < y < +\infty).
\]
Then, for every $\theta \in (0,1)$,
\[
\log |f(\theta)|
\leq
\int_{-\infty}^{+\infty} \omega(\theta, y) a_0(y) \dif y
+
\int_{-\infty}^{+\infty} \omega(1-\theta, y) a_1(y) \dif y.
\]
\end{lemma}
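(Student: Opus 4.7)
The plan is to apply the harmonic majoration principle (Proposition~\ref{prop:harmonic-majoration}) to $\log|f|$ compared against the harmonic extension of $a_0, a_1$ built from the kernel $\omega$ described immediately before the statement. Concretely, set
\[
u(x+iy) := \int_{-\infty}^{+\infty} \omega(x, y-t) a_0(t) \dif t + \int_{-\infty}^{+\infty} \omega(1-x, y-t) a_1(t) \dif t.
\]
As noted in the preamble, $u$ is harmonic in $S$ and extends continuously to $\bar S$ with $u(j+iy) = a_j(y)$. Since $\omega(\theta, \cdot)$ is even, a change of variables shows that the right-hand side of the claimed inequality equals $u(\theta)$, so it suffices to prove $\log|f(z)| \leq u(z)$ for every $z \in S$.

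Let $v := \log|f| - u$. By Corollary~\ref{cor:hol-log-subharm}, $\log|f|$ is subharmonic on $S$, and subtracting the harmonic function $u$ keeps $v$ subharmonic. Extended by $-\infty$ at zeroes of $f$, $v$ is upper semicontinuous on $\bar S$, and by hypothesis $v \leq 0$ on $\boundary S$. A quick growth estimate is needed: since $\omega(x, y-t) \lesssim e^{-\pi|y-t|}$ uniformly for $x$ in a compact subset of $(0,1)$, and $|a_j(t)| \leq C e^{a|t|}$ with $a < \pi$, convolution yields $|u(x+iy)| \leq C' e^{a|y|}$, so that $v$ inherits admissible growth from $\log|f|$: there exist $a' < \pi$ and $C''$ with $v(x+iy) \leq C'' e^{a'|y|}$ on $S$.

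The main obstacle is that $S$ is unbounded, so Proposition~\ref{prop:harmonic-majoration} does not apply directly; one needs a Phragmén--Lindelöf type correction. I would pick $\gamma \in (a', \pi)$ and set
\[
\psi(x+iy) := \cos(\gamma(x-1/2)) \cosh(\gamma y),
\]
the real part of the entire function $\cos(\gamma(z-1/2))$. For $x \in [0,1]$ we have $\cos(\gamma(x-1/2)) \geq \cos(\gamma/2) > 0$ since $\gamma/2 < \pi/2$, so $\psi$ is harmonic and positive on $\bar S$ with $\psi(x+iy) \gtrsim e^{\gamma|y|}$ as $|y|\to\infty$. Fix $\epsilon > 0$ and let $v_\epsilon := v - \epsilon \psi$. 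Then $v_\epsilon$ is subharmonic in $S$, upper semicontinuous on $\bar S$, satisfies $v_\epsilon \leq 0$ on $\boundary S$, and $v_\epsilon(x+iy) \to -\infty$ uniformly in $x \in [0,1]$ as $|y|\to\infty$ because $\gamma > a'$.

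Choose $R$ large enough that $v_\epsilon \leq 0$ on the horizontal segments $\{0 \leq x \leq 1,\, |y| = R\}$. Applying Proposition~\ref{prop:harmonic-majoration} on the compact rectangle $\{0 \leq x \leq 1,\, |y| \leq R\}$ with majorant $0$ gives $v_\epsilon \leq 0$ there, hence on all of $S$. Letting first $R\to\infty$ and then $\epsilon\to 0$ yields $v \leq 0$ on $S$, i.e.\ $\log|f(z)| \leq u(z)$, and specializing to $z = \theta$ gives the claim.
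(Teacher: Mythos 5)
Your proof is correct and takes essentially the same approach as the paper: majorize the subharmonic function $\log|f|$ by the harmonic extension $u$ plus a Phragm\'en--Lindel\"of correction term of the form $\epsilon\cos(\gamma(x-1/2))\cosh(\gamma y)$, apply harmonic majoration on a truncated strip, then pass to the limit in the truncation parameter and in $\epsilon$. The only deviation is that the paper additionally truncates the integrals defining $u$ to $[-T,T]$ and recovers the full inequality via Fatou's lemma, whereas you work directly with the full $u$ (already established in the preamble to be harmonic on $S$ and continuous up to $\bar S$), a minor simplification that makes the Fatou step unnecessary.
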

If $f$ does not have admissible growth, the proposition fails as the example $f(z) = \exp(-i \exp(i \pi z))$ shows.
Indeed, $\exp(i \pi z) \in \R$ on $\boundary S$, so that $\log |f|_{\boundary S}| \equiv 0$, while $f(1/2 + iy) = \exp(\exp(-\pi y)) \to \infty$ as $y \to -\infty$.
\begin{proof}
Let $a < a' < \pi$ and fix an $\epsilon > 0$.
Consider
\begin{multline*}
u_T (x+iy)
=
\int_{-T}^{+T} \omega(x, y-t) a_0(t) \dif t
+
\int_{-T}^{+T} \omega(1-x, y-t) a_1(t) \dif t\\
+
\epsilon \cosh (a' y) \cos (a'(x-1/2))
\end{multline*}
This is a harmonic function because
\[
\cosh (a' y) \cos (a'(x-1/2)) = 2 \Re e^{-i a' (z-1/2)} + 2 \Re e^{i a' (z-1/2)}.
\]
The first two summands extend to $a_j$ on $\{ \Re z = j, |\Im z| < T \}$ for $j=0,1$, while
the last one grows faster than $\log |f|$ and the first two as $y=T\to\infty$.
Thus, for $T$ large enough, $u_T$ majorizes $\log |f|$ on the boundary of $S_T = S \cap \{ |\Im z| < T \}$ and, by the principle of harmonic majoration (Proposition \ref{prop:harmonic-majoration}), on $S_T$, since $\log |f|$ is subharmonic by Corollary~\ref{cor:hol-log-subharm}.

By Fatou's lemma,
\begin{align*}
\log |f(x+iy)|
&\leq \limsup_{T\to\infty} u_T(x+iy)\\
&= \limsup_{T\to\infty} \int_{-T}^{+T} \omega(x, y-t) a_0(t) \dif t
+
\int_{-T}^{+T} \omega(1-x, y-t) a_1(t) \dif t\\
&\quad+
\epsilon \cosh (a' y) \cos (a'(x-1/2))\\
&\leq \int_{-\infty}^{+\infty} \omega(x, y-t) a_0(t) \dif t
+
\int_{-\infty}^{+\infty} \omega(1-x, y-t) a_1(t) \dif t\\
&\quad+
\epsilon \cosh (a' y) \cos (a'(x-1/2)).
\end{align*}

Let $\epsilon \to 0$, set $z=\theta$ and observe that $\omega(x,\cdot)$ is an even function to obtain the claim.
\end{proof}
To see that this lemma generalizes the Hadamard three line theorem we calculate
\begin{align*}
\int_{y=-\infty}^\infty \omega(x,y) \dif y
&=
\frac12 \int_{y=-\infty}^\infty \frac{\sin(\pi x)}{\cosh(\pi y) - \cos(\pi x)} \dif y\\
&=
\int_{u=0}^\infty \frac{\sin(\pi x)}{u+1/u - 2 \cos(\pi x)} \frac{\dif u}{\pi u}
& (u=e^{\pi y})\\
&=
\frac{\sin(\pi x)}{\pi} \int_{u=0}^\infty \frac{\dif u}{u^2 - 2u \cos(\pi x) + 1}\\
&=
\frac{1}{\pi} \int_{s=-\cot \pi x}^\infty \frac{\dif s}{s^2 + 1}
& \left( s = \frac{u - \cos \pi x}{\sin \pi x} \right)\\
&=
\frac{1}{\pi} \left. \arctan s \right|_{s=-\cot \pi x}^\infty\\
&=
1-x.
\end{align*}
Therefore, if $f$ is a function of admissible growth which is bounded by $M_j$ on $\{ \Re z = j \}$, $j=0,1$, then $|f(\theta)| \leq M_0^{1-\theta} M_1^\theta$ for all $0<\theta<1$.

We shall frequently need a vector-valued version of the three lines lemma.
Recall the definition of $X_{0}' \intersection X_{1}'$ from Section~\ref{sec:real-int-dual}.

\begin{corollary}
\label{cor:three-lines-vector}
Let $V \subset (X_{0}+X_{1})'$ be a subspace such that the $X_{0}+X_{1}$- and the $V'$-norm on $X_{0}+X_{1}$ are equivalent (subspaces with this property are sometimes called \emph{(norm-)determining}).

Let $f \in \hol{ S, X_{0}+X_{1} }$ be $\sigma(X_{0}+X_{1},V)$-continuous on $\bar S$ and $a_0, a_1 \in C(\R)$ be functions of admissible growth such that
\[
\log ||f(j+iy)||_{X_{j}} \leq a_j(y), \quad (j=0,1,\, -\infty < y < +\infty).
\]
Then, for every $\theta \in (0,1)$,
\[
\log ||f(\theta)||_{X_{0}+X_{1}}
\leq C+
\int_{-\infty}^{+\infty} \omega(\theta, y) a_0(y) \dif y
+
\int_{-\infty}^{+\infty} \omega(1-\theta, y) a_1(y) \dif y.
\]
\end{corollary}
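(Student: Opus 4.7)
The plan is to reduce the vector-valued statement to the scalar three lines lemma (Lemma~\ref{lemma:three-lines}) by pairing $f$ with elements of the norm-determining subspace $V$, thereby transforming the problem into a family of uniformly controlled scalar problems.

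Concretely, for each $\phi \in V$ with $||\phi||_{(X_0+X_1)'} \leq 1$, I would define the scalar function $g_\phi(z) := \phi(f(z))$ on $\bar S$. Holomorphy of $g_\phi$ on $S$ follows because $\phi$ is a continuous linear functional on $X_0+X_1$ composed with the holomorphic vector-valued $f$; continuity of $g_\phi$ on $\bar S$ follows from the hypothesis that $f$ is $\sigma(X_0+X_1,V)$-continuous there, since $\phi \in V$ is exactly one of the seminorms defining that topology. The admissible growth needed to apply Lemma~\ref{lemma:three-lines} is inherited from $f$ via the pointwise inequality $|g_\phi(z)| \leq ||\phi||_{(X_0+X_1)'} ||f(z)||_{X_0+X_1}$.

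Next I would estimate the boundary data. For $z = j + iy$, the value $f(j+iy)$ lies in $X_j$, and the restriction $\phi|_{X_j}$ satisfies $||\phi|_{X_j}||_{X_j'} \leq ||\phi||_{(X_0+X_1)'} \leq 1$, because for any $x \in X_j$ the trivial decomposition $x = x + 0$ gives $||x||_{X_0+X_1} \leq ||x||_{X_j}$. Hence
\[
\log|g_\phi(j+iy)| \leq \log ||f(j+iy)||_{X_j} \leq a_j(y), \qquad j=0,1.
\]
Applying the scalar three lines lemma to $g_\phi$ therefore yields
\[
\log|g_\phi(\theta)| \leq \int_{-\infty}^{+\infty} \omega(\theta,y) a_0(y) \dif y + \int_{-\infty}^{+\infty} \omega(1-\theta,y) a_1(y) \dif y,
\]
and this estimate is uniform in $\phi$ from the unit ball of $V \subset (X_0+X_1)'$.

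Finally, I would take the supremum over such $\phi$. By the norm-determining property,
\[
||f(\theta)||_{X_0+X_1} \leq C \sup_{\phi \in V,\ ||\phi||_{(X_0+X_1)'} \leq 1} |\phi(f(\theta))| = C \sup_\phi |g_\phi(\theta)|,
\]
where $C$ is precisely the equivalence constant between the $X_0+X_1$-norm and the $V'$-norm on $X_0+X_1$; taking $\log$ produces the additive constant $\log C$ in the conclusion. The only delicate point in the argument is verifying that the admissible growth condition required by Lemma~\ref{lemma:three-lines} genuinely transfers from $f$ to each $g_\phi$, uniformly in $\phi$ in the unit ball; this is what allows the scalar three lines lemma to be applied with uniform bounds, after which taking a supremum and invoking the norm-determining property is purely formal.
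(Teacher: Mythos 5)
Your proof is correct and takes exactly the paper's approach: pair $f$ with $\phi \in V$ in the unit ball, apply the scalar three lines lemma (Lemma~\ref{lemma:three-lines}) to $\phi \circ f$ uniformly in $\phi$, and recover the $X_0+X_1$-norm via the norm-determining property, which produces the additive constant. The paper's own proof is a one-line remark to this effect; your write-up simply fills in the details, including the correct observation that $||x||_{X_0+X_1} \leq ||x||_{X_j}$ for $x \in X_j$, which is what lets the boundary estimates transfer.
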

\begin{proof}
This follows at once considering $g \circ f$ with $g \in V$, $||g||_{V} \leq 1$ and observing that the estimates thus obtained are uniform in $\phi$.
\end{proof}

\section{Analyticity of vector-valued functions}
We are going to use analytic functions with values in locally convex vector spaces extensively.
We thus need a tangible characterization of analyticity.
Our main tool is the Cauchy integral formula, and thus we start by giving an account of an appropriate notion of integral.
The standard reference for this material is the book of Hille and Phillips \cite{MR0089373}, but we follow the more structured approach in a set of lecture notes by P.~Garrett \cite{garrett-page}.

\begin{definition}
\index{convex envelope property}
A topological vector space $E$ is said to have the \emph{convex envelope property} if the closed convex hull of every compact subset of $E$ is compact.
\end{definition}
First we verify that the spaces we are interested in have this property.
\begin{definition}
\index{totally bounded subset}
A subset of a topological vector space is called \emph{totally bounded} if, for every neighborhood of the origin $U$, it can be covered by finitely many translates of $U$.

A subset of a metric space is called \emph{totally bounded} if, for every $\epsilon>0$, it can be covered by finitely many balls of radius $\epsilon$.
\end{definition}
We remark that every compact subset of a topological vector space is totally bounded.
\begin{proposition}
Let $K$ be a totally bounded subset of a topological vector space.
Then $\conv K$ is totally bounded.
\end{proposition}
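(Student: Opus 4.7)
The plan is to reduce total boundedness of $\conv K$ to total boundedness of the convex hull of a \emph{finite} subset of $K$, which is compact and hence totally bounded. I will tacitly use local convexity of $E$, since the statement needs convex neighborhoods of the origin (this is consistent with the ambient context of the chapter).

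Given a neighborhood $U$ of the origin, first choose a convex neighborhood $V$ of the origin with $V+V \subseteq U$. Since $K$ is totally bounded, there exist finitely many points $x_{1},\dots,x_{n} \in E$ with $K \subseteq \bigcup_{i=1}^{n} (x_{i}+V)$. Let
\[
S := \conv\{x_{1},\dots,x_{n}\},
\]
which is the continuous image of the compact simplex $\{(\lambda_{1},\dots,\lambda_{n}) : \lambda_{i}\geq 0,\ \sum_{i}\lambda_{i}=1\}$ under the continuous map $(\lambda_{i})_{i}\mapsto \sum_{i}\lambda_{i}x_{i}$, hence compact. In particular $S$ is totally bounded, so there exist finitely many $y_{1},\dots,y_{m}$ such that $S \subseteq \bigcup_{j=1}^{m}(y_{j}+V)$.

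The key claim is that $\conv K \subseteq \bigcup_{j=1}^{m}(y_{j}+U)$. Indeed, take an arbitrary $x = \sum_{k} \lambda_{k} z_{k} \in \conv K$ with $z_{k}\in K$, $\lambda_{k}\geq 0$, $\sum_{k}\lambda_{k}=1$. For each $z_{k}$ pick an index $\sigma(k)\in\{1,\dots,n\}$ with $z_{k}-x_{\sigma(k)} \in V$. Then
\[
x = \underbrace{\sum_{k} \lambda_{k} x_{\sigma(k)}}_{\in\,S} + \underbrace{\sum_{k} \lambda_{k}(z_{k}-x_{\sigma(k)})}_{\in\,V},
\]
the latter membership because $V$ is convex and each $z_{k}-x_{\sigma(k)} \in V$. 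Thus $x \in S+V \subseteq \bigcup_{j}(y_{j}+V)+V \subseteq \bigcup_{j}(y_{j}+U)$, which proves the claim.

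The only non-routine point is the compactness of $S$ (handled by the simplex argument) and the splitting trick that converts a convex combination of points in $K$ into one in $S$ plus an error in the convex set $V$; both exploit convexity of $V$ in an essential way, which is why the local convexity assumption is needed.
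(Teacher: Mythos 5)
Your proof is correct and follows essentially the same route as the paper: shrink $U$ to a convex $V$ with $V+V\subseteq U$, reduce $K$ to a finite $\varepsilon$-net, observe that the convex hull of a finite set is a compact (hence totally bounded) continuous image of the standard simplex, and use convexity of $V$ to absorb the errors. The only cosmetic difference is that you spell out the splitting $x=\sum\lambda_k x_{\sigma(k)}+\sum\lambda_k(z_k-x_{\sigma(k)})$ explicitly, whereas the paper packages the same step as $\conv(F+\tilde U)\subset\conv F+\tilde U$, which holds because $\conv F+\tilde U$ is a convex set containing $F+\tilde U$.
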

\begin{proof}
Let $U$ be an arbitrary neighborhood of the origin.
Then by joint continuity of addition and local convexity there exists a convex neighborhood of the origin $\tilde U$ such that $\tilde U + \tilde U \subset U$.
Since $K$ is totally bounded, we have that $K \subset F + \tilde U$ for some finite set $F$.
Since the set $\conv F$ is the continuous image of the standard simplex in $\R^{|F|}$, it is compact and therefore totally bounded, so that $\conv F \subset \tilde F + \tilde U$ with a finite set $\tilde F$.
Therefore
\[
\conv K \subset \conv (F+\tilde U) \subset \conv F + \tilde U
\subset \tilde F + \tilde U + \tilde U \subset \tilde F + U.
\qedhere
\]
\end{proof}
Observe that the notions of total boundedness in the sense of topological vector spaces and in the sense of metric spaces coincide for a subset of a metric vector space.
Hence we immediately obtain the following.
\begin{corollary}
\label{cor:frechet-conv-env}
Let $K$ be a compact subset of a \Frechet\ space.
Then $\cconv K$ is also compact.
In other words, every \Frechet\ space has the convex envelope property.
\end{corollary}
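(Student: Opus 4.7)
The plan is to chain three easy observations. First, any compact subset $K$ of a Fréchet space is totally bounded in the topological vector space sense: given a neighborhood of the origin $U$, compactness yields a finite subcover of $K$ by translates of $U$. Second, the preceding proposition, applied to $K$ viewed as a subset of the ambient topological vector space, shows that $\conv K$ is totally bounded.

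Next I would pass to the closure. Given a neighborhood of the origin $U$, pick a smaller neighborhood $V$ with $V + V \subseteq U$, cover $\conv K$ by finitely many translates $x_1 + V, \dots, x_n + V$, and note that every point of $\cconv K$ lies within $V$ of some point of $\conv K$, hence in $x_i + V + V \subseteq x_i + U$ for some $i$. Therefore $\cconv K$ is itself totally bounded.

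Finally I would invoke the standard fact that a closed, totally bounded subset of a complete metric space is compact: any sequence admits a Cauchy subsequence, built by a diagonal argument using finite $2^{-k}$-coverings, which converges by completeness, and whose limit lies in $\cconv K$ by closedness. Since a Fréchet space is by definition a complete metrizable locally convex space, and since (as the author has already remarked) the topological-vector-space and metric notions of total boundedness coincide in a metrizable TVS, this yields compactness of $\cconv K$. There is no real obstacle; the only point that deserves care is the interplay between the two notions of total boundedness, which is what allows us to move from the TVS conclusion of the previous proposition to the metric setting where completeness can be applied.
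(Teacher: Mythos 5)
Your proof is correct and matches the route the paper intends: compactness of $K$ gives total boundedness in the TVS sense, the preceding proposition gives total boundedness of $\conv K$, and then completeness of the metric (via the coincidence of the two notions of total boundedness) yields compactness of the closure. You merely spell out the closure step and the Cauchy-subsequence argument that the paper leaves implicit behind "hence we immediately obtain."
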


\index{quasi-complete topological vector space}
The latter result extends to a more general class of locally convex spaces via a construction along the lines of the Banach-Alaoglu theorem.
We say that a subset of a topological vector space is \emph{bounded} if its image under every continuous linear form is bounded (for a subset of a locally convex vector space this is equivalent to the boundedness of images under arbitrary continuous seminorms by the uniform boundedness principle).
A topological vector space is called \emph{quasi-complete} if all its bounded closed sets are complete.

\begin{theorem}
\label{thm:quascomp-convenv}
Let $E$ be a quasi-complete locally convex vector space.
Then $E$ has the convex envelope property.
\end{theorem}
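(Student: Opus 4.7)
The plan is to combine the previous proposition with the defining property of quasi-completeness, plus the standard fact that a complete totally bounded subset of a uniform space is compact.

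First I would let $K \subset E$ be an arbitrary compact subset and aim to show that $\cconv K$ is compact. Since $K$ is compact it is totally bounded, so by the previous proposition the (not yet closed) convex hull $\conv K$ is totally bounded. Taking closure preserves total boundedness: if $\conv K \subset F + \tilde U$ for a finite set $F$ and an open convex neighborhood $\tilde U$ of the origin with $\tilde U + \tilde U \subset U$, then $\cconv K \subset \overline{F + \tilde U} \subset F + \overline{\tilde U} \subset F + U$, where the last inclusion uses that $\overline{\tilde U}$ is contained in any neighborhood of the form $\tilde U + V$. So $\cconv K$ is a closed totally bounded subset of $E$.

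Next I would observe that total boundedness implies boundedness in the sense used in the definition of quasi-completeness: if $\phi$ is a continuous linear form then $\phi(\cconv K)$ is totally bounded in the scalar field and hence bounded. Therefore, by quasi-completeness of $E$, the closed bounded set $\cconv K$ is complete as a uniform subspace of $E$.

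Finally I would invoke the classical fact that a complete totally bounded subset of a Hausdorff uniform space is compact: given a net in $\cconv K$, total boundedness allows one to extract a Cauchy subnet (cover by finitely many translates of a shrinking base of entourages and pass to a subnet that eventually lies in a single translate at each stage), and completeness promotes the Cauchy subnet to a convergent one with limit in $\cconv K$. Hence $\cconv K$ is compact, proving the convex envelope property. The main obstacle, if any, is the careful verification that total boundedness is preserved under closure in a (not necessarily metrizable) topological vector space, which is the only step beyond a direct quotation of the previous proposition.
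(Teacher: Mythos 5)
Your proof is correct but takes a genuinely different route from the paper's. The paper embeds $E$ into the product $\tilde E = \prod_a E_a$ of the Banach-space completions associated with the seminorms, applies the already-established \Frechet\ case (Corollary~\ref{cor:frechet-conv-env}) fiberwise together with Tychonov's theorem to produce a compact convex set $\tilde C$ containing $\iota(A)$, and then uses quasi-completeness only once, to show that $\iota(\cconv A)$ is closed in $\tilde E$ and hence equals $\cconv\iota(A)$. You instead argue intrinsically: total boundedness of $\conv K$ (from the preceding proposition) passes to the closure, total boundedness implies boundedness, quasi-completeness then gives completeness of $\cconv K$, and you conclude by the classical equivalence ``complete $+$ totally bounded $\Leftrightarrow$ compact'' for Hausdorff uniform spaces. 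Your version is shorter and avoids the auxiliary product-space construction, at the cost of invoking the general uniform-space compactness criterion as a black box (the paper only needs its metric-space avatar, implicit in the \Frechet\ corollary). The technical steps you flagged — that $\overline{\tilde U}\subset\tilde U+\tilde U\subset U$ and that $\overline{F+\tilde U}=F+\overline{\tilde U}$ for finite $F$ — are indeed the points that need care, and both are handled correctly.
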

\begin{proof}
We may assume that the topology is given by a family of seminorms $a$.
Our aim is to show that the closed convex hull of a compact set $A \subset E$ is compact.

For every seminorm $a$ let $E_{a}$ be the Banach space completion of $E/a\inv(0)$ with respect to $a$.
We furnish $\tilde E := \Prod_{a} E_{a}$ with the product topology.
Consider the canonical operators
\[
\iota : E \to \tilde E \text{ and } \pi_{a} : \tilde E \to E_{a}.
\]
The former operator is injective since the totality of seminorms separates the points of $E$ and therefore a homeomorphism onto its image.
Furthermore each $\pi_{a} \circ \iota$ is continuous, so that $\pi_{a}(\iota(A))$ is compact.
Since $E_{a}$ is a \Frechet\ space, Corollary~\ref{cor:frechet-conv-env} implies that $C_{a} := \cconv \pi_{a}(\iota(A))$ is compact.
By the Tychonov theorem $\tilde C := \Prod_{a} C_{a}$ is compact; $\tilde C$ is also convex and contains $\iota(A)$, so that $\cconv \iota(A)$ is compact.

It therefore suffices to show that $\cconv \iota(A) = \iota (\cconv A)$.
The inclusion ``$\supseteq$'' follows from continuity of $\iota$.
For the converse, observe that $\iota (\cconv A)$ is convex, contained in the bounded set $\tilde C$ and closed in $\iota(E)$, hence complete by quasi-completeness of $E$ and therefore closed in $\tilde E$.
\end{proof}

We now give some examples of quasi-complete spaces.
Evidently a complete topological vector space, and in particular a Banach space, is quasi-complete.

\begin{proposition}
Let $V_{0} \subset V_{1} \subset \dots$ be a strictly ascending chain of locally convex spaces such that each $V_{i}$ is a closed subspace of $V_{i+1}$.
We furnish $V := \union_{i} V_{i}$ with the colimit topology, i.e.\ we define open sets as those whose intersection with every $V_{i}$ is open in the respective topology.

Assume that every space $V_{i}$ is quasi-complete.
Then $V$ is quasi-complete.
\end{proposition}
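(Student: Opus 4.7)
The plan is to reduce quasi-completeness of $V$ to the quasi-completeness of the individual $V_i$ by showing that every bounded subset of $V$ is already contained in some $V_i$. This is the classical Dieudonné--Schwartz theorem for strict inductive limits, and it is the technical heart of the argument.

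First I would establish two preliminary facts about the colimit topology. (a) The subspace topology induced on each $V_i$ from $V$ coincides with the original topology on $V_i$; equivalently, every convex balanced neighborhood $U_i$ of $0$ in $V_i$ is the trace on $V_i$ of some convex balanced neighborhood of $0$ in $V$. This is proved by inductively extending $U_i$ to convex balanced neighborhoods $U_{i+1} \subset V_{i+1}$, $U_{i+2} \subset V_{i+2}, \dots$, using that $V_j$ is a closed subspace of the locally convex space $V_{j+1}$ together with a Hahn--Banach-type separation argument, and then taking the union. (b) Each $V_i$ is closed in $V$, which follows immediately from (a) and the hypothesis that $V_i$ is closed in $V_{i+1}$.

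Next comes the central step: if $B \subset V$ is bounded, then $B \subset V_i$ for some $i$. I would argue by contradiction. If no such $i$ exists, pass to a subsequence to find $x_k \in B$ and indices $n_k$ strictly increasing with $x_k \in V_{n_k} \setminus V_{n_k - 1}$. Using (a) and the closedness of $V_{n_{k}-1}$ in $V_{n_k}$, I would inductively construct convex balanced open neighborhoods $W_k$ of $0$ in $V_{n_k}$ satisfying $W_k \cap V_{n_{k-1}} = W_{k-1}$ together with $x_k \notin k W_k$. The union $W := \bigcup_k W_k$ is then a convex balanced absorbing set in $V$ whose intersection with each $V_j$ is open, hence a neighborhood of $0$ in $V$. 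But $x_k \notin k W$ for every $k$, so $W$ does not absorb $B$, contradicting boundedness. This inductive construction is the main obstacle, since one must simultaneously ensure the compatibility $W_k \cap V_{n_{k-1}} = W_{k-1}$ and the exclusion $x_k \notin k W_k$; the latter is arranged by separating the point $x_k$ from the convex set $k\inv W_{k-1}$ in $V_{n_k}$ via Hahn--Banach in $V_{n_k}/\overline{\lin W_{k-1}}$, or by an explicit Minkowski functional construction.

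With these two ingredients in place, the conclusion is immediate. Let $B \subset V$ be bounded and closed. By the Dieudonné--Schwartz step, $B \subset V_i$ for some $i$. Since $V_i$ is closed in $V$ and the topologies agree, $B$ is closed in $V_i$; it is also bounded in $V_i$, since continuous seminorms on $V_i$ extend (after suitable composition with the canonical maps) to continuous seminorms on $V$. Quasi-completeness of $V_i$ then forces $B$ to be complete, which proves quasi-completeness of $V$.
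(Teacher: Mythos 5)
Your proof is correct, but it takes the classical Dieudonné--Schwartz route, whereas the paper gets away with something noticeably lighter. The paper has earlier defined a subset of a topological vector space to be \emph{bounded} when its image under every continuous linear form is bounded (noting this agrees with the usual notion in locally convex spaces via uniform boundedness). Given that definition, the paper's proof that a bounded $A \subset V$ lies in some $V_i$ proceeds by contradiction through functionals alone: pick $x_i \in A \cap V_{n(i)} \setminus V_{n(i-1)}$ and, using Hahn--Banach together with the closedness of $V_{n(i-1)}$ in $V_{n(i)}$, inductively build $\lambda_i \in V_{n(i)}'$ with $\lambda_i(x_i) = i$ and $\lambda_{i+1}|_{V_{n(i)}} = \lambda_i$. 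The compatible family glues (by definition of the colimit topology) into a single $\lambda \in V'$ that is unbounded on $A$, contradicting boundedness. You instead build a convex balanced neighborhood $W = \bigcup_k W_k$ of $0$ with $x_k \notin kW$, the standard argument with the more intricate bookkeeping of the compatibility condition $W_k \cap V_{n_{k-1}} = W_{k-1}$. Both are correct; what the paper's version buys is that one constructs a \emph{single} functional rather than a system of nested neighborhoods, at the cost of leaning on the functional-analytic characterization of boundedness. Your version is more self-contained for a reader who thinks of boundedness as the topological (absorption) notion, and it makes explicit the preliminary facts (a) and (b) --- that the subspace topology on $V_i$ agrees with its own topology, and that each $V_i$ is closed in $V$ --- which the paper asserts without proof but which both arguments quietly require.
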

\begin{proof}
With the above definition $V$ is a topological vector space and every $V_{i}$ a closed subspace thereof.
We claim that a subset $A \subset V$ is bounded if and only if it is contained in some $V_{i}$ and is bounded as a subset thereof.
The ``if'' part of the assertion is clear, so let us turn to the ``only if'' part.

If $A$ is not contained in any of $V_{i}$'s, we can find a subsequence of natural numbers $n(i)$ and $x_{i} \in A \intersection V_{n(i)} \setminus V_{n(i-1)}$.
By the Hahn-Banach theorem there exists a sequence of functionals $\lambda_{i} \in V_{n(i)}'$ such that $\lambda_{i}(x_{i}) = i$ and $\lambda_{i+1}|_{V_{n(i)}} = \lambda_{i}$.
By definition of the colimit topology they are restrictions of a unique $\lambda \in V'$, and $\lambda(A)$ is unbounded.

Hence the only closed bounded subsets of $V$ are the closed bounded subsets of the spaces $V_{i}$ that are complete by the hypothesis.
\end{proof}

Thus we see that the space of compactly supported smooth functions on $\R^{n}$ is quasi-complete, since it is the strict colimit of \Frechet\ spaces.
Another interesting space is the space of continuous linear operators.
\begin{proposition}
Let $X$ and $Y$ be \Frechet\ spaces.
Then the space $L(X,Y)$ is quasi-complete w.r.t.\ the strong operator topology.
\end{proposition}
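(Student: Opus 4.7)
The plan is to unwind the definitions of the strong operator topology (SOT), bounded sets, and quasi-completeness, and then invoke the Banach-Steinhaus principle that is available because $X$, being Fréchet, is barreled.

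First I would fix a closed bounded subset $B \subset L(X,Y)$ with respect to the SOT and a Cauchy net $(T_{\alpha})_{\alpha}$ in $B$; the goal is to produce a limit in $B$. For every fixed $x \in X$, the evaluation map $T \mapsto Tx$ is continuous from $L(X,Y)$ with the SOT into $Y$, so the net $(T_{\alpha} x)_{\alpha}$ is Cauchy in $Y$. Since $Y$ is a Fréchet space and hence complete, this net converges to some element, which we may call $Tx$. Bilinearity of the evaluation in $(\alpha, x)$ combined with continuity of addition and scalar multiplication in $Y$ shows that the resulting map $T \colon X \to Y$ is linear.

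The crucial step is the continuity of $T$. Here the hypothesis that $B$ is bounded in the SOT means precisely that $\{Sx : S \in B\}$ is bounded in $Y$ for every $x \in X$. Since $X$ is Fréchet, it is a Baire space and therefore barreled, so the Banach-Steinhaus theorem applies and yields that $B$ is an equicontinuous family of operators. Equicontinuity passes to pointwise limits: given a continuous seminorm $q$ on $Y$, equicontinuity furnishes a continuous seminorm $p$ on $X$ with $q(Sx) \leq p(x)$ uniformly in $S \in B$, and taking limits along $\alpha$ preserves this inequality, so $q(Tx) \leq p(x)$. Hence $T \in L(X,Y)$, the net $(T_{\alpha})$ converges to $T$ in the SOT by construction, and the assumed closedness of $B$ places $T$ in $B$, completing the proof.

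The main obstacle is the justification that pointwise boundedness implies equicontinuity; this is exactly where the Fréchet (hence barreled) hypothesis on $X$ enters, and it is what makes pointwise Cauchy nets in $B$ admit limits that are again continuous. The completeness of $Y$ (a Fréchet space) is what supplies the pointwise limits in the first place, so both Fréchet assumptions are used, one on each factor.
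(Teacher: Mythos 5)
Your proof is correct and follows the same route as the paper: identify boundedness in the strong operator topology with pointwise boundedness, invoke the uniform boundedness principle (valid since $X$ is Fréchet hence barreled) to upgrade this to equicontinuity, and observe that equicontinuity is preserved under pointwise limits so that the pointwise limit of a strongly Cauchy net in the closed bounded set is again a continuous operator lying in that set. The paper's version is just more compressed; your write-up usefully spells out the intermediate steps.
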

\begin{proof}
Let $A \subset L(X,Y)$ be closed and bounded in the strong operator topology.
By the uniform boundedness principle (see \cite[Theorem III.4.2]{MR1741419} for a sufficiently general version) $A$ is equicontinuous.
The operator defined as a pointwise limit of a strongly Cauchy net in $A$ is therefore continuous and is the strong limit of the net.
\end{proof}
More in general, this result remains true if $X$ is replaced by a barreled locally convex space and $Y$ by an arbitrary locally convex space.
Note that a similar argument for the space $L(X,Y)$ equipped with the weak operator topology only works if $Y$ is complete w.r.t.\ the weak topology, which is the case for instance if $Y$ is a reflexive Banach space.

\begin{definition}
Let $\Omega$ be compact, $\mu : C(\Omega, \R) \to \R$ be a measure (i.e.\ a positive linear form) and $E$ be a topological vector space.
The \emph{Gelfand-Pettis} integral of a continuous function $f : \Omega \to E$ is a vector $\mu(f) \in E$ such that
\[
\<\xi, \mu(f)\> = \mu (\<\xi, f\>) \text{ for every } \xi \in E'.
\]
\end{definition}
Clearly, the Gelfand-Pettis integral is unique if $E'$ separates points, e.g.\ if $E$ is locally convex.
Next we provide a sufficient condition for its existence.

\begin{proposition}
Let $E$ be a topological vector space which has the convex envelope property.
Let also $\Omega$ be compact and $\mu : C(\Omega, \R) \to \R$ be a measure.
Then every continuous function $f : \Omega \to E$ admits a Gelfand-Pettis integral.
\end{proposition}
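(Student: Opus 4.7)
The plan is to find the integral inside the compact set $K := \mu(1)\cdot \cconv f(\Omega)$, which exists by the convex envelope property since $f(\Omega)$ is compact as the continuous image of a compact set. By rescaling $\mu$ (the case $\mu(1)=0$ being trivial), I would assume without loss of generality that $\mu$ is a probability measure, so that $K = \cconv f(\Omega)$ is compact, convex, and contains $f(\Omega)$.

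For each $\xi \in E'$, set
\[
H_\xi := \{ v \in K : \<\xi, v\> = \mu(\<\xi, f\>) \}.
\]
Each $H_\xi$ is closed in $K$ by continuity of $\xi$, and $K$ is compact, so to show that $\bigcap_{\xi \in E'} H_\xi$ is nonempty it suffices to establish the finite intersection property. Any point in this intersection will serve as the Gelfand-Pettis integral.

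Fix finitely many $\xi_1, \dots, \xi_n \in E'$ and consider the continuous linear map $T : E \to \R^n$, $T(v) = (\<\xi_1, v\>, \dots, \<\xi_n, v\>)$. Then $T(K)$ is a compact convex subset of $\R^n$ that contains $T(f(\Omega))$. I claim the target point $y^* := (\mu(\<\xi_1, f\>), \dots, \mu(\<\xi_n, f\>))$ lies in $T(K)$, which is equivalent to $H_{\xi_1} \cap \dots \cap H_{\xi_n} \neq \emptyset$. Suppose otherwise; then by the Hahn-Banach separation theorem in $\R^n$ there exist $\alpha_1, \dots, \alpha_n \in \R$ and $\beta \in \R$ with
\[
\Sum_{i=1}^n \alpha_i \<\xi_i, v\> \leq \beta < \Sum_{i=1}^n \alpha_i \mu(\<\xi_i, f\>)
\qquad \text{for all } v \in K.
\]
Applying the first inequality pointwise to $v = f(\omega) \in f(\Omega) \subset K$ and integrating against the positive measure $\mu$, and using $\mu(1) = 1$, yields $\sum_i \alpha_i \mu(\<\xi_i, f\>) \leq \beta$, contradicting the strict inequality on the right.

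The main obstacle is making sure the intersection construction is really nonempty; everything else is routine. The one subtlety is that the definition of the integral requires $E'$ to separate points in order for the integral to be unique, but existence only uses Hahn-Banach in finite dimensions, which is automatic. Uniqueness, should it be desired, would follow from local convexity of $E$ via the Hahn-Banach theorem applied to $E$ itself.
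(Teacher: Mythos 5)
Your proof is correct and follows essentially the same route as the paper's: normalize $\mu$ to a probability measure, note that $K = \cconv f(\Omega)$ is compact by the convex envelope property, reduce to a finite intersection of closed constraint sets, and dispose of the finite case by pushing forward to $\R^n$ and invoking Hahn--Banach separation together with positivity and normalization of $\mu$. The only cosmetic difference is that the paper indexes its constraint sets $K_\Lambda$ by finite tuples of functionals (so that the family is closed under finite intersection by inspection), whereas you index by single functionals and verify the finite intersection property directly; both arguments are identical in substance.
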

\begin{proof}
Without loss of generality we may assume $\mu(\Omega)=1$.
By the convex envelope property the set $K := \cconv f(\Omega)$ is compact.

Let $\{\lambda_{1}, \dots, \lambda_{n}\} \subset E'$ be finite.
Then $\Lambda = (\lambda_{1}, \dots, \lambda_{n}) : E \to \R^{n}$ is a continuous linear operator.
Assume that $(\mu(\lambda_{1}\circ f), \dots, \mu(\lambda_{n}\circ f)) \not\in \Lambda(K)$.
Then there exists a linear form $\phi$ on $\R^{n}$ which separates the former point from the latter convex compact set.
In particular $\mu(\phi\circ\Lambda\circ f)$ is separated from $\phi\circ\Lambda(K) \supset \conv (\phi\circ\Lambda\circ f(\Omega))$, which contradicts the positivity and normalization of $\mu$.

Therefore the closed subset
\[
K_{\Lambda} = \{x \in K : \<\lambda_{i},x\> = \mu(\<\lambda_{i},f\>) \text{ for } i=1,\dots,n\}
\]
is non-empty.
Moreover these sets are compact and enjoy the finite intersection property (since the intersection of finitely many such sets has the same form).
Their intersection is hence non-empty, and every point in the intersection is a Gelfand-Pettis integral of $f$.
\end{proof}

We are now ready to give the basic characterization of analytic vector-valued functions.
\begin{theorem}
\label{thm:hol-lcvs}
Let $E$ be a locally convex quasi-complete vector space, $\Omega \subset \C$ be an open set and $f : \Omega \to E$ be a function.
Then the following properties are equivalent.
\begin{enumerate}
\item\label{thm:hol-lcvs:an}
$f$ is locally analytic, i.e.\ the sum of a power series in a neighborhood of each point in $\Omega$.
\item\label{thm:hol-lcvs:sh}
$f$ is strongly holomorphic, i.e.\ differentiable w.r.t.\ the topology of $E$.
\item\label{thm:hol-lcvs:wh}
$f$ is weakly holomorphic, i.e.\ $\<\lambda,f\>$ is differentiable for every $\lambda \in E'$.
\end{enumerate}
\end{theorem}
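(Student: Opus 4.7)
The implications $(\ref{thm:hol-lcvs:an}) \Rightarrow (\ref{thm:hol-lcvs:sh}) \Rightarrow (\ref{thm:hol-lcvs:wh})$ are routine and I would dispatch them first. For the first, starting from a convergent power series $f(z) = \sum_n a_n(z-z_0)^n$ on a disk $D(z_0, r)$, I would argue seminorm by seminorm: at any $z$ with $|z-z_0|=\rho<r$ the terms tend to zero, so $p(a_n)\rho^n$ is bounded for each continuous seminorm $p$ on $E$, whence the formal derivative series converges in $E$ on every strictly smaller disk and is identified with $f'$ by the standard algebraic manipulation of difference quotients. The second implication is immediate since each $\lambda \in E'$ commutes with the limit defining the derivative.

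The substance of the theorem is $(\ref{thm:hol-lcvs:wh}) \Rightarrow (\ref{thm:hol-lcvs:an})$. The plan is the classical Cauchy integral approach: fix $z_0 \in \Omega$, choose $R$ with $\bar D(z_0, R) \subset \Omega$, let $\gamma = \boundary D(z_0, R)$, and recover $f$ from a vector-valued Cauchy integral along $\gamma$. To set this up I would first establish that $f$ is strongly locally bounded: each $\lambda \circ f$ is scalar-holomorphic, hence continuous on the compact disk, so $f(\bar D(z_0, R))$ is weakly bounded in $E$, and weakly bounded subsets of a locally convex space are bounded. Next, to upgrade the automatic weak continuity of $f$ to continuity in the topology of $E$, I would apply the scalar Cauchy formula
\[
\lambda(f(z) - f(w)) = \frac{z - w}{2\pi i}\oint_\gamma \frac{\lambda(f(\zeta))}{(\zeta - z)(\zeta - w)}\, d\zeta
\]
and write an arbitrary continuous seminorm on $E$ as $p(x) = \sup_{\lambda \in V} |\lambda(x)|$ over some equicontinuous $V \subset E'$; passing the supremum inside the integral yields the Lipschitz estimate $p(f(z) - f(w)) \leq C |z-w| \sup_{\zeta\in\gamma} p(f(\zeta))$, with the right-hand supremum finite by the boundedness step.

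With strong continuity of $f$ in hand, the Gelfand-Pettis integrals
\[
a_n := \frac{1}{2\pi i}\oint_\gamma \frac{f(\zeta)}{(\zeta - z_0)^{n+1}}\, d\zeta
\]
are well-defined elements of $E$: quasi-completeness supplies the convex envelope property via Theorem~\ref{thm:quascomp-convenv}, which is precisely what the earlier existence result for Gelfand-Pettis integrals of continuous functions requires. A seminorm estimate $p(a_n) \leq R^{-n} \sup_\gamma p(f)$, again using the equicontinuous characterization of $p$, shows that $\sum_n a_n(z-z_0)^n$ converges in $E$ throughout $D(z_0, R)$. To identify the sum with $f$ I would apply an arbitrary $\lambda \in E'$ and invoke the scalar Cauchy coefficient formula: $\lambda(a_n)$ is the $n$-th Taylor coefficient of $\lambda \circ f$ at $z_0$, so $\lambda$ of the sum equals $\lambda(f(z))$, and the Hahn-Banach theorem closes the argument.

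The main obstacle is the second step, the upgrade from weak to strong continuity of $f$. This is where local boundedness together with the availability of \emph{uniform} Cauchy estimates over equicontinuous subsets of $E'$ are essential, since the continuous seminorms on $E$ are defined by taking suprema over precisely such subsets; without quasi-completeness both this step and the subsequent existence of the Gelfand-Pettis integrals would be out of reach.
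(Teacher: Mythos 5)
Your argument is correct and follows essentially the same approach as the paper: establish strong continuity, form the Gelfand--Pettis Cauchy integral using quasi-completeness (via Theorem~\ref{thm:quascomp-convenv}), and expand into a power series whose sum is identified with $f$. The only variation is in the strong-continuity step, where the paper instead normalizes $f(0)=0$, observes that the family $\{f(z)/z : z \in B_{2r}\}$ is weakly bounded hence bounded, and concludes by translation invariance --- a comparably elementary argument to your equicontinuous-supremum Lipschitz estimate.
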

\begin{proof}
The implications (\ref{thm:hol-lcvs:an})$\implies$(\ref{thm:hol-lcvs:sh})$\implies$(\ref{thm:hol-lcvs:wh}) are clear, so that we concentrate on (\ref{thm:hol-lcvs:wh})$\implies$(\ref{thm:hol-lcvs:an}).

In order to simplify the notation we assume that $f(0)=0$ and prove the assertion in a neighborhood of zero.
Choose $r>0$ in such a way that $\bar B_{2r}(0) \subset \Omega$.

Since $f$ is weakly holomorphic, for every $\lambda \in E'$ the function $z \mapsto \<\lambda,f(z)\>/z$ extends to a continuous function on $\bar B_{2r}$.
Hence the set $\{ f(z)/z, z \in B_{2r}\}$ is weakly bounded.
By the uniform boundedness principle it is bounded for every seminorm on $E$, so that $f$ is in fact strongly continuous at $0$, and by translation invariance strongly continuous on $\Omega$.

By the Cauchy integral formula applied to $\<\lambda, f\>$ and the definition of the Gelfand-Pettis integral we see that
\[
f(z) = \frac{1}{2\pi i} \int_{\gamma} \frac{f(\zeta)}{\zeta-z} \dif\zeta
\]
for $z\in B_{r}(0)$, where $\gamma$ is the boundary of $B_{2r}(0)$ with the positive orientation.
But then we have the estimate $|z/\zeta|\leq 1/2$ and therefore
\[
\frac1{\zeta - z} = \frac1\zeta \frac1{1-z/\zeta} = \frac1\zeta \sum_{n=0}^{\infty} \frac{z^{n}}{\zeta^{n}}
\]
with uniform convergence in $z$ and $\zeta$.
Inserting this into the previous formula yields
\[
f(z) = \frac{1}{2\pi i} \int_{\gamma} \sum_{n=0}^{\infty} \frac{z^{n} f(\zeta)}{\zeta^{n+1}} \dif\zeta.
\]
Since $f$ is continuous, $f(\gamma)$ is compact.
By continuity of scalar multiplication and Theorem~\ref{thm:quascomp-convenv} the closed absolute convex hull of $f(\gamma)$ is also compact and therefore bounded.
Since the Gelfand-Pettis integral of a function is contained in the closed convex hull of its image times the measure of the integration domain, we can interchange integration and summation and obtain
\[
f(z) = \sum_{n=0}^{\infty} \frac{z^{n}}{2\pi i} \int_{\gamma} \frac{f(\zeta)}{\zeta^{n+1}} \dif\zeta,
\]
where the integrals are Gelfand-Pettis integrals and convergence is uniform on $B_{r}$.
\end{proof}

\section{Intermediate spaces}
\index{complex interpolation!space $\mathcal{F}$}
Let $X_0, X_1$ be complex Banach spaces which are both contained in some topological vector space, $D \subseteq X_0 + X_1$ be a locally convex topological vector space (not necessarily carrying the subspace topology) and $\mathcal{F}(X_{0},X_{1},D)$ be the set of all bounded analytic functions $f_z : \bar S \to D$ such that $f_{j+i \cdot} \in C_{0}(\R, X_j)$ for $j=0,1$ and
\begin{equation}
\label{eq:F-norm}
||f_z||_{\mathcal{F}(X_{0},X_{1})} := \sup_{j=0,1;\ y\in\R} ||f_{j+iy}||_{X_j} < \infty.
\end{equation}
Note that $f_{z}$ stands both for the analytic function and for its value at $z \in \bar S$, depending on the context.

The space $\mathcal{F}(X_{0},X_{1})$ with the norm (\ref{eq:F-norm}) is a Banach space (here and below the omission of $D$ indicates that $D = X_{0} + X_{1}$).
For every $0 < \theta < 1$ we let $\mathcal{N}_{\theta}(X_{0},X_{1},D)$ be the subspace of $\mathcal{F}(X_{0},X_{1},D)$ which consists of the functions vanishing at $z = \theta$.
The space $\mathcal{N}_{\theta}$ is closed since convergence in $\mathcal{F}(X_{0},X_{1})$ implies pointwise convergence on $S$ by the three lines lemma (Corollary~\ref{cor:three-lines-vector}).

\index{complex interpolation!interpolation space}
The \emph{interpolation space} $[X_0, X_1]_{\theta}$ is defined as $\mathcal{F}(X_{0},X_{1}) / \mathcal{N}_{\theta}(X_{0},X_{1})$.
As a quotient of a Banach space by a closed subspace the space it is itself a Banach space.
The equivalence class of a function $f_{z}$ is canonically identified with the value $f_{\theta}$.
Under this identification the norm on the interpolation space is given by
\[
||f||_{[X_0, X_1]_{\theta}} = \inf_{f_z \in \mathcal{F}(X_{0}, X_{1}): f=f_\theta} ||f_z||_{\mathcal{F}(X_{0},X_{1})}.
\]

\begin{proposition}
\label{prop:F-X0-X1-D-dense}
Let $D \subset X_{0} \intersection X_{1}$ be a dense subspace.
Then $\mathcal{F}(X_{0},X_{1},D)$ is dense in $\mathcal{F}(X_{0},X_{1})$.
Moreover, the space $\mathcal{F}_{0}(X_{0},X_{1},D)$ of functions of the form
\begin{equation}
\label{eq:F-X0-X1-D-finite}
\sum_{k} h_{k} x_{k},
\end{equation}
where $h_{k}$ are complex-valued analytic functions, $x_{k} \in D$ and the sum is finite, is dense in $\mathcal{F}(X_{0},X_{1})$.
\end{proposition}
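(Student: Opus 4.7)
The plan is to prove the stronger assertion about density of $\mathcal{F}_{0}(X_{0},X_{1},D)$, from which the density of $\mathcal{F}(X_{0},X_{1},D)$ follows: since $D$ is a linear subspace, every $\sum_{k} h_{k}(z) x_{k}$ with $x_{k}\in D$ actually takes values in $D$, so $\mathcal{F}_{0}(X_{0},X_{1},D) \subseteq \mathcal{F}(X_{0},X_{1},D)$. The approximation of a given $f \in \mathcal{F}(X_{0},X_{1})$ would proceed in three stages.

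First I would reduce to functions with Gaussian-decaying boundary values. The entire scalar function $\psi_{\delta}(z) := e^{\delta z(z-1)}$ satisfies $|\psi_{\delta}(j+iy)| = e^{-\delta y^{2}}$ for $j \in \{0,1\}$, because $\Re(z(z-1)) = j(j-1) - y^{2} = -y^{2}$ on those edges; moreover $\psi_{\delta} \to 1$ uniformly on every bounded subset of $\bar S$ as $\delta \to 0^{+}$. The product $\psi_{\delta} f$ still lies in $\mathcal{F}(X_{0},X_{1})$, and a standard ``bounded piece versus tail'' split (using $|\psi_{\delta}-1|\leq 2$ globally and the $C_{0}$-property of the boundary values) gives $\psi_{\delta} f \to f$ in the norm of $\mathcal{F}(X_{0},X_{1})$. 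Hence one may assume the boundary values $f_{j}(\cdot) := f(j+i\cdot)$ are integrable and decay like $e^{-\delta y^{2}}$.

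Next I would reconstruct $f$ from its boundary values. Applied to the rectangle $[0,1] \times [-T,T]$ and passed to the limit $T \to \infty$, with the horizontal contributions vanishing by the Gaussian decay, the Cauchy integral formula (valid for Gelfand-Pettis integrals by Theorem~\ref{thm:hol-lcvs}) yields
\[
f(z) = \frac{1}{2\pi} \int_{\R} \frac{f(it)}{it-z}\,dt \;-\; \frac{1}{2\pi} \int_{\R} \frac{f(1+it)}{1+it-z}\,dt, \qquad z \in S.
\]
Since $f_{j} \in C_{0}(\R, X_{j})$ is Gaussian-decaying, its image has totally bounded closure in $X_{j}$, so by density of $D$ one can, for every $\epsilon>0$, choose finitely many bounded intervals $I_{jk}$ covering the essential support of $f_{j}$ and elements $x_{jk} \in D$ such that the step functions $\sum_{k} \chi_{I_{jk}} x_{jk}$ approximate $f_{j}$ uniformly within $\epsilon$ in $X_{j}$-norm. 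Substituting them into the representation yields
\[
\sum_{j,k} h_{jk}(z)\, x_{jk}, \qquad h_{jk}(z) := \frac{(-1)^{j+1}}{2\pi} \int_{I_{jk}} \frac{dt}{j+it-z},
\]
which belongs to $\mathcal{F}_{0}(X_{0},X_{1},D)$ since each $h_{jk}$ is scalar, analytic and bounded on $S$ with $C_{0}$ boundary traces.

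The last step, and the main technical obstacle, is to show that this substitution is close to $f$ in the $\mathcal{F}(X_{0},X_{1})$-norm. The difficulty is that the Cauchy kernel reproduces $f_{j}$ on the edge $\{\Re z = j\}$ only up to a Hilbert-transform-type contribution from the opposite edge, so the boundary values of $\sum_{j,k} h_{jk} x_{jk}$ do not literally coincide with the step-function approximation. I would handle this either by further convolving the boundary values with an approximate identity in the imaginary direction, which regularizes the boundary trace and makes the Hilbert-transform error of the step-function approximation uniformly small; or by replacing the Cauchy kernel with the explicit holomorphic extension of the Poisson kernel $\omega$ of the strip (Lemma~\ref{lemma:three-lines}) together with its harmonic conjugate, which by construction has boundary traces that are $\delta$-masses on the correct edge. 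Either variant, combined with letting $\epsilon \to 0$ and then $\delta \to 0$, yields the required approximation in $\mathcal{F}(X_{0},X_{1})$.
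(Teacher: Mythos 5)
Your approach is genuinely different from the paper's: you try to reconstruct $f$ from its boundary traces via the Cauchy integral formula on the strip and then approximate those traces by $D$-valued step functions, whereas the paper multiplies by a Gaussian, \emph{periodizes} in the imaginary direction, expands the resulting periodic function in a Fourier series, and uses \Fejer{} sums. The periodization route entirely avoids the boundary-trace issue you identify, because a partial Fourier sum of a (periodic) holomorphic function is automatically holomorphic with values in a finite-dimensional subspace, and its Fourier coefficients are shown to lie in $X_0\cap X_1$ by using Cauchy's theorem to move the integration line (so no Sokhotski--Plemelj analysis is ever needed).

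The gap in your write-up is real and is, I think, in the wrong place. When you replace the boundary data $f_j$ by a step function $\tilde f_j$ and take the Cauchy integral, the error on the edge $\{\Re z = j\}$ splits into two pieces: the opposite-edge piece has kernel $1/(1-j+it - (j+iy))$ with modulus $\geq 1$, so it is controlled by $\|f_{1-j}-\tilde f_{1-j}\|_{L^1}$ and is harmless; the dangerous piece is the \emph{same-edge} Sokhotski--Plemelj contribution, $\frac12(f_j-\tilde f_j)+\frac{1}{2}H(f_j-\tilde f_j)$, and the Hilbert transform is unbounded on $L^\infty$ — in fact the Hilbert transform of a characteristic function of an interval has logarithmic singularities at the endpoints, so $\|H(f_j-\tilde f_j)\|_\infty$ does not go to zero as the step approximation improves. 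Fix (b), switching to the conjugate-Poisson kernel of the strip, does not escape this: the conjugation operator is likewise unbounded on $L^\infty$, and for arbitrary (non-compatible) boundary data $\tilde f_0,\tilde f_1$ the resulting analytic extension simply does not have those as boundary traces. Fix (a) can in principle be pushed through, but it requires first convolving $f$ itself with a vertical approximate identity to make its boundary traces uniformly Hölder (so that the Hilbert transform becomes controllable), then approximating those by mollified $D$-valued step functions with a quantitative Hölder bound, and balancing the two scales; as stated, this is a sketch of a repair rather than a proof, and without it the argument does not close. I would either supply that bookkeeping carefully or adopt the paper's periodization and \Fejer-sum device, which is both shorter and sidesteps the boundary-value problem altogether.
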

\begin{proof}
Take an arbitrary $f_{z} \in \mathcal{F}(X_{0}, X_{1})$ with norm $1$.
Given an $\epsilon>0$ we can find a $\delta>0$ such that $||f_{z} - e^{\delta z^{2}} f_{z}||_{\mathcal{F}(X_{0}, X_{1})} < \epsilon$.
Let $T>0$ be so large that $|e^{\delta z^{2}}| < \epsilon$ whenever $|\Im z| > T$.
Then there exists an even smaller $\delta'>0$ such that $|e^{\delta' z^{2}}|>1-\epsilon$ whenever $|\Im z|<T$.
Let $T'>0$ be such that $|e^{\delta' z^{2}}| < \epsilon$ whenever $|\Im z| > T'$.
If we now find an analytic function $g_{z}$ uniformly bounded by $1+O(\epsilon)$ which approximates $e^{\delta z^{2}} f_{z}$ up to $O(\epsilon)$ inside the region $|\Im z| < T'$, then $||e^{\delta' z^{2}} g_{z} - e^{\delta z^{2}} f_{z}||_{\mathcal{F}(X_{0}, X_{1})} = O(\epsilon)$.
For this end let
\[
g_{z} := \sum_{k=-\infty}^{\infty} e^{\delta (z + ikT')^{2}} f_{z + ikT'}.
\]
This is a periodic function and its uniform boundedness and proximity to $e^{\delta z^{2}} f_{z}$ follow from the super-exponential decay of $e^{\delta z^{2}} f_{z}$.
Combined, these three properties also imply $||e^{\delta' z^{2}} g_{z} - g_{z}||_{\mathcal{F}(X_{0}, X_{1})} = O(\epsilon)$.
Consider the Fourier coefficients
\[
\hat g_{k}(z) = \frac{1}{2mT'} \int_{-mT'}^{mT'} g_{z+it} e^{-k(z+it)/T'} \dif t.
\]
By periodicity this integral is independent of $m$ and $\Im z$.
Letting $m \to \infty$ and using the Cauchy integral theorem and boundedness of $g_{z}$ we see that it is also independent of $\Re z$.
Therefore the Fourier coefficients lie in $X_{0} \intersection X_{1}$.
Since $g_{z}$ is analytic, the \Fejer{} sums of the Fourier series converge uniformly.
But the \Fejer{} sums are of the form (\ref{eq:F-X0-X1-D-finite}).
It now suffices to approximate every $x_{k}$ by an element of $D$.
\end{proof}

We now proceed to the announced generalization of the Riesz-Thorin interpolation theorem.
\begin{theorem}[{Stein \cite{MR0082586}}]
\index{Stein interpolation theorem}
\label{th:stein}
Let $D$ be a dense subspace of $X_0 \intersection X_1$ and $V$ be a subspace of $(Y_{0}+Y_{1})'$ such that the $Y_{0}+Y_{1}$- and the $V'$-norm on $Y_{0}+Y_{1}$ are equivalent.
Consider a family of linear operators
\[
T_z : D \to Y_0 + Y_1, \qquad (z \in \bar S)
\]
which is analytic on $S$, is $\sigma(Y_{0}+Y_{1},V)$-continuous and has admissible growth in the sense that for every $f \in D$ and $g \in V$ the function $\<T_{z}f,g\>$ is continuous on $\bar S$ and has admissible growth uniformly for bounded $f$ and $g$.
Assume in addition that for every $y \in \R$ and $f \in D$
\[
||T_{iy} f||_{Y_0} \leq M_0 ||f||_{X_0}, \quad ||T_{1+iy} f||_{Y_1} \leq M_1 ||f||_{X_1}.
\]

Then the operator
\[
T : \mathcal{F}(X_{0},X_{1},D) \to \mathcal{F}(Y_{0},Y_{1}),\, f_{z} \mapsto T_{z} f_{z}
\]
extends to a bounded operator from $\mathcal{F}(X_{0},X_{1})$ to $\mathcal{F}(Y_{0},Y_{1})$.

If in addition there exists an absolute constant $C$ such that
\[
\inf_{f_{z} \in \mathcal{F}(X_{0}, X_{1}, D) : f_{\theta}=f} ||f_{z}||_{\mathcal{F}(X_{0},X_{1})}
\leq
C ||f||_{[X_{0}, X_{1}]_{\theta}}
\]
for every $f \in D$, then $T$ maps $\mathcal{N}_{\theta}(X_{0},X_{1})$ into $\mathcal{N}_{\theta}(Y_{0},Y_{1})$ and therefore induces an operator from $[X_{0}, X_{1}]_{\theta}$ to $[Y_{0}, Y_{1}]_{\theta}$.
\end{theorem}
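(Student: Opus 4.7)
The plan is to define $T$ first on the dense subspace $\mathcal{F}_{0}(X_{0},X_{1},D)$ of Proposition~\ref{prop:F-X0-X1-D-dense}, verify that it lands in $\mathcal{F}(Y_{0},Y_{1})$ with the desired norm bound, and then extend by continuity. For the second part I would use the additional hypothesis to replace approximating sequences by sequences with the same value at $\theta$, showing that $T$ sends $\mathcal{N}_{\theta}$ into $\mathcal{N}_{\theta}$.

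First I would let $f_{z} = \sum_{k} h_{k}(z) x_{k} \in \mathcal{F}_{0}(X_{0},X_{1},D)$ (finite sum, $x_{k}\in D$, $h_{k}$ scalar entire). Then
\[
T_{z} f_{z} = \sum_{k} h_{k}(z)\, T_{z} x_{k}
\]
is a finite sum in which each $T_{z} x_{k}$ is analytic on $S$ with values in $Y_{0}+Y_{1}$, so $T_{z} f_{z}$ is a bounded $(Y_{0}+Y_{1})$-valued analytic function on $\bar S$ (the boundedness on $\bar S$ follows from Corollary~\ref{cor:three-lines-vector} applied to $\<T_{z} f_{z}, g\>$, $g \in V$ of unit norm, since $V$ is norm-determining). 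The endpoint hypotheses give
\[
\|T_{j+iy} f_{j+iy}\|_{Y_{j}} \leq M_{j} \|f_{j+iy}\|_{X_{j}} \leq M_{j} \|f_{z}\|_{\mathcal{F}(X_{0},X_{1})},
\]
so $\|T f_{z}\|_{\mathcal{F}(Y_{0},Y_{1})} \leq \max(M_{0},M_{1})\|f_{z}\|_{\mathcal{F}(X_{0},X_{1})}$. Because $M_{j} f_{j+i\cdot}$ is in $C_{0}(\R, X_{j})$, the products $T_{j+iy}f_{j+iy}$ lie in $C_{0}(\R, Y_{j})$, confirming $T f_{z} \in \mathcal{F}(Y_{0},Y_{1})$. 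Combining with Proposition~\ref{prop:F-X0-X1-D-dense}, $T$ extends uniquely to a bounded linear map $\mathcal{F}(X_{0},X_{1}) \to \mathcal{F}(Y_{0},Y_{1})$.

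For the second part, assume the additional hypothesis and take $f_{z} \in \mathcal{N}_{\theta}(X_{0},X_{1})$, so $f_{\theta}=0$. Choose approximations $f_{z}^{(n)} \in \mathcal{F}_{0}(X_{0},X_{1},D)$ with $f_{z}^{(n)} \to f_{z}$ in $\mathcal{F}(X_{0},X_{1})$; then $f_{\theta}^{(n)} \in D$ and $\|f_{\theta}^{(n)}\|_{[X_{0},X_{1}]_{\theta}} \leq \|f_{z}^{(n)} - f_{z}\|_{\mathcal{F}} \to 0$. By the hypothesis, for each $n$ we can pick $\tilde f_{z}^{(n)} \in \mathcal{F}(X_{0},X_{1},D)$ with $\tilde f_{\theta}^{(n)} = f_{\theta}^{(n)}$ and $\|\tilde f_{z}^{(n)}\|_{\mathcal{F}} \leq 2C\|f_{\theta}^{(n)}\|_{[X_{0},X_{1}]_{\theta}} \to 0$. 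Then $g_{z}^{(n)} := f_{z}^{(n)} - \tilde f_{z}^{(n)}$ lies in $\mathcal{F}(X_{0},X_{1},D)$, vanishes at $\theta$, and satisfies $g_{z}^{(n)} \to f_{z}$ in $\mathcal{F}(X_{0},X_{1})$. Since $T_{\theta}$ is linear on $D$ and $g_{\theta}^{(n)} = 0$, we have $(T g_{z}^{(n)})_{\theta} = T_{\theta} g_{\theta}^{(n)} = 0$, so $T g_{z}^{(n)} \in \mathcal{N}_{\theta}(Y_{0},Y_{1})$. Continuity of $T$ and closedness of $\mathcal{N}_{\theta}(Y_{0},Y_{1})$ give $T f_{z} \in \mathcal{N}_{\theta}(Y_{0},Y_{1})$, so $T$ descends to a bounded operator $[X_{0},X_{1}]_{\theta} \to [Y_{0},Y_{1}]_{\theta}$.

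The main obstacle I anticipate is the verification that $T f_{z}$ really belongs to $\mathcal{F}(Y_{0},Y_{1})$: boundedness on all of $\bar S$ (not just the two boundary lines) requires invoking the vector-valued three lines lemma through the norm-determining subspace $V$, and the decay at infinity on the boundary lines has to be extracted from the density approximation in Proposition~\ref{prop:F-X0-X1-D-dense}, where the super-exponentially decaying factors $e^{\delta z^{2}}$ compete against the admissible growth of $z \mapsto \<T_{z} x, g\>$. Once this is in hand, the quotient step is essentially formal.
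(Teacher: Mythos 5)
Your proof is correct and takes essentially the same approach as the paper: the first part is the same (vector-valued three lines lemma via Corollary~\ref{cor:three-lines-vector} on a dense subspace of $\mathcal{F}(X_0,X_1)$, then extension by continuity), and for the second part you merely repackage the paper's $\epsilon$-telescoping estimate on $\|(Tf_*)_\theta\|_{Y_0+Y_1}$ into a single corrected approximant $g_z^{(n)} = f_z^{(n)} - \tilde f_z^{(n)}$ that vanishes at $\theta$ and then invoke the closedness of $\mathcal{N}_\theta(Y_0,Y_1)$, which is the same argument with different bookkeeping.
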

\begin{proof}
Let $f \in \tilde X_{\theta}$ and $f_z \in \mathcal{F}(X_{0},X_{1},D)$ be a function with $f_\theta = f$.
Then $T_z f_z$ is an analytic function of admissible growth with values in $Y_0 + Y_1$ and by the assumptions
\[
|| T_{j+iy} f_{j+iy} ||_{Y_j}
\leq
M_j ||f_{j+iy}||_{X_j}
\]
By the three lines lemma (Corollary~\ref{cor:three-lines-vector}), the function $T_{z} f_{z}$ is bounded by an element of $C_{0}(\bar S)$, so that $T_{z} f_{z} \in \mathcal{F}(Y_{0},Y_{1})$ and
\[
||T_z f_z||_{\mathcal{F}(Y_{0},Y_{1})} \leq \max(M_{0},M_{1}) ||f_z||_{\mathcal{F}(X_{0},X_{1})},
\]
so that $T$ is a bounded operator on $\mathcal{F}(X_{0},X_{1},D)$.
By Proposition~\ref{prop:F-X0-X1-D-dense} the latter space is dense in $\mathcal{F}(X_{0},X_{1})$.

Assume now that the additional condition is satisfied.
Let $f_{z} \in \mathcal{N}_{\theta}(X_{0},X_{1})$.
For $\epsilon>0$ let $g_{z} \in \mathcal{F}_{0}(X_{0},X_{1},D)$ be a function with $||f_{z}-g_{z}||_{\mathcal{F}(X_{0},X_{1})} < \epsilon$ given by Proposition~\ref{prop:F-X0-X1-D-dense}.
Then $f_{\theta} - g_{\theta} = -g_{\theta} \in D$ and there exists a function $h_{z} \in \mathcal{F}(X_{0},X_{1},D)$ such that $h_{\theta} = g_{\theta}$ and $||h_{z}||_{\mathcal{F}(X_{0},X_{1})} < C \epsilon$.
Therefore
\begin{multline*}
||(Tf_{*})_{\theta}||_{Y_{0}+Y_{1}}
\leq
||(T(f_{*}-g_{*}))_{\theta}||_{Y_{0}+Y_{1}} + ||T_{\theta} g_{\theta}||_{Y_{0}+Y_{1}}\\
\leq
C ||T(g_{*}-f_{*})||_{\mathcal{F}(Y_{0},Y_{1})} + ||T_{\theta} h_{\theta}||_{Y_{0}+Y_{1}}
\leq
C \epsilon + C ||T h_{*}||_{\mathcal{F}(Y_{0},Y_{1})}
\leq
C \epsilon.
\end{multline*}
Since $\epsilon$ was arbitrary, this shows that $Tf_{*} \in \mathcal{N}_{\theta}(Y_{0},Y_{1})$.
Thus $T$ induces an continuous linear operator from $[X_{0}, X_{1}]_{\theta}$ to $[Y_{0},Y_{1}]_{\theta}$.
\end{proof}
In general it is not clear whether $T : [X_{0},X_{1}]_{\theta} \to [Y_{0},Y_{1}]_{\theta}$ is well-defined.
In the sequel we shall answer this question affirmatively in the cases $X_{j} = L^{p_{j}}(\R^{n})$, $1 \leq p_{j} \leq \infty$, $D=\Schwartz(\R^{n})$ and $X_{0} = H^{1}(\R^{n})$, $X_{1} = L^{p}(\R^{n})$, $1 < p < \infty$, $D=\Schwartz(\R^{n})$.

For the moment observe that $T(\mathcal{N}_{\theta}(X_{0},X_{1})) \subset \mathcal{N}_{\theta}(Y_{0},Y_{1})$ is always the case if $T_{z}$ is constant.
This is because convergence in $\mathcal{F}(X_{0},X_{1},D)$ implies convergence of the values at $\theta$ in the topology of $X_{0} + X_{1}$ on $D$ and because $T_{\theta} : D \subset X_{0}+X_{1} \to Y_{0}+Y_{1}$ is bounded (due to the density of $D$ in $X_{0} \cap X_{1}$).

For future reference we cite here a description of the complex interpolation between dual spaces.
\begin{theorem}[{\cite[Corollary 4.5.2]{MR0482275}}]
\label{thm:complex-interpolation-dual}
Let $X_{0}$ and $X_{1}$ be Banach spaces at least one of which is reflexive and $0<\theta<1$.
Then
\[
[X_{0}', X_{1}']_{\theta} = [X_{0}, X_{1}]_{\theta}'
\]
\end{theorem}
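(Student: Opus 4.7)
The plan is to prove the two continuous inclusions separately, with reflexivity entering only for the hard direction.

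For the inclusion $[X_0',X_1']_\theta \hookrightarrow [X_0,X_1]_\theta'$, I would argue as follows. Given $\phi \in [X_0',X_1']_\theta$, pick any representative $f_z \in \mathcal{F}(X_0',X_1')$ with $f_\theta = \phi$. By Proposition~\ref{prop:F-X0-X1-D-dense} the subspace $\mathcal{F}_0(X_0,X_1,X_0\cap X_1)$ is dense in $\mathcal{F}(X_0,X_1)$, so it suffices to define $\phi$ on $X_0 \cap X_1$ and extend. Given $x \in X_0 \cap X_1$ and $g_z = \sum_k h_k(z) x_k \in \mathcal{F}_0(X_0,X_1,X_0\cap X_1)$ with $g_\theta = x$, the pairing
\[
h(z) := \langle f_z, g_z\rangle = \sum_k h_k(z)\langle f_z, x_k\rangle
\]
makes sense because $f_z \in X_0'+X_1' = (X_0\cap X_1)'$ by Proposition~\ref{prop:sum-of-duals}, and is analytic of admissible growth on $\bar S$ with $|h(j+iy)| \leq \|f_{j+iy}\|_{X_j'} \|g_{j+iy}\|_{X_j}$. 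The scalar three lines lemma (Lemma~\ref{lemma:three-lines}) yields $|\phi(x)| \leq \|f_z\|_{\mathcal{F}(X_0',X_1')} \|g_z\|_{\mathcal{F}(X_0,X_1)}$; taking infima over both representatives gives $|\phi(x)| \leq \|\phi\|_{[X_0',X_1']_\theta} \|x\|_{[X_0,X_1]_\theta}$, and density lets $\phi$ extend.

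For the reverse inclusion, fix $\phi \in [X_0,X_1]_\theta'$. Restriction to $X_0 \cap X_1$ (which is continuously embedded) gives an element of $(X_0\cap X_1)' = X_0'+X_1'$ via Proposition~\ref{prop:sum-of-duals}; the task is to exhibit it as $f_\theta$ for some $f_z \in \mathcal{F}(X_0',X_1')$ with $\|f_z\|_{\mathcal{F}} \lesssim \|\phi\|$. I would define $\tilde\phi$ on $\mathcal{F}(X_0,X_1)$ by $\tilde\phi(g_z) := \phi(g_\theta)$; this is linear, bounded with $\|\tilde\phi\| \leq \|\phi\|$, and kills $\mathcal{N}_\theta$. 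The boundary-value map $g_z \mapsto (g_{iy}, g_{1+iy})$ embeds $\mathcal{F}(X_0,X_1)$ isometrically into $C_0(\R,X_0) \oplus C_0(\R,X_1)$ under the sup norm, so by Hahn-Banach $\tilde\phi$ extends to a continuous linear form on that direct sum. Here reflexivity is invoked: since at least one of $X_0,X_1$ is reflexive, the dual of $C_0(\R,X_j)$ identifies, via a vector-valued Riesz representation theorem, with a space of $X_j'$-valued regular Borel measures (without reflexivity one would only obtain weak$^*$-measurable or finitely additive objects). One thus obtains finite measures $\mu_0,\mu_1$ with values in $X_0',X_1'$ such that
\[
\tilde\phi(g_z) = \int_\R \langle d\mu_0(t), g_{it}\rangle + \int_\R \langle d\mu_1(t), g_{1+it}\rangle,
\quad \|\mu_j\| \lesssim \|\phi\|.
\]
Using the harmonic measure kernel $\omega$ from the three lines lemma, define
\[
f_z := \int_\R \omega(1-\Re z, \Im z - t)\, d\mu_0(t) + \int_\R \omega(\Re z, \Im z - t)\, d\mu_1(t),
\]
a Gelfand-Pettis integral with values in $X_0'+X_1'$ (the quasi-completeness and convex-envelope machinery from Theorems~\ref{thm:quascomp-convenv} and~\ref{thm:hol-lcvs} guarantee analyticity). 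Standard Poisson-kernel facts give $\|f_{j+iy}\|_{X_j'} \lesssim \|\phi\|$ and $f_\theta = \phi$ on $X_0 \cap X_1$ (by testing against $g_z \equiv x$ constant for $x \in X_0\cap X_1$, so that $\tilde\phi(g_z) = \phi(x)$ while the integral formula recovers the same value since $\omega(x,\cdot)$ is a probability density times $1-x$ plus symmetric). After this identification $\phi \in [X_0',X_1']_\theta$ with the correct norm bound.

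The main obstacle is the second step, specifically the vector-valued Riesz representation for $(C_0(\R,X_j))'$: obtaining a genuinely countably additive $X_j'$-valued measure (rather than a weakly measurable assignment) is exactly where reflexivity of one of the $X_j$ is used, via Radon-Nikodym-type reasoning for vector measures. Everything else — Hahn-Banach extension, Poisson representation, analyticity of the reconstructed $f_z$, and the boundary norm estimates — is routine once this representation is in place.
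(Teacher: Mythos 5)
The paper does not prove this theorem; it cites it from \cite[Corollary 4.5.2]{MR0482275}, so there is no argument in the text to compare against. Your first inclusion is essentially correct: pairing a representative $f_z \in \mathcal{F}(X_0',X_1')$ against $g_z \in \mathcal{F}_0(X_0,X_1,X_0\cap X_1)$ via Proposition~\ref{prop:sum-of-duals} and the scalar three lines lemma is the standard argument for $[X_0',X_1']_\theta \hookrightarrow [X_0,X_1]_\theta'$ and needs no reflexivity.

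The reverse inclusion has a genuine gap, and it is not located where your caveat puts it. Even granting the cleanest possible Riesz representation---genuine, countably additive $X_j'$-valued measures $\mu_0,\mu_1$ of bounded variation on both boundary lines---the Poisson reconstruction does not produce an element of $\mathcal{F}(X_0',X_1')$. The space $\mathcal{F}$ requires $f_{j+i\cdot} \in C_0(\R,X_j')$ and $f_z$ bounded on $\bar S$, but the Poisson integral of a bounded-variation measure is unbounded near any point mass of the measure, and its boundary values are at best in $L^\infty$, existing only pointwise almost everywhere even when the measure is absolutely continuous. So the $f_z$ you write down satisfies neither the boundedness nor the boundary-regularity requirement, and the estimate $\|f_{j+iy}\|_{X_j'}\lesssim\|\phi\|$ is simply not available. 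The classical proof of this result, as in \cite[Section 4.5]{MR0482275}, avoids precisely this obstruction by passing through the \emph{upper} complex interpolation functor $[\cdot,\cdot]^\theta$, whose defining class consists of analytic primitives with Lipschitz (not $C_0$) boundary values and whose interpolation elements are the derivatives $g'(\theta)$: the primitive of the Poisson integral of a bounded-variation measure is exactly such a function. That argument gives the duality $[X_0,X_1]_\theta' = [X_0',X_1']^\theta$ with no reflexivity hypothesis at all, and reflexivity is spent on the separate theorem that $[Y_0,Y_1]^\theta = [Y_0,Y_1]_\theta$ when one of $Y_0,Y_1$ is reflexive---a step your sketch omits entirely. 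The claim that the remainder is ``routine once this representation is in place'' is exactly where the proposal breaks down.
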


\section{Interpolation between \texorpdfstring{$L^{p}$}{Lp} spaces}
The motivating example for the complex interpolation method are the $L^p$ spaces.
Indeed,
\[
L^{p_\theta} = [ L^{p_0}, L^{p_1} ]_\theta,
\text{ where } \frac{1}{p_\theta} = \frac{1-\theta}{p_0} + \frac{\theta}{p_1}.
\]
We will prove a more general version of this result.

For a measure space $(\Omega,\mu)$ and a Banach space $X$ we denote by $L^{p}_{0}(\Omega,X)$ the closure of the space of simple functions in the Bochner space $L^{p}(\Omega,X)$.
We shall usually omit $\Omega$ if there is no ambiguity.
Note that $L^{p}_{0}(X) = L^{p}(X)$ if $1 \leq p < \infty$.
Moreover, if $\Omega$ is a topological space, then $C_{0}(\Omega,X) \subset L^{\infty}_{0}(\Omega, X)$.

\begin{theorem}
\label{thm:bochner-complex-interpolation}
Let $X_{j}$, $j=0,1$ be Banach spaces such that $X_{0} \intersection X_{1}$ is dense in both $X_{j}$'s, $0 < \theta < 1$ and $Z_{j} \subset X_{j}'$ be such that $[Z_{0},Z_{1}]_{\theta}$ encodes, by duality $\<f_{z},x\> = \<f_{\theta},x\>$, the $[X_{0}, X_{1}]_{\theta}$-norm on $X_{0} \cap X_{1}$.
Let also
\[
1 \leq p_{0} < p_{1} \leq \infty
\text{ and }
\frac{1}{p_\theta} = \frac{1-\theta}{p_0} + \frac{\theta}{p_1}.
\]
Write $Y_{j} := L^{p_{j}}_{0}(X_{j})$ for $j=0,1$ and $Y_{\theta} := L^{p_{\theta}}_{0}([X_{0},X_{1}]_{\theta})$.
Then we have that
\[
[Y_{0}, Y_{1}]_{\theta} = Y_{\theta}
\]
with equal (not merely equivalent) norms.
\end{theorem}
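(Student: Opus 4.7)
The strategy will be to establish matching inequalities $||f||_{[Y_0,Y_1]_\theta} \leq ||f||_{Y_\theta}$ and $||f||_{Y_\theta} \leq ||f||_{[Y_0,Y_1]_\theta}$ on the dense subset of simple functions valued in $X_0 \cap X_1$ and then extend by completeness. Both directions rest on an explicit construction of an analytic family in $\mathcal{F}(X_0, X_1)$ (respectively $\mathcal{F}(Z_0, Z_1)$) obtained by tensoring a scalar Riesz--Thorin cocycle with elementary vector-valued interpolants.

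First I would prove the easy direction. For $f = \sum_k \chi_{E_k} y_k$ with $y_k \in X_0 \cap X_1$ and the $E_k$ disjoint of finite measure, write $a_k := ||y_k||_{[X_0, X_1]_\theta}$, pick $g_{k,z} \in \mathcal{F}(X_0, X_1)$ with $g_{k,\theta} = y_k/a_k$ and $||g_{k,z}||_{\mathcal{F}} \leq 1+\epsilon$, and introduce the scalar cocycle $\phi_a(z) := a^{p_\theta((1-z)/p_0 + z/p_1)}$, which satisfies $\phi_a(\theta) = a$ and $|\phi_a(j+iy)| = a^{p_\theta/p_j}$. Then
\[
F_z(\omega) := \sum_k \chi_{E_k}(\omega) \phi_{a_k}(z) g_{k,z}
\]
lies in $\mathcal{F}(Y_0, Y_1)$ with $F_\theta = f$, and a direct computation gives
\[
||F_{j+iy}||_{L^{p_j}(X_j)}^{p_j} \leq (1+\epsilon)^{p_j} \sum_k \mu(E_k) a_k^{p_\theta} = (1+\epsilon)^{p_j} ||f||_{Y_\theta}^{p_\theta}
\]
(with the obvious modification when $p_1 = \infty$). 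Extracting the $p_j$-th root, rescaling by homogeneity, and letting $\epsilon \to 0$ yields $||f||_{[Y_0, Y_1]_\theta} \leq ||f||_{Y_\theta}$ for simple $f$; this extends by density, since every element of $\mathcal{F}_0(X_0, X_1, X_0 \cap X_1)$ has its value at $\theta$ in $X_0 \cap X_1$, so that Proposition~\ref{prop:F-X0-X1-D-dense} gives density of $X_0 \cap X_1$ in $[X_0, X_1]_\theta$.

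For the reverse inequality I would combine the same construction on the dual side with the three lines lemma. Given simple $f$ as above and a test function $g = \sum_\ell \chi_{F_\ell} \xi_\ell$ with $\xi_\ell \in Z_0 \cap Z_1$ and $||g||_{L^{p'_\theta}_0([Z_0, Z_1]_\theta)} \leq 1$, applying the previous construction to the couple $(Z_0, Z_1)$ with conjugate exponents $p'_j$ produces $G_z \in \mathcal{F}(L^{p'_0}_0(Z_0), L^{p'_1}_0(Z_1))$ with $G_\theta = g$ and $||G_{j+iy}||_{L^{p'_j}(Z_j)} \leq 1+\epsilon$. For any $F_z \in \mathcal{F}(Y_0, Y_1)$ representing $f$, the scalar
\[
h(z) := \int_\Omega \<F_z(\omega), G_z(\omega)\> \dif\mu(\omega)
\]
is holomorphic on $S$ and of admissible growth on $\bar S$, and on $\Re z = j$ pointwise Hölder gives $|h(j+iy)| \leq ||F_{j+iy}||_{L^{p_j}(X_j)} ||G_{j+iy}||_{L^{p'_j}(Z_j)} \leq (1+\epsilon) ||F_z||_{\mathcal{F}(Y_0, Y_1)}$. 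The three lines lemma yields $|h(\theta)| = \bigl|\int \<f,g\> \dif\mu\bigr| \leq (1+\epsilon) ||F_z||_{\mathcal{F}}$; taking the infimum over $F_z$, the supremum over $g$, and using that $[Z_0, Z_1]_\theta$ norm-determines $[X_0, X_1]_\theta$ on $X_0 \cap X_1$ together with the standard scalar $L^p$-duality, we obtain $||f||_{Y_\theta} \leq ||f||_{[Y_0, Y_1]_\theta}$ after sending $\epsilon \to 0$.

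The main technical obstacle will be to justify holomorphy and admissible growth of $h$ and the pointwise Hölder step rigorously. I plan to handle this by first reducing both $F_z$ and $G_z$ via Proposition~\ref{prop:F-X0-X1-D-dense} to the dense subclass $\mathcal{F}_0$ of finite sums $\sum_j h_j(z) u_j$ with $u_j$ among simple functions valued in $X_0 \cap X_1$ (respectively $Z_0 \cap Z_1$); for such functions $h(z)$ reduces to a finite combination of scalar analytic functions multiplied by scalar integrals, for which the estimates are immediate from Fubini. The residual density steps needed to lift the simple-function inequalities to the full spaces follow from the density of $X_0 \cap X_1$ in $[X_0, X_1]_\theta$ noted above and its analog on the $Z$ side.
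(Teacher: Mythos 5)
Your proof is correct and takes essentially the same route as the paper's: the forward inclusion comes from tensoring the scalar cocycle $a_k^{p_\theta/p_z}$ with vector-valued representatives in $\mathcal{F}(X_0,X_1)$, and the reverse inclusion comes from pairing against the analogous construction over $(Z_0,Z_1)$ with conjugate exponents and applying the three lines lemma. The only cosmetic difference is in the reverse step: the paper builds the single extremizing dual function $g_z$ directly (with $\<f_\theta,g_\theta\> = 1$), whereas you sup over all simple dual test functions and finish with $L^p$-duality — both amount to the same computation, and both rely on the Chebyshev-type density of $SF(X_0\cap X_1)$ in $Y_0\cap Y_1$ that you only gesture at in the final sentence.
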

\begin{proof}
Observe first that the space $SF(X_{0} \intersection X_{1})$ of simple functions with values in $X_{0} \intersection X_{1}$ is dense in $Y_{0} \intersection Y_{1}$ provided with the norm $\max\{||\cdot||_{Y_{0}}, ||\cdot||_{Y_{1}}\}$.
Indeed, let $f \in Y_{0} \intersection Y_{1}$.
By definition there exist simple functions $f_{j}$ with values in $X_{j}$ which approximate $f$ in $Y_{j}$ for $j=0,1$, respectively.
Hence we may assume without loss of generality that $\mu(\Omega) < \infty$, $f_{j} = 1_{\Omega} x_{j}$ with $x_{j} \in X_{j}$ and $||f-f_{j}||_{Y_{j}} < \epsilon$.
But then by the Chebyshev inequality we have that
\[
\mu(||f-1_{\Omega} x_{j}||_{X_{j}} > \epsilon (2/\mu(\Omega))^{1/p_{j}}) < \mu(\Omega)/2, \text{ for } j=0,1
\]
in the case $p_{1} < \infty$, and therefore there exists some $\omega \in \Omega$ such that $||f(\omega)-x_{j}||_{X_{j}} \leq \epsilon (2/\mu(\Omega))^{1/p_{j}}$, and thus we obtain
\[
||f-1_{\Omega} f(\omega)||_{Y_{j}} < (1 + 2^{1/p_{j}}) \epsilon, \text{ for } j=0,1,
\]
which is an approximation of $f$ by a $X_{0} \intersection X_{1}$-valued simple function.
The case $p_{1} = \infty$ is similar but easier.

Therefore, by Proposition~\ref{prop:F-X0-X1-D-dense}, the space $SF(X_{0} \intersection X_{1})$ is dense in $[Y_{0}, Y_{1}]_{\theta}$.
It thus suffices to verify that the $[Y_{0}, Y_{1}]_{\theta}$- and the $Y_{\theta}$-norm coincide for every $f = \sum_{k} a_{k} \chi_{k} x_{k}  \in SF(X_{0} \intersection X_{1})$.
In this decomposition $\chi_{k}$ are characteristic functions with disjoint support, $a_{k}$ are positive real numbers and $||x_{k}||_{[X_{0},X_{1}]_{\theta}} = 1$ for all $k$.

For each $k$ let $(x_{k})_{z} \in \mathcal{F}(X_{0},X_{1})$ be an analytic representative of $x_{k}$ with norm at most $1+\epsilon$.
Then
\[
f_{z} = e^{\epsilon(z^{2}-\theta^{2})} \sum_{k} a_{k}^{p_{\theta}/p_{z}} \chi_{k} (x_{k})_{z}
\]
is an analytic function with $f_{\theta} = f$ and $||f_{z}||_{\mathcal{F}(Y_{0},Y_{1})} \leq (1+\epsilon)e^{\epsilon(1-\theta^{2})} ||f||_{Y_{\theta}}$.
This proves the existence of a canonical contractive surjection
\[
Y_{\theta} \hookrightarrow [Y_{0}, Y_{1}]_{\theta}.
\]
For the converse we use the spaces $Z_{j}$.
Without loss of generality we may assume that $||f||_{Y_{\theta}}=1$.
Let $f_{z} \in \mathcal{F}(Y_{0},Y_{1})$ be an arbitrary function satisfying $f_{\theta}=f$.

For every $k$ let $x'_{k} \in [Z_{0}, Z_{1}]_{\theta}$ be such that $\<x_{k}, x'_{k}\> = 1$ and $||x'_{k}||_{\theta; Z_{0}, Z_{1}} < 1+\epsilon$.
Let $(x'_{k})_{z} \in \mathcal{F}(Z_{0},Z_{1})$ be an analytic representative of $x_{k}$ with norm bounded by $1+\epsilon$.
Then
\[
g_{z} = e^{\epsilon(z^{2}-\theta^{2})} \sum_{k} a_{k}^{p_{\theta}/p_{z}'} \chi_{k} (x'_{k})_{z}
\]
is a function in $\mathcal{F}(L^{p_{0}'}(Z_{0}), L^{p_{1}'}(Z_{1}))$ and the three lines lemma~\ref{lemma:three-lines} shows that
\begin{multline*}
||f||_{Y_{\theta}} = 1 = \< f_{z}, g_{z}\>
\leq ||f_{z}||_{\mathcal{F}(Y_{0},Y_{1})} ||g_{z}||_{\mathcal{F}(L^{p_{0}'}(Z_{0}), L^{p_{1}'}(Z_{1}))}\\
\leq ||f_{z}||_{\mathcal{F}(Y_{0},Y_{1})} (1+\epsilon)e^{\epsilon(1-\theta^{2})}.
\end{multline*}
Letting $\epsilon \to 0$ and taking the infimum on the right-hand side we obtain the contractivity of the embedding $[Y_{0}, Y_{1}]_{\theta} \hookrightarrow Y_{\theta}$.
\end{proof}

Thanks to Theorem~\ref{thm:complex-interpolation-dual} we can take $Z_{j} = X_{j}'$ whenever at least one of $X_{j}$ is reflexive.
The next example justifies the use of proper subspaces of $X_{j}'$.

The mixed-norm $L^P$ spaces are given by
\[
L^{P_j}_{0}(\Prod \Omega_k) = L^{P_{j,n}}_{0}(\Omega_n, L^{P_{j,n-1}}_{0}(\Omega_{n-1},\dots,L^{P_{j,1}}_{0}(\Omega_1)\dots))
\]
for some $1 \leq P_{j,k} \leq \infty$, $j=0,1$.
See \cite{MR0126155} for their properties; they are very similar to those of the usual $L^p$-spaces.
For $0 < \theta < 1$ define intermediate exponents $P_\theta$ and conjugate exponents $P_\theta'$ componentwise, e.g.\ $P_{\theta,k}\inv = (1-\theta) P_{0,k}\inv + \theta P_{1,k}\inv$.

\begin{corollary}
\index{complex interpolation!between mixed-norm Lp spaces@between mixed-norm $L^p$ spaces}
\label{cor:complex-interpolation-LP}
If for every $k$ we have $P_{0,k} \neq P_{1,k}$ then
\[
L^{P_\theta}_{0} = [ L^{P_0}_{0}, L^{P_1}_{0} ]_\theta.
\]
\end{corollary}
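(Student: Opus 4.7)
The plan is to induct on the number $n$ of factors in the product $\prod_k \Omega_k$. The base case $n=1$ is the standard scalar result and follows from Theorem~\ref{thm:bochner-complex-interpolation} with $X_j = Z_j = \C$, since $\C$'s self-duality supplies the norming condition trivially.

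For the inductive step, write $X_j := L^{(P_{j,n-1}, \ldots, P_{j,1})}_{0}(\prod_{k<n} \Omega_k)$ and set $p_j := P_{j,n}$, so that by definition of the mixed-norm space $L^{P_j}_0 = L^{p_j}_{0}(\Omega_n, X_j)$ and $L^{P_\theta}_0 = L^{p_\theta}_{0}(\Omega_n, L^{(P_{\theta,n-1},\ldots,P_{\theta,1})}_0)$. Since $P_{0,k} \neq P_{1,k}$ implies $P_{0,k}' \neq P_{1,k}'$ under the involution $p \mapsto p/(p-1)$ of $[1,\infty]$, the inductive hypothesis applies both to the exponents $(P_{j,k})_{k<n}$ and to their conjugates, giving
\[
[X_0, X_1]_\theta = L^{(P_{\theta,n-1}, \ldots, P_{\theta,1})}_0, \qquad [Z_0, Z_1]_\theta = L^{(P_{\theta,n-1}', \ldots, P_{\theta,1}')}_0,
\]
where $Z_j := L^{(P_{j,n-1}', \ldots, P_{j,1}')}_0 \subset X_j'$ is viewed inside the dual via the integration pairing, and the matching of exponents uses $1 - 1/P_{\theta,k} = (1-\theta)/P_{0,k}' + \theta/P_{1,k}'$.

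It remains to check the remaining hypotheses of Theorem~\ref{thm:bochner-complex-interpolation}, namely that $X_0 \cap X_1$ is dense in each $X_j$ and that $[Z_0, Z_1]_\theta$ encodes the $[X_0, X_1]_\theta$-norm on $X_0 \cap X_1$ by the duality pairing. Density is immediate since simple functions supported on sets of finite measure lie in every $L^Q_0$ space, which is exactly where the assumption $P_{0,k} \neq P_{1,k}$ is needed at every level (otherwise one would encounter $L^\infty_0 \ne L^\infty$ and lose density). The norming property is the crux: given a simple $x \in X_0 \cap X_1$, the classical Benedek--Panzone duality for mixed-norm spaces \cite{MR0126155} produces a simple ``conjugate'' function $z$ pairing with $x$ to the product $\|x\|_{[X_0,X_1]_\theta} \|z\|_{[Z_0,Z_1]_\theta}$ up to $\epsilon$, and this $z$ automatically lies in $Z_0 \cap Z_1$ because simple functions belong to every $L^{Q'}_0$ space.

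Granted these hypotheses, Theorem~\ref{thm:bochner-complex-interpolation} yields
\[
[L^{P_0}_0, L^{P_1}_0]_\theta = [L^{p_0}_0(\Omega_n, X_0), L^{p_1}_0(\Omega_n, X_1)]_\theta = L^{p_\theta}_{0}(\Omega_n, [X_0, X_1]_\theta) = L^{P_\theta}_0,
\]
closing the induction. The principal technical obstacle is verifying the norming property, which I would carry out by unwinding the mixed-norm duality level by level on simple functions with product-rectangle support; the hypothesis $P_{0,k} \neq P_{1,k}$ for every $k$ is essential precisely because it lets the inductive argument run simultaneously on the primal and dual sides.
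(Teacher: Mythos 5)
Your proof follows the same inductive strategy the paper uses: induct on the number of factors, at each level applying Theorem~\ref{thm:bochner-complex-interpolation} with the inner mixed-norm spaces as $X_j$ and their ``conjugate-exponent'' counterparts $Z_j \subset X_j'$ as the norming subspaces, exactly as in the paper's (much terser) proof. One small misattribution: the hypothesis $P_{0,k}\neq P_{1,k}$ is not needed to guarantee density of $X_0\cap X_1$ in the $X_j$ (simple functions are dense in every $L^Q_0$ space regardless, since $L^\infty_0$ is \emph{defined} as the closure of simple functions), but rather because Theorem~\ref{thm:bochner-complex-interpolation} requires $p_0 < p_1$ strictly, and this strictness must hold at every level of the recursion for the induction to apply.
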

\begin{proof}
We induct on $n$ and use Theorem~\ref{thm:bochner-complex-interpolation} in each step.
When $n=1$ we use $X_{0}=X_{1}=Z_{0}=Z_{1}=\C$, while for larger $n$ we consider
\[
X_{j} := L^{(p_{j,1}, \dots, p_{j,n-1})}_{0}(\Prod_{k=1}^{n-1} \Omega_k)
\text{ and }
Z_{j} := L^{(p'_{j,1}, \dots, p'_{j,n-1})}_{0}(\Prod_{k=1}^{n-1} \Omega_k).
\qedhere
\]
\end{proof}

In order to obtain a continuous operator between interpolation spaces in Theorem~\ref{th:stein} we need the following stronger version of Proposition~\ref{prop:F-X0-X1-D-dense}.

\begin{proposition}
\label{prop:lp-interpolation-by-schwartz-functions}
Let $1 \leq p_{0} < p_{1} \leq \infty$ and consider $\Schwartz(\R^{n})$ with its usual \Frechet\ space structure.
Let $f \in \Schwartz(\R^{n})$ be such that $||f||_{p_{\theta}} = 1$.
Then, for every $\epsilon>0$, there exists a function $f_{z} \in \mathcal{F}(L^{p_{0}}(\R^{n}),L^{p_{1}}(\R^{n}),\Schwartz(\R^{n}))$ such that $||f_{z}||_{\mathcal{F}(L^{p_{0}},L^{p_{1}})} \leq 1 + \epsilon$ and $f_{\theta} = f$.
\end{proposition}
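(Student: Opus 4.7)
My plan is to smoothly regularize the standard Calderón construction $|f|^{p_\theta/p_z - 1} f$, which already achieves norm exactly $1$ but is not $\Schwartz$-valued because $|f|$ is non-smooth at zeros of $f$. Fix parameters $\delta, \eta > 0$ (to be chosen small), set $A(x) := (|f(x)|^2 + \eta^2)^{1/2}$, and define
\[
f_z(x) := e^{\delta(z^2 - \theta^2)} A(x)^{p_\theta/p_z - 1} f(x),
\]
where $1/p_z := (1-z)/p_0 + z/p_1$ and $A^w := \exp(w \log A)$. Both the Gaussian factor and the power $A^{p_\theta/p_\theta - 1} = A^0$ are trivial at $z = \theta$, so $f_\theta = f$.

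That $f_z$ is $\Schwartz$-valued is the key structural observation. Since $|f|^2 + \eta^2$ is smooth and strictly positive, $A$ is smooth and satisfies $\eta \leq A(x) \leq \|f\|_\infty + \eta$; hence $A^w$ and all its $x$-derivatives are bounded functions of $x$ with at most polynomial dependence on $|w|$. Moreover $\partial A = \Re(\bar f \partial f)/A$ already has Schwartz decay because $|f|/A \leq 1$, so the product rule gives $A^w f \in \Schwartz(\R^n)$ for every $w \in \C$. Combined with the Gaussian damping $|e^{\delta z^2}| = e^{\delta((\Re z)^2 - (\Im z)^2)}$, which beats the linear-in-$|\Im z|$ growth of $|w| = |p_\theta/p_z - 1|$ on the strip, this shows $z \mapsto f_z$ is a bounded map $\bar S \to \Schwartz$, continuous on $\bar S$ and analytic on $S$ (via Theorem~\ref{thm:hol-lcvs}, reducing to pointwise weak analyticity). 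The same damping makes $f_{j+i\cdot} \in C_0(\R, L^{p_j})$.

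For the $\mathcal{F}$-norm bound, on $\Re z = j$ one computes
\[
\|f_{j+iy}\|_{p_j}^{p_j} = e^{\delta p_j (j^2 - y^2 - \theta^2)} \int A^{p_\theta - p_j} |f|^{p_j}.
\]
For $j = 1$ the construction $A \geq |f|$ together with $p_\theta - p_1 < 0$ gives $A^{p_\theta - p_1} |f|^{p_1} \leq |f|^{p_\theta}$ wherever $|f| > 0$ (the integrand vanishes elsewhere), so the integral is at most $\int |f|^{p_\theta} = 1$; the prefactor peaks at $y = 0$ with value $e^{\delta(1 - \theta^2)}$, which can be made $\leq 1 + \epsilon/2$ by taking $\delta$ small (the $p_1 = \infty$ case is handled identically via the pointwise bound $A^{-1}|f| \leq 1$). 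For $j = 0$ the exponent $p_\theta - p_0$ is positive; the integrand $(|f|^2 + \eta^2)^{(p_\theta - p_0)/2} |f|^{p_0}$ converges pointwise to $|f|^{p_\theta}$ as $\eta \to 0$ and is dominated, for $\eta$ bounded above, by an integrable function, so dominated convergence drives the integral to $1$, and combined with $e^{-\delta\theta^2} \leq 1$ this yields $\|f_{iy}\|_{p_0} \leq 1 + \epsilon/2$ once $\eta$ is small.

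The main obstacle is the tension between staying close to the unregularized Calderón formula (to keep the $\mathcal{F}$-norm near $1$) while simultaneously enforcing $\Schwartz$-regularity (which requires the strict positivity $A \geq \eta$) and the $C_0$-decay on the boundary (which requires the Gaussian factor). The regularization $A$ is chosen precisely so that $A \geq |f|$ keeps the $L^{p_1}$-side automatically under control while $A \to |f|$ in an $L^{p_\theta}$-dominated way as $\eta \to 0$ to handle the $L^{p_0}$-side; $\delta$ is first shrunk to absorb the forced overshoot $e^{\delta(1-\theta^2)}$ on the $p_1$-line, then $\eta$ is sent to zero to make the $p_0$-error negligible.
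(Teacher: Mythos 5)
Your construction is correct and follows a genuinely different route from the paper's. The paper uses a dyadic partition-of-unity decomposition: radial cut-offs $\tilde\phi_j$ on $\C$ supported on geometrically spaced annuli give $f = \sum_j \phi_j\circ f$ with $\phi_j(z) = z\tilde\phi_j(z)$, and the interpolating function is
\[
f_z := \phi_0\circ f + \sum_{j>0} m_j^{p_\theta/p_z}\frac{\phi_j\circ f}{m_j},
\]
with the boundary $L^{p_j}$ bounds tracked piecewise over the annuli. You instead regularize the classical \Calderon{} interpolant $|f|^{p_\theta/p_z-1}f$ by replacing $|f|$ with $A=(|f|^2+\eta^2)^{1/2}$, which is smooth and bounded below by $\eta$, so that $A^w f$ is $\Schwartz$-valued with all seminorms polynomially bounded in $|w|$. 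Both arguments insert the Gaussian damping $e^{\delta(z^2-\theta^2)}$, and the regularization cost is then paid in different currencies: in the paper through multiplicative slack $(1+\delta)^{O(1)}$ per annulus, in yours through a $\delta$-absorbed $L^{p_1}$ overshoot and an $\eta$-absorbed, dominated-convergence-controlled $L^{p_0}$ error, with $\delta$ fixed before $\eta$ is shrunk. Your version is shorter and stays closer to the textbook $L^p$ interpolation extremizer, and the pointwise inequality $A \geq |f|$ makes the $L^{p_1}$ side exact with no budget at all. The paper's dyadic decomposition is, however, the direct precursor of the $H^1$--$L^p$ analogue in Proposition~\ref{prop:h1-lp-interpolation-by-schwartz-functions}, where the decomposition is necessarily by atoms on level sets of the grand maximal function and a pointwise regularization of $|f|$ has no counterpart; this is presumably why the paper uses the same dyadic structure here.
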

This result tells us that the $[L^{p_{0}}(\R^{n}), L^{p_{1}}(\R^{n})]_{\theta}$-norm of a Schwartz function can be calculated considering only functions in $\mathcal{F}(L^{p_{0}}(\R^{n}), L^{p_{1}}(\R^{n}), \Schwartz(\R^{n}))$.
\begin{proof}
It suffices to find a uniformly bounded analytic function which satisfies the conclusion, since the decay can always be obtained at an arbitrarily small cost in $\epsilon$ by multiplication with $e^{\delta (z^{2} - \theta^{2})}$ with $\delta$ small enough.

Fix some $\delta > 0$ and let $\{ \tilde\phi_{j} \}_{j\in\N}$ be a smooth partition of identity on $\C$ by non-negative radial functions such that
\begin{align*}
\supp \tilde\phi_{0}(z) &\subset \{ |z| < \delta (1+\delta)^{3} \},\\
\supp \tilde\phi_{j}(z) &\subset \{ \delta (1+\delta)^{2j} < |z| < \delta (1+\delta)^{2j+3} \} &\text{if } j > 0
\end{align*}
and let $\phi_{j}(z) := z \tilde\phi_{z}$.
Then
\[
f = \sum_{j} \phi_{j} \circ f,
\]
where only finitely many summands are non-zero.

Let $\tilde\chi_{j}$ be the characteristic function of $\supp \tilde\phi_{j}$.
Set $\chi_{j}(z) := z \tilde\chi_{j}$ and $m_{j} := \sup |\chi_{j}|$.
Define
\[
f_{z} := \phi_{0} \circ f + \sum_{j > 0} m_{j}^{p_{\theta} / p_{z}} \frac{\phi_{j} \circ f}{m_{j}}.
\]
This is a linear combination of Schwartz functions with bounded coefficients which are analytic on $S$ and continuous on $\bar S$.
In the case $p_{z} \in [1,\infty)$ we use the pointwise estimate
\begin{align*}
\left| m_{j}^{p_{\theta}/p_{z}} \frac{\phi_{j} \circ f}{m_{j}} + m_{j+1}^{p_{\theta}/p_{z}} \frac{\phi_{j+1} \circ f}{m_{j+1}} \right|^{p_{z}}
&\leq
\left| m_{j+1}^{p_{\theta}/p_{z}} \frac{\phi_{j} \circ f}{m_{j}} + m_{j+1}^{p_{\theta}/p_{z}} \frac{\phi_{j+1} \circ f}{m_{j}} \right|^{p_{z}}\\
&\leq
m_{j}^{p_{\theta}} (1+\delta)^{4 p_{\theta}} \left| \frac{\chi_{j} \circ f}{m_{j}} \right|^{p_{z}}
\end{align*}
when $f(x) \in A_{j} := \supp\phi_{j} \setminus \supp\phi_{j-1}$, $j>0$, and obtain
\begin{align*}
||f_{z}||_{p_{z}}^{p_{z}}
&\leq
\int_{|f|<\delta (1+\delta)^{3}} |f|^{p_{z}} + (1+\delta)^{4 p_{\theta}} \sum_{j>0} \int_{f \in A_{j}} m_{j}^{p_{\theta}} \left| \frac{\chi_{j} \circ f}{m_{j}} \right|^{p_{z}}\\
&\leq
\epsilon/2 + (1+\delta)^{4 p_{\theta}} \max\{1, (1+\delta)^{4 (p_{z} - p_{\theta})}\} \sum_{j>0} \int_{f \in A_{j}} m_{j}^{p_{\theta}} \left| \frac{\chi_{j} \circ f}{m_{j}} \right|^{p_{\theta}}\\
&\leq
\epsilon/2 + \max\{(1+\delta)^{4 p_{\theta}}, (1+\delta)^{4 p_{z}}\} || f ||_{p_{\theta}}^{p_{\theta}}\\
&\leq
1 + \epsilon
\end{align*}
for sufficiently small $\delta$.
An analogous estimate holds when $\Im z \neq 0$.
In the case $p_{z} = \infty$ we use the even simpler pointwise estimate
\[
\left| \frac{\phi_{j} \circ f}{m_{j}} + \frac{\phi_{j+1} \circ f}{m_{j+1}} \right|
\leq
\left| \frac{\chi_{j} \circ f}{m_{j}} \right|
\]
whenever $f(x) \in A_{j}$ and conclude
\[
||f_{z}||_{\infty}
\leq ||\phi_{0} \circ f||_{\infty} + \max_{j>0} || \frac{\chi_{j} \circ f}{m_{j}} ||_{\infty}
\leq \epsilon + 1
\]
for $\delta$ small enough.
\end{proof}


\chapter{Fractional integration}
With an interpolation theorem for analytic families of operators at our disposition, we are now interested in obtaining such families.
In this chapter we extend a semigroup $T^n$ of differential operators on $\Schwartz(\R^n)$ parameterized by a natural number to a family of operators $T^z$ parameterized by a complex variable following mainly Stein's exposition in \cite{MR0290095}.
There is a slight obstacle in the way.

\begin{theorem}[\cite{MR0124611}]
\label{th:local-operators-are-differential}
Let $\Omega \subset \R^n$ be open and $D : C^\infty_c(\Omega) \to C^\infty_c(\Omega)$ be a linear operator which is local, i.e.\ $D$ satisfies
\[
\supp (Df) \subseteq \supp (f)
\quad
\text{for all } f \in C^\infty_c(\Omega).
\]
Then $D$ is a finite order differential operator in a neighborhood of each point of $\Omega$.
\end{theorem}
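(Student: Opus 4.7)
The proof follows Peetre's classical approach. Fix $x_{0} \in \Omega$; the central reduction is to show that there exist a neighborhood $V \ni x_{0}$ and an integer $N$ such that, for every $f \in C^{\infty}_{c}(\Omega)$ and every $x \in V$, the scalar $Df(x)$ depends only on the jet $(\partial^{\alpha}f(x))_{|\alpha|\leq N}$ at $x$. Granting this, linearity gives $Df(x) = \sum_{|\alpha|\leq N} c_{\alpha}(x)\partial^{\alpha}f(x)$ on $V$, and smoothness of the $c_{\alpha}$ follows by evaluating $D$ on the $x$-smoothly parametrized family $y\mapsto \phi(y)(y-x)^{\beta}/\beta!$ (with $\phi$ a cutoff equal to $1$ near $x_{0}$) and inverting the resulting triangular system in $\beta$, using that $Df$ is smooth for each fixed choice in the family.

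The jet dependence is proved by contradiction. Locality already yields that $Df(x)$ depends only on the germ of $f$ at $x$, so failure at $x_{0}$ produces, for every $n\in\N$, a function $f_{n}\in C^{\infty}_{c}(\Omega)$ and a point $x_{n}\to x_{0}$ with vanishing $n$-jet of $f_{n}$ at $x_{n}$ but $Df_{n}(x_{n})\neq 0$. Rescaling, I may assume $|Df_{n}(x_{n})|=1$. Multiplying by a cutoff equal to $1$ on a small ball around $x_{n}$ preserves $Df_{n}(x_{n})$ by locality, so I may additionally arrange $\supp f_{n}\subset B(x_{n}, r_{n})$ for some $r_{n}\to 0$; after thinning the sequence the balls are pairwise disjoint and accumulate only at $x_{0}$.

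The plan is then to form $g := \sum_{n} n\, f_{n}$ and derive a contradiction. Since $f_{n}$ has vanishing $n$-jet at $x_{n}$ and $\supp f_{n}\subset B(x_{n}, r_{n})$, Taylor's theorem controls all derivatives of $f_{n}$ of order $|\alpha|\leq n$ on $B(x_{n}, r_{n})$ by $\|f_{n}\|_{C^{n+1}}\, r_{n}^{\,n+1-|\alpha|}$. With the cutoff radii $r_{n}$ chosen to shrink fast enough (absorbing the $C^{n+1}$-norms of the $f_{n}$ and the amplification factor $n$), the series converges in $C^{\infty}(\Omega)$ and its sum $g$ is a $C^{\infty}_{c}$ function flat at $x_{0}$. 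By disjointness of supports and locality, $Dg(x_{n}) = n\, Df_{n}(x_{n}) = n$ for each $n$, so $|Dg(x_{n})|\to\infty$ along $x_{n}\to x_{0}$, contradicting the fact that $Dg \in C^{\infty}_{c}(\Omega)$ is bounded.

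The principal technical obstacle is precisely this simultaneous balancing of three competing scales — the normalization of $f_{n}$, the radii $r_{n}$ of its support, and the amplification factor $n$ — to ensure $C^{\infty}$-convergence of the series defining $g$ while keeping $|Dg(x_n)|$ divergent. Equivalently one may phrase the key step as a uniform local bound $\|Df\|_{L^{\infty}(V)} \leq C\|f\|_{C^{N}(\bar V)}$; failure of such a bound for every $N$ is the contrapositive of the desired statement, and essentially the same glueing construction yields the contradiction. Once this bound is in hand the jet dependence is immediate by approximating $f$ with its $N$-th Taylor polynomial at $x$ and controlling the remainder via the bound.
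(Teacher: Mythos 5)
Your overall strategy matches the paper's (Lemmas~\ref{lem:approx-by-locally-vanishing-functions}--\ref{lem:local-operators-are-locally-Cm-C0-continuous}): reduce to a uniform local bound and refute its failure by glueing a sequence of counterexamples into one smooth, compactly supported $g$ for which $Dg$ is unbounded. But the glueing step, as you describe it, has a real gap. You claim that after cutting off, the balls $B(x_n,r_n)$ can be thinned to become pairwise disjoint. However, $Df_n(x_n)\neq 0$ together with $\supp Df_n\subseteq\supp f_n$ forces $x_n\in\supp f_n$, so no cutoff can move the support off $x_n$; and the negation of jet dependence may well deliver $x_n=x_0$ for \emph{every} $n$ --- this is exactly the scenario in which $Df(x_0)$ would depend on jets of unbounded order at $x_0$, which you are not entitled to exclude in advance. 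In that case every ball contains $x_0$, disjointness is unachievable, the identity $Dg(x_n)=nDf_n(x_n)$ breaks down, and since $D$ is linear but not assumed continuous you cannot evaluate $Dg(x_0)$ termwise either; no contradiction results. The paper sidesteps this with a small but essential restriction: Lemma~\ref{lem:local-operators-are-locally-Cm-C0-continuous} establishes the uniform bound only for $f\in C^\infty_c(U\setminus\{x\})$, i.e.\ for test functions supported away from the reference point, with disjointness of the $\supp u_k$ guaranteed \emph{constructively} by shrinking the working domain, $U_{k+1}=U_k\setminus\supp u_k$, at each step. The differential-operator formula is then proved on $U\setminus\{x\}$ via Lemma~\ref{lem:Cm-C0-continuous-operators-are-differential} and extended to $y=x$ by continuity of both sides. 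Your argument needs the same device.

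A secondary imprecision is the closing sentence that ``jet dependence is immediate by approximating $f$ with its $N$-th Taylor polynomial at $x$.'' If $f$ has vanishing $N$-jet at $x$, its Taylor polynomial is identically zero and the ``remainder'' is $f$ itself, which is not small in $C^N(\bar V)$ for the \emph{fixed} neighborhood $\bar V$ appearing in the bound; so this is circular. The correct step, isolated in Lemma~\ref{lem:approx-by-locally-vanishing-functions}, is to approximate such an $f$ in $C^N$-norm by functions $f_\delta$ vanishing near $x$: locality gives $Df_\delta(x)=0$, and the bound then gives $|Df(x)| = |D(f-f_\delta)(x)| \leq C\,||f-f_\delta||_{C^N}\to 0$.
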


The proof is deferred to the end of the chapter.
This result shows that one cannot expect the operators $T^z$ to be local.
On the contrary, the differential operators are exceptional in this respect, so that one may expect them to arise at some exceptional points of $T^z$.

\section{Riesz potentials}
For $z$ with $\Re z > -n$ define a tempered distribution $g_z \in \Schwartz'(\R^n)$ by
\[
g_z (\phi) = \int_{\R^n} |y|^{z} \phi(y) \dif y = \int_{r=0}^\infty r^{z+n-1} \Phi(r) \dif r,
\]
where
\[
\Phi(r) = \Omega_n \int_{S^{n-1}} \phi(r \sigma) \dif \sigma
\]
and $\Omega_n = \frac{2 \pi^{n/2}}{\Gamma(n/2)}$ is the area of the unit sphere in $\R^{n}$.

In order to extend the family $g_{z}$ to $\{\Re z \leq -n\}$, rewrite it in the form
\begin{align*}
g_z (\phi)
&= \int_{r=1}^\infty r^{z+n-1} \Phi(r) \dif r
+ \int_0^1 r^{z+n-1} \left( \Phi(r) - \sum_{k=0}^{m} \frac{r^{2k} \Phi^{(2k)}(0)}{(2k)!} \right) \dif r\\
&\quad + \Sum_{k=0}^{m} \frac{\Phi^{(2k)}(0)}{(z + n + 2k) (2k)!}.
\end{align*}
The first term on the right is a uniform limit of Riemann sums for bounded $|z|$ and thus analytic on $\C$.
Since the odd derivatives of $\Phi$ vanish at zero, the expression in the parentheses in the second term is bounded by $(2m+2)! r^{2m+2} \sup_r |\Phi^{(2m+2)}(r)|$.
If $\Re z > -n-2m-2$ and $|z|$ bounded, it is a uniform limit of Riemann sums as well.

The last term, on the other hand, is meromorphic and has simple poles at $-n-2m,-n-2m+2,\dots,-n$.
Since the right-hand side does not depend on $m$ in its domain of definition, $g$ extends to a weakly analytic, meromorphic family of distributions on $\C \setminus -n-2\N$.

To remove the singularities, consider
\[
h_z = g_z \Gamma\left(\frac{z+n}{2}\right)\inv.
\]
Since $\Gamma((\cdot+n)/2)$ does not vanish anywhere, $h_z$ is defined for all $z \not\in -n-2\N$.
On the other hand, $\Gamma((\cdot+n)/2)$ has simple poles at $-n-2\N$, so that the quotient extends to a linear form on $\Schwartz(\R)$ at each of these points.
To see that this linear form is continuous, i.e.\ a tempered distribution, we will compute it explicitly.

Let $L$ denote the Laplace operator.
In polar coordinates $L=r^{1-n} \pdif{}{r} r^{n-1} \pdif{}{r} + r^{-2} \Delta_{S^{n-1}}$.
Averaging over $S^{n-1}$ we obtain
\begin{align*}
L^k \phi (0)
&= \Omega_n\inv (L^k \Phi(|\cdot|)) (0)\\
&= \Omega_n\inv \frac{\Phi^{(2k)}(0)}{(2k)!} \Prod_{j=1}^k (2j)(2j+n-2)\\
&= \frac{\Gamma(n/2)}{2 \pi^{n/2}} \frac{\Phi^{(2k)}(0)}{(2k)!} 2^k k! \frac{(2k+n-2)!!}{(n-2)!!}\\
&= \frac{\Gamma(k+n/2)}{2 \pi^{n/2}} 2^{2k} k! \frac{\Phi^{(2k)}(0)}{(2k)!},
\end{align*}
where in the last line we have used the relation
\[
m!! =
\begin{cases}
2^{m/2} \Gamma(m/2+1) & \text{for } m\in\N \text{ even},\\
\sqrt{2/\pi} 2^{m/2} \Gamma(m/2+1) & \text{for } m\in\N \text{ odd}.
\end{cases}
\]
The residuum of $\Gamma((\cdot+n)/2)$ at $-n-2k$ is $2 (-1)^k/k!$.
Thus
\begin{equation}
\begin{split}
h_{-n-2k}(\phi)
&= \left(\frac{2 (-1)^k}{k!}\right)\inv \frac{\Phi^{(2k)}(0)}{(2k)!}\\
&= \frac{k!}{2 (-1)^k} \left( \frac{\Gamma(k+n/2)}{2 \pi^{n/2}} 2^{2k} k! \right)\inv L^k \phi (0)\\
&= \pi^{n/2} 2^{-2k} \Gamma(k+n/2)\inv ((-L)^k \phi)(0),
\end{split}
\label{eq:laplace-h}
\end{equation}
which is a tempered distribution.
We have shown that $h_z$ is a weakly entire distribution-valued function in the sense that $h_z(\phi)$ is entire for every $\phi \in \Schwartz(\R^n)$.

Note that while $h_z$ is a scalar multiple of a power of the Laplacian for $z \in -n-2\N$, it is clearly not a local operator for any other $z$.

\begin{definition}
\index{Riesz potential}
Let $f$ be a fixed function.
Define the \emph{Riesz potential} of order $\gamma$ by
\[
I^\gamma f(x)
= 2^{-\gamma} \pi^{-n/2} \frac{\Gamma(n/2 - \gamma/2)}{\Gamma(\gamma/2)} \int_{\R^n} |y|^{\gamma-n} f(x+y) \dif y
\]
whenever the integral converges and extend the mapping $\gamma \mapsto I^\gamma f(x)$ by analytic continuation to the maximal domain on which the extension is unambiguous.
\end{definition}
For $f\in\Schwartz(\R^n)$ the Riesz potential $I^{\gamma}f(x)$ coincides with
\[
2^{-\gamma} \pi^{-n/2} \Gamma\left(\frac{n - \gamma}{2}\right) h_{\gamma-n} f(x + \cdot).
\]
By the preceding discussion, this is a meromorphic function with simple poles at $\gamma = n + 2\N$, so that for Schwartz functions, Riesz potentials of all orders except $n + 2\N$ are well defined.
Furthermore, Riesz potentials of Schwartz functions are smooth as functions of $x$.

By (\ref{eq:laplace-h}) the powers of the Laplacian coincide with Riesz potentials on the space of Schwartz functions
\begin{equation}
(-L)^k = I^{-2 k} \quad \text{for }k\in\N, f\in\Schwartz(\R^{n}).
\label{eq:riesz-laplacian}
\end{equation}

\begin{proposition}
\label{prop:riesz-of-smooth}
Let $\phi$ be a smooth function on $\R^n$ satisfying the estimate
\[
|\phi(x)| \leq C |x|^{\gamma-n}
\]
with $0 < \gamma < n$.
Then $I^\alpha \phi$ is defined at least when $\Re \alpha < n - \gamma$.
\end{proposition}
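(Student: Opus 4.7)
The strategy is to localize via a smooth cutoff. One separates $\phi$ into a compactly supported piece, whose Riesz potential is defined by the analytic continuation of $h_{z}$ developed above, and a tail piece, whose Riesz potential is defined directly by an absolutely convergent integral.

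Fix $x \in \R^{n}$, pick $\eta \in C_{c}^{\infty}(\R^{n})$ with $\eta \equiv 1$ on $B(0, 2|x|+1)$, and write $\phi = \phi_{1} + \phi_{2}$ with $\phi_{1} := \eta\phi \in C_{c}^{\infty}(\R^{n}) \subset \Schwartz(\R^{n})$ and $\phi_{2} := (1-\eta)\phi$. For the compactly supported piece the formula
\[
I^{\alpha}\phi_{1}(x) = 2^{-\alpha}\pi^{-n/2}\Gamma\bigl(\tfrac{n-\alpha}{2}\bigr)\,h_{\alpha-n}\bigl(\phi_{1}(x+\cdot)\bigr)
\]
is analytic in $\alpha$ away from the poles of $\Gamma((n-\alpha)/2)$, i.e.\ away from $n+2\N$. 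Since $n+2k\geq n > n-\gamma$ for all $k \geq 0$, these poles do not meet the half-plane $\{\Re\alpha < n-\gamma\}$.

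For the tail piece, $\phi_{2}(x+y)$ vanishes whenever $|y|\leq |x|+1$, so the integrand has no singularity at $y=0$. For $|y| \geq 2|x|$ the triangle inequality gives $|x+y|\geq |y|/2$, and the decay assumption then yields
\[
|y|^{\Re\alpha-n}\,|\phi_{2}(x+y)| \leq C\,|y|^{\Re\alpha-n}|x+y|^{\gamma-n} \leq C'\,|y|^{\Re\alpha+\gamma-2n}.
\]
Passing to polar coordinates reduces convergence at infinity to $\int^{\infty} r^{\Re\alpha+\gamma-n-1}\,\dif r<\infty$, which holds exactly when $\Re\alpha < n-\gamma$. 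Hence the integral defining $I^{\alpha}\phi_{2}(x)$ is absolutely convergent throughout this half-plane, and differentiation under the integral sign (dominated by the same bound) shows it is analytic in $\alpha$ there.

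Setting $I^{\alpha}\phi(x) := I^{\alpha}\phi_{1}(x) + I^{\alpha}\phi_{2}(x)$ thus produces an analytic function of $\alpha$ on $\{\Re\alpha < n-\gamma\}$. The only point that needs to be checked — and what I would flag as the main conceptual obstacle — is independence of the cutoff $\eta$: for two choices $\eta,\tilde\eta$ the discrepancy involves $(\eta-\tilde\eta)\phi \in C_{c}^{\infty}(\R^{n})$ evaluated in the two senses above. On the non-empty strip $0 < \Re\alpha < n-\gamma$ both constructions agree with the absolutely convergent original integral applied to $(\eta-\tilde\eta)\phi$; since both evaluations are analytic in $\alpha$ on the full half-plane, uniqueness of analytic continuation forces them to coincide everywhere.
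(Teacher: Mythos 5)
Your proof is correct and takes essentially the same approach as the paper: decompose $\phi$ via a bump function into a compactly supported piece (whose Riesz potential is given by the meromorphic continuation of $h_{z}$, holomorphic on $\Re\alpha < n$) and a tail piece vanishing near the evaluation point (whose Riesz potential is an absolutely convergent integral precisely when $\Re\alpha < n-\gamma$). The only difference is that you explicitly justify independence of the cutoff by uniqueness of analytic continuation from the overlap strip, whereas the paper simply asserts it; that is a small but welcome strengthening.
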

\begin{proof}
Clearly, $I^\alpha \phi(y)$ is defined for $0 < \Re \alpha < n - \gamma$ and every fixed $y$.

Using a bump function, decompose $\phi = \phi_1 + \phi_2$, where $\phi_1$ is a smooth function with compact support and $\phi_2$ is identically zero on $B(y,2|y|)$.
Then $I^\alpha \phi_1$ is a holomorphic function on $\Re\alpha < n$, while $\phi_2$ satisfies the estimate
\[
\left| \int \phi_2(x+y) |x|^{\alpha - n} \dif x \right|
\leq
C \int_{|x|>2|y|} |x+y|^{\gamma-n} |x|^{\Re \alpha-n} \dif x
\leq
C
<
\infty
\]
uniformly for $\Re \alpha < C_1 < n - \gamma$.
Therefore, we may derive under the integration sign, so that $I^\alpha \phi_2(y)$ has an analytic continuation to the region $\Re\alpha < n - \gamma$.
The sum of the continuations of $I^\alpha \phi_1$ and $I^\alpha \phi_2$ is a continuation of $I^\alpha \phi$ and is independent of the choice of the decomposition $\phi = \phi_1 + \phi_2$.
\end{proof}

\section{Composition of Riesz potentials}
The Riesz potentials convolve Schwartz functions with some distributions, an operation which may not yield a Schwartz function, so that we might not be able to apply a Riesz potential to the result.
To evade this problem for the moment, we are going to find a subspace of $\Schwartz$ invariant under the action of $I^\gamma$.

We calculate the Fourier transform of $h_z$ for certain values of $z$ first.
In a preliminary step, we deduce some formulae for the $\Gamma$ function.
If $\Re z > 0$ and by the change of variable $s = t |\xi|^2/2$ we have
\[
\Gamma\left(-\frac{z}{2}\right)
=
\int_{s=0}^\infty s^{-1-z/2} e^{-s} \dif s
=
\int_{t=0}^\infty t^{-1-z/2} e^{-t |\xi|^2 / 2} \left( \frac{|\xi^2|}{2} \right)^{-z/2} \dif t,
\]
while in the case $\Re z > -n$ the change of variable $s = |x|^2/(2 t)$ yields
\begin{align*}
\left(\frac{|x|^2}{2}\right)^{-n/2-z/2} \Gamma\left(\frac{z+n}{2}\right)
&=
\int_{s=0}^\infty \left(\frac{|x|^2}{2}\right)^{-n/2-z/2} s^{-1+n/2+z/2} e^{-s} \dif s\\
&=
\int_{t=0}^\infty t^{-1-z/2} t^{-n/2} e^{-|x|^2 / 2 t} \dif t
\end{align*}
The following calculation is based on the fact that the Fourier transform of a Gaussian is once again a Gaussian, the Plancherel theorem and the two preceding formulae.
For $-n < \Re z < 0$, we have that
\begin{align*}
\Gamma\left(\frac{z+n}{2}\right)& \Gamma\left(-\frac{z}{2}\right)
h_z (\Fourier \phi)\\
&=
\Gamma\left(-\frac{z}{2}\right) \int \Fourier \phi(\xi) |\xi|^z \dif \xi\\
&=
\int \Fourier \phi(\xi) |\xi|^z \int_{t=0}^\infty t^{-1-z/2} e^{-t |\xi|^2 / 2} \left( \frac{|\xi^2|}{2} \right)^{-z/2} \dif t \dif \xi
\\
&= 2^{z/2}
\int_{t=0}^\infty t^{-1-z/2} \int \Fourier \phi(\xi) e^{-t |\xi|^2 / 2} \dif \xi \dif t\\
&= 2^{z/2}
\int_{t=0}^\infty t^{-1-z/2} (2\pi)^{n/2} t^{-n/2} \int \phi(x) e^{-|x|^2 / 2 t} \dif x \dif t\\
&= 2^{z+n/2} \Gamma\left(\frac{z+n}{2}\right)
(2\pi)^{n/2} \int \phi(x) |x|^{-n-z} \dif x
\\
&= 2^{z+n} \pi^{n/2}
\Gamma\left(\frac{z+n}{2}\right) \Gamma\left(-\frac{z}{2}\right)
h_{-n-z}(\phi)
\end{align*}
By definition of the Fourier transform of a distribution we obtain that
\begin{equation}
\Fourier h_z = 2^{z+n} \pi^{n/2} h_{-n-z}
\label{eq:fourier-hz}
\end{equation}
for $-n < \Re z < 0$.
However, the functions on left- and right-hand side are entire, so that we have equality on $\C$ by analytic continuation.

\begin{definition}
Let $\Schwartz^*$ be the space of Schwartz functions which are orthogonal to all polynomials.
\end{definition}
The Fourier transform of $\Schwartz^*$ is the space $\Schwartz_0$ of all Schwartz functions such that all their derivatives vanish at origin.

\begin{proposition}
Every Riesz potential $I^\gamma$ maps the space $\Schwartz^*$ into itself.
\end{proposition}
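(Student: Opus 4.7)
The plan is to pass to the Fourier side. Since the Fourier transform is a bijection $\Schwartz^{*} \to \Schwartz_{0}$, it suffices to show that $\Fourier \circ I^{\gamma} \circ \Fourier\inv$ acts on $\Schwartz_{0}$ as multiplication by the function $|\xi|^{-\gamma}$, and that this multiplication preserves $\Schwartz_{0}$.

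First I would verify the multiplier formula in the central strip $0 < \gamma < n$. Viewing $I^{\gamma} f$ as the convolution of $f$ with the locally integrable radial function $|y|^{\gamma-n}$ (up to the normalization in the definition), applying the Fourier transform, and using $g_{z}=\Gamma((z+n)/2)\,h_{z}$ together with (\ref{eq:fourier-hz}), one finds that all the gamma factors cancel and
\[
\widehat{I^{\gamma} f}(\xi) = |\xi|^{-\gamma}\hat f(\xi).
\]
If $f \in \Schwartz^{*}$, then $\hat f \in \Schwartz_{0}$ vanishes to infinite order at the origin. The product $|\xi|^{-\gamma}\hat f(\xi)$ is smooth away from $0$; at the origin, the Leibniz rule together with the fact that every derivative of $\hat f$ tends to $0$ faster than any positive power of $|\xi|^{-\gamma}$ blows up shows that each partial derivative of the product extends continuously by the value $0$. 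Rapid decay of the product and of all its derivatives at infinity is immediate from the Leibniz rule and the Schwartz decay of $\hat f$. Hence $|\xi|^{-\gamma}\hat f \in \Schwartz_{0}$, so $I^{\gamma}f \in \Schwartz^{*}$.

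To cover the remaining admissible arguments $\gamma \notin n+2\N$, I would invoke weak analytic continuation: for fixed $f \in \Schwartz^{*}$ and every $\psi \in \Schwartz$, both sides of the identity $\< I^{\gamma}f,\psi\> = \< |\xi|^{-\gamma}\hat f,\Fourier\inv\psi\>$ depend holomorphically on $\gamma$ on $\C \setminus (n+2\N)$ (the right-hand side is actually entire, since the infinite-order vanishing of $\hat f$ at the origin neutralizes the singularity of $|\xi|^{-\gamma}$ for all $\gamma$), and the two sides coincide on the strip $0<\gamma<n$ by the preceding paragraph. For the exceptional arguments $\gamma = -2k \in -2\N$, where $I^{\gamma}$ degenerates to the differential operator $(-L)^{k}$ by (\ref{eq:riesz-laplacian}), a direct argument is also available: for any polynomial $p$, integration by parts yields $\int (-L)^{k} f \cdot p = \int f \cdot (-L)^{k}p = 0$, since $(-L)^{k}p$ is again a polynomial.

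The principal obstacle is to match, in the range $\gamma \geq n$, the ``multiplication by a distribution'' interpretation of the multiplier formula with the analytic continuation originally used to define $I^{\gamma}$. Fortunately this matching is unambiguous: the infinite-order vanishing of $\hat f$ at the origin turns the a priori singular pairing with $h_{-\gamma}$ into an honest pointwise product on $\Schwartz_{0}$, and both representations are meromorphic in $\gamma$ and agree on an open set, hence everywhere they are both defined.
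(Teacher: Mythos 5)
Your proof takes essentially the same route as the paper's: pass to the Fourier side via (\ref{eq:fourier-hz}) to identify $I^{\gamma}$ as multiplication by $|\xi|^{-\gamma}$ on $\Schwartz_{0}$, observe that such multiplication preserves $\Schwartz_{0}$, and extend by analytic continuation in $\gamma$. The paper carries the continuation one step further, noting that for $f\in\Schwartz^{*}$ the apparent pole of the normalization $\Gamma((n-\gamma)/2)$ at $\gamma\in n+2\N$ is cancelled because $h_{\gamma-n}*f=0$ there (with $h_{\gamma-n}$ a polynomial), so $I^{\gamma}f$ extends across those points --- a case your proposal sets aside by declaring them inadmissible, though your own observation that $|\xi|^{-\gamma}\hat f$ is entire in $\gamma$ already contains the needed information; your separate check at $\gamma=-2k$ via integration by parts is a pleasant sanity test but not logically required, since those points are regular for the normalization factor.
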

\begin{proof}
Let $f \in \Schwartz^*$, $g \in \Schwartz$ and $\Re z < 0$.
Then
\begin{equation}
\label{eq:FT-h-z-convolution-f}
\begin{split}
\Gamma(-z/2) 2^{-z} \pi^{n/2} h_{z} (f*g)
&=
\Gamma(-z/2) \Fourier h_{-n-z} (f * g)\\
&=
\Gamma(-z/2) h_{-n-z} ( \Fourier (f * g) )\\
&=
\Gamma(-z/2) h_{-n-z} ( \Fourier f \cdot \Fourier g )\\
&=
\int_{\R^n} |\xi|^{-n-z} \Fourier f(\xi) \Fourier g(\xi) \dif \xi.
\end{split}
\end{equation}
Since $\Fourier f \in \Schwartz_0$ and the latter space is preserved under multiplication by $|\xi|^{-n-z}$, the right-hand side is finite for all $z \in \C$.
By analytic continuation, equality holds for all $z \in \C \setminus 2\N$.
Since $\Fourier$ is a continuous mapping of $\Schwartz$ into itself, the map $\psi_z : g \mapsto h_{z} (f * g)$ is a tempered distribution.
Its Fourier transform $\Fourier \psi_z$ is a $\Schwartz_0$-function, so that $\psi_z \in \Schwartz^*$ by the bijectivity of Fourier transform on the space of tempered distributions.
Furthermore,
$\psi_z(g) = h_z (f * g) = (h_z * f)(g)$.
For $z \not\in 2\N$, this shows that $I^{z+n} f = 2^{-z-n} \pi^{-n/2} \Gamma(-z/2) (h_z * f) \in \Schwartz^*$.

For $z \in 2\N$, $(h_z * f)(x)=0$ for every $x$, because in such a case $h_z$ is a polynomial and $f$ is orthogonal to all polynomials.
Therefore
\[
F(z,x) = 2^{-z} \pi^{n/2} \Gamma(-z/2) (h_z * f)(x)
\]
extends to an entire function of $z$.
On the other hand, $F(z,x)$ is the inverse Fourier transform of $|\xi|^{-z-n} \Fourier f$ for $z \not\in 2\N$, and the latter are Schwartz functions with Schwartz seminorms bounded locally uniformly in $z$, so that $F(2k,\cdot)$ is in fact pointwise equal to the inverse Fourier transform of $|\xi|^{-2k-n} \Fourier f$, so it is in particular a $\Schwartz_0$-function.
\end{proof}
As a side product of the proof we have, for every $f \in \Schwartz^*$ and $g \in \Schwartz$,
\[
(2 \pi)^n \int (I^{\gamma} f)(x) g(x) \dif x
=
\int_{\R^n} |\xi|^{-\gamma} \Fourier f(\xi) \Fourier g(\xi) \dif \xi,
\]
which gives the following explicit formula for the Fourier transform of $I^{\gamma} f$:
\begin{equation}
\label{eq:fourier-of-riesz}
\Fourier (I^{\gamma} f)(\xi) = |\xi|^{-\gamma} \Fourier f(\xi).
\end{equation}
\begin{proposition}
\label{prop:I-alpha-beta-star}
For every $f \in \Schwartz^*$ and every $\alpha, \beta \in \C$,
\[
I^{\alpha} I^{\beta} f = I^{\alpha + \beta} f.
\]
\end{proposition}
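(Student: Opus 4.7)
The plan is to exploit the Fourier multiplier representation (\ref{eq:fourier-of-riesz}), which converts composition of Riesz potentials into pointwise multiplication of their symbols on the Fourier side.

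First I would observe that by the preceding proposition, $I^{\beta}$ maps $\Schwartz^{*}$ into itself, so $I^{\beta} f \in \Schwartz^{*}$ and the expression $I^{\alpha}(I^{\beta} f)$ is unambiguously defined (and again lies in $\Schwartz^{*}$); the same is of course true of $I^{\alpha+\beta} f$. This is what makes the identity meaningful for arbitrary complex $\alpha, \beta$ without having to avoid the ``bad'' exponents $n + 2\N$ that would cause trouble on general Schwartz functions.

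Next I would apply the identity (\ref{eq:fourier-of-riesz}) twice:
\[
\Fourier(I^{\alpha} I^{\beta} f)(\xi) = |\xi|^{-\alpha} \Fourier(I^{\beta} f)(\xi) = |\xi|^{-\alpha} |\xi|^{-\beta} \Fourier f(\xi) = |\xi|^{-\alpha-\beta} \Fourier f(\xi) = \Fourier(I^{\alpha+\beta} f)(\xi).
\]
Since the Fourier transform is injective on tempered distributions, this chain of equalities yields $I^{\alpha} I^{\beta} f = I^{\alpha+\beta} f$, as desired.

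The one point that deserves care — and the main place where the hypothesis $f \in \Schwartz^{*}$ is essential — is the unconditional validity of (\ref{eq:fourier-of-riesz}) for every complex exponent. This is precisely what was established in the proof of the preceding proposition: the identity was first obtained for $\Re\gamma < 0$ from the integral representation and the formula (\ref{eq:FT-h-z-convolution-f}) for $\Fourier h_z$, and then extended by analytic continuation using the fact that $\Fourier f \in \Schwartz_{0}$ is preserved under multiplication by the singular symbol $|\xi|^{-\gamma}$ for every $\gamma \in \C$. With this behind us, the argument above is essentially symbol calculus; the only ``obstacle'' is bookkeeping — making sure that at each step we remain inside $\Schwartz^{*}$ so that the Fourier-multiplier formula may be iterated.
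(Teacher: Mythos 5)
Your proof is correct and takes essentially the same approach as the paper: both rely on the fact that $I^{\beta}$ preserves $\Schwartz^{*}$ and on the Fourier multiplier formula $\Fourier(I^{\gamma} f) = |\xi|^{-\gamma}\Fourier f$, iterated twice. The paper carries this out by pairing against a test function $g \in \Schwartz$ via Plancherel and then identifying the distributions, while you pass directly to the Fourier side and invoke injectivity of $\Fourier$ on tempered distributions — a cosmetic rather than substantive difference.
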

\begin{proof}
By definition of $I^\gamma$ and (\ref{eq:fourier-of-riesz}), for $f \in \Schwartz^*$, $g \in \Schwartz$,
\begin{align*}
\int (I^{\alpha} I^{\beta} f)(x) g(x) \dif x
&=
(2 \pi)^{-n} \int \Fourier(I^{\alpha} I^{\beta} f)(\xi) \Fourier g(\xi) \dif \xi\\
&=
(2 \pi)^{-n} \int |\xi|^{-\alpha} \Fourier(I^{\beta} f)(\xi) \Fourier g(\xi) \dif \xi\\
&=
(2 \pi)^{-n} \int |\xi|^{-\alpha} |\xi|^{-\beta} \Fourier(f)(\xi) \Fourier g(\xi) \dif \xi\\
&=
(2 \pi)^{-n} \int \Fourier(I^{\alpha+\beta} f)(\xi) \Fourier g(\xi) \dif \xi\\
&=
\int ((I^{\alpha+\beta} f)(x) g(x) \dif x,
\end{align*}
so that $I^{\alpha} I^{\beta} f = I^{\alpha+\beta} f$ as distributions and thus also as functions.
\end{proof}

\begin{proposition}
\label{prop:I-alpha-beta}
For every $f \in \Schwartz$ and every $\alpha, \beta \in \C$ such that
$\Re \alpha > 0$, $\Re \beta > 0$, $\Re (\alpha + \beta) < n$,
\[
I^{\alpha} I^{\beta} f = I^{\alpha + \beta} f.
\]
\end{proposition}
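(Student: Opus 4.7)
The plan is to express $I^{\alpha} I^{\beta} f$ as a single convolution of $f$ against a homogeneous kernel via Fubini, identify that kernel up to a multiplicative constant by scaling, and pin the constant down by comparison with Proposition~\ref{prop:I-alpha-beta-star}. The first step is to verify that everything is well defined: since $\Re\beta > 0$ and $f \in \Schwartz$, the integral defining $I^{\beta} f(x)$ converges absolutely, and splitting its domain at $|y| = |x|/2$ and using the Schwartz decay of $f$ gives the pointwise estimate $|I^{\beta} f(x)| \leq C(1+|x|)^{\Re\beta - n}$. Since $\Re\alpha < n - \Re\beta$, Proposition~\ref{prop:riesz-of-smooth} then ensures that $I^{\alpha}(I^{\beta} f)$ is defined. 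Moreover, the substitution $w := y+z$ shows that the iterated integral
\[
\int\!\!\int |y|^{\Re\alpha - n} |z|^{\Re\beta - n} |f(x+y+z)| \, \dif y \, \dif z
\]
is dominated, after integrating in $y$, by a constant multiple of $\int |f(x+w)| \, |w|^{\Re(\alpha+\beta) - n} \dif w$, which is finite because $f$ is Schwartz and $0 < \Re(\alpha+\beta) < n$.

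By Fubini one then obtains
\[
I^{\alpha} I^{\beta} f(x) = c(\alpha) c(\beta) \int f(x+w) K(w) \dif w, \qquad K(w) := \int |y|^{\alpha - n} |y - w|^{\beta - n} \dif y,
\]
where $c(\gamma) := 2^{-\gamma} \pi^{-n/2} \Gamma((n-\gamma)/2)/\Gamma(\gamma/2)$ is the normalization appearing in the definition of $I^{\gamma}$. Rotation invariance and the dilation $y = |w| y'$ yield $K(w) = C_{\alpha,\beta,n} |w|^{\alpha+\beta-n}$ for a constant $C_{\alpha,\beta,n}$ depending only on the indicated parameters. The statement thus reduces to the numerical identity $c(\alpha) c(\beta) C_{\alpha,\beta,n} = c(\alpha+\beta)$.

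To pin the constant down I would pick any non-zero $g \in \Schwartz^*$ (for instance the inverse Fourier transform of a bump function supported away from the origin, which lies in $\Schwartz_{0}$). Proposition~\ref{prop:I-alpha-beta-star} gives $I^{\alpha} I^{\beta} g = I^{\alpha+\beta} g$, while the Fubini derivation above applies verbatim to $g$ and yields the same convolution representation with coefficient $c(\alpha) c(\beta) C_{\alpha,\beta,n}$ in place of $c(\alpha+\beta)$. An $x$ for which $(g * |\cdot|^{\alpha+\beta-n})(x) \neq 0$ must exist, since otherwise $\hat g \cdot |\xi|^{-\alpha-\beta} \equiv 0$, which together with $\hat g \in \Schwartz$ would force $g \equiv 0$. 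Evaluating the two expressions at such an $x$ yields $c(\alpha) c(\beta) C_{\alpha,\beta,n} = c(\alpha+\beta)$, and the proof is complete. The main obstacle I anticipate is the justification of Fubini, which is precisely where the hypothesis $\Re(\alpha+\beta) < n$ is needed; the identification of the constant, which would otherwise call for a direct Beta-function manipulation, becomes routine via this reduction to Proposition~\ref{prop:I-alpha-beta-star}.
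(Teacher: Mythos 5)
Your proposal is correct and follows essentially the same route as the paper: both rewrite $I^\alpha I^\beta f$ via Fubini as a convolution of $f$ against the kernel $\int |y|^{\alpha-n}|y-w|^{\beta-n}\dif y$, both identify that kernel as $C_{\alpha,\beta,n}|w|^{\alpha+\beta-n}$ by scaling and rotation invariance, and both defer the evaluation of the constant to Proposition~\ref{prop:I-alpha-beta-star}. You are somewhat more explicit than the paper about justifying the Fubini interchange and about why the test function argument pins the constant down, but those are elaborations of the same argument, not a different one.
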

\begin{proof}
In the range $\Re \gamma > 0$, $I^\gamma$ is defined by an integral, so that
\begin{align*}
(I^\alpha I^\beta f)(x)
&=
C \int_{\R^n} |x-y|^{\alpha-n} \int_{\R^n} |y-z|^{\beta-n} f(z) \dif z \dif y\\
&=
C \int_{\R^n} f(z) \int_{\R^n} |x-y|^{\alpha-n} |y-z|^{\beta-n} \dif y \dif z\\
&=
C \int_{\R^n} f(z) \int_{\R^n} |(x-z)-y|^{\alpha-n} |y|^{\beta-n} \dif y \dif z\\
&=
C \int_{\R^n} f(z) |x-z|^{\alpha+\beta-n} \int_{\R^n} \left| \frac{x-z}{|x-z|}- y \right|^{\alpha-n} \left| y \right|^{\beta-n} \dif y \dif z,
\end{align*}
where the last equality follows by a change of variable.
By transitivity of the action of $O(n)$ on the unit sphere $S^{n-1}$, the $y$-integral may be written as
\[
\int_{\R^n} \left| e- y \right|^{\alpha-n} \left| y \right|^{\beta-n} \dif y
\]
with a fixed unit vector $e$.
The assumption $\Re(\alpha+\beta) < n$ implies that this integral is finite, so that
\begin{align*}
(I^\alpha I^\beta f)(x)
&=
C \int_{\R^n} f(z) |x-z|^{\alpha+\beta-n} \dif z\\
&=
C (I^{\alpha+\beta} f)(x).
\end{align*}
By Proposition~\ref{prop:I-alpha-beta-star} the constant is $1$.
\end{proof}

\section{Inverse of a Riesz potential}
Proposition~\ref{prop:I-alpha-beta-star} gives the inversion formula $(I^\gamma)\inv = I^{-\gamma}$ on the space $\Schwartz^*(\R^n)$.
On the full space $\Schwartz(\R^n)$ there may be some integrability issues which prevent one from defining the composition $I^{-\gamma} I^\gamma$.
Restricting to a subset of possible $\gamma$ resolves the problem.
\begin{proposition}
\label{prop:I-pm-gamma}
For every $f \in \Schwartz$ and every $\gamma$ such that $0 < \gamma < n$,
\[
I^{-\gamma} I^{\gamma} f = f.
\]
\end{proposition}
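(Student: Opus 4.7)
The plan is to prove this by analytic continuation in the exponent of the outer Riesz potential. The semigroup property (Proposition~\ref{prop:I-alpha-beta}) only gives $I^{\alpha} I^{\gamma} f = I^{\alpha+\gamma} f$ in the restricted range $0<\Re\alpha<n-\gamma$; the strategy is to view both sides as analytic functions of the single variable $\alpha$ on a common half-plane that contains both this strip and the point $\alpha=-\gamma$, and then to push the identity to $\alpha=-\gamma$ via the identity theorem. At that endpoint the outer operator becomes $I^{-\gamma}$ and the right side reduces to $I^{0}f$, which I identify with $f$ by an explicit computation using the formula for $h_{-n}$.

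First I would set $g := I^{\gamma} f$. Since $0<\gamma<n$ and $f\in\Schwartz(\R^n)$, the defining integral converges, $g$ is smooth (derivatives can be slid onto $f$), and $g$ is bounded. Combining boundedness with the Schwartz decay of $f$ one obtains the global bound $|g(x)|\leq C|x|^{\gamma-n}$: for $|x|$ small it is trivial because $|x|^{\gamma-n}\to\infty$, while for $|x|$ large the integral is dominated by $|x|^{\gamma-n}$. Hence Proposition~\ref{prop:riesz-of-smooth} applies to $g$ with parameter $\gamma$, producing an analytic extension of $\alpha\mapsto (I^{\alpha} g)(x)$ to the half-plane $\{\Re\alpha<n-\gamma\}$, which in particular unambiguously defines $(I^{-\gamma} g)(x)$.

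Next I would check that $\alpha\mapsto (I^{\alpha+\gamma} f)(x)$ is also analytic on $\{\Re\alpha<n-\gamma\}$. Writing
\[
(I^{\alpha+\gamma} f)(x) = 2^{-\alpha-\gamma}\pi^{-n/2}\,\Gamma\bigl((n-\alpha-\gamma)/2\bigr)\, h_{\alpha+\gamma-n}(f(x+\cdot)),
\]
one observes that $z\mapsto h_z$ is weakly entire and that the Gamma factor has no poles as long as $\Re(\alpha+\gamma)<n$. Proposition~\ref{prop:I-alpha-beta} supplies agreement of the two analytic functions on the nonempty open strip $\{0<\Re\alpha<n-\gamma\}$, so the identity theorem forces equality on the whole connected half-plane, and specialization at $\alpha=-\gamma$ yields $(I^{-\gamma}I^{\gamma} f)(x)=(I^{0} f)(x)$. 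Equation~(\ref{eq:laplace-h}) at $k=0$ gives $h_{-n}(\phi)=\pi^{n/2}\Gamma(n/2)\inv\phi(0)$, whence $I^{0}f(x)=\pi^{-n/2}\Gamma(n/2)\cdot\pi^{n/2}\Gamma(n/2)\inv f(x)=f(x)$, which is the claim.

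I expect the most delicate step to be the first: cleanly establishing the global pointwise decay required by Proposition~\ref{prop:riesz-of-smooth}, and checking that the analytic continuation of $(I^{\alpha} g)(x)$ furnished by that proposition genuinely agrees with the one already at our disposal through $h_{\alpha-n}\ast g$, so that evaluating at $\alpha=-\gamma$ legitimately produces $I^{-\gamma}g$. Once this compatibility is in place, the identity-theorem step is routine and the endpoint computation is a direct substitution.
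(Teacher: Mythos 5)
Your proposal is correct and follows essentially the same route as the paper: apply Proposition~\ref{prop:I-alpha-beta} on the strip $0<\Re\alpha<n-\gamma$, establish the decay bound $|I^\gamma f(x)|\leq C|x|^{\gamma-n}$, invoke Proposition~\ref{prop:riesz-of-smooth} to extend the left-hand side analytically to $\{\Re\alpha<n-\gamma\}$, and continue the identity to $\alpha=-\gamma$. You spell out two points the paper leaves implicit (that the right-hand side is analytic on the needed half-plane and the explicit check $I^0 f=f$ via $h_{-n}$ and~\eqref{eq:laplace-h}), and your compatibility worry about the two continuations is resolved automatically by the paper's definition of $I^\gamma$ for a fixed function as \emph{the} unambiguous analytic continuation.
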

\begin{proof}
By Proposition~\ref{prop:I-alpha-beta}, we have
\[
I^{\alpha} I^{\gamma} f (x) = I^{\alpha + \gamma} f (x)
\]
whenever $0 < \Re \alpha < n-\gamma$.
The right-hand side is an entire function of $\alpha$ which is equal to $f(x)$ for $\alpha = -\gamma$.
It is therefore sufficient to show that the left-hand side has an analytic extension to the region $\Re \alpha < n - \gamma$.

Let $\phi = I^\gamma f$.
Since $f \in \Schwartz(\R^n)$, $\phi$ is smooth and we have $f(y) \leq C (1 + |y|)^{-2n}$.
Now,
\begin{align*}
|\phi(x)|
&=
C \left| \int_{\R^n} f(y) |x-y|^{\gamma-n} \dif y \right|\\
&\leq
C \int_{|y| \leq |x|/2} \frac{|x-y|^{\gamma-n}}{(1 + |y|)^{2n}} \dif y
+
C \int_{|y| > |x|/2} (1 + |y|)^{-2n} |x-y|^{\gamma-n} \dif y\\
&\leq
C \int_{|y| \leq |x|/2} \frac{|x/2|^{\gamma-n}}{(1 + |y|)^{2n}} \dif y
+
C \int_{|y| > |x|/2} \frac{(1 + |y|)^{-2n-\gamma+n}}{(1 + |x/2|)^{-\gamma+n}} |x - y|^{\gamma-n} \dif y\\
&\leq
C |x|^{\gamma-n}
+
C |x|^{\gamma-n} \int_{\R^n} (1 + |y|)^{-2n-\gamma+n} |x - y|^{\gamma-n} \dif y\\
&\leq
C |x|^{\gamma-n}.
\end{align*}
By Proposition~\ref{prop:riesz-of-smooth}, $I^{-\gamma} \phi(x)$ is well defined, and by analyticity of the continuation it coincides with $f(x)$.
\end{proof}

\section{Supplement: Local operators are differential}
We present here the proof of Theorem~\ref{th:local-operators-are-differential} which may be found in \cite[Theorem II.1.4]{MR1790156}.
In this section, we use the symbol $||\cdot||_m$ to denote the seminorms
\[
||f||_m = \sup_{|\alpha| \leq m, x \in \R^n} |D^\alpha f (x)|.
\]
The operator $D$ extends to an operator on $C^\infty(\Omega)$ in a natural fashion since every smooth function $f$ agrees with a smooth function with compact support $f_x$ in a neighborhood $U_x$ of each point $x$, and $\left. D f_x \right|_{U_x}$ does not depend on the choice of $f_x$ by the locality of $D$.
\begin{lemma}
\label{lem:approx-by-locally-vanishing-functions}
Let $f \in C^\infty(\R^n)$ be such that $D^\alpha f (x) = 0$ for some $x$ and all $|\alpha| \leq m$.
Then there is a family of functions $f_\delta$, $\delta>0$ such that $f_\delta \equiv 0$ on a neighborhood of $x$ for each $\delta$ and $||f - f_\delta||_m \to 0$ as $\delta \to 0$.
\end{lemma}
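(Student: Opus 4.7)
The plan is to produce the family $f_\delta$ by a straightforward cut-off construction centered at $x$ and then to use the vanishing of all derivatives of $f$ of order $\leq m$ at $x$ (via Taylor's theorem) to absorb the blow-up of derivatives of the cut-off.

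First I would fix a smooth function $\chi \in C^\infty(\R^n)$ with $0 \leq \chi \leq 1$, $\chi(y) = 0$ for $|y| \leq 1$, and $\chi(y) = 1$ for $|y| \geq 2$. For $\delta>0$ set $\chi_\delta(y) = \chi((y-x)/\delta)$ and define
\[
f_\delta := \chi_\delta f.
\]
By construction $f_\delta \equiv 0$ on $B(x,\delta)$, so the required locally-vanishing property is immediate. It then remains to show $\|f - f_\delta\|_m = \|(1-\chi_\delta) f\|_m \to 0$ as $\delta \to 0$.

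The key ingredient is Taylor's theorem applied at $x$: since $D^\gamma f(x) = 0$ for every multi-index $\gamma$ with $|\gamma| \leq m$, the Taylor expansion of $D^\gamma f$ around $x$ has a vanishing polynomial part up to order $m - |\gamma|$, yielding
\[
|D^\gamma f(y)| \leq C_\gamma\, |y-x|^{m-|\gamma|+1}
\]
uniformly on a bounded neighborhood of $x$, for every $|\gamma| \leq m$. Applying Leibniz's rule and using $|D^\beta \chi_\delta| \leq C_\beta \delta^{-|\beta|}$ (for $|\beta|\geq 1$) together with the fact that $D^\beta(1-\chi_\delta)$ is supported in $B(x,2\delta)$, I expand
\[
D^\alpha\bigl((1-\chi_\delta) f\bigr)(y) = \sum_{\beta \leq \alpha} \binom{\alpha}{\beta} D^\beta(1-\chi_\delta)(y)\, D^{\alpha-\beta} f(y).
\]
For $y \in B(x,2\delta)$ each term is bounded by $C\delta^{-|\beta|} \cdot \delta^{m-|\alpha|+|\beta|+1} = C\delta^{m-|\alpha|+1}$, and for $|\alpha|\leq m$ this is $O(\delta)$. (The case $|\beta|=0$ is handled by the same estimate, using only the Taylor bound on $D^\alpha f$.)

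Taking the supremum over $y \in \R^n$ and over $|\alpha|\leq m$ therefore gives $\|f - f_\delta\|_m = O(\delta) \to 0$, which is what we want. The only mildly delicate point is writing down the Taylor-type bound on $D^\gamma f$ with a uniform constant on a neighborhood of $x$; once that estimate is in hand, the Leibniz bookkeeping is routine and no genuine obstacle arises.
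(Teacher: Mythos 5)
Your construction is essentially the paper's: the paper sets $f_\delta(y) = f(y)(1-\phi(y/\delta))$ for a bump $\phi$ that is $1$ near the origin, which is exactly your cut-off $\chi_\delta = 1-\phi(\cdot/\delta)$ after translating $x$ to $0$, and the estimate is the same Leibniz-plus-Taylor argument on $B(x,O(\delta))$. Correct and same approach.
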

\begin{proof}
Without loss of generality, assume $x=0$.
Let $\phi$ be a bump function which is $1$ in a neighborhood of the identity and $0$ outside of the unit ball in $\R^n$.
Let
\[
f_\delta(y) = f(y) (1 - \phi(y/\delta)).
\]
Then $f_\delta \equiv 0$ in a neighborhood of $0$ and
\[
D^\alpha (f - f_\delta)(y)
=
\sum_{\beta \leq \alpha} {\alpha \choose \beta}
(D^{\beta} f)(y) (D^{\alpha - \beta} \phi)(y/\delta) \delta^{-|\alpha|+|\beta|},
\]
so that, for $|\alpha| \leq m$,
\begin{align*}
\sup_{y \in \R^n} |D^\alpha (f-f_\delta)(y)|
&\leq
C \sup_{y \in \R^n}
\sum_{\beta \leq \alpha}
\left| (D^{\beta} f)(y) (D^{\alpha - \beta} \phi)(y/\delta) \delta^{-|\alpha|+|\beta|} \right|\\
&\leq
C \sup_{|y| \leq \delta}
\sum_{\beta \leq \alpha}
\left| (D^{\beta} f)(y) \delta^{-|\alpha|+|\beta|} \right|\\
&=
O(\delta)
\end{align*}
as $\delta \to 0$, because $(D^\beta f)(y) = O(|y|^{m-|\beta|+1})$ by Taylor's formula.
\end{proof}

\begin{lemma}
\label{lem:Cm-C0-continuous-operators-are-differential}
Let $\Omega \subset \R^n$ be open and $D : C^\infty_c(\Omega) \to C^\infty_c(\Omega)$ be a local linear operator satisfying
\begin{equation}
\label{eq:Cm-C0-continuous}
|| D f ||_0 \leq C || f ||_m
\end{equation}
for all $f \in C^\infty_c(\Omega)$.
Then $D$ is a differential operator of order $m$ on $\Omega$, i.e.
\[
D f(y) = \sum_{|\alpha| \leq m} a_\alpha(y) (D^\alpha f)(y)
\]
for some smooth functions $a_\alpha$ and all $f \in C^\infty(\Omega)$ (not necessarily with compact support).
\end{lemma}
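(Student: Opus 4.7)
The plan is to show first that $Df(y)$ depends only on the $m$-jet of $f$ at $y$, then extract coefficients by linearity, and finally verify their smoothness by testing against monomials.

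For the first step I claim that whenever $D^\alpha f(y) = 0$ for all $|\alpha| \leq m$, one has $Df(y) = 0$. Applying Lemma~\ref{lem:approx-by-locally-vanishing-functions} (centred at $y$ rather than at $0$, which is permitted by translation invariance of the argument) produces a family $f_\delta \in C^\infty_c(\Omega)$ that vanishes on a neighbourhood of $y$ and satisfies $||f-f_\delta||_m \to 0$. Locality forces $Df_\delta(y) = 0$, and the hypothesis (\ref{eq:Cm-C0-continuous}) gives
\[
|Df(y)| = |D(f-f_\delta)(y)| \leq ||D(f-f_\delta)||_0 \leq C ||f-f_\delta||_m \to 0.
\]
By linearity, $Df(y)$ is therefore a linear function of the $m$-jet $(D^\alpha f(y))_{|\alpha| \leq m}$, so there exist scalars $a_\alpha(y)$ with
\[
Df(y) = \sum_{|\alpha| \leq m} a_\alpha(y) D^\alpha f(y).
\]

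To establish smoothness of the $a_\alpha$ I would fix an arbitrary $y_0 \in \Omega$ together with a precompact open neighbourhood $U$ of $y_0$ with $\overline{U} \subset \Omega$. For every multi-index $\gamma$ with $|\gamma| \leq m$ pick $f_\gamma \in C^\infty_c(\Omega)$ that coincides with $x \mapsto x^\gamma/\gamma!$ on $U$. Since $f_\gamma$ is polynomial on $U$, the representation formula reads there
\[
Df_\gamma(y) = \sum_{\alpha \leq \gamma} a_\alpha(y) \frac{y^{\gamma-\alpha}}{(\gamma-\alpha)!},
\]
where $\leq$ denotes the componentwise partial order on multi-indices. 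This is a triangular linear system that one inverts by induction on $|\gamma|$, expressing each $a_\alpha|_U$ as a polynomial combination of the smooth functions $Df_\gamma|_U$ and previously computed coefficients. Hence $a_\alpha \in C^\infty(U)$, and since $y_0$ was arbitrary, $a_\alpha \in C^\infty(\Omega)$.

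The extension to $f \in C^\infty(\Omega)$ not necessarily of compact support is immediate from locality: for $y \in \Omega$, pick any $\tilde f \in C^\infty_c(\Omega)$ that agrees with $f$ in a neighbourhood of $y$, and set $Df(y) := D\tilde f(y)$; this is well defined by locality, and both sides of the formula depend only on the $m$-jet at $y$, which $\tilde f$ and $f$ share. The delicate part is the smoothness of the coefficients; Steps 1 and 2 are essentially formal consequences of the previous lemma, whereas Step 3 requires the trick of using monomial test functions so that the matrix of derivatives becomes triangular and directly invertible into smooth data.
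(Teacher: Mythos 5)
Your proof is correct and follows essentially the same path as the paper: Lemma~\ref{lem:approx-by-locally-vanishing-functions} combined with locality and the estimate (\ref{eq:Cm-C0-continuous}) shows that $Df(y)$ depends only on the $m$-jet of $f$ at $y$, from which the representation formula follows. The only variation is in the smoothness step: the paper plugs in the basepoint-parametrized Taylor monomials $P_{\alpha,a}(x)=\prod_i (x_i-a_i)^{\alpha_i}/\alpha_i!$, whose $m$-jet at $a$ is a Kronecker delta, so the coefficient matrix is already diagonal and smoothness of $a_\alpha(a)=(DP_{\alpha,a})(a)$ is read off from the polynomial dependence of $P_{\alpha,a}$ on $a$; you instead test against a fixed family of cut-off monomials on a neighbourhood $U$ and invert the resulting unit-upper-triangular system, which is equally valid.
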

\begin{proof}
Let
\[
P_{\alpha,a} = \Prod_{i=1}^n \frac{1}{\alpha_i !} (x_i - a_i)^{\alpha_i}.
\]
Then
\[
D^\beta P_{\alpha,x} = \delta_\alpha^\beta,
\]
and, for every $f \in C^\infty_c(\Omega)$ and $a \in \Omega$,
\[
g(x) = f(x) - \sum_{|\alpha| \leq m} (D^\alpha f)(a) P_{\alpha,a}
\]
is a smooth function on $\Omega$ and $(D^\alpha g)(a) = 0$ for all $|\alpha| \leq m$.
Therefore, by Lemma~\ref{lem:approx-by-locally-vanishing-functions}, $g$ may be approximated in $C^m$-norm by some functions $g_\delta$ vanishing in some neighborhoods of $a$.
By locality of $D$, $(D g_\delta)(a) = 0$, while by (\ref{eq:Cm-C0-continuous}), $(D g_\delta)(a) \to (D g)(a)$ as $\delta \to 0$, so that $(D g)(a) = 0$ as well.
By linearity of $D$, we have
\begin{equation}
\label{eq:representation-as-differential-operator}
(D f)(a) = \sum_{|\alpha| \leq m} (D^\alpha f)(a) (D P_{\alpha,a})(a).
\end{equation}
Now, $P_{\alpha, a}$ is a polynomial in $x$ with coefficients which are polynomials in $a$, say
\[
P_{\alpha, a} = \sum_i p_i(x) q_i(a).
\]
By linearity of $D$, we have
\[
(D P_{\alpha,a})(a)
=
\sum_i (D p_i)(a) q_i(a),
\]
which is a smooth function of $a$.
Therefore, (\ref{eq:representation-as-differential-operator}) is the representation of $D$ in the required form.
\end{proof}

\begin{lemma}
\label{lem:local-operators-are-locally-Cm-C0-continuous}
Let $\Omega \subset \R^n$ be open and $D$ be a local operator on $C^\infty_c(\Omega)$.
Then, for every $x \in \Omega$, there exists a relatively compact neighborhood $U$ of $x$ and a natural number $m$ such that
\[
||D f||_0 \leq ||f||_m \quad \text{for all } f \in C^\infty_c(U \setminus \{x\}).
\]
\end{lemma}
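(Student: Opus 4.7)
The plan is to argue by contradiction, constructing via a gliding hump at $x$ a function $g \in C^\infty_c(\Omega)$ on which $Dg$ is unbounded, contradicting the fact that $Dg \in C^\infty_c(\Omega)$. A minor caveat: to produce a useful contradiction—and also to feed into Lemma~\ref{lem:Cm-C0-continuous-operators-are-differential}, whose hypothesis (\ref{eq:Cm-C0-continuous}) carries a constant—I will really prove the quantitative version ``there exist $U$, $m$ and $C>0$ such that $||Df||_0 \leq C ||f||_m$ for all $f \in C^\infty_c(U \setminus \{x\})$''. The negation reads: for every relatively compact $U \ni x$, every $m \in \N$, and every $C>0$, some $f \in C^\infty_c(U \setminus \{x\})$ satisfies $||Df||_0 > C ||f||_m$.

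First I fix a relatively compact neighborhood $V_1 \ni x$ with $\bar V_1 \subset \Omega$ and construct inductively, for $k \geq 1$: (i) a function $f_k \in C^\infty_c(V_k \setminus \{x\})$ with $||Df_k||_0 > k^3 ||f_k||_k$, applying the negation to $U = V_k$, $m = k$, $C = k^3$; and (ii) a relatively compact open neighborhood $V_{k+1}$ of $x$ with $\bar V_{k+1} \subset V_k$ and $\bar V_{k+1}$ disjoint from $\supp f_k$, which is possible because $\supp f_k$ is a compact subset of $V_k \setminus \{x\}$. Rescaling $g_k := k^{-2} f_k / ||f_k||_k$ yields $||g_k||_k = k^{-2}$ and $||Dg_k||_0 > k$. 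For each fixed $m$, the estimate $||g_k||_m \leq ||g_k||_k = k^{-2}$ for $k \geq m$ makes $g := \sum_k g_k$ converge absolutely in every $C^m$, so $g \in C^\infty$; since every $\supp g_k \subset V_1$, one has $g \in C^\infty_c(\Omega)$ with support in $\bar V_1$, and hence $||Dg||_0 < \infty$.

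The final step exploits locality of $D$. On the open set $W_k := V_k \setminus \bar V_{k+1}$, the summands $g_j$ with $j < k$ vanish because by step (ii) $\bar V_k \cap \supp f_j = \emptyset$ (the nesting $\bar V_k \subset V_{j+1}$ forces $\bar V_k \subset \bar V_{j+1}$, while $\supp f_j \subset (\bar V_{j+1})^{c}$), and those with $j > k$ vanish because $\supp g_j \subset V_j \subset \bar V_{k+1}$; on the other hand, $\supp g_k \subset W_k$ by the separation in step (ii). Thus $g \equiv g_k$ on $W_k$, and locality applied to $g - g_k$ gives $Dg \equiv Dg_k$ on $W_k$. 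Since by locality $\supp(Dg_k) \subset \supp g_k \subset W_k$, it follows that $||Dg||_0 \geq ||Dg_k||_0 > k$ for every $k$, which is the required contradiction.

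The main obstacle I anticipate is the inductive geometric bookkeeping: one must choose $V_{k+1}$ only \emph{after} $f_k$ so as to enforce $\bar V_{k+1} \cap \supp f_k = \emptyset$, and this precise separation is what lets locality decouple the individual bumps in the last step. The exponents $k^{3}$ versus $k^{-2}$ are calibrated so that the $C^k$-norms of the $g_k$'s decay summably (ensuring $g \in C^\infty$) while still forcing $||Dg_k||_0 \to \infty$.
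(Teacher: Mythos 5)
Your proof is correct and follows essentially the same gliding-hump/resonance argument as the paper: negate the claim, extract $f_k$ with $\|Df_k\|_0 > k^3\|f_k\|_k$, normalize, sum, and use locality to show $Dg$ is unbounded. The only (presentational) difference is that you shrink to nested neighborhoods $\bar V_{k+1}\subset V_k$ disjoint from $\supp f_k$, whereas the paper takes $U_{k+1} = U_k\setminus\supp u_k$; your choice makes the locality step on the annuli $W_k$ cleaner, at no cost.
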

\begin{proof}
Suppose that the contrary is the case, i.e.\ that for every relatively compact neighborhood $U$ of $x$, every $m$ and every $M > 0$ there exists a function $u \in C^\infty_c(U \setminus \{x\})$ such that $||D u||_0 > M ||u||_m$.

Let $U_0$ be some relatively compact neighborhood and define a sequence of functions $\{ u_k \}$ and relatively compact neighborhoods $\{ U_k \}$ inductively in such a way that
\[
u_k \in C^\infty_c(U_k \setminus \{x\}), \quad ||D u_k||_0 > 2^{2 k} ||u_k||_k,
\]
\[
U_{k+1} = U_k \setminus \supp u_k.
\]
Then the supports of $u_k$ are mutually disjoint, so that
\[
u = \sum_k \frac{2^{-k} u_k}{||u_k||_k}
\]
is well-defined and compactly supported.
Furthermore, the series converges in every $C^m$ norm, so that $u$ is smooth.
On the other hand,
\[
|| (D u) |_{\supp u_k} ||_0
=
\frac{2^{-k}}{||u_k||_k} || (D u_k) |_{\supp u_k} ||_0
>
2^k,
\]
so that $D u$ is unbounded on $U_0$, in contradiction to it being a continuous function on the compact set $\bar U_0$.
\end{proof}

\begin{proof}[Proof of Theorem~\ref{th:local-operators-are-differential}]
Let $x \in \Omega$.
By Lemma~\ref{lem:local-operators-are-locally-Cm-C0-continuous}, there exists an $m$ and a neighborhood $U$ of $x$ such that
\[
|| D f ||_0 \leq C || f ||_m
\]
for all $f \in C^\infty_c(U \setminus \{ x \})$.
By Lemma~\ref{lem:Cm-C0-continuous-operators-are-differential},
\begin{equation}
\label{eq:D-is-a-differential-operator}
D f(y) = \sum_{|\alpha| \leq m} a_\alpha(y) (D^\alpha f)(y)
\end{equation}
for all $y \in U \setminus \{x\}$, some smooth functions $a_\alpha$ and all $f \in C^\infty(U \setminus \{ x \})$.
In particular, this is true for $f \in C^\infty_c(\Omega)$.
Furthermore, the formula (\ref{eq:D-is-a-differential-operator}) remains valid at $x$, since both the left- and the right-hand side are continuous functions of $y$.
Therefore, $D$ is a differential operator of order $m$ in $U$.
\end{proof}


\chapter{The Radon transform}
\label{chap:radon}
\index{Radon transform}
The Radon transform is among the simplest integral operators with singular kernel one can possibly imagine.
It was originally introduced in \cite{radon1917} for two dimensions with an indication of possible generalizations to more dimensions and non-Euclidean spaces.

This transform is probably best known in connection with computer tomography.
However the $k$-plane transform used there differs from the Radon transform in the codimension of the support of the integration kernel.
We refer to chapter~\ref{chap:k-plane} for the discussion of this application.

The Radon transform of a Schwartz function $f \in \Schwartz(\R^n)$ is defined by
\[
\Radon f (\sigma, t) = \int_{\<\sigma, x\> = t} f(x) \dif\Leb[n-1](x),\quad
\sigma \in S^{n-1}, t \in \R.
\]
It is a function of the affine hyperplane $\{ x : \<\sigma, x\> = t \}$.

We start by proving two of its basic properties, mostly following the exposition in \cite{MR1723736}.
The first one says that $\Radon f$ vanishes for $|t|>R$ only if $f$ itself vanishes identically for $|x|>R$, suggesting that the transform is invertible.
The second is the actual inversion formula.

Afterwards we will turn to the investigation of boundedness of the Radon transform as an operator from $L^p(\R^n)$ to $L^q(S^{n-1},L^r(\R))$.
The norms of functions in the latter space are denoted by $|| \cdot ||_{q;r}$, and we are interested in estimates of the form
\begin{equation}
\label{eq:radon-p-qr-estimate}
|| \Radon f ||_{q; r} \leq C_{q,r,p} || f ||_p.
\end{equation}

\section{The support theorem}
If the support of $f$ is contained in a ball of radius $R$ centered at the origin, then the Radon transform clearly vanishes for $|t|>R$.
In this section we prove the converse statement, beginning with the special case of radial functions.

\begin{prop}
\label{prop:support-radial}
Let $f \in \Schwartz(\R^n)$ be a radial function, such that for every $|t|>R$ and $\sigma \in S^{n-1}$, $\Radon f(\sigma, t)=0$.
Then $\supp f \subseteq B(0,R)$.
\end{prop}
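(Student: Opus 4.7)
The plan is to use radial symmetry to reduce the Radon transform of $f$ to a one-variable Abel-type integral transform, and then invert it.

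First I would write $f(x) = F(|x|)$ and, by rotation invariance, fix $\sigma = e_{n}$. Parametrizing the hyperplane $\{x_{n}=t\}$ by $x=(y,t)$ with $y \in \R^{n-1}$ and passing to polar coordinates in $\R^{n-1}$ gives
\[
\Radon f(\sigma, t) = |S^{n-2}| \int_{0}^{\infty} F(\sqrt{r^{2}+t^{2}})\, r^{n-2} \dif r.
\]
The successive substitutions $s = \sqrt{r^{2}+t^{2}}$, $u = s^{2}$, $\tau = t^{2}$ turn this into
\[
H(\tau) := \frac{|S^{n-2}|}{2} \int_{\tau}^{\infty} G(u)\, (u-\tau)^{(n-3)/2} \dif u, \quad G(u) := F(\sqrt u),
\]
so that the hypothesis translates to $H(\tau) = 0$ for every $\tau > R^{2}$, and the desired conclusion $\supp f \subseteq B(0, R)$ is equivalent to $G(u) = 0$ for $u > R^{2}$.

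The second step is to invert this Abel-type fractional integral transform. When $n$ is odd the exponent $(n-3)/2$ is a non-negative integer: since $G$ decays rapidly, one may differentiate $H$ under the integral sign, each differentiation lowering the exponent by one, and after $(n-1)/2$ differentiations one recovers $G(\tau)$ up to a non-zero constant. For $n$ even one first differentiates $(n-2)/2$ times to arrive at the kernel $(u-\tau)^{-1/2}$, and then applies a half-order Abel transform $\Phi(\tau) \mapsto \int_{\tau}^{\infty} \Phi(\xi)(\xi-\tau)^{-1/2} \dif\xi$; swapping the order of integration, the inner integral evaluates to the Beta integral $B(1/2,1/2) = \pi$, so that this composition reproduces, up to a constant, the antiderivative of $G$, from which $G$ itself is recovered by one last differentiation.

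The hard part will be the uniform treatment of both parities of $n$: the half-integer exponent for even $n$ prevents a purely elementary reduction. The composition trick above exploits in disguise the semigroup property of fractional integrals --- the same phenomenon underlying Proposition~\ref{prop:I-alpha-beta} for Riesz potentials in the previous chapter. In either parity the inversion formula ultimately expresses $G$ as a differential-integral combination of $H$, and therefore transfers the vanishing of $H$ on $(R^{2}, \infty)$ to that of $G$ on the same interval, proving the proposition.
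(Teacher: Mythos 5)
Your proposal is correct and uses essentially the paper's approach: reduce to an Abel-type fractional integral in the squared radial variable, then invert it. The only difference is that the paper avoids your parity case split entirely by multiplying the equation $\int_{r\ge t} F(r)(r^2-t^2)^{(n-3)/2}r\,\dif r = 0$ (for $t>R$) by the \emph{matching} weight $t(t^2-s^2)^{(n-3)/2}$ and integrating over $t\in(s,\infty)$; the inner integral then evaluates to a constant multiple of $(r^2-s^2)^{n-2}$ (a Beta integral, just like yours), so a single application of $\bigl(\tdif{}{(s^2)}\bigr)^{n-1}$ recovers $F(s)=0$ uniformly in $n$ --- precisely resolving the parity issue you flagged as the hard part.
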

\begin{proof}
By the assumptions, $f(x)=F(|x|)$ for some even function $F$ and
\begin{align*}
\Radon f(\sigma, t)
&= \int_{\<x,\sigma\>=t} f(x) \Leb[n-1](x)\\
&= \int_{y \in \R^{n-1}} F(\sqrt{t^2 + |y|^2}) \dif y\\
&= \Omega_{n-1} \int_{\rho=0}^\infty F(\sqrt{t^2 + \rho^2}) \rho^{n-2} \dif\rho\\
&= \Omega_{n-1} \int_{r=t}^\infty F(r) (r^2 - t^2)^{\frac{n-3}{2}} r \dif r \quad (r^2 = \rho^2 + t^2)
\end{align*}
Let $s > R$, multiply this equality by $t (t^2 - s^2)^{(n-3)/2}$ and integrate for $t \in (s, \infty)$.
Observe that the left-hand side vanishes identically, so that
\begin{align*}
0 &=
\int_{t=s}^\infty t (t^2 - s^2)^{\frac{n-3}{2}} \int_{r=t}^\infty F(r) (r^2 - t^2)^{\frac{n-3}{2}} r \dif r \dif t\\
&=
\int_{r=s}^\infty F(r) r \int_{t=s}^r (t^2 - s^2)^{\frac{n-3}{2}} (r^2 - t^2)^{\frac{n-3}{2}} t \dif t \dif r.
\end{align*}
With the substitution $2 t^2 = a^2 + s^2 - \tau (a^2 - s^2)$, the inner integral becomes
\begin{multline*}
\int_{\tau=-1}^1
\left[(1-\tau)(a^2 - s^2)/2\right]^{\frac{n-3}{2}}
\left[(1+\tau)(a^2 - s^2)/2\right]^{\frac{n-3}{2}}
\frac{(a^2-s^2) \dif\tau}{2}\\
=
C (a^2 - s^2)^{n-2},
\end{multline*}
and inserting it back into the previous equation, one obtains
\[
\int_{r=s}^\infty F(r) r (a^2 - s^2)^{n-2} \dif r = 0.
\]
Applying the differential operator $\left(\tdif{}{(s^2)}\right)^{n-1}$, one gets $F(s) = 0$.
\end{proof}

Let us now see how the general case reduces to the one already treated.
First, observe that it is not sufficient to average $f$ over the spheres centered at $0$, since every odd function would vanish under this procedure.
This limitation can be dealt with if one takes into consideration the spheres centered at points different from the origin.

Let $x_0 \in \R^n$ and define the spherically averaged function centered at $x_0$ by
\[
f_{x_0} (x) = \int_{S^{n-1}} f (x_0 + \sigma |x-x_0|) \dif \sigma.
\]
If $f$ was rapidly decreasing at infinity, then it is rapidly decreasing too.
Its Radon transform is given by
\begin{align*}
\Radon f_{x_0} (\sigma, t)
&=
\int_{\<\sigma, x\> = t} f_{x_0} (x) \dif\Leb[n-1](x)\\
&=
\int_{\<\sigma, x\> = t} \int_{k \in O(n)} f (x_0 + k(x-x_0)) \dif k \dif\Leb[n-1](x)\\
&=
\int_{k \in O(n)} \int_{\<\sigma, x\> = t} f (x_0 + k(x-x_0)) \dif\Leb[n-1](x) \dif k\\
&=
\int_{k \in O(n)} \int_{\mathrlap{\<k \sigma, k x + x_0 - k x_0\> = t + \<k \sigma, x_0 - k x_0\>}} f (x_0 + k(x-x_0)) \dif\Leb[n-1](x_0 + k(x-x_0)) \dif k\\
&=
\int_{k \in O(n)} \Radon f (k \sigma, t + \<k \sigma, x_0 - k x_0\>) \dif k,
\end{align*}
or, written in a tidier way,
\[
\Radon f_{x_0} (\sigma, t + \<\sigma, x_0 \>) = 
\int_{k \in O(n)} \Radon f (k \sigma, t + \<k \sigma, x_0\>) \dif k.
\]
If $\Radon f$ was supported in $\{|t| \leq R\}$, then $||t| - |x_0||>R$ implies $\Radon f_{x_0} (\sigma, t + \<\sigma, x_0 \>) = 0$.
Visually, this says the Radon transform gets averaged over the hyperplanes which have the same distance to $x_0$ and $\Radon f_{x_0}$ vanishes for hyperplanes outside the ball of radius $R + |x_0|$ around $x_0$.
By Prop.~\ref{prop:support-radial}, the averages $f_{x_0}$ vanish outside the ball too.
Put in clear way, we have $\int_S f = 0$ whenever a sphere $S$ does not intersect the ball $\bar B(0,R)$.
\begin{lemma}
\label{lem:spheric-integrals-vanish}
Assume that, for a rapidly decreasing function $f$, whenever a sphere $S$ does not intersect the ball $\bar B(0,R)$, $\int_S f = 0$.
Then, for every such $S$, also $\int_S f x_j = 0$ for every $j$.
\end{lemma}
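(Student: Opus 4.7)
The plan is to parameterize spheres by center $x_0 \in \R^n$ and radius $r>0$, and to analyze two smooth functions
\[
F(x_0,r) := \int_{|y-x_0|=r} f(y)\,dS(y), \qquad G_j(x_0,r) := \int_{|y-x_0|=r} y_j f(y)\,dS(y).
\]
The hypothesis says $F \equiv 0$ on the open set $U := \{(x_0,r) : \bigl|\,|x_0|-r\,\bigr| > R\}$, and my goal is $G_j \equiv 0$ on $U$. Note that $U$ has two connected components: $U_- = \{r < |x_0|-R\}$, in which $\bar B(x_0,r)$ is disjoint from $\bar B(0,R)$, and $U_+ = \{r > |x_0|+R\}$, in which $\bar B(0,R) \subset B(x_0,r)$.

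Splitting $y_j = (y-x_0)_j + (x_0)_j$ and applying the divergence theorem to the vector field $f e_j$ on $B(x_0,r)$ gives the identity
\[
G_j(x_0,r) = (x_0)_j\,F(x_0,r) + r \int_{|y-x_0|\leq r} \partial_j f(y)\,dy.
\]
Writing $H(x_0,r) := \int_{|y-x_0|\leq r} f(y)\,dy$, the change of variable $z = y-x_0$ and differentiation under the integral show that the ball integral on the right equals $\partial_{(x_0)_j} H(x_0,r)$. On $U$ the first term vanishes, so the identity reduces to $G_j = r\,\partial_{(x_0)_j} H$, and it suffices to prove $\partial_{(x_0)_j} H \equiv 0$ on $U$.

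Polar coordinates around $x_0$ give $\partial_r H = F$, which vanishes on $U$, so $H(x_0,\cdot)$ is locally constant in $r$ on each component of $U$. On $U_-$, for every $s \in [0,r]$ the sphere of radius $s$ around $x_0$ still avoids $\bar B(0,R)$, hence $H(x_0,r) = \int_0^r F(x_0,s)\,ds = 0$. On $U_+$, fix $x_0$ and observe that $r \mapsto H(x_0,r)$ is constant on the ray $(|x_0|+R,\infty)$; by rapid decay of $f$,
\[
\lim_{r\to\infty} H(x_0,r) = \int_{\R^n} f(y)\,dy,
\]
a quantity independent of $x_0$. Therefore $H$ is equal to this same constant throughout $U_+$. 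In both components $\partial_{(x_0)_j} H = 0$, which yields $G_j \equiv 0$ on $U$.

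The delicate point is the enclosing case $U_+$: one cannot foliate $B(x_0,r)$ by concentric spheres that all avoid $\bar B(0,R)$, so the reduction cannot be made purely local. It is resolved by the global observation that $H(x_0,r)$ stabilizes at the full integral $\int_{\R^n} f$ as $r \to \infty$ and is therefore independent of $x_0$, even though the naive foliation argument fails.
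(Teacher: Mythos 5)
Your proof is correct and follows essentially the same route as the paper: both differentiate the ball integral $H(x_0,r)=\int_{B(x_0,r)}f$ in the center variable, use the divergence theorem to convert $\int_{B}\partial_j f$ into a sphere integral, and exploit that $H$ is constant in $x_0$ because it stabilizes at $\int_{\R^n}f$. Your version is slightly more complete in that you explicitly treat both connected components of the parameter region (disjoint ball and enclosing ball), whereas the paper's proof only writes out the enclosing case $B(x,\tilde R)\supset\bar B(0,R)$, which is the one actually used later in Theorem~\ref{thm:support}.
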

\begin{proof}
Let $x$, $\tilde R$ be such that $B(x, \tilde R) \supset \bar B(0, R)$.
Then
\[
\int_{B(x, \tilde R)} f = \int_{R^n} f - \Omega_{n} \int_{\rho=\tilde R}^\infty \rho^{n-1} \int_{\boundary B(x,\rho)} f = \int_{R^n} f = \const.
\]
In particular,
\begin{align*}
0
&= \tdif{}{x_j} \int_{B(x, \tilde R)} f
= \int_{B(x, \tilde R)} \partial_j f
= \int_{B(x, \tilde R)} \nabla_y (f(y) \partial_j) \dif y
= \int_{B(x, \tilde R)} \nabla_y (f(y) \partial_j) \dif y\\
&= \int_{\boundary B(x, \tilde R)} \< f(y) \partial_j, y \> \dif y
= \int_{\boundary B(x, \tilde R)} f(y) (x_j + y_j) \dif y
= \int_{\boundary B(x, \tilde R)} f(y) y_j \dif y,
\end{align*}
the last two equalities being valid by the divergence theorem and by the hypothesis (since $x_j$ is constant).
\end{proof}
This allows us to conclude easily.
\begin{theorem}
\label{thm:support}
Let $f \in \Schwartz(\R^n)$ be such that $\Radon f$ vanishes for every $t>R$ and $\sigma \in S^{n-1}$.
Then $\supp f \subseteq B(0,R)$.
\end{theorem}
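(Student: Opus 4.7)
The plan is to bootstrap from the radial case (Proposition \ref{prop:support-radial}) by spherical averaging, and then to convert vanishing spherical averages into pointwise vanishing using the polynomial-density mechanism prepared by Lemma \ref{lem:spheric-integrals-vanish}.

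First I would fix an arbitrary center $x_0\in\R^n$ and combine the identity
\[
\Radon f_{x_0}(\sigma,t+\<\sigma,x_0\>)=\int_{k\in O(n)}\Radon f(k\sigma,t+\<k\sigma,x_0\>)\dif k
\]
derived above with the reverse triangle inequality $|t+\<k\sigma,x_0\>|\geq|t|-|x_0|$ to see that the left-hand side vanishes as soon as $|t|>R+|x_0|$. Translating $f_{x_0}$ to the origin yields a radial Schwartz function whose Radon transform is supported in $|s|\leq R+|x_0|$, so Proposition \ref{prop:support-radial} forces $\supp f_{x_0}\subseteq\bar B(x_0,R+|x_0|)$. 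Equivalently, $\int_S f=0$ for every sphere $S$ that strictly encloses $\bar B(0,R)$. Next, Lemma \ref{lem:spheric-integrals-vanish} applied to this family (its proof actually uses only enclosing spheres, so the weaker hypothesis suffices) yields $\int_S f\cdot y_j=0$ for every $j$; iterating the lemma with $f$ replaced by the Schwartz functions $y\mapsto f(y)y_{j_1}\cdots y_{j_k}$ produces $\int_S fp=0$ for every polynomial $p\in\C[y_1,\dots,y_n]$ and every such $S$.

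Finally, given $y_0$ with $|y_0|>R$, I would take $x_0:=-t\,y_0/|y_0|$ for arbitrary $t>0$ and set $\rho:=|y_0-x_0|=|y_0|+t$; since $\rho-|x_0|=|y_0|>R$, the sphere $S:=\partial B(x_0,\rho)$ is enclosing and passes through $y_0$. The restrictions of polynomials to $S$ form a unital, point-separating subalgebra of $C(S)$, hence are dense by Stone-Weierstrass, so the continuous function $f|_S$, being $L^2(S)$-orthogonal to every polynomial, must vanish identically, giving $f(y_0)=0$. The main obstacle I anticipate is this last step — ensuring enough enclosing spheres pass through every exterior point — which is resolved by sending the center far in the direction antipodal to $y_0$.
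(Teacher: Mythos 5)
Your proof is correct and follows the same strategy as the paper: spherical averaging about an arbitrary center $x_0$ plus Proposition~\ref{prop:support-radial} gives vanishing integrals over enclosing spheres, then Lemma~\ref{lem:spheric-integrals-vanish} iterated over coordinate monomials and Stone--Weierstrass finish (and your explicit note that the lemma's proof uses only enclosing spheres is a useful clarification of a slight overclaim in the text). One small remark: the displaced center $x_0=-t\,y_0/|y_0|$ in your final step is unnecessary, since for $|y_0|>R$ the origin-centered sphere $\partial B(0,|y_0|)$ already strictly encloses $\bar B(0,R)$ and passes through $y_0$ — this is exactly the family $\partial B(0,\tilde R)$, $\tilde R>R$, that the paper's proof uses.
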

\begin{proof}
By the preceding considerations, we may apply Lemma~\ref{lem:spheric-integrals-vanish}.
Since $f(x) x_j$ still satisfies its conditions, we may reiterate.
Thus, by induction on the degree of $P$, we get
\[
\int_{\boundary B(0, \tilde R)} f(x) P(x) \dif x = 0
\]
for every polynomial $P$ and every $\tilde R > R$.
By the Stone-Weierstraß theorem, the polynomials are dense in continuous functions on the sphere, so that this implies $f \equiv 0$ on $\boundary B(0, \tilde R)$.
Since $\tilde R>R$ is arbitrary, the assertion follows.
\end{proof}

\section{The inversion formula}
A basic relation between the Radon and the Fourier transform is
\begin{equation}
\label{eq:radon-fourier}
\begin{split}
\Fourier f(\sigma s)
&=
\int_{\R^n} f(x) e^{-i \<\sigma s, x\>} \dif x\\
&=
\int_{t=-\infty}^\infty \int_{\<\sigma, x\> = t} f(x) e^{-i \<\sigma s, x\>} \dif\Leb[n-1](x) \dif t\\
&=
\int_{t=-\infty}^\infty \int_{\<\sigma, x\> = t} f(x) e^{-i s t} \dif\Leb[n-1](x) \dif t\\
&=
\int_{t=-\infty}^\infty \Radon f (\sigma, t) e^{-i s t} \dif t\\
&=
\Fourier_1 \Radon f (\sigma, s),
\end{split}
\end{equation}
where the symbol ``$\Fourier_1$'' indicates the Fourier transform in the coordinate $s \in \R^1$ for a fixed $\sigma \in S^{n-1}$.

Therefore the formal inverse of the Radon transform is $\Radon\inv = \Fourier\inv \Fourier_1$.
On the other hand, formally up to constants,
\begin{multline*}
\int \Radon f \cdot g
= \int \Fourier_1\inv \Fourier f \cdot g
= \int_{S^{n-1}} \int_\R \Fourier f \cdot \Fourier_1 g\\
= \int_{\R^{n}} \Fourier f \cdot |\xi|^{-n+1} \Fourier_1 g
= \int_{\R^{n}} f \cdot \Fourier\inv |\xi|^{-n+1} \Fourier_1 g.
\end{multline*}
This suggests that the inverse of the Radon transform may also be written as the composition of its adjoint and a Riesz potential.
To calculate the adjoint explicitly consider the duality relation in question.
A calculation using Fubini's theorem shows that
\begin{align*}
\int_{S^{n-1}} \int_\R \Radon f(\sigma, t) g(\sigma,t) \dif t \dif \sigma
&= \int_{S^{n-1}} \int_\R \int_{\<\sigma,x\>=t} f(x) \dif\Leb[n-1](x) g(\sigma,t) \dif t \dif \sigma\\
&= \int_{S^{n-1}} \int_{\R^n} f(x) g(\sigma,\<\sigma,x\>) \dif x \dif \sigma\\
&= \int_{\R^n} f(x) \Radon^* g(x) \dif x,
\end{align*}
where
\[
\Radon^* g(x) = \int_{S^{n-1}} g(\sigma, \<\sigma,x\>) \dif \sigma
\]
is the integral of $g$ over all hyperplanes passing through $x$.

We will now compose the Radon transform and its formal adjoint and apply the inversion formula for the Riesz potential.
To simplify the expression, it will be convenient to use group-theoretic notation.
Let $M(n)$ be the group of isometries on $\R^n$, the action of $g \in M(n)$ being written multiplicatively, $x=g\cdot 0$ and $\xi \in G_{n,n-1}$ a fixed hyperplane through $0$.
\begin{align*}
\Radon^* \Radon f(g \cdot 0)
&=
\int_{k \in O(n)} \Radon f(g k \cdot \xi) \dif k\\
&=
\int_{k \in O(n)} \int_{y \in \xi} f(g k \cdot y) \dif\Leb[n-1](y) \dif k\\
&=
\int_{y \in \xi} \int_{k \in O(n)} f(g k \cdot y) \dif k \dif\Leb[n-1](y)\\
&=
\int_{y \in \xi} \int_{\sigma \in S^{n-1}} f(g \cdot 0 + \sigma |y|) \dif \sigma \dif\Leb[n-1](y)\\
&=
\Omega_{n-1} \int_{r=0}^\infty r^{n-2} \int_{\sigma \in S^{n-1}} f(x + \sigma r) \dif \sigma \dif r\\
&=
\Omega_{n-1} \Omega_n\inv \int_{\R^n} |y|\inv f(x + y) \dif y\\
&=
\frac{\Omega_{n-1}}{\Omega_n} g_{-1} * f(x)\\
&=
\Gamma(n/2) 2^{n-1} \pi^{n/2-1} I^{n-1} f
\end{align*}
By Proposition~\ref{prop:I-pm-gamma}, this gives the inversion formula
\[
f = \Gamma(n/2)\inv 2^{-n+1} \pi^{-n/2+1} I^{-n+1} \Radon^* \Radon f.
\]
In case of odd $n$ this specializes to
\[
f = \Gamma(n/2)\inv 2^{-n+1} \pi^{-n/2+1} (-L)^{(n-1)/2} \Radon^* \Radon f
\]
by (\ref{eq:riesz-laplacian}).

\section{\texorpdfstring{$L^p$}{Lp} discontinuity}
The examples in this section impose some restrictions on the possible values of parameters in (\ref{eq:radon-p-qr-estimate}).
The given functions are not of Schwartz class, but each of them may be approximated simultaneously pointwise and in $L^p$ by a monotonously increasing sequence $(f_k)$ of Schwartz functions.
The Radon transforms of $f_k$ converge monotonously as well, and by the monotonous convergence theorem, the lower bounds below are satisfied up to an arbitrary error by some $f_k$.

\paragraph*{Restriction on $p$}
For $p \geq n/(n-1) > 1$, the function
\[
f(x) = (2 + |x|)^{-n/p} \ln\inv (2 + |x|)
\]
is in $L^p$, as
\begin{align*}
||f||_p
&=
C \int_{r=0}^\infty r^{n-1} (2+r)^{-n} \ln^{-p} (2+r)
\leq
\int_{r=2}^\infty r^{-1} \ln^{-p} (r)\\
&=
\int_{r=2}^\infty \left( \frac{\ln^{-p+1}(r)}{1-p} \right)'
=
\frac{\ln^{1-p}(2)}{p-1},
\end{align*}
but for every $\sigma \in S^{n-1}$, $t \in \R$,
\begin{align*}
\int_{\<x,\sigma\>=t} f(x) \dif\Leb[n-1](x)
&\sim \int_{r=0}^\infty \frac{r^{n-2} (2+\sqrt{r^2+t^2})^{-n/p}}{\ln(2+\sqrt{r^2+t^2})}\\
&\geq \int_{r=0}^\infty \frac{r^{n-2} (r+2+|t|)^{-n/p}}{\ln(r+2+|t|)}\\
&\geq \int_{r=3 + |t|}^\infty \frac{(r-2-|t|)^{n-2} r^{-n/p}}{\ln(r)}\\
&\gtrsim \int_{r=3 + |t|}^\infty \frac{r^{n-2-n/p}}{\ln(r)}\\
&\geq \int_{r=3 + |t|}^\infty \frac{r^{-1}}{\ln(r)}\\
&= \lim_{R \to \infty} \left[ \ln\ln(r) \right]_{r=3+|t|}^R
= \infty,
\end{align*}
i.e.\ the Radon transform is identically $+\infty$.
Thus estimates of the form (\ref{eq:radon-p-qr-estimate}) are only possible if
\begin{equation}
\label{eq:radon-estimate-p-restriction}
p < \frac{n}{n-1}.
\end{equation}

\paragraph*{Restriction on $r$}
We consider the characteristic function $\chi_R$ of the ball of radius $R$ around the origin in $\R^n$.
Then $||\chi_R||_p = C R^{n/p}$ and
\begin{align*}
|| \Radon \chi_R ||_{q,r}
&= \left( \int_{\sigma \in S^{n-1}} \left( \int_{t\in\R} \left(\int_{\<x,\sigma\>=t} \chi_R(x) \dif\Leb[n-1](x) \right)^r \dif t \right)^{q/r} \dif \sigma \right)^{1/q}\\
&= C \left( \int_{t=-R}^R \left( (R^2-t^2)^{(n-1)/2} \right)^r \dif t \right)^{1/r}\\
&= C R^{n-1+1/r} \left( \int_{s=-1}^1 \left( (1-s^2)^{(n-1)/2} \right)^r \dif s \right)^{1/r}
= C R^{n-1+1/r},
\end{align*}
so that the mixed norm estimate can only hold if $R^{n-1+1/r} < C R^{n/p}$ for all $R$, which is the case if and only if
\begin{equation}
\label{eq:radon-estimate-r-restriction}
n-1+1/r = n/p.
\end{equation}
We remark that we might have used the dilates of arbitrary functions in this argument.
The characteristic functions of balls, however, yield the most explicit formulae.

\paragraph*{Restriction on $q$}
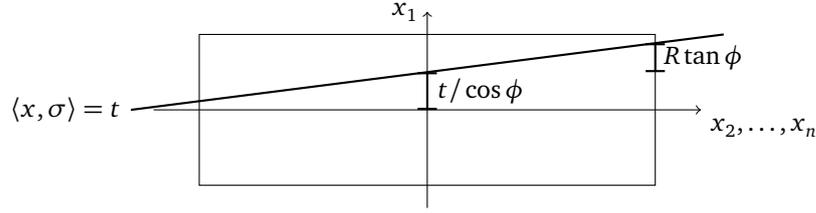
\begin{figure}
\begin{center}
\begin{tikzpicture}[xscale=3]
\draw[->] (0,-1.3) -- (0,1.3) node[left] {$x_1$};
\draw[->] (-1.2,0) -- (1.2,0) node[anchor=north west] {$x_2, \dots, x_n$};

\draw (-1,-1) -- (-1,1) -- (1,1) -- (1,-1) -- cycle;

\coordinate (A1) at (1.3,1);
\coordinate (A2) at (-1.3,0);
\draw[thick] (A1) -- (A2) node[left] {$\<x, \sigma\> = t$};
\coordinate (I1) at (intersection of A1--A2 and 0,0--0,1);
\coordinate (I2) at (intersection of A1--A2 and 1,0--1,1);
\draw[|-|,thick] (0,0) -- node[right] {$t/\cos\phi$} (I1);
\draw[|-|,thick] (I2 |- I1) -- node[right] {$R \tan\phi$} (I2);
\end{tikzpicture}
\end{center}
\caption{A plane passing through the cylinder of height $1$ and radius $R$}
\label{fig:restriction-on-q}
\end{figure}
Finally, consider the characteristic function $\chi_R$ of the cylinder $|x_1| \leq 1$, $x_2^2 + \dots + x_n^2 \leq R^2$.
Then $||\chi_R||_p = C R^{(n-1)/p}$, while the Radon transform is of order $R^{n-1}$ if $\sigma$ is near $e_1$ in the sense that the angle $\phi$ between them satisfies $\tan\phi \leq 1/(2R)$ and $t$ is small in the sense $|t|/\cos\phi \leq 1/2$, as in Figure~\ref{fig:restriction-on-q}.
For $R$ big enough, one may consider the more restrictive conditions $\phi \leq 1/(4 R)$, $|t| \leq 1/4$, and the Radon transform satisfies the lower bound
\begin{align*}
|| \Radon \chi_R ||_{q,r}
&\geq \left( \int_{\{\phi \leq 1/(4 R)\}} \left( \int_{\{|t| \leq 1/4 \}} \left(\int_{\<x,\sigma\>=t} \chi_R(x) \dif\Leb[n-1](x) \right)^r \dif t \right)^{q/r} \dif \sigma \right)^{1/q}\\
&\geq C \mu(\{\phi \leq 1/(4 R)\})^{1/q} \Leb[1](\{|t| \leq 1/4\})^{1/r} R^{n-1}
\geq C R^{(n-1)(1-1/q)},
\end{align*}
where $\mu$ is the measure on $S^{n-1}$.
The estimate (\ref{eq:radon-p-qr-estimate}) implies that
\[
R^{(n-1)(1-1/q)} \leq C R^{(n-1)/p} \text{ as } R\to\infty,
\]
so that $1-1/q \leq 1/p$ or, equivalently,
\begin{equation}
\label{eq:radon-estimate-q-restriction}
q \leq p'.
\end{equation}
By Hölder's inequality and because the measure of $S^{n-1}$ is finite, (\ref{eq:radon-p-qr-estimate}) with any given $q$ implies the same inequality for all smaller $q$'s as well (as long as $q \geq 1$).
Therefore, an estimate on $q$ from below would have disproved (\ref{eq:radon-p-qr-estimate}).

\section{\texorpdfstring{$L^p$}{Lp} estimates by complex interpolation}
\label{sec:radon-int}
By Fubini's theorem,
\begin{equation*}
||\Radon f(\sigma, \cdot)||_1 \leq ||f||_1
\end{equation*}
for every $\sigma$, which immediately shows the estimate
\begin{equation}
\label{eq:radon-L1}
|| \Radon f ||_{\infty; 1} \leq || f ||_1.
\end{equation}
The strategy for showing (\ref{eq:radon-p-qr-estimate}) will be interpolation between the end point (\ref{eq:radon-L1}) and a second estimate.
To obtain that additional estimate, we will in turn embed the Radon transform into a holomorphic family of operators.
For $f \in D:=\Schwartz(\R^{n})$ define
\[
\Radon_z f = h_z * \Radon f,
\]
where the convolution is meant to be taken in the second variable of $\Radon f(\cdot, \cdot)$.

We begin with the end-point estimates on two vertical lines in $\C$.
Let us start with $\{\Re z = 0\}$.
For every $y \in \R$, we have that $||g_{iy}||_{\infty} = 1$, and therefore
\[
|\Radon_{iy} f(\sigma, t)| \leq ||\Radon f(\sigma,\cdot)||_{1} ||h_{iy}||_{\infty} \leq ||f||_{1} |\Gamma((1 + iy)/2)|\inv
\]
for every $\sigma$ and $t$, so that
\[
||\Radon_{iy} f||_{\infty} \leq |\Gamma((1 + iy)/2)|\inv ||f||_{1}.
\]
Proceed with $\{\Re z = -(n+1)/2\}$.
By (\ref{eq:radon-fourier}) we have $\Fourier(\Radon f(\sigma,\cdot))(t) = \Fourier f(\sigma t)$.
The Plancherel theorem on $\R$ and (\ref{eq:fourier-hz}) imply
\begin{align*}
\int_{S^{n-1}} \int_\R |\Radon_z f|^2
&= \int_{S^{n-1}} \int_\R |\Fourier \Radon_z f|^2\\
&= \int_{S^{n-1}} \int_\R |\Fourier (\Radon f(\sigma,\cdot))(t)|^2 |\Fourier h_z(t)|^2 \dif t \dif \sigma\\
&=
\left| \frac{2^{z+1} \pi^{1/2}}{\Gamma(-z/2)} \right|^2
\int_{S^{n-1}} \int_\R |\Fourier f(\sigma t)|^2 |t|^{2(-\Re z-1)} \dif t \dif \sigma\\
&=
C |\Gamma(-z/2)|^{-2} ||\Fourier f||_{2}^2\\
&= C |\Gamma(-z/2)|^{-2} ||f||_{2}^2,
\end{align*}
i.e.
\[
||\Radon_{-(n+1)/2+iy} f||_2 \leq |\Gamma(((n+1)/2 - iy)/2)|^{-1} ||f||_2.
\]
We now verify the technical conditions on $\Radon_{z}$.
Fix an $f\in D$.
Recall that in one dimension
\begin{align*}
h_z (\phi)
&= \Gamma\left(\frac{z+1}{2}\right)\inv \int_{|r|=1}^\infty r^{z} \Phi(r) \dif r\\
&\qquad+ \Gamma\left(\frac{z+1}{2}\right)\inv \int_0^1 r^{z} \left( \Phi(r) - \sum_{k=0}^{m} \frac{r^{2k} \Phi^{(2k)}(0)}{(2k)!} \right) \dif r\\
&\qquad + \Gamma\left(\frac{z+1}{2}\right)\inv \Sum_{k=0}^{m} \frac{\Phi^{(2k)}(0)}{(z + 1 + 2k) (2k)!}\\
&=: h'_{z}(\phi) + h''_{z}(\phi) + h'''_{z}(\phi),
\end{align*}
where $\Phi(r) = \phi(r)+\phi(-r)$.
The second term defines an analytic function on $\{ -1-2m-2 < \Re z \}$.
Since $\Radon$ is a continuous operator between Schwartz spaces, $\Radon f(\sigma,\cdot)$ is bounded in $\Schwartz(\R^{1})$ uniformly in $\sigma$, and $h''_{z} * \Radon f$ is holomorphic as an $L^{\infty}(S^{n-1} \times \R)$-valued function on the open half-plane $\{ -1-2m-2 < \Re z \}$.
Furthermore, $h'''_{z} * \Radon f$ is an entire $L^{\infty}(S^{n-1} \times \R)$-valued function.

The function $h'_{z} * \Radon f$ on the other hand is merely pointwise entire.
It is analytic with values in $L^{\infty}(S^{n-1} \times \R)$ on the open half-plane $\{ \Re z < 0 \}$, but not necessarily differentiable in $L^{\infty}(S^{n-1} \times \R)$ at $z = iy$, since $r^{iy} \ln r$ is unbounded on $(1,\infty)$.

Nevertheless, since $m$ was arbitrary and by Theorem~\ref{thm:quascomp-convenv} the family $\Radon_{z}$ is strongly analytic on $\{\Re z < 0\}$.

Moreover, $h'_{z} * \Radon f$ is locally uniformly bounded on $\{ \Re z \leq 0 \}$.
Therefore for every $v \in L^{1}(S^{n-1} \times \R)$ the dominated convergence theorem implies that the function $\< h'_{z} * \Radon f, v\>$ is continuous on $\{ \Re z \leq 0 \}$.
Hence $\<\Radon_z f, v\>$ is continuous on $\{ \Re z \leq 0\}$.
To see that this function has admissible growth it suffices to use the Stirling formula, which implies that the inverse of the $\Gamma$ function has admissible growth.

Finally, by proposition~\ref{prop:sum-of-duals} we have that $L^{2} + L^{\infty} = (L^{2} \cap L^{1})'$.
Now we can apply the Stein interpolation theorem~\ref{th:stein} with $D=\Schwartz(\R^{n})$, $V := L^{2} \cap L^{1}(S^{n-1} \times \R)$ and strip $S$ replaced by $\{ -(n+1)/2 \leq \Re z \leq 0\}$.
By Proposition~\ref{prop:lp-interpolation-by-schwartz-functions} we obtain for every $0 < \theta < 1$ the estimate
\begin{equation}
||\Radon_z f||_{L^{p'}} \leq C ||f||_{L^p},
\label{ineq:radon-not-mixed}
\end{equation}
where $z=-(n+1) (1-\theta) / 2$ and $p\inv = (1-\theta) 2\inv + \theta 1\inv = 1/2 + \theta/2 = 1 + z/(n+1)$.
Observe that $h_{-1}$ is a constant multiple of the Dirac $\delta(0)$, so that $\Radon_{-1} = C \Radon$.
Therefore, if $\theta$ in chosen in such a way that $z=-1$, we obtain an estimate for $\Radon$, namely
\[
||\Radon f||_{n+1; n+1} = ||\Radon f||_{n+1} \leq C ||f||_{(n+1)/n}.
\]
Combining this result with (\ref{eq:radon-L1}) and applying the Interpolation Theorem~\ref{th:stein} and the characterization of the intermediate mixed norm spaces (Corollary~\ref{cor:complex-interpolation-LP}) again, this time with $P_\theta\inv = (1-\theta) 1\inv + \theta n/(n+1) =: p\inv$, $Q_{\theta,1}\inv = \theta (n+1)\inv = 1/p'$, $Q_{\theta,2}\inv = (1-\theta) 1\inv + \theta (n+1)\inv = r\inv = n p\inv - n +1$, we obtain
\[
||\Radon f||_{p'; r} \leq C ||f||_{p}
\]
for every $1 \leq p \leq (n+1)/n$.

In case $n \geq 2$, we can extend the result to $(n+1)/n < p < n/(n-1)$.
Set $z = -(n+1)/p'$.
Then $-(n+1)/2 \leq -1-1/n < z < -1 < 0$, and thus the inequality~(\ref{ineq:radon-not-mixed}) holds.
Furthermore, in dimension $1$,
\[
\Fourier(h_{-z-2} * h_z) = C h_{z+1} h_{-1-z} = C
\]
by (\ref{eq:fourier-hz}) and because $-1-z > -1$, $z+1 > -1$, so that the distributions $h_{z+1}$, $h_{-1-z}$ are in fact functions.
Therefore
\[
\Radon f = C h_{-z-2} * \Radon_z f.
\]
Since $h_{-z-2} \in L^{1/(z+2),\infty}$, the weak-type Young inequality (Proposition~\ref{prop:weak-type-young}) implies
\[
||\Radon f||_{p'; r} \leq ||\Radon_z f||_{p'; p'} ||h_{-z-2}||_{1/(z+2),\infty} \lesssim ||\Radon_z f||_{p'; p'} \lesssim ||f||_p,
\]
where $r$ is given by the condition
\[
\frac1r + 1 = \frac1{p'} + \frac1{1/(z+2)} \implies \frac1r = 1 -\frac{n}{p'},
\]
in accordance with the restriction (\ref{eq:radon-estimate-r-restriction}).
Observe that at the end point $p=\frac{n}{n-1}$ we have $r=\infty$ and the weak-type Young inequality does not hold, just as one would expect from (\ref{eq:radon-estimate-p-restriction}).

Since the measure on $S^{n-1}$ is finite, the Hölder inequality allows to extend the range of exponents in (\ref{eq:radon-p-qr-estimate}) replacing the $q$ by any number between, and including, $1$ and $q$.
This way, we see that (\ref{eq:radon-estimate-q-restriction}) is the optimal.
Let us summarize the results of this section.
\begin{theorem}
\label{th:radon-lp-lqr-estimate}
Let $n \geq 2$.
Then an estimate of the type
\[
|| \Radon f ||_{q; r} \leq C_{q,r,p} || f ||_p
\]
holds if and only if $1 \leq p < n/(n-1)$, $q \leq p'$ and $n-1+1/r = n/p$.
\end{theorem}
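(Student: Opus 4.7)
The plan is to assemble the ingredients developed throughout the section: the three counterexamples handle necessity, while the analytic family $\Radon_{z} f = h_{z} * \Radon f$, Stein interpolation, and the weak-type Young inequality handle sufficiency. No new construction is required; the task is essentially one of bookkeeping.

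For necessity, I would observe that each of the three constraints has already been realized by an explicit function. The bound $p<n/(n-1)$ is forced by the radial function $(2+|x|)^{-n/p}\log^{-1}(2+|x|)$, whose Radon transform is identically $+\infty$. The equality $n-1+1/r=n/p$ is forced by dilation invariance, tested on $\chi_{B(0,R)}$ as $R\to\infty$. The inequality $q\le p'$ is forced by the cylinder $\{|x_{1}|\le 1,\ x_{2}^{2}+\dots+x_{n}^{2}\le R^{2}\}$ as $R\to\infty$. Since each family of test functions can be approximated in $L^{p}$ by Schwartz functions with pointwise monotone convergence of Radon transforms, the monotone convergence theorem transfers the failures back into the Schwartz class.

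For sufficiency, I would first isolate the two endpoints. The trivial $L^{1}$ estimate $\|\Radon f\|_{\infty;1}\le\|f\|_{1}$ comes from Fubini; the $L^{2}$ estimate $\|\Radon_{-(n+1)/2+iy}f\|_{2}\le C(y)\|f\|_{2}$ comes from Plancherel combined with the Fourier-slice identity $\Fourier_{1}\Radon f(\sigma,s)=\Fourier f(\sigma s)$ and the explicit Fourier transform $\Fourier h_{z}$. After verifying that $z\mapsto\Radon_{z}f$ has admissible growth and is $\sigma(L^{2}+L^{\infty},L^{1}\cap L^{2})$-continuous (via the decomposition $h_{z}=h_{z}'+h_{z}''+h_{z}'''$ used above and Stirling's bound on $1/\Gamma$), Theorem~\ref{th:stein} together with Proposition~\ref{prop:lp-interpolation-by-schwartz-functions} yields $\|\Radon_{z}f\|_{p'}\le C\|f\|_{p}$ for $z=-(n+1)/p'$ on the segment $-(n+1)/2\le\Re z\le 0$. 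Specializing to $z=-1$, where $h_{-1}$ is a multiple of $\delta_{0}$ and so $\Radon_{-1}=C\Radon$, gives $\|\Radon f\|_{n+1}\le C\|f\|_{(n+1)/n}$.

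The remaining work is to upgrade this to mixed norms and to push $p$ up to $n/(n-1)$. Interpolating between $\|\Radon f\|_{\infty;1}\le\|f\|_{1}$ and $\|\Radon f\|_{n+1;n+1}\le C\|f\|_{(n+1)/n}$ via Stein's theorem, combined with Corollary~\ref{cor:complex-interpolation-LP} to identify the interpolants of mixed-norm $L^{P}$ spaces, produces $\|\Radon f\|_{p';r}\le C\|f\|_{p}$ for $1\le p\le(n+1)/n$ with $r$ satisfying $n-1+1/r=n/p$. For $(n+1)/n<p<n/(n-1)$ and $n\ge 2$, the identity $\Radon f=C\,h_{-z-2}*\Radon_{z}f$ (with $z=-(n+1)/p'$ so that $-(n+1)/2\le z\le -1$) together with $h_{-z-2}\in L^{1/(z+2),\infty}$ and Proposition~\ref{prop:weak-type-young} gives the estimate at the required $r$. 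Finally, since $S^{n-1}$ has finite measure, Hölder's inequality in $\sigma$ extends any $q=p'$ estimate to all $1\le q\le p'$, completing the sufficient direction.

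The main obstacle I anticipate is not any individual step but the verification of the admissible-growth and weak continuity hypotheses of Theorem~\ref{th:stein} for $\Radon_{z}$ near the line $\Re z=0$, where the singular integral representing $h_{z}$ prevents strong continuity into $L^{\infty}$. This was handled above by splitting $h_{z}=h_{z}'+h_{z}''+h_{z}'''$ and pairing against $V=L^{1}\cap L^{2}$ via Proposition~\ref{prop:sum-of-duals}; any write-up needs to cite that decomposition carefully. Once that is in place, the remaining steps are mechanical assembly of interpolation and Young-type inequalities.
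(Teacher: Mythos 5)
Your proposal reproduces the paper's argument faithfully: the three counterexamples for necessity, the analytic family $\Radon_{z}=h_{z}*\Radon$, the $L^{1}$ and $L^{2}$ endpoints, Stein interpolation (together with Corollary~\ref{cor:complex-interpolation-LP} for the mixed norms and Proposition~\ref{prop:lp-interpolation-by-schwartz-functions} for the initial domain), the weak-type Young trick for $(n+1)/n<p<n/(n-1)$, and the final Hölder reduction. This is essentially the same proof as in the text, including the observation that the delicate point is verifying admissible growth and weak continuity of $\Radon_{z}$ near $\Re z=0$ via the splitting $h_{z}=h_{z}'+h_{z}''+h_{z}'''$.
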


\section{A Lorentz space estimate at the critical point}
The use of the weak-type Young inequality in the previous section suggests the possibility that the Radon transform is a continuous operator between some Lorentz spaces for the critical exponent $p=n/(n-1)$.
In this section, we show that this is indeed the case.
In particular,
\[
||\Radon f||_{n; \infty} \leq C || f ||_{p,1}.
\]
\begin{lemma}
\label{lemma:interpolation-Lr-Hoelder}
Let $0 < \alpha < 1 \leq r < \infty$.
Then for every $f \in C^{0,\alpha}(\R) \cap L^r(\R)$,
\[
||f||_\infty \lesssim ||f||_r^{r \alpha/(1+r \alpha)} ||f||_{C^{0,\alpha}}^{1/(1+r \alpha)}.
\]
\end{lemma}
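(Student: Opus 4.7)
The plan is to reduce to a local lower bound on $|f|$ near a near-maximizer and then balance parameters. Set $M := \|f\|_{\infty}$ and $H := \|f\|_{C^{0,\alpha}}$; the case $M=0$ is trivial, so assume $M > 0$. Since $f$ is Hölder continuous it is continuous, and given $\epsilon>0$ we can choose $x_{0} \in \R$ with $|f(x_{0})| \geq (1-\epsilon) M$. After letting $\epsilon \to 0$ at the end, it suffices to work with $|f(x_{0})| \geq M/2$ and optimize.

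Next I would exploit Hölder continuity to get a lower bound on $|f|$ on an interval. For $|x-x_{0}| \leq \delta$ we have $|f(x) - f(x_{0})| \leq H \delta^{\alpha}$, so choosing $\delta$ so that $H\delta^{\alpha} = M/4$, i.e.\ $\delta = (M/(4H))^{1/\alpha}$, yields $|f(x)| \geq M/4$ throughout $[x_{0}-\delta, x_{0}+\delta]$. Integrating,
\[
\|f\|_{r}^{r} \geq 2\delta \cdot (M/4)^{r} = c\, M^{r + 1/\alpha} H^{-1/\alpha}
\]
for an absolute constant $c>0$. Solving for $M$ gives $M^{1 + r\alpha} \leq c^{-1} \|f\|_{r}^{r\alpha} H$, i.e.\
\[
M \leq C \|f\|_{r}^{r\alpha/(1+r\alpha)} \|f\|_{C^{0,\alpha}}^{1/(1+r\alpha)},
\]
which is the claim (after letting the factor $1-\epsilon$ tend to $1$).

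There is no real obstacle here; the only thing to be mindful of is that $f$ is only assumed to be in $L^{r} \cap C^{0,\alpha}$, so the supremum may not be attained, but near-maximizers work just as well, and the Hölder seminorm controls the oscillation on the full line so that no compactness assumption is needed. If the chosen $\delta$ happens to be large, this only improves the bound, so we do not need to worry about truncation.
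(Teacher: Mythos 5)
Your proof is correct and takes essentially the same route as the paper: use Hölder continuity to produce an interval of length $\sim(|f(x_0)|/\|f\|_{C^{0,\alpha}})^{1/\alpha}$ on which $|f|$ stays comparable to $|f(x_0)|$, lower-bound $\|f\|_r^r$ by the integral over that interval, and solve for the sup. The only cosmetic difference is that the paper runs the estimate at an arbitrary point $x$ with $|f(x)|>0$ and takes the supremum at the end, whereas you fix a near-maximizer up front; this is immaterial, and indeed the $\epsilon$-approximation you introduce is unnecessary since working with any $x_0$ satisfying $|f(x_0)|>M/2$ already suffices.
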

\begin{proof}
Let $x$ be such that $|f(x)| > 0$.
By Hölder continuity of $f$,
\[
|y-x|^\alpha ||f||_{C^{0,\alpha}} < |f(x)|/2
\implies
|f(x) - f(y)| < |f(x)|/2
\implies
|f(y)| > |f(x)|/2,
\]
i.e.\ $|f(y)| > |f(x)|/2$ on a line segment of length $C (|f(x)| / ||f||_{C^{0,\alpha}})^{1/\alpha}$ centered at $x$.
By the Chebyshev inequality we have
\[
||f||_r^r \geq (|f(x)|/2)^r |\{ |f| > |f(x)|/2 \}| > C |f(x)|^{r+1/\alpha} ||f||_{C^{0,\alpha}}^{-1/\alpha}.
\]
Since $x$ is arbitrary,
\[
||f||_\infty \lesssim ||f||_r^{r\alpha/(r\alpha+1)} ||f||_{C^{0,\alpha}}^{1/(r\alpha+1)}.
\qedhere
\]
\end{proof}

\begin{definition}
The \emph{finite difference} operator is defined by
\[
\Delta_t u(x) = u(x+t) - u(x)
\]
\end{definition}

\begin{lemma}
\label{lemma:finite-difference}
Let $\alpha > 0$, $m>\alpha$ be an integer, $g\in C(\R)$ such that $\Fourier g \in L^1(\R)$.
Then
\[
\sup |\Delta_t^m g| \leq C(m,\alpha) \left( \int_{s \in \R} |\Fourier g (s)|^2 |s|^{1+2\alpha} \dif s\right)^{\frac12} |t|^\alpha
\]
\end{lemma}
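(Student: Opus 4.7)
The plan is to express $\Delta_t^m g$ on the Fourier side and then split the resulting integral via the Cauchy-Schwarz inequality.

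First I would invoke the Fourier inversion formula, valid since $\Fourier g \in L^1$, to write
\[
g(x) = \frac{1}{2\pi}\int_\R \Fourier g(s)\, e^{isx}\,\dif s.
\]
Translation becomes multiplication by $e^{ist}$, so applying $\Delta_t$ to $g$ pulls out a factor $(e^{ist}-1)$. Iterating yields
\[
\Delta_t^m g(x) = \frac{1}{2\pi}\int_\R \Fourier g(s)\, (e^{ist}-1)^m\, e^{isx}\,\dif s,
\]
whence the pointwise bound
\[
|\Delta_t^m g(x)| \leq \frac{1}{2\pi}\int_\R |\Fourier g(s)|\, |e^{ist}-1|^m\,\dif s,
\]
uniformly in $x$.

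Next I would apply the Cauchy-Schwarz inequality with the weight $|s|^{(1+2\alpha)/2}$:
\[
\int_\R |\Fourier g(s)|\,|e^{ist}-1|^m\,\dif s
\leq
\Bigl(\int_\R |\Fourier g(s)|^2 |s|^{1+2\alpha}\,\dif s\Bigr)^{1/2}
\Bigl(\int_\R |e^{ist}-1|^{2m} |s|^{-1-2\alpha}\,\dif s\Bigr)^{1/2}.
\]
The first factor is precisely the quantity appearing in the statement. For the second factor I would perform the dilation $u=st$, which pulls out the factor $|t|^{2\alpha}$ and leaves the dimensionless integral
\[
\int_\R |e^{iu}-1|^{2m}|u|^{-1-2\alpha}\,\dif u.
\]

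The only substantive point is verifying that this last integral is finite, and here the two hypotheses enter in a complementary fashion. Near $u=0$, the bound $|e^{iu}-1|\leq |u|$ gives an integrand of size $|u|^{2m-1-2\alpha}$, which is integrable precisely because $m>\alpha$. Near infinity, $|e^{iu}-1|\leq 2$ gives an integrand of size $|u|^{-1-2\alpha}$, integrable precisely because $\alpha>0$. Thus the integral evaluates to a finite constant $C(m,\alpha)$, and taking square roots and supremum in $x$ delivers the claimed bound. There is no real obstacle beyond these standard size bounds on $|e^{iu}-1|$.
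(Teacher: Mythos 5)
Your proof is correct and follows essentially the same route as the paper's: Fourier inversion, the identity $\Fourier(\Delta_t g)(\xi) = \Fourier g(\xi)(e^{i\xi t}-1)$ iterated $m$ times, Cauchy-Schwarz with the weight $|s|^{(1+2\alpha)/2}$, and a dilation to extract $|t|^\alpha$, with the finiteness of the dimensionless integral checked by the two standard bounds on $|e^{iu}-1|$. No differences worth noting beyond your bookkeeping of the $1/(2\pi)$ normalization, which is harmlessly absorbed into $C(m,\alpha)$.
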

\begin{proof}
By the Fourier inversion formula and as $\Fourier(\Delta_t g)(\xi) = \Fourier g (\xi) (e^{i \xi t} - 1)$,
\begin{align*}
|\Delta_t^m g(x)|
&\leq \int_{\xi \in \R} |\Fourier g(\xi)| |e^{i \xi t} - 1|^m \dif \xi\\
&\leq 
\left( \int_{\xi \in \R} |\Fourier g(\xi)|^2 |\xi|^{1+2\alpha} \dif\xi \right)^{\frac12}
\left( \int_{\xi \in \R} |e^{i \xi t} - 1|^{2m} |\xi|^{-1-2\alpha} \dif \xi \right)^{\frac12}\\
&\leq
\left( \int_{\xi \in \R} |\Fourier g (\xi)|^2 |\xi|^{1+2\alpha} \dif\xi \right)^{\frac12} |t|^\alpha
\left( \int_{s \in \R} |e^{i s} - 1|^{2 m} |s|^{-1-2\alpha} \dif s \right)^{\frac12}
\end{align*}
The integrand in the last pair of parenthesis is independent of $t$, bounded by $|s|^{2 m} |s|^{-1-2\alpha}$ for $|s|<1$ and by $|s|^{-1-2\alpha}$ elsewhere and thus finite.
\end{proof}

\begin{lemma}
\label{lemma:fin-dif-L-pp-inf-inf-norm}
Assume $n \geq 3$, let $n/(n-1) < p \leq 2$ and $m$ be an integer with $m>(n-p')/2$.
Then, for every $f \in \Schwartz(\R^n)$,
\[
\left( \int_{S^{n-1}} (\sup_{x,t\neq 0 \in \R} |(\Delta_t^m \Radon f)(\sigma,x)|^{p'} |t|^{p'-n}) \dif\sigma \right)^{1/p'} \leq C ||f||_p.
\]
\end{lemma}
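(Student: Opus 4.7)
The plan is to pass to the Fourier side via Lemma~\ref{lemma:finite-difference}, exploit the relation $\Fourier_{1} \Radon f(\sigma, s) = \Fourier f(s\sigma)$ from~(\ref{eq:radon-fourier}), and recognise the resulting integral as the $L^{p'/2}(S^{n-1})$-norm of $U_{p'/2}(|\Fourier f|^{2})$ in the notation of Proposition~\ref{prop:Rn-Lp1-sphere-Lp}. The guiding observation is that the exponents in the statement are tuned so that the $|t|$-dependence cancels perfectly.

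First I would choose $\alpha := n/p' - 1$: this is positive by the assumption $p > n/(n-1)$, and the inequalities $p' \geq 2$ (from $p \leq 2$) together with $m > (n-p')/2$ imply $\alpha \leq (n-p')/2 < m$. Applying Lemma~\ref{lemma:finite-difference} to $g_{\sigma}(x) := \Radon f(\sigma, x)$ yields
\[
|\Delta_{t}^{m} \Radon f(\sigma, x)| \leq C |t|^{\alpha} \Bigl(\int_{\R} |\Fourier_{1} \Radon f(\sigma, s)|^{2} |s|^{2n/p'-1} ds\Bigr)^{1/2}.
\]
Raising to the power $p'$, the factor $|t|^{\alpha p'}$ exactly cancels the weight $|t|^{p'-n}$ because $\alpha p' = n - p'$, so the bound becomes independent of $x$ and $t$. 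Using~(\ref{eq:radon-fourier}) the bracket equals $U_{p'/2}(|\Fourier f|^{2})(\sigma)$, and integrating over $\sigma$ gives
\[
\int_{S^{n-1}} \sup_{x, t \neq 0} |\Delta_{t}^{m} \Radon f(\sigma, x)|^{p'} |t|^{p'-n} d\sigma \leq C \| U_{p'/2}(|\Fourier f|^{2}) \|_{L^{p'/2}(S^{n-1})}^{p'/2}.
\]

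To conclude, Proposition~\ref{prop:Rn-Lp1-sphere-Lp} with $q = p'/2$ majorises the right-hand side by $C \||\Fourier f|^{2}\|_{p'/2, 1}^{p'/2}$, and the rearrangement identity $(|F|^{2})^{*}(t) = (F^{*}(t))^{2}$ gives $\||\Fourier f|^{2}\|_{p'/2, 1} \sim \|\Fourier f\|_{p', 2}^{2}$. Finally, real interpolation (Theorem~\ref{th:marcinkiewicz} together with Theorem~\ref{th:lorentz-interpolation}) applied to $\Fourier : L^{1} \to L^{\infty}$ and $\Fourier : L^{2} \to L^{2}$ yields the Lorentz-form Hausdorff--Young inequality $\Fourier : L^{p, 2} \to L^{p', 2}$, which combined with the embedding $L^{p} \subset L^{p, 2}$ for $p \leq 2$ from Corollary~\ref{cor:lorentz-scale} produces $\|\Fourier f\|_{p', 2} \leq C \|f\|_{p}$. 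The hard part is the bookkeeping of the Lorentz indices, compounded by the fact that Proposition~\ref{prop:Rn-Lp1-sphere-Lp} formally requires $q > 1$ and therefore excludes the endpoint $p = 2$; there, however, $q = 1$ and the estimate trivialises to Plancherel in polar coordinates, $\int_{S^{n-1}} U_{1}(|\Fourier f|^{2}) d\sigma = 2 \|\Fourier f\|_{2}^{2} = C \|f\|_{2}^{2}$, so the argument still closes.
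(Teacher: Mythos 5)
Your proof is correct, but it follows a genuinely different route than the paper. The paper embeds the estimate into an analytic family
\[
T_{z}f(\sigma,x,t) \;=\; |t|^{\eta z/(n-\eta)}\,(\Delta_{t}^{m}\Radon_{z}f)(\sigma,x), \qquad \eta = n-p',
\]
obtains the $L^{1}\to L^{\infty}(L^{\infty})$ endpoint at $\Re z = 0$ from Fubini and the $L^{2}\to L^{2}(L^{\infty})$ endpoint at $\Re z=(\eta-n)/2$ from Lemma~\ref{lemma:finite-difference} with the fixed exponent $\alpha = \eta/2$, and then applies the Stein interpolation theorem~\ref{th:stein} to reach $T_{-1}\colon L^{p}\to L^{p'}(L^{\infty})$. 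You instead apply Lemma~\ref{lemma:finite-difference} once, directly, with the $p$-dependent exponent $\alpha = n/p'-1$ tuned so that the resulting power $|t|^{\alpha p'}$ cancels the weight $|t|^{p'-n}$; the bound then becomes $\|U_{p'/2}(|\Fourier f|^{2})\|_{p'/2}^{p'/2}$, and you close the argument with Proposition~\ref{prop:Rn-Lp1-sphere-Lp} and the Lorentz-improved Hausdorff--Young inequality $\Fourier\colon L^{p,2}\to L^{p',2}$, itself a consequence of real interpolation (Theorems~\ref{th:marcinkiewicz} and~\ref{th:lorentz-interpolation}). Both routes hinge on the same finite-difference lemma, but the paper's analytic-family approach is chosen because the family $\Radon_{z}$ and its endpoint bounds have already been assembled in the earlier parts of Section~\ref{sec:radon-int}, so the incremental cost of reusing it is small; it also fits the paper's programme of showcasing complex interpolation. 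Your version trades this for elementariness: real interpolation requires none of the admissible-growth and weak-$*$ continuity bookkeeping demanded by Theorem~\ref{th:stein}, and the exponent arithmetic becomes transparent. Your handling of the endpoint $p=2$, where Proposition~\ref{prop:Rn-Lp1-sphere-Lp} is out of range but the estimate degenerates to Plancherel in polar coordinates, is correct, as is the observation $\alpha = (n-p')/p' \le (n-p')/2 < m$ that guarantees $m>\alpha$ throughout the range $p\le 2$.
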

\begin{proof}
Let $\eta := n - p' > 0$,
\[
(T_z f)(\sigma, x, t) := |t|^{\eta z /(n-\eta)} (\Delta_t^m \Radon_z f)(\sigma,x).
\]
At the end point $\Re z = (\eta - n)/2 \leq -1$,
the Lemma~\ref{lemma:finite-difference} with $\alpha = \eta/2$ applies because for every $\sigma$, $\Radon_z f$ is a convolution of a tempered distribution and a Schwartz function and thus continuous, while by (\ref{eq:FT-h-z-convolution-f}),
\[
(\Fourier \Radon_z f)(\sigma, \xi) = 2^{z+1} \pi^{1/2} \Gamma(-z/2)\inv |\xi|^{-z-1} (\Fourier f)(\sigma \xi),
\]
which is in $L^1(\R)$ as a function of $\xi$.
The lemma implies
\begin{align*}
\int_{S^{n-1}} & \sup_{x,t\neq 0 \in \R} \left| T_z f(\sigma, x, t) \right|^2 \dif\sigma\\
&=
\int_{S^{n-1}} \sup_{x,t\neq 0 \in \R} \left| \frac{(\Delta_t^m \Radon_z f)(\sigma,x)}{t^{\eta/2}} \right|^2 \dif\sigma\\
&\leq C
\int_{S^{n-1}}
\int_{\xi \in \R} \left| (\Fourier \Radon_z f)(\sigma,\xi) \right|^2 |\xi|^{1+\eta}
\dif\xi \dif\sigma\\
&= C
\int_{S^{n-1}}
\int_{\xi \in \R} \left| 2^{z+1} \pi^{1/2} \Gamma(-z/2)\inv (\Fourier f)(\sigma \xi) \right|^2 |\xi|^{n-1}
\dif\xi \dif\sigma\\
&= C
|2^{2 z} \Gamma(-z/2)^{-2}|
\int_{\xi \in \R^n} \left| (\Fourier f)(\xi) \right|^2 \dif\xi\\
&= C
|\Gamma(-z/2)|^{-2} ||f||_2^2,
\end{align*}
i.e.\ $T_z : L^2(\R^n) \to L^{2}(S^{n-1},L^\infty(\R \times (\R \setminus \{0\})))$ is bounded.
By the Fubini theorem, $\Re z = 0$ implies
\[
\sup_{\sigma \in S^{n-1}} \sup_{x,t\neq 0 \in \R} \left| T_z f(\sigma, x, t) \right|
\leq
|\Gamma((z+1)/2)|\inv ||f||_1,
\]
i.e.\ $T_z : L^1(\R^n) \to L^{\infty}(S^{n-1},L^\infty(\R \times (\R \setminus \{0\})))$ is bounded.
By Theorem~\ref{th:stein} with $-1 = (1-\theta) \frac{\eta - n}{2}$, $T_{-1}$ extends to a bounded operator from $L^p(\R^n)$ to
\[
L^{p'}(S^{n-1},L^\infty(\R \times (\R \setminus \{0\}))).
\]
This proves the claim since
\[
(T_{-1} f)(\sigma, x, t) = C |t|^{-(n-p')/p'} (\Delta_t^m \Radon f)(\sigma,x).
\qedhere
\]
\end{proof}
For every $0 < \beta < 1$, Lemma~\ref{lemma:interpolation-Lr-Hoelder} and the Hölder inequality imply
\begin{multline*}
\int_{S^{n-1}} \left( \sup_{t\in\R} |\Radon f(\sigma, t)|^n \right) \dif\sigma\\
\leq
C \int_{S^{n-1}} \left( \int_{t\in\R} |\Radon f(\sigma, t)|^r \dif t \right)^{\frac{n \alpha}{1+r \alpha}} \left( \sup_{s\neq t\in\R} \frac{|\Radon f(\sigma, t) - \Radon f(\sigma, s)|}{|t-s|^\alpha} \right)^{\frac{n}{1+r \alpha}} \dif\sigma\\
\leq
C \left( \int_{S^{n-1}} \left( \int_{t\in\R} |\Radon f(\sigma, t)|^r \dif t \right)^{n \alpha/(1+r \alpha)/\beta} \dif\sigma \right)^\beta\\
\left( \int_{S^{n-1}} \left( \sup_{s\neq t\in\R} \frac{|\Radon f(\sigma, t) - \Radon f(\sigma, s)|}{|t-s|^\alpha} \right)^{n/(1+r \alpha)/(1-\beta)} \dif\sigma \right)^{1-\beta}\\
\leq
C || \Radon f||_{n\alpha r/(1+r \alpha)/\beta; r}^{n \alpha r/(1+r\alpha)}
|| f ||_{p_1}^{(1-\beta) p_1'}\\
\leq
C || f||_{p_0}^{n \alpha r/(1+r\alpha)}
|| f ||_{p_1}^{(1-\beta) p_1'},
\end{multline*}
the second to last inequality being valid by Lemma~\ref{lemma:fin-dif-L-pp-inf-inf-norm} if
\[
\frac{n}{(1+\alpha r)(1 - \beta)} = p_1',
\quad
\frac{n \alpha}{(1+\alpha r)(1 - \beta)} = n - p_1',
\quad
\frac{n}{n-1} < p_1 \leq 2,
\]
and the last inequality by Theorem~\ref{th:radon-lp-lqr-estimate} if
\[
\frac{n \alpha r}{(1+r\alpha)\beta} \leq p_0',
\quad
\frac1r = \frac{n}{p_0} - n + 1,
\quad
1 \leq p_0 < \frac{n}{n-1}.
\]
Elementary manipulations show that these restrictions reduce to
\[
1 \leq p_0 < \frac{n}{n-1} < p_1 \leq 2,
\]
and there are explicit expressions for $\alpha$, $\beta$, $r$.
With those, the inequality assumes the form
\begin{equation}
\label{eq:radon-critical-estimate}
|| \Radon f ||_{n; \infty}
\leq C
|| f ||_{p_0}^{1-\gamma} || f ||_{p_1}^\gamma,
\end{equation}
where $\gamma$ satisfies
\[
\frac{1 - \gamma}{p_0} + \frac{\gamma}{p_1} = \frac{n-1}{n}.
\]

The space $L^{p,1}$ is the smallest rearrangement-invariant Banach function space in which $||\chi_E|| = |E|^{1/q}$ for every measurable set $E$, cf.~\cite[Section 2.5]{MR928802}.
In that space, we can estimate the norm of simple functions \emph{from below} by the corresponding linear combination of norms of the characteristic functions,
\[
||\sum_k a_k \chi_k||_{p,1} \geq \sum_k |a_k| || \chi_k ||_{p,1}.
\]
Since simple functions are dense in $L^{p,1}$, this reduces the question of boundedness of an operator $T$ from $L^{p,1}$ to some Banach space $X$ to the question whether
\[
|| T \chi_E ||_X \leq C |E|^{1/p}
\]
for every measurable set $E$.
In case of the Radon transform, we have by (\ref{eq:radon-critical-estimate}), formally,
\[
|| \Radon \chi_E ||_{n; \infty}
\leq C
|| \chi_E ||_{p_0}^{1-\gamma} || \chi_E ||_{p_1}^\gamma
= C |E|^{1/p}
\]
with $p=n/(n-1)$.
But, for every measurable set $E$, $\chi_E$ may be approximated by Schwartz functions simultaneously in $L^{p_0}$, $L^{p_1}$ and $L^{p,1}$, so that we still obtain a dense subset $D \subset L^{p,1}$ such that $\Radon : D \to L^n(L^\infty)$ is a bounded operator.

\section{Radon transform as a convolution operator}
\label{subsec:radon-heisenberg}
In this section we consider a convolution operator on the Heisenberg group
\[
H_n = \{ (x_1, \dots, x_n, y_1, \dots, y_n, t) \}
\]
and provide an alternative view point on the boundedness of that operator \cite[{Lemma 2.6}]{MR1021141}.

Let $f$ be a measurable function on $\R^{2n+1} = H_n$ and $\mu$ be the $2n$-dimensional Lebesgue measure on the hyperplane $\{ t = 0\}$.
The convolution $f*\mu$ may be written in terms of the Radon transform as
\begin{align*}
f * \mu (x, y, t)
&= \int_{\R^n} \int_{\R^n} f(x-r,y-s,t-\frac12 (xs - yr)) \dif r \dif s\\
&= ||v||\inv \int_{\pi(x,y,t)} f.
\end{align*}
Here, $\pi(x,y,t)$ is the plane through $(x,y,t)$ which is orthogonal to
\begin{multline*}
v=(-1,0,\dots,0,y_1/2)
\wedge
\dots
\wedge
(0,\dots,0,-1,0,\dots,0,y_n/2)
\wedge\\
(0,\dots,0,-1,0,\dots,0,-x_1/2)
\wedge
\dots
\wedge
(0,\dots,0,-1,-x_n/2)\\
=
(y_1/2,\dots,y_n/2,-x_1/2,\dots,-x_n/2,1),
\end{multline*}
and $||v||=\left( 1 + x^2/4 + y^2/4 \right)^{1/2}$.
We will use this description of the convolution operator $f \mapsto f*\mu$ to obtain an $L^p$ estimate.

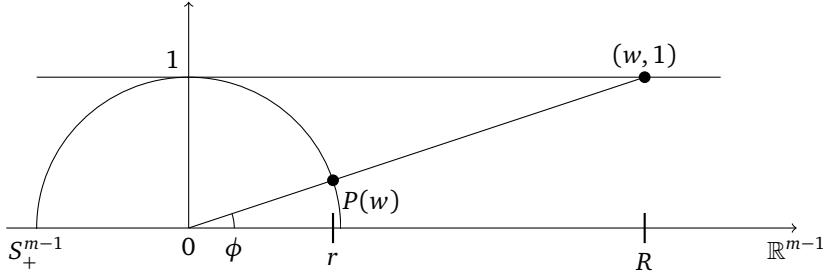
\begin{figure}
\centering
\begin{tikzpicture}[scale=2]
\coordinate (C) at (0,0); 
\coordinate (H) at (4,0); 
\coordinate (R) at (0,1); 
\coordinate (L) at (3,1); 

\draw[->] (-1.2,0) -- (H) node[below] {$\R^{m-1}$};
\draw[->] (0,0) -- (0,1.5);
\node[below] at (0,0) {$0$};
\node[anchor=south east] at (0,1) {$1$};

\draw (L) +(.5,0) -- (-1,1);
\begin{scope}
\clip (-1,0) -- (1,0) -- (1,1.1) -- (-1,1.1) -- cycle;
\node (unit circle) [draw, circle through=(R)] at (C) {};
\end{scope}
\node[below] at (-1,0) {$S^{m-1}_+$};

\draw (C) -- (L) node[above] {$(w,1)$};
\draw[fill] (L) circle (1pt);

\coordinate (I) at (intersection of unit circle and C--L);
\node[anchor=north west] (I1) at (I) {$P(w)$};
\draw[fill] (I) circle (1pt);

\draw[thick] ($(C)!(I)!(H)$) +(0,0.1) -- +(0,-0.1) node[anchor=north] {$r$};
\draw[thick] ($(C)!(L)!(H)$) +(0,0.1) -- +(0,-0.1) node[anchor=north] {$R$};

\begin{scope}
\clip (C) -- (L) -- (H) -- cycle;
\draw (C) circle (0.3);
\end{scope}
\node[below] at (0.3,0) {$\phi$};
\end{tikzpicture}
\caption{Centrographic projection.}
\label{fig:centrographic}
\end{figure}
\index{centrographic projection}
First we make some remarks regarding the centrographic projection map $P$ (cf.\ Figure~\ref{fig:centrographic}).
It is a bijective map from the plane $\R^{m-1}$ onto the upper unit half-sphere $S^{m-1}_+$.
We now compute the density of the image measure of the Lebesgue measure on $\R^{m-1}$ with respect to the standard measure on the sphere.
At a point given by angle $\phi$, there are three effects to take care of:
\begin{itemize}
\item the radius is reduced by a factor of $\sin\phi$ since $r=\cos\phi$ and $R=\frac{\cos \phi}{\sin \phi}$, which gives a factor of $\sin^{2-m}\phi$ in the density function (there are $m-2$ additional dimensions hidden in the figure).
\item The inclination of the ray (the inclined line in the figure) with respect to the plane is $\phi$, while the sphere is orthogonal to the ray, which gives an additional factor of $\sin\inv\phi$.
\item The length of the ray is $1/\sin\phi$, while the distance from the origin to the sphere is $1$, which gives one more factor of $\sin\inv\phi$.
\end{itemize}
So the density function, i.e.\ $(\det\nabla P)\inv$, is $\sin^{-m}\phi$.

Returning to the convolution operator, the map
\[
F:(x,y,t) \mapsto (v/||v||, (x,y,t) \cdot v/||v||)
\]
is bijective between $H_n$ and $S^{2n}_+ \times \R$.
Its first component is the composition of a restriction to a hyperplane of codimension $1$, a dilation by $1/2$ and the centrographic projection $P$ with $m=2n+1$.
The second component contains all the dependence on $t$.
Therefore,
\[
\det \nabla F
=
\left(\pdif{}{t} ((x,y,t) \cdot v/||v||) \right) (2^{-2n}) \det \nabla P
=
(||v||\inv) (2^{-2n}) (\sin^{2n+1}\phi).
\]
Since in our case $\sin\phi = ||v||\inv$, we have
\[
\det \nabla F
=
2^{-2n} ||v||^{-2n-2}.
\]
This readily implies
\[
\int_{H_n} (f * \mu)^{2n + 2}
=
2^{2n}
\int_{S^{2n}_+} \int_\R \left( \Radon f(\sigma, s) \right)^{2n+2} \dif \sigma \dif s.
\]
By Theorem~\ref{th:radon-lp-lqr-estimate}, the latter integral may be estimated by
\[
C ||f||_{(m+1)/m}.
\]


\chapter{Rearrangement inequalities}
\label{chap:rearrangement}
Certain estimates are particularly easy to prove for symmetric objects like radially symmetric functions.
This happens if the problem at hand reduces to the one-dimensional case, where tools like the Hardy inequalities are available.
The operation known as the Steiner symmetrization then serves to amplify the results thus obtained to more general functions.

\index{Steiner symmetrization}
For a measurable set $T \subset \R^n$ and $u \in S^{n-1}$, the \emph{Steiner symmetrization} of $T$ with respect to the direction $u$ is the set $T^{*u}$ with the property that for every $v \perp u$, $T^{*u} \cap (\lin (u) + v)$ is a segment centered at $v$ such that its $1$-dimensional Lebesgue measure is the same as that of $T \cap (\lin (u) + v)$ (or $0$ if the latter set is not measurable).

By Fubini's Theorem the Steiner symmetrization preserves measure.
The central idea is that we can, up to an arbitrarily small error, transform any set into a ball with the same measure by repeated applications of the Steiner symmetrization.
\begin{proposition}[Symmetrization principle]
\index{Steiner symmetrization!symmetrization principle}
\label{prop:steiner-approx}
Let $\{T_i\}_{i \in I}$ be a countable collection of bounded subsets of $\R^n$ and $\rho_i$ be the radius of a ball with Lebesgue measure $|T_i|$.
Then there exists a sequence of directions $(u_j)_{j}$ such that $T_i^{*u_1 \dots *u_n} \subset B(0,\rho_i+o_i(1))$ and $\Leb(S_i^{*u_1 \dots *u_n} \Delta B(0,\rho_i)) \to 0$ for all $i \in I$.
\end{proposition}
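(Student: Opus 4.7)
My plan is to use the second moment about the origin, $J(T) := \int_T |x|^2 \, dx$, as a monotone monitor measuring how far $T$ is from the ball $B_i := B(0,\rho_i)$ of the same volume. First I would observe that Steiner symmetrization with respect to a hyperplane through the origin preserves containment in any ball $B(0,R_0)$ centred at the origin---each chord parallel to $u$ is replaced by the centred chord of the same length, which still lies in $B(0,R_0)$---so, enlarging $R_0$ once at the start to contain every $T_i$ simultaneously, every iterate $T_i^{*u_1 \cdots *u_n}$ remains in $B(0,R_0)$. Fubini on lines parallel to $u$ together with the one-variable inequality $\int_E t^2 \, dt \ge \int_{E^\ast} t^2 \, dt$, where $E^\ast$ is the centred segment of the same length as $E$, yields $J(T^{*u}) \le J(T)$. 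Applying the rearrangement inequality to $|x|^2$ moreover gives $J(T) \ge J(B_{|T|}) =: m_i$, with equality precisely when $T = B_{|T|}$ modulo null sets.

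The second ingredient is a quantitative stability estimate: since
\[
J(T) - m_i = \int_{T \setminus B_i}(|x|^2 - \rho_i^2) \, dx + \int_{B_i \setminus T}(\rho_i^2 - |x|^2) \, dx,
\]
with both integrands non-negative, a Chebyshev-type bound gives
\[
|T \setminus B(0,\rho_i + \delta)| + |B(0,\rho_i - \delta) \setminus T| \le C(\rho_i)(J(T) - m_i)/\delta
\]
for each $\delta > 0$. Optimizing $\delta \sim \sqrt{J(T) - m_i}$ yields $\Leb(T \Delta B_i) \le C(\rho_i)\sqrt{J(T) - m_i}$, and after fixing the canonical representative of a Steiner-symmetrized set the corresponding null-difference inclusion $T \subset B(0,\rho_i + C\sqrt{J(T) - m_i})$.

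It therefore suffices to construct directions so that $J(T_i^{*u_1 \cdots *u_n}) \to m_i$ for every $i$. For a single set, let $J^\ast := \inf\{J(T^{*u_1 \cdots *u_N}) : N \ge 0,\, u_j \in S^{n-1}\}$. By compactness of closed subsets of $\bar B(0,R_0)$ in the Hausdorff distance, a minimizing sequence has a limit point $T_\ast$ with $J(T_\ast) = J^\ast$; if $J^\ast > m_i$, strict monotonicity of $J$ furnishes a direction $u$ with $J(T_\ast^{*u}) < J^\ast$, and continuity of a single Steiner symmetrization on a suitable class of representatives then produces a reachable set violating the defining property of $J^\ast$. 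Hence $J^\ast = m_i$, and for any prescribed $\epsilon$ finitely many directions bring $J$ within $\epsilon$ of $m_i$.

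For the countable family I would dovetail: fix a surjection $\iota : \N \to I$ hitting every index infinitely often, and inductively append to the direction sequence a finite block, chosen via the single-set argument applied to the current state of $T_{\iota(k)}$, that reduces its moment gap below $2^{-k}$. The decisive point is that the monotonicity $J((\cdot)^{*u}) \le J(\cdot)$ holds in every direction, so appending further symmetrizations on behalf of other sets never spoils estimates already obtained for any other $T_j$. Thus for every $i$ the full sequence produces $J(T_i^{*u_1 \cdots *u_n}) \to m_i$, and both conclusions of the proposition follow from the stability estimate. The main obstacle is the continuity/compactness step in the single-set convergence; this is where the paper's claim of improvement over Federer comes in, and a fully direct alternative would establish a uniform lower bound on the per-step decrease $J(T) - \inf_u J(T^{*u})$ in terms of $J(T) - m_i$ for $T \subset B(0,R_0)$, bypassing any passage to Hausdorff limits.
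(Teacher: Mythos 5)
Your approach via the second moment $J(T)=\int_T|x|^2\,\dif x$ is a genuinely different idea, but it has a gap that goes beyond the compactness issue you flag at the end: it controls only the symmetric difference, not the containment, and the containment is the stronger of the two conclusions (the paper remarks explicitly that the first conclusion implies the second). Your stability bound $\Leb(T\Delta B_i)\le C\sqrt{J(T)-m_i}$ is fine, but the further claim that a ``canonical representative'' of a Steiner-symmetrized set then satisfies $T\subset B(0,\rho_i+C\sqrt{J(T)-m_i})$ does not follow. After symmetrizing in $n$ orthogonal directions, $T$ has the property that $y\in T$ and $|x_j|\le|y_j|$ for all $j$ imply $x\in T$; such a set can still carry an arbitrarily thin ``tendril'' along a coordinate axis reaching far beyond $B(0,\rho_i)$, with measure and moment contribution both as small as you like. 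So $J(T)-m_i$ small, or even $\Leb(T\Delta B_i)$ small, is compatible with $T\not\subset B(0,\rho_i+\delta)$ for any prescribed $\delta$. The monitor functional simply does not see the containment radius.

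Separately, the claim $J^\ast=m_i$ via a Hausdorff limit point of a minimizing sequence is precisely the Federer-type passage the chapter is written to avoid: Hausdorff convergence of sets of fixed measure does not identify the limit's Lebesgue measure, and $J$ is not continuous along such a sequence, so the limit cannot be identified as $B(0,\rho_i)$ this way. The paper instead works directly with the quantity you need: it lets $R$ be the infimum of radii of origin-centred balls containing some iterate $T^{*u_1\cdots*u_m}$, assumes $R>\rho$, uses conservation of measure to find a point $y_0\in S(0,R)$ whose $\epsilon$-ball misses $T$ (otherwise $T\supset B(0,(R+\rho)/2)$, impossible), and then chooses finitely many directions that propagate this separation around all of $S(0,R)$, strictly shrinking the containing radius and contradicting minimality of $R$. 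That argument tracks the containment radius itself rather than an integral proxy, which is why it yields the stronger conclusion.
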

We present here a quantitative version of the standard proof.
\begin{proof}
The first conclusion clearly implies the second.
Moreover, since the Steiner symmetrization of a set bounded by a constant is bounded by the same constant, the result follows from the special case $|I|=1$ by a diagonal argument.
From now on we omit the index $i$ and define
\[
R := \inf \{ s : T^{*u_1 \dots *u_n} \subset B(0,s) \text{ for some } u_1, \dots, u_n \}.
\]
It suffices to show that $R=\rho$.
Assume for a contradiction that $R>\rho$.

Let $\delta > 0$.
By definition of $R$ we may assume that $T$ is contained in $B(0,R+\delta)$.
Let $e_1, \dots, e_n$ be the canonical frame.
Replacing $T$ by $T^{*e_1 \dots *e_n}$ we may also assume that if $y \in T$ and $x \in \R^{n}$ satisfy $|x_j| \leq |y_j|$ for all $j=1,\dots,n$, then $x \in T$.
This new set is still contained in $B(0, R+\delta)$.

Let $\epsilon$ be such that
\[
(R+\rho)^2/4 + (R+\rho) n \epsilon + n \epsilon^2 \leq R^2.
\]
Assume that no point of the sphere $S(0,R)$ is $\epsilon$-separated from $T$.
By the choice of $\epsilon$, for every $x \in B(0,(R+\rho)/2)$ there exists a $y \in S(0,R)$ with $|x_j| + \epsilon \leq |y_j|$ for all $j$, since
\[
\sum_j (|x_j|+\epsilon)^2 \leq (R+\rho)^2/4 + (R+\rho) n \epsilon + n \epsilon^2 \leq R^2.
\]
On the other hand, the intersection of the cube with side length $2\epsilon$ centered at $y$ with the set $T$ is non-empty by the assumption.
Therefore $x \in T$, so that $T \supset B(0,(R+\rho)/2)$, which contradicts the fact that the Steiner symmetrization preserves measure.

Therefore there exists a point $y_0 \in S(0,R)$ such that $B(y_0, \epsilon) \cap T = \emptyset$.
By compactness of $S(0,R)$, there exist $y_1, \dots, y_m$ different from $y_0$ such that the balls $B(y_j,\epsilon/2)$, $j=0,\dots,m$ cover $S(0,R)$.
Observe that $m$ may be chosen independently of $\delta$.

Define $u_j:=\frac{y_0-y_j}{|y_0-y_j|}$ for $j=1,\dots,m$.
We claim that, provided that $\delta$ is suitably small, $T^{*u_1 \dots *u_m}$ is separated from $S(0,R)$ by some positive distance.
But then $T^{*u_1 \dots *u_m} \subset B(0,R')$ for some $R' < R$, contradicting the minimality of $R$.

If we could choose $\delta=0$, the claim would follow by the argument in \cite[2.10.31]{MR0257325}.
Here we need a more quantitative version.
We look at the decay of the radius of balls separating $S(0,R)$ from $T$ as Steiner symmetrization is applied in terms of their initial radius $r$, $R$ and the error $\delta$.
Assume $p \in S(0,R)$, $B(p,r) \cap T = \emptyset$, $u \in S^{n-1}$ and write $\phi$ for the angle between the lines spanned by $p$ and $u$.

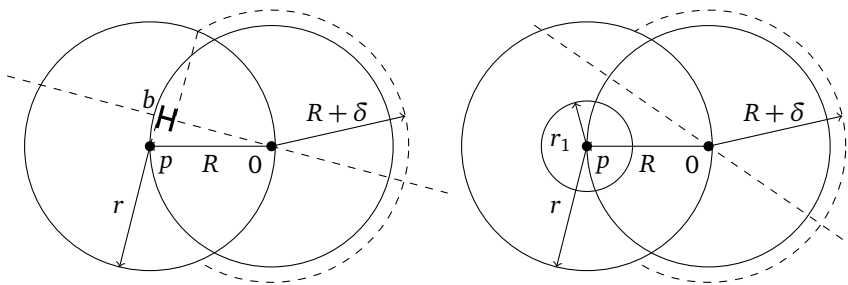
\begin{figure}
\begin{center}
\begin{tikzpicture}[scale=0.8]
\coordinate (C) at (0,0);
\coordinate (P) at (-2,0);
\coordinate (H) at (-2.5,-2);
\coordinate (Rd) at (2.2,0.5);
\draw[fill] (P) circle (0.075) node[anchor=north west] {$p$};
\draw[fill] (C) circle (0.075) node[anchor=north east] {$0$};

\node (R) [draw, circle through=(P)] at (C) {};
\node (D) [circle through=(Rd)] at (C) {};
\node (S) [draw, circle through=(H)] at (P) {};
\coordinate (I1) at (intersection 1 of S and D);
\coordinate (I2) at (intersection 2 of S and D);
\begin{scope}
\clip (I2) -- (-3,3) -- (3,3) -- (3,-3) -- (-3,-3) -- (I1) -- cycle;
\node (D') [draw, dashed, circle through=(Rd)] at (C) {};
\end{scope}

\coordinate (A1) at (165:4.5);
\coordinate (A2) at (-15:3);
\draw[dashed] (A1) -- (A2);

\draw[dashed] (I2) -- ($(A1)!(I2)!(A2)$);
\draw[dashed] (P) -- ($(A1)!(P)!(A2)$);
\draw[|-|,very thick] ($(A1)!(P)!(A2)$) -- node[anchor=south east] {$b$} ($(A1)!(I2)!(A2)$);

\draw[->] (C) -- node[below] {$R$} (P);
\draw[->] (P) -- node[left] {$r$} (H);
\draw[->] (C) -- node[above] {$R+\delta$} (Rd);
\end{tikzpicture}
\begin{tikzpicture}[scale=0.8]
\coordinate (C) at (0,0);
\coordinate (P) at (-2,0);
\coordinate (H) at (-2.5,-2);
\coordinate (Rd) at (2.2,0.5);
\draw[fill] (P) circle (0.075) node[anchor=north west] {$p$};
\draw[fill] (C) circle (0.075) node[anchor=north east] {$0$};

\node (R) [draw, circle through=(P)] at (C) {};
\node (D) [circle through=(Rd)] at (C) {};
\node (S) [draw, circle through=(H)] at (P) {};
\coordinate (I1) at (intersection 1 of S and D);
\coordinate (I2) at (intersection 2 of S and D);
\begin{scope}
\clip (I2) -- (-3,3) -- (3,3) -- (3,-3) -- (-3,-3) -- (I1) -- cycle;
\node (D') [draw, dashed, circle through=(Rd)] at (C) {};
\end{scope}

\coordinate (A1) at (145:3.5);
\coordinate (A2) at (-35:3);
\draw[dashed] (A1) -- (A2);

\draw (P) circle (0.75);
\draw[->] (P) -- node[anchor=north east] {$r_1$} +(105:0.75);

\draw[->] (C) -- node[below] {$R$} (P);
\draw[->] (P) -- node[left] {$r$} (H);
\draw[->] (C) -- node[above] {$R+\delta$} (Rd);
\end{tikzpicture}
\end{center}
\caption{Separation of $p$ from $T^{*u}$ for $\phi \approx \frac\pi2$ (left) and $\phi$ small (right)}
\label{fig:st-sym-rad-decay}
\end{figure}
Then on the one hand, if $\phi$ is almost $\pi/2$ (say, $\cos \phi \leq \frac{r}{32 R}$, $\sin\phi \geq \frac12$), then the projection of $p$ onto $u^\perp$ is separated from the projection of $T$ at least by
\begin{align*}
b &=\frac{r^2 - 2 R \delta - \delta^2}{2 R}\sin\phi - \frac{1}{2 R} \cos\phi \sqrt{(r^2 - \delta)(4R^2 + 4R\delta + \delta^2 - r^2)}\\
&\geq \frac{r^2}{4R} \sin\phi - 2r \cos\phi \geq \frac{r^2}{8 R},
\end{align*}
assuming $\delta$ is small enough, so that $p$ is separated from $T^{*u}$ by at least the same distance.

On the other hand, as long as $\phi$ is bounded away from $\pi/2$, $B(p,r)$ contains a ball of radius bounded from below by some $r_1$ (independent of $\phi$) which does not intersect $u^\perp$.
If the thickness of the rind $B(0,R+\delta) \setminus B(0,R)$ does not exceed, say, $r_1/8$, then $B(p,r_1/4) \cap T^{*u} = \emptyset$.

These bounds are uniform in $p$ and $u$, so that for a given $r>0$ there is a $\delta>0$ and $r_2>0$ such that if $B(p,r) \cap T = \emptyset$, then $B(p,r_2) \cap T^{*u} = \emptyset$.

This may be reiterated a finite number of times, since smaller $\delta$ do not worsen the estimate for $r_2$.
Furthermore, if $p_m$ is the mirror image of $p$ with respect to the hyperplane $u^\perp$, then also $B(p_m,r_2) \cap T^{*u} = \emptyset$.
Because each point of $S(0,R)$ arises as a mirror image of some point in $S(0,R) \cap B(y_0,\epsilon/2)$ and the latter set is separated from $T$, we obtain that $S(0,R)$ is separated from $T^{*u_1 \dots *u_m}$.
\end{proof}

The first application of the symmetrization principle is the following proof of the \emph{Brunn-Minkowski inequality} found in \cite[p.~361]{MR1898210}.
\begin{corollary}
\index{Brunn-Minkowski inequality}
For any non-empty Lebesgue measurable sets $T_1, T_2 \subset \R^n$, we have that
\begin{equation}
\Leb (T_1 + T_2) ^{1/n} \geq \Leb (T_1)^{1/n} + \Leb (T_2)^{1/n}.
\label{ineq:brunn-minkowski}
\end{equation}
\end{corollary}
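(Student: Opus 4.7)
My plan is to deduce the Brunn--Minkowski inequality from the symmetrization principle (Proposition~\ref{prop:steiner-approx}) via the classical three-step reduction: the one-dimensional case, the fact that Steiner symmetrization does not increase the measure of a Minkowski sum, and a limiting argument to balls.

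First I reduce to bounded (say compact) $T_1,T_2$ by writing $T_i$ as an increasing union of $T_i\cap B(0,R)$ and noting that (\ref{ineq:brunn-minkowski}) for the truncations implies it for the originals via monotone convergence and the inclusion $(T_1\cap B(0,R))+(T_2\cap B(0,R))\subseteq T_1+T_2$. The 1D case $\Leb[1](A_1+A_2)\ge\Leb[1](A_1)+\Leb[1](A_2)$ follows by inner regularity together with translating compact approximants so that $A_1\subseteq(-\infty,0]$ and $A_2\subseteq[0,\infty)$, which makes $A_1\cup A_2\subseteq A_1+A_2$ an essentially disjoint union. Next I show
\[
\Leb(T_1^{*u}+T_2^{*u})\le\Leb(T_1+T_2),\qquad u\in S^{n-1}.
\]
By Fubini on fibers parallel to $\lin(u)$, and writing $\ell_i(v)$ for the length of the fiber of $T_i$ over $v\in u^\perp$, the fiber of $T_1^{*u}+T_2^{*u}$ at $v$ is the centered interval of length $\sup_{v_1+v_2=v}(\ell_1(v_1)+\ell_2(v_2))$, while the corresponding fiber of $T_1+T_2$ contains, for each decomposition $v=v_1+v_2$, the one-dimensional Minkowski sum of the fibers of $T_1$ and $T_2$, of length at least $\ell_1(v_1)+\ell_2(v_2)$ by the 1D case. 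Applying Proposition~\ref{prop:steiner-approx} to the pair $\{T_1,T_2\}$ produces directions so that the iterated symmetrizations $T_i^{(k)}:=T_i^{*u_1\dots*u_k}$ satisfy $T_i^{(k)}\subset B(0,\rho_i+\epsilon_k)$ with $\epsilon_k\to 0$, where $\rho_i$ denotes the radius of a ball of Lebesgue measure $\Leb(T_i)$. Iterating the monotonicity gives $\Leb(T_1+T_2)\ge\Leb(T_1^{(k)}+T_2^{(k)})$ for every $k$, so it remains to show $\liminf_k\Leb(T_1^{(k)}+T_2^{(k)})\ge\Leb(B(0,\rho_1+\rho_2))$.

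The main obstacle lies in this last step, because the symmetrization principle only provides convergence in symmetric difference together with outer containment in a slightly enlarged ball, and Minkowski sums are not continuous under symmetric difference. I plan to handle it by a direct measure count. Fix $\eta>0$ and $x\in\R^n$ with $|x|\le\rho_1+\rho_2-\eta$. If $x\notin T_1^{(k)}+T_2^{(k)}$, then $T_1^{(k)}$ and $x-T_2^{(k)}$ are disjoint, whence
\[
\Leb(T_1)+\Leb(T_2)\le\Leb\bigl(B(0,\rho_1+\epsilon_k)\cup B(x,\rho_2+\epsilon_k)\bigr),
\]
and inclusion--exclusion gives
\[
\Leb\bigl(B(0,\rho_1+\epsilon_k)\cap B(x,\rho_2+\epsilon_k)\bigr)\le\sum_{i=1,2}\bigl[\Leb(B(0,\rho_i+\epsilon_k))-\Leb(B(0,\rho_i))\bigr]=O(\epsilon_k).
\]
However, for $|x|\le\rho_1+\rho_2-\eta$ the measure of the lens $B(0,\rho_1)\cap B(x,\rho_2)$ is bounded below by a strictly positive constant depending only on $\eta$, $\rho_1$, $\rho_2$, $n$, by elementary continuity of the volume function, and this bound is uniform in $x$. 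Thus for $k$ sufficiently large (uniformly on $\{|x|\le\rho_1+\rho_2-\eta\}$) the displayed inequality fails, so $B(0,\rho_1+\rho_2-\eta)\subseteq T_1^{(k)}+T_2^{(k)}$. Letting first $k\to\infty$ and then $\eta\to 0$ yields $\Leb(T_1+T_2)\ge\Leb(B(0,\rho_1+\rho_2))=(\Leb(T_1)^{1/n}+\Leb(T_2)^{1/n})^n$, which is~(\ref{ineq:brunn-minkowski}).
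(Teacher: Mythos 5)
Your proof is correct, and it follows the same skeleton as the paper's: reduce to bounded sets, establish the one-dimensional case, show that the Minkowski sum cannot shrink under Steiner symmetrization (via Fubini on fibers parallel to $u$ combined with the one-dimensional inequality — the paper phrases this as $(T_1+T_2)^{*u}\supseteq T_1^{*u}+T_2^{*u}$, but the content is identical), and then invoke Proposition~\ref{prop:steiner-approx} to pass to balls. The genuine divergence is in the final limiting step. The paper sets $\chi_{i,m}=\chi_{T_i^{*u_1\dots*u_m}}$, uses Young's inequality to get $\chi_{1,m}*\chi_{2,m}\to\chi_1*\chi_2$ in $L^1$, and bounds $\Leb(T_1^{(m)}+T_2^{(m)})$ from below by $\Leb\{\chi_{1,m}*\chi_{2,m}>0\}$; extracting $\Leb B(0,\rho_1+\rho_2)+o_m(1)$ from the $L^1$ convergence still implicitly needs the observation that $\chi_1*\chi_2$ is bounded below on $B(0,\rho_1+\rho_2-\eta)$. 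You sidestep the convolution machinery entirely: given $x$ well inside $B(0,\rho_1+\rho_2)$, if $x\notin T_1^{(k)}+T_2^{(k)}$ then $T_1^{(k)}$ and $x-T_2^{(k)}$ are disjoint, and inclusion--exclusion inside $B(0,\rho_1+\epsilon_k)\cup B(x,\rho_2+\epsilon_k)$ forces the lens $B(0,\rho_1+\epsilon_k)\cap B(x,\rho_2+\epsilon_k)$ to have measure $O(\epsilon_k)$ — contradicting the uniform lower bound on the lens volume for $|x|\le\rho_1+\rho_2-\eta$. This gives the stronger and cleaner conclusion $B(0,\rho_1+\rho_2-\eta)\subseteq T_1^{(k)}+T_2^{(k)}$ for $k$ large, with no appeal to convolutions or Young's inequality, and it makes explicit the quantitative mechanism that the paper leaves compressed. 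In short: same route, but your terminal argument is more elementary, more explicit about why convergence in symmetric difference suffices despite Minkowski sums being discontinuous under it, and arguably closer to being fully justified than the paper's one-liner.
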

Note that the assertion is clear if both $T_1$ and $T_2$ are balls.
\begin{proof}
If one of the sets $T_j$ has infinite measure, there is nothing to prove.
If both $T_1$ and $T_2$ have finite measure, they may be approximated in $L^1$ from below by bounded sets, so that we may assume that they are bounded.
Furthermore, passing to subsets if needed, the sets may be assumed to be Borel measurable, so that their intersections with affine subspaces are still measurable.

To be able to apply Steiner symmetrization, observe that $(T_1 + T_2)^{*u} \supseteq T_1^{*u} + T_2^{*u}$ for every $u$.
This is essentially a one-dimensional fact.
If $T_j \subset \R$ are non-empty and bounded, then there exist $x_j \in T_j$ such that
\[
\Leb(T_1 \cap (-\infty, x_1)) > (1-\epsilon) \Leb(T_1)
\text{ and }
\Leb(T_2 \cap (x_2, +\infty)) > (1-\epsilon) \Leb(T_2).
\]
It follows immediately that
\begin{align*}
\Leb(T_1 + T_2)
&\geq
\Leb(T_1 \cap (-\infty, x_1) + x_2) + \Leb(x_1 + T_2 \cap (x_2, +\infty))\\
&\geq
(1-\epsilon) (\Leb(T_1) + \Leb(T_2)).
\end{align*}
Since such $x_j$ exist for all $\epsilon$, the observation follows in one dimension.
In several dimensions, $T_j$ may be decomposed into one-dimensional slices parallel to $u$ to obtain the same result.

By Proposition~\ref{prop:steiner-approx} there exists a sequence of directions $u_j$ such that $T_j^{*u_1\dots*u_m} \subset B(0,\rho_j + o(1))$.
Since the Steiner symmetrization preserves measure and by the above,
\begin{align*}
\Leb (T_1 + T_2)
&= \Leb (T_1 + T_2)^{*u_1\dots*u_m}\\
&\geq \Leb (T_1^{*u_1} + T_2^{*u_1})^{*u_2\dots*u_m}
\geq \dots\\
&\geq \Leb (T_1^{*u_1\dots*u_m} + T_2^{*u_1\dots*u_m}).
\end{align*}
Let $\chi_{i,m}$ be the characteristic function of $T_i^{*u_1\dots*u_m}$ and $\chi_i$ be the charactertic function of $B(0,\rho_i)$.
Then the functions $\chi_{i,m}$ are uniformly bounded and converge to $\chi_i$ in $L^1$, so that $\chi_{1,m} * \chi_{2,m} \to \chi_1 * \chi_2$ in $L^1$ by the Young inequality.
But
\begin{align*}
\Leb (T_1^{*u_1\dots*u_m} + T_2^{*u_1\dots*u_m})
&\geq \Leb\{\chi_{1,m} * \chi_{2,m} > 0\}\\
&\geq \Leb\{\chi_1 * \chi_2 > 0\} + o_{m}(1)\\
&= \Leb B(0,\rho_1 + \rho_2) + o_{m}(1).
\end{align*}
Passing to the limit as $m \to \infty$ we obtain
\begin{align*}
\Leb (T_1 + T_2)^{1/n}
&\geq \Leb B(0,\rho_1 + \rho_2)^{1/n}\\
&= \Leb B(0,\rho_1)^{1/n} + \Leb B(0,\rho_2)^{1/n}\\
&= \Leb(T_1)^{1/n} + \Leb(T_2)^{1/n}.
\qedhere
\end{align*}
\end{proof}

The Brunn-Minkowski inequality readily implies a form of concavity for the area of the sections of a convex set.
\begin{proposition}
\label{prop:convex-set-concave-cut-area}
Let $C \subset \R^{n+1}$ be a convex set and $\phi$ be a fixed linear functional on $\R^{n+1}$.
Denote the area of a $\phi$-slice of $C$ by $S(t) = \Leb[n](C\cap \{\phi = t\})$.
Then $S(t)^{1/n}$ is a concave function of $t$ in the interval where it is greater than $0$.
\end{proposition}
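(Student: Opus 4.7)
The plan is to derive concavity of $S^{1/n}$ as a direct application of the Brunn-Minkowski inequality just established, exploiting the fact that parallel slices of a convex set interpolate linearly.

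First I would set up coordinates so that the slices can be compared in a common space. Fix a vector $v \in \R^{n+1}$ with $\phi(v) = 1$, and identify each hyperplane $\{\phi = t\}$ with $\ker\phi \cong \R^n$ via the affine isomorphism $x \mapsto tv + x$. Write $C_t := \{x \in \ker\phi : tv + x \in C\} \subset \R^n$, so that $S(t) = \Leb[n](C_t)$.

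The key observation, which uses convexity of $C$, is that for any $t_0, t_1$ in the interval where $S > 0$ and any $\lambda \in [0,1]$,
\[
C_{t_\lambda} \supseteq (1-\lambda) C_{t_0} + \lambda C_{t_1}, \quad \text{where } t_\lambda := (1-\lambda) t_0 + \lambda t_1.
\]
This follows at once: if $x_0 \in C_{t_0}$ and $x_1 \in C_{t_1}$, then $t_0 v + x_0$ and $t_1 v + x_1$ both lie in $C$, and by convexity their $\lambda$-combination $t_\lambda v + ((1-\lambda) x_0 + \lambda x_1)$ lies in $C$ as well, giving $(1-\lambda) x_0 + \lambda x_1 \in C_{t_\lambda}$.

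With this inclusion in hand, I would apply monotonicity of Lebesgue measure followed by the Brunn-Minkowski inequality (\ref{ineq:brunn-minkowski}) to obtain
\[
S(t_\lambda)^{1/n} \geq \Leb[n]\bigl((1-\lambda) C_{t_0} + \lambda C_{t_1}\bigr)^{1/n} \geq (1-\lambda) S(t_0)^{1/n} + \lambda S(t_1)^{1/n},
\]
which is precisely concavity of $S^{1/n}$. There is no real obstacle here; the only minor point worth noting is that one must check the sets $C_{t_j}$ are non-empty (which is guaranteed by $S(t_j) > 0$) so that the Minkowski sum makes sense and Brunn-Minkowski applies in the form stated.
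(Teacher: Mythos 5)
Your proof is correct and follows essentially the same route as the paper's: the convexity of $C$ gives the Minkowski-sum inclusion between slices, and Brunn-Minkowski then yields concavity of $S^{1/n}$. The only cosmetic difference is that you make the identification of each affine slice with a subset of $\ker\phi \cong \R^n$ explicit, whereas the paper leaves this implicit; the substance is identical.
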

\begin{proof}
Assume $S(t_0) > 0$, $S(t_1) > 0$.
By convexity of $C$, for every $0 < \theta < 1$,
\[
C \cap \{ \phi = t_\theta \}
\supseteq
(1 - \theta) (C \cap \{ \phi = t_0 \})
+
\theta (C \cap \{ \phi = t_1 \}),
\]
where $t_\theta = (1-\theta) t_0 + \theta t_1$.
Applying (\ref{ineq:brunn-minkowski}), one sees that
\begin{align*}
S(t_\theta)^{1/n}
&\geq
\Leb[n]((1-\theta)(C\cap \{\phi = t_0\}))^{1/n}
+
\Leb[n](\theta(C\cap \{\phi = t_1\}))^{1/n}
\\ &=
(1-\theta) S(t_0)^{1/n}
+
\theta S(t_1)^{1/n}.
\qedhere
\end{align*}
\end{proof}

\begin{corollary}
\label{cor:balanced-decreasing}
If $C \subset \R^{n+1}$ is convex and balanced in the sense that $C=-C$, then $S(t)$ is monotonously decreasing for $t \geq 0$.
\end{corollary}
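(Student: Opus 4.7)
The plan is to combine the evenness of $S$, which is immediate from $C=-C$, with the concavity statement from Proposition~\ref{prop:convex-set-concave-cut-area}. Since $C$ is balanced we have $C\cap\{\phi=-t\} = -(C\cap\{\phi=t\})$, and the Lebesgue measure is invariant under the map $x\mapsto -x$, so $S(-t)=S(t)$ for every $t$. In particular, $S(t)>0$ for $t\geq 0$ if and only if $S(-t)>0$.

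To show monotonicity on $[0,\infty)$, fix $0\leq t_{0}\leq t_{1}$. If $S(t_{1})=0$ the inequality $S(t_{0})\geq S(t_{1})$ is trivial, so assume $S(t_{1})>0$. Then also $S(-t_{1})=S(t_{1})>0$, and the interval where $S^{1/n}$ is positive contains the symmetric interval $[-t_{1},t_{1}]$, in particular $t_{0}$. Write $t_{0}$ as a convex combination $t_{0}=\theta t_{1}+(1-\theta)(-t_{1})$ with $\theta=(1+t_{0}/t_{1})/2\in[1/2,1]$, and apply the concavity of $S^{1/n}$ from Proposition~\ref{prop:convex-set-concave-cut-area}:
\[
S(t_{0})^{1/n} \geq \theta S(t_{1})^{1/n} + (1-\theta) S(-t_{1})^{1/n} = S(t_{1})^{1/n},
\]
where the last equality uses $S(-t_{1})=S(t_{1})$. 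Raising to the $n$-th power yields $S(t_{0})\geq S(t_{1})$, which is the claim.

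There is essentially no obstacle here, since everything is reduced to the one-variable fact that an even concave function on $\R$ (more precisely, on the symmetric interval where it is positive) is non-increasing on the positive axis. The only subtlety worth writing out explicitly is ensuring that the concavity inequality is legitimately applied, i.e.\ that the endpoints $\pm t_{1}$ lie inside the interval of positivity of $S^{1/n}$; this is handled by the case split on whether $S(t_{1})$ vanishes.
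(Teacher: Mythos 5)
Your proof is correct and follows exactly the paper's one-line argument (evenness of $S$ from $C=-C$, plus concavity of $S^{1/n}$ from Proposition~\ref{prop:convex-set-concave-cut-area}); you simply spell out the elementary one-variable step that an even concave function is non-increasing on the positive half-line, which the paper leaves implicit.
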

\begin{proof}
$S(t) = S(-t)$ and $S^{1/n}$ is concave by Proposition~\ref{prop:convex-set-concave-cut-area}.
\end{proof}

Let us see how this result may be used to obtain a rearrangement inequality for characteristic functions of sets.
\begin{proposition}
\label{prop:rearr-char}
Let $l_j$ and $f_j$, $j=1, \dots, n$ be some linear functions on $\R^m$ and characteristic functions of intervals $(b_j - c_j, b_j + c_j) \subset \R$, respectively, and $K$ be a characteristic function of a balanced convex set in $\R^m$.
Then
\[
\int_{\R^m} \Prod_{j=1}^n f_j(l_j(z)) K(z) \dif z \leq \int_{\R^m} \Prod_{j=1}^n f_j^{**}(l_j(z)) K(z) \dif z,
\]
where $f_j^{**}$ denotes the non-increasing radial rearrangement of $f_j$.
\end{proposition}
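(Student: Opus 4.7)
The plan is to reorganize the integral as a function of the translation vector $b = (b_1, \dots, b_n)$ and show that it attains its maximum at $b = 0$, applying the Brunn-Minkowski inequality~(\ref{ineq:brunn-minkowski}) twice. Packaging the $l_j$ into a linear map $L : \R^m \to \R^n$, $L(z) = (l_1(z), \dots, l_n(z))$, and writing $B = \prod_{j=1}^n (-c_j, c_j)$ for the centered box in $\R^n$ and $C$ for the balanced convex set carrying $K$, the integral equals
\[
F(b) := \int_{\R^m} \chi_C(z) \chi_B(L(z) - b) \dif z,
\]
and the right-hand side of the claimed inequality is exactly $F(0)$. So the proposition is equivalent to the estimate $F(b) \leq F(0)$ for every $b \in \R^n$.

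Assuming first that $L$ has full rank $n$ (the degenerate case will follow by perturbing $L$ to a sequence of full-rank maps and using continuity), a change of variables lets me write $F(b) = \int_{\R^n} \chi_B(y - b) \Phi(y) \dif y$, where the marginal $\Phi(y)$ is proportional to $\Leb[m-n](L^{-1}(y) \cap C)$. Since $C$ is balanced and $L$ is linear, $\Phi$ is an even function. Applying~(\ref{ineq:brunn-minkowski}) to parallel slices of $C$ shows that $\Phi^{1/(m-n)}$ is concave on its support (Brunn's theorem, proved exactly as in Proposition~\ref{prop:convex-set-concave-cut-area} but with $L^{-1}(y)$-fibers in place of hyperplanes), so each super-level set $K_t := \{y : \Phi(y) \geq t\}$ is balanced and convex. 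A layer-cake decomposition then rewrites
\[
F(b) = \int_0^\infty \Leb[n](B \cap (K_t + b)) \dif t,
\]
which reduces the proposition to showing that, for every balanced convex $K \subset \R^n$, the function $b \mapsto \Leb[n](B \cap (K + b))$ attains its maximum at $b = 0$.

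For this last step I would invoke~(\ref{ineq:brunn-minkowski}) a second time. The inclusion $B \cap (\bar b + K) \supseteq (1-\theta)(B \cap (b_0 + K)) + \theta(B \cap (b_1 + K))$, with $\bar b = (1-\theta) b_0 + \theta b_1$, is immediate from the convexity of $B$ and $K$, so~(\ref{ineq:brunn-minkowski}) makes $g(b) := \Leb[n](B \cap (K+b))^{1/n}$ concave on its support, while evenness ($B = -B$, $K = -K$) gives $g(-b) = g(b)$. Concavity combined with evenness forces $g(0) \geq \tfrac{1}{2}(g(b) + g(-b)) = g(b)$, so the maximum lies at the origin. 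The main obstacle I anticipate is setting up the marginal $\Phi$ and Brunn's theorem cleanly, especially the degenerate-rank case; once those foundational pieces are in place, the rest of the argument is just two applications of Brunn-Minkowski glued together by a layer-cake formula.
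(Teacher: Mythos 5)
Your proposal is correct, but it takes a noticeably different and heavier route than the paper. The paper's proof embeds everything into a single balanced convex body in $\R^{m+1}$: it introduces a scalar interpolation parameter $t$ that slides the intervals from $(b_j-c_j,b_j+c_j)$ (at $t=0$) to $(-c_j,c_j)$ (at $t=1$), observes that the quantity $I(t)$ in question is the Lebesgue measure of the slice $C\cap\{z_{m+1}=1-t\}$ of the balanced convex set
\[
C=\{z\in\R^{m+1}: -c_j\leq l_j(z)-b_j z_{m+1}\leq c_j,\ (z_1,\dots,z_m)\in K\},
\]
and concludes $I(0)\leq I(1)$ by a single appeal to Corollary~\ref{cor:balanced-decreasing} (i.e.\ one use of Brunn's theorem). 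Your argument instead disintegrates the integral over the fibers $L^{-1}(y)$, invokes a higher-codimension version of Brunn's theorem (which you would have to prove — Proposition~\ref{prop:convex-set-concave-cut-area} as stated treats only hyperplane slices), layer-cakes the resulting marginal, and then applies Brunn--Minkowski a second time to $b\mapsto\Leb[n](B\cap(K_t+b))^{1/n}$, plus a perturbation argument to cover the case $\operatorname{rank} L<n$ (for which your dominated-convergence argument needs a little care, since if $L$ is degenerate the image of a positive-measure set of $z$ can sit on $\partial B$). Every step is fixable and the conclusion is right, but the paper's device of adjoining the single auxiliary coordinate $z_{m+1}$ to $\R^m$ sidesteps the disintegration, the second Brunn--Minkowski application, and the rank dichotomy entirely. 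The nominally stronger statement you prove (the inequality for \emph{every} translation $b$, not just the given centers) also follows for free from the paper's version, since $b_j$ there is arbitrary.
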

\begin{proof}
Define $f_j(x | t) = f_j(x + b_j t)$.
Then $f_j(\cdot | 0) = f_j$ and $f_j(\cdot | 1) = f_j^{**}$.
It is therefore natural to study
\[
I(t)
= \int_{\R^m} \Prod_{j=1}^n f_j(l_j(z) | t) K(z) \dif z
= \Leb[m] (K \intersection \Intersection_j S_j(t)),
\]
where $S_j(t) = \{f_j(\cdot|t) = 1\} = \{ z | b_j - c_j \leq l_j(z) + b_j t \leq b_j + c_j\}$ and $K$ stands also for the set of which $K$ is the characteristic function.
The set
\[
C = \{ z \in \R^{m+1} | -c_j \leq l_j(z) - b_j z_{m+1} \leq c_j, \, j=1,\dots,n, \, (z_1, \dots, z_m) \in K\}
\]
is given by $n+1$ conditions each of which defines a balanced convex set, so that it is balanced and convex.
Furthermore,
\[
C \cap \{ z_{m+1} = 1-t \} = K \intersection \Intersection_j S_j(t)
\]
regarded as subsets of $\R^m$.
By Corollary~\ref{cor:balanced-decreasing}, $\Leb[m] (C \cap \{ z_{m+1} = 1-t \})$ grows with $t$.
This implies $I(0) \leq I(1)$.
\end{proof}

This result extends to arbitrary functions and yields the rearrangement inequality due to Brascamp, Lieb and Luttinger \cite{MR0346109}.
It contains the Hardy-Littlewood and the Riesz rearrangement inequalities as special cases.
\begin{theorem}
\index{Brascamp-Lieb-Luttinger rearrangement inequality}
\label{thm:steiner-sym-increases-integrals}
Let $f_j$, $j=1, \dots, n$ be positive functions on $\R$, $K$ a Steiner convex function on $\R^m$ and $l_j$ some linear functions.
Then
\[
\int_{\R^m} \Prod_{j=1}^n f_j(l_j(z)) K(z) \dif z \leq \int_{\R^m} \Prod_{j=1}^n f_j^{**}(l_j(z)) K(z) \dif z.
\]
\end{theorem}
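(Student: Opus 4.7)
The plan is to reduce the general inequality to Proposition~\ref{prop:rearr-char} by two successive decompositions. First, I would apply the layer-cake representation
\[
f_j(t) = \int_0^\infty \chi_{\{f_j > s\}}(t) \, ds, \qquad K(z) = \int_0^\infty \chi_{\{K > \tau\}}(z) \, d\tau,
\]
and use Fubini on both sides. Steiner convexity of $K$ forces its super-level sets to be balanced and convex, and the elementary identity $\chi_A^{**} = \chi_{A^*}$ with $A^* = (-|A|/2, |A|/2)$ the centered interval of measure $|A|$ reduces matters to showing that, for arbitrary measurable $A_j \subset \R$ and balanced convex $B \subset \R^m$,
\[
\Leb[m]\{z \in B : l_j(z) \in A_j\ \forall j\} \leq \Leb[m]\{z \in B : l_j(z) \in A_j^*\ \forall j\}.
\]

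Second, I would approximate each $A_j$ from the inside by a finite disjoint union of open intervals. The corresponding symmetric rearrangements then converge monotonically to $A_j^*$, and dominated convergence on both sides lets one pass to the limit. The task is thereby reduced to the case where each $A_j$ is a finite disjoint union of $N_j$ intervals, which I would treat by induction on the total count $N_1 + \cdots + N_n$. The base case, all $N_j = 1$, is exactly Proposition~\ref{prop:rearr-char}, with any translations of the $A_j$ absorbed into the constant term of the (now affine) $l_j$.

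The crux is the inductive step: given some $A_j$ with at least two components, one must exhibit a one-dimensional operation that fuses two of its components into one, preserving $|A_j|$ and strictly decreasing the component count, while not decreasing the multiple integral. Following the template of Proposition~\ref{prop:rearr-char}, I would set up a linear family $A_j(t)$ of sets interpolating at rate $t \in [0,1]$ between the initial configuration and the merged one, form the auxiliary set
\[
C = \{(z, t) \in \R^{m+1} : z \in B,\ l_k(z) \in A_k \text{ for } k \neq j,\ l_j(z) \in A_j(t)\},
\]
and apply Corollary~\ref{cor:balanced-decreasing} to $C$ to deduce that the slice measure $S(t)$ is non-decreasing in $t$.

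The main obstacle I anticipate is verifying that $C$ is itself balanced and convex. Convexity requires that $A_j(t)$ be cut out of $\R \times [0,1]$ by linear inequalities in $(\ell, t)$, which is manageable when the family is constructed by sliding one component toward another at constant speed. Balance is more subtle and will likely require an initial symmetrization of each $A_k$, $k \neq j$ (replacing it by $A_k \cup (-A_k)$, which only enlarges the sets and hence can only enlarge the integral), so that the auxiliary set is centrally symmetric; since any such enlargement is controlled by the already-established monotonicity in $A_k$, it does not damage the final estimate. Once this configuration is arranged the induction drives each $A_j$ to a single symmetric interval, and unwinding the two reductions yields the claim.
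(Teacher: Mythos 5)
The inductive step has a genuine gap. Your auxiliary set
\[
C = \{(z,t) : z \in B,\ l_k(z) \in A_k \text{ for } k\neq j,\ l_j(z) \in A_j(t)\}
\]
is in general neither convex nor balanced, so Corollary~\ref{cor:balanced-decreasing} does not apply to it. Convexity fails because the fixed sets $A_k$ are finite unions of intervals, so $\{z : l_k(z) \in A_k\}$ is a disjoint union of slabs, and $A_j(t)$ itself retains several components while only two are being merged. Balance fails even for a single fixed interval $A_k = (b_k-c_k,\,b_k+c_k)$ with $b_k\neq 0$, since the constraint $-c_k \leq l_k(z)-b_k \leq c_k$ is not invariant under $(z,t)\mapsto -(z,t)$. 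Your proposed repair --- replacing $A_k$ by $A_k\cup(-A_k)$ --- makes matters worse on both counts: it further destroys convexity (two slabs instead of one) and, more seriously, it proves the wrong inequality, because $(A_k\cup(-A_k))^*$ is strictly larger than $A_k^*$, so the chain $\int\chi_{A_k}\cdots \leq \int\chi_{A_k\cup(-A_k)}\cdots \leq \int\chi_{(A_k\cup(-A_k))^*}\cdots$ stops short of the target $\int\chi_{A_k^*}\cdots$. ``Monotonicity in $A_k$'' pushes both sides up, so it cannot be used to come back down.

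The paper sidesteps both obstructions simultaneously by a step you omit: after writing each $f_j = \sum_k f_{j,k}$ as a sum of characteristic functions of \emph{single} intervals, it expands the product $\Prod_j \sum_k f_{j,k}$ into a sum over index tuples $(k_1,\dots,k_n)$ of products $\Prod_j f_{j,k_j}$, and then applies Proposition~\ref{prop:rearr-char} to each summand separately, sliding \emph{every} interval --- including those that in your scheme stay fixed --- toward the origin at its own rate $b_{j,k_j}$. In each summand the auxiliary set is convex (one interval per linear form) and balanced (all constraints have the central-symmetric form $-c_{j,k_j} \leq l_j(z) - b_{j,k_j}z_{m+1} \leq c_{j,k_j}$), so Corollary~\ref{cor:balanced-decreasing} yields termwise monotonicity in $t$, hence monotonicity of the sum; one then runs the flow only until the first two intervals touch, merges them, and restarts, which drives down the component count just as you intended but on the correct building block.
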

\begin{proof}
We use the identity
\begin{equation}
\label{eq:decomposition-characteristic}
g = \int_{s=0}^\infty \chi_{\{g > s\}} \dif s
\implies
g^{**} = \int_{s=0}^\infty \chi_{\{g > s\}}^{**} \dif s
\text{ with } g=f_j,K.
\end{equation}
By monotonous approximation it is sufficient to prove the theorem in the case that $K$ is the characteristic function of a balanced convex set and $f_j$ are characteristic functions of some sets $A_j$.
Again by monotonous approximation $A_j$ may be assumed to be bounded, then open, then finite unions of intervals.

Write $f_j = \sum_{k} f_{j,k}$, where $f_{j,k}$ are characteristic functions of $(b_{j,k} - c_{j,k}, b_{j,k} + c_{j,k})$ and $b_{j, k} + c_{j, k} < b_{j, k+1} - c_{j, k+1}$.
The proof of Proposition~\ref{prop:rearr-char} applied to each combination of functions $f_{j,k}(x | t) = f_{j,k}(x + b_{j,k} t)$ individually shows that
\[
I(t) = \int_{\R^m} \Prod_{j=1}^n \sum_k f_{j,k}(l_j(z) | t) K(z) \dif z
\]
is monotonously increasing in $t$.
Since $f_{j,k}(\cdot | 1) = f_{j,k}^{**}$, there exists a smallest $t_0$ such that the supports of some $f_{j,k}(\cdot | t_0)$ and $f_{j,k+1}(\cdot | t_0)$ touch each other (see Figure~\ref{fig:intervals-touch}).
\begin{figure}
\begin{center}
\begin{tikzpicture}[scale=0.9]
\draw[->] (0,0) -- (0,2.5) node[left] {$t$};

\draw[->] (-5.5,0) -- (6,0) node[right] {$z_j$};
\draw[very thick,|-|] (-5,0) -- node[below] {$f_{j,1}$} (-3,0);
\draw[very thick,|-|] (-0.5,0) -- node[below] {$f_{j,2}$} (0.5,0);
\draw[very thick,|-|] (2,0) -- node[below] {$f_{j,3}$} (3,0);
\draw[very thick,|-|] (4,0) -- node[below] {$f_{j,4}$} (5,0);

\draw[dashed] (-4,0) -- (0,2);
\draw[dashed] (2.5,0) -- (0,2);
\draw[dashed] (4.5,0) -- (0,2);

\draw[very thick,|-|] (-3,1) -- (-1,1);
\draw[very thick,|-|] (-0.5,1) -- (0.5,1);
\draw[very thick,|-|] (0.75,1) -- (1.75,1);
\draw[very thick,|-|] (1.75,1) -- (2.75,1) node[right] {time $t_0$};

\draw[very thick,|-|] (-1,2) -- (1,2);
\draw[very thick,|-|] (-0.5,2) -- (0.5,2);
\end{tikzpicture}
\end{center}
\caption{Reduction of the number of intervals}
\label{fig:intervals-touch}
\end{figure}
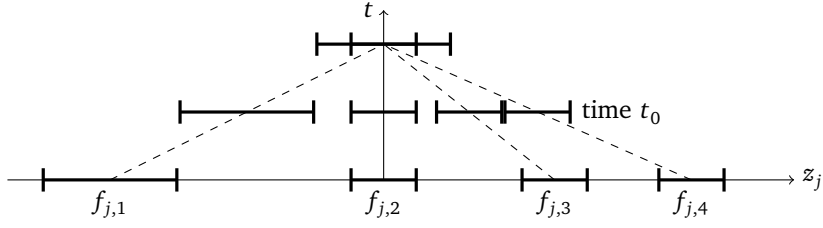
At this point, replace $f_j$ by $\sum_k f_{j,k} (\cdot | t_0)$ and reiterate.
Since the number of the pieces $f_{j,k}$ decreases with each step, at some point one arrives in the setting of Proposition~\ref{prop:rearr-char}.
\end{proof}


\chapter{The Hardy space \texorpdfstring{$H^1$}{H1}}
\label{chap:hardy}
It turns out that $L^1$ can be replaced by the (strictly smaller) Hardy space $H^1$ for purposes of complex interpolation which facilitates proving endpoint estimates.

There exist various equivalent definitions of the Hardy space $H^1$.
We follow here Stein's exposition in \cite{MR1232192} with modifications due to Meda, Sjögren and Vallarino \cite{MR2656522}.
The most convenient characterization for the task of proving the boundedness of an operator \emph{from} the Hardy space is by atomic decomposition.
The latter depends on a parameter $1<q\leq\infty$.
The flavor obtained if $q<\infty$ is particularly adapted to the task of identifying $\BMO$ as the dual space of $H^1$, while $q=\infty$ yields a formally stronger characterization which is useful for interpolation.
Unfortunately, the equivalence of spaces obtained for various $q$ is not evident (although one inclusion is, for every pair of $q$'s).
A detour through a maximal characterization of $H^1$, which is given here as the definition, is needed.

The choice of the maximal function below is fairly arbitrary and we use the one which is tailored to our needs.
Let $b>1$ and $\Phi_1$ be the set of smooth functions with support in $B(0,1)$ and $C^1$-seminorm bounded by a constant.
The corresponding \emph{non-tangential grand maximal function} is defined by
\index{grand maximal function}
\[
\GM f(x) = \GM_{\Phi_1, b} f(x) = \sup_{\phi\in\Phi_1, t>0, |y-x|<bt} |a*\phi_t(y)|,
\]
where $\phi_t(y)=t^{-n} \phi(y/t)$.
\begin{definition}
\index{Hardy space@Hardy space $H^{1}$}
The Hardy space $H^1(\R^n)$ is the space of functions $f$ on $\R^n$ such that
\[
||f||_{H^1} := || \GM f ||_1 < \infty.
\]
\end{definition}
We remark that $\GM f$ is dominated by
\[
\GM_{\Phi} f(x) = \sup_{\phi\in\Phi, t>0} |a*\phi_t(x)|,
\]
where $\Phi$ denotes the set of smooth functions with $C^1$-seminorm bounded by the same constant as before but this time with support in $B(0,1+b)$.
This is evident since $\tau_{y} \phi_t = (\tau_{y/t} \phi)_t$.

\section{Atomic decomposition}
\begin{definition}
Let $1 < q \leq \infty$.
A \emph{$(1,q)$-atom} is a function $a$ on $\R^n$ such that
\begin{enumerate}
\item $\supp a \subset B$ for some ball $B \subset \R^n$,
\item $||a||_q \leq |B|^{-1+1/q}$,
\item $\int_B a = 0$.
\end{enumerate}
\end{definition}

Abusing the notation we sometimes write $\supp a$ for a ball satisfying the above conditions.

\begin{definition}
The space $H^{1,q}_{\text{at}}$ consists of all functions $f \in L^1$ which admit a \emph{$(1,q)$-atomic decomposition}, i.e.
\index{Hardy space@Hardy space $H^{1}$!atomic decomposition}
\begin{equation}
\label{eq:atomic}
f = \sum_j \lambda_j a_j,
\end{equation}
where $a_j$ are $(1,q)$-atoms, $( \lambda_j )_j$ is an absolutely summable sequence and the sum is taken in $L^1$ sense.
The norm on this space is given by $||f||_{H^{1,q}_{\text{at}}} = \inf \sum_j |\lambda_j|$, where the infimum is taken over all decompositions (\ref{eq:atomic}).
\end{definition}

Since every $(1,q)$-atom is also a $(1,\infty)$ atom, we see that $H^{1,\infty}_{\text{at}} \subset H^{1,q}_{\text{at}}$ with contractive inclusion for every $1<q<\infty$.

Next we relate the spaces $H^{1,q}_{\text{at}}$ and $H^1$.
\begin{proposition}
\label{prop:GM-of-atom}
We have that $||\GM_\Phi f||_1 \leq C_q ||f||_{H^{1,q}_{\text{at}}}$ for every $f \in H^{1,q}_{\text{at}}$.
In particular, $||\GM_\Phi a||_1 \leq C_q$ for every $(1,q)$-atom $a$ and there is a continuous inclusion $H^{1,q}_{\text{at}} \subset H^1$.
\end{proposition}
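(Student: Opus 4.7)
The plan is to reduce everything to a uniform estimate for a single atom and then use sublinearity of the grand maximal function. Indeed, the inequality $|(\sum_j \lambda_j a_j) * \phi_t(y)| \leq \sum_j |\lambda_j|\, |a_j * \phi_t(y)|$ yields $\GM_\Phi f \leq \sum_j |\lambda_j| \GM_\Phi a_j$ pointwise, so once we establish $\|\GM_\Phi a\|_1 \leq C_q$ uniformly over $(1,q)$-atoms, integration and passing to the infimum over decompositions give the full statement.

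Fix an atom $a$ supported in a ball $B = B(x_0, r)$ and split $\int_{\R^n} \GM_\Phi a = \int_{B^*} \GM_\Phi a + \int_{\R^n \setminus B^*} \GM_\Phi a$, where $B^* := B(x_0, \kappa r)$ for a constant $\kappa = 2(1+b)$ chosen so that for $x \notin B^*$ and $t>0$ the condition $\supp(\phi_t(x-\cdot)) \cap B \neq \emptyset$ forces $t \gtrsim |x-x_0|$. For the local piece, observe that $\GM_\Phi a(x) \leq C (Ma)(x)$ pointwise, where $M$ is the Hardy-Littlewood maximal function, so by the $L^q$-boundedness of $M$ (Theorem~\ref{th:hardy-littlewood-maximal-inequality}) and Hölder's inequality,
\[
\int_{B^*} \GM_\Phi a \leq |B^*|^{1-1/q} \|\GM_\Phi a\|_q \leq C |B|^{1-1/q} \|a\|_q \leq C_q,
\]
using the size condition $\|a\|_q \leq |B|^{-1+1/q}$.

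For the far piece I exploit the cancellation $\int a = 0$. For any $\phi \in \Phi$, $t>0$ and $x$ with $|x-x_0| > \kappa r$,
\[
a*\phi_t(x) = \int_B a(y)\bigl[\phi_t(x-y) - \phi_t(x-x_0)\bigr]\dif y,
\]
and the uniform $C^1$-bound on $\phi$ gives $|\phi_t(x-y)-\phi_t(x-x_0)| \leq C r / t^{n+1}$ for $y \in B$. Using $\|a\|_1 \leq |B|^{1-1/q}\|a\|_q \leq 1$ we obtain $|a*\phi_t(x)| \leq C r / t^{n+1}$. Since the only relevant $t$ satisfy $t \gtrsim |x-x_0|$ by the choice of $\kappa$, taking the supremum yields $\GM_\Phi a(x) \leq C r |x-x_0|^{-n-1}$, and integration in polar coordinates gives
\[
\int_{\R^n \setminus B^*} \GM_\Phi a \leq C r \int_{\kappa r}^\infty \rho^{-n-1}\rho^{n-1}\dif\rho = C.
\]

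Summing the two contributions gives $\|\GM_\Phi a\|_1 \leq C_q$ uniformly, and the sublinearity argument from the first paragraph completes the proof; the continuous inclusion $H^{1,q}_{\text{at}} \subset H^1$ follows since $\GM f \leq \GM_\Phi f$ as noted in the text. The one delicate point is the far-field estimate, but the $C^1$-control on $\Phi$ together with the vanishing mean makes it routine; the only place where one must be careful is in choosing $\kappa$ large enough (depending on the parameter $b$ in the definition of $\GM$) to ensure that the support of $\phi_t(x-\cdot)$ cannot meet $B$ unless $t$ is comparable to $|x-x_0|$.
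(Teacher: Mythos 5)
Your proof is correct and follows essentially the same strategy as the paper: reduce to a uniform bound on a single atom via sublinearity of $\GM_\Phi$, then split into a local piece (controlled by the Hardy-Littlewood maximal function, Hölder, and the size condition $\|a\|_q \leq |B|^{-1+1/q}$) and a far piece (controlled by the cancellation $\int a = 0$, the $C^1$ bound, and the fact that only $t \gtrsim |x-x_0|$ contribute). The paper splits at radius $2R$ and carries the factor $(|x|-R)/(1+b)$ through, whereas you name the constant $\kappa = 2(1+b)$ up front; this is only a cosmetic difference.
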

\begin{proof}
Let $f \in H^{1,q}_{\text{at}}$, $f = \sum_j \lambda_j a_j$ be an atomic decomposition, $t>0$ and $\phi \in \Phi$.
Then $\phi_t \in L^\infty$, so that
\[
\phi_t * f = \sum_j \lambda_j (\phi_t * a_j) \text{ pointwise everywhere},
\]
because $||a_j||_1 \leq 1$ for every $j$ by the Hölder inequality and the atomic decomposition converges in $L^1$.
Therefore
\begin{align*}
\GM_\Phi f(x)
&=
\sup_{t>0} \sup_{\phi \in \Phi_1} |\phi_t * f(x)|\\
&\leq
\sup_{t>0} \sup_{\phi \in \Phi_1} \sum_j |\lambda_j| |\phi_t * a_j(x)|\\
&\leq
\sum_j |\lambda_j| \sup_{t>0} \sup_{\phi \in \Phi_1} |\phi_t * a_j(x)|\\
&=
\sum_j |\lambda_j| \GM_\Phi a_j(x).
\end{align*}
Taking first the $L^1$ norm and then the infimum over all atomic decompositions we see that $||\GM_\Phi f||_1 \leq ||f||_{H^{1,q}_{\text{at}}} \sup_a ||\GM_\Phi a||_1$, where the supremum is taken over all $(1,q)$-atoms.

We will now give a uniform estimate for $||\GM_\Phi a||_1$.
By translation invariance we can assume that $\supp a \subset B(0,R)$.
We split
\[
||\GM_\Phi a||_1
=
\int_{B(0,2R)} \GM_\Phi a(x) \dif x
+
\int_{\R^n \setminus B(0,2R)} \GM_\Phi a(x) \dif x
=
I_1 + I_2
\]
and estimate the two terms separately.
In the first term we estimate $\GM_\Phi a$ in terms of the Hardy-Littlewood maximal function as
\begin{equation}
\label{eq:GM-M}
\begin{split}
\GM_\Phi a(x)
&\leq
\sup_{\phi \in \Phi, t>0} \int_{B(x,t(1+b))} |a(y) \phi_t(x-y)| \dif y\\
&\leq
C \sup_{t>0} t^{-n} \int_{B(x,t(1+b))} |a(y)| \dif y\\
&\leq
C Ma(x).
\end{split}
\end{equation}
By the Hölder inequality and the Hardy-Littlewood maximal inequality (Theorem~\ref{th:hardy-littlewood-maximal-inequality}),
\[
I_1
\leq
C R^{n/q'} ||\GM_\Phi a||_q
\leq
C R^{n/q'} ||M a||_q
\leq
C R^{n/q'} ||a||_q
\leq
C.
\]
In the second integral we use $x > 2R$ and $\supp a \subset B(0,R)$ to exclude integrals of products of functions with disjoint support from definition of $\GM_\Phi$ to infer
\begin{align*}
\GM_\Phi a(x)
&=
\sup_{\phi\in\Phi, t>0} |a*\phi_t(x)|\\
&=
\sup_{\phi\in\Phi, t>(|x|-R)/(1+b)} \left| \int_{B(0,R)} a(y) \phi_t(x-y) \dif y \right|\\
&=
\sup_{\phi\in\Phi, t>(|x|-R)/(1+b)} t^{-n}
\left| \int_{B(0,R)} a(y) \left(\phi\left(\frac{x-y}{t}\right) - \phi\left(\frac{x}{t}\right) \right) \dif y \right|\\
&\leq
\sup_{\phi\in\Phi, t>(|x|-R)/(1+b)} t^{-n}
\left| \int_{B(0,R)} a(y) \left( \frac{R}{t} \sup |\nabla \phi| \right) \dif y \right|\\
&\leq
C \sup_{t>(|x|-R)/(1+b)} R t^{-n-1}
\int_{B(0,R)} |a(y)| \dif y\\
&\leq
C R ((|x|-R)/(1+b))^{-n-1}
||a(y)||_q |B(0,R)|^{1/q'}\\
&\leq
C R |x|^{-n-1}.
\end{align*}
Inserting this into the integral yields
\[
I_2
\leq
C R \int_{\R^n \setminus B(0,2R)} |x|^{-n-1} \dif x
=
C.
\qedhere
\]
\end{proof}

Hence we have $H^{1,\infty}_{\text{at}} \subset H^{1,q}_{\text{at}} \subset H^1$ for every $1<q<\infty$.
The next proposition shows that these inclusions are in fact equalities.
The required properties of the Whitney decomposition are summarized in Section~\ref{sec:whitney}.

\begin{proposition}
\label{prop:infinite-atomic}
There exists a canonic continuous inclusion $H^1 \subset H^{1,\infty}_{\text{at}}$.

More precisely, there exist constants $c_{s} < c_{s}' < 1$ and $N \in \N$ such that for every $f \in H^{1}$ there exists an atomic decomposition
\[
f = \sum_{m \in \Z,j} b_{j}^{(m)} = \sum_{m,j} \lambda_{j}^{(m)} a_{j}^{(m)}
\]
such that the following holds.
\begin{enumerate}
\item Every $b_{j}^{(m)}$ is a function supported in a ball with center $x_{j}^{(m)} \in O^{(m)} := \{ \GM f > 2^m \}$ and radius $c_{s} d_{j}^{m}$, where $d_{j}^{(m)} = \dist(x_{j}^{(m)}, \complement O^{(m)})$,
\item every $b_{j}^{(m)}$ is bounded by $C 2^{m}$, where $C$ is a universal constant,
\item the functions $a_{j}^{(m)} := \left( \lambda_{j}^{(m)} \right)\inv b_{j}^{(m)}$ are $(1,\infty)$-atoms,
\item the coefficients are given by $\lambda_{j}^{(m)} := C' 2^{m} (d_{j}^{(m)})^{n}$, where $C'$ is a universal constant, and satisfy
\[
\sum_{m,j} \lambda_{j}^{(m)} \leq C ||\GM f||_{1},
\]
where $C$ is a universal constant,
\item \label{atomic:bdd-int}
for a fixed $m$ the balls $B(x_{j}^{(m)}, c_{s}' d_{j}^{m})$ enjoy a bounded intersection property with constant $N$ which does not depend on $f$ and $m$.
\end{enumerate}

Moreover, if $f$ is continuous, then every atom $a_{j}^{(m)}$ is continuous.
If $f\in L^{p}$ for some $1 < p < \infty$, then the decomposition converges in $L^{p}$.
\end{proposition}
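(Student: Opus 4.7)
The plan is to perform a Calderón-Zygmund type splitting of $f$ at every dyadic level $2^m$, organized by the open level sets $O^{(m)} := \{\GM f > 2^m\}$ of the grand maximal function. Since $\GM f \in L^1$ one has $|O^{(m)}| \leq 2^{-m} \|\GM f\|_1$, and the Whitney decomposition of Section~\ref{sec:whitney} supplies centers $x_j^{(m)} \in O^{(m)}$ with distances $d_j^{(m)} = \dist(x_j^{(m)}, \complement O^{(m)})$ such that the balls $B(x_j^{(m)}, c_s d_j^{(m)})$ cover $O^{(m)}$ while the enlargements $B(x_j^{(m)}, c_s' d_j^{(m)})$ retain bounded overlap. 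Subordinate to these enlargements I fix a smooth partition of unity $\{\eta_j^{(m)}\}$ with $\sum_j \eta_j^{(m)} = \chi_{O^{(m)}}$ and $|\nabla \eta_j^{(m)}| \lesssim (d_j^{(m)})\inv$.

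The building blocks are the bad pieces $\beta_j^{(m)} := (f - c_j^{(m)}) \eta_j^{(m)}$, where the constant $c_j^{(m)} := \int f \eta_j^{(m)} / \int \eta_j^{(m)}$ is chosen so that $\int \beta_j^{(m)} = 0$, together with the good remainder $g^{(m)} := f - \sum_j \beta_j^{(m)}$. The main obstacle, and the key analytic step, is the uniform pointwise bound $|\beta_j^{(m)}| \leq C 2^m$. The idea is to rewrite the averaging functional $f \mapsto c_j^{(m)}$ as a convolution $f * \tilde\phi_t$ with $\tilde\phi \in \Phi$ and $t \simeq d_j^{(m)}$, then to pick a reference point $z \in \complement O^{(m)}$ within distance $\simeq d_j^{(m)}$ of $x_j^{(m)}$, for which $|z - x| < bt$ on the support of $\eta_j^{(m)}$ by the choice of the aperture $b$; the definition of $\GM f$ then forces $|c_j^{(m)}| \leq \GM f(z) \leq 2^m$, and the same argument applied to the kernel $(f - c_j^{(m)}) \eta_j^{(m)} / \int \eta_j^{(m)}$ bounds $|\beta_j^{(m)}|$ itself. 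Since on $\complement O^{(m)}$ one has $|f| \leq \GM f \leq 2^m$ a.e., this also yields $|g^{(m)}| \leq C 2^m$ a.e.

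Next, I telescope
\[
f = \sum_{m \in \Z}(g^{(m+1)} - g^{(m)}),
\]
noting that the $m \to \infty$ tail converges because $|O^{(m)}| \to 0$ and $g^{(m)} \to f$ in $L^1_{\mathrm{loc}}$, while the $m \to -\infty$ tail vanishes because $\|g^{(m)}\|_\infty \lesssim 2^m \to 0$. Regrouping $\sum_j \beta_j^{(m+1)} - \sum_j \beta_j^{(m)}$ by Whitney parent at level $m$ produces functions $b_j^{(m)}$ supported in the inner ball $B(x_j^{(m)}, c_s d_j^{(m)})$, bounded by $C 2^m$ and of integral zero; the constituents of level $m+1$ that land inside $B_j^{(m)}$ are absorbed into $b_j^{(m)}$, and the cancellation condition is preserved because $\sum_{j'} \eta_{j'}^{(m+1)} = \chi_{O^{(m+1)}}$ on $B_j^{(m)}$. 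Setting $\lambda_j^{(m)} := C' 2^m (d_j^{(m)})^n$ and $a_j^{(m)} := (\lambda_j^{(m)})\inv b_j^{(m)}$ gives $(1,\infty)$-atoms, and the bounded overlap of the Whitney balls together with the layer-cake formula produces
\[
\sum_{m,j} \lambda_j^{(m)} \lesssim \sum_m 2^m |O^{(m)}| \lesssim \int \GM f.
\]
Continuity of the atoms when $f$ is continuous is immediate from the construction, and $L^p$-convergence for $f \in L^p$, $1 < p < \infty$, follows from $|g^{(m)}| \leq C 2^m \chi_{\complement O^{(m)}} + |f| \chi_{O^{(m)}}$ together with $|O^{(m)}| \to 0$ and dominated convergence.
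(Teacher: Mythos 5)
Your overall scaffolding — the level sets $O^{(m)}$, the Whitney decomposition, the partition of unity, the weighted means $c_j^{(m)}$, the good parts $g^{(m)}$, and the telescope $f = \sum_m (g^{(m+1)} - g^{(m)})$ — matches the paper's proof exactly. The bounds $|c_j^{(m)}| \leq C2^m$ and $|g^{(m)}| \leq C2^m$ a.e.\ are also correctly derived. However, two places are wrong, and one of them is the heart of the argument.

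First, a minor error: you claim $|\beta_j^{(m)}| \leq C 2^m$ ``by the same argument applied to the kernel $(f - c_j^{(m)})\eta_j^{(m)}/\int\eta_j^{(m)}$.'' This is not meaningful. The grand maximal function controls weighted averages $f*\phi_t(y)$, not pointwise values; $\beta_j^{(m)}(x) = (f(x)-c_j^{(m)})\eta_j^{(m)}(x)$ is a pointwise product, not a convolution, and is not bounded by $C2^m$ on the portion of $O^{(m)}$ where $f$ is large (i.e.\ inside $O^{(m+1)}$). You do not actually use this bound later, so by itself it is a stray remark, but it signals a misunderstanding that reappears below.

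Second, and this is the genuine gap: the regrouping of $\sum_j\beta_j^{(m)} - \sum_k\beta_k^{(m+1)}$ into mean-zero pieces $b_j^{(m)}$ supported in $B(x_j^{(m)}, c_s d_j^{(m)})$ with $\|b_j^{(m)}\|_\infty \leq C2^m$ cannot be done by simply ``absorbing the constituents of level $m+1$ that land inside $B_j^{(m)}$.'' There is a real tension between the $L^\infty$ bound and the cancellation. If you assign each whole $\beta_k^{(m+1)}$ to a single level-$m$ bucket, mean-zero is preserved, but the $L^\infty$ bound fails: for $x \in O^{(m+1)}$ one gets a term $f(x)\bigl[\eta_j^{(m)}(x) - \sum_{k\in\mathcal{A}_j}\eta_k^{(m+1)}(x)\bigr]$, and the bracket does not vanish in general (the level-$(m+1)$ pieces assigned to $j$ do not reproduce $\eta_j^{(m)}$ pointwise), so $f$, which is unbounded there, survives. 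If instead you split each $\beta_k^{(m+1)}$ across level-$m$ buckets by multiplying by $\eta_j^{(m)}$ (the only choice that makes the $f$-terms cancel on $O^{(m+1)}$, since $\eta_j^{(m)}\cdot\bigl(1 - \sum_k\eta_k^{(m+1)}\bigr) = 0$ there), then the resulting pieces $(f-c_k^{(m+1)})\eta_j^{(m)}\eta_k^{(m+1)}$ no longer have mean zero, and neither does $b_j^{(m)}$. Your sentence asserting that cancellation holds ``because $\sum_{j'}\eta_{j'}^{(m+1)} = \chi_{O^{(m+1)}}$ on $B_j^{(m)}$'' conflates these two regroupings and does not actually prove either the $L^\infty$ bound or the mean-zero property for whichever one you intend. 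The paper resolves the tension by introducing the auxiliary weighted means $d_{j,k}^{(m)}$ satisfying $\sum_j d_{j,k}^{(m)} = 0$ and redistributing the constant corrections $d_{j,k}^{(m)}\psi_k^{(m+1)}$ across the level-$m$ buckets; this leaves $f$ unchanged, restores $\int b_j^{(m)} = 0$ by construction, keeps $\|b_j^{(m)}\|_\infty \leq C2^m$ because every surviving term is a bounded weighted mean times a test function, and also keeps the support inside $B(x_j^{(m)}, c_s d_j^{(m)})$ via the comparable-radii property (\ref{whitney:comparable-radii}). Without this correction device the step does not close.
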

\begin{proof}
The function $\GM f$ is the supremum of continuous functions and thus lower semicontinuous.
The sets $O^{(m)}$ are therefore open.
For each $O^{(m)}$ let $\{ x_j \}_{j \in J^{(m)}}$ be the corresponding Whitney decomposition given by Proposition~\ref{prop:whitney} for some fixed $c$ and $c'$, so that in particular $O^{(m)} = \union_{j \in J^{(m)}} B(x_j, c' d_j^{(m)})$, where $d_j^{(m)} = \dist(x_j, \complement O^{(m)})$.
The superscript ${}^{(m)}$ will be reserved for objects associated with the decomposition of $O^{(m)}$.

\paragraph*{Partitions of unity}
Let $c' < c'' < 1$, take a smooth cut-off function $\psi$ with values in $[0,1]$ supported on $B(0,c'')$ and identically $1$ on $B(0,c')$.
Denote by $\tilde\psi_j^{(m)}$ the dilate of this function supported on $B_j^{(m)} := B(x_j, c'' d_j^{(m)})$.
Then $|\nabla \tilde\psi_j^{(m)}| \leq C (d_j^{(m)})\inv$.

For fixed $m$, the balls $B_j^{(m)}$ have the bounded intersection property with constant $N'$ by (\ref{whitney:bounded-intersection}).
Therefore, and by covering property (\ref{whitney:covering}),
\[
\Psi^{(m)} := \sum_k \tilde\psi_k^{(m)}
\]
satisfies $1 \leq \Psi^{(m)} \leq N'$ on $O^{(m)}$.
Thus the partition of unity
\[
\psi_j^{(m)} := \frac{\tilde\psi_j^{(m)}}{\Psi^{(m)}}
\]
is well-defined.
Furthermore, if $J_0$ denotes the subset of $J$ for which $\tilde\psi_{k \in J_0}^{(m)}$ do not vanish, then
\begin{equation}
\label{eq:partition-of-unity-C1-estimate}
\begin{split}
|\nabla \psi_j^{(m)}|
&= \frac{1}{\Psi^2} | \Psi \nabla \tilde\psi_j^{(m)} - \tilde\psi_j^{(m)} \sum_{j' \in J_0} \nabla \tilde\psi_{j'}^{(m)} |\\
&\leq C (d_j^{(m)})\inv + C \sum_{j' \in J_0} (d_{j'}^{(m)})\inv\\
&\leq C (d_j^{(m)})\inv.
\end{split}
\end{equation}
The last inequality is justified by the bounded intersection property (\ref{whitney:bounded-intersection}), which ensures $|J_0|\leq N'$, and the comparability of the radii (\ref{whitney:comparable-radii}).

We also need some properties connecting the partitions of unity $(m)$ and $(m+1)$.
Since $\supp \psi_k^{(m+1)} \subset O^{(m+1)} \subset O^{(m)}$ for every $k$, we have the splitting formula
\begin{equation}
\label{eq:partitions-of-unity-expansion}
\psi_k^{(m+1)} = \sum_j \psi_j^{(m)} \psi_k^{(m+1)}.
\end{equation}
The estimate (\ref{eq:partition-of-unity-C1-estimate}) readily implies
\[
|\nabla \psi_j^{(m)} \psi_k^{(m+1)}|
\leq
C (d_j^{(m)})\inv + C (d_k^{(m+1)})\inv.
\]
Since the left-hand side is non-zero only if $B(x_j, c'' d_j^{(m)})$ and $B(x_k, c'' d_k^{(m+1)})$ intersect, (\ref{whitney:comparable-radii}) applies and we conclude that
\begin{equation}
\label{eq:partition-of-unity-product-C1-estimate}
|\nabla \psi_j^{(m)} \psi_k^{(m+1)}|
\leq
C (d_k^{(m+1)})\inv.
\end{equation}
Now we proceed as follows.
In the first step we decompose $f$ dyadically according to its magnitude using the fact that $\sum_j \psi_j^{(m)}$ is the characteristic function of $O^{(m)}$.
In the second step we decompose further into functions with support in balls provided by the Whitney decomposition.
This is helpful because the size condition on an atom becomes less severe as the supporting ball shrinks.
While doing so we have to pay attention to the cancellation property, i.e.\ that every term in the resulting sum must have mean zero.
This is what the weighted means take care of.

\paragraph*{Decomposition by size}
Consider the weighted mean
\[
c_j^{(m)} = \frac{ \int f \psi_j^{(m)} }{ \int \psi_j^{(m)} }.
\]
The denominator may be estimated from below by $|B(x_j^{(m)}, c d_j^{(m)})| = C (d_j^{(m)})^n$.
To estimate the numerator, observe that $\psi_j^{(m)}(x) = \phi(\frac{x - x_j^{(m)}}{d_j^{(m)}})$ for some smooth test function $\phi$ such that $|| \phi ||_{C^1} \leq C$ by (\ref{eq:partition-of-unity-C1-estimate}).
Recall that $\phi_t (x) = t^{-n} \phi(x/t)$.
Thus
\[
\left| \int f \psi_j^{(m)} \right|
=
(d_j^{(m)})^n \left| f * \phi_{d_j^{(m)}} (x_j^{(m)}) \right|
\leq
C (d_j^{(m)})^n \GM f(y),
\]
given that $\dist(y, x_j^{(m)}) < b d_j^{(m)}$, where $b$ is the non-centrality constant of $\GM$.
By the assumption $b>1$ and by definition of $d_j^{(m)}$ there exists a $y \not\in O^{(m)}$ that satisfies this condition.
Since then $\GM f(y) \leq 2^m$, we have
\begin{equation}
\label{eq:atomic-mean-c}
c_j^{(m)}
\leq
\frac{ C (d_j^{(m)})^n 2^m }{ (d_j^{(m)})^n }
=
C 2^m.
\end{equation}
Since $\{ \psi_j^{(m)} \}_j$ is a partition of unity on $O^{(m)}$, we can decompose
\begin{align*}
f
&=
f \chi_{\complement O^{(m)}} + \sum_j f \psi_j^{(m)}\\
&=
\left( f \chi_{\complement O^{(m)}} + \sum_j c_j^{(m)} \psi_j^{(m)} \right) + \sum_j (f - c_j^{(m)}) \psi_j^{(m)}\\
&=
g^{(m)} + \sum_j (f - c_j^{(m)}) \psi_j^{(m)}.
\end{align*}
By the choice of $c_j$ every summand in the second term has mean zero.
Furthermore, the second summand is supported on $O^{(m)}$.
By the Chebyshev inequality, $|O^{(m)}| \leq 2^{-m}$, and therefore
\[
g^{(m)} \to f  \text{ pointwise a.e.\ as } m \to +\infty.
\]
On the other hand, we have the bound
\[
|g^{(m)}| \leq C 2^m \to 0  \text{ uniformly as } m \to -\infty.
\]
The combination of both asymptotic expressions yields
\begin{equation}
\label{eq:two-sided-atomic-series}
f
=
\lim_{M \to +\infty} g^{(M)} - g^{(-M)}
=
\lim_{M \to +\infty} \sum_{m=-M}^{M-1} (g^{(m+1)} - g^{(m)}) \text{ pointwise a.e.}
\end{equation}
The terms of this series are given by
\[
g^{(m+1)} - g^{(m)}
=
\sum_j (f - c_j^{(m)}) \psi_j^{(m)}
-
\sum_k (f - c_k^{(m+1)}) \psi_k^{(m+1)}.
\]
In this sum the terms $f \psi_j^{(m+1)}$ and $f \psi_j^{(m)}$ cancel out whenever $\GM f > 2^{m+1}$, the remaining terms are bounded by $C 2^m$ and the mean value is zero by the definition of $c_j^{(m)}$.

\paragraph*{Decomposition by support}
Now consider the weighted mean
\[
d_{j,k}^{(m)} = \frac{ \int \left[ (f - c_k^{(m+1)}) \psi_j^{(m)} \right] \psi_k^{(m+1)} }{ \int \psi_k^{(m+1)} }.
\]
Analogously to the estimate for $c_j^{(m)}$ but using (\ref{eq:partition-of-unity-product-C1-estimate}) instead of (\ref{eq:partition-of-unity-C1-estimate}) we obtain
\begin{equation}
\label{eq:atomic-mean-d}
d_{j,k}^{(m)} \leq C 2^m.
\end{equation}

Since $c_k^{(m+1)}$ are weighted means themselves, we have that
\[
\sum_j d_{j,k}^{(m)}
=
C \int \left[ (f - c_k^{(m+1)}) \sum_j \psi_j^{(m)} \right] \psi_k^{(m+1)}
=
C \int \left[ (f - c_k^{(m+1)}) \right] \psi_k^{(m+1)}
=
0
\]
by (\ref{eq:partitions-of-unity-expansion}) and the definition of $c_k^{(m+1)}$, respectively.
This fact and (\ref{eq:partitions-of-unity-expansion}) allow us to expand $g^{(m+1)} - g^{(m)}$ as
\begin{align*}
&
\sum_j (f - c_j^{(m)}) \psi_j^{(m)}
-
\sum_k (f - c_k^{(m+1)}) \psi_k^{(m+1)}\\
&=
\sum_j (f - c_j^{(m)}) \psi_j^{(m)}
-
\sum_{k} \left( (f - c_k^{(m+1)}) \left(\sum_j \psi_j^{(m)}\right) - \sum_j d_{j,k}^{(m)} \right) \psi_k^{(m+1)}\\
&=
\sum_j \left[
(f - c_j^{(m)}) \psi_j^{(m)}
-
\sum_{k} \left( (f - c_k^{(m+1)}) \psi_j^{(m)} - d_{j,k}^{(m)} \right) \psi_k^{(m+1)}
\right]\\
&=
\sum_j \left[
f \psi_j^{(m)} \left( 1 - \sum_k \psi_k^{(m+1)} \right)
- c_j^{(m)} \psi_j^{(m)}
+
\sum_{k} \left( c_k^{(m+1)} \psi_j^{(m)} + d_{j,k}^{(m)} \right) \psi_k^{(m+1)}
\right]
\end{align*}
The term in the square brackets (let us call it $b_j^{(m)}$) has mean zero as may be seen in the next to last line by the definitions of $c_j^{(m)}$ and $d_{j,k}^{(m)}$.

Note that $\psi_j^{(m)} \psi_k^{(m+1)} \neq 0$ only if $B(x_j^{(m)}, c'' d_j^{(m)}) \cap B(x_k^{(m+1)}, c'' d_k^{(m+1)}) \neq \emptyset$.
This is also a necessary condition for $d_{j,k}^{(m)}\neq 0$.
Property (\ref{whitney:comparable-radii}) implies that $d_k^{(m+1)} < \frac{1+c''}{1-c''} d_j^{(m)}$ and thus $\supp \psi_k^{(m+1)} \subset B(x_j^{(m)}, c''(1+2\frac{1+c''}{1-c''}) d_j^{(m)})$.

Hence $b_j^{(m)}$ is supported in $B(x_j^{(m)}, c_{s} d_j^{(m)})$, where $c_{s} := c''(1+2\frac{1+c''}{1-c''})$.
The latter constant becomes less than one if we chose $c''<\sqrt{5}-2$.
Then (\ref{whitney:bounded-intersection}) ensures that the sets $B(x_j^{(m)}, c_{s}' d_j^{(m)})$ enjoy the bounded intersection property with a constant $N$ which is independent of $f$ and $m$, whichever $c_{s} < c_{s}' < 1$ we chose.

Since $\left( 1 - \sum_k \psi_k^{(m+1)} \right)$ is the characteristic function of $\complement O^{(m+1)}$, by (\ref{eq:atomic-mean-c}) and by (\ref{eq:atomic-mean-d}) we have $|b_j^{(m)}| \leq C 2^m$ a.e.
The series (\ref{eq:two-sided-atomic-series}) becomes
\[
f
=
\sum_{m,j} \lambda_j^{(m)} a_j^{(m)}
=
\sum_{m,j} C 2^m |d_j^{(m)}|^n \frac{b_j^{(m)}}{C 2^m |d_j^{(m)}|^n}.
\]
The latter fractions are $(1, \infty)$-atoms.
Since, for every $m$, the balls $B(x_j^{(m)}, c d_j^{(m)})$ are disjoint by \ref{whitney:disjoint} and contained in $O^{(m)}$, we have that
\begin{align*}
\sum_{m,j} |\lambda_j^{(m)}| &\leq C \sum_m 2^m |O^{(m)}|\\
&= C \int_{\R^n} \left( \sum_{m \in \Z : 2^m < \GM f(x)} 2^m \right) \dif x\\
&\leq 2 C \int_{\R^n} \GM f(x) \dif x\\
&= C || \GM f ||_1.
\end{align*}

We now turn to the additional assertions.
\paragraph*{$L^{p}$ convergence}
Suppose that $f \in L^{p}$.
By the bounded intersection property of the supports we have that $\sum_{j} |b_{j}^{(m)}| \leq CN \chi_{O^{(m)}}$.
This implies the pointwise estimate
\[
\sum_{m,j} |\lambda_{j}^{(m)}| |a_{j}^{(m)}| \leq 2CN \GM f.
\]
By the non-centered Hardy-Littlewood maximal inequality (Theorem~\ref{th:hardy-littlewood-maximal-inequality}), the latter function is in $L^{p}$, and convergence of the atomic decomposition in $L^{p}$ follows from the dominated convergence theorem.

\paragraph*{Continuity of atoms}
Now let $f$ be continuous.
We will show that under this assumption every atom $a_j^{(m)}$ is continuous, or, equivalently, that every function
\begin{equation}
\label{eq:bjm}
b_j^{(m)}
=
(f - c_j^{(m)}) \psi_j^{(m)}
-
\sum_{k} \left( (f - c_k^{(m+1)}) \psi_j^{(m)} - d_{j,k}^{(m)} \right) \psi_k^{(m+1)}
\end{equation}
is.
The continuity of the first term is clear.
In the second term the number of non-zero summands in a neighborhood of every interior point of $O^{(m+1)}$ is bounded by a constant $N'$ provided by (\ref{whitney:bounded-intersection}) applied to the balls $\{B(x_{k}^{(m+1)}, c''' d_{k}^{(m+1)})\}_{k}$ with $c'' < c''' < 1$.
Hence the term is continuous inside $O^{(m+1)}$.
Since the term vanishes outside $O^{(m+1)}$, it suffices to show that
\[
h_k = \left( (f - c_k^{(m+1)}) \psi_j^{(m)} - d_{j,k}^{(m)} \right) \psi_k^{(m+1)} \to 0
\text{ as } k \to \infty
\]
uniformly for fixed $m$ and $j$, because by the bounded intersection property this implies that the sum of these terms declines to zero towards the boundary of $O^{(m+1)}$.

Since $c_k^{(m+1)} \leq C 2^m = C$ and $\psi_j^{(m)}$ is a smooth function with compact support,
\[
\tilde f_k = (f - c_k^{(m+1)}) \psi_j^{(m)}
\]
is a uniformly equicontinuous family of functions (parameterized by $k$).
Since the diameter of the support of $\psi_k^{(m+1)}$ shrinks to zero as $k \to \infty$ and
\[
d_{j,k}^{(m)} = \frac{ \int \tilde f_k \psi_k^{(m+1)} }{ \int \psi_k^{(m+1)} },
\]
we indeed have that $h_k \to 0$ as $k \to \infty$.
\end{proof}

\section{Finite atomic decomposition and extension of operators}
It is not true, in general, that a linear operator $T$ which is defined on the subspace of $H^1$ generated by the $(1,\infty)$-atoms and uniformly bounded on the $(1,\infty)$-atoms is continuous with respect to the $H^1$ norm \cite{MR2163588}.

We discuss two possibilities to treat this non-extension problem.
One of them \cite{MR2399059} consists in choosing an even smaller initial domain of definition, while the second \cite[Theorem 1.21]{MR0447954} requires an additional $L^q$-continuity hypothesis.

Denote by $F^{1,\infty}$ the vector space algebraically generated by $(1,\infty)$-atoms.
A natural norm on $F^{1,\infty}$ is given by $||f||_{F^{1,\infty}} = \inf_{f = \sum'_j \lambda_j a_j} \sum'_j |\lambda_j|$, where the infimum is taken other all \emph{finite} atomic decompositions.

To say that $||T a|| \leq C$ for every $(1,\infty)$-atom $a$ is equivalent to saying that $T$ is continuous in the $F^{1,\infty}$ norm.
However, as the following example of Y.~Meyer \cite[5.6]{MR794574} shows, this norm is not equivalent to the $H^1$ norm.

Let $\{B_{j}\}_{j}$ be a dense countable collection of disjoint open balls contained in $B_{1}(0)$ and let $f$ be a function such that $f=1$ on half of each ball $B_{j}$, $f=-1$ on the other half and $f=0$ everywhere else.
Then $||f||_{H^{1}} \leq \sum |B_{j}|$, since the restriction of $f$ to each $B_{j}$ is a multiple of an atom, but $||f||_{F^{1,\infty}} = |B_{1}(0)|$ is a constant.
To see that observe that $|B_{1}(0)|\inv f$ is an atom.
On the other hand, given any finite atomic decomposition $f=\sum_{k} \lambda_{k} a_{k}$ with $\supp a_{k} = A_{k}$ one can write
\[
\sum_{k} |\lambda_{k}| = \int \sum_{k} |\lambda_{k}| \chi_{A_{k}} |A_{k}|\inv.
\]
The latter function is piecewise constant and bounded from below by $\sum_{k} \lambda_{k} a_{k} = f$.
Since $f=1$ on a dense subset of $B_{1}(0)$, we obtain $\sum_{k} |\lambda_{k}| \geq |B_{1}(0)|$.
Letting $\sum_{j} |B_{j}|$ go to zero we see that the $H^{1}$- and the $F^{1,\infty}$-norm are not equivalent on $F^{1,\infty}$.

Nevertheless the two norms do coincide on the space of finite linear combinations of \emph{continuous} $(1,\infty)$-atoms.
\begin{proposition}
\label{prop:finite-atomic}
Let $f \in F^{1,\infty} \intersection C(\R^n)$ be such that $|| \GM f ||_1 = 1$.
Then there exists a finite decomposition $f = \sum_j \lambda_j a_j$, where $a_j$ are $(1,\infty)$-atoms and $\sum_j |\lambda_j| < C$.
In particular, the $F^{1,\infty}$- and the $H^1$-norm are equivalent on $F^{1,\infty} \intersection C(\R^n)$.
\end{proposition}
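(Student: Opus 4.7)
The reverse direction $\|f\|_{H^1} \leq C\|f\|_{F^{1,\infty}}$ follows from Proposition~\ref{prop:GM-of-atom}, so the task is to show $\|f\|_{F^{1,\infty}} \leq C\|\GM f\|_1$. Normalize $\|\GM f\|_1 = 1$. Since $f \in F^{1,\infty}$ is a finite combination of $(1,\infty)$-atoms, it is bounded with compact support; hence $\GM f$ is bounded (so $O^{(m)} = \{\GM f > 2^m\}$ is empty for $m > m_{\max}$) and decays like $\|f\|_1/|x|^n$ at infinity. Combining this decay with the Chebyshev bound $|O^{(m)}| \leq 2^{-m}$, each $O^{(m)}$ is enclosed in a ball $\tilde B_m$ of volume $\leq C \cdot 2^{-m}$.

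My plan is to apply Proposition~\ref{prop:infinite-atomic} to obtain a continuous-atom decomposition $f = \sum_{m,j} \lambda_j^{(m)} a_j^{(m)} = \sum_{m,j} b_j^{(m)}$ with $\sum |\lambda_j^{(m)}| \leq C$ and $L^p$-convergence for $1 < p < \infty$, and then to finitize it. First, truncate at a large negative level $m_*$ by writing $f = g^{(m_*)} + \sum_{m \geq m_*,\,j} b_j^{(m)}$. The lower tail $g^{(m_*)} = f \chi_{\complement O^{(m_*)}} + \sum_k c_k^{(m_*)} \psi_k^{(m_*)}$ is continuous, mean-zero, pointwise bounded by $C \cdot 2^{m_*}$, and supported in $\tilde B_{m_*}$; thus it is a universal-constant multiple of a single $(1,\infty)$-atom.

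For each of the finitely many levels $m \in \{m_*,\ldots,m_{\max}-1\}$, the countable inner sum $\sum_j b_j^{(m)} = g^{(m+1)} - g^{(m)}$ must be reduced to finitely many terms. Ordering the Whitney indices so that $d_j^{(m)}$ is decreasing, I keep the first $J_m$ atoms individually, contributing at most $\sum_{j \leq J_m} |\lambda_j^{(m)}|$ to the total coefficient, and regroup the remainder $h^{(m)} = \sum_{j > J_m} b_j^{(m)}$. Since $f$ is continuous, $g^{(m+1)} - g^{(m)}$ extends continuously to $\R^n$ (vanishing on $\complement O^{(m)}$), making $h^{(m)}$ a continuous mean-zero function uniformly bounded by $NC\cdot 2^m$ via the bounded-intersection property, supported in $\bigcup_{j > J_m} B_j^{(m)}$ of measure at most $C \sum_{j > J_m} |\lambda_j^{(m)}|/2^m$. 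By clustering this tail into finitely many groups of Whitney balls, each enclosable in a ball of volume comparable to the cluster's total measure, $h^{(m)}$ may be expressed as a finite sum of $(1,\infty)$-atoms with total coefficient at most $NC \sum_{j > J_m} |\lambda_j^{(m)}|$.

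Choosing the cut-offs $J_m$ so that $\sum_{m,\,j > J_m} |\lambda_j^{(m)}| \leq \epsilon$ assembles a finite decomposition $f = \sum_k \mu_k \alpha_k$ whose total coefficient satisfies $\sum_k |\mu_k| \leq C + \sum_{m,\, j \leq J_m} |\lambda_j^{(m)}| + NC\epsilon \leq C\|\GM f\|_1$. The principal obstacle lies in the clustering step: one must show that the scattered support of $h^{(m)}$ partitions into groups each enclosable in a ball of volume comparable to the group's total measure, without which one would incur a logarithmic loss in the atomic coefficient. This clustering argument leans decisively on continuity of $f$ (so that the intermediate tails are pointwise-defined continuous functions rather than mere $L^p$-classes) together with the bounded-intersection property of the Whitney decomposition and a Vitali-type selection of nearby balls.
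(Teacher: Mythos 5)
Your high-level scaffolding matches the paper's: truncate the atomic decomposition below at a large negative level (your $g^{(m_*)}$, the paper's $g^{(m'+1)}$), observe the truncation is a single bounded-support atom, and then handle the finitely many remaining levels. But the crucial step — finitizing each inner sum $\sum_j b_j^{(m)}$ — is where your argument has a genuine gap, and you have correctly diagnosed it yourself. The clustering idea does not go through: for a fixed level $m$, the set $O^{(m)}$ can be a union of tiny blobs scattered throughout $B(0,2R)$, and there is no Vitali-type selection that groups the leftover balls $B_j^{(m)}$ (with $j > J_m$) into clusters each enclosable in a ball of volume comparable to the cluster's total measure. Scattered small sets of total measure $\delta$ inside a region of measure $1$ can require an enclosing ball of measure $1$, incurring exactly the loss you worry about. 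Selecting the cutoff $J_m$ by decreasing radius and by summability of coefficients does not help, because the geometry of the scattered support is what governs the atom constant, not the coefficient mass.

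The paper avoids the problem entirely by choosing the threshold differently and never clustering. It fixes $\delta > 0$ by \emph{uniform continuity} of $f$ (available since $f$ is continuous with compact support) so that $f$ varies by at most $\epsilon$ on any ball of radius $\delta$, then keeps every $b_j^{(m)}$ with $c''' d_j^{(m)} > \delta$ — a finite set by the disjointness property of the Whitney balls inside the bounded set $O^{(m)}$. The discarded $b_j^{(m)}$ have small supporting balls, so the terms $(f - c_j^{(m)})\psi_j^{(m)}$ and $(f - c_k^{(m+1)})\psi_j^{(m)}\psi_k^{(m+1)}$ that build $b_j^{(m)}$ are each bounded by $O(\epsilon)$; combined with the bounded-intersection property and the fact that only $m'' - m'$ levels appear, the \emph{entire} discarded tail (summed over all middle levels) is uniformly bounded by $(N'+1)N(m''-m')\epsilon$, supported in $B(0,2R)$, and mean-zero (being $f$ minus mean-zero terms). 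Hence the whole leftover is a single small multiple of one $(1,\infty)$-atom — no grouping, no logarithmic loss. This is the idea your proposal is missing: the continuity hypothesis is used not to make the tail terms nice enough to cluster, but to make them so uniformly small that the full tail across all levels is already one atom.
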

\begin{proof}
Recall that we can chose the decomposition
\[
f = \sum_m \sum_j b_j^{(m)} = \sum_m \sum_j \lambda_j^{(m)} a_j^{(m)}
\]
provided by Proposition~\ref{prop:infinite-atomic} in such a way that $\supp b_j^{(m)} \subset O^{(m)}$ and that the bounded intersection property (\ref{whitney:bounded-intersection}) applies, so that each sum $\sum_j b_j^{(m)}$ is pointwise finite with a bound on the number of non-zero summands which is uniform in $f$ and $m$.
Taking into account the estimate $|b_j^{(m)}| < C 2^m$ we see that
\begin{equation}
\label{eq:atom-sum-pointwise}
\sum_j |b_j^{(m)}| \leq C 2^m
\quad\text{and}\quad
\sum_m \sum_j |b_j^{(m)}| \leq C \GM f \quad \text{pointwise a.e.}
\end{equation}

Assume without loss of generality that $\supp f \subset B(0,R)$ and recall from the proof of Proposition~\ref{prop:GM-of-atom} that $\GM f(x) \leq CR^{-n}$ for $|x|>2R$, i.e.\ $O^{(m)} \subset B(0,2R)$ for $m > m'$, where $m'$ is the integer part of $\log_2 CR^{-n}$.
Hence
\[
\supp \sum_{m>m'} \sum_j b_j^{(m)} \subset B(0,2R),
\]
so that $g^{(m'+1)} = \sum_{m \leq m'} \sum_j b_j^{(m)} = f - \sum_{m>m'} \sum_j b_j^{(m)}$ is supported in $B(0,2R)$ as well.
Since $|g^{(m'+1)}| \leq C 2^{m'} \leq C R^{-n}$, the function $g^{(m'+1)}$ is a bounded multiple of an atom.
Since $f$ is uniformly continuous, $(f-c_j^{(m'+1)}) \psi_j^{(m'+1)} \to 0$ uniformly as $j \to \infty$.
Analogously to the continuity of $b_{j}^{(m)}$ this implies that $g^{(m'+1)}$ is continuous.

Since $f$ is bounded, $\GM f$ is bounded as well, so that there exists an $m''$ such that $O^{(m)} = \emptyset$ and $b_j^{(m)}=0$ for $m > m''$.
For a given $\epsilon > 0$ let $\delta > 0$ be such that $f$ varies by at most $\epsilon$ on every ball of radius $\delta$.
Let also $\mathcal{J} = \{ (m,j) : m' < m \leq m'' \text{ and } c''' d_j^{(m)} > \delta \}$.
By (\ref{whitney:disjoint}) and because the sets $O^{(m)}$ have finite measure, the index set $\mathcal{J}$ is finite, so that
\[
f = g^{(m'+1)} + \sum_{(m,j) \in \mathcal{J}} b_j^{(m)} + \text{leftover}
\]
is a finite decomposition.
Furthermore, the ``leftover'' is bounded by
\[
(N'+1)N(m''-m') \epsilon,
\]
where $N'+1$ is the bound on the number of summands in (\ref{eq:bjm}) which do not vanish at a given point.
By Proposition~\ref{prop:infinite-atomic} we have that $b_j^{(m)} = \lambda_j^{(m)} a_j^{(m)}$, where $a_j^{(m)}$ are atoms and $\sum_m \sum_j |\lambda_j^{(m)}| < C$.
Since the ``leftover'' can be an arbitrarily small multiple of a $(1,\infty)$-atom, the conclusion follows.
\end{proof}

We observe that $F^{1,\infty} \intersection C(\R^n)$ is dense in $H^1$.
To show this we use the $(1,q)$-atomic characterization of $H^1$ with $1<q<\infty$.
It is sufficient to approximate a $(1,q)$-atom $a$ by continuous functions with compact support in the $H^{1,q}_{\text{at}}$ norm.
Let $(\phi_\epsilon)_\epsilon$ be an approximated identity associated with a function with compact support.
Then $\phi_\epsilon * a \to a$ in $L^q$ as $\epsilon \to 0$ and $\supp \phi_\epsilon * a$ is contained in an arbitrarily small neighborhood of $\supp a$.
This clearly implies $\phi_\epsilon * a \to a$ in $H^1$.
Since $\phi_\epsilon * a$ is a continuous function with compact support for every $\epsilon$, the conclusion follows.

Therefore a linear operator $T$ defined on $F^{1,\infty} \intersection C(\R^n)$ and bounded uniformly on atoms (i.e.\ such that $||Ta|| < C$ for every continuous $(1,\infty)$-atom $a$) extends in a unique way to a continuous operator on $H^1$.

We now turn our attention to the case of an operator defined on $F^{1,\infty}$ and continuous with respect to an $L^q$ norm.

\begin{theorem}
\label{th:H1-X-cont}
Let $1 < q < \infty$, $D \subset H^1 \intersection L^q$ be a subspace which contains $F^{1,\infty}$, $X$ be a Banach space and $T : D \to X$ be an operator such that $||T a||_X \leq C$ for every continuous atom $a$ and $||T f||_X \leq C ||f||_q$ for every $f \in D$.
Then $T$ admits a unique extension to a continuous operator from $H^1$ to $X$ and this extension is continuous with respect to the $L^q$ norm on $H^1 \intersection L^q$.
\end{theorem}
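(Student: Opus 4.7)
The plan is to first construct an $H^1$-continuous extension $\tilde T:H^1 \to X$ by using the dense subspace $F^{1,\infty} \cap C(\R^n) \subset H^1$ on which the hypothesis gives direct control, and then to identify $\tilde T$ with $T$ on $D$ by a simultaneous approximation argument. For $f \in F^{1,\infty} \cap C(\R^n)$, Proposition~\ref{prop:finite-atomic} yields a finite decomposition $f = \sum_j \lambda_j a_j$ into continuous $(1,\infty)$-atoms with $\sum_j |\lambda_j| \leq C \|f\|_{H^1}$. Since $F^{1,\infty} \subset D$, the operator $T$ is already defined on such $f$, and the hypothesis on continuous atoms gives
\[
\|T f\|_X \leq \sum_j |\lambda_j|\, \|T a_j\|_X \leq C \|f\|_{H^1}.
\]
Since $F^{1,\infty} \cap C(\R^n)$ is dense in $H^1$ (as established in the paragraph following Proposition~\ref{prop:finite-atomic}), $T|_{F^{1,\infty} \cap C(\R^n)}$ extends uniquely to a continuous linear operator $\tilde T : H^1 \to X$.

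To show $\tilde T = T$ on $D$, I would prove the following key approximation statement: for every $f \in D \subset H^1 \cap L^q$ there exists a sequence $(f_n) \subset F^{1,\infty} \cap C(\R^n)$ with $f_n \to f$ simultaneously in $H^1$ and in $L^q$. Apply Proposition~\ref{prop:infinite-atomic} to write $f = \sum_{m,j} \lambda_j^{(m)} a_j^{(m)}$ with $\sum |\lambda_j^{(m)}| \leq C\|f\|_{H^1}$; by Proposition~\ref{prop:GM-of-atom} this series converges in $H^1$, and by the last clause of Proposition~\ref{prop:infinite-atomic} it also converges in $L^q$ because $f \in L^q$ with $1<q<\infty$. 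The partial sums $F_N$ are therefore in $F^{1,\infty}$ and approximate $f$ in both norms; it remains to replace each atom by a continuous one. Fix a smooth compactly supported mollifier $\phi_\epsilon$: then $\phi_\epsilon * a$ is smooth, has mean zero, is supported in a slight enlargement $B'$ of the ball $B$ supporting $a$, and satisfies $\|\phi_\epsilon * a\|_\infty \leq \|a\|_\infty \leq |B|^{-1}$, so it is $|B'|/|B|$ times a continuous $(1,\infty)$-atom, with $|B'|/|B| \to 1$ as $\epsilon \to 0$. Moreover $\phi_\epsilon * a - a$ is supported in $B'$, has mean zero, and tends to $0$ in $L^q$, hence, being a multiple of a $(1,q)$-atom with coefficient bounded by $|B'|^{1-1/q}\|\phi_\epsilon * a - a\|_q$, also tends to $0$ in $H^1$ by Proposition~\ref{prop:GM-of-atom}. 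A diagonal choice $f_n := \phi_{\epsilon_n} * F_n$ then lies in $F^{1,\infty} \cap C(\R^n) \subset D$ and converges to $f$ in both norms.

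Given this, the $L^q$-hypothesis yields $T f_n \to T f$ in $X$, while $H^1$-continuity of $\tilde T$ yields $\tilde T f_n \to \tilde T f$; since $\tilde T$ and $T$ coincide on $F^{1,\infty} \cap C(\R^n)$, we conclude $T f = \tilde T f$ on $D$. The $L^q$-continuity of $\tilde T$ on $H^1 \cap L^q$ follows from the same approximation: for $f \in H^1 \cap L^q$ choose $f_n \in F^{1,\infty} \cap C(\R^n) \subset D$ with $f_n \to f$ in both norms and pass to the limit in $\|T f_n\|_X \leq C \|f_n\|_q$.

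The main obstacle is the simultaneous $H^1$ and $L^q$ approximation by continuous finite combinations of atoms; once the mollification estimate showing that $\phi_\epsilon * a - a \to 0$ in $H^1$ (controlled via its coefficient as a multiple of a $(1,q)$-atom) is in place, the rest is a clean density and diagonal argument.
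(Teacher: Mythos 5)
Your proposal is correct and uses essentially the same ingredients as the paper: Proposition~\ref{prop:finite-atomic} for the $H^1$-to-$F^{1,\infty}$ norm equivalence on continuous functions, the $L^q$-convergence clause of Proposition~\ref{prop:infinite-atomic}, and mollification to pass to continuous atoms simultaneously in $H^1$ and $L^q$. The only difference is presentational: you build $\tilde T$ on $H^1$ first and then verify $\tilde T = T$ on $D$, whereas the paper directly estimates $\|Tf\|_X \leq \|T\tilde f\|_X + \|T(\tilde f - f)\|_X$ for $f \in D$; these are two orderings of the same density and estimation argument.
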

\begin{proof}
The uniqueness of the extension follows from the fact that $F^{1,\infty}$ is dense in $H^1$.

A characteristic function of a bounded set can be easily approximated in $L^q$ by multiples of $(1,\infty)$-atoms.
Therefore the space $F^{1,\infty}$ is dense in $L^q$.
Extending $T$ to an operator on $H^1 \intersection L^q$ by $L^q$-continuity we may assume $D = H^1 \intersection L^q$.
It suffices to show that $T$ is continuous in the $H^1$ norm.

Let $f \in H^1 \intersection L^q$.
By (\ref{eq:GM-M}) and the Hardy-Littlewood maximal inequality (Theorem~\ref{th:hardy-littlewood-maximal-inequality}) we see that $||\GM f||_q \leq C ||f||_q$.
Let $f = \sum_{j} \lambda_j a_j$ be the atomic decomposition given by Proposition~\ref{prop:infinite-atomic}.
By the additional conclusion of that proposition this series also converges in $L^q$.
Hence, given $\epsilon > 0$, there exists a (finite) partial sum $\bar f = \sum'_j \lambda_j a_j$ such that $||f-\bar f||_{H^1} \leq \epsilon/2$ and $||f-\bar f||_q \leq \epsilon/2$.
Since the $H^{1,\infty}_{\text{at}}$- and the $H^{1,q}_{\text{at}}$-norm are equivalent and using the approximated identity as above we can approximate each $a_j$ by a continuous function $\tilde a_j$ simultaneously in $H^1$ and $L^q$.
By doing so we obtain an $\tilde f = \sum'_j \lambda_j \tilde a_j \in F^{1,\infty} \intersection C(\R^n)$ such that $||f-\tilde f||_{H^1} \leq \epsilon$ and $||f-\tilde f||_q \leq \epsilon$.
Therefore, since the $H^1$- and the $F^{1,\infty}$-norm are equivalent on $F^{1,\infty} \cap C(\R^n)$,
\begin{align*}
||T f||_X
&\leq
||T \tilde f||_X + || T (\tilde f - f) ||_X\\
&\leq
||T||_{F^{1,\infty} \to X} ||\tilde f||_{F^{1,\infty}} + ||T||_{L^{q} \to X} || \tilde f - f ||_{q}\\
&\leq
C ||\tilde f||_{H^1} + C \epsilon\\
&\leq
C (||f||_{H^1} + || \tilde f - f ||_{H^1}) + C \epsilon\\
&\leq
C ||f||_{H^1} + C \epsilon.
\end{align*}
Since $\epsilon$ is arbitrary and the constants do not depend on $\epsilon$, the operator $T$ is continuous on $D$ with respect to the $H^1$ norm.
\end{proof}

A useful variant of the preceding result regards operators with values in function spaces.
\begin{theorem}
\label{th:H1-Lp-cont}
Let $(\Omega,\mu)$ be a $\sigma$-finite measure space, $1 \leq p,r \leq \infty$, $1 < q < \infty$, $D \subset H^1 \intersection L^q(\R^n)$ be a subspace which contains $F^{1,\infty}$, and $T : D \to L^p \intersection L^r(\Omega)$ be an operator such that $||T a||_p \leq C$ for every continuous atom $a$ and $||T f||_r \leq C ||f||_q$ for every $f \in D$.
Then $T$ admits a unique extension to a continuous operator from $H^1(\R^n)$ to $L^p(\Omega)$ and this extension is continuous as an operator from $H^1 \intersection L^q(\R^n)$ endowed with the $L^q$ norm to $L^r(\Omega)$.
\end{theorem}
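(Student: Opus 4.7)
The approach will parallel that of Theorem~\ref{th:H1-X-cont}, with the twist that the two hypotheses yield estimates in two different norms on the codomain. The plan is to first establish an $H^1 \to L^p$ bound on the core $F^{1,\infty} \cap C(\R^n)$, extend by density to $H^1$, and then reconcile this extension with the original operator $T$ on $D$.

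For the first step, given $f \in F^{1,\infty} \cap C(\R^n)$, I would invoke Proposition~\ref{prop:finite-atomic} to write $f = \sum'_j \lambda_j a_j$ as a finite combination with $\sum'_j |\lambda_j| \leq C \|f\|_{H^1}$; inspection of the construction (which builds the $a_j$ via Proposition~\ref{prop:infinite-atomic}) shows that continuity of $f$ forces continuity of each atom. Linearity of $T$ on $F^{1,\infty}$ together with the uniform atom estimate then gives
\[
\|Tf\|_p \leq \sum'_j |\lambda_j| \, \|T a_j\|_p \leq C \|f\|_{H^1}.
\]
Since $F^{1,\infty} \cap C(\R^n)$ is dense in $H^1$ (by convolving atoms with a compactly supported approximate identity, as noted in the discussion preceding Theorem~\ref{th:H1-X-cont}), the operator $T$ extends uniquely to a continuous operator $\tilde T : H^1 \to L^p(\Omega)$.

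To identify $\tilde T$ with $T$ on $D$ and to obtain the auxiliary $L^q \to L^r$ bound, I would take $f \in H^1 \cap L^q$ and, exactly as in the proof of Theorem~\ref{th:H1-X-cont}, construct approximants $\tilde f_k \in F^{1,\infty} \cap C(\R^n) \subset D$ with $\tilde f_k \to f$ simultaneously in $H^1$ and in $L^q$ (truncate the atomic decomposition from Proposition~\ref{prop:infinite-atomic}, which converges in both norms, and then mollify the finitely many surviving atoms). Then $\tilde T \tilde f_k \to \tilde T f$ in $L^p$, while the hypothesis $\|T g\|_r \leq C \|g\|_q$ on $D$ makes $(T \tilde f_k)_k$ Cauchy in $L^r$, converging to some $g \in L^r$ with $\|g\|_r \leq C \|f\|_q$. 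Convergence in $L^p$ and in $L^r$ each implies, along a subsequence, pointwise a.e.\ convergence, so $\tilde T f = g$ a.e.; in particular $\tilde T f \in L^r$ with $\|\tilde T f\|_r \leq C \|f\|_q$. For $f \in D$ the same subsequence argument applied to $T \tilde f_k \to Tf$ in $L^r$ will show $\tilde T f = Tf$ a.e., so the extension genuinely prolongs $T$.

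The main obstacle is exactly this incomparability: the codomains $L^p(\Omega)$ and $L^r(\Omega)$ are a priori unrelated Banach spaces, so one cannot simply add the two estimates as in Theorem~\ref{th:H1-X-cont}. The reconciliation must happen at the level of pointwise convergence along a common subsequence, after which the two a priori different limit objects coincide as measurable functions.
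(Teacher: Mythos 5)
Your proposal is correct and follows essentially the same route as the paper's own proof: approximate $f$ simultaneously in $H^1$ and $L^q$ by elements of $F^{1,\infty}\cap C(\R^n)$, obtain convergence of the images in $L^p$ (from the $H^1$ bound) and in $L^r$ (from the $L^q$ bound), and identify the two limits. The only difference is one of explicitness: the paper states tersely that ``the $L^p$- and the $L^r$-limit must coincide,'' whereas you correctly supply the justification --- pass to a common subsequence along which both modes of convergence become pointwise a.e.\ --- which is exactly the right way to close that gap.
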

\begin{proof}
As before, we may assume that $D = H^1 \intersection L^q(\R^n)$ and that
\[
T : (F^{1,\infty} \intersection C(\R^n), ||\cdot||_{H^1}) \to L^p(\Omega)
\]
is continuous.
We only need to show that $T$ is continuous with respect to the $H^1$ norm on the whole space $D$.

Let $f \in D$.
By the proof of Theorem~\ref{th:H1-X-cont} there exists a sequence $(f_j)_j \subset F^{1,\infty} \intersection C(\R^n)$ which converges to $f$ both in the $H^1$- and in the $L^q$-norm.
By the continuity in the $L^q$ norm we have that $Tf_j \to Tf$ as $j \to \infty$ in $L^r(\Omega)$.
On the other hand, $(f_j)_j$ is a Cauchy sequence with respect to the $H^1$ norm, so that $(Tf_j)_j$ is a Cauchy sequence with respect to the $L^p$ norm.
Since the $L^p$- and the $L^r$-limit must coincide, $Tf_j \to Tf$ in $L^p(\Omega)$ as well, which proves the $H^1(\R^n) \to L^p(\Omega)$ continuity.
\end{proof}

\section{\texorpdfstring{$\BMO$}{BMO}, the dual space of \texorpdfstring{$H^1$}{H1}}
The abbreviation $\BMO$ stands for ``bounded mean oscillation''.
As we shall see, it does not matter much how exactly we compute the ``mean''.
However, for the moment we have to distinguish between various possibilities.
\begin{definition}
\index{BMO@$\BMO$}
Let $1 \leq q < \infty$.
The \emph{$q$-mean oscillation} of a function $f$ over a ball $B$ is given by
\[
\mo_q(f, B) = \left( \frac{1}{|B|} \int_B \left| f(x) - \frac{1}{|B|} \int_B f \right|^q \right)^{1/q}.
\]
\end{definition}
To shorten the notation we will write $\fint_{B}$ in place of $\frac{1}{|B|} \int_{B}$.

\begin{definition}
The space $\BMO_q$ is the quotient of the space of functions for which
\[
|| f ||_{\BMO_q} = \sup_B \mo_q(f,B)
\]
is finite (where the supremum is taken over all balls $B \subset \R^n$) modulo the constant functions.
\end{definition}

Given a function in $\BMO_q$, we can easily construct from it other functions with bounded mean oscillation.
\begin{proposition}
\label{prop:BMO-arbitrary-constants}
Let $B \mapsto c_B$ be a complex-valued function on the space of all balls in $\R^n$ and $f$ a measurable function on $\R^n$.
Assume that
\[
A = \sup_B \left( \fint_B |f - c_B|^q \right)^{1/q} < \infty.
\]
Then $f \in \BMO_q$ and $||f||_{\BMO_q} \leq 2 A$.
\end{proposition}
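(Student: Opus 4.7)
The plan is to compare the given ``approximate'' average $c_{B}$ to the true mean $f_{B} := \fint_{B} f$ using the triangle inequality in $L^{q}(B, |B|\inv \dif x)$ and then control the difference $|c_{B} - f_{B}|$ by a single application of Jensen's inequality.

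Concretely, for an arbitrary ball $B$, I would write
\[
\left( \fint_{B} |f - f_{B}|^{q} \right)^{1/q}
\leq
\left( \fint_{B} |f - c_{B}|^{q} \right)^{1/q}
+
\left( \fint_{B} |c_{B} - f_{B}|^{q} \right)^{1/q}.
\]
The first summand is bounded by $A$ directly from the hypothesis. For the second, the integrand is a constant, so the term equals $|c_{B} - f_{B}|$. Writing $c_{B} - f_{B} = \fint_{B}(c_{B} - f)$ and applying Jensen's inequality (or equivalently Hölder with exponents $q$ and $q'$ against the constant function $1$ in the normalized probability measure $|B|\inv \dif x$ on $B$) gives
\[
|c_{B} - f_{B}|
\leq
\fint_{B} |f - c_{B}|
\leq
\left( \fint_{B} |f - c_{B}|^{q} \right)^{1/q}
\leq A.
\]
Adding the two estimates yields $\mo_{q}(f,B) \leq 2A$, and taking the supremum over $B$ concludes $||f||_{\BMO_{q}} \leq 2A$.

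There is no real obstacle here; the whole argument is just the observation that the true mean $f_{B}$ is the best constant approximation in $L^{2}$ but only off by a factor of $2$ in $L^{q}$ for any other choice, which is a standard triangle-inequality/Jensen manoeuvre. The only thing worth being careful about is that the functional $c \mapsto \fint_{B} |f - c|^{q}$ need not be minimized at $f_{B}$ when $q\neq 2$, which is exactly why the constant $2$ (rather than $1$) appears in the conclusion.
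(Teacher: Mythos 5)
Your argument is the same one the paper uses: split $f - \fint_B f$ as $(f-c_B) + (c_B - \fint_B f)$ via Minkowski, then bound the constant term $|c_B - \fint_B f|$ by $\fint_B |f - c_B|$ and Hölder/Jensen. The presentation order differs slightly, but the decomposition, the inequalities invoked, and the resulting constant $2$ are identical.
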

\begin{proof}
By Hölder's inequality we have
\[
\left| \fint_B f - c_B \right|
\leq
\fint_B |f - c_B|
\leq
\left( \fint_B |f - c_B|^q \right)^{1/q}
\]
and thus by the Minkowski inequality
\[
\mo_q (f,B)
=
\left( \fint_B \left|f - c_{B} + c_{B} - \fint_B f \right|^{q} \right)^{1/q}
\leq
2 \left( \fint_B \left|f - c_B \right|^q \right)^{1/q}.
\qedhere
\]
\end{proof}

Thus we see that replacing balls by cubes (or other comparable shapes) in the definition of $\BMO_{q}$ we obtain the same space with a comparable norm.

\begin{proposition}
Let $b \in \BMO_q$ and identify it with an arbitrary representative of the equivalence class.
Let $P$ be a contraction on $\C$.
Then $P \circ b \in \BMO_q$ and $|| P \circ b ||_{\BMO_q} \leq 2 || b ||_{\BMO_q}$.
\end{proposition}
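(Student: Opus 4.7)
The plan is to apply Proposition~\ref{prop:BMO-arbitrary-constants} with a cleverly chosen family of constants $\{c_B\}$, exploiting the fact that composition with a contraction does not increase pointwise differences. The key observation is that while the average $\fint_B (P\circ b)$ is typically not equal to $P(\fint_B b)$, the latter quantity is nevertheless a good ``centering value'' for $P\circ b$.

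First, I would fix a representative of the equivalence class of $b$ and, for every ball $B\subset\R^n$, set $c_B := P\bigl(\fint_B b\bigr)$. The contractivity hypothesis $|P(z)-P(w)|\leq|z-w|$ gives the pointwise bound
\[
|P(b(x)) - c_B| = \bigl|P(b(x)) - P(\textstyle\fint_B b)\bigr| \leq \bigl|b(x) - \textstyle\fint_B b\bigr|
\]
for almost every $x\in B$. Taking $L^{q}$-averages over $B$ yields
\[
\left(\fint_B |P\circ b - c_B|^q\right)^{1/q}
\leq
\left(\fint_B \bigl|b - \textstyle\fint_B b\bigr|^q\right)^{1/q}
=
\mo_q(b,B)
\leq
\|b\|_{\BMO_q}.
\]

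Taking the supremum over all balls $B$, the quantity $A$ of Proposition~\ref{prop:BMO-arbitrary-constants} associated with $P\circ b$ and the constants $c_B$ is bounded by $\|b\|_{\BMO_q}$. That proposition then gives $P\circ b \in \BMO_q$ with $\|P\circ b\|_{\BMO_q}\leq 2A \leq 2\|b\|_{\BMO_q}$, which is the claim.

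There is no real obstacle here; the only subtlety is recognising that one should \emph{not} try to compare $\fint_B (P\circ b)$ directly with $P(\fint_B b)$ (which would invoke a Jensen-type inequality and introduce an extra factor), but rather bypass the issue entirely by centring with $P(\fint_B b)$ and invoking the previous proposition to absorb the cost of not using the genuine mean of $P\circ b$.
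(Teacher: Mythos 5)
Your argument is correct and is essentially identical to the paper's proof: both choose $c_B = P\bigl(\fint_B b\bigr)$, use the contractivity of $P$ pointwise to bound $\mathrm{mo}_q$-type quantities, and then invoke Proposition~\ref{prop:BMO-arbitrary-constants} to absorb the mismatch between $c_B$ and the true mean at the cost of a factor $2$.
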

\begin{proof}
Since $P$ is a contraction, we have
\[
\left( \fint_B \left| P(b(x)) - P\left(\fint_B b\right) \right|^q \dif x \right)^{1/q}
\leq
\left( \fint_B \left| b(x) - \fint_B b \right|^q \dif x \right)^{1/q}
\leq
||b||_{\BMO_q},
\]
for every ball $B$ and the result follows by Proposition~\ref{prop:BMO-arbitrary-constants}.
\end{proof}

\begin{corollary}
Let $b \in \BMO_q$ and fix a representative for it.
Define a family of functions parameterized by $\alpha > 0$ by composition with projections onto the disc of radius $\alpha$ as
\marginpar{
\begin{tikzpicture}[scale=0.65]
\draw[fill] (0,0) circle (1pt) node[anchor=north west] {$0$};
\draw (0,0) circle (1);
\draw[|->] (30:2) node[right] {$b(x)$} -- (30:1) node[anchor=north west] {$b_\alpha(x)$};
\end{tikzpicture}
}
\[
b_\alpha(x) = P_\alpha(b(x)) =
\begin{cases}
b(x), & |b(x)| < \alpha,\\
\alpha b(x)/|b(x)| & \text{otherwise}.
\end{cases}
\]
Then $b_\alpha \in \BMO_q$ and $|| b_\alpha ||_{\BMO_q} \leq 2 || b ||_{\BMO_q}$.
Furthermore, $|b| \in \BMO_q$ and $|| |b| ||_{\BMO_q} \leq 2 || b ||_{\BMO_q}$.
\end{corollary}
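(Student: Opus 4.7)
The plan is to invoke the preceding proposition twice, once for each assertion, and therefore the only work to be done is to check that the two maps involved are genuine contractions on $\C$.

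For the first assertion, I would verify that $P_\alpha : \C \to \C$ is a contraction. This is the standard nearest-point projection onto the closed convex set $\bar B(0,\alpha) \subset \C$: for $|z|\leq \alpha$ it acts as the identity, while for $|z|>\alpha$ one has $P_\alpha(z)=\alpha z/|z|$. The contractivity $|P_\alpha(z)-P_\alpha(w)|\leq |z-w|$ is a routine case analysis (three cases according to whether $z,w$ lie inside or outside the disc); in the mixed case one writes $P_\alpha(z)=z-(|z|-\alpha)z/|z|$ and uses that $|z|-\alpha\leq|z-P_\alpha(w)|$ since $P_\alpha(w)\in\bar B(0,\alpha)$. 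Once this is established, the estimate $\|P_\alpha\circ b\|_{\BMO_q}\leq 2\|b\|_{\BMO_q}$ is precisely the content of the preceding proposition applied with $P=P_\alpha$.

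For the second assertion, I would check that the absolute value $z\mapsto|z|$, regarded as a map $\C\to\C$, is a contraction. This is immediate from the reverse triangle inequality $\bigl||z|-|w|\bigr|\leq|z-w|$. Applying the preceding proposition with $P(z)=|z|$ yields $\||b|\|_{\BMO_q}\leq 2\|b\|_{\BMO_q}$.

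There is no real obstacle here; the proof is essentially one sentence per assertion once the two contractivity statements are noted. I would present the argument as a brief observation rather than a formal proof, emphasising that the corollary is a direct specialisation of the previous proposition to two canonical contractions of $\C$.
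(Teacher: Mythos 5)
Your proposal is correct and matches the paper exactly: the corollary is presented without a separate proof because it is a direct specialization of the preceding proposition to two particular contractions of $\C$, and you have identified both of them correctly. One small caveat: the manipulation you sketch for the mixed case of $P_\alpha$-contractivity ($|z|>\alpha\geq|w|$) is not quite conclusive as stated, since the bound $|z|-\alpha\leq|z-w|$ fed naively into the triangle inequality only yields $|P_\alpha(z)-w|\leq 2|z-w|$; what one actually needs is the slightly stronger observation $\langle z-w,\,z/|z|\rangle\geq|z|-|w|\geq|z|-\alpha$, from which $|P_\alpha(z)-w|^2=|z-w|^2-2(|z|-\alpha)\langle z-w,z/|z|\rangle+(|z|-\alpha)^2\leq|z-w|^2$. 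That said, the fact that nearest-point projection onto a closed convex subset of a Hilbert space is $1$-Lipschitz is standard, and you correctly identify $P_\alpha$ as exactly such a projection, so the overall argument is sound.
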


Now we come to the proof of duality between $H^1$ and $\BMO$.
\begin{proposition}
\index{Hardy space@Hardy space $H^{1}$!dual}
\label{prop:H1-dual-BMO}
For every $1 < q \leq \infty$, the dual space of $H^{1,q}_{\text{at}}$ is $\BMO_{q'}$.
The dual pairing is given by
\begin{equation}
\label{eq:H1-BMO-duality}
\< \sum_j \lambda_j a_j , b \> = \sum_j \lambda_j \int a_j(x) b(x) \dif x,
\quad
f = \sum_j \lambda_j a_j \in H^{1,q}_{\text{at}},\,
b \in \BMO_{q'},
\end{equation}
and the value of the series is independent of the choice of the atomic decomposition and the representative of the equivalence class $b$.

In particular all the spaces $\BMO_{q}$ for $1 \leq q < \infty$ are identical up to norm equivalence.
\end{proposition}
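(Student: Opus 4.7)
The plan is to establish the two inclusions $\BMO_{q'} \hookrightarrow (H^{1,q}_{\text{at}})'$ and $(H^{1,q}_{\text{at}})' \hookrightarrow \BMO_{q'}$ separately, with the explicit pairing (\ref{eq:H1-BMO-duality}) realizing the duality, and then to deduce the concluding identification of all the $\BMO_{q}$ spaces from the fact, built into the preceding development, that the norms on $H^{1,q}_{\text{at}}$ for $1 < q \leq \infty$ are all equivalent to the grand-maximal norm on $H^{1}$.

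For the first inclusion, given $b \in \BMO_{q'}$ and a $(1,q)$-atom $a$ supported in a ball $B$, the cancellation $\int a = 0$ allows me to subtract any constant from $b$; choosing $c_{B} = \fint_{B} b$ and applying Hölder's inequality with conjugate exponents $q$ and $q'$ yields
\[
\left| \int a b \right| = \left| \int a (b - c_{B}) \right| \leq ||a||_{q}\, ||b - c_{B}||_{L^{q'}(B)} \leq |B|^{-1+1/q} |B|^{1/q'} ||b||_{\BMO_{q'}} = ||b||_{\BMO_{q'}}.
\]
For a finite atomic combination $f = \sum_{j} \lambda_{j} a_{j}$ the quantity $\Phi_{b}(f) := \int f b$ is unambiguously defined (the integral makes literal sense because $f$ has compact support and $b$ is locally in $L^{q'}$), and by linearity and the above estimate satisfies $|\Phi_{b}(f)| \leq ||b||_{\BMO_{q'}} \sum_{j} |\lambda_{j}|$. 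Because any $g \in H^{1,q}_{\text{at}}$ is approximated in its own norm by truncations of an almost-optimal atomic decomposition, taking the infimum over decompositions extends $\Phi_{b}$ by continuity to all of $H^{1,q}_{\text{at}}$ with operator norm at most $||b||_{\BMO_{q'}}$; independence of the representative of $b$ is automatic from $\int a_{j} = 0$.

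For the converse inclusion, I would fix $\phi \in (H^{1,q}_{\text{at}})'$. For every ball $B$, the space $L^{q}_{0}(B) := \{ f \in L^{q} : \supp f \subset B,\, \int f = 0 \}$ embeds continuously into $H^{1,q}_{\text{at}}$ with $||f||_{H^{1,q}_{\text{at}}} \leq |B|^{1/q'} ||f||_{q}$, since $f / (|B|^{1/q'} ||f||_{q})$ is a $(1,q)$-atom. Hence $\phi|_{L^{q}_{0}(B)}$ is bounded with norm at most $|B|^{1/q'} ||\phi||$; for $1 < q < \infty$ Hahn-Banach extends it to $L^{q}(B)$ and Riesz representation produces $b_{B} \in L^{q'}(B)$ with $\phi(f) = \int f b_{B}$ for all $f \in L^{q}_{0}(B)$. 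Normalizing $\fint_{B} b_{B} = 0$, the norm bound becomes $||b_{B}||_{L^{q'}(B)} \leq C |B|^{1/q'} ||\phi||$. For nested balls $B_{1} \subset B_{2}$, the restrictions $b_{B_{1}}$ and $b_{B_{2}}|_{B_{1}}$ represent the same functional on $L^{q}_{0}(B_{1})$ and hence differ by an additive constant, which permits patching them, after further constant adjustments, into a single function $b$ defined modulo constants on $\R^{n}$ with $\mo_{q'}(b, B) \leq C ||\phi||$ for every ball, i.e.\ $b \in \BMO_{q'}$; evaluation $\phi(f) = \int f b$ then agrees with (\ref{eq:H1-BMO-duality}) on the dense subspace of finite atomic sums, extending by continuity to all of $H^{1,q}_{\text{at}}$.

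The final assertion follows immediately: the identification $(H^{1,q}_{\text{at}})' = \BMO_{q'}$ together with the norm equivalence of the spaces $H^{1,q}_{\text{at}}$ for $1 < q \leq \infty$ forces all $\BMO_{q'}$ to coincide with equivalent norms. The main obstacles I expect are twofold: first, the well-definedness of $\Phi_{b}$ as a functional on $H^{1,q}_{\text{at}}$ rather than merely on the finite-atomic subspace — Meyer's example (already cited in the paper) warns that the finite-atomic and atomic norms are \emph{not} equivalent on $F^{1,\infty}$, so the extension must be carried out using density of finite atomic sums in the $H^{1,q}_{\text{at}}$ norm (which \emph{does} hold, by tail-truncation of atomic decompositions) rather than in the $F^{1,\infty}$ norm; second, the case $q = \infty$ in the converse direction, where $L^{\infty}$-duality is genuinely delicate — here the cleanest workaround is to treat the case $1 < q < \infty$ first and then transport the result to $q = \infty$ using the equivalence $H^{1,\infty}_{\text{at}} \simeq H^{1,q}_{\text{at}}$.
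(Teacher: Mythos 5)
Your architecture coincides with the paper's, and both the converse inclusion $(H^{1,q}_{\text{at}})' \hookrightarrow \BMO_{q'}$ (the paper defines $\ell_B(f) := \ell(f - \fint_B f)$ directly on $L^q(B)$, sidestepping Hahn--Banach, but the idea is the same) and the concluding identification of all $\BMO_q$'s via the norm equivalences $H^{1,\infty}_{\text{at}} \simeq H^{1,q}_{\text{at}} \simeq H^1$ are sound. The forward inclusion, however, contains a genuine gap.

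You correctly flag the issue --- independence of the pairing from the chosen atomic decomposition --- but your proposed fix (density of finite atomic sums in the $H^{1,q}_{\text{at}}$ norm) does not repair it. To extend $\Phi_b$ by continuity from finite atomic combinations you need $|\int fb| \leq ||b||_{\BMO_{q'}}\,||f||_{H^{1,q}_{\text{at}}}$ for finite-atomic $f$, and since the $H^{1,q}_{\text{at}}$ norm is the infimum of $\sum_j|\lambda_j|$ over \emph{all} countable atomic decompositions, this is precisely the question of whether $\int fb = \sum_j \lambda_j \int a_j b$ holds along an infinite decomposition $f = \sum_j\lambda_j a_j$. That identity is immediate for bounded $b$ (dominated convergence, since $\sum_j|\lambda_j||a_j|\in L^1$) but not otherwise, because $b$ is only locally in $L^{q'}$ and in general unbounded. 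Equivalently, setting $\Phi_b(g) := \sum_j\lambda_j\int a_j b$ from one near-optimal decomposition, you would still have to show two decompositions of the same $g$ agree, i.e.\ that $\sum_j\lambda_j a_j = 0$ in $L^1$ forces $\sum_j\lambda_j\int a_j b = 0$; your Hölder bound alone does not give this.

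The paper closes the gap with the bounded projection-truncations $b_\alpha = P_\alpha\circ b$, whose essential virtue is the uniform estimate $||b_\alpha||_{\BMO_{q'}} \leq 2||b||_{\BMO_{q'}}$. For each fixed $\alpha$ dominated convergence gives $\sum_j\lambda_j\int a_j b_\alpha = 0$; then $\int a_j b_\alpha \to \int a_j b$ by monotone convergence in $L^{q'}(\supp a_j)$, and the uniform bound $|\int a_j b_\alpha|\leq 2||b||_{\BMO_{q'}}$ allows the limit to pass through the series. Without this truncation, or an equivalent device that controls the interchange of limit and sum against an unbounded $b$, the embedding $\BMO_{q'}\hookrightarrow (H^{1,q}_{\text{at}})'$ is not established.
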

\begin{proof}
Since $(1,q)$-atoms have mean zero, the value of the integral $\int a_j b$ does not depend on the choice of the representative of $b$.
For the same reason
\begin{multline*}
\left| \int a_j b \right|
=
\left| \int_{B_j} a_j \left(b - \fint_{B_j} b \right) \right|\\
\leq
|| a_j ||_q |B_j|^{1/q'} \left( \fint_{B_j} \left(b - \fint_{B_j} b \right)^{q'} \right)^{1/q'}
\leq
|| b ||_{\BMO_{q'}}.
\end{multline*}
Here $B_j$ is the ball containing the support of $a_j$.
Hence the series $\sum_j \lambda_j \int a_j b$ converges absolutely.

To show that its value does not depend on the atomic decomposition it is sufficient to show that if $\sum_j \lambda_j a_j = 0$, then also $\sum_j \lambda_j \int a_j b = 0$.
If $b$ is bounded, this follows immediately by the dominated convergence theorem since $\sum_j |\lambda_j| |a_j| \in L^1$.
Otherwise consider the bounded functions $b_\alpha$.
By the definition of the $\BMO_{q'}$-norm, $b \in L^{q'}(B_j)$ for every $j$ and $b_\alpha \to b$ as $\alpha \to \infty$ in $L^{q'}(B_j)$ by the monotone convergence theorem.
Since $a_j \in L^q(B_j)$, this implies
\[
\int a_j b_\alpha \to \int a_j b.
\]
Since $\int a_j b_\alpha \leq ||b_\alpha||_{\BMO_{q'}} \leq 2 ||b||_{\BMO_{q'}}$, we may apply the monotone convergence theorem to the series and we obtain
\[
0 = \sum_j \lambda_j \int a_j b_\alpha \to \sum_j \lambda_j \int a_j b \text{ as } \alpha\to\infty.
\]
Therefore every $b \in \BMO_{q'}$ defines a unique bounded linear form on $H^{1,q}_{\text{at}}$.
For the converse, restrict the problem to the case $1 < q < \infty$ first.

Let $\ell \in (H^{1,q}_{\text{at}})'$.
For every ball $B$ we can define $\ell_B \in (L^q(B))' = L^{q'}(B)$ by
\[
\ell_B (f) = \ell(f - \fint_B f) \text{ for } f \in L^q(B),
\]
since $(f - \fint_B f)/(2 ||f||_q^{-1+1/q})$ is a $(1,q)$-atom.
Hence $|| \ell_B ||_{q'} \leq 2 |B|^{-1+1/q} ||\ell||$ and we have that
\[
\ell(a) = \int_B a \ell_B,
\quad
\ell(f - \fint_B f) = \ell_B (f) = \int_B f \ell_B - \fint_B f \int \ell_B
\]
for every atom $a$ supported in $B$ and $f \in L^q(B)$, respectively.
The latter identity implies that
\[
\ell(f - \chi_B \fint_B f)
=
\int_B f \ell_B - \fint_B f \int_B \ell_B
=
\int_B f \ell_C - \fint_B f \int_B \ell_C
\]
for every ball $C \supset B$ and every $f \in L^q(B) \subset L^q(C)$.
This may be written as
\[
0
=
\int_B f \left(\ell_B - \ell_C - \fint_B (\ell_B - \ell_C) \right).
\]
Since $f$ is arbitrary, we have that
\[
\ell_B - \left. \ell_C \right|_B = \fint_B (\ell_B - \ell_C),
\]
i.e.\ that the two functions differ by a constant.
Since any two balls are contained in a greater one, we see that
\[
b(x) := \ell_B(x) - \fint_B \ell_B, \quad x \in B
\]
is well-defined.
Furthermore, for every ball $B$,
\[
\left( \int_B \left( b(x) + \fint_B\ell_B \right)^{q'} \dif x \right)^{1/q'}
\leq
2 |B|^{-1+1/q} ||\ell||,
\]
so that $||b||_{\BMO_q} \leq 4 ||\ell||$ by Proposition~\ref{prop:BMO-arbitrary-constants}.
Since the series $f = \sum_j \lambda_j a_j$ converges in $H^{1,q}_{\text{at}}$, we have
\[
\ell(f)
=
\lim_{N \to \infty} \sum_{j\leq N} \lambda_j \ell(a_j)
=
\lim_{N \to \infty} \sum_{j\leq N} \lambda_j \int a_j b.
\]
Hence $\BMO_{q'} = (H^q)'$ with equivalent norms for every $1 < q < \infty$.
By the first part of the proof we have also $\BMO_1 \subset (H^1)'$ with continuous inclusion.
On the other hand, $\BMO_{q'} \subset \BMO_1$ since contractivity of this inclusion follows by Hölder's inequality from the definition of $\BMO_{q'}$.
Summarizing, we have
\[
\BMO_1 \subset (H^{1,\infty}_{\text{at}})' = (H^1)' = (H^{1,q}_{\text{at}})' = \BMO_{q'} \subset \BMO_1.
\qedhere
\]
\end{proof}
Henceforth we will call $\BMO$ the dual space of $H^1$ and keep in mind that $\BMO = \BMO_q$ for every $1\leq q < \infty$.

The best-known example of a function which lies in $\BMO(\R^{n})$ but not in $L^{\infty}(\R^{n})$ is probably $f(x)=\log|x|$.
The fact that it lies in $\BMO$ follows immediately from the next lemma taking $c_{Q} = \max_{Q} f$.
\begin{lemma}
Let $f$ be a measurable function on $\R^{n}$ and let $b > 0$, $B \geq 0$ be constants such that for every cube $Q \subset \R^{n}$ there exists a $c_{Q} \in \R$ such that for every $s > 0$
\begin{equation}
\label{eq:john-nirenberg-c}
| Q \cap \{ |f-c_{Q}| > s \} | \leq B e^{- b s} | Q |.
\end{equation}
Then $f \in \BMO(\R^{n})$.
\end{lemma}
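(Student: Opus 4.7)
The plan is very short: reduce to bounding the $1$-mean oscillation, then integrate the tail estimate. By Proposition~\ref{prop:BMO-arbitrary-constants} applied with $q=1$ (together with the remark that $\BMO$ is defined equivalently using cubes in place of balls), it suffices to exhibit, for every cube $Q$, a constant $c_Q$ such that
\[
\fint_Q |f - c_Q| \leq A
\]
with $A$ independent of $Q$. The hypothesis (\ref{eq:john-nirenberg-c}) provides candidates $c_Q$, so the only task is to convert the exponential decay of the distribution function into an integral bound.

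The key step is the layer cake formula. Writing
\[
\fint_Q |f - c_Q|
= \frac{1}{|Q|} \int_0^\infty \bigl| Q \cap \{|f - c_Q| > s\} \bigr| \, ds,
\]
one inserts (\ref{eq:john-nirenberg-c}) to obtain
\[
\fint_Q |f - c_Q|
\leq \frac{1}{|Q|} \int_0^\infty B e^{-bs} |Q| \, ds
= \frac{B}{b},
\]
a bound uniform in $Q$. Hence $f \in \BMO_1$, and Proposition~\ref{prop:H1-dual-BMO} identifies $\BMO_1$ with $\BMO$.

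There is essentially no obstacle here; the lemma is a one-line consequence of the layer cake representation together with Proposition~\ref{prop:BMO-arbitrary-constants}. The only minor nuisance is remembering that $\BMO$ was defined via balls, whereas the hypothesis is phrased in terms of cubes, but this is handled by the standard observation that cubes and balls of comparable size differ only by universal geometric constants.
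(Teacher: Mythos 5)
Your proof is correct and is essentially the same as the paper's: both apply the layer cake formula to $\int_Q |f - c_Q|\,ds$, insert the exponential tail bound to get $\fint_Q|f-c_Q|\leq B/b$, and invoke Proposition~\ref{prop:BMO-arbitrary-constants}. The cube-versus-ball remark you make explicit is handled in the paper by the observation immediately following Proposition~\ref{prop:BMO-arbitrary-constants}, and the paper uses the already-established equality $\BMO = \BMO_1$ rather than citing Proposition~\ref{prop:H1-dual-BMO}, but these are cosmetic differences.
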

The converse, namely that $f \in \BMO$ implies (\ref{eq:john-nirenberg-c}) with $c_{Q} = \fint_{Q} f$, is known as the John-Nirenberg inequality.
\begin{proof}
Fix a cube $Q \subset \R^{n}$.
Then
\begin{align*}
\int_{Q} |f-c_{Q}|
&= \int_{s=0}^\infty |Q \cap \{ |f-c_Q| > s \} | \dif s\\
&\leq \int_{s=0}^\infty B e^{-b s} | Q | \dif s\\
&= B/b |Q|.
\end{align*}
Proposition~\ref{prop:BMO-arbitrary-constants} implies $||f||_\BMO \leq 2B/b$.
\end{proof}

\section{\texorpdfstring{$\VMO$}{VMO}, a predual of \texorpdfstring{$H^1$}{H1}}
We now consider functions for which the oscillation over ever smaller balls not merely stays bounded but converges to zero, hence the name $\VMO$, or ``vanishing mean oscillation''.
For technical reasons we shall use a less direct definition.
\begin{definition}
\index{VMO@$\VMO$}
The space $\VMO$ is the closure of the space $C_{c}$ of continuous functions with compact support in the space $\BMO$.
\end{definition}
We shall need a functional analytic lemma.
\begin{lemma}[Goldstine]
\label{lem:X1-dense-in-X''1}
Let $X$ be a Banach space.
Then, under the canonical identification of $X$ with a subspace of $X''$, the unit ball $B_{X}$ is $\sigma(X'',X')$-dense in $B_{X''}$.
\end{lemma}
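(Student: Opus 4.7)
The plan is a classical Hahn--Banach separation argument in the locally convex space $(X'', \sigma(X'',X'))$. I will argue by contradiction.

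Suppose there exists $\phi \in B_{X''}$ which does not belong to the $\sigma(X'',X')$-closure $K$ of $B_X$. First, I would observe that $K$ is convex, balanced, and $\sigma(X'',X')$-closed since $B_X$ is convex and balanced and these properties are preserved under closure in any locally convex vector topology. Next, I apply the Hahn--Banach separation theorem in the locally convex space $(X'',\sigma(X'',X'))$ to strictly separate the point $\phi$ from the closed convex set $K$: there exist a $\sigma(X'',X')$-continuous linear functional $\Lambda$ on $X''$ and a real number $\alpha$ such that
\[
\sup_{x \in K} \Re \Lambda(x) \leq \alpha < \Re \Lambda(\phi).
\]

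The key identification to make explicit is that every $\sigma(X'',X')$-continuous linear functional on $X''$ is given by evaluation at some $\lambda \in X'$; that is, $\Lambda(\xi) = \xi(\lambda)$ for some $\lambda \in X'$. This is the standard fact that the dual of a space equipped with a weak topology $\sigma(E,F)$ is exactly $F$, and follows from the characterization of $\sigma(X'',X')$-continuous seminorms. Using the balanced property of $B_X$, so that $e^{i\theta} B_X = B_X$, we get
\[
\|\lambda\|_{X'} = \sup_{x \in B_X} |\lambda(x)| = \sup_{x \in B_X} \Re \lambda(x) \leq \alpha < \Re \phi(\lambda) \leq |\phi(\lambda)| \leq \|\phi\|_{X''} \|\lambda\|_{X'} \leq \|\lambda\|_{X'},
\]
a contradiction.

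The steps are essentially routine; the only point deserving care is the identification of the topological dual of $(X'',\sigma(X'',X'))$ with $X'$, which is where the weak-$*$ topology is doing the real work (the norm-dual of $X''$ is generally much larger than $X'$, but the $\sigma(X'',X')$-continuous functionals are only those coming from $X'$). Once this identification is in place, the balancedness of $B_X$ turns the real separation inequality into a norm inequality on $\lambda$ that directly contradicts $\phi \in B_{X''}$.
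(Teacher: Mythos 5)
Your proof is correct, and it takes a genuinely different route from the paper's. You argue by Hahn--Banach \emph{separation} in the locally convex space $(X'',\sigma(X'',X'))$: assuming $\phi\in B_{X''}$ escapes the weak-$*$ closure $K$ of $B_X$, you separate $\phi$ from the closed convex balanced set $K$ and then use the key identification $(X'',\sigma(X'',X'))'=X'$ to push the separating functional down to some $\lambda\in X'$, at which point the balancedness of $B_X$ and the definition of the bidual norm collide. The paper instead argues by a finite-dimensional reduction plus Hahn--Banach \emph{extension}: it fixes a finite-dimensional $W\subset X'$, uses linear algebra to produce an $x\in X$ matching $\psi$ on $W$, and shows that the affine set $x+\bigcap_{\phi\in W}\ker\phi$ must meet $B_X$ — otherwise a distance-attaining norming functional $\phi$ would lie in $W$ and force $\psi(\phi)>1$, contradicting $\|\psi\|\le 1$. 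Your approach is shorter once the identification of the dual of $\sigma(X'',X')$ is granted, and it hides the finite-dimensional nature of the weak-$*$ neighborhoods inside that identification; the paper's argument is more hands-on, building the approximating element explicitly and exhibiting where finiteness of the defining functionals actually enters. Both are classical proofs of Goldstine; yours is the one usually seen in Rudin-style treatments, the paper's is closer to the original basic-neighborhood argument. One small stylistic remark: you may want to note explicitly that $\Lambda\ne 0$ (hence $\lambda\ne 0$) is automatic from the strict separation $\alpha<\Re\Lambda(\phi)$, though your final inequality chain already delivers the contradiction in every case.
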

\begin{proof}
Given a $\psi \in B_{X''}$ and a finite dimensional space $W \subset X'$ it is a matter of linear algebra to find an $x \in X$ such that $\psi|_{W} = x|_{W}$.
Let now $Y=\cap_{\phi \in W} \ker \phi$.
Assume that $x + Y \cap B_{X} = \emptyset$.
Then $\dist(x,Y) > 1$ and by the Hahn-Banach theorem there exists a $\phi \in X'$ such that $\phi|_{Y}=0$, $\phi(x)>1$ and $||\phi||_{X'}=1$.
Then $\phi \in W$ and
\[
1 < \phi(x) = \psi(\phi) \leq ||\psi||_{X''} ||\phi||_{X'} \leq 1,
\]
a contradiction.
Therefore the affine subspace $x+Y$ contains an element with norm at most $1$.
This element coincides with $\psi$ on $W$.
By definition of the $\sigma(X'',X')$ topology we are done.
\end{proof}

\begin{proposition}[{\cite[Theorem 4.1]{MR0447954}}]
\index{Hardy space@Hardy space $H^{1}$!predual}
\label{prop:VMO-dual-H1}
The Hardy space $H^{1}$ is the Banach space dual of $\VMO$.
The duality is given by (\ref{eq:H1-BMO-duality}).
\end{proposition}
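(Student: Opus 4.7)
The plan is to identify $H^{1}$ with $(\VMO)^{*}$ via the canonical map $T\colon H^{1}\to(\VMO)^{*}$ defined by $T(f)(b)=\langle f,b\rangle_{H^{1},\BMO}$ for $b\in\VMO\subset\BMO$, with the pairing given by Proposition~\ref{prop:H1-dual-BMO}. This $T$ is well-defined and bounded thanks to $\VMO\subset\BMO$ and the duality $\BMO=(H^{1})'$, and it is injective because $C_{c}\subset\VMO$ separates points of $L^{1}\supset H^{1}$.

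To obtain the reverse norm inequality $\|f\|_{H^{1}}\lesssim\|T(f)\|_{(\VMO)^{*}}$, I approximate a $b\in\BMO$ with $\|b\|_{\BMO}\le 1$ that nearly attains the supremum in the $H^{1}$-$\BMO$ pairing by elements of $\VMO$ with uniformly bounded $\BMO$-norm. A concrete construction is
\[
b_{n}=\bigl((b-c_{n})\chi_{B(0,n)}\bigr)*\rho_{1/n}, \qquad c_{n}=\fint_{B(0,n)}b,
\]
with $\rho$ a standard mollifier; then $b_{n}\in C_{c}^{\infty}\subset\VMO$ and its $\BMO$-norm is controlled by $\|b\|_{\BMO}$ via standard manipulations (Proposition~\ref{prop:BMO-arbitrary-constants}, mollification preserves $\BMO$-norm, and the truncation contributes a controlled constant). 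Convergence $\langle f,b_{n}\rangle\to\langle f,b\rangle$ is verified by applying the dominated convergence theorem to the atomic series $\sum_{j}\lambda_{j}\int a_{j}b_{n}$: the summands are dominated by $|\lambda_{j}|\cdot C$ thanks to the uniform estimate $\left|\int a_{j}b_{n}\right|\le C\|b_{n}\|_{\BMO}$ from the proof of Proposition~\ref{prop:H1-dual-BMO}, and they converge termwise because on each fixed support $B_{j}$ the approximation $b_{n}\to b$ holds in $L^{q'}(B_{j})$.

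Surjectivity is the crux, and the main obstacle, since $H^{1}$ is not reflexive and hence not automatically weak-$*$ closed in $\BMO^{*}=(H^{1})''$. Given $\ell\in(\VMO)^{*}$, Hahn-Banach extends $\ell$ to $\tilde\ell\in\BMO^{*}$ with equal norm. Applying Goldstine's Lemma~\ref{lem:X1-dense-in-X''1} to $X=H^{1}$ produces a net $(f_{\alpha})\subset H^{1}$ with $\|f_{\alpha}\|_{H^{1}}\le\|\ell\|$ and $\langle f_{\alpha},b\rangle\to\tilde\ell(b)$ for every $b\in\BMO$; in particular $T(f_{\alpha})\to\ell$ in $\sigma((\VMO)^{*},\VMO)$. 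To extract a single $f\in H^{1}$ with $T(f)=\ell$, the plan is to establish the $\sigma((\VMO)^{*},\VMO)$-closedness of each ball $T(B_{H^{1}}(r))$; combined with the weak-$*$ density obtained from Goldstine and the weak-$*$ compactness of bounded sets in $(\VMO)^{*}$ from Banach-Alaoglu, this forces $\ell\in T(B_{H^{1}}(\|\ell\|))$. The closedness step is the technical heart of the argument and I expect it to consume most of the work: for a weak-$*$ convergent net with bounded $H^{1}$ norms, I will use the atomic decomposition of Proposition~\ref{prop:infinite-atomic} to organize the $f_{\alpha}$, pass to a subnet along which the atomic coefficients converge, and exploit the vanishing-mean-oscillation condition at both small scales and infinity (which is what distinguishes $\VMO$ from $\BMO$) to rule out escape of mass and thereby assemble an atomic decomposition of the limit with controlled total weight.
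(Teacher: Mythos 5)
Your framework---Hahn--Banach, Goldstine, injectivity via separation by $C_{c}$, surjectivity via weak-$*$ closedness of $\iota(B_{H^{1}})$---is the paper's, and you correctly flag closedness as the crux. But the ``reverse norm inequality'' paragraph should be dropped: it is redundant, because once $\iota$ is a continuous bijection the closed graph theorem gives norm equivalence (this is what the paper does), and the proposed construction fails. The hard truncation $(b-c_{n})\chi_{B(0,n)}$ need not be in $\BMO$: take $b(x)=\log|x-x_{0}|$ with $x_{0}\in\partial B(0,n)$; on a cube $Q$ of side $\delta$ straddling the sphere near $x_{0}$, the truncated function has mean oscillation growing like $|\log\delta|$. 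Mollification then produces a $C_{c}^{\infty}$ function whose $\BMO$ norm is controlled only by $\sup_{B(0,n)}|b-c_{n}|$, which is unbounded in $n$. $\BMO$ functions are truncated \emph{by value}, not by support.

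Your closedness argument is sketch-level and misses a simplification the paper relies on. Since $(C_{c},||\cdot||_{\infty})\hookrightarrow\BMO$ is continuous, $\VMO$ is separable, so the $\sigma(\VMO',\VMO)$-topology on bounded sets is metrizable and \emph{sequential} closedness suffices; no nets or Banach--Alaoglu needed. The paper then normalizes atoms to be supported in $3Q_{l}$ for a fixed enumeration $(Q_{l})_{l}$ of dyadic cubes, passes to a diagonal subsequence along which $\lambda_{k,l}\to\lambda_{l}$ and $a_{k,l}\to a_{l}$ in $\sigma(L^{\infty},L^{1})$, verifies that $f:=\sum_{l}\lambda_{l}a_{l}\in B_{H^{1}}$, and shows $\langle f_{k},v\rangle\to\langle f,v\rangle$ for $v\in C_{c}$ by a $3\epsilon$ split: contributions from small cubes vanish by uniform continuity of $v$, from large cubes by $||v||_{1}<\infty$, and the finitely many medium cubes converge termwise. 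You invoke ``vanishing oscillation at small scales and infinity,'' but the concrete lever is density of $C_{c}$ in $\VMO$ together with these two elementary properties of a $C_{c}$ test function; until that mechanism is filled in, the surjectivity step is a plan rather than a proof.
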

\begin{proof}
Proposition~\ref{prop:H1-dual-BMO} provides the canonical maps
\[
H^{1} \to \BMO' \to \VMO'.
\]
Since $C_{c}$ separates the points of $L^{1}$ and $H^{1}$ is a subspace of $L^{1}$, the space $\VMO$ separates the points of $H^{1}$.
Therefore the canonical contraction $\iota : H^{1} \to \VMO'$ is injective.

By Lemma~\ref{lem:X1-dense-in-X''1} the unit ball $B_{H^{1}}$ is $\sigma(\BMO',\BMO)$-dense in $B_{\BMO'}$.
Hence the Hahn-Banach theorem implies that the set $\iota(B_{H^{1}})$ is $\sigma(\VMO', \VMO)$-dense in $B_{\VMO'}$.
If we can prove that the completion of $\iota(B_{H^{1}})$ in the $\sigma(\VMO', \VMO)$ topology is a subset of $\iota(H^{1})$, then we obtain the set-theoretic equality $\iota(H^{1}) = \VMO'$, and the norm equivalence follows from the closed graph theorem.

Since $(C_{c}(\R^{n}), ||\cdot||_{\infty})$ is separable and the inclusion into $\BMO$ is continuous, the space $\VMO$ is separable.
Therefore the $\sigma(\VMO', \VMO)$ topology is metrizable and it suffices to show that every sequence $(f_{k})_{k \in N} \subset B_{H^{1}}$ contains a $\sigma(H^{1},\VMO)$-convergent subsequence.
By Proposition~\ref{prop:infinite-atomic} every $f_{k}$ can be written as
\[
f_{k} = \sum_{l} \lambda_{k,l} a_{k,l},
\]
where $a_{k,l}$ are $(1,\infty)$-atoms and $\sum_{l} |\lambda_{k,l}| \leq C$.
Spending a multiplicative constant in the last inequality we may assume that every $a_{k,l}$ is supported in $3Q$, where $Q$ is some dyadic cube.
Combining the atoms supported in the same cube we may equally assume that $\supp a_{k,l} = 3 Q_{l}$, where $(Q_{l})_{l \in\N}$ is a fixed enumeration of the dyadic cubes.

Passing to a subsequence we may assume that $a_{k,l} \to a_{l}$ in the $\sigma(L^{\infty}, L^{1})$ topology and $\lambda_{k,l} \to \lambda_{l}$ as $k \to \infty$.
Then $a_{l}$ are $(1,\infty)$-atoms, $\sum_{l} |\lambda_{l}| \leq C$ and $f := \sum_{l} \lambda_{l} a_{l} \in H^{1}$.
Let now $v \in C_{c}$.
Then $||v||_{1} < \infty$ and $v$ is uniformly continuous.
The former property implies that $\<a,v\> \to 0$ as $|\supp a| \to \infty$ (and thus $||a||_{\infty} \to 0$) uniformly for all atoms $a$, while the latter shows that $\<a,v\> \to 0$ as $|\supp a| \to 0$, again uniformly for all atoms $a$.
Furthermore there are only finitely many ``medium-sized'' dyadic cubes $Q$ with a given upper and a lower bound on the edge length such that $3Q$ intersects $\supp v$.
Upon an appropriate choice of the bounds we obtain
\[
\<f_{k},v\>
=
\sum_{l : Q_{l} \text{ small}} \lambda_{k,l} \underbrace{\<a_{k,l}, v\>}_{\leq \epsilon}
+
\sum_{l : Q_{l} \text{ mid-size}} \lambda_{k,l} \underbrace{\<a_{k,l}, v\>}_{\to \<a_{l}, v\>}
+
\sum_{l : Q_{l} \text{ large}} \lambda_{k,l} \underbrace{\<a_{k,l}, v\>}_{\leq \epsilon},
\]
where the middle sum is finite.
Therefore $\<f_{k},v\> \to \<f,v\>$.
By a $3\epsilon$ argument we see that $f_{k} \to f$ in the $\sigma(H^{1},\VMO)$ topology.
\end{proof}

\section{The sharp function and the inverse \texorpdfstring{$L^p$}{Lp} inequality}
We now turn to interpolation between $H^1$ and $L^p$ spaces.
We take the detour \cite{MR0447953} across dual spaces and use Fefferman's sharp function $f^\sharp$ to reduce statements about $\BMO$ to statements about $L^\infty$.
To make use of results in term of the sharp function we will need an estimate for $f$ in terms of $f^\sharp$.
\begin{definition}
\index{Fefferman sharp function}
The \emph{sharp function} is defined by
\[
f^\sharp(x) = \sup_{x \in Q} \mo_1 (f, Q).
\]
\end{definition}

It is clear from definitions that $f \in \BMO$ if and only if $f^\sharp \in L^\infty$ and
\[
||f||_\BMO = ||f||_{\BMO_1} = ||f^\sharp||_\infty.
\]
Moreover, $f^\sharp$ is bounded by $2Mf$, where $Mf$ is the usual maximal function.
Hence the classical maximal inequality gives the bound
\[
|| f^\sharp ||_p \leq 2 || Mf ||_p \leq C_p || f ||_p
\]
for every $1 < p \leq \infty$.
Interestingly, the inverse inequality is true as well under an additional qualitative assumption.

\begin{proposition}
\label{prop:inverse-inequality-for-the-sharp-function}
Let $1 < p < \infty$, $1 \leq p_0 \leq p$ and assume that $f \in L^{p_0}$, $f^\sharp \in L^p$.
Then
\[
||M f||_p \leq C_p ||f^\sharp||_p.
\]
The bound $C_p$ is independent of $p_0$ and $||f||_{p_0}$.
\end{proposition}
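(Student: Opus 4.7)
The strategy is the classical Fefferman--Stein good--$\lambda$ argument. First I would establish the following distributional inequality: there is a dimensional constant $C_n$ such that for every $\gamma\in(0,1)$ and every $\lambda>0$,
\begin{equation}
\label{eq:good-lambda}
|\{x : Mf(x)>2\lambda,\ f^\sharp(x)\leq\gamma\lambda\}|
\leq
C_n\,\gamma\,|\{Mf>\lambda\}|.
\end{equation}
To prove \eqref{eq:good-lambda}, note that $\{Mf>\lambda\}$ is open of finite measure (since $f\in L^{p_0}$ yields $Mf\in L^{p_0,\infty}$), hence admits a Whitney decomposition $\{Q_j\}$ into essentially disjoint cubes with $\dist(Q_j,\complement\{Mf>\lambda\})$ comparable to $\ell(Q_j)$. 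On each Whitney cube $Q=Q_j$, pick a parent cube $Q^*\supset Q$ with $Q^*\cap\complement\{Mf>\lambda\}\ne\emptyset$; then by maximality of $\{Mf>\lambda\}$ one has $\fint_{Q^*}|f|\leq\lambda$, and the contribution of $f\chi_{\complement Q^*}$ to $M$ is pointwise dominated by $C\lambda$ on $Q$. Thus for $x\in Q$ with $Mf(x)>2\lambda$, necessarily $M(f\chi_{Q^*})(x)>\lambda$ (after adjusting the dilation factor of $Q^*$). Assuming $Q$ contains some point $x_0$ with $f^\sharp(x_0)\leq\gamma\lambda$ (otherwise $Q$ contributes nothing to the left side of \eqref{eq:good-lambda}), the weak-$(1,1)$ boundedness of $M$ yields
\[
|\{x\in Q : M((f-c_{Q^*})\chi_{Q^*})(x)>\lambda/2\}|
\leq
\frac{C}{\lambda}\int_{Q^*}|f-c_{Q^*}|
\leq
C\gamma|Q^*|
\leq
C_n\gamma|Q|,
\]
where $c_{Q^*}=\fint_{Q^*}f$ and the last step uses $f^\sharp(x_0)\leq\gamma\lambda$ at a point $x_0\in Q\subset Q^*$. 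Summing over $j$ gives \eqref{eq:good-lambda}.

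Next I would integrate \eqref{eq:good-lambda} against $p\lambda^{p-1}\,d\lambda$. Writing $\|Mf\|_p^p=2^p\int_0^\infty p\lambda^{p-1}|\{Mf>2\lambda\}|\,d\lambda$ and splitting
\[
|\{Mf>2\lambda\}|\leq|\{Mf>2\lambda,\,f^\sharp\leq\gamma\lambda\}|+|\{f^\sharp>\gamma\lambda\}|,
\]
one obtains
\begin{equation}
\label{eq:absorb}
\|Mf\|_p^p\leq 2^pC_n\gamma\,\|Mf\|_p^p+(2/\gamma)^p\|f^\sharp\|_p^p.
\end{equation}
Choosing $\gamma$ so that $2^pC_n\gamma\leq 1/2$ and absorbing the first term on the right yields $\|Mf\|_p\leq C_p\|f^\sharp\|_p$.

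The main (and only) subtlety is justifying the absorption, since \eqref{eq:absorb} is vacuous if $\|Mf\|_p=\infty$. This is precisely the role of the qualitative assumption $f\in L^{p_0}$: I would apply the same argument to the truncated quantity
\[
I_\Lambda:=p\int_0^\Lambda\lambda^{p-1}|\{Mf>\lambda\}|\,d\lambda,
\]
which is finite for every $\Lambda<\infty$ because $|\{Mf>\lambda\}|\leq C\lambda^{-p_0}\|f\|_{p_0}^{p_0}$ by the weak-type $(p_0,p_0)$ inequality for $M$ (this is where $p_0\geq 1$ is used, with the Marcinkiewicz inequality when $p_0>1$ and Lemma~\ref{lem:hardy-littlewood-maximal-inequality-L1} when $p_0=1$), so the integrand is controlled for large $\lambda$ and bounded on $[0,\Lambda]$ by $\lambda^{p-1}|\{Mf>\lambda\}|\in L^1_{\mathrm{loc}}((0,\Lambda])$ with the singularity at $\lambda=0$ being integrable since $p>1$ and $|\{Mf>\lambda\}|$ is bounded near the set's total measure. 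A change of variable in the good-$\lambda$ inequality gives $I_\Lambda\leq 2^pC_n\gamma\,I_\Lambda+(2/\gamma)^p\|f^\sharp\|_p^p$; absorbing and letting $\Lambda\to\infty$ by monotone convergence yields the claim with a constant $C_p$ depending only on $p$ and $n$, independent of $p_0$ and $\|f\|_{p_0}$. The hardest step is the geometric covering argument behind \eqref{eq:good-lambda}; the rest is bookkeeping.
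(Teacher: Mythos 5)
Your proof is correct, but it takes a genuinely different route from the paper's. The paper performs a Calderón-Zygmund decomposition of $f$ itself at every level $\alpha$ (with nested dyadic cubes $Q^\alpha_j$), proves the scale-comparison estimate $\mu(\alpha)\leq|\{f^\sharp>\alpha/A\}|+\tfrac{2}{A}\mu(2^{-n-1}\alpha)$ for the total measure $\mu(\alpha)=\sum_j|Q^\alpha_j|$ of the CZ cubes, and only afterwards transfers this to the distribution function of $Mf$ by a two-sided comparison $\mu(\alpha)\leq\lambda(\alpha)\leq 3^n\mu(\alpha/(1+4^n))$. You instead run the good-$\lambda$ argument directly on $Mf$: Whitney-decompose the open superlevel set $\{Mf>\lambda\}$ and show $|\{Mf>2\lambda,\,f^\sharp\leq\gamma\lambda\}|\leq C_n\gamma|\{Mf>\lambda\}|$ by the usual stopping-time localization plus weak-$(1,1)$ boundedness of $M$ on the localized piece $(f-c_{Q^*})\chi_{Q^*}$. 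The two arguments are morally equivalent and both classical Fefferman--Stein; yours avoids the auxiliary family of CZ cubes of $f$ and produces the distributional inequality for $Mf$ in one shot, at the cost of needing the Whitney geometry of $\{Mf>\lambda\}$ instead of the self-similar dyadic CZ structure. Both handle the absorption subtlety identically, by truncating the integral and invoking the qualitative hypothesis $f\in L^{p_0}$.

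One small imprecision to tidy up: your justification that $I_\Lambda<\infty$ says $|\{Mf>\lambda\}|$ is ``bounded near the set's total measure'' as $\lambda\to 0$. That is not right — if $f\not\equiv 0$ then $Mf>0$ everywhere and $|\{Mf>\lambda\}|\to\infty$. The correct reason is the weak-type $(p_0,p_0)$ bound $|\{Mf>\lambda\}|\lesssim\lambda^{-p_0}\|f\|_{p_0}^{p_0}$, which makes $\lambda^{p-1}|\{Mf>\lambda\}|\lesssim\lambda^{p-1-p_0}$ integrable near $0$ when $p_0<p$; in the borderline case $p_0=p$ one should instead invoke the strong Hardy--Littlewood bound $\|Mf\|_p\lesssim\|f\|_p<\infty$ (Theorem~\ref{th:hardy-littlewood-maximal-inequality}, $p>1$) to see $I_\Lambda\leq\|Mf\|_p^p<\infty$ directly. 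The paper's own treatment of this finiteness has the same flavor and is no more careful, so this is a shared loose end rather than a defect of your approach. Also, depending on whether $M$ is taken centered or non-centered over cubes, the factor $2$ in the good-$\lambda$ inequality may need to be a larger absolute constant to make the Whitney localization work cleanly; this is a standard cosmetic adjustment.
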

\begin{proof}
We use the Calderón-Zygmund decomposition of $f$.
For each $\alpha>0$ we find some cubes $\{ Q^\alpha_j \}$ such that
\begin{equation}
\label{eq:calderon-zygmund-mean-value}
\alpha < \fint_{Q^\alpha_j} |f| \leq 2^n \alpha,
\end{equation}
\[
|f|<\alpha
\quad
\text{a.e.\ in } \R^n \setminus \cup_j Q^\alpha_j
\]
and $\{Q^\beta_j\}$ are sub-cubes of $\{Q^\alpha_k\}$ whenever $\beta>\alpha$.
Essentially, we do so by dividing $\R^n$ into cubes on which the average of $|f|$ is small and then subdividing these cubes dyadically until we encounter some cubes on which the average is large.
Doing so for all $\alpha$ at once, however, requires careful formulation.
See Section~\ref{sec:calderon-zygmund} for details.

\paragraph*{Comparison of scales}
Let $A$ be a constant that will be optimized later and $\mu(\alpha) = \sum_j |Q^\alpha_j|$.
We claim that
\begin{equation}
\label{eq:sharp-scale-comparison}
\mu(\alpha)
\leq
\left|\{ f^\sharp > \alpha/A \}\right|
+
\frac{2}{A} \mu(2^{-n-1} \alpha).
\end{equation}
Let $Q_0 = Q^{2^{-n-1} \alpha}_k$ and consider the sub-cubes $\{ Q^\alpha_j \}_{j \in J_0} \subset Q_0$.
Then either $Q_0 \subset \{ f^\sharp > \alpha/A \}$ and hence
\[
Q^\alpha_j \subset \{ f^\sharp > \alpha/A \}
\text{ for all } j \in J_0
\]
or there exists an $x \in Q_0$ such that $f^\sharp(x) \leq \alpha/A$.
Then
\[
\frac{\alpha}{A} \geq f^\sharp(x) \geq \fint_{Q_0} \left| f - \fint_{Q_0} f \right|.
\]
By (\ref{eq:calderon-zygmund-mean-value}) we have
$\left| \fint_{Q_0} f \right| \leq \fint_{Q_0} |f| \leq 2^n(2^{-n-1}\alpha) = \alpha/2$
and
$\fint_{Q^\alpha_j} |f| > \alpha$ and hence the estimate
\begin{align*}
\frac\alpha2 |Q^\alpha_j|
&\leq
|Q^\alpha_j| \left| \fint_{Q^\alpha_j} |f| - \left| \fint_{Q_0} f \right| \right|\\
&=
\left| \int_{Q^\alpha_j} \left( |f| - \left| \fint_{Q_0} f \right| \right) \right|\\
&\leq
\int_{Q^\alpha_j} \left| |f| - \left|\fint_{Q_0} f \right| \right|\\
&\leq
\int_{Q^\alpha_j} \left| f - \fint_{Q_0} f \right|.
\end{align*}
Summing over $J_0$ we obtain
\[
\sum_{j \in J_0} |Q^\alpha_j|
\leq
\frac{2}{\alpha} \int_{Q_0} \left| f - \fint_{Q_0} f \right|
\leq
\frac2A |Q_0|
\]
and summation over $Q_0$ proves the claim.

\paragraph*{Distribution function of the maximal function}
Let $\lambda$ denote the distribution function of the maximal function
\[
M f (x) = \sup_{x \in Q} \fint_Q |f|.
\]
Since $x \in Q^\alpha_j$ implies $Mf(x) \geq \alpha$ by construction of $Q^{\alpha}_{j}$, we have that
\[
\lambda(\alpha) \geq \mu(\alpha).
\]
To obtain a converse estimate consider the cubes $3 Q^\alpha_j$ which have the same centers as $Q^\alpha_j$ but the triple edge length.
Let $x \not\in \union_j 3 Q^\alpha_j$ be arbitrary.
Then, whenever $Q$ is a cube containing $x$ such that $Q \intersection Q^\alpha_j \neq \emptyset$, the cube $2 Q$ contains $Q^\alpha_j$.
Since $|f| \leq \alpha$ outside of $\union_{j} Q^\alpha_j$, we have
\[
\fint_Q |f|
\leq
\alpha
+
\frac{1}{|Q|} \sum_{j : Q \intersection Q^\alpha_j \neq \emptyset} \int_{Q^\alpha_j} |f|
\leq
\alpha
+
\frac{2^n}{|2Q|} \sum_{j : Q \intersection Q^\alpha_j \neq \emptyset} |Q^\alpha_j| 2^n \alpha
\leq
(1 + 4^n) \alpha,
\]
that is, $Mf < (1+4^n \alpha)$ outside of $\union_j 3 Q^\alpha_j$, so that
\[
\lambda((1+4^n) \alpha) \leq 3^n \mu(\alpha).
\]

\paragraph*{$L^p$ norm of the maximal function in terms of its distribution function}
\begin{align*}
|| Mf ||_p^p
&=
p \int_{a=0}^\infty a^{p-1} \lambda(a) \dif a\\
&\leq
3^n p \int_{a=0}^\infty a^{p-1} \mu(a/(1+4^n)) \dif a\\
&=
3^n (1+4^n)^p \sup_{N>0} p \int_{a=0}^N a^{p-1} \mu(a) \dif a.
\end{align*}
By (\ref{eq:sharp-scale-comparison}) we can estimate the last term by
\begin{align*}
p &\int_{a=0}^N a^{p-1} \mu(a) \dif a\\
&\leq
p \int_{a=0}^N a^{p-1}
\left|\{ f^\sharp > a/A \}\right|
\dif a
+
p \int_{a=0}^N a^{p-1}
\frac{2}{A} \mu(2^{-n-1} a)
\dif a\\
&\leq
A^p p \int_{a=0}^{N/A} a^{p-1}
\left|\{ f^\sharp > a \}\right|
\dif a
+
\frac{2}{A} 2^{(n+1)p} p \int_{a=0}^{2^{-n-1} N} a^{p-1}
\mu(a)
\dif a\\
&\leq
A^p || f^\sharp ||_p^p
+
\frac{2 \cdot 2^{(n+1)p}}{A}
p \int_{a=0}^{N} a^{p-1} \mu(a) \dif a.
\end{align*}
Note that the latter integral is bounded by
\[
p \int_{a=0}^N a^{p-1} |\{|f| > a\}| \dif a
\leq
N^{p-p_0} p \int_{a=0}^N a^{p_0-1} |\{|f| > a\}| \dif a
\leq
N^{p-p_0} || f ||_{p_0} < \infty
\]
and hence both sides of the inequality are finite.
Specifying to $A=4 \cdot 2^{(n+1)p}$ we obtain
\[
p \int_{a=0}^N a^{p-1} \mu(a) \dif a
\leq
2 \cdot 4^p \cdot 2^{(n+1)p^2} || f^\sharp ||_p^p
\]
uniformly in $N$ and hence
\[
|| Mf ||_p^p
\leq
3^n (1+4^n)^p
\cdot 2 \cdot 4^p \cdot 2^{(n+1)p^2} || f^\sharp ||_p^p.
\qedhere
\]
\end{proof}
Since $|f| \leq Mf$ a.e., Proposition~\ref{prop:inverse-inequality-for-the-sharp-function} also tells us that $||f||_p \leq C_p ||f^\sharp||_{p}$ for $1<p<\infty$.

\section{Interpolation between \texorpdfstring{$L^{p}$}{Lp} and \texorpdfstring{$H^{1}$}{H1}}
We shall now see how the Hardy space theory can be used to obtain $L^p$ estimates.
\begin{theorem}
\index{complex interpolation!between VMO and Lp@between $\VMO$ and $L^p$}
\label{th:Lp-VMO-interpolation}
Let $1 < p < \infty$, $0 < \theta < 1$ and $1/p_\theta = (1-\theta)/p$.
Then, up to norm equivalence,
\[
[ L^p, \VMO ]_\theta = L^{p_\theta}.
\]
\end{theorem}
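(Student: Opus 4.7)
The plan is to establish the two inclusions $[L^p,\VMO]_\theta \hookrightarrow L^{p_\theta}$ and $L^{p_\theta} \hookrightarrow [L^p,\VMO]_\theta$ separately.

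For the first inclusion, I would argue by duality combined with the three lines lemma. Let $f \in [L^p,\VMO]_\theta$ be represented by $f_z \in \mathcal{F}(L^p, \VMO)$ with $f_\theta = f$, and let $g$ be a Schwartz function of mean zero with $||g||_{p_\theta'} \leq 1$ (such $g$ being dense in the unit ball of $L^{p_\theta'}$). Proposition~\ref{prop:h1-lp-interpolation-by-schwartz-functions} supplies an analytic family $g_z \in \mathcal{F}(L^{p'}, H^1, \Schwartz)$ with $g_\theta = g$ and $||g_z||_{\mathcal{F}(L^{p'}, H^1)} \leq 1 + \epsilon$. The scalar function $F(z) = \<f_z, g_z\>$ is then holomorphic on the strip with admissible growth, bounded on $\{\Re z = 0\}$ by Hölder's inequality and on $\{\Re z = 1\}$ by the $\VMO$--$H^1$ duality of Proposition~\ref{prop:VMO-dual-H1}; both boundary bounds reduce to $||f_z||_{\mathcal{F}}(1+\epsilon)$. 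Lemma~\ref{lemma:three-lines} then gives $|\<f, g\>| = |F(\theta)| \leq C||f_z||_{\mathcal{F}}(1+\epsilon)$, and passing to the infimum over $f_z$ and the supremum over $g$ yields $||f||_{p_\theta} \leq C ||f||_{[L^p,\VMO]_\theta}$.

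For the reverse inclusion, I would imitate the construction in Proposition~\ref{prop:lp-interpolation-by-schwartz-functions} with $p_0 = p$ and $p_1 = \infty$. Given a Schwartz $f$ with $||f||_{p_\theta} = 1$, form the same analytic family
\[
f_z := e^{\delta(z^2 - \theta^2)}\left( \phi_0 \circ f + \sum_{j > 0} m_j^{p_\theta/p_z} \frac{\phi_j \circ f}{m_j} \right)
\]
as in that proof. Because $f$ is Schwartz, only finitely many $\phi_j \circ f$ are non-zero and each is itself a Schwartz function (compactly supported when $j > 0$, or agreeing with $f$ outside a compact set when $j = 0$). The $L^p$-bound on $f_{iy}$ and the $L^\infty$-bound on $f_{1+iy}$ are copied verbatim from the proof of Proposition~\ref{prop:lp-interpolation-by-schwartz-functions}; both are at most $1+\epsilon$. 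Since Schwartz is dense in $L^{p_\theta}$ and both $L^{p_\theta}$ and $[L^p, \VMO]_\theta$ embed continuously into $\Schwartz'(\R^n)$, the resulting Schwartz-level embedding extends uniquely and coincides with the identity on all of $L^{p_\theta}$.

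The main obstacle is to upgrade the $L^\infty$-bound on $f_{1+iy}$ to a $\VMO$-bound, since $\VMO$ is strictly smaller than $L^\infty$. It succeeds here because the structure of the construction forces each $\phi_j \circ f$ to lie in $C_0(\R^n) \subset \VMO(\R^n)$: for $j > 0$ the function has compact support, while for $j = 0$ it agrees with the Schwartz function $f$ outside a compact set and therefore vanishes at infinity. The estimate $||\cdot||_{\VMO} \leq 2 ||\cdot||_\infty$ then converts the sup-norm bound into the required $\VMO$-bound, continuity of $y \mapsto f_{1+iy}$ into $\VMO$ follows from continuity of the finitely many scalar coefficients $m_j^{p_\theta/p_z}$, and decay as $|y| \to \infty$ comes from the Gaussian factor $e^{\delta z^2}$, so $f_z \in \mathcal{F}(L^p, \VMO, \Schwartz)$ with comparable norm.
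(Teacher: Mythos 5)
Your easy inclusion $L^{p_\theta} \hookrightarrow [L^p, \VMO]_\theta$ matches the paper's and usefully spells out why the $L^\infty$-valued family from Proposition~\ref{prop:lp-interpolation-by-schwartz-functions} actually lands in $\VMO$ (because each $\phi_j \circ f$ lies in $C_0(\R^n) \subset \VMO$). For the hard inclusion $[L^p, \VMO]_\theta \hookrightarrow L^{p_\theta}$ you take a genuinely different route. The paper argues by real variables: it realizes $f_z^\sharp$ as a supremum of a linear analytic family $U_{z,Q,\eta}$, obtains $||f_{iy}^\sharp||_p \leq C_p$ and $||f_{1+iy}^\sharp||_\infty \leq 1$ from the sharp-function estimates, applies the three lines lemma to each $U_{z,Q,\eta}$, and closes with the inverse inequality for the sharp function (Proposition~\ref{prop:inverse-inequality-for-the-sharp-function}). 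You instead dualize, pairing $f_z$ against the analytic representative $g_z$ supplied by Proposition~\ref{prop:h1-lp-interpolation-by-schwartz-functions} (with $p'$ in place of $p$ and $1-\theta$ in place of $\theta$, so that the interpolated exponent is $p_\theta'$) and invoking the $H^1$--$\VMO$ duality of Proposition~\ref{prop:VMO-dual-H1} at $\Re z = 1$. Both are correct; the paper's version is self-contained and uses no atomic machinery, while yours is shorter once Proposition~\ref{prop:h1-lp-interpolation-by-schwartz-functions} is in hand and exhibits $[L^p,\VMO]_\theta$ and $[L^{p'},H^1]_\theta$ as a dual pair directly. Two details in your argument deserve an explicit word: analyticity and admissible growth of $z\mapsto\<f_z,g_z\>$ require that $g_z$ take values in $L^{p'}\cap H^1 = (L^p+\VMO)'$ rather than merely in $L^{p'}+H^1$, which is precisely what the Schwartz-valued conclusion of Proposition~\ref{prop:h1-lp-interpolation-by-schwartz-functions} gives you; and the passage from the uniform bound $|\<f_\theta,g\>|\leq C$ over mean-zero Schwartz $g$ in the unit ball of $L^{p_\theta'}$ to $f_\theta \in L^{p_\theta}$ should note that such $g$ separate the points of $L^p+\VMO$ modulo constants and $L^{p_\theta}(\R^n)$ contains no non-zero constants, so the recovered $L^{p_\theta}$ function really is (the coset of) $f_\theta$.
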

\begin{proof}
Let $f \in L^{p_\theta}$ be a Schwartz function and assume without loss of generality that $||f||_{p_\theta} = 1$.
For every $\epsilon > 0$ there exists an analytic representative $f_z \in \mathcal{F}(L^p, L^\infty, \Schwartz)$ with norm less or equal $1+\epsilon$.
Since $(\Schwartz, ||\cdot||_{\infty}) \hookrightarrow \VMO$, we have that $f_z \in \mathcal{F}(L^p, \BMO)$ and its norm is bounded by a constant.
By density of $\Schwartz$ in $L^{p}$ we obtain the inclusion
\[
L^{p_\theta} \hookrightarrow [ L^p, \VMO ]_\theta.
\]
To obtain the converse consider an arbitrary function $f_z \in \mathcal{F}(L^p, \VMO, L^{p} \intersection \VMO)$ with norm $1$.
The latter space is dense in $\mathcal{F}(L^p, \VMO)$ by Proposition~\ref{prop:F-X0-X1-D-dense} and is a subspace of $\mathcal{F}(L^p, \BMO)$.
To rewrite the $\BMO$ boundedness property in linear terms, consider measurable functions $Q$ on $\R^n$ such that $Q(x)$ is a cube containing $x$ and $\eta$ on $\R^n \times \R^n$ such that $|\eta(x,y)| = 1$.
Let also
\[
U_{z,Q,\eta}(x) = \fint_{Q(x)} \left( f_z(y) - \fint_{Q(x)} f \right) \eta(x,y) \dif y.
\]
Then $f_z^\sharp (x) = \sup_{Q, \eta} |U_{z,Q,\eta}(x)|$.
Therefore
\begin{align*}
|| U_{iy,Q,\eta} ||_p &\leq || f_{iy}^\sharp ||_p \leq C_p || f_{iy} ||_p \leq C_p \text{ and}\\
|| U_{1+iy,Q,\eta} ||_\infty &\leq || f_{1+iy}^\sharp ||_\infty = || f_{1+iy} ||_\BMO \leq 1.
\end{align*}
Since $U_{z,Q,\eta}$ is an analytic function of $z$, this implies that
\[
||U_{\theta,Q,\eta}||_{p_\theta} \leq C_p^{1-\theta},
\]
and the bound is independent of $Q$ and $\eta$.
Choose $Q$ and $\eta$ such that $2 |U_{\theta, Q, \eta}| \geq f_\theta^\sharp$.
The resulting function is in $L^{p_\theta}$ and dominates $U_{\theta, Q, \eta}$ for all $Q$ and $\eta$.
Therefore the dominated convergence theorem applies and
\[
||f_{\theta}^\sharp||_{p_\theta} \leq C_p^{1-\theta}.
\]
On the other hand, $f_\theta \in L^p$ and $p < p_\theta$.
Proposition~\ref{prop:inverse-inequality-for-the-sharp-function} thus implies
\[
|| f_\theta ||_{p_\theta} \leq C || f_\theta^\sharp ||_{p_\theta} \leq C.
\]
Therefore we have
\[
[ L^p, \VMO ]_\theta \hookrightarrow L^{p_\theta}.
\qedhere
\]
\end{proof}

Since $L^{p}$ is reflexive for $1 < p < \infty$, by Proposition~\ref{prop:VMO-dual-H1} and Theorem~\ref{thm:complex-interpolation-dual} we immediately obtain
\begin{corollary}
\label{cor:Lp-H1-interpolation}
Let $1 < p < \infty$ and $0 < \theta < 1$ and $1/p_\theta = (1-\theta)/p$.
Then
\[
[L^{p'}, H^{1}]_{\theta} = L^{p_{\theta}'}
\text{ and }
[L^{p}, \BMO]_{\theta} = L^{p_{\theta}}.
\]
\end{corollary}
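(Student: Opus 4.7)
The plan is to derive both identities from Theorem~\ref{th:Lp-VMO-interpolation} by two successive applications of the duality theorem for complex interpolation (Theorem~\ref{thm:complex-interpolation-dual}), exploiting the fact that the $L^{p}$ spaces are reflexive for $1<p<\infty$ and that $\VMO' = H^{1}$ (Proposition~\ref{prop:VMO-dual-H1}) and $(H^{1})' = \BMO$ (Proposition~\ref{prop:H1-dual-BMO}).

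For the first identity, I apply Theorem~\ref{thm:complex-interpolation-dual} to the couple $(L^{p},\VMO)$, where $L^{p}$ is reflexive. The conclusion is
\[
[L^{p},\VMO]_{\theta}' = [(L^{p})', \VMO']_{\theta} = [L^{p'}, H^{1}]_{\theta},
\]
using $(L^{p})' = L^{p'}$ and $\VMO' = H^{1}$. Theorem~\ref{th:Lp-VMO-interpolation} identifies the left-hand side as $(L^{p_{\theta}})' = L^{p_{\theta}'}$, yielding the first identity. Both spaces $L^{p}$ and $\VMO$ sit naturally inside the space of locally integrable functions modulo constants, which supplies the ambient topological vector space required by the interpolation framework.

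For the second identity, I bootstrap: I apply Theorem~\ref{thm:complex-interpolation-dual} once more, this time to the couple $(L^{p'}, H^{1})$, where $L^{p'}$ is again reflexive. Together with the first identity just established, this gives
\[
[L^{p}, \BMO]_{\theta} = [(L^{p'})', (H^{1})']_{\theta} = [L^{p'}, H^{1}]_{\theta}' = (L^{p_{\theta}'})' = L^{p_{\theta}},
\]
which is the desired conclusion.

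The main technical point to verify is that the hypotheses of Theorem~\ref{thm:complex-interpolation-dual} are genuinely satisfied in both applications. Reflexivity of one member of each couple is immediate, but one must also check that the pair forms an interpolation couple in the sense the theorem requires (in particular, compatibility of the ambient TVS and density considerations implicit in the identification of the dual of the sum with the intersection of duals, cf.\ the discussion in Section~\ref{sec:real-int-dual}). For $(L^{p},\VMO)$ the intersection contains bounded, compactly supported continuous functions, which are dense in $L^{p}$ for $1<p<\infty$ and dense in $\VMO$ by construction; an analogous check works for $(L^{p'}, H^{1})$ since $F^{1,\infty}\cap C(\mathbb{R}^{n})\subset L^{p'}\cap H^{1}$ is dense in $H^{1}$ and $L^{p'}$-simple functions approximate elements of $L^{p'}$. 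Once these minor density verifications are in hand, the corollary is immediate.
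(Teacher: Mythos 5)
Your proof follows exactly the paper's route: the paper derives the corollary "immediately" from Theorem~\ref{th:Lp-VMO-interpolation}, the duality theorem for complex interpolation (Theorem~\ref{thm:complex-interpolation-dual}) applied to the couples $(L^{p},\VMO)$ and $(L^{p'},H^{1})$, and the dualities $\VMO'=H^{1}$ and $(H^{1})'=\BMO$. Your additional remarks on density and the ambient TVS are sensible sanity checks that the paper leaves implicit, but the argument itself is the same.
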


An important property of the interpolation pair $(L^{p}, H^{1})$ is the fact that the space of Schwartz functions with mean zero is dense in $L^{p} \intersection H^{1}$.
Indeed, by Proposition~\ref{prop:infinite-atomic} we can approximate every function in $L^{p} \intersection H^{1}$ by a finite linear combination of atoms simultaneously in the $L^{p}$- and the $H^{1}$-norm.
Then we use a smooth approximated identity to make that linear combination smooth.

We now give a direct proof of the fact that $[H^{1}, L^{p}]_{\theta} = L^{p_{\theta}}$ modeled on \cite{MR671315} which does not rely on the duality with $\BMO$.
Together with the preceding observation it enables us to use complex interpolation for analytic families of operators initially defined on Schwartz functions with mean zero.
\begin{proposition}
\index{complex interpolation!between H1 and Lp@between $H^1$ and $L^{p}$}
\label{prop:h1-lp-interpolation-by-schwartz-functions}
Let $f \in H^{1}(\R^{n}) \cap L^{p}(\R^{n})$ be such that $||f||_{p_{\theta}} = 1$, where
\[
1 < p < \infty, \quad
0 < \theta < 1, \text{ and }
\frac{1}{p_{\theta}} = \frac{1-\theta}{1} + \frac{\theta}{p}.
\]
Then there exists an analytic function $f_{z} \in \mathcal{F} (H^{1}, L^{p})$ such that $f_{\theta} = f$ and
\[
||f||_{\mathcal{F}(H^{1},L^{p})} \leq C.
\]
If in addition $f \in \Schwartz(\R^{n})$, then we can choose $f_{z} \in \mathcal{F}(H^{1}, L^{p}, \Schwartz \cap H^{1})$.
\end{proposition}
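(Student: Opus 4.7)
The plan is to construct $f_z$ by weighting each atom in an atomic decomposition of $f$ by a $z$-dependent entire function that equals $1$ at $z = \theta$. The atomic decomposition from Proposition~\ref{prop:infinite-atomic} provides pre-atoms $b_j^{(m)} = \lambda_j^{(m)} a_j^{(m)}$ with $\lambda_j^{(m)} = C' 2^m (d_j^{(m)})^n$, $|b_j^{(m)}| \leq C 2^m \chi_{B_j^{(m)}}$, and $B_j^{(m)} \subset O^{(m)} := \{\GM f > 2^m\}$; since $p > 1$, the series $\sum b_j^{(m)}$ converges in both $H^1$ and $L^p$. I choose a small $\delta > 0$ and set
\[
f_z := e^{\delta(z^2 - \theta^2)} \sum_{m,j} 2^{m(p_\theta/p_z - 1)} b_j^{(m)}.
\]
Because $1/p_z = (1-z) + z/p$ is affine with real part bounded away from zero on $\bar S$, the exponent $p_\theta/p_z$ is entire and $|2^{m(p_\theta/p_z - 1)}|$ depends only on $\Re z$; at $z = \theta$ every coefficient equals $1$, recovering $f_\theta = f$.

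For the bound at $\Re z = 0$, I combine $\|a_j^{(m)}\|_{H^1} \leq C$ from Proposition~\ref{prop:GM-of-atom} with the disjointness of the Whitney balls $B(x_j^{(m)}, c d_j^{(m)})$ to obtain
\[
\|f_{iy}\|_{H^1} \lesssim \sum_{m,j} 2^{m(p_\theta-1)} \lambda_j^{(m)} \lesssim \sum_m 2^{m p_\theta} |O^{(m)}| \simeq \int (\GM f)^{p_\theta}.
\]
Since $p_\theta > 1$ and $\GM f \lesssim M f$, Theorem~\ref{th:hardy-littlewood-maximal-inequality} gives $\|\GM f\|_{p_\theta} \lesssim \|f\|_{p_\theta} = 1$, whence $\|f_{iy}\|_{H^1} \leq C$. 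At $\Re z = 1$, the bounded intersection property (item~\ref{atomic:bdd-int}) yields the pointwise bound
\[
|f_{1+iy}(x)| \lesssim \sum_m 2^{m p_\theta/p} \chi_{O^{(m)}}(x) \lesssim (\GM f(x))^{p_\theta/p}
\]
via a geometric series over $\{m : 2^m \leq \GM f(x)\}$, and integrating the $p$-th power gives $\|f_{1+iy}\|_p \leq C$. Analyticity of $z \mapsto f_z$ into the Banach space $H^1 + L^p$ follows from Theorem~\ref{thm:hol-lcvs} by weak analyticity, since each term is a scalar entire function times a fixed vector and the above estimates control the tails uniformly on compacts in $\bar S$; the factor $e^{\delta(z^2 - \theta^2)}$ supplies the $C_0$-decay in $\Im y$ required by the definition of $\mathcal{F}$.

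For the Schwartz supplement I anticipate the main obstacle, since the pre-atoms $b_j^{(m)}$ produced by Proposition~\ref{prop:infinite-atomic} are only continuous—not Schwartz—even when $f \in \Schwartz$, because they may fail to be differentiable across $\partial O^{(m+1)}$. To handle this case I would replace the construction by one modelled on Proposition~\ref{prop:lp-interpolation-by-schwartz-functions}: decompose $f = \sum_j \phi_j \circ f$ dyadically by size of $|f|$, subtract a mean-zero correction from each piece by setting $g_j := \phi_j \circ f - A_j h$ with $A_j := \int \phi_j \circ f$ and $h \in \Schwartz$ a fixed bump of integral $1$. Because $\sum_j A_j = \int f = 0$, we still have $\sum_j g_j = f$, and each $g_j$ is a smooth, compactly supported mean-zero function, hence a bounded multiple of a continuous $(1,\infty)$-atom and thus a member of $\Schwartz \cap H^1$. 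Defining
\[
f_z := e^{\delta(z^2 - \theta^2)} \Bigl(g_0 + \sum_{j > 0} m_j^{p_\theta/p_z - 1} g_j\Bigr)
\]
produces a finite sum of Schwartz functions when $f \in \Schwartz$ (only finitely many $g_j$ are non-zero), analytic in $z$ with values in $\Schwartz \cap H^1$, and $f_\theta = f$. The hard step here is recovering the $H^1$ and $L^p$ endpoint bounds, since the correction terms $A_j h$ must be controlled uniformly and the mean-zero Schwartz functions $g_j$ no longer arise directly from an atomic decomposition associated to $\GM f$; I expect this to be handled by estimating the $A_j$ through the $L^{p_\theta}$-norm of $f$ via Hölder's inequality on $\supp \phi_j \circ f$ and combining with the atomic $H^1$-estimates for the $g_j$'s.
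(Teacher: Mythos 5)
Your construction for the first assertion is essentially the paper's own proof: you use the same atomic decomposition from Proposition~\ref{prop:infinite-atomic}, the same $z$-dependent coefficients $(2^m)^{p_\theta/p_z-1}$, the same $H^1$ bound at $\Re z=0$ via disjointness of the Whitney balls together with $||\GM f||_{p_\theta}\lesssim||f||_{p_\theta}$, and the same $L^p$ bound at $\Re z=1$ via the bounded intersection property and a geometric series in $m$. The one structural deviation is that you keep the full infinite sum, whereas the paper truncates to a finite piece $\sum_{(m,j)\in\mathcal I}$ plus a small remainder $\tilde f$ with $||\tilde f||_{H^1\cap L^p}<1$; the truncation makes analyticity and $C_0$-decay of $f_z$ immediate, while your version needs the norm-convergence-on-compacts argument that you only sketch.

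The Schwartz supplement is where you genuinely diverge, and there the proposal has a real gap. You correctly identify the obstruction: the pre-atoms $b_j^{(m)}$ are only continuous even when $f\in\Schwartz$, because of the infinite sum over $k$ near $\partial O^{(m+1)}$. But your proposed replacement, a dyadic-by-size decomposition $f=\sum_j\phi_j\circ f$ with mean-zero corrections $g_j=\phi_j\circ f-A_jh$ using a fixed bump $h$, abandons the atomic structure adapted to $\GM f$ that made the $H^1$ endpoint bound work, and you acknowledge that you have not recovered it. Indeed $||g_j||_{H^1}$ depends on the size of $\supp(\phi_j\circ f)$ relative to $\supp h$ in a way that is not controlled by $||f||_{p_\theta}$, since $A_jh$ is not an approximate atom sitting inside $O^{(m)}$; the crucial estimate $\sum_{j>0} m_j^{p_\theta-1}||g_j||_{H^1}\leq C$ does not follow from what you wrote. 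The paper's fix is far lighter and keeps the original decomposition intact: because the sum over $(m,j)\in\mathcal I$ is finite, one convolves each $b_j^{(m)}$ with a compactly supported smooth bump $\phi$ of integral $1$. Then $\phi*b_j^{(m)}$ is smooth, compactly supported, and mean zero, hence lies in $\Schwartz\cap H^1$; for small enough $\supp\phi$ the bounded intersection property of the supports persists by conclusion~(\ref{atomic:bdd-int}) of Proposition~\ref{prop:infinite-atomic}; and the errors $b_j^{(m)}-\phi*b_j^{(m)}$ are small in $L^p$, hence in $H^1=H^{1,p}_{\text{at}}$, and can be absorbed into the $z$-independent term along with $\tilde f$. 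With this modification all the endpoint estimates carry over verbatim.
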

\begin{proof}
As usual we look for a function which is merely uniformly bounded (and does not exhibit decay) first.

Let $f = \sum_{m,j} b_{j}^{(m)}$ be the atomic decomposition given by Proposition~\ref{prop:infinite-atomic}.
Since it converges both in $H^{1}$ and in $L^{p}$, we can write $f = \sum_{(m,j) \in \mathcal{I}} b_{j}^{(m)} + \tilde f$, where the sum is finite and $||\tilde f||_{H^{1} \cap L^{p}} < 1$.
Let now
\[
f_{z} := \sum_{(m,j) \in \mathcal{I}} (2^{m})^{p_{\theta}/p_{z}} \frac{b_{j}^{(m)}}{2^{m}} + \tilde f.
\]
This function is clearly analytic and bounded on $\bar S$.
Since $\frac{b_{j}^{(m)}}{2^{m} |\supp b_{j}^{m}|}$ is a bounded multiple of a $(1,\infty)$-atom,
\begin{align*}
||f_{0+iy}||_{H^{1}}
&\leq
C \sum_{(m,j) \in \mathcal{I}} (2^{m})^{p_{\theta}} |\supp b_{j}^{m}| + ||\tilde f||_{H^{1}}\\
&\leq
CN \sum_{m} (2^{m})^{p_{\theta}} |O^{(m)}| + ||\tilde f||_{H^{1}}\\
&\leq
C || \GM f ||_{p_{\theta}}^{p_{\theta}} + 1\\
&\leq
C || f ||_{p_{\theta}}^{p_{\theta}} + 1\\
&\leq
C.
\end{align*}
Here we have used the bounded intersection property for $\supp b_{j}^{(m)}$ and (\ref{eq:GM-M}) together with the Hardy-Littlewood maximal inequality (Theorem~\ref{th:hardy-littlewood-maximal-inequality}).
On the other hand,
\begin{align*}
||f_{1+iy} - \tilde f||_{L^{p}}^{p}
&\leq
\int_{\R^{n}} \left| \sum_{(m,j) \in \mathcal{I}} (2^{m})^{p_{\theta}/p} \frac{|b_{j}^{(m)}|}{2^{m}} \right|^{p}\\
&\leq
\int_{\R^{n}} \left| \sum_{m} (2^{m})^{p_{\theta}/p-1} C N 2^{m} \chi_{O^{(m)}} \right|^{p}\\
&=
C \int_{\R^{n}} \left| \sum_{m} \chi_{O^{(m)} \setminus O^{(m-1)}} \sum_{l=-\infty}^{m} (2^{l})^{p_{\theta}/p} \right|^{p}\\
&=
C \int_{\R^{n}} \sum_{m} \chi_{O^{(m)} \setminus O^{(m-1)}} \left| \frac{(2^{m})^{p_{\theta}/p}}{1 - 2^{-p_{\theta} / p}} \right|^{p}\\
&=
C \int_{\R^{n}} \sum_{m} \chi_{O^{(m)} \setminus O^{(m-1)}} (2^{m})^{p_{\theta}}\\
&\leq
C || \GM f ||_{p_{\theta}}^{p_{\theta}}\\
&\leq
C.
\end{align*}
Therefore $e^{z^{2}-\theta^{2}} f_{z}$ has the required property.

If $f$ is a Schwartz function, then we convolve every $b_{j}^{(m)}$ for $(j,m) \in \mathcal{I}$ with a compactly supported positive  smooth function $\phi$ with $\int\phi = 1$.
If the support of $\phi$ is small enough, the bounded intersection property for $\supp b_{j}^{(m)}$ is preserved by conclusion (\ref{atomic:bdd-int}) of Proposition~\ref{prop:infinite-atomic} and $||b_{j}^{(m)}-\phi * b_{j}^{(m)}||_{p}$ is small.
Since $H^{1} = H^{1,p}_{\text{at}}$, the latter assertion implies that $||b_{j}^{(m)}-\phi * b_{j}^{(m)}||_{H^{1}}$ is small too.
Hence the function $e^{z^{2}-\theta^{2}} f_{z}$, where
\[
f_{z} := \sum_{(m,j) \in \mathcal{I}} (2^{m})^{p_{\theta}/p_{z}} \frac{\phi * b_{j}^{(m)}}{2^{m}} + \tilde f + \sum_{(m,j) \in \mathcal{I}} (b_{j}^{(m)} - \phi * b_{j}^{(m)}),
\]
has the required property.
\end{proof}
By density of $H^{1} \cap L^{p}$ in $L^{p_{\theta}}$ we obtain a continuous inclusion $L^{p_{\theta}} \hookrightarrow [H^{1}, L^{p}]_{\theta}$.
The continuity of the converse mapping $[H^{1}, L^{p}]_{\theta} \hookrightarrow L^{p_{\theta}} = [L^{1}, L^{p}]_{\theta}$ is clear since $H^{1}$ is a subspace of $L^{1}$.

\section{Whitney decomposition}
\label{sec:whitney}
\begin{proposition}
\index{Whitney decomposition}
\label{prop:whitney}
Let $O \subset \R^n$ be an open set with non-empty complement and $c, c'$ be positive numbers satisfying
\[
c + \frac{2c}{c'} \leq 1; \quad c' < 1.
\]
Then there exist $x_j \in O$ such that
\begin{enumerate}
\renewcommand{\theenumi}{W\arabic{enumi}}
\renewcommand{\labelenumi}{\theenumi.}
\item the balls $B(x_j, c d_j)$ are pairwise disjoint,
\label{whitney:disjoint}
\item the balls $B(x_j, c' d_j)$ cover $O$,
\label{whitney:covering}
\end{enumerate}
Here, $d_j = \dist (x_j, \complement O)$.
\end{proposition}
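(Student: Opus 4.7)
The plan is to produce the points $x_j$ via a maximal-selection argument and then lean on maximality to force the covering property. Concretely, I would apply Zorn's lemma to the family of subsets $S \subset O$ for which the balls $\{B(x, c\dist(x,\complement O))\}_{x \in S}$ are pairwise disjoint, ordered by inclusion; chains have their union as an upper bound, so a maximal element $\{x_j\}_{j \in J}$ exists and property \ref{whitney:disjoint} holds by construction. Since $O$ is open with non-empty complement, each $d_j$ is a strictly positive real number, so the balls $B(x_j, c d_j)$ are non-empty open subsets of $\R^n$; separability then forces $J$ to be at most countable.

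To verify \ref{whitney:covering}, I would fix an arbitrary $y \in O$, set $d := \dist(y, \complement O) > 0$, and invoke maximality: either $y$ is already some $x_j$ (in which case the conclusion is trivial because $c' > 0$), or $\{x_j\} \cup \{y\}$ violates the disjointness property, so some $B(y, cd)$ meets some $B(x_j, c d_j)$, yielding
\[
|y - x_j| < c(d + d_j).
\]
The estimate is then closed using the $1$-Lipschitz property of $\dist(\cdot, \complement O)$, which gives $d \leq d_j + |y - x_j|$. Substituting into the previous inequality and solving for $d$ produces $d \leq \frac{1+c}{1-c} d_j$, and reinserting gives $|y - x_j| < \frac{2c}{1-c} d_j$.

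I do not anticipate a genuine obstacle; the only moment requiring any care is the algebraic manipulation showing that the hypothesis $c + \frac{2c}{c'} \leq 1$ is exactly what is needed to conclude. Rearranging this hypothesis gives $\frac{2c}{c'} \leq 1 - c$, and since the inequality forces $c < 1$ we may multiply by $\frac{c'}{1-c}$ to obtain $\frac{2c}{1-c} \leq c'$, which combined with the previous estimate yields $|y - x_j| < c' d_j$ as required. All remaining ingredients — Zorn's lemma, the Lipschitz property of the distance function, and countability via separability — are standard.
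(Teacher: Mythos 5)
Your proposal is correct and follows essentially the same strategy as the paper: select a maximal collection of points whose $c$-balls are pairwise disjoint, then use the triangle inequality together with the hypothesis $c + 2c/c' \leq 1$ to show that the $c'$-balls cover. The only cosmetic difference is that you argue the covering directly from the failure of disjointness, whereas the paper runs the contrapositive (assume a point is uncovered and derive a contradiction with maximality); the algebra is the same in both cases.
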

\begin{proof}
Let $\{x_j\}_{j \in J}$ be a maximal collection of points for which (\ref{whitney:disjoint}) is satisfied.
For notational convenience, $0 \not\in J$.

Assume that there exists a point $x_0 \in O \setminus \union_j B(x_j, c' d_j)$.
Then, for every $j \in J$, $d := \dist(x_0, x_j) \geq c' d_j$ since $x_0 \not\in B(x_j, c' d_j)$ and $d_0 \leq d + d_j$ by the triangle inequality.
This combines to
\[
c d_0 + c d_j \leq c (d + 2 d_j) \leq c d (1 + \frac{2}{c'}) \leq d
\]
by the assumption.
Therefore $B(x_0, c d_0)$ and $B(x_j, c d_j)$ are disjoint, which contradicts the maximality of $\{x_j\}_{j \in J}$.
Hence (\ref{whitney:covering}) is proved.
\end{proof}

\begin{proposition}
\label{prop:whitney-comparable-radii}
Let $c$ and $c'$ be as above, $c' < c'' < 1$ and $O^{(+)} \subseteq O \subset \R^n$ be open sets with non-empty complement.
Let $\{ x_j \}_{j \in J}$, $\{ x_j \}_{j \in J^{(+)}}$ be their Whitney decompositions.
Then,
\begin{enumerate}
\setcounter{enumi}{2}
\renewcommand{\theenumi}{W\arabic{enumi}}
\renewcommand{\labelenumi}{\theenumi.}
\item \label{whitney:comparable-radii}
for each pair of $j\in J$, $k \in J^{(+)}$ such that $B(x_j, c'' d_j) \cap B(x_k, c'' d_k^{(+)}) \neq \emptyset$, we have that
\[
d_k^{(+)} < \frac{1 + c''}{1 - c''} d_j.
\]
\end{enumerate}
\end{proposition}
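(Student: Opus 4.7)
The plan is to use the triangle inequality twice, exploiting the monotonicity of the distance function with respect to set inclusion. Since $O^{(+)} \subseteq O$, we have $\complement O \subseteq \complement O^{(+)}$, so for every point $y$
\[
\dist(y, \complement O^{(+)}) \leq \dist(y, \complement O).
\]
In particular $d_k^{(+)} = \dist(x_k, \complement O^{(+)}) \leq \dist(x_k, \complement O)$. This is the one place where the hypothesis $O^{(+)} \subseteq O$ enters.

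First I would pick any point $y$ in the (non-empty) intersection $B(x_j, c'' d_j) \cap B(x_k, c'' d_k^{(+)})$. The triangle inequality then gives
\[
|x_j - x_k| \leq |x_j - y| + |y - x_k| < c''\, d_j + c''\, d_k^{(+)}.
\]
Next, for any $z \in \complement O$ the triangle inequality applied to $x_k, x_j, z$ yields $|x_k - z| \leq |x_k - x_j| + |x_j - z|$; taking the infimum over $z \in \complement O$ gives
\[
\dist(x_k, \complement O) \leq |x_k - x_j| + d_j.
\]

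Combining these with the monotonicity observation, I obtain
\[
d_k^{(+)} \leq \dist(x_k, \complement O) \leq |x_k - x_j| + d_j < c''(d_j + d_k^{(+)}) + d_j = (1+c'')\,d_j + c''\, d_k^{(+)}.
\]
Rearranging and dividing by the positive quantity $1 - c'' > 0$ (here is where the assumption $c'' < 1$ is used) yields the desired bound $d_k^{(+)} < \frac{1+c''}{1-c''}\, d_j$. There is no real obstacle; the entire argument is a one-line chain of triangle inequalities once the inclusion-monotonicity of the distance function is observed. The constants $c$ and $c'$ from the original Whitney decomposition play no role in the statement beyond guaranteeing that both decompositions exist.
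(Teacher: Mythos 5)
Your proof is correct and follows essentially the same approach as the paper: the monotonicity of the distance function under set inclusion, the triangle inequality via a point in the nonempty intersection, and the triangle inequality to the complement, followed by rearranging and using $c'' < 1$. Your writeup simply spells out the steps that the paper compresses into a single displayed chain.
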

\begin{proof}
Let $j$ and $k$ satisfy the assumptions.
Since $O^{(+)} \subseteq O$, by the triangle inequality and by the assumption that the two balls intersect, we have
\[
d_k^{(+)} \leq d_k \leq \dist(x_j, x_k) + d_j < c'' (d_j + d_k^{(+)}) + d_j.
\]
Rearranging this inequality we obtain the claim.
\end{proof}

\begin{proposition}
In the setting of Proposition~\ref{prop:whitney} for every $c''$ such that $c' < c'' < 1$ there exists an $N \in \N$ such that
\begin{enumerate}
\setcounter{enumi}{3}
\renewcommand{\theenumi}{W\arabic{enumi}}
\renewcommand{\labelenumi}{\theenumi.}
\item the balls $B(x_j, c'' d_j)$ have the bounded intersection property, that is, each $x \in O$ is contained in at most $N$ balls.
\label{whitney:bounded-intersection}
\end{enumerate}
\end{proposition}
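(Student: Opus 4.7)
The plan is a standard volume-counting argument. Fix $x \in O$ and set $r := \dist(x,\complement O) > 0$ and $I_x := \{ j : x \in B(x_j, c'' d_j) \}$; we will bound $|I_x|$ by a constant depending only on $n$, $c$, and $c''$.

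The first step is to show that, for $j \in I_x$, the number $d_j$ is comparable to $r$. For the upper bound, I would use the triangle inequality to observe that any point of $\complement O$ at distance close to $r$ from $x$ has distance at least $d_j - |x - x_j|$ from $x_j$, hence $d_j \leq |x-x_j| + r < c'' d_j + r$, giving $d_j < r/(1-c'')$. For the lower bound, $r \leq |x-x_j| + d_j < (1+c'') d_j$, so $d_j > r/(1+c'')$. Thus every center $x_j$ with $j \in I_x$ lies in the ball $B(x, c'' r/(1-c''))$.

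The second step is volume comparison. By property \ref{whitney:disjoint}, the balls $\{ B(x_j, c d_j) \}_{j \in I_x}$ are pairwise disjoint, each has radius at least $cr/(1+c'')$ by the lower bound on $d_j$, and each is contained in the slightly larger ball $B(x, R)$ with $R = (c'' + c)r/(1-c'')$, since the center lies in $B(x, c'' r/(1-c''))$ and the radius is at most $cr/(1-c'')$. Comparing Lebesgue measures gives
\[
|I_x| \left( \frac{cr}{1+c''} \right)^n \leq \left( \frac{(c''+c) r}{1-c''} \right)^n,
\]
so $|I_x| \leq N := \left( \frac{(c''+c)(1+c'')}{c(1-c'')} \right)^n$, which depends only on $n$, $c$, $c''$ and is independent of $x$ and of the set $O$.

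There is no real obstacle here; the only thing to be careful about is that both inequalities relating $d_j$ to $r$ are used (the upper bound to confine the centers, the lower bound to keep the disjoint balls from shrinking), and that the enlarged ball $B(x,R)$ genuinely contains $B(x_j, c d_j)$ for every $j \in I_x$, which is immediate from the two estimates above.
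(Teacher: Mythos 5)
Your proof is correct and follows essentially the same volume-counting argument as the paper: confine the centers $x_j$ near $x$, use the pairwise disjointness from W1, and compare the measures of the small disjoint balls with a fixed larger ball. The only minor difference is that you derive the two-sided comparability of $d_j$ with $r = \dist(x,\complement O)$ directly from the triangle inequality, whereas the paper cites Proposition~\ref{prop:whitney-comparable-radii} (with the point $x$ playing the role of a center), which yields the slightly looser factors $\frac{1\pm c''}{1\mp c''}$ and hence a somewhat larger value of $N$; both give a valid finite bound.
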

\begin{proof}
Let $x_0 \in O$ and $J_0 \subset J$ be the set of indices $j$ such that the corresponding balls $B(x_j, c'' d_j)$ contain $x_0$.
By Proposition~\ref{prop:whitney-comparable-radii} with $O^{(+)} = O$ we have that $d_j < \frac{1 + c''}{1 - c''} d_0$ for every $j \in J_0$.
Therefore $\dist(x_0, x_j) < c'' d_j < c'' \frac{1 + c''}{1 - c''} d_0$ and $B(x_j, c d_j) \subset B(x_0, (c + c'') \frac{1 + c''}{1 - c''} d_0)$.
The volume of the latter ball is
\[
V = \frac{\Omega_n}{n} \left((c + c'') \frac{1 + c''}{1 - c''} \right)^n d_0^n.
\]
On the other hand, the balls $B(x_j, c d_j)$ are disjoint.
Again by Proposition~\ref{prop:whitney-comparable-radii} we have that $d_0 < \frac{1 + c''}{1 - c''} d_j$.
Therefore the volume of each such ball is bounded from below by
\[
|B(x_j, c d_j)| = \frac{\Omega_n}{n} c^n d_j^n
\geq
\left(c \frac{1 - c''}{1 + c''} \right)^n d_0^n =: W.
\]
Since their total volume is bounded from below by $|J_0| W$ and from above by $V$, we obtain that
\[
|J_0|
\leq
V / W
=
\left(\frac{c + c''}{c} \left(\frac{1 + c''}{1 - c''}\right)^2 \right)^n.
\]
This gives (\ref{whitney:bounded-intersection}) with $N$ equal to the integer part of the latter number.
\end{proof}

\section{Calderón-Zygmund decomposition}
\label{sec:calderon-zygmund}
\index{Calderon-Zygmund decomposition@Calderón-Zygmund decomposition}
Throughout this section, $f \in L^{p_0}(\R^n)$.
Let $\{ Q_{m,v} \}_{m \in \Z, v \in \Z^n}$ be the mesh of dyadic cubes in $\R^n$, i.e.\ the set of cubes with edges parallel to the axes, side length $2^m$ and a vertex in $2^m v$.
For each $\alpha > 0$ let $m_\alpha$ be the smallest integer such that
\[
\fint_{Q_{m_\alpha,v}} |f|
\leq
||f||_{p_0} |Q_{m_\alpha, v}|^{-1/p_0}
=
||f||_{p_0} 2^{-m_\alpha n / p_0}
\leq
\alpha
\text{ for all } v.
\]
For each $\alpha$, the set of cubes $S_\alpha = \{Q_{m,v}\}_{m \leq m_\alpha}$ is partially ordered by inclusion.
Let $S'_\alpha \subset S_\alpha$ be the subset of cubes $Q'$ with the property
\[
\fint_{Q'} |f| > \alpha
\]
and $S''_\alpha \subset S'_\alpha$ the subset of cubes which are maximal with respect to inclusion.
Since each increasing chain in $S_\alpha$ is finite, every $Q' \in S'_\alpha$ is contained in some cube $Q'' \in S''_\alpha$.
Since the top-level cubes $Q_{m_\alpha, v} \not\in S'_\alpha$ by the definition of $m_\alpha$ and by maximality, every $Q_{m,v} \in S''_\alpha$ is contained in a cube $Q_{m+1,v'} \in S_\alpha \setminus S'_\alpha$.
Therefore
\[
\fint_{Q_{m,v}} |f| \leq 2^n \fint_{Q_{m+1,v'}} |f| \leq 2^n \alpha.
\]
Write $\{Q^\alpha_j\} = S''_\alpha$.
Since for every $x \not\in \cup_j Q^\alpha_j$ there exists a sequence of cubes converging to $x$ such that the average of $|f|$ over them is bounded by $\alpha$, the Lebesgue density theorem implies that $|f(x)| \leq \alpha$ for almost every such $x$.

Furthermore, if $\beta > \alpha$, then $m_\beta < m_\alpha$ and hence $S_\beta \subset S_\alpha$ as well as $S'_\beta \subset S'_\alpha$.
Therefore each $Q^\beta_j$ is a sub-cube of some $Q^\alpha_k$.


\chapter{The \texorpdfstring{$k$}{k}-plane transform}
\label{chap:k-plane}
Analogously to the Radon transform we consider the integrals of a function on $\R^n$ over affine subspaces of an arbitrary dimension $k$.
Most of the results in this chapter are due to Christ \cite{MR763948}.

The case $n=3$, $k=1$ furnishes an idealized model of tomography.
In this model, the waves we use to scan the object (e.g.\ a brain) are assumed to travel along straight lines without refraction.
Absorption and scattering are supposed to be the only measurable phenomena, and both are thought of as isotropic and linear (in the intensity of the wave).
With these assumptions, the relative decrease of the intensity of the wave is
\[
\log \frac{\text{intensity after}}{\text{intensity before}} = - \int_{\text{taken path}} f,
\]
where $f$ is some material-specific absorption coefficient, which may vary in space.
Therefore, sending a wave with known intensity through the object and measuring its intensity on the opposite side, we can measure line integrals of $f$.
If we could do so with arbitrary precision for every direction and use an arbitrary amount of computing time, we would be able to compute $f$ easily (under reasonable assumptions on its regularity).
The practical problems of doing so include the facts that you neither want to use more radiation then strictly necessary (because it might destroy the object) nor to wait arbitrarily long for the results.
However we will not be concerned with these problems regarding the inverse transform.

Before proceeding, let us establish the notation.
Let $G_{n,k}$ denote the Grassmann manifold of $k$-planes passing through the origin in $\R^n$ and $M_{n,k}$ the manifold of all affine $k$-planes in $\R^n$.
The spaces $G_{n,k}$ and $M_{n,k}$ are homogeneous for $O(n)$ and the group of Euclidean transformations, respectively, and are equipped with the unique (up to multiplication by a constant) measures $\nu_{n,k}$ and $\mu_{n,k}$, respectively, invariant under the actions of these groups.
$G_{n,k}(x)$ is a translated copy of $G_{n,k}$, i.e.\ the manifold of all $k$-planes passing through $x$.

We will use the mixed norms for functions on $M_{n,k}$ given by
\[
|| g ||_{q;r}
=
\left(
\int_{G_{n,k}} \left(
\int_{\pi^\perp}
|g(\pi + x)|^r
\dif \Leb[\pi^\perp](x)
\right)^{q/r}
\dif \nu_{n,k}(\pi)
\right)^{1/q}.
\]
Here, $\Leb[\pi^\perp]$ is the $n-k$-dimensional Lebesgue measure on the subspace $\pi^\perp \subset \R^n$.
Observe that if $q=r$, then $||g||_{q; r}$ is just the usual norm on $L^r(M_{n,k})$.

The space $M_{n,k}$ has a natural structure of a fiber bundle over $G_{n,k}$.
To simplify the notation, we will often use a measurable trivialization of this bundle and identify $M_{n,k} = G_{n,k} \times \R^{n-k}$ as measure spaces.

The $k$-plane transform of a test function on $\R^n$ is given by
\index{k-plane transform@$k$-plane transform}
\[
\Radon_{n,k} f (\pi) = \int_\pi f(x) \dif\Leb[k](x), \quad \pi \in M_{n,k}.
\]
Again, we will be interested in estimates of the form
\begin{equation}
\label{eq:k-plane-p-qr-estimate}
|| \Radon_{n,k} f ||_{q; r} \leq C || f ||_p.
\end{equation}
By Fubini's theorem, we have
\begin{equation}
\label{eq:k-plane-fubini-estimate}
|| \Radon_{n,k} f ||_{\infty; 1} \leq C || f ||_1.
\end{equation}
The remaining part of this chapter is devoted to obtaining a second end-point estimate of the form (\ref{eq:k-plane-p-qr-estimate}) with $q=r=n+1$.

\section{Measure equivalences}
In evaluating integrals over $M_{n,k}$, we will find it useful to transform them into integrals over $\R^n$ as in \cite[p.~497]{MR748958}.
Let $\det (x_0, \dots, x_k)$ denote the $k$-dimensional volume of the convex hull of $x_0, \dots, x_k$.
\begin{lemma}
\label{lem:measures-on-Rn-k+1}
The following measures on $(\R^n)^{k+1}$ are equivalent up to a constant:
\[
\dif\Leb[\pi](x_0) \dots \dif\Leb[\pi](x_k) \dif\mu_{n,k}(\pi)
=
C \det (x_0, \dots, x_k)^{k-n} \dif\Leb[n](x_0) \dots \dif\Leb[n](x_k).
\]
Here, $\Leb[\pi]$ denotes the $k$-dimensional Lebesgue measure on the affine hyperplane $\pi$.
\end{lemma}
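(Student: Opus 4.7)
The plan is to verify the identity up to a constant via a direct Jacobian computation at a single well-chosen configuration, using Euclidean invariance to handle the reduction. Both measures descend to measures on $(\R^n)^{k+1}$, viewing the left-hand side through the a.e.\ bijective map $(\pi, x_0, \ldots, x_k) \mapsto (x_0, \ldots, x_k)$ that forgets $\pi$ (defined on the open set of affinely independent tuples, whose complement is of measure zero on both sides). Both measures are invariant under the diagonal action of the Euclidean group $\R^n \rtimes O(n)$ on $(\R^n)^{k+1}$: the Lebesgue measures $\Leb[n]$ and $\Leb[\pi]$ are Euclidean-invariant, $\det(x_0,\ldots,x_k)$ depends only on the Euclidean class of the tuple, and $\mu_{n,k}$ is Euclidean-invariant by construction. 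It therefore suffices to compare the two densities at one convenient configuration.

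I would reduce to the case where the affine span $\pi_0$ equals $\R^k \times \{0\}^{n-k}$, and parameterize planes of $M_{n,k}$ near $\pi_0$ by $(A, b) \in \R^{(n-k) \times k} \times \R^{n-k}$, with $\pi_{A,b} := \{(y, Ay + b) : y \in \R^k\}$. By invariance and smoothness of $\mu_{n,k}$, at the reference plane one has $\dif\mu_{n,k} = c \, \dif A \, \dif b$ for some constant $c > 0$. Each $x_i \in \pi_{A,b}$ is parameterized by its $\R^k$-coordinate $y_i$ via $x_i = (y_i, A y_i + b)$, with $\dif\Leb[\pi_{A,b}](x_i) = \sqrt{\det(I + A^T A)} \, \dif y_i$, which equals $\dif y_i$ at $A = 0$.

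The core computation is the Jacobian of $\Phi : (A, b, y_0, \ldots, y_k) \mapsto (x_0, \ldots, x_k)$ at $(A, b) = 0$. Splitting each $x_i$ into its first $k$ and last $n-k$ components, $x_i^{(k)} = y_i$ and $x_i^{(n-k)} = A y_i + b$. At $A = 0$ the component $x_i^{(n-k)}$ is independent of all $y_j$, so the Jacobian is block-diagonal: an identity block on the $y$-variables, plus a nontrivial block coming from the linear map $(A, b) \mapsto (A y_0 + b, \ldots, A y_k + b)$ of $\R^{(n-k)(k+1)}$ to itself. Writing $A$ in terms of its columns $A_1, \ldots, A_k \in \R^{n-k}$, this block takes the tensor-product form $M \otimes I_{n-k}$, where $M$ is the $(k+1) \times (k+1)$ matrix with $i$-th row $(y_i^T, 1)$. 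Subtracting the zeroth row from the others and expanding along the last column yields $|\det M| = |\det(y_1 - y_0, \ldots, y_k - y_0)| = k! \det(x_0, \ldots, x_k)$, hence $|\det \dif \Phi| = (k!)^{n-k} \det(x_0, \ldots, x_k)^{n-k}$.

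Inverting this Jacobian and combining with the measure identifications above gives, at the reference configuration and hence (by Euclidean invariance) everywhere,
\[
\dif\mu_{n,k}(\pi) \prod_{i=0}^k \dif\Leb[\pi](x_i) = c\, (k!)^{-(n-k)} \det(x_0, \ldots, x_k)^{k-n} \prod_{i=0}^k \dif\Leb[n](x_i),
\]
which is the claim. The one subtle point is the tensor-product structure of the nontrivial Jacobian block, which is exactly what raises $\det(x_0, \ldots, x_k)$ to the codimension $n - k$; since the equality is only asserted up to a constant, no careful normalization of $\mu_{n,k}$ is needed.
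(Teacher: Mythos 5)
Your proof is correct, and it takes a genuinely different route from the one in the paper. The paper proceeds by induction on $k$: the base case $k=1$ is handled by observing that the density $J_{n,1}$ must be a Euclidean invariant of the pair and hence a function of $|x_0-x_1|$, whose form is pinned down by homogeneity under dilations; the inductive step then factors the product measure through the chain $M_{n,k} \leadsto G_{n,k}(x_0) \leadsto M_{n,k-1}$, using the two auxiliary identities (\ref{eq:measure-on-taut-fiber-bundle}) and (\ref{eq:measure-on-planes-not-through-pt}) that are themselves justified by uniqueness of invariant measures plus homogeneity. You instead compute the Jacobian of the obvious $(A,b,y_0,\dots,y_k)\mapsto(x_0,\dots,x_k)$ chart directly at the reference plane $A=b=0$, where the block structure makes the determinant visibly the Kronecker product $M\otimes I_{n-k}$, and then invoke Euclidean invariance only to move any affinely independent tuple into the reference plane. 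Your argument is shorter, avoids induction and the auxiliary identities, and actually exhibits the constant as $c\,(k!)^{-(n-k)}$. The paper's approach is more structural in that it isolates the two measure-disintegration identities, which are reused almost verbatim in the complex analogue (Lemma~\ref{lem:measures-on-Cn-k+1}); your chart computation would also adapt to $\C^n$ but would need the codimension replaced by $2(n-k)$ and the real-linear structure tracked by hand. One small presentational point: "it suffices to compare the two densities at one convenient configuration" slightly understates what you actually need, since the Euclidean group does not act transitively on affinely independent $(k+1)$-tuples. What you in fact verify is the identity for all tuples lying in a fixed reference $k$-plane (by letting $y_0,\dots,y_k$ vary with $A=b=0$), and only then transport by Euclidean invariance. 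The computation you carry out does exactly this, so the proof is sound; it is just the prose summary that should be made to match.
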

\begin{proof}
Observe first that, disregarding sets of measure zero on both sides, there is a natural bijective correspondence between $\{(\pi \in M_{n,k}, x_0 \in \pi, \dots, x_k \in \pi)\}$, the $k+1$-th power of the tautological fiber bundle over $M_{n,k}$, and $\{(x_0,\dots,x_k)\} = \R^{n \times (k+1)}$.
With this identification, the measures are absolutely continuous with respect to each other, so that
\[
\dif\Leb[\pi](x_0) \dots \dif\Leb[\pi](x_k) \dif\mu_{n,k}(\pi)
= C
J_{n,k}(x_0, \dots, x_k)
\dif\Leb[n](x_0) \dots \dif\Leb[n](x_k).
\]
The problem is now to find $J_{n,k}$.
In case $k=0$ there is nothing to prove.

If $k=1$, observe that the measures on both sides are invariant under the action of Euclidean transformations.
Thus, $J_{n,1}$ must be a Euclidean invariant of the simplex $(x_0,x_1)$, so that it is a function of $|x_0 - x_1|$.
By homogeneity with respect to rescaling of the coordinates, one sees that in fact $J_{n,1} = C |x_0 - x_1|^{1-n}$, so that the lemma is proved for $k=0,1$.

For $k>1$ this approach does not work because there are multiple Euclidean invariants for a $k$-simplex.
So we use induction on $k$.

If $\nu_{n,k,x_0}$ denotes the measure on $G_{n,k}(x_0)$, then
\begin{equation}
\label{eq:measure-on-taut-fiber-bundle}
\dif\Leb[\pi](x_0) \dif\mu_{n,k}(\pi)
= C
\dif\nu_{n,k,x_0}(\pi) \dif x_0
\end{equation}
since both are measures on the tautological fiber bundle over $M_{n,k}$ invariant under Euclidean transformations.
Furthermore, if $\mu_{k,k-1,\pi}$ denotes the measure on the manifold of affine hyperplanes contained in $\pi$, then
\begin{equation}
\label{eq:measure-on-planes-not-through-pt}
\dist(x_0,\theta)^{(k-1)-k} \dif\mu_{k,k-1,\pi}(\theta) \dif\nu_{n,k,x_0}(\pi)
= C
\dist(x_0,\theta)^{(k-1)-n} \dif\mu_{n,k-1}(\theta),
\end{equation}
since both are measures on $\tilde M_{n,k-1}(x_0) = M_{n,k-1} \setminus \{ \text{planes through } x_0 \}$ invariant under rotations around $x_0$ and dilations with base point $x_0$, which together generate a group acting transitively on $\tilde M_{n,k-1}(x_0)$.

Now, using first the induction hypothesis with $k,k-1$ instead of $n,k$, we obtain
\begin{align*}
&\dif\Leb[\pi](x_0) \dots \dif\Leb[\pi](x_k) \dif\mu_{n,k}(\pi)\\
&= C
\dif\Leb[\pi](x_0) \det(x_1,\dots,x_k)^{k-(k-1)} \dif\Leb[\theta](x_1) \dots \dif\Leb[\theta](x_k) \dif\mu_{k,k-1,\pi}(\theta) \dif\mu_{n,k}(\pi)\\
&= C
\det(x_1,\dots,x_k) \dif\Leb[\theta](x_1) \dots \dif\Leb[\theta](x_k) \dif\mu_{k,k-1,\pi}(\theta) \dif\nu_{n,k,x_0}(\pi) \dif x_0
\quad\text{ by }(\ref{eq:measure-on-taut-fiber-bundle})
\\
&= C
\det(x_1,\dots,x_k) \dif\Leb[\theta](x_1) \dots \dif\Leb[\theta](x_k) \dist(x_0,\theta)^{k-n} \dif\mu_{n,k-1}(\theta) \dif x_0
\quad\text{ by }(\ref{eq:measure-on-planes-not-through-pt})
\\
&= C
\det(x_1,\dots,x_k)^{k-n} \dif x_1 \dots \dif x_k \dist(x_0,\pi(x_1, \dots, x_k))^{k-n} \dif x_0
\quad\text{ by induction}
\\
&= C
\det(x_0,x_1,\dots,x_k)^{k-n} \dif x_1 \dots \dif x_k \dif x_0.
&\qedhere
\end{align*}
\end{proof}

A similar result for measures on the projective space $G_{n,1}$ is easily deduced by a change of coordinates.
Let $\Det(\omega_1, \dots, \omega_k)$ for $\omega_1, \dots, \omega_k \in G_{n,1}$ denote $\det(0, x_1, \dots, x_k)$ whenever $x_j \in S^{n-1} \intersection \omega_j$.
Note that this quantity does not depend on the choice of $x_1, \dots, x_k$.

\begin{lemma}
\label{lem:measures-on-Gn1-k}
The following measures on $(G_{n,1})^{k}$ are equivalent up to a constant:
\[
\dif\nu_{k,1}(\omega_1) \dots \dif\nu_{k,1}(\omega_k) \dif\nu_{n,k}(\pi)
=
C \Det (\omega_1, \dots, \omega_k)^{k-n} \dif\nu_{n,1}(\omega_1) \dots \dif\nu_{n,1}(\omega_k).
\]
\end{lemma}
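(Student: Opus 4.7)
The plan is to reduce this statement to Lemma~\ref{lem:measures-on-Rn-k+1} by switching to polar coordinates in each of the $k$ copies of $\R^n$. Before doing so, I would first derive the variant of Lemma~\ref{lem:measures-on-Rn-k+1} appropriate for $k$-planes through the origin, namely
\[
\dif\Leb[\pi](y_1) \dots \dif\Leb[\pi](y_k) \dif\nu_{n,k}(\pi)
=
C \det(0,y_1,\dots,y_k)^{k-n} \dif y_1 \dots \dif y_k,
\]
understood as an equality of measures on the total space of the tautological bundle over $G_{n,k}$. To obtain it, I would start with Lemma~\ref{lem:measures-on-Rn-k+1}, use (\ref{eq:measure-on-taut-fiber-bundle}) to rewrite $\dif\Leb[\pi](x_0)\dif\mu_{n,k}(\pi)$ as $C\dif\nu_{n,k,x_0}(\pi)\dif x_0$, and then perform the substitution $y_j=x_j-x_0$. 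Since translation carries $G_{n,k}(x_0)$ to $G_{n,k}$, $\nu_{n,k,x_0}$ to $\nu_{n,k}$, and leaves the $k$-volume invariant because $\det(x_0,x_0+y_1,\dots,x_0+y_k) = \det(0,y_1,\dots,y_k)$, the factor $\dif x_0$ appears as Lebesgue measure on both sides and can be dropped.

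Next I would expand each $y_j$ in polar coordinates. On $\R^n$ one has $\dif y = C\, r^{n-1}\,\dif r\,\dif\nu_{n,1}(\omega)$ with $\omega$ the line spanned by $y$, while within the $k$-plane $\pi$ one has $\dif\Leb[\pi](y) = C\, r^{k-1}\,\dif r\,\dif\nu_{k,1}(\omega)$ (identifying the unit sphere with $G_{\cdot,1}$ via the two-to-one quotient). Substituting these into the displayed identity and using the homogeneity
\[
\det(0,r_1\omega_1,\dots,r_k\omega_k) = r_1\cdots r_k\,\Det(\omega_1,\dots,\omega_k),
\]
which is immediate from the multilinearity of the $k$-dimensional volume in the edge vectors, the radial factor $\prod_j r_j^{k-1}\dif r_j$ appears on both sides and cancels: indeed, the right-hand side picks up $(\prod_j r_j)^{k-n}\prod_j r_j^{n-1} = \prod_j r_j^{k-1}$ while the left-hand side picks up $\prod_j r_j^{k-1}$ directly. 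What remains is exactly the asserted identity.

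The main technical obstacle is bookkeeping: ensuring that the measure-theoretic identifications used throughout are correct on a full-measure set. Specifically, one needs the locus in $(G_{n,1})^k$ where $\omega_1,\dots,\omega_k$ fail to span a $k$-dimensional subspace to be $\nu_{n,1}^{\otimes k}$-negligible, which is immediate since it is a proper algebraic subvariety. With that out of the way, the tautological bundle over $G_{n,k}$ is identified up to a null set with $(G_{n,1})^k$ via $(\pi,\omega_1,\dots,\omega_k)\mapsto(\omega_1,\dots,\omega_k)$, and $O(n)$-invariance of all constructions guarantees that the absolute constants collected along the way depend only on $n$ and $k$.
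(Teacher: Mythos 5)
Your proposal is correct and follows essentially the same route as the paper: use (\ref{eq:measure-on-taut-fiber-bundle}) to peel off the base point, switch to polar coordinates in each copy of $\R^n$ and in each $k$-plane, and cancel the radial factors using the homogeneity of $\det$. The only presentational difference is that you first isolate the origin-centered version of Lemma~\ref{lem:measures-on-Rn-k+1} as an intermediate step, while the paper carries the $x_0$- and $r_j$-dependence through a single chain of identities before dropping them at the end.
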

\begin{proof}
Using (\ref{eq:measure-on-taut-fiber-bundle}) and the measurable coordinates $(\omega, r) \in G_{n,1}(x_0) \times \R \cong G_{n,1} \times \R$ centered at $x_0$ on $\R^n$ the assertion of Lemma~\ref{lem:measures-on-Rn-k+1} becomes
\begin{align*}
\dif\nu_{k,1}(\omega_1) |r_1|^{k-1} \dif r_1 &\dots \dif\nu_{k,1}(\omega_k) |r_k|^{k-1} \dif r_k \dif\nu_{n,k, x_0}(\pi) \dif\Leb[n](x_0)\\
&=
C \dif\Leb[\pi](x_0) \dots \dif\Leb[\pi](x_k) \dif\mu_{n,k}(\pi)\\
&=
C \det (x_0, \dots, x_k)^{k-n} \dif\Leb[n](x_0) \dots \dif\Leb[n](x_k)\\
&=
C |r_1|^{k-n} \dots |r_k|^{k-n} \Det(\omega_1, \dots, \omega_k)^{k-n}\\
&\qquad \cdot \dif\nu_{n,1}(\omega_1) |r_1|^{n-1} \dif r_1 \dots \dif\nu_{n,1}(\omega_k) |r_k|^{n-1} \dif r_k \dif\Leb[n](x_0).
\end{align*}
Eliminating the integration over $x_0$ and $r_1, \dots r_k$ yields the claim.
\end{proof}

\section{Estimates by rearrangement}
We consider the multilinear functional
\[
A_{n,k}(f_0, \dots, f_n)
=
\int_{\pi \in M_{n,k}} \Prod_{j=0}^n \Radon_{n,k} f_j(\pi) \dif\mu_{n,k}(\pi)
\]
for functions $f_0, \dots, f_n$ on $\R^n$.
For a positive function $f$,
\[
A_{n,k}(f,\dots,f) = ||\Radon_{n,k} f||_{n+1; n+1}^{n+1}.
\]
The multilinear nature of $A_{n,k}$ allows one to apply rearrangement techniques.
Note that by Lemma~\ref{lem:measures-on-Rn-k+1} we have
\begin{align*}
A_{n,k}(f_0, \dots, f_n)
&=
\int_{\pi \in M_{n,k}} \Prod_{j=0}^n \int_\pi f_j(x_j) \dif\Leb[k](x_j) \dif\mu_{n,k}(\pi)\\
&=
\int_{x_0, \dots, x_k \in \R^n} f_0(x_0) \dots f_k(x_k)
\prod_{j=k+1}^n \left( \int_{x_j \in \pi(x_0, \dots, x_k)} f_j(x_j) \dif\Leb[k](x_j) \right)\\
&\qquad \cdot \det(x_0, \dots, x_k)^{k-n} \dif x_k \dots \dif x_0
\end{align*}
Let us show that the latter integral does not decrease under the action of the Steiner symmetrization on the functions $f_j$.
Note that the formula is invariant under the action of $O(n)$, so that we may choose the coordinates in such a way as to consider the symmetrization along the $e_n$ axis.
Write $x_j=(z_j,t_j) \in \R^{n-1} \times \R$.
Disregarding a set of measure zero, we may assume that $z_0, \dots, z_k$ are in general position, i.e.\ they span a $k$-dimensional affine subspace.
Henceforth let $z_0, \dots, z_k$ be fixed.
Using the remaining freedom in the choice of coordinates, we may assume that $z_0,\dots,z_k$ lie in the coordinate plane $\R^k \subset \R^{n-1}$.
This ensures that the $k$-plane $\pi(x_0, \dots, x_k)$ is not parallel to the basis vector $e_n$.
It is now sufficient to show that the integral
\[
\tilde A =
\int\limits_{\R^{k+1}} f_0(x_0) \dots f_k(x_k)
\prod_{j=k+1}^n \left( \int_{\mathrlap{\pi(x_0, \dots, x_k)}} f_j(x_j) \dif\Leb[k](x_j) \right)
\det(x_0, \dots, x_k)^{k-n} \dif t_k \dots \dif t_0.
\]
does not decrease under Steiner symmetrization.
This integral only depends on the restriction of $f_j$'s to the $k+1$-dimensional subspace $V=\R^k \times \{0\} \times \R \subset \R^n$, and the following manipulations are restricted to $V$.

In our coordinates, $\pi(x_0, \dots, x_k)$ is just a graph of a linear function, so that integration over it reduces to integration over $\R^k$:
\[
\int\limits_{\pi(x_0, \dots, x_k)} f_j(x_j) \dif\Leb[k](x)
=
\int\limits_{\R^k \subset \R^{n-1}} f_j(z_j,t_j(x_0, \dots, x_k, z_j)) \frac{\det (x_0, \dots, x_k)}{\det (z_0, \dots, z_k)} \dif z_j,
\]
where $t_j$ is a linear function of $t_0, \dots, t_k$ and the last fraction relates the volumes in the graph and in $\R^k$.
The quantity we are interested in becomes
\[
\tilde A
=
\int\limits_{\R^{k+1}} f_0(x_0) \dots f_k(x_k)
\prod_{j=k+1}^n \left(
\int\limits_{\R^k \subset \R^{n-1}} \frac{f_j(z_j,t_j(x_0, \dots, x_k, z_j))}{\det (z_0, \dots, z_k)} \dif z_j
\right)
\dif t_k \dots \dif t_0.
\]
Pulling out the integration over $z_j$'s, it is sufficient to show that
\[
\int\limits_{\R^{k+1}} f_0(x_0) \dots f_k(x_k)
\prod_{j=k+1}^n f_j(z_j,t_j(x_0, \dots, x_k, z_j))
\dif t_k \dots \dif t_0
\]
does not decrease under Steiner symmetrization.
Since the $t_j$'s are linear in $t_0, \dots, t_k$, this is given by Theorem~\ref{thm:steiner-sym-increases-integrals}.

We have proved that $A$ does not decrease under Steiner symmetrization of its arguments.
A much more useful result would be if it would hold not just for one-dimensional, but for $n$-dimensional radial non-increasing rearrangement of the arguments.
\begin{proposition}
\label{prop:k-plane-rearrangement}
Let $f_0, \dots, f_n$ be positive functions.
Then
\[
A(f_0, \dots, f_n) \leq A(f_0^{**}, \dots, f_n^{**}).
\]
Here, $f^{**}$ denotes the non-increasing radial rearrangement of $f$ in $n$ dimensions.
\end{proposition}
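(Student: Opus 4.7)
The plan is to combine the single-direction Steiner symmetrization monotonicity established above with the symmetrization principle (Proposition~\ref{prop:steiner-approx}), bridged by the layer cake decomposition already exploited in the proof of Theorem~\ref{thm:steiner-sym-increases-integrals}.

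First I would reduce to characteristic functions. Writing $f_j = \int_0^\infty \chi_{\{f_j > s_j\}} \dif s_j$ and expanding the $(n+1)$-fold product inside the definition of $A$ by Fubini, multilinearity gives
\[
A(f_0,\dots,f_n) = \int_{(0,\infty)^{n+1}} A(\chi_{\{f_0>s_0\}},\dots,\chi_{\{f_n>s_n\}}) \dif s_0 \dots \dif s_n,
\]
and the analogous formula with $f_j^{**}$ holds by (\ref{eq:decomposition-characteristic}). Hence it suffices to prove $A(\chi_{E_0},\dots,\chi_{E_n}) \leq A(\chi_{E_0^{**}},\dots,\chi_{E_n^{**}})$ for measurable sets $E_j$, and by inner regularity and monotone convergence we may assume each $E_j$ is bounded.

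Second, the computation preceding the statement shows that for every direction $u \in S^{n-1}$,
\[
A(\chi_{E_0},\dots,\chi_{E_n}) \leq A(\chi_{E_0^{*u}},\dots,\chi_{E_n^{*u}}),
\]
since Theorem~\ref{thm:steiner-sym-increases-integrals} applies to the inner integral with $z_0,\dots,z_k$ fixed and the Steiner symmetrization of $\chi_{E_j}$ equals the characteristic function of $E_j^{*u}$. Iterating, for any finite sequence of directions $u_1,\dots,u_m$,
\[
A(\chi_{E_0},\dots,\chi_{E_n}) \leq A(\chi_{E_0^{*u_1\dots *u_m}},\dots,\chi_{E_n^{*u_1\dots *u_m}}).
\]

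Third, apply the symmetrization principle (Proposition~\ref{prop:steiner-approx}) to the countable collection $\{E_j\}_{j=0}^n$: there is a sequence $(u_m)$ such that for every $j$, $E_j^{*u_1\dots *u_m}$ is contained in a fixed ball and converges in Lebesgue measure to $B(0,\rho_j)$, where $\rho_j$ is chosen so that $|B(0,\rho_j)| = |E_j|$. Since $\chi_{E_j^{**}} = \chi_{B(0,\rho_j)}$, it remains to show
\[
A(\chi_{E_0^{*u_1\dots *u_m}},\dots,\chi_{E_n^{*u_1\dots *u_m}}) \xrightarrow[m\to\infty]{} A(\chi_{B(0,\rho_0)},\dots,\chi_{B(0,\rho_n)}).
\]

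The main obstacle will be this last continuity step, because of the singular weight $\det(x_0,\dots,x_k)^{k-n}$ in the representation given by Lemma~\ref{lem:measures-on-Rn-k+1}. I would handle it by exploiting the uniform support bound: all the sets stay inside a fixed ball $B(0,R)$, and from the measure equivalence of Lemma~\ref{lem:measures-on-Rn-k+1},
\[
\int_{B(0,R)^{k+1}} \Prod_{j=k+1}^n \Radon_{n,k}\chi_{B(0,R)}(\pi(x_0,\dots,x_k)) \det(x_0,\dots,x_k)^{k-n} \dif x_0 \dots \dif x_k
\]
equals $\int_{M_{n,k}} (\Radon_{n,k}\chi_{B(0,R)})^{n+1} \dif\mu_{n,k}$, which is finite (the kernel $\Radon_{n,k}\chi_{B(0,R)}$ is bounded and compactly supported in $M_{n,k}$). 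This gives an $L^1$ majorant for the integrand in each $A(\chi_{E_0^{*u_1\dots *u_m}},\dots)$. Pointwise convergence a.e.\ of the integrand follows from $L^1$ convergence of $\chi_{E_j^{*u_1\dots *u_m}}$ along a subsequence, and the dominated convergence theorem yields the desired limit, completing the proof.
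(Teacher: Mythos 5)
Your overall strategy matches the paper's: reduce to bounded characteristic functions by the layer cake decomposition, iterate the single-direction Steiner monotonicity established just before the statement, and invoke Proposition~\ref{prop:steiner-approx}. The final limit step is where you diverge, and there is a gap in it. You assert that pointwise a.e.\ convergence of the integrand follows from $L^1$ convergence of $\chi_{E_j^{*u_1\dots *u_m}}$ along a subsequence. But passing to a subsequence so that $\chi_{E_j^{*u_1\dots*u_m}} \to \chi_{B(0,\rho_j)}$ a.e.\ in $\R^n$ says nothing about the $k$-dimensional integrals $\int_{\pi(x_0,\dots,x_k)} \chi_{E_j^{*u_1\dots*u_m}}$ appearing in the integrand for $j>k$, since each plane $\pi(x_0,\dots,x_k)$ is a null set in $\R^n$. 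To repair this, observe that the estimate $\|\Radon_{n,k} g\|_{L^1(M_{n,k})} \leq C\|g\|_{L^1(\R^n)}$ (a consequence of (\ref{eq:measure-on-taut-fiber-bundle}) and Fubini) gives $\Radon_{n,k}\chi_{E_j^{*u_1\dots*u_m}} \to \Radon_{n,k}\chi_{B(0,\rho_j)}$ in $L^1(M_{n,k})$; a further subsequence then yields a.e.\ convergence on $M_{n,k}$, and this transfers to a.e.\ convergence in $(x_0,\dots,x_k)$ via Lemma~\ref{lem:measures-on-Rn-k+1}, because the density $\det(x_0,\dots,x_k)^{n-k}$ is bounded on $B(0,R)^{k+1}$.

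It is also worth noting the paper's shorter route around this issue: rather than establishing the two-sided limit $A(\chi_{E_0^{*u_1\dots*u_m}},\dots) \to A(\chi_{B(0,\rho_0)},\dots)$ by dominated convergence, the paper uses the containment $E_j^{*u_1\dots*u_m} \subset B(0,\rho_j+o(1))$ furnished by Proposition~\ref{prop:steiner-approx} together with the monotonicity of $A$ in each positive argument. The only continuity then needed is that of $(\rho_0,\dots,\rho_n)\mapsto A(\chi_{B(0,\rho_0)},\dots,\chi_{B(0,\rho_n)})$, where pointwise convergence of the relevant $k$-plane integrals is manifest.
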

\begin{proof}
By the monotone convergence theorem and a decomposition similar to (\ref{eq:decomposition-characteristic}), it is sufficient to consider simple functions.
By multilinearity of $A$ the functions $f_j$ may be assumed to be characteristic functions of sets $E_j$.
Again by monotone convergence, the sets may be assumed to be bounded by some $R$.
By Proposition~\ref{prop:steiner-approx}, there is a sequence of directions such that the successive Steiner symmetrizations of $E_j$ converge to their radial rearrangements $B(0,r_j)$ in the Hausdorff distance, in particular $E_i^{*u_1 \dots *u_l} \subset B(0,r_j+o(1))$.
Together with the preceding discussion this shows that
\[
A(f_0, \dots, f_n)
\leq
A(f_0^{*u_1 \dots *u_l}, \dots, f_n^{*u_1 \dots *u_l})
\leq
A(\chi_{B(0,r_0+o(1))}, \dots, \chi_{B(0,r_n+o(1))}).
\]
Letting $l \to \infty$, one sees that the right-hand side of the inequality converges to $A(f_0^{**}, \dots, f_n^{**})$.
\end{proof}

The problem is now reduced to finding an estimate for the $k$-plane transform of a radial non-increasing function.
Our result is somewhat stronger then needed in that it gives an estimate in terms of a Lorentz space norm.
\begin{proposition}
\label{prop:k-plane-estimate-radial}
Let $f$ be a positive non-increasing radial function on $\R^n$.
Then
\[
A_{n,k}(f, \dots, f)
\leq C
|| f ||_{(n+1)/k+1, n+1}.
\]
\end{proposition}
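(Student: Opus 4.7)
The plan is to reduce the assertion to a one-dimensional weighted $L^{n+1}$ estimate and then settle it by Minkowski's integral inequality on the multiplicative group $((0,\infty), \dif r/r)$. First I write $f(x) = F(|x|)$ and observe that, since $f$ is radial, $\Radon_{n,k} f(\pi)$ depends only on $r := \dist(0,\pi)$; call this common value $G(r)$. Parametrizing $\pi = V + v$ with $V \in G_{n,k}$ and $v \in V^\perp$ and passing to polar coordinates in $V^\perp \simeq \R^{n-k}$ gives
\[
A_{n,k}(f,\ldots,f) = ||\Radon_{n,k} f||_{n+1;n+1}^{n+1} = C\int_0^\infty G(r)^{n+1} r^{n-k-1} \dif r,
\]
while integrating $F(\sqrt{r^2+|y|^2})$ over $y \in \R^k$ in polar coordinates and substituting $\rho = r\sqrt{\sigma^2-1}$ gives
\[
G(r) = C\int_0^\infty F(\sqrt{r^2+\rho^2}) \rho^{k-1} \dif\rho = C r^k \int_1^\infty F(r\sigma) (\sigma^2-1)^{(k-2)/2} \sigma \dif\sigma.
\]
Since $f^*(\omega_n r^n) = F(r)$ for radial non-increasing $f$, a direct computation also yields $||f||_{(n+1)/(k+1),n+1}^{n+1} = C\int_0^\infty F(r)^{n+1} r^{n(k+1)-1}\dif r$, so the desired bound is the one-dimensional inequality
\[
\int_0^\infty G(r)^{n+1} r^{n-k-1} \dif r \leq C \int_0^\infty F(r)^{n+1} r^{n(k+1)-1} \dif r.
\]

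Next I would introduce the dilation-weighted functions $\Phi(r) := r^{n(k+1)/(n+1)} F(r)$ and $\Psi(r) := r^{(n-k)/(n+1)} G(r)$, so that the preceding inequality reads $||\Psi||_{L^{n+1}(\dif r/r)} \leq C ||\Phi||_{L^{n+1}(\dif r/r)}$. Using the arithmetic identity $(n-k)/(n+1) + k = n(k+1)/(n+1)$ and inserting $F(r\sigma) = (r\sigma)^{-n(k+1)/(n+1)}\Phi(r\sigma)$ into the formula for $G$, all powers of $r$ cancel and one obtains the dilation-invariant representation
\[
\Psi(r) = C \int_1^\infty \Phi(r\sigma) \kappa(\sigma) \dif\sigma, \qquad \kappa(\sigma) := \sigma^{1-n(k+1)/(n+1)} (\sigma^2-1)^{(k-2)/2}.
\]
The measure $\dif r/r$ is invariant under the dilations $r \mapsto r\sigma$; hence Minkowski's integral inequality together with the substitution $r' = r\sigma$ immediately yields
\[
||\Psi||_{L^{n+1}(\dif r/r)} \leq \left(\int_1^\infty \kappa(\sigma)\dif\sigma\right) ||\Phi||_{L^{n+1}(\dif r/r)}.
\]

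It remains to verify that $\int_1^\infty \kappa(\sigma) \dif\sigma$ is finite. Near $\sigma = 1$ we have $(\sigma^2-1)^{(k-2)/2} \sim (\sigma-1)^{(k-2)/2}$, which is integrable because $k > 0$. Near $\sigma = \infty$ we have $\kappa(\sigma) \sim \sigma^{k-1-n(k+1)/(n+1)}$, and this is integrable at infinity if and only if $k < n(k+1)/(n+1)$, which rearranges to $k < n$, the standing hypothesis. The only genuine obstacle is the bookkeeping in the second step: organizing the exponents so that the dilation-invariant convolution structure on $(0,\infty)$ becomes manifest. Once $\kappa$ has been isolated, Minkowski's inequality and the elementary estimate on $\kappa$ close the argument.
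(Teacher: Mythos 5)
Your proof is correct, and it takes a genuinely different route from the paper. The paper proceeds by first applying the pointwise bound $(s^{2}-r^{2})^{k/2-1}\leq s^{k-2}$ and then Hardy's inequality (\ref{ineq:hardy-positive}); since that pointwise bound fails for $k=1$, the paper must handle $k=1$ separately by splitting the inner integral into $\int_{r}^{2r}+\int_{2r}^{\infty}$ and estimating each piece by hand. Your substitution $\rho=r\sqrt{\sigma^{2}-1}$ instead rewrites $G$ as a dilation-invariant convolution $\Psi(r)=\int_{1}^{\infty}\Phi(r\sigma)\kappa(\sigma)\,\dif\sigma$ on $((0,\infty),\dif r/r)$, after which Minkowski's integral inequality reduces everything to the integrability of the fixed kernel $\kappa(\sigma)=\sigma^{1-n(k+1)/(n+1)}(\sigma^{2}-1)^{(k-2)/2}$. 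The arithmetic identity $(n-k)/(n+1)+k=n(k+1)/(n+1)$ and the two endpoint checks ($k>0$ near $\sigma=1$, $k<n$ near $\sigma=\infty$) are exactly right, and the singularity at $\sigma=1$ is mild enough that there is no need to treat $k=1$ differently. The upshot is a slightly cleaner, case-free argument: Hardy's inequality is a particular convolution estimate of the same type, so the paper's proof is a specialization of yours at $k\geq 2$ and an ad hoc patch at $k=1$, while yours makes the convolution structure manifest from the start.
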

\begin{proof}
For convenience, $f$ is seen both as function on $\R^n$ and on $(0,\infty)$.
Using radial symmetry of $f$ we can make the following simplifications.
\begin{align*}
A_{n,k}(f, \dots, f)
&= \int_{\pi \in G_{n,k}} \int_{x \perp \pi} |\Radon_{k,n} f(\pi, x)|^{n+1} \dif x \dif \pi\\
&= \int_{x \perp \pi} |\Radon_{k,n} f(\pi, x)|^{n+1} \dif x \quad\text{for an arbitrary } \pi\\
&= C \int_{r=0}^\infty r^{n-k-1} \left| \int_{\rho=0}^\infty \rho^{k-1} f(\sqrt{r^2+\rho^2}) \dif\rho \right|^{n+1} \dif r\\
&= C
\int_{r=0}^\infty r^{n-k-1} \left|
\int_{s=r}^\infty f(s) s (s^2-r^2)^{k/2-1} \dif s\right|^{n+1} \dif r.
\end{align*}
In the last line we have substituted $s=\sqrt{r^2+\rho^2}$.
If $k \geq 2$, we have the estimate $(s^2-r^2)^{k/2-1} \leq s^{k-2}$, so that
\begin{align*}
A_{n,k}(f, \dots, f)
&\leq C
\int_{r=0}^\infty r^{n-k-1} \left|
\int_{s=r}^\infty f(s) s^{k-1} \dif s\right|^{n+1} \dif r\\
&\leq C
\int_{r=0}^\infty r^{k n+n-1} f(r)^{n+1} \dif r
\end{align*}
by Hardy's inequality (\ref{ineq:hardy-positive}).
We now use the relation $f^*(\Omega_n |x|^n/n) = f^{**}(x)$ between the linear and the radial non-increasing rearrangement of $f$.
Here the argument of $f^*$ is the volume of the ball of radius $|x|$ in $\R^n$.
From the fact that $f$ is both radial and non-increasing we infer $f^*(s) = f(C s^{1/n})$.
The change of variable $r = C s^{1/n}$ in the last integral yields
\begin{align*}
A_{n,k}(f, \dots, f)
&\leq
C \int_{r=0}^\infty r^{k n+n} f(r)^{n+1} \frac{\dif r}{r} \\
&=
C \int_{s=0}^\infty s^{(k n+n)/n} f^*(s)^{n+1} \frac{\dif s}{s} \\
&=
C \int_{s=0}^\infty \left( s^{(k+1)/(n+1)} f^*(s) \right)^{n+1} \frac{\dif s}{s} \\
&=
C ||f||_{(n+1)/(k+1),n+1}^{n+1},
\end{align*}
which is the desired estimate.

In the remaining case $k=1$ the integral reduces to
\begin{multline*}
A_{n,k}(f, \dots, f)
\leq
C \int_0^\infty r^{n-2} \left( \int_r^\infty |f(s)| (s^2 - r^2)^{-1/2} s \dif s \right)^{n+1} \dif r\\
\leq
C \int_0^\infty r^{n-2} \left( \int_r^{2 r} \underbrace{|f(s)|}_{\leq f(r)} \underbrace{(s^2 - r^2)^{-1/2}}_{\leq (s-r)^{-1/2} (2r)^{-1/2}} \underbrace{s}_{\leq 2r} \dif s \right)^{n+1} \dif r \\
+
C \int_0^\infty r^{n-2} \left( \int_{2r}^\infty |f(s)| \underbrace{(s^2 - r^2)^{-1/2}}_{\leq (s^2 / 4)^{-1/2}} s \dif s \right)^{n+1} \dif r.
\end{multline*}
Here the last term may be estimated as before, while the first is dominated by
\begin{multline*}
C \int_0^\infty r^{n-2} \left( f(r) (2r)^{-1/2} 2r \int_r^{2 r} (s-r)^{-1/2} \dif s \right)^{n+1} \dif r\\
=
C \int_0^\infty r^{n-2} \left( f(r) r \right)^{n+1} \dif r
=
C \int_0^\infty r^{2 n-1} f(r)^{n+1} \dif r,
\end{multline*}
which again may be estimated as before.
\end{proof}

\begin{theorem}
\label{th:k-plane}
Let $f$ be a test function on $\R^n$.
Then, for every $1 \leq p \leq (n+1)/(k+1)$,
\[
|| \Radon_{n,k} f ||_{q; r}
\leq
C || f ||_{p},
\]
where
$q \leq (n-k) p'$, $n/p - (n-k)/r = k$.
\end{theorem}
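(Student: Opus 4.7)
The plan is to combine the two natural endpoint estimates with complex interpolation, then extend by H\"older's inequality on the Grassmannian.

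The first endpoint is $(p,q,r) = (1,\infty,1)$: Fubini's theorem gives $||\Radon_{n,k} f||_{\infty;1} \leq ||f||_1$ at once, as recorded in (\ref{eq:k-plane-fubini-estimate}). For the critical endpoint $p = (n+1)/(k+1)$, $q = r = n+1$, I would proceed as follows. Assume $f \geq 0$ (the general case follows by replacing $f$ by $|f|$), and chain Propositions~\ref{prop:k-plane-rearrangement} and \ref{prop:k-plane-estimate-radial}:
\[
||\Radon_{n,k} f||_{n+1;n+1}^{n+1}
= A_{n,k}(f,\dots,f)
\leq A_{n,k}(f^{**},\dots,f^{**})
\leq C\, ||f^{**}||_{(n+1)/(k+1),n+1}^{n+1}.
\]
Since rearrangement preserves the Lorentz norm and the continuous inclusion $L^{(n+1)/(k+1)} \hookrightarrow L^{(n+1)/(k+1),n+1}$ holds by Corollary~\ref{cor:lorentz-scale} (because $(n+1)/(k+1) \leq n+1$), this yields
\[
||\Radon_{n,k} f||_{n+1;n+1} \leq C\, ||f||_{(n+1)/(k+1)}.
\]

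Next I would interpolate between the two endpoints. By Corollary~\ref{cor:complex-interpolation-LP} applied to the mixed-norm target space, with $(P_{0,1},P_{0,2}) = (\infty,1)$ and $(P_{1,1},P_{1,2}) = (n+1,n+1)$ (both components differ), one has
\[
[L^{\infty;1}(M_{n,k}), L^{n+1;n+1}(M_{n,k})]_\theta = L^{q_\theta;r_\theta}(M_{n,k})
\]
with $1/q_\theta = \theta/(n+1)$ and $1/r_\theta = (1-\theta) + \theta/(n+1)$; similarly the source satisfies $[L^1(\R^n), L^{(n+1)/(k+1)}(\R^n)]_\theta = L^{p_\theta}(\R^n)$ with $1/p_\theta = (1-\theta) + \theta(k+1)/(n+1)$. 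Applying the Stein interpolation theorem~\ref{th:stein} to the constant analytic family $T_z \equiv \Radon_{n,k}$ on the dense subspace $\Schwartz(\R^n) \subset L^{p_j}$ (using the remark following that theorem, which ensures $T(\mathcal{N}_\theta) \subset \mathcal{N}_\theta$ for constant $T_z$) gives
\[
||\Radon_{n,k} f||_{q_\theta;r_\theta} \leq C\, ||f||_{p_\theta}
\]
for every $\theta \in [0,1]$. A direct calculation confirms $n/p_\theta - (n-k)/r_\theta = k$ and $q_\theta = (n-k)\, p_\theta'$.

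Finally, to enlarge the range to arbitrary $q \leq (n-k)p'$, I would invoke H\"older's inequality on the compact Grassmannian $G_{n,k}$, which carries a finite invariant measure: for each fixed $r$, $L^{q_2}(G_{n,k},L^r) \hookrightarrow L^{q_1}(G_{n,k},L^r)$ whenever $q_1 \leq q_2$, so the diagonal estimate propagates to the full admissible range. The substantive step is the critical endpoint, where the two-step rearrangement reduction (first to radial non-increasing functions, then to a one-variable Hardy-type estimate) does all the real work; the interpolation and H\"older steps are then formal. The main obstacle I anticipate is purely bookkeeping: verifying that the mixed-norm spaces $L^{\infty;1}$ and $L^{n+1;n+1}$ satisfy the hypotheses of Corollary~\ref{cor:complex-interpolation-LP} (in particular that both exponents change strictly between endpoints, so the iterated Bochner formulation applies) and that $\Schwartz(\R^n)$ provides a compatible dense initial domain on which $\Radon_{n,k}$ is unambiguously defined.
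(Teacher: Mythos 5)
Your proposal is correct and follows exactly the paper's route: the endpoint $(p,q,r)=(1,\infty,1)$ from Fubini, the critical endpoint $(p,q,r)=((n+1)/(k+1),n+1,n+1)$ from Propositions~\ref{prop:k-plane-rearrangement} and \ref{prop:k-plane-estimate-radial} together with Corollary~\ref{cor:lorentz-scale}, interpolation via Theorem~\ref{th:stein} and Corollary~\ref{cor:complex-interpolation-LP}, and H\"older's inequality on the compact Grassmannian to reach $q<(n-k)p'$. Your explicit verification that $q_\theta=(n-k)p_\theta'$ and $n/p_\theta-(n-k)/r_\theta=k$ for all $\theta$, and your attention to the density of $\Schwartz(\R^n)$ as the initial domain, flesh out the details that the paper compresses into a single sentence.
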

\begin{proof}
For $q=(n-k)p'$ we have the endpoint estimate $p=(n+1)/(k+1)$ by Proposition~\ref{prop:k-plane-rearrangement}, Proposition~\ref{prop:k-plane-estimate-radial} and Corollary~\ref{cor:lorentz-scale}, while the case $p=1$ is covered by Fubini's theorem as mentioned in (\ref{eq:k-plane-fubini-estimate}).
The result follows by the interpolation Theorem~\ref{th:stein}.

The estimate for $q<(n-k)p'$ follows by the Hölder inequality.
\end{proof}

\section{Estimates by induction on \texorpdfstring{$k$}{k}}
In order to sharpen the results of the previous section we have to consider another integral operator.
For a function $F$ defined on $G_{n,1}$ set
\[
S_{n,k} F (\pi) = \int_{\omega \subset \pi} F(\omega) \dif\nu_{k,1}(\omega),
\quad
\pi \in G_{n,k}.
\]
Analogously to what was done for the Radon transform, we will consider the multilinear form
\[
B_{n,k}(F_1, \dots, F_n)
=
\int_{\pi \in G_{n,k}} \Prod_{j=1}^n S_{n,k} F_j (\pi) \dif\nu_{n,k}(\pi).
\]
By Lemma~\ref{lem:measures-on-Gn1-k} we have
\begin{align*}
B_{n,k}(F_1, \dots, F_n)
&=
\int_{\pi \in G_{n,k}} \Prod_{j=1}^n \left( \int_{\omega \subset \pi} F_j(\omega) \dif\nu_{k,1}(\omega)\right) \dif\nu_{n,k}(\pi)\\
&=
\int_{\omega_1, \dots, \omega_k \in G_{n,1}} \Prod_{j=1}^k F_j(\omega_j) \Prod_{j=k+1}^n \left( \int_{\omega_j \subset \pi(\omega_1, \dots, \omega_k)} F_j(\omega_j) \dif\nu_{k,1}(\omega_j)\right)\\
&\qquad \cdot \Det(\omega_1, \dots, \omega_k)^{k-n}
\dif\nu_{n,1}(\omega_1) \dots \dif\nu_{n,1}(\omega_k).
\end{align*}
For $F\geq 0$, $B_{n,k}(F,\dots, F) = ||S_{n,k} f||_n^n$, and the next theorem shows that $S_{n,k} : L^{n/k,n}(G_{n,1}) \to L^n(G_{n,k})$ is a bounded operator.

\begin{theorem}
\label{th:ind}
For every $1 \leq k \leq n$, we have that
\begin{enumerate}
\item\label{ind-sphere} $|B_{n,k}(F_1,\dots,F_n)| \leq C \prod_{j=1}^n ||F_j||_{n/k,n}$ for arbitrary $F_j \in L^{n/k,n}(G_{n,1})$ and
\item\label{ind-radon} $|A_{n,k}(f_0,\dots,f_n)| \leq C \prod_{j=0}^n ||f_j||_{\frac{n+1}{k+1},n+1}$ for arbitrary $f_j \in L^{\frac{n+1}{k+1},n+1}(\R^n)$.
\end{enumerate}
\end{theorem}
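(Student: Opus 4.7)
\bigskip

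\noindent\textbf{Proof plan.}

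The plan is to prove (\ref{ind-sphere}) by induction on $k$ and then deduce (\ref{ind-radon}) from (\ref{ind-sphere}) at the same $k$ by a standard endpoint reduction. Let me take the two stages in that order.

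\emph{Stage 1: Part (\ref{ind-sphere}) by induction on $k$.} For the base case $k=1$ we simply observe that $S_{n,1}F = F$ (a $1$-plane contains only itself as a line), so $B_{n,1}(F_1,\dots,F_n) = \int_{G_{n,1}} \prod_j F_j \, d\nu_{n,1}$, and Hölder's inequality with the diagonal exponent $n$ on each factor gives $|B_{n,1}(F_1,\dots,F_n)| \leq C \prod_j \|F_j\|_n$, which is the claim since $L^{n,n} = L^n$. For the inductive step $k-1 \Rightarrow k$, I would slice via the fiber bundle structure of $G_{n,k}$: fix a line $\omega_1 \in G_{n,1}$; the $k$-planes through $\omega_1$ correspond to $(k-1)$-planes in $\omega_1^\perp$, and lines in $\omega_1 \oplus \pi'$ decompose in polar coordinates as a line $\omega''\subset\pi'$ together with an angle $\theta$. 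Disintegrating $d\nu_{n,k}$ along this fibration and inserting the polar decomposition of $d\nu_{k,1}$ yields an identity
\[
B_{n,k}(F_1,\dots,F_n) = C \int_{G_{n,1}} F_1(\omega_1)\, B_{n-1,k-1}\!\left(G_2^{\omega_1},\dots,G_n^{\omega_1}\right) d\nu_{n,1}(\omega_1),
\]
where $G_j^{\omega_1}$ is the weighted angular average $G_j^{\omega_1}(\omega'') = C \int F_j(\omega(\omega_1,\omega'',\theta))\sin^{k-1}\theta\, d\theta$ and the weight makes the identification $S_{n,k}F_j(\omega_1\oplus\pi') = S_{n-1,k-1}G_j^{\omega_1}(\pi')$ consistent with the measure formula (Lemma~\ref{lem:measures-on-Gn1-k}). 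Apply the inductive hypothesis in ambient dimension $n-1$ with exponent $(n-1)/(k-1)$, and then bound the resulting integral in $\omega_1$ by combining Hölder on $G_{n,1}$ with a Hardy-Littlewood-Sobolev-type fractional integration estimate on the Grassmannian (Corollary~\ref{cor:hardy-littlewood-sobolev}) to return to the target exponent $(n/k, n)$ for all $F_j$.

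\emph{Stage 2: Part (\ref{ind-radon}) from (\ref{ind-sphere}).} Here the argument parallels Proposition~\ref{prop:interpolation-multilinear}. Using the multilinearity and symmetry of $A_{n,k}$, I would single out $f_0$ and substitute $\delta_{x_0}$: collapsing the integral over $M_{n,k}$ to one over $G_{n,k}(x_0)$ and then using polar coordinates within each $k$-plane through $x_0$ gives
\[
A_{n,k}(\delta_{x_0}, f_1,\dots,f_n) = C\, B_{n,k}(F_1^{x_0},\dots,F_n^{x_0}),\qquad F_j^{x_0}(\omega) := \int_\R f_j(x_0+t\omega) |t|^{k-1}\, dt.
\]
Now apply part (\ref{ind-sphere}) and estimate $\|F_j^{x_0}\|_{L^{n/k,n}(G_{n,1})} \leq \|F_j^{x_0}\|_{L^{n/k}(G_{n,1})} \leq C \|f_j\|_{L^{n/k,1}(\R^n)}$, the last step being exactly Proposition~\ref{prop:Rn-Lp1-sphere-Lp} with $p = n/k$ (translation invariance lets us drop $x_0$), and the first being Corollary~\ref{cor:lorentz-scale} on the finite-measure space $G_{n,1}$ since $n/k \leq n$. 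This bound is uniform in $x_0$, so integrating against $f_0$ yields the endpoint estimate $|A_{n,k}(f_0,\dots,f_n)| \leq C\|f_0\|_{1,1} \prod_{j\geq 1}\|f_j\|_{n/k,1}$; by symmetry the same holds with any index in place of $0$, and Proposition~\ref{prop:interpolation-multilinear} interpolates these endpoints to give (\ref{ind-radon}).

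\emph{Main obstacle.} The technical core is the inductive step in Stage 1. Unlike the clean Fubini-type measure identity needed in Stage 2, the slicing of $B_{n,k}$ produces an averaging operator $F_j \mapsto G_j^{\omega_1}$ on the Grassmannian whose weight ($\sin^{k-1}\theta$ rather than the natural $\sin^{n-2}\theta$) encodes a fractional integration of fractional order $(n-k-1)$, and matching exponents so that the Hölder-plus-HLS combination on $G_{n,1}$ recovers exactly $(n/k, n)$ for every factor requires careful bookkeeping of the measure decomposition from Lemma~\ref{lem:measures-on-Gn1-k} and possibly a further multilinear interpolation to reduce to endpoint estimates before applying the fractional integration bound.
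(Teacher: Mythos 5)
Your Stage 2 coincides with the paper's step $(\ref{ind-sphere}, k) \Rightarrow (\ref{ind-radon}, k)$: the endpoint reduction $|A_{n,k}| \leq C \|f_0\|_1 \prod_{j\geq 1}\|f_j\|_{n/k,1}$ via freezing $x_0$, the identity $K(x_0) = C\, B_{n,k}(U_{n/k}f_1,\dots,U_{n/k}f_n)$, the bound from Proposition~\ref{prop:Rn-Lp1-sphere-Lp}, and the passage to $(\ref{ind-radon})$ by Proposition~\ref{prop:interpolation-multilinear} are exactly what the paper does. The gap is in Stage 1.

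Your plan for the inductive step of $(\ref{ind-sphere})$ passes directly from $(n-1,k-1)$ to $(n,k)$ by disintegrating the Grassmannian integral over a line $\omega_1$, but this is never completed. The slicing identity you write down is only sketched (incidentally, the polar weight should be $\sin^{k-2}\theta$, not $\sin^{k-1}\theta$, for the measure on $G_{k,1}\cong\R P^{k-1}$), and the resulting operator $F_j\mapsto G_j^{\omega_1}$ carries a weight that mismatches the natural $\sin^{n-2}\theta$ measure on $G_{n,1}$ by a factor $\sin^{-(n-k)}\theta$, so you are facing a genuine fractional integration on $\R P^{n-1}$. You would need to prove that this averaging maps $L^{n/k,n}(G_{n,1})$ to $L^{(n-1)/(k-1),\,n-1}(G_{n-1,1})$ uniformly in $\omega_1$, then reconcile the factor $F_1(\omega_1)$ with the remaining integration; you yourself flag this as ``careful bookkeeping'' and ``possibly a further multilinear interpolation'' without resolving it, so as written the induction does not close.

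The paper sidesteps this entirely with a zigzag: the base is $(\ref{ind-sphere}, k=1)$ and the two alternating steps are $(\ref{ind-sphere}, k) \Rightarrow (\ref{ind-radon}, k)$ (your Stage 2) and $(\ref{ind-radon}, k) \Rightarrow (\ref{ind-sphere}, k+1)$. The second step is what you are missing: by multilinearity and compactness of $G_{n,1}$ one may assume each $F_j$ is supported in a small ball; the centrographic projection (Figure~\ref{fig:ind-coor}) then identifies these balls with compact sets in $\R^{n-1}$ with bounded Jacobian and with $\Det(\omega_1,\dots,\omega_{k+1})$ comparable to $\det(x_1,\dots,x_{k+1})$, so that
\[
B_{n,k+1}(F_1,\dots,F_n) \leq C\, A_{n-1,k}(f_1,\dots,f_n), \qquad f_j = F_j\circ P\inv,
\]
and $(\ref{ind-radon}, k)$ in ambient dimension $n-1$ gives the target norm $\|\cdot\|_{n/(k+1),n}$ directly. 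No angular weight, no HLS on the Grassmannian, no exponent matching: the projection transports the measure estimate verbatim. You should replace your Stage~1 by this route, i.e.\ run the induction jointly on both assertions rather than proving $(\ref{ind-sphere})$ in isolation.
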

\begin{proof}
We use induction on $k$.
The basis is (\ref{ind-sphere}, $k=1$), which is trivial since $S_{n,1}$ is the identity operator.
The induction steps are (\ref{ind-sphere}, $k$) $\implies$ (\ref{ind-radon}, $k$) $\implies$ (\ref{ind-sphere}, $k+1$).

\paragraph*{(\ref{ind-sphere}, $k$) $\implies$ (\ref{ind-radon}, $k$)}
By the symmetry and positivity of $A$ and real interpolation (Proposition~\ref{prop:interpolation-multilinear}) it is sufficient to show the stronger estimate
\[
A_{n,k}(f_0, \dots, f_n) \leq C ||f_0||_1 \Prod_{j=1}^n ||f_j||_{n/k, 1}.
\]
for positive functions $f_0,\dots,f_n$.
Since
\[
A_{n,k}(f_0, \dots, f_n) = \int_{x_0 \in \R^n} f_0(x_0) K(x_0) \dif x_0
\]
with
\begin{multline*}
K(x_0)
=
\int_{x_1, \dots, x_k \in \R^n} f_1(x_1) \dots f_k(x_k)
\prod_{j=k+1}^n \left( \int_{x_j \in \pi(x_0, \dots, x_k)} f_j(x_j) \dif\Leb[k](x_j) \right)\\
\cdot\det(x_0, \dots, x_k)^{k-n} \dif x_1 \dots \dif x_1
\end{multline*}
and by duality it is sufficient to show $||K||_\infty \leq C \prod_{j=1}^n ||f_j||_{n/k, 1}$.
By translation invariance we only need to consider $x_0=0$.
By definition of $\Det$,
\[
\det(0, x_1, \dots, x_k) = \Det(\omega_1, \dots, \omega_k) \Prod_{j=1}^k |x_j|,
\]
where $\omega_j \in G_{n,1}$ is the line passing through $x_j$.
With this notation we also have
\[
\pi(0, x_1, \dots, x_k) = \pi(\omega_1, \dots, \omega_k).
\]
Hence the change to radial coordinates, induction hypothesis (\ref{ind-sphere}, $k$) and Proposition~\ref{prop:Rn-Lp1-sphere-Lp} yield
\begin{align*}
K(0)
&=
C \int_{\omega_1, \dots, \omega_k \in G_{n,1}} \Prod_{j=1}^k \left(\int_{t_j \in \R} |t_j|^{k-n} f_j(t_j \omega_j) |t_j|^{n-1} \dif t_j \right)\\
&\qquad \prod_{j=k+1}^n \left( \int_{\omega_j \subset \pi(\omega_1, \dots, \omega_k)} \int_{t_j \in \R} f_j(t_j \omega_j) |t_j|^{k-1} \dif t_j \dif\nu_{k,1}(\omega_j) \right)\\
&\qquad \cdot \Det(\omega_1, \dots, \omega_k)^{k-n} \dif \omega_1 \dots \dif \omega_k\\
&=
C \int_{\omega_1, \dots, \omega_k \in G_{n,1}} \Prod_{j=1}^k \left( U_{n/k} f_j (\omega_j) \right)\\
&\qquad \prod_{j=k+1}^n \left( \int_{\omega_j \subset \pi(\omega_1, \dots, \omega_k)} U_{n/k} f_j (\omega_j)\dif\nu_{k,1}(\omega_j) \right)\\
&\qquad \cdot \Det(\omega_1, \dots, \omega_k)^{k-n} \dif \omega_1 \dots \dif \omega_k\\
&=
C B_{n,k}(U_{n/k} f_1, \dots, U_{n/k} f_n)
\leq C \Prod_{j=1}^n ||U_{n/k} f_j||_{n/k, n}^n
\leq C \Prod_{j=1}^n ||f_j||_{n/k, 1}^n.
\end{align*}

\paragraph*{(\ref{ind-radon}, $k$) $\implies$ (\ref{ind-sphere}, $k+1$)}
By multilinearity of $B$ and compactness of $G_{n,1}$ it is sufficient to obtain an estimate in the case that each $F_j$ is supported on a small ball in $G_{n,1} = S^{n-1}/(\pm\id)$ (this manifold inherits a metric structure from $S^{n-1}$).
The size of the balls is chosen as to ensure that the aperture of the dashed cone in Figure~\ref{fig:ind-coor} is bounded from above, so that we may apply centrographic projection to change to flat coordinates.
To see that such a choice of the radii is possible observe that every (generic) collection of directions $\{ \omega_j \in \supp F_j \}_{j=1, \dots, k+1}$ divides $S^k = S^n \intersection \pi(\omega_1, \dots, \omega_{k+1})$ into $2^{k+1}$ sectors, at least one of which is contained in the positive sector of some coordinate system given by an orthonormal basis, as is easily seen by induction.
Since such a positive sector is contained in a cone of aperture $2 \arccos(1/\sqrt{k+1}) < \pi$, the sets $\supp F_j$ may be represented by subsets of the sphere contained in a cone with slightly greater aperture.

\begin{figure}
\centering
\begin{tikzpicture}[scale=1.9]
\coordinate (C) at (0,0); 
\coordinate (PL) at (-3,-1); 
\coordinate (PR) at (3,-1); 
\coordinate (O1U) at (230:-0.5); 
\coordinate (O1L) at (230:1.5); 
\coordinate (O2U) at (300:-0.5); 
\coordinate (O2L) at (300:1.5); 

\fill (0,0) circle (1pt) node[anchor=north] {$0$};
\draw (180:1) arc (180:360:1) node[right] {$\tilde S$}; 
\draw (PL) -- (PR); 

\draw[thick,|-|] (220:1) node[anchor=north east] {$\supp F_1$} arc (220:240:1);
\draw (O1L) -- (O1U) node[right] {$\omega_1$};
\coordinate (I1) at (intersection of PL--PR and O1L--O1U);
\fill (I1) circle (1pt) node[anchor=north west] {$x_1$};

\draw[thick,|-|] (290:1) arc (290:310:1) node[anchor=west] {$\supp F_2$};
\draw (O2L) -- (O2U) node[left] {$\omega_2$};
\coordinate (I2) at (intersection of PL--PR and O2L--O2U);
\fill (I2) circle (1pt) node[anchor=north east] {$x_2$};

\draw[dashed] ([xshift=0.5cm] PL) -- (0,0) -- ([xshift=-0.5cm] PR);
\end{tikzpicture}
\caption{Coordinates for the induction step (\ref{ind-radon}, $k$) $\implies$ (\ref{ind-sphere}, $k+1$)}
\label{fig:ind-coor}
\end{figure}
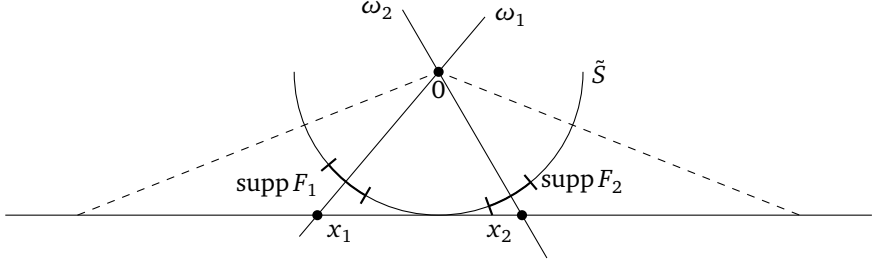

The Jacobian of the change of coordinates in Figure~\ref{fig:ind-coor} is bounded.
In particular, every Lorentz norm of the function $f_j(x_j) = F(\omega_j)$ is comparable with the corresponding norm of $F$.
Furthermore we have
\[
\Det(\omega_1, \dots, \omega_{k+1}) \leq C \det(x_1, \dots, x_{k+1}).
\]
Since $\omega_j \subset \pi(\omega_1, \dots, \omega_{k+1})$ if and only if $x_j \in \pi(x_1, \dots, x_{k+1})$ we conclude that
\begin{align*}
B_{n,k+1}(F_1, \dots, F_n)
&\leq
C \int_{x_1, \dots, x_{k+1} \in \R^{n-1}} \Prod_{j=1}^{k+1} f_j(x_j) \Prod_{j=k+2}^n \left( \int_{x_j \in \pi(x_1, \dots, x_{k+1})} f_j(x_j) \dif x_j \right)\\
&\quad \det(x_1, \dots, x_{k+1})^{k+1-n}
\dif x_1 \dots \dif x_{k+1}\\
&= A_{n-1,k}(f_1, \dots, f_n)\\
&\leq \Prod_{j=1}^n ||f_j||_{n/(k+1), n}^n\\
&\leq \Prod_{j=1}^n ||F_j||_{n/(k+1), n}^n.
\qedhere
\end{align*}
\end{proof}

\section{Estimates using the Hardy space}
Once again we will use the Plancherel theorem to obtain an end-point estimate.
At the other end-point we will however use a genuine Hardy space estimate instead of the usual $L^1$ estimate provided by Fubini's theorem.
\begin{theorem}[\cite{MR782573}]
\label{th:radon-riesz-bound}
Let $1 < p \leq 2$.
Then
\[
|| (I^{k/p'})\inv \Radon_{n,k} f ||_{p'; p} \leq C || f ||_p,
\]
where the Riesz potential acts on $\pi^\perp$ for every $\pi \in G_{n,k}$.
\end{theorem}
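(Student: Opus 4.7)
The plan is to interpolate via the Stein analytic interpolation theorem~\ref{th:stein} between an $L^{2}$ estimate at $p=2$ (from Plancherel) and an $H^{1}$ estimate at $p=1$ (provided atom by atom). Define the analytic family
\[
T_{z} f := (I^{kz/2})\inv \Radon_{n,k} f, \quad z \in \bar S = \{0 \leq \Re z \leq 1\},
\]
acting on the space $D\subset\Schwartz(\R^{n})$ of Schwartz functions with sufficiently many vanishing moments, so that every complex Riesz potential and its inverse are well-defined on $D$. The Fourier-slice identity $\widehat{\Radon_{n,k} f}(\pi,\xi)=\hat f(\xi)$ for $\xi \in \pi^{\perp}$, derived from Fubini exactly as in (\ref{eq:radon-fourier}), makes $T_{z}$ weakly analytic with polynomial (hence admissible) growth along vertical lines.

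For the endpoint $\Re z = 1$, Plancherel on $\pi^{\perp}$ gives
\[
\int_{\pi^\perp} |T_{1+iy}f(\pi,x)|^{2}\dif x = C\int_{\pi^{\perp}} |\xi|^{k} |\hat f(\xi)|^{2}\dif\xi.
\]
Integrating over $G_{n,k}$ and invoking the elementary polar-coordinate identity
\[
\int_{G_{n,k}}\int_{\pi^{\perp}} g(\xi)\dif\xi\dif\nu_{n,k}(\pi) = C\int_{\R^{n}} g(\xi) |\xi|^{-k}\dif\xi
\]
(valid by rotational invariance), the weight $|\xi|^{k}$ is annihilated and we obtain $\|T_{1+iy}f\|_{2;2}\leq C\|f\|_{2}$, uniformly in $y$.

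For the endpoint $\Re z = 0$, the key observation is that $\Radon_{n,k}$ maps every $(1,\infty)$-atom of $H^{1}(\R^{n})$ to a uniformly bounded multiple of a $(1,\infty)$-atom of $H^{1}(\pi^{\perp})$, for every $\pi \in G_{n,k}$. Indeed, if $a$ is supported in $B(x_{0},r)\subset\R^{n}$ with $\|a\|_{\infty}\leq Cr^{-n}$, then $\Radon_{n,k} a(\pi,\cdot)$ is supported in the projected ball of radius $r$ in $\pi^{\perp}$ (volume $C r^{n-k}$), has sup-norm at most $\|a\|_{\infty}\cdot Cr^{k}=Cr^{k-n}$ — matching the atomic normalization on $\pi^{\perp}$ — and vanishing mean by Fubini. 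Theorem~\ref{th:H1-Lp-cont} then extends this to $\|\Radon_{n,k} f(\pi,\cdot)\|_{H^{1}(\pi^{\perp})} \leq C\|f\|_{H^{1}(\R^{n})}$ uniformly in $\pi$. Since the imaginary Riesz multiplier $(I^{iyk/2})\inv$ is bounded on $H^{1}(\pi^{\perp})$ with polynomial growth in $y$ and $H^{1}(\pi^{\perp})\hookrightarrow L^{1}(\pi^{\perp})$ contractively, this yields
\[
\|T_{iy}f\|_{\infty;1} \leq C(1+|y|)^{N}\|f\|_{H^{1}(\R^{n})},
\]
which has admissible growth.

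Finally, Stein's theorem~\ref{th:stein} applied to this family yields, at $\Re z = \theta\in(0,1)$, a bounded operator from $[H^{1},L^{2}]_{\theta}$ to $[L^{\infty}(L^{1}),L^{2}(L^{2})]_{\theta}$. Proposition~\ref{prop:h1-lp-interpolation-by-schwartz-functions} identifies $[H^{1},L^{2}]_{\theta}=L^{p}$ with $1/p=1-\theta/2$ and provides Schwartz representatives with controlled norm, guaranteeing the additional hypothesis of Stein's theorem. Corollary~\ref{cor:complex-interpolation-LP} (applicable because both pairs of mixed-norm exponents differ) identifies $[L^{\infty}(L^{1}),L^{2}(L^{2})]_{\theta}=L^{p'}(L^{p})$. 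With the substitution $\theta = 2/p'$, the Riesz exponent becomes $kz/2 = k/p'$, so the conclusion reads $\|(I^{k/p'})\inv \Radon_{n,k} f\|_{p';p}\leq C\|f\|_{p}$, as desired. The main obstacle in my view will be rigorously establishing step three — namely, verifying that $\Radon_{n,k}$ extends to a bounded map $H^{1}(\R^{n})\to L^{\infty}(G_{n,k};H^{1}(\pi^{\perp}))$ via the atomic argument combined with Theorem~\ref{th:H1-Lp-cont}, and tracking the polynomial growth of $(I^{iy})\inv$ on $H^{1}$ carefully enough to meet the admissible-growth condition of Stein's theorem.
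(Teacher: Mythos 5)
Your proposal is correct and follows essentially the same route as the paper: the analytic family $T_{z}=(I^{zk/2})^{-1}\Radon_{n,k}$, the Plancherel/polar-coordinate computation at $\Re z=1$, the atom-to-atom argument together with $H^{1}$-boundedness of the purely imaginary Riesz potentials at $\Re z=0$, and Stein interpolation closed out via Proposition~\ref{prop:h1-lp-interpolation-by-schwartz-functions} and Corollary~\ref{cor:complex-interpolation-LP}. The only cosmetic deviation is that the paper extends $\Radon_{n,k}$ from atoms to all of $H^{1}$ directly via Fubini and $L^{1}$-convergence of the atomic series rather than by invoking Theorem~\ref{th:H1-Lp-cont}, reserving the latter for the Riesz multiplier $I^{i\gamma}$; either works.
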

\begin{proof}
Consider the family of operators
\[
T_z f
=
(I^{z k/2})\inv \Radon_{n,k} f,
\quad z \in \bar S
\]
defined on the space of Schwartz functions $f$ with mean zero (which is dense in $H^{1} \cap L^{p}$).

The function $T_{z} f$ is holomorphic with values in  $L^{\infty}(G_{n,k},L^{2}(\R^{n-k}))$ for every $f$, because $\Fourier(T_{z}f)(\pi,\xi)=|\xi|^{zk/2} \Fourier(\Radon_{n,k} f)(\pi,\xi)$ and $\Fourier(\Radon_{n,k} f)(\pi,\cdot)$ is bounded in $\Schwartz(\R^{n-k})$ uniformly in $\pi$.
By Theorem~\ref{thm:hol-lcvs} the family $T_{z}$ is locally analytic.
Note also that
\[
L^{\infty}(G_{n,k},L^{2}(\R^{n-k})) \subset L^{\infty}(G_{n,k},L^{1}(\R^{n-k})) + L^{2}(G_{n,k},L^{2}(\R^{n-k}))
\]
with continuous inclusion.

At the end point $\Re z=1$ we have by the Plancherel theorem and by the uniqueness of the rotation-invariant probability measure on $S^{n-1}$ that
\begin{align*}
|| (I^{z k/2})\inv \Radon_{n,k} f ||_{2; 2}
&=
\left( \int_{G_{n,k}} \int_{\R^{n-k}}
| (I^{z k/2})\inv \Radon_{n,k} f(\pi, y) |^2
\dif y \dif\pi \right)^{1/2}\\
&=
\left( \int_{G_{n,k}} \int_{\xi \perp \pi}
|\xi|^k |\Fourier f(\xi) |^2
\dif \xi \dif\pi \right)^{1/2}\\
&=
C \left( \int_{G_{n,k}} \int_{r=0}^\infty r^{(n-k)-1} \int_{\pi^\perp \cap S^{n-1}}
r^k |\Fourier f(r \sigma) |^2
\dif \sigma \dif r \dif\pi \right)^{1/2}\\
&=
C \left( \int_{r=0}^\infty r^{n-1} \int_{G_{n,k}} \int_{\pi^\perp \cap S^{n-1}}
|\Fourier f(r \sigma) |^2
\dif \sigma \dif\pi \dif r \right)^{1/2}\\
&=
C \left( \int_{r=0}^\infty r^{n-1} \int_{S^{n-1}}
|\Fourier f(r \sigma) |^2
\dif \sigma \dif r \right)^{1/2}\\
&= C || \Fourier f ||_2\\
&= C || f ||_2.
\end{align*}
\begin{figure}
\centering
\begin{tikzpicture}
\draw (-1.5,0) -- (1.5,0) node[below] {$\pi^\perp$};
\draw (0,-1.5) -- (0,1.5) node[left] {$\pi$};

\begin{scope}
\clip[draw] (0,0) circle (1);
\draw[thick] (-1,0) -- (1,0);
\draw (0.3,-1) -- (0.3,1);
\end{scope}

\draw[->] (-2,-1) node[left] {$\supp T a$} -- (-0.5,0);
\draw[->] (-2,1) node[left] {$\supp a$} -- (135:1);
\draw[->] (2,1) node[right] {$A$} -- (0.3,0.5);
\end{tikzpicture}
\caption{Restriction of the $k$-plane transform to a fiber of $M_{n,k}$ applied to an $H^1$ atom}
\label{fig:k-plane-restriction}
\end{figure}
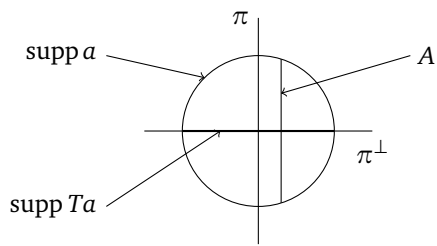
At the other end point, $\Re z = 0$, we are going to obtain a Hardy space estimate.
Let $a$ be an $H^1$ atom on $\R^n$ supported without loss of generality in $B(0,R)$ and bounded by $C R^{-n}$.
For every $\pi \in G_{n,k}$, $Ta := \Radon_{n,k} a(\pi, \cdot)$ has support in $B(0,R)$ and
\[
|T a| \leq \sup |a| \sup A \leq C R^{-n} C R^k = C R^{-(n-k)} \text{ a.e.}
\]
by Fubini's theorem, see Figure~\ref{fig:k-plane-restriction}.
Thus $Ta$ is a bounded multiple of an $H^1$ atom.
Furthermore, if $\{ \lambda_j \}$ is a summable sequence, then $\sum_j \lambda_j T a_j \to T(\sum_j \lambda_j a_j)$ in $L^1$ by Fubini's theorem.
Since the bound on $Ta$ is uniform in $\pi$, we have that
\[
\Radon_{n,k} : H^1(\R^n) \to L^\infty(G_{n,k}, H^1(\R^{n-k}))
\]
is a bounded operator.
Now let $a$ be an atom of $H^1(\R^{m})$, once again without loss of generality supported on $B(0,R)$.
We will show that the Riesz potential $(I^{z k/2})\inv a = I^{-z k/2} a$ has bounded $L^1$ norm.
Write for simplicity $-z k/2 = i \gamma$, $\gamma \in \R$ and split the $L^1$ norm as
\begin{align*}
\int_{\R^{m}} |I^{i \gamma} a(x)| \dif x
&=
\int_{|x| \leq 2 R} |I^{i \gamma} a(x)| \dif x
+
\int_{|x| > 2 R} |I^{i \gamma} a(x)| \dif x.
\end{align*}
The first integral may be estimated using the Hölder inequality and fact that $I^{i \gamma}$ is a contraction on $L^2$ in the following way
\begin{multline*}
\int_{|x| \leq 2 R} |I^{i \gamma} a(x)| \dif x
\leq
|B(0, 2R)|^{1/2} || I^{i \gamma} a ||_{L^2(B(0,2R))}\\
\leq
C R^{m/2} || I^{i \gamma} a ||_{L^2(\R^m)}
\leq
C R^{m/2} || a ||_{2}
\leq
C.
\end{multline*}
The estimate for the second integral requires the cancellation property of the atom $a$.
If $|x| > 2 R$, we have
\begin{align*}
|I^{i \gamma} a(x)|
&=
2^{-i \gamma} \pi^{-n/2} \frac{\Gamma(n/2 - i \gamma/2)}{\Gamma(i \gamma/2)}
\left| \int a(x-y) |y|^{-n+i\gamma} \dif y \right|\\
&=
C_\gamma \left| \int_{B(x,R)} a(x-y) \left( |y|^{-n+i\gamma} - |x|^{-n+i\gamma} \right) \dif y \right|\\
&\leq
C_\gamma \int_{B(x,R)} |a(x-y)| R \sup_{y' \in B(x,R)} \left| \nabla |y'|^{-n+i\gamma} \right| \dif y\\
&=
C_\gamma R ||a||_1 \sup_{y \in B(x,R)} \left| \nabla |y|^{-n+i\gamma} \right|\\
&\leq
C_\gamma R |x|^{-n-1}.
\end{align*}
Inserting this estimate into the integral, we obtain
\[
\int_{|x| > 2 R} |I^{i \gamma} a(x)| \dif x
\leq
C_\gamma R \int_{|x| > 2 R} |x|^{-n-1} \dif x
=
C_\gamma,
\]
where $C_\gamma$ has admissible growth in $\gamma$.
Since $I^{i \gamma}$ is bounded on atoms and is a continuous operator on $L^2$, Theorem~\ref{th:H1-Lp-cont} implies that it is continuous on $H^1$.
Summarizing, we have shown that
\begin{align*}
|| (I^{z k/2})\inv \Radon_{n,k} f ||_{\infty; 1}
&\leq
C_\gamma || f ||_{H^1}
& (\Re z = 0),\\
|| (I^{z k/2})\inv \Radon_{n,k} f ||_{2; 2}
&\leq
C || f ||_{2}
& (\Re z = 1).
\end{align*}
The interpolation theorem~\ref{th:stein} together with proposition~\ref{prop:h1-lp-interpolation-by-schwartz-functions} and corollary~\ref{cor:complex-interpolation-LP} concludes the proof.
\end{proof}

To extract an $L^p$ bound for $\Radon_{n,k}$ from this result we now need an estimate of the form
\[
|| I^{k/p'} F ||_{q; r} \leq || F ||_{p'; p}
\]
for functions $F$ on $M_{n,k}$.
We will use a special property of $F = (I^{k/p'})\inv \Radon_{n,k} f$, namely
\begin{equation}
\label{eq:radon-f-fourier-compatibility-condition}
\Fourier F(\pi, \xi) = \Fourier F(\theta, \xi), \text{ whenever } \pi\perp\xi, \theta\perp\xi,
\end{equation}
which is clear since both sides equal $|\xi|^{-\gamma} \Fourier f(\xi)$.
We will consider a dense subset of smooth functions in $L^{p'}(L^p)$.
\begin{proposition}
Let $F \in C^\infty(M_{n,k})$ have rapid decay in $\xi$ uniformly in $\pi$, satisfy (\ref{eq:radon-f-fourier-compatibility-condition}), and be such that $\Fourier F(\pi, \xi) = 0$ for all $\xi$ in a neighborhood of $0$ and all $\pi$.
Let also $\gamma < n-k$.
Then
\[
I^\gamma F(\pi, y)
=
C \int_{\omega \perp \pi} \int_{\mathrlap{\alpha \perp \omega}} I^{\gamma+1-(n-k)} \Radon_{n-k,n-k-1} F(\alpha, \omega^\perp, \<y,\omega\>) \dif\nu_{n-1,k}(\alpha) \dif\nu_{n-k,1}(\omega).
\]
Here $\Radon$ and $\omega^\perp$ are taken with respect to the complementary subspace of $\alpha$, we use the measurable coordinates $(\omega, t) \in G_{n-k,1} \times \R \cong G_{n-k,n-k-1} \times \R$ on $\alpha^\perp$ and identify each $\omega$ with a point in $S^{n-1} \intersection \omega$.
\end{proposition}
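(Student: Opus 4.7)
The plan is to reduce the $(n-k)$-dimensional Riesz potential on $\pi^{\perp}$ to a one-dimensional Riesz potential of the inner $(n-k-1)$-plane transform via Fourier--polar coordinates, and then to exploit the compatibility condition (\ref{eq:radon-f-fourier-compatibility-condition}) to average the first argument of $F$ over all $k$-planes perpendicular to the chosen direction $\omega$.

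First I would use (\ref{eq:fourier-of-riesz}), applied in the second variable of $F$ on $\pi^{\perp}\cong\R^{n-k}$, to write
\[
I^{\gamma}F(\pi,y)=C\int_{\pi^{\perp}}|\xi|^{-\gamma}\,\Fourier F(\pi,\xi)\,e^{i\<y,\xi\>}\dif\xi,
\]
absolute integrability being guaranteed by rapid decay of $\Fourier F$ together with its vanishing near the origin and the assumption $\gamma<n-k$. Passing to polar coordinates $\xi=r\omega$ with $\omega\in G_{n-k,1}\cap\pi^{\perp}$ and $r\in\R$ picks up a Jacobian $|r|^{n-k-1}$; Fubini then yields
\[
I^{\gamma}F(\pi,y)=C\int_{\omega\perp\pi}\int_{\R}|r|^{n-k-1-\gamma}\,\Fourier F(\pi,r\omega)\,e^{ir\<y,\omega\>}\dif r\,\dif\nu_{n-k,1}(\omega).
\]
The Fourier-slice identity (\ref{eq:radon-fourier}) applied on $\pi^{\perp}$ rewrites $\Fourier F(\pi,r\omega)$ as $\Fourier_{1}\Radon_{n-k,n-k-1}F(\pi,\cdot)(\omega^{\perp},\cdot)$ evaluated at $r$, and a second use of (\ref{eq:fourier-of-riesz}) in one dimension shows that the inner $r$-integral equals $C\,I^{\gamma+1-(n-k)}\Radon_{n-k,n-k-1}F(\pi,\cdot)(\omega^{\perp},\<y,\omega\>)$, where this final Riesz potential acts in the signed-distance variable.

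Second, I would invoke the compatibility condition. For any $\alpha\in G_{n,k}$ with $\alpha\perp\omega$, the vector $r\omega$ is perpendicular to both $\pi$ and $\alpha$, so (\ref{eq:radon-f-fourier-compatibility-condition}) gives $\Fourier F(\pi,r\omega)=\Fourier F(\alpha,r\omega)$ for every $r$. Inverting the Fourier-slice identity and commuting the one-dimensional $I^{\gamma+1-(n-k)}$ through this pointwise equality, one obtains
\[
I^{\gamma+1-(n-k)}\Radon_{n-k,n-k-1}F(\pi,\cdot)(\omega^{\perp},\<y,\omega\>)=I^{\gamma+1-(n-k)}\Radon_{n-k,n-k-1}F(\alpha,\cdot)(\omega^{\perp},\<y,\omega\>).
\]
Since this quantity is independent of $\alpha$ in $\{\alpha\in G_{n,k}:\alpha\perp\omega\}\cong G_{n-1,k}$, it coincides with its average against the normalized invariant measure $\dif\nu_{n-1,k}$. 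Substituting this average into the formula from the first step and absorbing all normalization constants into $C$ yields exactly the asserted identity.

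The main technical issue is the commutation of the one-dimensional Riesz potential with the pointwise identity of Radon transforms. This is where the support hypothesis on $\Fourier F$ is essential: it places the functions $r\mapsto\Fourier_{1}\Radon_{n-k,n-k-1}F(\pi,\cdot)(\omega^{\perp},r)$ into $\Schwartz^{*}$, on which Proposition~\ref{prop:I-alpha-beta-star} and the multiplier identity (\ref{eq:fourier-of-riesz}) give the composed Riesz potentials their unambiguous meaning. The remaining bookkeeping (Fubini interchanges, computation of the polar Jacobian, reconciliation of measures on the Grassmannians) is routine, with the hypothesis $\gamma<n-k$ ensuring integrability of the polar weight $|r|^{n-k-1-\gamma}$ against the rapidly decaying $\Fourier F$ at infinity.
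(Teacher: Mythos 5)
Your argument is correct and follows essentially the same route as the paper: Fourier inversion in $\pi^\perp$, polar coordinates, identification of the $|r|$-power with a one-dimensional Riesz multiplier, the Fourier-slice theorem, and the compatibility condition~(\ref{eq:radon-f-fourier-compatibility-condition}) to replace the single plane $\pi$ by the average over $\alpha \perp \omega$. The only cosmetic difference is ordering: the paper inserts the $\nu_{n-1,k}$-average immediately after passing to polar coordinates (before recognizing the Riesz multiplier), whereas you first carry the computation through with the fixed $\pi$ and then observe the result is independent of the choice of $k$-plane orthogonal to $\omega$, hence equals its average; also note that your remark at the end has the roles of $\Schwartz^*$ and $\Schwartz_0$ swapped---it is $\Radon_{n-k,n-k-1}F(\pi,\cdot)(\omega^\perp,\cdot)$ that lies in $\Schwartz^*$ because its one-dimensional Fourier transform vanishes near the origin, i.e.\ lies in $\Schwartz_0$.
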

\begin{proof}
The conditions on $\gamma$ and $F$ imply $\Fourier I^\tau F=|\cdot|^{-\tau} \Fourier F$ when it is needed in the following computation.
\begin{align*}
I^\gamma &F(\pi, y)\\
&=
\int_{\xi \perp \pi} e^{i \<y,\xi\>} \Fourier I^\gamma F(\theta, \xi) \dif\Leb[n-k](\xi)\\
&=
C \int_{\omega \perp \pi} \int_{t \in\R} e^{i \<y,t \omega\>} |t|^{-\gamma} \Fourier F(\theta, t\omega) |t|^{n-k-1} \dif t \dif\nu_{n-k,1}(\omega)\\
&=
C \int_{\omega \perp \pi} \int_{t \in\R} e^{i \<y,t \omega\>} |t|^{n-k-1-\gamma} \int_{\alpha \perp \omega} \Fourier F(\alpha, t\omega) \dif\nu_{n-1,k}(\alpha) \dif t \dif\nu_{n-k,1}(\omega)\\
&=
C \int_{\omega \perp \pi} \int_{\alpha \perp \omega} \int_{t \in\R} e^{i \<y,t \omega\>} |t|^{n-k-1-\gamma} \Fourier F(\alpha, t\omega) \dif t \dif\nu_{n-1,k}(\alpha) \dif\nu_{n-k,1}(\omega)\\
&=
C \int_{\omega \perp \pi} \int_{\alpha \perp \omega} \int_{t \in\R} e^{i \<y,t \omega\>} \Fourier I^{\gamma+1-(n-k)} F(\alpha, t\omega) \dif t \dif\nu_{n-1,k}(\alpha) \dif\nu_{n-k,1}(\omega)\\
&=
C \int_{\omega \perp \pi} \int_{\mathrlap{\alpha \perp \omega}} I^{\gamma+1-(n-k)} \Radon_{n-k,n-k-1} F(\alpha, \omega^\perp, \<y,\omega\>) \dif\nu_{n-1,k}(\alpha) \dif\nu_{n-k,1}(\omega).
\qedhere
\end{align*}
\end{proof}
Now we estimate the operators this one is composed of.
\begin{proposition}
\label{prop:IR-bound}
Let $1 < p \leq 2$, $F \in L^{p'}(L^p)(M_{n,k})$, $n-1 < kp < n$ and
\[
h(\omega,t) = \int_{\alpha \perp \omega} I^{k/p'+1-(n-k)} \Radon_{n-k,n-k-1} F(\alpha, \omega^\perp, t) \dif\nu_{n-1,k}(\alpha),
\quad
\omega\in G_{n,1}, t\in\R.
\]
Then $||h||_{p';Q} \leq C ||F||_{p';p}$, where $Q\inv = n/p - k$.
\end{proposition}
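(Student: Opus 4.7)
The plan is to bound $h$ by applying Theorem~\ref{th:radon-riesz-bound} fiberwise in $\alpha$, then using Hardy-Littlewood-Sobolev (Corollary~\ref{cor:hardy-littlewood-sobolev}) in the one-dimensional $t$-variable to trade the ``excess order'' of the Riesz potential for a gain in Lebesgue integrability, and finally exploiting the $O(n)$-invariance of the incidence relation $\{(\omega,\alpha):\omega\perp\alpha\}$ to recast the remaining integration and close the estimate.

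Concretely, for each fixed $\alpha \in G_{n,k}$ I would apply Theorem~\ref{th:radon-riesz-bound} to $F(\alpha,\cdot)$ on $\alpha^\perp \cong \R^{n-k}$, with the pair $(n,k)$ replaced by $(m,m-1)$ where $m := n-k$. This gives
\[
\left(\int_{G_{m,1}} \bigl\|(I^{(m-1)/p'})\inv \Radon_{m,m-1} F(\alpha,\cdot)(\omega^\perp,\cdot)\bigr\|_{L^p(\R)}^{p'}\, d\nu_{m,1}(\omega)\right)^{\!1/p'} \!\leq C\, \|F(\alpha,\cdot)\|_{L^p(\alpha^\perp)}.
\]
Writing $\sigma := k/p'+1-m+(m-1)/p' = (n-1)/p'-(n-k-1)$, the hypothesis $n-1<kp<n$ rearranges to $0<\sigma<1/p\leq 1$, so Hardy-Littlewood-Sobolev on $\R$ upgrades $L^p(dt)$ to $L^Q(dt)$ with
\[
\frac{1}{Q} = \frac{1}{p}-\sigma = \frac{n}{p}-k.
\]
The composition property of Riesz potentials (Proposition~\ref{prop:I-alpha-beta-star}, applied on the class of Schwartz functions with Fourier transform vanishing near the origin — a property inherited by $\Radon_{m,m-1}F(\alpha,\cdot)$ from the standing assumption on $F$) lets me identify $I^\sigma \circ (I^{(m-1)/p'})\inv$ with $I^{k/p'+1-m}$. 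Applying HLS pointwise in $\omega$ to the previous inequality therefore yields, with the abbreviation $G(\alpha,\omega,t):=I^{k/p'+1-m}\Radon_{m,m-1}F(\alpha,\cdot)(\omega^\perp,t)$,
\[
\|G(\alpha,\cdot,\cdot)\|_{L^{p'}(G_{m,1};\,L^Q(\R))} \leq C\, \|F(\alpha,\cdot)\|_{L^p(\alpha^\perp)}.
\]

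To finish, I assemble the estimate for $h(\omega,t)=\int_{\alpha\perp\omega} G(\alpha,\omega,t)\, d\nu_{n-1,k}(\alpha)$ by first using Minkowski's integral inequality in $L^Q(dt)$ and then Jensen's inequality against the $\alpha$-integral (legitimate because $G_{n-1,k}$ has finite total mass and $p'\geq 1$), obtaining
\[
\|h\|_{p';Q}^{p'} \leq C \int_{G_{n,1}} \int_{\alpha\perp\omega} \|G(\alpha,\omega,\cdot)\|_{L^Q(\R)}^{p'}\, d\nu_{n-1,k}(\alpha)\, d\nu_{n,1}(\omega).
\]
The $O(n)$-homogeneous incidence manifold $\{(\omega,\alpha):\omega\perp\alpha\}$ carries a unique invariant measure up to scaling, so Fubini lets me swap to the parametrization $\alpha\in G_{n,k}$, $\omega \in \alpha^\perp \cap G_{n,1}$; combined with the fiberwise estimate above this produces
\[
\|h\|_{p';Q}^{p'} \leq C \int_{G_{n,k}} \|F(\alpha,\cdot)\|_{L^p(\alpha^\perp)}^{p'}\, d\nu_{n,k}(\alpha) = C\, \|F\|_{p';p}^{p'}.
\]

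The substantive obstacle is simply the bookkeeping in the middle step: one must check that the range $n-1<kp<n$ places $\sigma$ strictly inside $(0,1)$ so that the one-dimensional Hardy-Littlewood-Sobolev applies, and that $1/p-\sigma$ collapses exactly to $1/Q = n/p-k$. Everything else is a clean Minkowski-Fubini argument on the incidence manifold, and the algebraic composition of Riesz potentials is routine on the dense subclass of $F$'s considered in the preceding proposition.
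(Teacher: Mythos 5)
Your proposal is correct and follows essentially the same route as the paper's proof: apply Theorem~\ref{th:radon-riesz-bound} fiberwise in $\alpha$ with $(n,k)$ replaced by $(n-k,n-k-1)$, use the mapping property of the one-dimensional Riesz potential $I^{k-(n-1)/p}:L^p(\R)\to L^Q(\R)$ (the paper invokes the weak-type Young inequality, Proposition~\ref{prop:weak-type-young}, of which your Hardy--Littlewood--Sobolev appeal is the immediate corollary), then close by Minkowski's integral inequality in $L^Q$, Jensen over the finite-mass $\nu_{n-1,k}$, and a Fubini swap on the $O(n)$-invariant incidence manifold. The exponent bookkeeping $\sigma=k-(n-1)/p$ and the verification that $n-1<kp<n$ forces $0<\sigma<1$ and $0<1/Q<1$ match the paper exactly.
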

\begin{proof}
By Theorem~\ref{th:radon-riesz-bound} we have
\[
|| (I^{(n-k-1)/p'})\inv \Radon_{n-k,n-k-1} F(\alpha,\cdot) ||_{p'; p} \leq C || F(\alpha,\cdot) ||_p.
\]
By the weak-type Young inequality $I^{k/p'+1-(n-k)+(n-k-1)/p'}=I^{k-(n-1)/p} : L^p \to L^Q$ with $1+Q\inv = p\inv - (k-(n-1)/p-1)$ since the condition on $kp$ implies $0 < Q\inv < 1$ and $0 < -(k-(n-1)/p-1) < 1$.
Hence
\[
|| F_\alpha ||_{p'; Q} \leq C || F(\alpha,\cdot) ||_p,
\quad
F_\alpha(\cdot) = I^{k/p'+1-(n-k)} \Radon_{n-k,n-k-1} F(\alpha,\cdot).
\]
By convexity of the $L^Q$ norm,
\begin{align*}
||h||_{p',Q}
&=
\left( \int \left( \int \left| \int_{\alpha \perp \omega} F_\alpha(\omega^\perp, t) \dif\nu_{n-1,k}(\alpha) \right|^Q \dif t \right)^{p'/Q} \dif\nu_{n,1}(\omega) \right)^{1/p'}\\
&\leq
\left( \int \left( \int_{\alpha \perp \omega} \left( \int |F_\alpha(\omega^\perp, t)|^Q \dif t \right)^{1/Q} \dif\nu_{n-1,k}(\alpha) \right)^{p'} \dif\nu_{n,1}(\omega) \right)^{1/p'}\\
&\leq
\left( \int \int_{\alpha \perp \omega} \left( \int |F_\alpha(\omega^\perp, t)|^Q \dif t \right)^{p'/Q} \dif\nu_{n-1,k}(\alpha) \dif\nu_{n,1}(\omega) \right)^{1/p'}\\
&=
C \left( \int_{\alpha} \int_{\omega\perp\alpha} \left( \int |F_\alpha(\omega^\perp, t)|^Q \dif t \right)^{p'/Q} \dif\nu_{n-1,1}(\omega) \dif\nu_{n,k}(\alpha) \right)^{1/p'}\\
&=
C \left( \int_{\alpha} ||F_\alpha||_{p';Q}^{p'} \dif\nu_{n,k}(\alpha) \right)^{1/p'}\\
&\leq
C \left( \int_{\alpha} ||F(\alpha,\cdot)||_{p}^{p'} \dif\nu_{n,k}(\alpha) \right)^{1/p'}\\
&=
C ||F||_{p'; p}.
\qedhere
\end{align*}
\end{proof}

\begin{proposition}
\label{prop:U-Lqr-Lp-bound}
Let $X$ be a Banach space, $h \in L^{q'}(\pi \in G_{n,k}, L^r(\omega \in G_{n-k,1, \pi^\perp}))$, where $n/p - (n-k)/r=k$ and $q \leq (n-k) p'$ and define
\[
U h(\omega) = \int_{\pi\perp\omega} h(\pi, \omega) \dif\nu_{n-1,k}(\pi),
\quad \omega\in G_{n,1}.
\]
Then $||U h||_{p; X} \leq C ||h||_{q'; r; X}$.
\end{proposition}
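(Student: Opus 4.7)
The plan is to reduce to the scalar case by the triangle inequality, dualize, and then invoke Theorem~\ref{th:ind}(\ref{ind-sphere}) (with $k$ replaced by $n-k$). First, $\|Uh(\omega)\|_X \leq \int_{\pi \perp \omega} \|h(\pi,\omega)\|_X \dif\nu_{n-1,k}(\pi) =: UH(\omega)$ where $H(\pi,\omega) := \|h(\pi,\omega)\|_X$, and clearly $\|H\|_{q';r} = \|h\|_{q';r;X}$. So it suffices to prove the scalar estimate $\|UH\|_p \leq C\|H\|_{q';r}$ for $H \geq 0$.

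By $L^p$--$L^{p'}$ duality on $G_{n,1}$ this is equivalent to $\int UH \cdot g \, \dif\nu_{n,1} \leq C\|H\|_{q';r}\|g\|_{p'}$ for every nonnegative $g \in L^{p'}(G_{n,1})$. The incidence manifold $F = \{(\pi,\omega) \in G_{n,k} \times G_{n,1} : \omega \perp \pi\}$ is a bundle over $G_{n,k}$ with fiber $G_{n-k,1}$ and a bundle over $G_{n,1}$ with fiber $G_{n-1,k}$; both fiberwise product measures are $O(n)$-invariant on $F$, hence coincide up to a constant. Applying this Fubini, followed by Hölder's inequality in $\omega$ (exponents $r,r'$) and then in $\pi$ (exponents $q',q$), gives
\[
\int UH \cdot g \, \dif\nu_{n,1} = C \int_\pi \int_{\omega \perp \pi} H(\pi,\omega) g(\omega) \dif\omega \dif\pi \leq C\|H\|_{q';r} \left( \int_\pi \left(\int_{\omega \perp \pi} g^{r'} \dif\omega \right)^{q/r'} \dif\pi \right)^{1/q}.
\]

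It remains to bound the last factor by $C\|g\|_{p'}$. Setting $\psi := g^{r'}$ and using the measure-preserving duality of Grassmannians $\pi \leftrightarrow \pi^\perp$ from $G_{n,k}$ to $G_{n,n-k}$, the inner integral equals $S_{n,n-k}\psi(\pi^\perp)$, so the factor is $\|S_{n,n-k}\psi\|_{L^{q/r'}(G_{n,n-k})}^{1/r'}$. A short computation from the hypothesis $n/p - (n-k)/r = k$ gives $1/r' = n/((n-k)p')$, whence $p'/r' = n/(n-k)$; and $q \leq (n-k)p'$ forces $q/r' \leq n$. By Theorem~\ref{th:ind}(\ref{ind-sphere}) (with $k$ replaced by $n-k$), $S_{n,n-k}: L^{n/(n-k),n}(G_{n,1}) \to L^n(G_{n,n-k})$ is bounded. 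Since $n/(n-k) \leq n$, Corollary~\ref{cor:lorentz-scale} gives the embedding $L^{n/(n-k)} \hookrightarrow L^{n/(n-k),n}$; and since $G_{n,n-k}$ has finite measure with $q/r' \leq n$, Hölder's inequality gives $L^n \hookrightarrow L^{q/r'}$. Chaining,
\[
\|S_{n,n-k}\psi\|_{L^{q/r'}} \leq C\|\psi\|_{L^{p'/r'}} = C\|g\|_{p'}^{r'},
\]
so the displayed factor is at most $C\|g\|_{p'}$, completing the proof for $1 < p \leq (n+1)/(k+1)$. The boundary case $p=1$ (forcing $r=1$) makes $r' = \infty$ degenerate in the duality argument, but it follows directly from Fubini: $\|Uh\|_{L^1(X)} \leq C \int_\pi \int_{\omega \perp \pi} \|h\|_X = C \|h\|_{L^1(L^1(X))}$, and any larger $q'$ is then absorbed by Hölder on the compact Grassmannian $G_{n,k}$.

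The only subtle step is the bookkeeping in the reduction to $S_{n,n-k}$: one must carefully translate between $\omega \perp \pi$ in $G_{n,k}$ and $\omega \subset \pi^\perp$ in $G_{n,n-k}$. Everything else is routine Hölder and Fubini, and the exponent arithmetic collapses cleanly so that the single Lorentz estimate for $S_{n,n-k}$ from Theorem~\ref{th:ind} suffices across the full range of $p$, $q$, $r$ at once.
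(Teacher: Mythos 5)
Your proof is correct and takes essentially the same route as the paper: pair against a test function in $L^{p'}(G_{n,1})$, swap the order of integration on the incidence manifold, apply H\"older with exponents $(r,r')$ in $\omega$ and $(q',q)$ in $\pi$, and conclude from the $L^{n/(n-k),n}\to L^n$ boundedness of $S_{n,n-k}$ given by Theorem~\ref{th:ind}(\ref{ind-sphere}) together with the finite-measure embedding $L^n \hookrightarrow L^{q/r'}$. You are in fact a bit more careful than the paper at the last step (the paper's display chains to $L^q$ where $L^{q/r'}$ is what the H\"older step actually requires, harmless since $q/r'\le n$ is the governing bound), and your separate treatment of $p=1$ is a benign extra, not needed in the ambient application where $p>1$.
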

\begin{proof}
Let $\phi \in L^{p'}(G_{n,1})$ be a test function.
Then
\begin{align*}
\<||Uh||_X, \phi\>
&=
\int_\omega \phi(\omega) \left|\left| \int_{\pi\perp\omega} h(\pi, \omega) \dif\nu_{n-1,k}(\pi) \right|\right|_{X} \dif\nu_{n,1}(\omega)\\
&\leq
C \int_\pi \int_{\omega\perp\pi} \phi(\omega) || h(\pi, \omega) ||_X \dif\nu_{n-k,1}(\omega) \dif\nu_{n,k}(\pi)\\
&\leq
C \int_\pi
\left( \int_{\omega\perp\pi} |\phi(\omega)|^{r'} \dif\nu_{n-k,1}(\omega) \right)^{1/r'}\\
&\quad\cdot \left( \int_{\omega\perp\pi} ||h(\pi, \omega)||_X^r \dif\nu_{n-k,1}(\omega) \right)^{1/r} \dif\nu_{n,k}(\pi).
\end{align*}
The term in the former pair of parentheses equals
\[
\left( S_{n,n-k} |\phi|^{r'} (\pi^\perp) \right)^{1/r'}.
\]
The assumptions on the exponents, Theorem~\ref{th:ind} and (\ref{ind-sphere}) imply that $S_{n,n-k} : L^{n/(n-k)}(G_{n,1}) = L^{p'/r'}(G_{n,1}) \to L^{n}(G_{n,n-k}) \hookrightarrow L^{q}(G_{n,n-k}) \cong L^{q}(G_{n,k})$ is bounded, so that
\begin{align*}
\<||Uh||_X, \phi\>
&\leq
C \int_{\mathrlap{\pi}}
\left( S_{n,n-k} |\phi|^{r'} (\pi^\perp) \right)^{1/r'}
\left( \int_{\mathrlap{\omega\perp\pi}} ||h(\pi, \omega)||_X^r \dif\nu_{n-k,1}(\omega) \right)^{1/r} \dif\nu_{n,k}(\pi)\\
&\leq
C || \left( S_{n,n-k} |\phi|^{r'} \right)^{1/r'} ||_q ||h||_{q'; r; X}\\
&\leq
C || \phi ||_{p'} ||h||_{q'; r; X}.
\qedhere
\end{align*}
\end{proof}

\begin{proposition}
Let $h \in L^{p'}(G_{n,1}, L^Q(\R))$ and define
\[
Th(\pi, y) = \int_{\omega\perp\pi} h(\omega, \<\omega, y\>) \dif\nu_{n-k,1}(\omega).
\]
Under the same assumptions on the exponents as in Propositions~\ref{prop:IR-bound} and~\ref{prop:U-Lqr-Lp-bound}, we have $||Th||_{q;r} \leq C ||h||_{p'; Q}$.
\end{proposition}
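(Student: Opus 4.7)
The plan is to argue by duality: for a smooth compactly supported test function $\phi$ on $M_{n,k}$, I evaluate $\langle Th,\phi\rangle$ and show it is bounded by $C\|h\|_{p';Q}\|\phi\|_{q';r'}$, so that taking the supremum over $\phi$ with $\|\phi\|_{q';r'}\le 1$ yields the claim. Starting from the definition of $T$, I apply Fubini twice. First, for fixed $\pi$ and $\omega$ with $\omega \perp \pi$, I split $y\in\pi^\perp$ as $y = t\omega+y'$ with $y'\perp\omega$ inside $\pi^\perp$; the integration of $\phi(\pi,\cdot)$ in $y'$ produces $\Radon_{n-k,n-k-1}\phi(\pi,\cdot)(\omega,t)$, the Radon transform of $\phi(\pi,\cdot)$ over the affine hyperplane $\{y\in\pi^\perp:\langle y,\omega\rangle=t\}$ in $\pi^\perp\cong\R^{n-k}$. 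Second, I interchange the integrations over $\pi$ and $\omega$, using that on the set $\{(\pi,\omega):\omega\perp\pi\}$ the measures $\dif\nu_{n,k}(\pi)\dif\nu_{n-k,1}(\omega)$ and $\dif\nu_{n,1}(\omega)\dif\nu_{n-1,k}(\pi)$ coincide up to a multiplicative constant by uniqueness of the $O(n)$-invariant measure. This leads to
\[
\langle Th,\phi\rangle = C\int_{G_{n,1}}\int_{\R} h(\omega,t)\,G(\omega,t)\,\dif t\,\dif\nu_{n,1}(\omega),\quad G(\omega,t) := \int_{\pi\perp\omega} \Radon_{n-k,n-k-1}\phi(\pi,\cdot)(\omega,t)\,\dif\nu_{n-1,k}(\pi).
\]

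Next I apply Hölder's inequality, first in $t$ with conjugate exponents $Q$ and $Q'$, then in $\omega$ with exponents $p'$ and $p$, to obtain $|\langle Th,\phi\rangle|\le C\|h\|_{p';Q}\|G\|_{p;Q'}$. The operator producing $G$ from $\phi$ is precisely the operator $U$ of Proposition~\ref{prop:U-Lqr-Lp-bound} applied to the Banach-space-valued function $(\pi,\omega)\mapsto \Radon_{n-k,n-k-1}\phi(\pi,\cdot)(\omega,\cdot)$ with values in $X:=L^{Q'}(\R)$. Thus that proposition bounds $\|G\|_{p;Q'}$ by the $L^{q'}_\pi L^r_\omega L^{Q'}_t$-norm of $\Radon_{n-k,n-k-1}\phi$, so it only remains to estimate the $\pi$-fibered expression.

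The final step is the mixed-norm bound $\|\Radon_{n-k,n-k-1}\phi(\pi,\cdot)\|_{r;Q'} \le C\|\phi(\pi,\cdot)\|_{r'}$ for each fixed $\pi$; this is Theorem~\ref{th:radon-lp-lqr-estimate} in ambient dimension $n-k$ with Lebesgue parameter $r'$. The main obstacle is the routine but unavoidable exponent bookkeeping needed to verify the three conditions $r' < (n-k)/(n-k-1)$, $r \le (r')' = r$, and $(n-k-1) + 1/Q' = (n-k)/r'$. The first is equivalent to $r > n-k$, which unravels to $n/p < k+1$ and follows from the standing hypothesis $n-1 < kp$; the last identity is a direct calculation from $Q^{-1} = n/p - k$ together with $(n-k)/r = n/p - k$. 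Integrating the per-$\pi$ Radon bound to the $q'$-th power in $\pi$ produces $C\|\phi\|_{q';r'}$, and taking the supremum over $\phi$ concludes the argument.
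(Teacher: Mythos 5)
Your proof is correct and follows essentially the same route as the paper: pass to the adjoint by pairing with a test function, interchange integrals so that $T^*$ appears as the composition $U \circ \Radon_{n-k,n-k-1}$, then combine Proposition~\ref{prop:U-Lqr-Lp-bound} (with values in $X = L^{Q'}(\R)$) with the mixed-norm Radon estimate of Theorem~\ref{th:radon-lp-lqr-estimate} in ambient dimension $n-k$. The only cosmetic difference is that you apply Hölder's inequality explicitly before invoking the two mapping properties, whereas the paper states them as operator bounds and closes by duality in one line; your exponent check also quietly corrects a typo in the paper (which writes $r' < (n-k-1)/(n-k)$ where $r' < (n-k)/(n-k-1)$ is meant).
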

\begin{proof}
Let $g \in L^{q'}(L^{r'})$ and calculate
\begin{align*}
\<Th, g\>
&=
\int_\pi \int_{y\perp\pi} g(\pi, y) \int_{\omega\perp\pi} h(\omega,\<\omega,y\>) \dif\nu_{n-k,1}(\omega) \dif\Leb[n-k](y) \dif\nu_{n,k}(\pi)\\
&=
C \int_{\omega} \int_{\pi\perp\omega} \int_{t\in\R} \int_{\mathrlap{y\perp\pi, \<y,\omega\>=t}} g(\pi, y) h(\omega,t) \dif\Leb[n-k](y) \dif t \dif\nu_{n-1,k}(\pi) \dif\nu_{n,1}(\omega).
\end{align*}
It is sufficient to give a bound for
\begin{align*}
T^* g(\omega, t)
&=
C \int_{\pi\perp\omega} \int_{y\perp\pi, \<y,\omega\>=t} g(\pi, y) \dif\Leb[n-k](y) \dif\nu_{n-1,k}(\pi)\\
&=
C (U \Radon_{n-k,n-k-1} g)(\omega,t).
\end{align*}
Since $p>(n-1)/k \geq n/(k+1)$ implies $r' < (n-k-1)/(n-k)$ we have $\Radon_{n-k,n-k-1} : L^{r'} \to L^r(L^{Q'})$ by Theorem~\ref{th:radon-lp-lqr-estimate}, while $U : L^{q'}(L^r(L^{Q'})) \to L^p(L^{Q'})$ by Proposition~\ref{prop:U-Lqr-Lp-bound}.
The assertion follows by duality.
\end{proof}
Applying in order the preceding propositions we obtain that
\[
|| \Radon_{n,k} f ||_{q; r}
\leq
C || f ||_{p},
\]
under the assumptions, in order of appearance,
\[
1 < p \leq 2,
\quad
\frac{k}{p'} < n-k,
\quad
n-1 < kp < n,
\quad
\frac{n}{p}-\frac{n-k}{r}=k,
\quad
q \leq (n-k)p'.
\]
The first two are implied by the third if $k \geq n/2$.
On the other hand, if $k < n/2$, then $(n+1)/(k+1) \geq 2$ and the assertion is contained in the results of previous sections.

\section{The complex \texorpdfstring{$k$}{k}-plane transform}
\index{k-plane transform@$k$-plane transform!complex}
We will now take a brief look at the analogue of $\Radon_{n,k}$ given by integration over $k$-dimensional complex affine subspaces of $\C^n$.
We denote this map by $\Radon_{n,k,\C}$ and continue using the notation $G_{n,k}$, $M_{n,k}$, etc., but all subspaces are now assumed to be complex.
The volume function $\det$ will be substituted by
\[
\det_\C(0, x_1, \dots, x_k) = \det(0, x_1, \dots, x_k, i x_1, \dots, i x_k).
\]
It extends to non-zero values of the first argument by translation invariance.
Note that $x_1, \dots, x_k$
are $\C$-linearly independent if and only if $x_1, \dots, x_k, i x_1, \dots, i x_k$ are $\R$-linearly independent.
With this notation the following analogue of Lemma~\ref{lem:measures-on-Rn-k+1} holds.
\begin{lemma}
\label{lem:measures-on-Cn-k+1}
The following measures on $(\C^n)^{k+1}$ are equivalent up to a constant:
\[
\dif\Leb[\pi](x_0) \dots \dif\Leb[\pi](x_k) \dif\mu_{n,k}(\pi)
=
C \det_\C (x_0, \dots, x_k)^{k-n} \dif\Leb[n](x_0) \dots \dif\Leb[n](x_k).
\]
Here, $\Leb[\pi]$ denotes the $2k$-dimensional Lebesgue measure on the complex affine hyperplane $\pi$.
\end{lemma}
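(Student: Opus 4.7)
The plan is to prove Lemma~\ref{lem:measures-on-Cn-k+1} by induction on $k$, following the structure of the proof of Lemma~\ref{lem:measures-on-Rn-k+1} but with the real dilation/rotation group replaced by $U(n) \ltimes \C^n$ (viewing $\C^n$ as the real Euclidean space $\R^{2n}$ endowed with the complex structure $J$). As in the real case, the measures on both sides are absolutely continuous with respect to one another on the common underlying manifold $\{(\pi \in M_{n,k,\C},\, x_0,\dots,x_k \in \pi)\}$, so it suffices to determine the Radon-Nikodym derivative $J_{n,k,\C}$.

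For $k=0$ the statement is trivial. For $k=1$ both measures are invariant under the group of complex-affine Euclidean transformations that preserve the pair $(x_0,x_1)$ up to simultaneous motion; hence $J_{n,1,\C}$ depends only on $|x_0-x_1|$. Real dilations show that the density is homogeneous of degree $2(1-n)$: the left-hand side rescales as $\lambda^{2\cdot 2+2(n-1)}=\lambda^{2n+2}$ (two factors of $\dif\Leb[\pi]$ on the complex line plus the perpendicular trivialization of $\mu_{n,1,\C}$), while the right-hand side rescales as $\lambda^{4n}$; and indeed $\det_\C(x_0,x_1)^{1-n}=|x_0-x_1|^{2(1-n)}$ because $\det_\C(x_0,x_1)$ is the area of the parallelogram spanned by $x_1-x_0$ and $i(x_1-x_0)$, equal to $|x_1-x_0|^2$.

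For $k\ge 2$ one imitates the real induction. The complex analogue of (\ref{eq:measure-on-taut-fiber-bundle}),
\[
\dif\Leb[\pi](x_0)\dif\mu_{n,k,\C}(\pi) = C\dif\nu_{n,k,\C,x_0}(\pi)\dif x_0,
\]
is again forced by the uniqueness of complex-Euclidean invariant measure on the tautological bundle. The complex analogue of (\ref{eq:measure-on-planes-not-through-pt}) is obtained by the same homogeneity argument, but one must track real dimensions: parametrizing complex hyperplanes in $\pi\cong\C^k$ through $G_{k,k-1,\C}\times\C$ yields a scaling $\dif\mu_{k,k-1,\C,\pi}\sim\lambda^2$, while $\dif\mu_{n,k-1,\C}\sim\lambda^{2(n-k+1)}$, so the $U(n)\ltimes\R_+$-invariance on $\tilde M_{n,k-1,\C}(x_0)$ gives
\[
\dif\mu_{k,k-1,\C,\pi}(\theta)\dif\nu_{n,k,\C,x_0}(\pi) = C\dist(x_0,\theta)^{2(k-n)}\dif\mu_{n,k-1,\C}(\theta).
\]
Applying first the inductive hypothesis with $(n',k')=(k,k-1)$ on $\pi\cong\C^k$, then the two displays above, then the inductive hypothesis again with $(n,k-1)$, yields
\[
\dif\Leb[\pi](x_0)\cdots\dif\Leb[\pi](x_k)\dif\mu_{n,k,\C}(\pi) = C\det_\C(x_1,\dots,x_k)^{k-n}\dist(x_0,\theta)^{2(k-n)}\dif x_0\cdots \dif x_k.
\]
The proof concludes with the identity
\[
\det_\C(x_0,x_1,\dots,x_k) = \det_\C(x_1,\dots,x_k)\cdot\dist(x_0,\theta(x_1,\dots,x_k))^2,
\]
which follows from the standard base-times-height volume formula applied to the parallelepiped spanned by $\{x_j-x_0,\,i(x_j-x_0)\}_{j=1}^{k}$: the two-dimensional projection of $\{x_1-x_0,i(x_1-x_0)\}$ onto the two-real-dimensional complex line orthogonal to $\theta_0=\theta-x_1$ is the parallelogram spanned by $v$ and $iv$ for $v=P_{\theta_0^\perp}(x_1-x_0)$, whose area is $|v|^2=\dist(x_0,\theta)^2$.

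The main potential obstacle lies not in the algebra of the induction, which is a direct transcription of the real case, but in correctly accounting for the doubling of real dimensions when complexifying: the exponent $2(k-n)$ in the complex analogue of (\ref{eq:measure-on-planes-not-through-pt}) and the square in $\dist(x_0,\theta)^2$ must conspire to reproduce the exponent $k-n$ of $\det_\C$, and this hinges on the $U(n)$-invariance of $\theta_0^\perp\cap\pi$ being a \emph{complex} line (so that its area measure is quadratic rather than linear in the normal distance). Once this geometric fact is verified, the remainder of the argument is formally identical to that of Lemma~\ref{lem:measures-on-Rn-k+1}.
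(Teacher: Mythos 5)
Your proof is correct and follows essentially the same approach as the paper: induction on $k$ via the complex analogues of \eqref{eq:measure-on-taut-fiber-bundle} and \eqref{eq:measure-on-planes-not-through-pt} (with the doubled exponents), plus the key observation that $\det_\C$ picks up a factor of $\dist^2$ when a vertex is adjoined, which holds because the orthogonal complement of the complex $(k-1)$-plane $\theta_0$ inside the ambient complex $k$-plane is a \emph{complex} line. The only cosmetic difference is that the paper justifies the $\dist^2$ factor by the chain $\dist(x_k,\pi)=\dist(ix_k,\pi)=\dist(ix_k,\pi+\R x_k)$ (base $\times$ height $\times$ height), while you project $\{x_1-x_0,i(x_1-x_0)\}$ simultaneously onto the complementary complex line and compute the area $|v|^2$ of the resulting parallelogram; these are the same argument phrased differently.
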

\begin{proof}
Proceed as in the proof of Lemma~\ref{lem:measures-on-Rn-k+1}.
The induction basis $k=1$ holds since the equivalence class of a pair $(x_0,x_1)$ under complex affine transformations is still uniquely determined by $|x_0-x_1|^2 = \det_\C(x_0,x_1)$, because the unitary group acts transitively on the unit sphere in $\C^n$.

In the induction step (\ref{eq:measure-on-planes-not-through-pt}) is replaced by
\begin{equation}
\label{eq:measure-on-complex-planes-not-through-pt}
\dist(x_0,\theta)^{2((k-1)-k)} \dif\mu_{k,k-1,\pi}(\theta) \dif\nu_{n,k,x_0}(\pi)
= C
\dist(x_0,\theta)^{2((k-1)-n)} \dif\mu_{n,k-1}(\theta)
\end{equation}
because of the change in homogeneity.

The important observation is now
\[
\det_\C(x_0\dots,x_{k-1}) \dist(x_k,\pi(x_0, \dots, x_{k-1}))^{2}
= C
\det_\C(x_0,\dots,x_{k-1},x_k).
\]
It holds since, assuming by translation invariance $x_0 = 0$,
\begin{multline*}
\dist(x_k,\pi(x_0, \dots, x_{k-1}))
=
\dist(i x_k,\pi(x_0, \dots, x_{k-1}))\\
=
\dist(i x_k,\pi(x_0, \dots, x_{k-1})+\R x_k),
\end{multline*}
because $\pi(x_0, \dots, x_{k-1})$ is a complex subspace and $x_k \perp i x_k$.
The inductive argument applies without further changes.
\end{proof}
The proof of Proposition~\ref{prop:k-plane-rearrangement} remains the same in the complex setting.
Furthermore we have the following substitute for Proposition~\ref{prop:k-plane-estimate-radial}.
\begin{proposition}
\label{prop:complex-k-plane-estimate-radial}
Let $f$ be a positive non-increasing radial function on $\C^n$.
Then
\[
|| \Radon_{n,k,\C} f ||_{n+1; n+1}
\leq C
|| f ||_{(n+1)/k+1, n+1}.
\]
\end{proposition}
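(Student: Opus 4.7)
The plan is to mirror the proof of Proposition~\ref{prop:k-plane-estimate-radial}, exploiting the fact that a complex $k$-plane in $\C^{n}$ has real dimension $2k$ and its orthogonal complement has real dimension $2(n-k)$. Because $f$ is radial, the inner integral
\[
\Radon_{n,k,\C} f(\pi_{0}+x) = \int_{\pi_{0}} f(y+x) \dif\Leb[2k](y)
\]
depends only on $|x|$ (for $x \in \pi_{0}^{\perp}$), so its value does not depend on the particular complex $k$-plane $\pi_{0}\in G_{n,k}$ chosen. I would fix one and pass to polar coordinates in $\pi_{0}\cong\R^{2k}$ and in $\pi_{0}^{\perp}\cong\R^{2(n-k)}$ to obtain
\[
||\Radon_{n,k,\C} f||_{n+1;n+1}^{n+1} = C\int_{0}^{\infty} r^{2(n-k)-1}\left|\int_{0}^{\infty} f(\sqrt{r^{2}+\rho^{2}})\rho^{2k-1}\dif\rho\right|^{n+1}\dif r.
\]

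Next I would substitute $s=\sqrt{r^{2}+\rho^{2}}$ (so $\rho^{2k-1}\dif\rho = (s^{2}-r^{2})^{k-1} s\,\dif s$) and estimate $(s^{2}-r^{2})^{k-1}\leq s^{2(k-1)}$. This is where the complex case is in fact easier than the real one: the exponent $k-1$ is non-negative for every $k\geq 1$, so no separate treatment of a borderline case (as in Proposition~\ref{prop:k-plane-estimate-radial} for $k=1$) is required. This yields
\[
||\Radon_{n,k,\C} f||_{n+1;n+1}^{n+1} \leq C \int_{0}^{\infty} r^{2(n-k)} \left(\int_{r}^{\infty} f(s) s^{2k}\frac{\dif s}{s}\right)^{n+1}\frac{\dif r}{r}.
\]

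Now I would apply Hardy's inequality (\ref{ineq:hardy-positive}) with $q=n+1$ and $\lambda=2(n-k)/(n+1)$ (positive since $k<n$; the boundary case $k=n$ is trivial). A direct computation of the resulting exponent gives
\[
||\Radon_{n,k,\C} f||_{n+1;n+1}^{n+1} \leq C \int_{0}^{\infty} r^{2n(k+1)} f(r)^{n+1}\frac{\dif r}{r}.
\]

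Finally I would convert this to a Lorentz norm by using that $f$ is radial and non-increasing on $\R^{2n}$, so $f^{*}(t)=f(Ct^{1/(2n)})$; the change of variables $t=c r^{2n}$ transforms the last integral into $C\int_{0}^{\infty}(t^{(k+1)/(n+1)} f^{*}(t))^{n+1}\dif t/t = C||f||_{(n+1)/(k+1),n+1}^{n+1}$, which is the desired estimate. The only genuine checkpoints are the verification of Lemma~\ref{lem:measures-on-Cn-k+1}-style orbit-of-measures arguments to justify the polar decomposition in the complex setting and the Hardy exponent bookkeeping; both are straightforward once the real-case template is in place, so I do not expect any serious obstacle.
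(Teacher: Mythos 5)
Your proof is correct and follows essentially the same route as the paper's: polar coordinates in $\pi_0$ and $\pi_0^\perp$, the substitution $s=\sqrt{r^2+\rho^2}$, the pointwise bound $(s^2-r^2)^{k-1}\leq s^{2(k-1)}$ valid for all $k\geq 1$ (the paper phrases the same point as ``the real dimension of a proper complex affine subspace is always $\geq 2$''), Hardy's inequality, and the change of variable $r\mapsto c\,r^{2n}$ to rewrite the radial integral as the Lorentz $L^{(n+1)/(k+1),\,n+1}$ quasinorm. The only difference is presentational — you spell out the Hardy exponent bookkeeping that the paper leaves to the reader by analogy with Proposition~\ref{prop:k-plane-estimate-radial}.
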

\begin{proof}
By arguments analogous to those in real setting we have
\begin{align*}
|| \Radon_{n,k,\C} f ||_{n+1; n+1}^{n+1}
&= C
\int_{r=0}^\infty r^{2n-2k-1} \left|
\int_{s=r}^\infty f(s) s (s^2-r^2)^{k-1} \dif s\right|^{n+1} \dif r\\
&\leq C
\int_{r=0}^\infty r^{2 k n + 2 n} f(r)^{n+1} \frac{\dif r}{r}
\end{align*}
since the real dimension of a proper complex affine subspace is always $\geq 2$.
In the complex case the linear and the radial non-increasing rearrangements are related by
\[
f^*(\Omega_{2n} |x|^{2n}/{2n}) = f^{**}(x),
\]
so that $f^*(s) = f(C s^{1/(2 n)})$.
The change of variable $r = C s^{1/(2n)}$ now yields
\begin{align*}
|| \Radon_{n,k,\C} f ||_{n+1; n+1}^{n+1}
&\leq
C \int_{s=0}^\infty \left( s^{(k+1)/(n+1)} f^*(s) \right)^{n+1} \frac{\dif s}{s} \\
&=
C ||f||_{(n+1)/(k+1),n+1}^{n+1}.
\qedhere
\end{align*}
\end{proof}
The same estimate is true for arbitrary $f$.
Note that in the real case the Theorem~\ref{th:ind} and real interpolation provide the sharper estimate
\[
|| \Radon_{2n,2k} f ||_{2n+2; n+1}
\leq
C ||f||_{(n+1)/(k+1),n+1}.
\]
This is consistent with the fact that the complex Grassmannian $G_{n,k}^\C$ is a submanifold of the real Grassmannian $G_{2n,2k}^\R$.


\chapter{Convolution kernels supported on submanifolds}
In section~\ref{subsec:radon-heisenberg} we have seen that the Radon transform in odd dimension is equivalent to the convolution with the Lebesgue measure on a hyperplane in the Heisenberg group.
Here we present a related $L^p$ continuity result which applies to more general Lie groups, submanifolds and measures supported thereon.

The submanifold in question must satisfy a curvature condition.
We restrict our attention to a variant of it from \cite{MR937632}.
For the most general results obtained by similar methods see \cite{MR1726701}.

Note that the convolution kernels treated in this chapter are compactly supported, so that the theory is not directly applicable to the Radon transform.

\section{Sobolev spaces}
For $k \in \N$ and $1 \leq p \leq \infty$ the Sobolev space $W^{k,p}(\R^n)$ is defined as the space of all distributions $g$ such that
\[
||g||_{W^{k,p}} := \sum_{j=0}^k ||\nabla^j g||_{L^p} < \infty.
\]
For the sake of completeness we recall some important facts from the theory of Sobolev spaces.
First of all, $C^\infty_c(\R^n)$ is dense in $W^{k,p}(\R^n)$ for every $k \in \N$ and every $1\leq p < \infty$.
Furthermore, $W^{k,p}$ may be thought of as a closed subspace of $(L^p)^{k+1}$ by considering the function and its derivatives separately.
Therefore the Lebesgue convergence theorems apply (under the assumption of domination or monotonicity on each derivative).

We will need a special case of the Sobolev embedding theorem \cite[Theorem 4.12]{MR2424078}.
The proof given here is taken from lecture notes by T.~Tao \cite{tao-blog}.
We begin by showing the Loomis-Whitney inequality.
While it admits a striking generalization, see e.g.\ \cite{MR2377493}, we content ourselves with the classical formulation.
\begin{proposition}[Loomis-Whitney]
\index{Loomis-Whitney inequality}
\label{prop:loomis-whitney}
Let $n \geq 2$ and $f_1, \dots, f_n \in L^{n-1}(\R^{n-1})$.
Then
\[
\int_{\R^n} \Prod_{j=1}^n |f_j (\hat x^j_{1,\dots,n})| \dif x_1 \dots \dif x_n \leq \Prod_{j=1}^n ||f_j||_{n-1}.
\]
Here $\hat x^j_{1,\dots,n} = (x_1, \dots, \hat x_j, \dots, x_n)$ denotes the omission of the $j$-th coordinate.
\end{proposition}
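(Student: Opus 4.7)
The plan is to proceed by induction on $n$, with two applications of Hölder's inequality per inductive step. The base case $n=2$ is immediate from Fubini's theorem: the integrand $|f_{1}(x_{2})| \, |f_{2}(x_{1})|$ factors and each factor is integrated out in its own variable, yielding $\|f_{1}\|_{1}\|f_{2}\|_{1}$.

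For the inductive step ($n-1 \rightsquigarrow n$), I would first integrate in the variable $x_{n}$ with the other $x_{j}$'s fixed. The function $f_{n}(\hat x^{n}_{1,\dots,n}) = f_{n}(x_{1},\dots,x_{n-1})$ does not depend on $x_{n}$ and so can be pulled out. For the remaining $n-1$ factors $|f_{j}(\hat x^{j})|$, each of which is a function of $x_{n}$ (and other variables), Hölder's inequality with the $n-1$ conjugate exponents all equal to $n-1$ gives
\[
\int_{\R} \prod_{j=1}^{n-1} |f_{j}(\hat x^{j})| \, \dif x_{n} \leq \prod_{j=1}^{n-1} \left(\int_{\R} |f_{j}(\hat x^{j})|^{n-1} \dif x_{n}\right)^{1/(n-1)} =: \prod_{j=1}^{n-1} F_{j}(\hat x^{j}_{1,\dots,n-1}),
\]
where each $F_{j}$ is now a function on $\R^{n-2}$ that omits the $j$-th coordinate among $x_{1},\dots,x_{n-1}$, and by Fubini $\|F_{j}\|_{L^{n-1}(\R^{n-2})} = \|f_{j}\|_{L^{n-1}(\R^{n-1})}$.

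After this step I am left with the $(n-1)$-dimensional integral
\[
\int_{\R^{n-1}} |f_{n}(x_{1},\dots,x_{n-1})| \prod_{j=1}^{n-1} F_{j}(\hat x^{j}_{1,\dots,n-1}) \, \dif x_{1}\cdots \dif x_{n-1}.
\]
Now I would apply Hölder's inequality once more, this time with the conjugate exponents $n-1$ and $(n-1)/(n-2)$, to separate $|f_{n}|$ from the remaining product:
\[
\int_{\R^{n-1}} |f_{n}| \prod_{j=1}^{n-1} F_{j} \leq \|f_{n}\|_{L^{n-1}(\R^{n-1})} \cdot \left\| \prod_{j=1}^{n-1} F_{j} \right\|_{L^{(n-1)/(n-2)}(\R^{n-1})}.
\]
Setting $G_{j} := F_{j}^{(n-1)/(n-2)}$, the remaining task is to bound $\int_{\R^{n-1}} \prod_{j=1}^{n-1} G_{j}$ in terms of $\prod_{j=1}^{n-1} \|G_{j}\|_{L^{n-2}(\R^{n-2})}$. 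But this is precisely the Loomis-Whitney inequality in dimension $n-1$: there are $n-1$ functions, each defined on $\R^{n-2}$ and each omitting one of the coordinates $x_{1},\dots,x_{n-1}$. By the inductive hypothesis this holds, and a direct computation gives $\|G_{j}\|_{n-2} = \|F_{j}\|_{n-1}^{(n-1)/(n-2)} = \|f_{j}\|_{n-1}^{(n-1)/(n-2)}$, so that taking $(n-2)/(n-1)$-th powers recovers exactly the claimed bound.

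The main obstacle is not conceptual but bookkeeping: one must select the two Hölder exponents so that the powers on the functions at the end of the second step line up with $L^{n-2}$ norms for the $(n-1)$-dimensional induction hypothesis to apply, and so that the resulting factors $\|G_{j}\|_{n-2}$ match $\|f_{j}\|_{n-1}$ after exponentiation. The choice of $n-1$ and $(n-1)/(n-2)$ in the second step is forced precisely by this requirement.
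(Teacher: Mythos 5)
Your proof is correct and uses essentially the same ingredients as the paper's: Fubini for the base case, two applications of Hölder in the inductive step, and the $(n-1)$-dimensional Loomis--Whitney inequality. The only (cosmetic) difference is the order of the two Hölder applications: the paper first separates off the factor not depending on the integration variable (outer Hölder with exponents $n$ and $n/(n-1)$), then applies Hölder to the inner one-dimensional integral, whereas you apply the inner Hölder first and then the outer separation (exponents $n-1$ and $(n-1)/(n-2)$); both lead to the same bound after invoking the inductive hypothesis.
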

\begin{proof}
The case $n=2$ is just the Fubini theorem.
We proceed by induction on $n$.
Two applications of the Hölder inequality followed by the induction hypothesis show that
\begin{align*}
\int_{\R^{n+1}} &\Prod_{j=1}^{n+1} |f_j (\hat x^j_{1,\dots,n+1})| \dif x_{1,\dots,n+1}\\
&=
\int_{\R^{n}} f_{n+1} (x_{1,\dots,n}) \int_\R \Prod_{j=1}^{n} |f_j (\hat x^j_{1,\dots,n+1})| \dif x_{n+1} \dif x_{1,\dots,n}\\
&\leq
||f_{n+1}||_n \left( \int_{\R^{n}} \left( \int_\R \Prod_{j=1}^{n} |f_j (\hat x^j_{1,\dots,n+1})| \dif x_{n+1} \right)^{\frac{n}{n-1}} \dif x_{1,\dots,n}\right)^{\frac{n-1}{n}}\\
&\leq
||f_{n+1}||_n \left( \int_{\R^{n}} \Prod_{j=1}^{n} \left( \int_\R |f_j (\hat x^j_{1,\dots,n+1})|^n \dif x_{n+1} \right)^{\frac1n \frac{n}{n-1}} \dif x_{1,\dots,n}\right)^{\frac{n-1}{n}}\\
&\leq
||f_{n+1}||_n \Prod_{j=1}^{n} \left( \int_{\R^{n-1}} \left( \int_\R |f_j (\hat x^j_{1,\dots,n+1})|^n \dif x_{n+1} \right)^{\frac1n \frac{n}{n-1} (n-1)} \dif \hat x^j_{1,\dots,n}\right)^{\frac{1}{n-1} \frac{n-1}{n}}\\
&=
\Prod_{j=1}^{n+1} ||f_{j}||_n.
\qedhere
\end{align*}
\end{proof}

Next lemma comes in handy in density arguments.
\begin{lemma}
\label{lem:adj-dual-unit-ball-sstar-closed}
Let $Y$ be a locally convex space, $X$ a Banach space and $\iota : Y \to X$ a continuous injection with dense image.
Then the image of the closed unit ball $B_{X'}$ under the adjoint map $\iota'$ is $\sigma^*$-closed in $Y'$.
\end{lemma}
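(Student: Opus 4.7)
The plan is to exhibit $\iota'(B_{X'})$ as a continuous image of a compact set inside a Hausdorff space, so that it is automatically closed.

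First I would verify that the adjoint map $\iota' : X' \to Y'$ is well-defined: for every $\phi \in X'$ the composition $\phi \circ \iota$ is a continuous linear form on $Y$ because $\iota$ is continuous, so $\iota'(\phi) := \phi \circ \iota \in Y'$. Next, I would check that $\iota'$ is continuous from $(X', \sigma(X', X))$ to $(Y', \sigma(Y', Y))$. By definition of the target topology this reduces to showing that for each fixed $y \in Y$ the scalar-valued map
\[
\phi \mapsto \langle \iota'(\phi), y \rangle = \phi(\iota(y))
\]
is $\sigma(X', X)$-continuous on $X'$, which holds since $\iota(y)$ is a fixed element of $X$.

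Having established this, the Banach-Alaoglu theorem gives that $B_{X'}$ is $\sigma(X', X)$-compact, and hence $\iota'(B_{X'})$ is $\sigma(Y', Y)$-compact as the continuous image of a compact set. Finally, $\sigma(Y', Y)$ is Hausdorff because $Y$ separates the points of its continuous dual $Y'$, so every compact subset of $(Y', \sigma(Y', Y))$ is closed. This yields the claim.

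No serious obstacle is anticipated; the only minor point to take care of is the Hausdorff-separation property of $\sigma(Y', Y)$, which is built into the very definition of $Y'$ as the space of continuous linear forms on $Y$, and the density of $\iota(Y)$ in $X$ is not even needed for this closedness statement (it only ensures that $\iota'$ is injective, so that the closed set $\iota'(B_{X'})$ is actually an isomorphic copy of $B_{X'}$).
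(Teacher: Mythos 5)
Your proof is correct and takes a genuinely different route from the paper. The paper's argument is a direct limit-point check: if a net $(\phi_\alpha) \subset B_{X'}$ satisfies $\iota'(\phi_\alpha) \to \psi$ in $\sigma(Y',Y)$, then $|\psi(y)| = \lim_\alpha |\phi_\alpha(\iota(y))| \leq ||\iota(y)||_X$, so $\psi$, viewed as a form on $\iota(Y)$, extends by density to a functional of norm at most $1$ on $X$, whose image under $\iota'$ is $\psi$. (The paper actually phrases this with a sequence rather than a net, which on its face only yields sequential closedness; since $\sigma(Y',Y)$ need not be first countable when $Y = C^\infty_c(\R^n)$, the argument should really be run with nets, though it works verbatim.) You instead invoke Banach-Alaoglu to get $\sigma(X',X)$-compactness of $B_{X'}$, observe that $\iota'$ is $\sigma(X',X)$-to-$\sigma(Y',Y)$-continuous, and conclude from the Hausdorff property of $\sigma(Y',Y)$ that the compact image is closed. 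Your route is slicker, handles the net-versus-sequence subtlety automatically, and makes visible something the paper's proof obscures: as you note, density of $\iota(Y)$ in $X$ is not needed for the closedness claim at all. The paper's proof uses density in its extension step, whereas in yours density only serves to make $\iota'$ injective, a separate matter.
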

\begin{proof}
Let $(\phi_n) \subset B_{X'}$ be such that $\iota'(\phi_n) \to \psi$ in the $\sigma(Y',Y)$-topology on $Y'$.
This just means that $\phi_n(\iota(y)) \to \psi(y)$ for every $y \in Y$.
Therefore
\[
|\psi(y)| \leq \limsup_n ||\phi_n||_{X'} ||\iota(y)||_X \leq ||\iota(y)||_X,
\]
so that $\psi$, viewed as a linear form on $\iota(Y)$, admits a continuous extension with norm less or equal $1$.
\end{proof}

\begin{theorem}
\index{Sobolev embedding of $W^{1,1}$}
\label{th:sobolev-embedding-W11}
Let $1 \leq q \leq \frac{n}{n-1}$.
Then $W^{1,1}(\R^n) \hookrightarrow L^q(\R^n)$ with continuous embedding.
\end{theorem}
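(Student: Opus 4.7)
The plan is first to reduce the full range of exponents to the endpoint $q=n/(n-1)$, then to prove the endpoint estimate on $C^{\infty}_{c}$ by the Loomis--Whitney inequality, and finally to transfer the bound to all of $W^{1,1}$ by density. The trivial inclusion $W^{1,1}\hookrightarrow L^{1}$ is immediate from $||f||_{1}\leq ||f||_{W^{1,1}}$, so once the endpoint estimate $||f||_{n/(n-1)}\leq C||f||_{W^{1,1}}$ is established, log-convexity of $L^{p}$-norms yields $||f||_{q}\leq ||f||_{1}^{1-\theta}||f||_{n/(n-1)}^{\theta}$ for the unique $\theta\in[0,1]$ satisfying $\tfrac{1}{q}=(1-\theta)+\theta\tfrac{n-1}{n}$, giving continuous embedding into $L^{q}$ for every $1\leq q\leq n/(n-1)$.

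For the endpoint, I would start from a smooth compactly supported $f$ and, for each coordinate direction $j=1,\dots,n$, use the fundamental theorem of calculus to write the pointwise bound $|f(x)|\leq F_{j}(\hat x^{j}_{1,\dots,n})$ where $F_{j}(\hat x^{j}):=\int_{\R}|\partial_{j}f|\,ds$ is a function only of the $n-1$ remaining coordinates. Taking the geometric mean of these $n$ inequalities gives
\[
|f(x)|^{n/(n-1)}\leq \prod_{j=1}^{n} F_{j}(\hat x^{j}_{1,\dots,n})^{1/(n-1)}.
\]
Integrating over $\R^{n}$ and applying Proposition~\ref{prop:loomis-whitney} to the functions $g_{j}:=F_{j}^{1/(n-1)}\in L^{n-1}(\R^{n-1})$ (note $||g_{j}||_{n-1}^{n-1}=||F_{j}||_{1}=||\partial_{j}f||_{1}$) produces
\[
\int_{\R^{n}}|f|^{n/(n-1)}\leq \prod_{j=1}^{n}||\partial_{j}f||_{1}^{1/(n-1)},
\]
and raising to the power $(n-1)/n$ together with AM--GM yields $||f||_{n/(n-1)}\leq C||\nabla f||_{1}\leq C||f||_{W^{1,1}}$ for every $f\in C^{\infty}_{c}(\R^{n})$.

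To pass to arbitrary $f\in W^{1,1}$, I would use density of $C^{\infty}_{c}$ in $W^{1,1}$: choose $f_{k}\in C^{\infty}_{c}$ with $f_{k}\to f$ in $W^{1,1}$. The endpoint estimate applied to the differences $f_{k}-f_{\ell}$ shows that $(f_{k})$ is Cauchy in $L^{n/(n-1)}$; the $L^{n/(n-1)}$-limit must coincide with $f$ almost everywhere, since $L^{1}$-convergence gives almost everywhere convergence along a subsequence. Passing to the limit in the inequality finishes the proof. The only point requiring care — and the main obstacle conceptually — is the endpoint estimate itself: the naive application of Hölder's inequality one direction at a time loses a factor, and the combinatorial symmetry exploited by Loomis--Whitney is exactly what makes the exponents match.
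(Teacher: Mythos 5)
Your proof is correct and follows essentially the same route as the paper: reduce to the endpoint $q=n/(n-1)$, bound $|f|$ pointwise by one-dimensional integrals of the gradient via the fundamental theorem of calculus, apply Loomis--Whitney, and pass to general $f\in W^{1,1}$ by density of $C^{\infty}_{c}$. The only (harmless) variation is in the density step, where you argue directly via Cauchy sequences in $L^{n/(n-1)}$ rather than invoking the paper's $\sigma^{*}$-closedness lemma for the dual unit ball; both are valid.
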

\begin{proof}
By interpolation it is sufficient to consider $q=\frac{n}{n-1}$.
Let $f \in C^\infty_c(\R^n)$.
By the fundamental theorem of calculus we obtain the estimate
\[
|f(x_{1,\dots,n})| \leq \int_{\R} |\nabla f(x_{1,\dots,n})| \dif x_j =: f_j(\hat x^j_{1,\dots,n}).
\]
By definition of $f_j$ and from the Loomis-Whitney inequality (Proposition~\ref{prop:loomis-whitney}) we infer that
\begin{multline*}
\int_{\R^n} |f(x_{1,\dots,n})|^{\frac{n}{n-1}} \dif x_{1,\dots,n}
\leq
\int_{\R^n} \Prod_{j=1}^n f_j(\hat x^j_{1,\dots,n})^{\frac{1}{n-1}} \dif x_{1,\dots,n}\\
\leq
\Prod_{j=1}^n \left( \int_{\R^{n-1}} f_j(\hat x^j_{1,\dots,n}) \dif \hat x^j_{1,\dots,n} \right)^{\frac{1}{n-1}}
=
||\nabla f||_1^{\frac{n}{n-1}}.
\end{multline*}

Let now $f \in W^{1,1}$ be arbitrary.
There exists a sequence $(f_n) \subset C^\infty_c$ such that $f_n \to f$ in $W^{1,1}$.
Since the inclusion $C^\infty_c \hookrightarrow L^\infty \hookrightarrow (W^{1,1})'$ is continuous, we also have the convergence in the sense of distributions.
By Lemma~\ref{lem:adj-dual-unit-ball-sstar-closed} with $X = L^{q'}$ and $Y=C^\infty_c$, the unit ball of $L^q$ is closed as a subset of the space of distributions, so that by the above $f \in L^q$ and we have a bound on $||f||_q$ in terms of $||f||_{W^{1,1}}$.
\end{proof}

\section{Transport of measure}
The image of a measure given by a differentiable density under a measurable map need not, in general, be regular in any sense.
However we do obtain some regularity by imposing additional constraints on the map.
\begin{proposition}
\label{prop:measure-transport-non-singular}
Let $D \subset \R^m$ be an open set with compact closure, $n \leq m$ and $\Phi : \bar D \to \R^n$ be a smooth map whose differential has full rank at every point of $D$.
Then there exists a constant $C$ (which depends polynomially on $\sup_D ||\dif \Phi||$) such that, for every $\psi \in C_c^1(D)$, the measure $\Phi_*(\phi \dif\Leb[m])$ is absolutely continuous with respect to the usual Lebesgue measure, and its density $\rho$ satisfies
\[
|| \rho ||_{W^{1,1}(\R^n)} \leq C ||\psi||_{C^1} \int_D J^{-2},
\]
where $J^2 = \sum_k J_k^2$ and $J_k$ are the minors of $\dif \Phi$ of order $m$.
\end{proposition}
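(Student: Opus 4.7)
The plan is to reduce to a local representation of $\rho$ via the implicit function theorem and then bound the resulting integral and its derivatives directly. Since $J^2 = \sum_k J_k^2$, at every point of $D$ where $J > 0$ at least one of the $n$-minors $J_k$ satisfies $J_k^2 \geq J^2/\binom{m}{n}$. For each $n$-subset $\sigma \subset \{1,\dots,m\}$ let $U_\sigma$ be an open neighborhood of $\{x : J_\sigma^2(x) \geq J^2(x)/\binom{m}{n}\}$ on which the comparable inequality $J_\sigma^2 \geq J^2/(2\binom{m}{n})$ still holds; these sets cover $\supp\psi$, and I fix a smooth partition of unity $\{\eta_\sigma\}$ subordinate to this finite cover.

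On each $U_\sigma$, split $x = (x',x'')$ so that $x'$ corresponds to the coordinates in $\sigma$; the Jacobian of the map $(x',x'') \mapsto (\Phi(x',x''),x'')$ equals $\pm J_\sigma \neq 0$, and the implicit function theorem produces a smooth inverse $(y,x'') \mapsto X(y,x'')$. The change of variables identifies the density $\rho_\sigma$ of $\Phi_{*}((\eta_\sigma \psi)\dif \Leb[m])$ as
\[
\rho_\sigma(y) = \int \frac{(\eta_\sigma \psi)(X(y,x''),x'')}{|J_\sigma(X(y,x''),x'')|} \dif x''.
\]
The $L^1$ estimate is immediate by Fubini in the $x$-variables: $\int|\rho_\sigma| \leq \int|\eta_\sigma\psi| \leq \|\psi\|_\infty |D|$, and $|D|$ is in turn controlled by $\|\dif\Phi\|_\infty^{2n}\int_D J^{-2}$, using the pointwise bound $J \leq \|\dif\Phi\|_\infty^n$.

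For the derivative, differentiate under the integral. The factor $\partial_{y_j} X = (\partial_{x'}\Phi)^{-1} e_j$ has entries bounded by $C/|J_\sigma|$ via Cramer's rule, and $\partial_{x'}(1/J_\sigma) = -(\partial_{x'} J_\sigma)/J_\sigma^2$ is bounded by $C/J_\sigma^2$ with $C$ depending polynomially on the first two derivatives of $\Phi$. Hence the integrand in $\partial_{y_j} \rho_\sigma$ is pointwise bounded by $C\|\psi\|_{C^1}/|J_\sigma|^3$. Reversing the change of variables absorbs exactly one factor of $|J_\sigma|$ into the Jacobian:
\[
\int |\partial_{y_j}\rho_\sigma(y)| \dif y \leq \int_{U_\sigma} \frac{C\|\psi\|_{C^1}}{|J_\sigma|^3}\,|J_\sigma|\dif x = C\|\psi\|_{C^1} \int_{U_\sigma} J_\sigma^{-2} \leq C'\|\psi\|_{C^1}\int_D J^{-2},
\]
where the last step uses $J_\sigma^{-2} \leq 2\binom{m}{n} J^{-2}$ on $U_\sigma$. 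Summing over the finitely many $\sigma$ and components $j$ yields the claim.

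The principal obstacle is tracking the $J$-powers: naively one would expect $\partial_{y_j}\rho$ to carry a factor $J^{-3}$ (one from $\partial_{y}X$, two from $\partial_{x'}(1/J_\sigma)$), and the argument hinges on observing that exactly one such factor is cancelled by the Jacobian of the coarea change of variables when the $y$-integral is pulled back to $D$. A secondary technical point, which the stated dependence of $C$ on $\sup_D\|\dif\Phi\|$ elides, is that the constant in fact also involves $\sup_D\|\dif^2\Phi\|$ through $\partial_{x'} J_\sigma$; this is automatic once $\Phi$ is held fixed and smooth on $\bar D$.
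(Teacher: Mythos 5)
Your proof is correct, and it takes a genuinely different route from the paper. The paper's argument is intrinsic: it writes $\rho(x)$ as a fiber integral over the level sets $L_x = \{\Phi = x\}$ via the coarea formula, constructs the vector field $X_j = (\dif\Phi|_{NL_x})\inv\partial_j$ whose flow maps $L_x$ to $L_{x+te_j}$, and differentiates the pushforward along this flow, collecting three terms ($X\psi$, $\psi\,X(\det\inv)$, and $\psi\,\tr\nabla X$). Your approach instead makes the coarea slicing concrete by picking, on each piece of a partition of unity, a coordinate $n$-subset $\sigma$ for which $J_\sigma$ controls $J$; then the implicit function theorem turns $\rho_\sigma$ into an explicit integral over $x''$ with integrand $(\eta_\sigma\psi)/|J_\sigma|$, and the $y$-derivative is computed by the ordinary chain rule plus Cramer's rule. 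The key cancellation you identify — that the $J_\sigma\inv$ picked up when $\dif y$ is pulled back to $\dif x'$ reduces the apparent $J\inv$-power from three to two — is exactly the role played in the paper by the coarea Jacobian when $\int |F|/J\,\dif\mu_x\,\dif x$ is rewritten as $\int_D |F|$. Your approach is more elementary and computational; the paper's avoids choosing a dominant minor and is invariant under rotations of the $\R^m$ coordinates, at the cost of the Riemannian-geometric machinery (Gram determinants, $\tr\nabla X$). Both yield the same bound, and both need a final countable-cover argument to reduce to the locally trivialized setting, which you and the paper each handle in passing. Your parenthetical remark about the stated constant is also on target: both your $\partial_{x'} J_\sigma$ and the paper's $X\bigl((\det\dif\Phi|_{NL_x})\inv\bigr)$ and $\tr\nabla X$ terms necessarily involve second derivatives of $\Phi$, so the phrase ``depends polynomially on $\sup_D\|\dif\Phi\|$'' is imprecise as written — though harmless, since in the subsequent application $\Phi$ is fixed and only finiteness of $C$ is used.
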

\begin{proof}
The absolute continuity and the bound $||\rho||_{L^1} \leq C ||\psi||_{C^0}$ are clear locally and therefore globally by the monotone convergence theorem.

Let $L_x = \{ \Phi = x \}$ denote the level sets of $\Phi$.
Since $\Phi$ is a submersion and by the implicit function theorem, each $L_x$ is a (smooth) submanifold of $D$.
Let $NL_x$ denote the normal vector bundle to and $\mu_x$ the Riemannian volume on $L_x$ with respect to the Euclidean scalar product.
Then
\[
\rho(x) = \int_{L_x} \psi(y) \left( \det \left. \dif \Phi \right|_{NL_x}(y) \right)\inv \dif \mu_x(y).
\]
Fix $1 \leq j \leq n$ and define a vector field on $D$ by
\[
X(y) := \left. \dif \Phi \right|_{NL_{\Phi(y)}}\inv \partial_j.
\]
By Cramer's rule this vector field is smooth and bounded.
Denote the flow generated by $X$ by $\Psi_t$.
Then, for a fixed $x$, $\Psi_t : \tilde L_x \to \tilde L_{x+te_j}$ is bijective for small $t$, where $\tilde L_{x'}$ are some manifolds such that $L_{x'} \intersection \supp \psi \subset \tilde L_{x'} \subset L_{x'}$.
Assume for the moment that every $\tilde L_x$ can be covered by one chart and let $\{v_1, \dots, v_{m-n}\} \subset \Gamma T L_x$ be a frame of tangent vector fields.
For small $t$ the density of the transported measure satisfies
\begin{multline*}
\rho(x + t e_j)
=
\int_{y \in \tilde L_x} \psi(\Psi_t(y)) \left( \det \left. \dif \Phi \right|_{NL_{x+te_j}}(\Psi_t(y)) \right)\inv\\
\sqrt{\det\left(\< \dif \Psi_t v_k(y), \dif \Psi_t v_l(y) \>\right)_{kl}} w^1 \wedge \dots \wedge w^{m-n},
\end{multline*}
where the differential forms $w^j$ are dual to $v_j$ and vanish on $NL_x$.
We compute the derivative of the square root first.
Let $M_{kl} = \< v_k(y), v_l(y) \>$.
\begin{align*}
\tdif{}{t} & \left.\sqrt{\det\left(\< \dif \Psi_t v_k(y), \dif \Psi_t v_l(y) \>\right)_{kl}} \right|_{t=0}\\
&=
\frac12 \frac{1}{\sqrt{\dots}} \tdif{}{t}\left. \det\left(\< \dif\Psi_t v_k(y), \dif\Psi_t v_l(y) \>\right)_{kl} \right|_{t=0}\\
&=
\frac12 \sqrt{\dots}
\sum_{k,l} \left(\< v_k(y), v_l(y) \>\right)\inv_{lk}
\tdif{}{t}\left. \< \dif\Psi_t v_k(y), \dif\Psi_t v_l(y) \> \right|_{t=0}\\
&=
\frac12 \sqrt{\dots}
\sum_{k,l} M\inv_{lk}
\left( \< (\nabla X)|_{TL_{\Phi(y)}} v_k(y), v_l(y) \> + \< v_k(y), (\nabla X)|_{TL_{\Phi(y)}} v_l(y) \> \right)\\
&=
\frac12 \sqrt{\dots}
\tr \left( M\inv
\left( (\nabla X)|_{TL_{\Phi(y)}} M + M (\nabla X)|_{TL_{\Phi(y)}} \right) \right)\\
&=
\sqrt{\det\left(\< v_k(y), v_l(y) \>\right)_{kl}}
\tr ( (\nabla X)|_{TL_{\Phi(y)}} )
\end{align*}
The remaining terms are compositions of functions with $\Psi_t$, so that their derivatives are given by $X$ applied to the respective functions.
Altogether, the differentiation gives
\begin{multline*}
\partial_j \rho(x)
=
\int_{y \in \tilde L_x} \left[
(X \psi)(y) \left( \det \left. \dif \Phi \right|_{NL_{x}}(y) \right)\inv \sqrt{\det\left(\< v_k(y), v_l(y) \>\right)_{kl}}\right.\\
+
\psi(y) X \left(\left( \det \left. \dif \Phi \right|_{NL_{x}} \right)\inv \right)(y) \sqrt{\det\left(\< v_k(y), v_l(y) \>\right)_{kl}}\\
+
\left. \psi(y) \left( \det \left. \dif \Phi \right|_{NL_{x}}(y) \right)\inv \sqrt{\det\left(\< v_k(y), v_l(y) \>\right)_{kl}} \tr(\left. \nabla X \right|_{TL_x})
\right]\\
w^1 \wedge \dots \wedge w^{m-n},
\end{multline*}
or, more succinctly,
\[
\partial_j \rho(x)
=
\int_{y \in L_x}
F(y)
\left( \det \left. \dif \Phi \right|_{NL_{x}}(y) \right)\inv
\dif \mu_x(y),
\]
where
\[
F
=
X \psi
+
\psi \left( \det \left. \dif \Phi \right|_{NL_{x}} \right) X \left(\left( \det \left. \dif \Phi \right|_{NL_{x}} \right)\inv \right)
+
\psi \tr(\left. \nabla X \right|_{TL_x}).
\]
From the form of $F$ we see that
\[
|F|
\leq
P(\dif \Phi) ||\nabla \psi|| \left| \det \left. \dif \Phi \right|_{NL_{x}} \right|^{-1}
+
Q(\dif \Phi) |\psi| \left| \det \left. \dif \Phi \right|_{NL_{x}} \right|^{-2}
\]
for some polynomials $P$ and $Q$.
Hence
\[
\int_{\R^m} |\partial_j \rho(x)|
\leq
\int_D |F|
\leq
C ||\psi||_{C_1} \int_D J^{-2},
\]
where $C$ is polynomial in $\sup_D ||\dif\Phi||$ (for any operator norm).
The general case follows from the fact that $\supp \psi$ can be covered by countably many charts which locally trivialize the leaves $L_x$ and the vector fields $X=X_j$.
\end{proof}

\begin{proposition}
\label{prop:measure-transport}
Let $B \subset \R^m$ be the open unit ball, $n \leq m$ and $\Phi : \bar B \to \R^n$ be an analytic map whose differential has full rank almost everywhere.
Then there exists a $0<\theta<1$ such that, for every $\phi \in C_c^1(D)$, the measure $\Phi_*(\phi \dif\Leb[m])$ is absolutely continuous with respect to the usual Lebesgue measure, and its density $\rho$ lies in the real interpolation space $[ L^1, W^{1,1} ]_{\theta, \infty}$.
\end{proposition}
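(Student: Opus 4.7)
The plan is to decompose the density as $\rho = \rho_\epsilon + r_\epsilon$ for a parameter $\epsilon > 0$, trading $W^{1,1}$-regularity of the first piece against $L^1$-smallness of the second as $\epsilon$ varies, and then to conclude via the $K$-functional characterization of $[L^1, W^{1,1}]_{\theta, \infty}$. Let $J := \bigl(\sum_k J_k^2\bigr)^{1/2}$ with $J_k$ the maximal minors of $d\Phi$, so that $J > 0$ precisely where $d\Phi$ has full rank. Since $\Phi$ is real analytic on $\bar B$, so is $J^2$; the hypothesis that $d\Phi$ has full rank almost everywhere forces $J^2 \not\equiv 0$ on $\bar B$. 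The classical Łojasiewicz inequality for nonzero real analytic functions on a compact set then yields constants $C_0, \gamma > 0$ such that
\[
\bigl| \{ x \in \bar B : J(x) < \epsilon \} \bigr| \leq C_0 \epsilon^{\gamma} \qquad (0 < \epsilon \leq 1).
\]
This is the only ingredient that requires analyticity as opposed to mere smoothness.

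Fix a smooth $\chi : [0,\infty) \to [0,1]$ equal to $0$ on $[0,1]$ and to $1$ on $[2,\infty)$, and set $\chi_\epsilon(x) := \chi(J(x)/\epsilon)$. Then $\chi_\epsilon \in C^\infty(\bar B)$ vanishes on $\{J < \epsilon\}$, equals $1$ on $\{J > 2\epsilon\}$, and satisfies $\|\nabla \chi_\epsilon\|_\infty \leq C/\epsilon$ since $J$ has bounded derivative on $\bar B$. The decomposition $\phi = \chi_\epsilon \phi + (1 - \chi_\epsilon)\phi$ induces $\rho = \rho_\epsilon + r_\epsilon$ on $\R^n$. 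The function $\psi_\epsilon := \chi_\epsilon \phi$ lies in $C_c^1(D_\epsilon)$ with $D_\epsilon := B \cap \{J > \epsilon/2\}$ (an open set with compact closure on which $d\Phi$ has full rank), and $\|\psi_\epsilon\|_{C^1} \leq C \|\phi\|_{C^1}/\epsilon$. Applying Proposition~\ref{prop:measure-transport-non-singular} on $D_\epsilon$ (its polynomial prefactor in $\sup \|d\Phi\|$ is bounded uniformly in $\epsilon$ by $\sup_{\bar B} \|d\Phi\| < \infty$) gives
\[
\|\rho_\epsilon\|_{W^{1,1}(\R^n)} \leq C \|\psi_\epsilon\|_{C^1} \int_{D_\epsilon} J^{-2} \leq C \cdot \frac{\|\phi\|_{C^1}}{\epsilon} \cdot \frac{|B|}{(\epsilon/2)^2} \leq C_1 \epsilon^{-3}.
\]
For the remainder, pushforwards do not increase total mass, so the Łojasiewicz bound yields
\[
\|r_\epsilon\|_{L^1(\R^n)} \leq \|(1 - \chi_\epsilon)\phi\|_{L^1(\R^m)} \leq \|\phi\|_\infty \bigl| \{J < 2\epsilon\} \bigr| \leq C_2 \epsilon^{\gamma}.
\]

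Combining, for every $t > 0$ and every $0 < \epsilon \leq 1$,
\[
K(\rho, t; L^1, W^{1,1}) \leq \|r_\epsilon\|_{L^1} + t \|\rho_\epsilon\|_{W^{1,1}} \leq C\bigl( \epsilon^{\gamma} + t \epsilon^{-3} \bigr).
\]
Optimizing by $\epsilon := t^{1/(\gamma + 3)}$ when $t \leq 1$ gives $K(\rho, t) \leq C t^{\gamma/(\gamma+3)}$, while for $t \geq 1$ the trivial bound $K(\rho, t) \leq \|\rho\|_{L^1} \leq \|\phi\|_1$ is $O(1)$ and therefore $O(t^{\gamma/(\gamma+3)})$. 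Hence $\sup_{t > 0} t^{-\theta} K(\rho, t) < \infty$ with $\theta := \gamma / (\gamma + 3) \in (0, 1)$, which is exactly the statement $\rho \in [L^1, W^{1,1}]_{\theta, \infty}$. The main technical obstacle is the Łojasiewicz sublevel set estimate: it can fail for generic smooth $\Phi$ and is a classical but nonelementary consequence of analyticity (proved via Hironaka's resolution of singularities, or by stratifying $\{J = 0\}$ as a real analytic variety of positive codimension and bounding tube volumes); all remaining steps are routine optimization of a $K$-functional.
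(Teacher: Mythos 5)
Your proof is correct and follows essentially the same route as the paper's: decompose $\rho$ by sublevel sets of $J$, estimate the regular part via Proposition~\ref{prop:measure-transport-non-singular}, bound the $L^1$-mass of the residual by a sublevel-set volume estimate, and optimize the $K$-functional to land in $[L^1,W^{1,1}]_{\theta,\infty}$. The one point of divergence is how the volume estimate $|\{J<\epsilon\}|=O(\epsilon^\gamma)$ is justified: you invoke the Łojasiewicz inequality for real analytic functions (standard, but its usual proofs lean on resolution of singularities or a stratification argument), whereas the paper reaches the same bound more elementarily through the Weierstraß preparation theorem (finite vanishing order at each zero, then compactness of $\bar B$). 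Your cutoff $\chi(J/\epsilon)$, justified by the Lipschitz bound $|\nabla J|\le\bigl(\sum_k|\nabla J_k|^2\bigr)^{1/2}$ from Cauchy--Schwarz, is slightly cleaner than the paper's $\psi_a$ built from a distance estimate between sublevel sets of $J^2$ and incidentally gives a marginally sharper gradient bound; since the proposition asserts only the existence of \emph{some} $\theta\in(0,1)$, this makes no difference to the conclusion.
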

We remark that, with the notation from \cite{MR928802}, the space $[ L^1, W^{1,1} ]_{\theta, \infty}$ is the Besov space $B_{\theta,\infty}^1$.
However we are only interested in the existence of an embedding
\[
[ L^1, W^{1,1} ]_{\theta, \infty} \hookrightarrow L^\delta
\]
for some $\delta > 1$.
This is an immediate consequence of the Sobolev embedding theorem~\ref{th:sobolev-embedding-W11} and the Marcinkiewicz interpolation theorem~\ref{th:marcinkiewicz}.
\begin{proof}[Proof of Proposition~\ref{prop:measure-transport}]
The idea of the proof is to use the $K$-method and to decompose $\rho$ by the absolute value of $J^2 = \sum_k J_k^2$, where $J_k$ are the minors of $\dif \Phi$ of order $m$.

Since $\Phi$ is analytic, the zeroes of $J^2$ have finite order.
By the Weierstraß preparation theorem, $|\{ |J^2| < a \}| = O(a^{1/k})$ as $a \to 0$ locally near every zero of $J^2$ for some $k \in \N$.
By compactness of $\bar B$, we even have that $|\{ |J^2| < a \}| = O(a^{1/k})$ as $a \to 0$ on all of $\bar B$ for some $k \in \N$.
The same argument shows that $\dist(\{|J^2| < a\}, \{|J^2| > 2a\}) > Ca$ as $a\to 0$.

Let $\psi_a : \bar B \to [0,1]$ be a smooth function which is $1$ if $|J^2|>2a$, $0$ if $|J^2|<a$ and such that $||\psi_a||_{C^1} \leq C a^{-1}$.
Consider the decomposition
\[
\Phi_* ( \phi \dif\Leb[m] )
=
\Phi_* ( (1-\psi_a) \phi \dif\Leb[m] )
+
\Phi_* ( \psi_a \phi \dif\Leb[m] ).
\]
By the above, the $L^1$-norm of former summand is $O( ||\phi||_\infty a^{1/k})$.
On the other hand, by Proposition~\ref{prop:measure-transport-non-singular} the $W^{1,1}$-norm of the latter summand is $O(||\phi||_{C^1} a^{-2})$, so that
\[
K(\rho,t; L^1, W^{1,1}) \leq C a^{1/k} + C t a^{-2}.
\]
Taking $a = t^{k/(2k+1)}$ we obtain
\[
K(\rho,t; L^1, W^{1,1}) \leq C t^{1/(2k+1)}.
\]
Therefore $\rho \in [L^1, W^{1,1}]_{1/(2k+1), \infty}$ by definition of the $K$-method.
\end{proof}

\section{\texorpdfstring{$L^p$}{Lp} improvement}
Now we deduce an $L^p$-improvement result \cite[Theorem 1.1]{MR1021141} for a convolution operator whose kernel is a differentiable measure supported on a curved submanifold.
The idea is to use the curvature condition to convolve multiple copies of the measure into an absolutely continuous measure and to deduce some regularity for it from Proposition~\ref{prop:measure-transport}.

Let $G$ be a connected, simply connected Lie group and $W \subset G$ be a connected analytic submanifold.
Assume also that $WW\inv$ generates $G$ in the sense that $WW\inv$ is not contained in a proper closed subgroup of $G$.
An example of such a submanifold is the hyperplane $\{ t = 0 \}$ in the Heisenberg group.
By \cite[Proposition 1.1]{MR937632}, there exists a $k$ such that the map $\Pi : (WW\inv)^k \to G$, $(w_1, \dots, w_k) \mapsto w_1 \cdot\dots\cdot w_k$ has full rank on a dense subset of $(WW\inv)^k$.

\begin{theorem}
Let $G$ and $W$ be as above with the additional assumption that $G$ is unimodular, $\phi$ be a positive $C^1$ function with compact support on $W$ and $\sigma$ be a smooth volume measure on $W$.
Then there exists a $p<2$ such that
\[
||f * (\phi \dif\sigma)||_2 \leq C ||f||_p
\]
for every smooth function $f$ with compact support, where the norms are taken with respect to the Haar measure $\nu$ on $G$.
\end{theorem}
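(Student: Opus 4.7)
The strategy is the $TT^*$ method combined with Proposition~\ref{prop:measure-transport}, which produces $L^\delta$-regularity of an iterated convolution kernel. Set $Tf = f * \mu$ with $\mu = \phi\,d\sigma$. Since $G$ is unimodular, the $L^2$-adjoint is $T^*g = g * \tilde\mu$ with $\tilde\mu(A) = \overline{\mu(A^{-1})}$. The iterated operator $(TT^*)^k$ is convolution with the measure $\eta_k := (\mu * \tilde\mu)^{*k}$, which is exactly the pushforward of the smooth compactly supported density $\prod_{i=1}^k \phi(x_i)\phi(y_i)\,d\sigma(x_i)\,d\sigma(y_i)$ on $W^{2k}$ under the real-analytic map
\[
\Pi_k : (x_1,y_1,\ldots,x_k,y_k) \mapsto x_1 y_1^{-1} x_2 y_2^{-1} \cdots x_k y_k^{-1};
\]
equivalently, it is the pushforward of a smooth density on $(WW^{-1})^k$ under the multiplication map $\Pi$ from the cited Proposition~1.1 of \cite{MR937632}.

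By that Proposition, the hypothesis that $WW^{-1}$ generates $G$ furnishes a $k$ for which $\Pi$ (and hence $\Pi_k$) has full rank on some dense subset, and real analyticity upgrades this to full rank almost everywhere. Working in local coordinate patches on $G$ cut off by a partition of unity, Proposition~\ref{prop:measure-transport} applies and yields that $\eta_k$ is absolutely continuous with density $h$ lying in $[L^1, W^{1,1}]_{\theta,\infty}(G)$ for some $\theta > 0$. The Sobolev embedding Theorem~\ref{th:sobolev-embedding-W11} together with the Marcinkiewicz Theorem~\ref{th:marcinkiewicz} embed this interpolation space into $L^\delta(G)$ for some $\delta > 1$, so $h \in L^\delta$. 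Young's convolution inequality then gives $(TT^*)^k : L^{p_0} \to L^{p_0'}$ boundedly for $p_0 = 2\delta/(2\delta - 1) < 2$, since this corresponds to $1/p_0 - 1/p_0' = 1 - 1/\delta$.

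To pass from this bound on $(TT^*)^k$ to a bound on $T$ itself, I would apply the Stein interpolation Theorem~\ref{th:stein} to the analytic family $\{(TT^*)^z\}_{0 \le \Re z \le k}$, defined by the Borel functional calculus of the positive self-adjoint operator $TT^*$ on $L^2$. On $\Re z = 0$ each $(TT^*)^{iy}$ is an $L^2$-contraction; on $\Re z = k$ one has the $L^{p_0} \to L^{p_0'}$ estimate above. Interpolation at $z = 1$ then yields $TT^* : L^{p_1} \to L^{p_1'}$ for some $p_1 < 2$, from which $\|Tf\|_2^2 = \langle T^*Tf, f\rangle \leq \|T^*Tf\|_{p_1'}\|f\|_{p_1} \leq C \|f\|_{p_1}^2$ (the argument for $T^*T$ being identical to that for $TT^*$ up to trivial symmetry) closes the proof. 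The main obstacle is precisely this descent from the $k$-th power to the operator: to invoke Stein's theorem one must control the imaginary powers $(TT^*)^{iy}$ as $L^{p_0}$-multipliers with admissible growth in $y$, which is a nontrivial $H^\infty$-functional-calculus statement. A more hands-on alternative, closer in spirit to the ``recasting in the language of interpolation'' announced in the introduction, is to iterate bounds on $(TT^*)^k$ using the trivial $L^r$-boundedness of $T$ for all $r \in [1,\infty]$ (coming from $\mu$ being a finite compactly supported measure) together with the $L^2$-self-adjointness and positivity of $TT^*$, bypassing spectral calculus at the cost of a more technical real-interpolation argument.
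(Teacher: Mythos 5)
Your setup, the identification of $(TT^*)^k$ as convolution with a pushforward of a smooth density, and the invocation of Proposition~\ref{prop:measure-transport} followed by Sobolev embedding and Young's inequality to get $(TT^*)^k : L^{p_0} \to L^{p_0'}$ all match the paper's argument. The genuine gap is in the descent from the $k$-th power to $T$ itself, and it is precisely the step you flag as the ``main obstacle.'' Your first route --- Stein interpolation on the family $(TT^*)^z$ defined by spectral calculus --- needs uniform $L^{p_0}\to L^{p_0'}$ bounds on the imaginary powers $(TT^*)^{iy}$ with admissible growth in $y$, and the framework developed in the thesis gives you no such bounds for free; without them, Theorem~\ref{th:stein} does not apply to that family. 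Your second route is a gesture in the right direction but is left too vague to assess, and it misses the one device that makes an elementary argument go through.

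The paper sidesteps all of this by not working with $k$ itself: it chooses $l$ with $2^l \geq k$ (more factors in the product map can only help the rank, so the full-rank-a.e.\ conclusion still holds) and sets $T_l := (T_\mu^* T_\mu)^{2^l}$. The point of the power of two is that $T_{l+1} = T_l^2$ exactly, which allows a clean descending induction with no functional calculus at all: if $T_{l+1} : L^p \to L^{p'}$ then, since $T_l$ is self-adjoint,
\[
\|T_l f\|_2^2 = \langle T_l^* T_l f, f\rangle = \langle T_{l+1} f, f\rangle \leq \|T_{l+1}\|_{L^p\to L^{p'}}\,\|f\|_p^2,
\]
so $T_l : L^p \to L^2$, then by duality $T_l : L^2 \to L^{p'}$, and Theorem~\ref{th:stein} applied to the constant family gives $T_l : L^q \to L^{q'}$ with $2/q = 1/p + 1/2$; iterating down to $l=0$ yields $T_\mu^* T_\mu : L^p \to L^{p'}$ for some (worse but still $<2$) exponent $p$, and then $T_\mu : L^p \to L^2$ as you indicate. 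You should replace your proposed descent with this halving argument; the crucial and non-obvious insight is to run the whole construction with $2^l$ copies rather than $k$ copies precisely so that each stage of the induction is a perfect square of the previous one.
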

\begin{proof}
By compactness of $\supp f$ we may assume that $(W,\psi)$ is a coordinate neighborhood, where $\psi : B \to W$ is some analytic map and that $(W W\inv)^{2^l}$ is contained in a coordinate neighborhood $(B_{\mathfrak{g}},\exp)$ of the identity given by the exponential map, where $l$ is a natural number such that $2^l \geq k$ and $B_{\mathfrak{g}}$ is a ball in the Lie algebra $\mathfrak{g}$ of $G$.

For a measure $\mu$ on $G$ let $T_\mu$ denote the operator $T_\mu f=f * \mu$, i.e.
\[
T_\mu f(x) = \int_G f(xy\inv) \dif\mu(y).
\]
Then
\begin{multline*}
\< T_\mu f,g \>
=
\int\int f(xy\inv) \dif\mu(y) \bar g(x) \dif\nu(x)
=
\int\int f(x) \bar g(x y) \dif\nu(x) \dif\mu(y)\\
=
\int f(x) \int \bar g(x y) \dif\mu(y) \dif\nu(x)
=
\int f(x) T_{\mu^*} \bar g(x) \dif\nu(x)
=
\< f, T_{\mu^*} g \>,
\end{multline*}
where $\mu^*(E) = \mu(E\inv) = \iota_* \mu(E)$ ($\iota$ denotes the inversion $\iota(x) = x\inv$).
Observe that
\[
T_{\mu^*}f(x)
=
\int_B f(x \psi(a)) \dif(\psi\inv_* \iota\inv_* \mu^*)(a)
=
\int_B f(x \psi(a)) \dif(\psi\inv_* \mu)(a)
\]
and, analogously,
\[
T_{\mu}f(x)
=
\int_B f(x \psi(a)\inv) \dif(\psi\inv_* \mu)(a).
\]
Therefore $(T_{\mu^*} T_\mu)^{2^l}$ is the convolution operator with the measure
\[
(\mu * \mu^*)^{*2^l} = \exp_* \Phi_* (\psi\inv_* \mu)^{\times 2^{l+1}},
\]
where $\Phi$ is the map
\[
\Phi(x_1, \dots, x_{2^{l+1}}) = \exp\inv (\psi(x_1) \psi(x_2)\inv \psi(x_3) \dots \psi(x_{2^{l+1}})\inv).
\]
Since $\sigma$ is a smooth volume measure and $\psi$ is analytic, $\psi\inv_* \mu$ is continuous with respect to the Lebesgue measure and its density is once continuously differentiable.
We already know that $\Phi$ is a submersion on a dense set.
It cannot fail to be a submersion on a set of positive measure, since then it would not be a submersion anywhere by the Lebesgue density theorem, the Weierstrass preparation theorem and analyticity.
Therefore $\Phi$ satisfies the hypothesis of Proposition~\ref{prop:measure-transport}.
By that Proposition, $\Phi_* (\psi\inv_* \mu)^{\times 2^{l+1}}$ is in $L^\delta$ for some $\delta > 1$ with respect to the Lebesgue measure.
Since the Haar measure $\nu$ is a volume measure as well, its restriction to $\exp(B_{\mathfrak{g}})$ is equivalent to the Lebesgue measure on $B$ under $\exp$ up to some function which is bounded from above and below.
Therefore $(\mu * \mu^*)^{* 2^l} \in L^\delta$ as well.
This implies that
\[
T_l := (T_\mu^* T_\mu)^{2^l} = (T_{\mu^*} T_\mu)^{2^l} : L^p \to L^{p'}
\]
is continuous for some $p<2$.
We perform a descending induction on $l$ in order to show that the same is also true for $l=0$.
Indeed, assume that $T_{l+1} = T_l^2 : L^p \to L^{p'}$ is continuous.
Since $T_l$ is self-adjoint, we obtain that it is continuous as an operator from $L^p$ to $L^2$.
Indeed,
\[
||T_l f||_2^2 = \<T_l^* T_l f, f\> \leq ||T_{l+1}||_{L^p \to L^{p'}} ||f||_p^2.
\]
Still by self-adjointness, $T_l$ is also continuous from $L^2$ to $L^{p'}$.
By the Interpolation Theorem~\ref{th:stein} it follows that it is continuous from $L^q$ to $L^{q'}$, where $q$ is given by $2/q = 1/p+1/2$.

Since $T_\mu^* T_\mu : L^p \to L^{p'}$, we have in fact that $T_\mu : L^p \to L^2$.
\end{proof}


\backmatter
\selectlanguage{ngerman}
\chapter{Zusammenfassung}
Wir behandeln die abstrakte Formulierung und einige Anwendungen der Interpolationstheorie.

Dabei geht es um \emph{Methoden} einen Raum ``zwischen'' zwei gegebenen Banach\-räumen zu definieren.
Die Methoden sollten die \emph{Interpolationseigenschaft} besitzen: stetige Operatoren, die auf den ursprünglichen Räumen auf eine verträgliche Weise definiert sind, sollten stetige Operatoren auf Interpolationsräumen induzieren.
Dies ist mit der Hoffnung verbunden dass sich die Operatoren auf den ursprünglichen Räumen leichter untersuchen lassen.

Diese Theorie wird angewendet um Integraloperatoren der Form $Tf(y)=\int_{M} K(y,x) f(x)$ auf $L^p$-Stetigkeit zu untersuchen.
Hier ist $M$ eine Mannigfaltigkeit und der Träger der Distribution $K(y, \cdot)$ eine Untermannigfaltigkeit positiver Kodimension ist, beispielsweise eine affine Gerade in $\R^{n}$, $n\geq 2$.

Wir fassen die wichtigsten Eigenschaften der reellen $K$-Methode von Peetre zusammen und behandeln die komplexe Methode von \Calderon{} samt dem komplex-analytischen Unterbau.
Wir beweisen eine Version des Interpolationssatzes von Stein für nicht überall definierte Operatoren und zeigen dass diese bei der Untersuchung von $[L^{p_{0}}(\R^{n}),L^{p_{1}}(\R^{n})]_{\theta}$ angewendet werden kann.

Danach gehen wir auf Riesz-Transformationen ein.
Diese analytische Familie von Operatoren verallgemeinert die Ableitungsoperatoren.

Im vierten Kapitel wird die Wirkung der klassischen Radontransformation auf Schwartzfunktionen untersucht und die $L^{p}$-Stetigkeit charakterisiert.
Wir überprüfen dabei die technischen Annahmen der Originalarbeiten.

Wir untersuchen dann, in welchem Sinne man eine messbare Menge $T \subset \R^{n}$ in einen Ball umordnen kann und beweisen die Brunn-Minkowski-Ungleichung sowie eine Umordnungsungleichung von Brascamp, Lieb und Luttinger.

Kapitel~\ref{chap:hardy} ist dem Hardyraum $H^{1}$ gewidmet.
Dieser dient bei der Interpolation häufig als Ersatz für $L^{1}$.
Wir konstruieren die schärfste Version der atomaren Zerlegung in $H^1$ und erläutern den Zusammenhang mit der Stetigkeit der auf $H^1$ definierten Operatoren.
Den klassischen Beweis der Dualität $H^{1} = \VMO'$ geben wir in vereinfachter Form wieder.
Unser wichtigstes Ergebnis ist die Möglichkeit, zwischen $H^{1}(\R^{n})$ und $L^{p}(\R^{n})$ mittels Schwartzfunktionen zu interpolieren.

Der siebte Abschnitt enthält einige Anwendungen der Interpolation und der Umordnungsungleichungen auf die $k$-Ebenentransformation.

Schließlich betrachten wir Faltungsoperatoren über Liegruppen, die Träger deren Kerne Untermannigfaltigkeiten sind.
Das zentrale Maßtransportlemma wird in der Sprache der Interpolationstheorie bewiesen.
\selectlanguage{american}


\bibliography{pzorin-harmonic}
\printindex

\embeddedfile[GNU FDL 1.3]{fdl-1.3.txt}{fdl-1.3.txt}
\end{document}